\documentclass[11pt,a4paper,reqno]{amsart}
\usepackage[a4paper,left=2.0cm,right=2.0cm,top=2.3cm,bottom=2.3cm]{geometry} 
\usepackage{mathrsfs,amssymb}
\usepackage{enumitem}
\usepackage{placeins}
\usepackage{tikz}
\usetikzlibrary{arrows.meta}
\definecolor{cbpInk}{RGB}{37,53,66}
\definecolor{cbpFluid}{RGB}{231,240,242}
\definecolor{cbpCore}{RGB}{151,61,45}
\definecolor{cbpFlow}{RGB}{35,102,117}
\tikzset{cbpwall/.style={draw=cbpInk,line width=.75pt},
cbpinner/.style={draw=cbpCore,line width=.9pt},
cbpleader/.style={draw=cbpInk!75,line width=.5pt},
cbpflow/.style={-{Stealth[length=2.1mm,width=1.4mm]},draw=cbpFlow,line width=.85pt},
cbpmeasure/.style={<->,>={Stealth[length=1.6mm,width=1.1mm]},line width=.55pt},
cbpradius/.style={-{Stealth[length=1.15mm,width=.9mm]},draw=cbpInk,line width=.6pt},
cbpfigure/.style={x=1cm,y=1cm,font=\small,line cap=round,
line join=round,line width=.55pt,>={Stealth[length=2.1mm,width=1.4mm]}}}
\usepackage[linkcolor=blue,citecolor=cyan,urlcolor=blue,colorlinks,hypertexnames=false]{hyperref}
\usepackage[T1]{fontenc}

\newtheorem{theorem}{Theorem}[section]
\newtheorem{lemma}[theorem]{Lemma}
\newtheorem{proposition}[theorem]{Proposition}

\theoremstyle{definition}
\newtheorem{definition}[theorem]{Definition}
\theoremstyle{remark}
\newtheorem{remark}[theorem]{Remark}

\newcommand{\R}{\mathbb R}
\newcommand{\T}{\mathbb T}

\numberwithin{equation}{section}
\allowdisplaybreaks
\begin{document}
\title[Cavity collapse and kinetic energy concentration in free-boundary Euler flow]{Cavity collapse and kinetic energy concentration in three-dimensional free-boundary Euler flow}
\author{Tao Luo and Siqi Yang}  
\address{Tao Luo\newline
Department of Mathematics, City University of Hong Kong, Hong Kong, China}
\email{taoluo@cityu.edu.hk}
\address{Siqi Yang\newline School of Mathematics, Statistics and Mechanics, Beijing University of Technology, Beijing 100124, China}
\email{siqiyang@bjut.edu.cn}

\begin{abstract}
We construct smooth axisymmetric solutions of the three-dimensional
incompressible free-boundary Euler equations in bounded domains for
which a tubular cavity collapses onto a circle at a finite time $T$.
The velocity has a single-valued potential, and the construction applies
without surface tension and with any fixed positive surface tension.
Both boundary components remain embedded and disjoint before collapse,
while the outer boundary converges smoothly. The terminal outer
meridional domain may be any smooth bounded simply connected domain
whose closure is separated from the rotation axis; the interior point
generating the collapse circle and the positive concentrated energy
may be prescribed independently. We prove a sharp collapse law with
radius asymptotic to an explicit multiple of
$(T-t)^{1/2}|\log(T-t)|^{-1/4}$, determine curvature blow-up asymptotics,
and show that kinetic energy concentrates uniformly on the circle.
For a canonical reflection symmetric family, we identify a common
cylindrical spacetime limit and sharp velocity and pressure profiles.
At every fixed spatial point in the region enclosed by the limiting
outer boundary and away from the collapse circle, the velocity tends
to zero while the pressure tends to positive infinity. For this family,
we characterize the topology of the limiting material map and prove
its optimal $1/2$-H\"older regularity and sharp $W^{1,p}$ threshold
$1\le p<2$, with first derivatives in weak $L^2$ but not in $L^2$.
These canonical solutions admit weak continuations that preserve mass
and momentum and whose total energy drops at collapse by exactly the
concentrated kinetic energy.
\end{abstract}

\keywords{Free-boundary Euler equations, cavity collapse, blow-up, energy concentration, weak continuation}
\subjclass[2020]{Primary 35Q31; Secondary 35B44, 35R35, 76B03}
\maketitle
\enlargethispage{2pt}
\setcounter{tocdepth}{2}
\tableofcontents

\section{Introduction}

The geometry of a moving fluid domain can become singular even when
the velocity remains irrotational. We study a collapse in which an
entire free-boundary component contracts onto a curve: a tubular cavity
in a bounded three-dimensional ideal fluid shrinks to a circle in finite
time, while the outer free surface converges smoothly. The two boundary
components remain embedded and disjoint at every time before collapse.
The velocity has a single-valued potential throughout the evolution,
so both vorticity and circulation vanish.

Our main result constructs such flows with a prescribed terminal outer
geometry and a prescribed positive amount of kinetic energy concentrating
on the circle. The collapse rate is determined by the logarithmic
capacity of a thin tube and depends, to leading order, only on the
concentrated energy and the length of the circle. The construction works
both without surface tension and for every fixed positive surface
tension.

The logarithmically corrected scaling already appears in the
asymptotic potential-flow analysis of \cite{Gordillo2005}.
Here we realize it in the full three-dimensional Euler evolution of
two free boundaries and determine its leading constant from the
concentrated energy and the length of the collapse circle.
Related hollow-vortex implosions \cite{ChenWalshWheeler2026} occur in
planar exterior domains with nonzero circulation and a gas-pressure
law. Our bounded, globally potential flows allow the terminal outer
geometry and the concentrated energy to be prescribed independently
within the class specified below. For a canonical reflection symmetric
family, we further connect the concentration of energy to the
degeneration of the material map and to the exact energy loss in a
weak continuation on the filled domain.

For a fixed surface tension $\sigma\ge0$, we consider the three-dimensional incompressible
free-boundary Euler system
\begin{equation}\label{eq:euler-toroidal-front}
\begin{cases}
\partial_t v+(v\cdot\nabla)v+\nabla p=0,& x\in\Omega_t,\\
\nabla\cdot v=0,& x\in\Omega_t,\\
p=\sigma\mathcal H,& x\in\partial\Omega_t,\\
\mathcal V_{\partial\Omega_t}=v\cdot n,& x\in\partial\Omega_t,
\end{cases}
\end{equation}
Here $\Omega_t$ is the fluid domain, $v$ its velocity, $p$ its pressure,
and $\mathcal V_{\partial\Omega_t}$ the normal boundary velocity.
The unit normal $n$ points out of the fluid, and the mean curvature
$\mathcal H=\operatorname{div}_{\partial\Omega_t}n$.
The Taylor coefficient is $-\partial_n p$.
We seek solutions of the form $v=\nabla\Phi$ with a single-valued
potential $\Phi$.
We use cylindrical coordinates $(\rho,\phi,z)$, where
$(x_1,x_2,x_3)=(\rho\cos\phi,\rho\sin\phi,z)$ and
$\phi\in\T:=\R/(2\pi\mathbb Z)$.
The fluid occupies a bounded axisymmetric domain with two free-boundary components
\begin{equation*}
\partial\Omega_t=\Gamma_-(t)\cup\Gamma_+(t).
\end{equation*}
The inner component $\Gamma_-(t)$ bounds the tubular cavity, and
$\Gamma_+(t)$ is the outer free surface. We prescribe the limiting
outer meridional domain among smooth bounded simply connected domains
whose closures are separated from the rotation axis, and choose the
point generating the collapse circle in its interior.
The outer boundary evolves freely and converges smoothly to the
prescribed terminal shape.

\subsection{Main results}

Theorem~\ref{thm:prescribed-outer-geometry} gives the construction for
prescribed terminal geometry, together with the collapse law, curvature
asymptotics and energy concentration. The subsequent results concern
the canonical reflection symmetric trajectories of
Definition~\ref{def:canonical-terminal-objects}: they determine the
singular profiles, the terminal material map and the weak filling
continuations.

\subsubsection{Collapse with prescribed outer geometry}

Unless otherwise indicated, Sobolev norms of angular functions are taken on $\T$.
The notation $\mathcal H^d$ denotes $d$-dimensional Hausdorff measure,
and $\mathbf1_A$ denotes the indicator of a set $A$.
We write
$|\Sigma|=\mathcal H^2(\Sigma)$ for the area of a surface $\Sigma$.
For a smooth closed surface $\Sigma$, we use the spectral norm
\begin{equation}\label{eq:spectral-H32-definition}
\|f\|_{H^{3/2}_{\rm sp}(\Sigma)}
:=\|(1-\Delta_\Sigma)^{3/4}f\|_{L^2(\Sigma)},
\end{equation}
where $\Delta_\Sigma$ is the Laplace--Beltrami operator.
The notation $A\lesssim B$ means $A\le CB$ for a constant $C>0$. 
We write $A\asymp B$ when $A\lesssim B$ and $B\lesssim A$, and
$A\sim B$ when $A/B\to1$.

Extend the velocity by zero:
\begin{equation*}
\widetilde v(t,x)=\mathbf 1_{\Omega_t}(x)v(t,x),\qquad x\in\R^3.
\end{equation*}
%\FloatBarrier
\begin{theorem}\label{thm:prescribed-outer-geometry}
Let $D_*\Subset\{(\rho,z):\rho>0\}$ be a bounded smooth simply connected
domain, let $c_*=(R_*,Z_*)\in D_*$, and let $\Omega_*$ be its solid of
revolution about the $z$-axis.  Set
\begin{equation*}
\gamma_*=\{(R_*\cos\phi,R_*\sin\phi,Z_*):\phi\in\T\},
\qquad L_*=2\pi R_*.
\end{equation*}
For every fixed $\sigma\ge0$ and $E_c>0$, there is a smooth
axisymmetric globally potential solution of
\eqref{eq:euler-toroidal-front} on a finite interval $[0,T)$.
Its two free-boundary components remain embedded and disjoint for $t<T$.
The outer boundary converges smoothly to $\partial\Omega_*$, while
the inner boundary collapses to $\gamma_*$.

Let $a(t)$ be the angular mean radius of the inner meridian. 
For a sufficiently small fixed local reference length $\ell>0$, the scale satisfies
\begin{equation}\label{eq:final-collapse-law}
T-t
=\frac12\sqrt{\frac{\pi L_*}{E_c}}\,
a(t)^2\sqrt{\log\frac{\ell}{a(t)}}\,(1+o(1)),
\end{equation}
and hence
\begin{equation*}
a(t)=2^{3/4}\left(\frac{E_c}{\pi L_*}\right)^{1/4}
(T-t)^{1/2}|\log(T-t)|^{-1/4}(1+o(1))
\quad\text{as }t\uparrow T.
\end{equation*}

The mean curvature has the asymptotics
\begin{equation}\label{eq:L2-curvature-final}
\|\mathcal H(t)\|_{L^2(\Gamma_-(t))}
\sim(2\pi L_*)^{1/2}a(t)^{-1/2}\longrightarrow\infty,
\end{equation}
and 
\begin{equation}\label{eq:H32-curvature-final}
\|\mathcal H(t)\|_{H^{3/2}_{\rm sp}(\Gamma_-(t))}
\sim\sqrt{2\pi^2/R_*}\,a(t)^{-1}
\asymp (T-t)^{-1/2}|\log(T-t)|^{1/4}
\quad\text{as }t\uparrow T.
\end{equation}

The kinetic energy measures satisfy
\begin{equation}\label{eq:main-energy-concentration}
\frac12|\widetilde v(t,x)|^2\,dx
\overset{*}{\rightharpoonup}
\frac{E_c}{L_*}\,
\mathcal H^1\!\restriction_{\gamma_*}
\quad\text{as }t\uparrow T,
\end{equation}
in the space of finite Radon measures on $\R^3$.
\end{theorem}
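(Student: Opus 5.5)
The construction will run backward from the collapse time. Time reversal maps irrotational free-boundary Euler solutions to solutions: $v\mapsto -v(-t)$, with the pressure and the boundary conditions unchanged. So I will reduce the problem to a terminal-value problem near $t=T$ and rewrite it as a fixed-point problem in renormalized variables.

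\textbf{Ansatz.} The leading-order ansatz is a thin axisymmetric tube of meridional radius $a(t)$ around $c_*$ inside a domain close to $\Omega_*$. The potential is approximated by
\begin{equation*}
\Phi\approx \dot a\, a\,G_{D}(\cdot,c_*)\cdot 2\pi R_*/(\ldots),
\end{equation*}
that is, a multiple of the Dirichlet Green function of the Laplacian on the outer domain with a line source along $\gamma_*$. Near $\gamma_*$ this field behaves like $\frac{a\dot a}{1}\log(r/\ell)$ in the meridional distance $r$, corrected by a regular part. The kinetic energy is then
\begin{equation*}
\frac12\int|\nabla\Phi|^2\approx \pi L_*(a\dot a)^2\log(\ell/a).
\end{equation*}
Energy conservation fixes this quantity to be $E_c$ to leading order, because the outer region carries only $O(1/\log)$ of the energy. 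This gives the ODE $a\dot a=-\sqrt{E_c/(\pi L_*\log(\ell/a))}$. Integrating it yields \eqref{eq:final-collapse-law}, and inverting $\log(\ell/a)\sim\frac12|\log(T-t)|$ gives the stated power law.

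\textbf{Energy concentration.} Since $|\nabla\Phi|^2\sim (a\dot a)^2/r^2$ and the mass of the energy measure in any annulus $\{a<r<\delta\}$ is a fraction $\log(\delta/a)/\log(\ell/a)\to1$ of the total, the energy measure concentrates on $\gamma_*$ with uniform linear density $E_c/L_*$. This proves \eqref{eq:main-energy-concentration}. Two further points hold in this regime:
\begin{itemize}
\item the velocity on compact sets away from $\gamma_*$ is $O(|\log(T-t)|^{-1/2})\to0$, so the outer boundary moves by an integrable amount and converges smoothly;
\item the terminal outer domain can be prescribed by choosing the data at $t=T$ to be exactly $\partial\Omega_*$ and solving the outer evolution backward, with $c_*$ as a free parameter of the ansatz.
\end{itemize}

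\textbf{Curvature asymptotics.} These are direct computations on a near-round torus. The meridional curvature is $1/a$ and the area is $2\pi a L_*$, so $\|\mathcal H\|_{L^2}^2\sim 2\pi L_*/a$. For $H^{3/2}_{\rm sp}$, the angular-mean part contributes nothing at top order. The leading contribution comes from the $\cos$-mode of the curvature generated by the toroidal correction $-\cos\theta/R_*$, which has Laplace--Beltrami frequency $a^{-1}$. This gives the stated $\sqrt{2\pi^2/R_*}\,a^{-1}$; checking the constant requires tracking that first Fourier mode.

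\textbf{Main obstacle.} The hard step is the existence of an exact smooth solution realizing the ansatz up to time $T$, with error terms $o(1)$ relative to the leading profile, uniformly as $a\to0$. My plan is as follows.
\begin{itemize}
\item Write the inner boundary in meridional polar coordinates as $r=a(t)(1+\eta(t,\theta))$.
\item Rescale space by $a$ and introduce the logarithmic time $s=-\log(T-t)$.
\item Derive an evolution for $(\eta,\psi)$, the rescaled shape and the boundary trace of the potential.
\end{itemize}
The inner Dirichlet-to-Neumann map is then close to the one for an annulus of modulus $\log(\ell/a)$, plus decaying toroidal corrections.

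The pressure satisfies $p\approx\frac{d}{dt}(a\dot a)\log$, which is large and positive in the bulk, so the Taylor sign condition $-\partial_np>0$ holds on both boundaries. I would then prove energy estimates for the linearized renormalized problem via a Lyapunov functional that combines the Taylor-weighted $H^{3/2}$ norm of $\eta$ with $\psi$. For $\sigma>0$ I would add the surface-energy term; near collapse surface tension is subcritical, since $\sigma/a\ll(\dot a)^2$ fails only logarithmically and must be checked.

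Backward-in-time solvability from $t=T$ will be obtained by a contraction or compactness argument over a sequence of finite-time terminal problems posed at $t_n\uparrow T$, with uniform bounds, followed by passage to the limit. I expect the logarithmic slowness of the corrections, which are of order $1/\log(\ell/a)$ and hence only barely integrable in $s$, to be the delicate point. To handle it I would first build an improved approximate solution by expanding in powers of $1/\log$, so that the residual becomes $s$-integrable.
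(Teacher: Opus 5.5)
The heuristic parts of your proposal agree with the paper, but the existence step, which is essentially the whole theorem, is left open, and your diagnosis of its difficulty points the wrong way.

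\textbf{What agrees.} The relation $a\dot a=-q$ together with $\pi L_*q^2\log(\ell/a)\to E_c$ gives \eqref{eq:final-collapse-law} and the $2^{3/4}$ inversion. The two curvature constants come from the mean part $1/a$ and the toroidal $\cos\theta$ mode, as you say. The logarithmic radial distribution of $q^2/r^2$ is the mechanism behind \eqref{eq:main-energy-concentration}.

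\textbf{Where the logarithm actually enters.} It affects only the radially symmetric part of the motion, and the paper treats that part exactly rather than by expansion. The radius itself is the clock, $Y=\log(\ell/a)$. The flux is recovered from the conserved energy and the exact capacity, $q^2=(E_*-E_0)/(\pi L(Y+\mathfrak c))$.

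This also matters for the set-up. There is no classical state at $t=T$ to solve backward from, and $T$ is not known in advance, so you cannot pose approximating problems at $t_n\uparrow T$. The paper prescribes exact round condenser data at cavity radius $a_N>0$, with outer boundary exactly $\partial D_*$ and centre $c_*$. It compares the branches in $Y$ and obtains $T$ by integrating $t_Y=\Theta a^2/q$.

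\textbf{Why an integrable residual is not enough.} What must be controlled is the non-radial part. A shape error changes $\|\mathcal H\|_{H^{3/2}_{\rm sp}(\Gamma_-)}$ by roughly $a^{-2}\|\eta\|_{H^{7/2}}$, because the linearized curvature $-(\partial_\theta^2+1)$ is elliptic on the modes $|m|\ge2$. So \eqref{eq:H32-curvature-final} needs $\|\eta\|_{H^{7/2}}=o(a)$, uniformly in the terminal radius and at every regularity level; the latter is needed for compactness and smoothness of the limit. The paper gets $\eta=O_{H^k}(\mathcal P_k(Y)a^2)$. The reason is that, in the scaled variables $(a\eta,aF)$, the backward energy grows at rate $1$ in $Y$, while the forcing of the modes $|m|\ge2$ is $O(\mathcal P(Y)a^3)$. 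Since $1<3$, Lemma~\ref{lem:zero-coupled-terminal-budget} applies.

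\textbf{What your plan is missing.} None of the following ingredients appears in it.
\begin{enumerate}
\item[(a)] The first harmonic must be split off before the higher modes see $O(a^3)$ forcing. The centre is a modulation variable, with both coordinates $c=(R,Z)$ for general $D_*$. It drifts with $c_Y=O(\mathcal P(Y)a^2)$, driven by the first harmonic of the condenser flux and by the dipole, and is integrated back from $c(Y_N)=c_*$. So $c_*$ is terminal data, not a free ansatz parameter. The dipole coefficient of the potential evolves by $\alpha_Y-\alpha=\mathfrak f_1$ with $\mathfrak f_1=O(q\mathcal P(Y)a)$, integrated backward from $\alpha(Y_N)=0$ (Proposition~\ref{prop:first-mode-backward-amplitude}).
\item[(b)] The growth rate $1$ must be independent of the Sobolev index. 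The paper obtains this from good-unknown, half-density variables in which the Dirichlet--Neumann and curvature symbols are powers of the same $|\xi|/j$ (Theorem~\ref{thm:natural-variable-principal-energy}). A single $H^{3/2}$-level Lyapunov functional does not give it.
\item[(c)] At $\sigma>0$, your remark on subcriticality is backwards: $B_q=\sigma a/q^2\asymp aY\to0$ exponentially. The real difficulty is that the energy controls the height at top order only with weight $\sqrt{B_q}$. Hence every order-zero height-to-momentum coefficient costs $B_q^{-1/4}$ (Lemma~\ref{lem:zeroth-mixed-cost}). The flux derivative $p_q$ and the inner Taylor coefficient $\tau_0$ each contain the nonintegrable term $-1/(2(Y+\mathfrak c))$. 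The estimate closes only because $1+p_q-\tau_0=O(\mathcal P(Y)a)$, both being computed from the same capacity correction of the actual solution. An approximate solution from a finite expansion in $1/\log$ would generically leave a mismatch of some order $Y^{-k}$, and $Y^{-k}B_q^{-1/4}$ is exponentially large.
\item[(d)] At $\sigma=0$ you additionally need a quantitative outer Taylor sign, $-\partial_{n_+}p\ge\underline\kappa\,q^2/(2a^2(Y+\mathfrak c))$, coming from the Hopf constant of the filled Green potential. You also need an outer energy coupled to the inner one at one higher derivative.
\end{enumerate}

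Without these ingredients, the ``contraction or compactness with uniform bounds'' in your plan has nothing to rest on.
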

\begin{remark}
Since $D_*$ is simply connected and separated from the rotation axis,
$\Omega_*$ is diffeomorphic to a solid torus.  The terminal outer section,
the interior point generating the collapse circle, and the concentrated
energy can be prescribed independently within the stated class.  The leading coefficient in the collapse law depends only on $E_c$ and $L_*$. At zero surface tension,
the pressure satisfies
\begin{equation*}
-\Delta p=\sum_{i,j=1}^3(\partial_i\partial_j\Phi)^2
\end{equation*} 
with zero
boundary values.  Since the velocity is nonconstant, Hopf's lemma gives $\partial_n p<0$ on both components
at every time before collapse.
\end{remark}
\begin{remark}
Section~\ref{sec:prescribed-outer-geometry} gives the centred
parametrizations and the quantitative geometric estimates
\eqref{eq:prescribed-geometry-tails} for this solution.
\end{remark}

Figures~\ref{fig:prescribed-geometry} and~\ref{fig:prescribed-evolution}
illustrate the prescribed terminal geometry and its curvature and energy
limits, and the boundary evolution, respectively.
In Figure~\ref{fig:prescribed-evolution}, $c(t)$ is the centre of the inner
meridian in the centred representation used to define $a(t)$ in
Theorem~\ref{thm:prescribed-outer-geometry}, and $c(t)\to c_*$ as
$t\uparrow T$.  The centred representation is specified in
Section~\ref{sec:prescribed-outer-geometry}.

\begin{figure}[!ht]
\centering
\begin{tikzpicture}[cbpfigure]
\path[use as bounding box] (0,0) rectangle (15.7,6.65);
\begin{scope}[yshift=2.20cm]

\node at (3.75,4.12) {Prescribed meridional section};
\draw[->,cbpInk!70] (.55,.95)--(.55,3.68) node[above] {$z$};
\draw[->,cbpInk!70] (.55,.95)--(6.30,.95) node[right] {$\rho$};
\path[fill=cbpFluid,cbpwall] (1.75,2.12)
.. controls (1.40,3.20) and (2.52,3.61) .. (3.57,3.26)
.. controls (4.55,2.92) and (5.39,3.41) .. (5.81,2.68)
.. controls (6.20,1.83) and (5.24,1.25) .. (4.29,1.42)
.. controls (3.23,1.60) and (2.10,1.17) .. (1.75,2.12)--cycle;
\node at (2.77,2.65) {$D_*$};
\fill[cbpCore] (4.10,2.40) circle (2pt);
\node[anchor=west] at (4.24,2.40) {$c_*$};
\node at (3.45,.12) {$c_*=(R_*,Z_*)$};
\draw[dashed,cbpInk!45] (4.10,2.33)--(4.10,.51);
\draw[dashed,cbpInk!45] (.59,2.40)--(4.02,2.40);
\node[anchor=east] at (.43,2.40) {$Z_*$};
\draw[cbpmeasure] (.55,.51)--(4.10,.51)
node[midway,fill=white,inner sep=1.5pt] {$R_*>0$};
\draw[cbpleader] (5.77,2.90)--(6.15,3.35);
\node[anchor=south] at (6.15,3.35) {$\partial D_*$};
\draw[->,cbpInk,line width=.8pt] (6.83,2.32)--(8.26,2.32);
\node[align=center,font=\footnotesize] at (7.54,1.67) {revolve about\\the $z$ axis};
\node at (12.10,4.12) {Circle in the plane $z=Z_*$};
\begin{scope}[shift={(11.77,2.18)},x={(.92cm,-.25cm)},y={(.50cm,.36cm)},z={(0cm,1cm)}]
\draw[dashed,->,cbpInk!65] (0,0,-1.00)--(0,0,1.49) node[right] {$z$};
\draw[cbpCore,line width=1.15pt,domain=0:360,samples=120,variable=\ang]
plot ({2.15*cos(\ang)},{2.15*sin(\ang)},0);
\draw[dashed,cbpInk!65] (0,0,0)--(2.85,0,0);
\node[anchor=north west] at (2.40,0,0) {$\phi=0$};
\draw[cbpradius] (0,0,0)--({2.15*cos(58)},{2.15*sin(58)},0);
\node[fill=white,inner sep=1.5pt,anchor=south east] at ({1.17*cos(58)},{1.17*sin(58)},0) {$R_*$};
\draw[->,cbpFlow,domain=0:58,samples=35,variable=\ang]
plot ({.71*cos(\ang)},{.71*sin(\ang)},0);
\node[cbpFlow,anchor=west] at ({.91*cos(29)},{.91*sin(29)},0) {$\phi$};
\fill[cbpCore] ({2.15*cos(58)},{2.15*sin(58)},0) circle (2pt);
\fill[cbpInk] (0,0,0) circle (1.4pt);
\node[anchor=east,inner sep=3pt] at (0,0,0) {$(0,0,Z_*)$};
\end{scope}
\draw[cbpleader] (13.73,2.55)--(14.00,3.10)--(15.16,3.10);
\node[anchor=south east,cbpCore] at (15.16,3.10) {$\gamma_*$};
\node at (12.10,.62) {$(R_*\cos\phi,R_*\sin\phi,Z_*)$};
\node at (12.10,.12) {$L_*=2\pi R_*$};
\end{scope}
\draw[cbpInk!20] (.40,1.99)--(15.30,1.99);
\draw[cbpInk!20] (7.77,.17)--(7.77,1.83);
\node at (3.70,1.30)
{$a(t)^{1/2}\|\mathcal H(t)\|_{L^2(\Gamma_-(t))}
\longrightarrow\sqrt{2\pi L_*}$};
\node at (3.70,.48)
{$a(t)\|\mathcal H(t)\|_{H^{3/2}_{\rm sp}(\Gamma_-(t))}
\longrightarrow\sqrt{2\pi^2/R_*}$};
\node at (11.88,1.30)
{$\displaystyle\frac12|\widetilde v(t,x)|^2\,dx
\overset{*}{\rightharpoonup}
\frac{E_c}{L_*}\mathcal H^1\!\restriction_{\gamma_*}$};
\node[align=center,font=\footnotesize] at (11.88,.48)
{constant arclength density $E_c/L_*$\\total limiting kinetic energy $E_c$};
\end{tikzpicture}
\caption{Prescribed terminal geometry and singular limits in
Theorem~\ref{thm:prescribed-outer-geometry}.  Revolving $D_*$ and its
interior point $c_*=(R_*,Z_*)$ about the $z$ axis gives $\Omega_*$ and
$\gamma_*$, respectively.  The azimuthal angle $\phi$ is measured from
the positive $x_1$ direction in the plane $z=Z_*$.  The two curvature
norms diverge with the displayed leading constants, and the limiting
kinetic energy has constant arclength density $E_c/L_*$ on $\gamma_*$.}
\label{fig:prescribed-geometry}
\end{figure}

\begin{figure}[!ht]
\centering
\begin{tikzpicture}[cbpfigure]
\path[use as bounding box] (0,0) rectangle (15.7,4.95);
\def\cbpTerminalSection{(.85,2.28)
.. controls (.65,3.25) and (1.45,3.75) .. (2.38,3.46)
.. controls (3.28,3.17) and (3.58,3.50) .. (4.15,3.03)
.. controls (4.84,2.21) and (3.86,1.55) .. (3.06,1.72)
.. controls (2.34,1.90) and (1.16,1.31) .. (.85,2.28)--cycle}
\foreach \off/\ss/\aa/\dx/\dz/\tt in
{0/1.045/.47/.045/-.03/0,5.35/1.010/.23/.011/-.007/1}{
\begin{scope}[shift={(\off,0)}]
\node at (2.50,4.57) {$t_\tt<T$};
\draw[->,cbpInk!55] (.25,1.12)--(.25,3.82) node[above] {$z$};
\draw[->,cbpInk!55] (.25,1.12)--(4.62,1.12) node[right] {$\rho$};
\begin{scope}[shift={(2.50,2.50)},scale=\ss,shift={(-2.50,-2.50)}]
\path[fill=cbpFluid,cbpwall] \cbpTerminalSection;
\end{scope}
\draw[dashed,cbpInk!55,line width=.5pt] \cbpTerminalSection;
\begin{scope}[shift={({2.90+\dx},{2.50+\dz})}]
\filldraw[fill=white,cbpinner] (0,0) circle (\aa);
\draw[cbpradius] (0,0)--(0,\aa);
\fill[cbpCore] (0,0) circle (.9pt);
\draw[cbpleader] (0,\aa)--(.55,1.30)--(.89,1.30);
\node[anchor=west,inner sep=2pt] at (.89,1.30) {$a(t_\tt)$};
\draw[cbpleader] (-.04,-.03)--(-.60,-.32)--(-1.05,-.32);
\node[anchor=east,inner sep=2pt] at (-1.05,-.32) {$c(t_\tt)$};
\end{scope}
\node at (2.55,.67) {Both boundaries move};
\end{scope}}
\begin{scope}[shift={(10.70,0)}]
\node at (2.50,4.57) {Terminal geometry};
\draw[->,cbpInk!55] (.25,1.12)--(.25,3.82) node[above] {$z$};
\draw[->,cbpInk!55] (.25,1.12)--(4.62,1.12) node[right] {$\rho$};
\path[fill=cbpFluid,cbpwall] \cbpTerminalSection;
\node at (1.63,2.72) {$D_*$};
\fill[cbpCore] (2.90,2.50) circle (2pt);
\node[anchor=west] at (3.06,2.50) {$c_*$};
\node at (2.55,.67) {$\Gamma_+(t)\longrightarrow\partial\Omega_*$};
\end{scope}
\draw[cbpleader] (3.30,2.16)--(3.75,1.57)--(4.08,1.57);
\node[anchor=west,inner sep=2pt] at (4.08,1.57) {$\Gamma_-(t_0)$};
\draw[cbpleader] (1.60,3.57)--(1.10,4.05);
\node[anchor=south,inner sep=2pt] at (1.10,4.05) {$\Gamma_+(t_0)$};
\draw[->,cbpInk!75] (4.74,2.51)--(5.38,2.51);
\draw[->,cbpInk!75] (10.09,2.51)--(10.73,2.51);
\draw[cbpwall] (1.43,.16)--(2.08,.16);
\node[anchor=west] at (2.23,.16) {moving outer section};
\draw[dashed,cbpInk!55,line width=.5pt] (7.43,.16)--(8.08,.16);
\node[anchor=west] at (8.23,.16) {prescribed terminal section $\partial D_*$};
\end{tikzpicture}
\caption{Schematic meridional sections at $0\le t_0<t_1<T$ and as
$t\uparrow T$.  The labels $\Gamma_\pm(t_0)$ indicate the sections of
the two free boundaries.  Both boundaries move: the outer contours
converge smoothly to the fixed dashed curve $\partial D_*$, while the
cavity shrinks about $c(t)\to c_*$ and the boundaries remain disjoint.
The circular inner contours show the leading profiles.  Revolution
about the $z$-axis turns the small cavity into a tubular cavity and
$c_*$ into the nondegenerate circle $\gamma_*$.}
\label{fig:prescribed-evolution}
\end{figure}
%\FloatBarrier
\subsubsection{Canonical trajectories}

We now specialize to trajectories obtained by evolving concentric
circular terminal meridians backwards and passing to a subsequential
limit. This procedure selects the reflection symmetric family for
which we prove the finer asymptotic and continuation results below.
The cylindrical limit is common to this family and is independent of
the point and sequence used in its rescaling.

\begin{definition}
\label{def:canonical-terminal-objects}
\leavevmode
\begin{enumerate}[label=\textup{(\roman*)},leftmargin=*,itemsep=6pt]
\item \textbf{Terminal geometry and filled domain.}
Fix $\sigma\ge0$, $E_c>0$ and $R_*>0$, and choose $\ell>0$ with
$\ell/R_*$ sufficiently small. In the meridional $(\rho,z)$ half-plane, set
\begin{equation*}
c_*=(R_*,0),\qquad
D_*=\{(\rho,z):(\rho-R_*)^2+z^2<\ell^2\}.
\end{equation*}
In this family, $\ell$ is also the radius of the limiting outer meridian.
Revolving the point $c_*$ and the disk $D_*$ about the $z$-axis gives,
respectively, the \emph{collapse circle} and the \emph{filled domain}:
\begin{equation*} 
\gamma_*=\{(R_*\cos\phi,R_*\sin\phi,0):\phi\in\T\},\qquad
\Omega_*=\{x\in\R^3:(\sqrt{x_1^2+x_2^2}-R_*)^2+x_3^2<\ell^2\}. 
\end{equation*}
Thus $\Omega_*$ is an open solid torus, $\gamma_*\subset\Omega_*$,
and $\Gamma_+^*=\partial\Omega_*$ is the limiting outer boundary.
For a set $A\subset\R^3$ and $b>0$, write
$N_b(A)=\{x\in\R^3:\operatorname{dist}(x,A)<b\}$.
Thus $\Omega_*=N_\ell(\gamma_*)$.

\item \textbf{Canonical trajectories.}
Choose $0<a_N<\ell$ with $a_N\downarrow0$, and let $u_N$ be harmonic in
\begin{equation*}
\Omega_N^*=N_\ell(\gamma_*)\setminus
\overline{N_{a_N}(\gamma_*)},
\end{equation*}
with boundary values $1$ on $\partial N_{a_N}(\gamma_*)$ and $0$
on $\partial N_\ell(\gamma_*)$.  Prescribe the terminal potential
$\Phi_N^*=c_Nu_N$ and velocity $v_N^*=c_N\nabla u_N$, where,
for sufficiently large $N$, the positive constant $c_N$ satisfies
\begin{equation*}
\frac12c_N^2\int_{\Omega_N^*}|\nabla u_N|^2\,dx
+2\pi\sigma L_*a_N=E_c,\qquad L_*=2\pi R_*.
\end{equation*}
Evolve these states backwards by the Euler system and compare the
solutions in the common variable $Y=\log(\ell/a)$, where $a$ is
the mean inner meridional radius.  A \emph{canonical trajectory}
is a collapsing solution obtained by taking a diagonal subsequential
limit on compact $Y$ intervals as $a_N\downarrow0$, as constructed in
Sections~\ref{sec:terminal-existence} and~\ref{sec:zero-surface-tension}. 

\item \textbf{Terminal material map and inner meridians.}
For a canonical trajectory, let $\mathcal X_t$ denote the material
flow on the initial closed fluid domain:
\begin{equation*}
\partial_t\mathcal X_t(\xi)=v(t,\mathcal X_t(\xi)),\qquad
\mathcal X_0(\xi)=\xi,\qquad \xi\in\overline{\Omega_0}.
\end{equation*}
The \emph{terminal material map} is
\begin{equation*}
\mathcal X_T:\overline{\Omega_0}\longrightarrow\overline{\Omega_*},
\qquad \mathcal X_T(\xi)=\lim_{t\uparrow T}\mathcal X_t(\xi).
\end{equation*}
We shall prove that this limit exists uniformly in $\xi$
(see Theorem~\ref{thm:terminal-material-deformation}).
For $\phi\in\T$, define the initial inner boundary meridian and its
corresponding point on the collapse circle by
\begin{equation*}
M_\phi=\{\xi\in\Gamma_-(0):\text{the cylindrical azimuth of }\xi
\text{ is }\phi\},\qquad
x_\phi=(R_*\cos\phi,R_*\sin\phi,0).
\end{equation*}
Thus $M_\phi$ is the entire closed cross-sectional loop at fixed
azimuth. The meridional angle varies along this loop.
\end{enumerate}
\end{definition}

For canonical trajectories, the evolving domains are invariant under
$(\rho,\phi,z)\mapsto(\rho,\phi,-z)$, and the scalar potentials satisfy
$\Phi(t,\rho,-z)=\Phi(t,\rho,z)$.  Their velocities have zero azimuthal
component, so each fluid particle retains its initial cylindrical azimuth.

For a matrix $M$, we use the Euclidean operator norm
$|M|=\|M\|=\sup_{|e|=1}|Me|$.  The $L^p$ and weak $L^2$ norms of
matrix-valued fields use this pointwise norm.  We write $L^{2,\infty}(U)$
for the weak $L^2$ space, with quasi-norm
\begin{equation*}
\|F\|_{L^{2,\infty}(U)}
=\sup_{\lambda>0}\lambda
\bigl|\{x\in U:|F(x)|>\lambda\}\bigr|^{1/2},
\end{equation*}
where the set is measured by Lebesgue volume.
For the $L^\infty$ norm,
\begin{equation*}
\|M\|_{L^\infty(U)}
=\operatorname*{ess\,sup}_{x\in U}\|M(x)\|.
\end{equation*}
The symbols $\delta_y$ and $\delta_T$ denote the unit Dirac measures
at the spatial point $y$ and time $T$, respectively.  
For the energy balance, set
\begin{equation*}
\chi_t=\mathbf1_{\Omega_t},\qquad
A_t=\mathcal H^2\!\restriction_{\partial\Omega_t},
\end{equation*}
and write $n_t$ for the outward fluid normal at time $t$.
Products involving $\chi_t$ are extended by zero outside the fluid.
Define the spatial energy and flux measures by
\begin{equation}\label{eq:weak-capillary-energy-flux} 
\mathcal E_t=\frac12\chi_t|v(t)|^2\,dx+\sigma A_t,\qquad
\mathcal F_t=\chi_t\left(\frac12|v(t)|^2+p(t)\right)v(t)\,dx
+\sigma(v(t)\cdot n_t)n_t A_t. 
\end{equation}
We write $\mathcal E=\mathcal E_t\,dt$ and
$\mathcal F=\mathcal F_t\,dt$ for the corresponding spacetime measures.

\enlargethispage{\baselineskip}
\begin{theorem}\label{thm:toroidal-curvature-singularity}
Fix $\sigma>0$.  There exist constants $R_*,\ell,E_c>0$, with $\ell/R_*$ sufficiently
small, and smooth initial data consisting of a bounded fluid domain with one
tubular cavity around a circle and an axisymmetric, reflection symmetric, globally potential,
divergence-free velocity, for which \eqref{eq:euler-toroidal-front} has a
canonical trajectory in the sense of
Definition~\ref{def:canonical-terminal-objects} on $[0,T)$ for some
$T<\infty$. Set
\begin{equation*}
L_*=2\pi R_*,\qquad
\gamma_*=\{(R_*\cos\phi,R_*\sin\phi,0):\phi\in\T\}.
\end{equation*} The solution is smooth on every compact subinterval of
$[0,T)$ and satisfies the properties below, where all limits are taken
as $t\uparrow T$. 
\begin{enumerate}[label=\textup{(\roman*)},ref=\textup{(\roman*)},leftmargin=*]
\item\label{item:main-geometry} \textbf{Geometry and energy concentration.}
\begin{enumerate}[label=\textup{(\alph*)},leftmargin=*]
\item For every $t<T$, both free-boundary components are smooth, embedded, and
disjoint.

\item The inner component is a centred relative graph
\begin{equation}\label{eq:asymptotically-tubular-final}
r=a(t)(1+\eta(t,\theta)),\qquad
\|\eta(t)\|_{H^2}=o(1),\qquad
\|\eta(t)\|_{H^{7/2}}=o(a(t)),
\end{equation}
over a tube about an instantaneous centre circle $\gamma(t)$ of radius $R(t)$,
where the meridional centre is $c(t)=(R(t),0)$, $r$ is the distance
from $\gamma(t)$, and $\theta$ is measured about $c(t)$ from the
direction of increasing $\rho$.  The centre is chosen so that the first
angular harmonics of $r$ vanish, and $a(t)$ is its angular mean.
One has $a(t)\downarrow0$, $R(t)\to R_*>0$, and $\gamma(t)\to\gamma_*$.

\item With the fixed reference length $\ell$, $a(t)$ obeys
\eqref{eq:final-collapse-law} and its equivalent radius asymptotic.

\item The mean curvature satisfies
\eqref{eq:L2-curvature-final}--\eqref{eq:H32-curvature-final}.

\item The kinetic energy measure converges as in
\eqref{eq:main-energy-concentration}.
Here $\Gamma_+(t)$ converges smoothly to the terminal outer surface
$\Gamma_+^*$ and $E_c$ is
the limiting kinetic energy. 

\item Fix a sufficiently small tubular radius $r_0>0$.  For each fixed
$0<\lambda<1$, put $r_\lambda(t)=r_0^{1-\lambda}a(t)^\lambda$.  Then
\begin{equation}\label{eq:main-logarithmic-energy-law}
\frac12\int_{\Omega_t\cap N_{r_\lambda(t)}(\gamma(t))}|v(t,x)|^2\,dx
\longrightarrow(1-\lambda)E_c.
\end{equation}
For every fixed $C>1$, the same integral with $r_\lambda(t)$ replaced by
$Ca(t)$ tends to zero.
\end{enumerate}

\item\label{item:main-profiles} \textbf{Velocity and pressure profiles.}
\begin{enumerate}[label=\textup{(\alph*)},leftmargin=*]
\item As $t\uparrow T$,
\begin{equation}\label{eq:main-velocity-limits}
\begin{gathered}
\widetilde v(t)\longrightarrow0\quad\hbox{strongly in }L^p(\R^3),
\quad 1\le p<2,\\
\widetilde v(t)\rightharpoonup0\quad\hbox{weakly in }L^2(\R^3),
\qquad \|\widetilde v(t)\|_{L^2}^2\longrightarrow2E_c>0.
\end{gathered}
\end{equation}
In particular there is no continuation whose Eulerian velocity, extended by zero, is strongly continuous in $L^2(\R^3)$ through $T$.

\item Let $n_-=n|_{\Gamma_-(t)}$ and define the normalized inner flux by
\begin{equation*}
q(t)=\frac{1}{4\pi^2R(t)}
\int_{\Gamma_-(t)}v(t)\cdot n_-\,dS.
\end{equation*}
The inner normal velocity satisfies
\begin{equation}\label{eq:hly-normal-velocity-divergence}
\|v(t)\cdot n_-\|_{L^2(\Gamma_-(t))}
\sim (2\pi L_*)^{1/2}q(t)\,a(t)^{-1/2}\longrightarrow\infty.
\end{equation}
The velocity gradient satisfies
\begin{equation}\label{eq:main-gradient-blowup-rate}
\lim_{t\uparrow T}(T-t)
\|\nabla v(t)\|_{L^\infty(\Omega_t)}=\frac12.
\end{equation}

\item Let $U_*$ be the Dirichlet potential determined by
$-\Delta U_*=2\pi\mathcal H^1\!\restriction_{\gamma_*}$ in $\Omega_*$
in the sense of distributions, with zero boundary values.  Then $U_*>0$ in
$\Omega_*\setminus\gamma_*$.  For every compact
$K\Subset\Omega_*\setminus\gamma_*$ and fixed integer $k\ge0$,
$v(t)\to0$ in $C^k(K)$ and the pressure satisfies
\begin{equation}\label{eq:main-pressure-time-profile}
(T-t)|\log(T-t)|^{3/2}p(t)
\longrightarrow\sqrt{\frac{E_c}{2\pi L_*}}\,U_*
\quad\text{in }C^k(K),
\end{equation}
so $p(t,x)\to+\infty$ at every fixed
$x\in\Omega_*\setminus\gamma_*$.
\end{enumerate}

\item \textbf{Continuation through collapse.}
\begin{enumerate}[label=\textup{(\alph*)},leftmargin=*]
\item The solution admits no continuation through $T$ as a strong solution whose boundary remains nondegenerate and continuous in $H^4$.
Consequently $T$ is maximal in every such strong class.

\item There is $\delta>0$ and a continuation to $[0,T+\delta)$ in the
one-phase distributional formulation of
Subsection~\ref{subsec:weak-continuation}.  At $T$ its state is
$(\Omega_*,0)$, and for $t\ge T$ it is the classical capillary Euler
solution with this initial state.  With the energy and flux measures
defined in \eqref{eq:weak-capillary-energy-flux}, its exact local
total energy balance is
\begin{equation}\label{eq:main-weak-energy-defect}
\partial_t\mathcal E+\operatorname{div}_x\mathcal F
=-\delta_T\otimes\left(
\frac{E_c}{L_*}
\mathcal H^1\!\restriction_{\gamma_*}\right).
\end{equation}
In particular, the total energy decreases by exactly $E_c$
at $T$ and equals $\sigma|\Gamma_+^*|$ thereafter.
\end{enumerate}

\end{enumerate}
\end{theorem}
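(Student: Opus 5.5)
The proof runs backward from the terminal states of Definition~\ref{def:canonical-terminal-objects}\,(ii). I would treat this theorem as a synthesis: the canonical trajectory is obtained from the backward-evolution construction and the uniform estimates behind Theorem~\ref{thm:prescribed-outer-geometry}, specialized to $D_*$ a disk of radius $\ell$ about $c_*=(R_*,0)$. I then extract the finer statements from the reduced dynamics in the variable $Y=\log(\ell/a)$.

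\textbf{Step 1: the canonical trajectory.} For each $N$ I solve backward from $(\Omega_N^*,c_N\nabla u_N)$. The reflection symmetry $z\mapsto-z$ and the vanishing azimuthal velocity are preserved by uniqueness. In the centred relative-graph variables of \eqref{eq:asymptotically-tubular-final}, I would prove energy estimates uniform in $N$ on compact $Y$ intervals. The key structure is that the leading dynamics is radial. The potential near the tube is $\approx q(t)\log(r_0/r)$ plus a smooth part. Conservation of $E=\tfrac12\int|v|^2+\sigma|\partial\Omega|$ together with the capacity asymptotics $\int|\nabla u|^2\approx 2\pi L_*/\log(\ell/a)$ gives $\tfrac12 q^2\cdot 2\pi L_* Y\approx E_c$. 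The kinematic condition reads $\dot a\approx -q/a$. Integrating $a\dot a=-q\propto -Y^{-1/2}$ yields \eqref{eq:final-collapse-law}. The nonradial modes $\eta$ satisfy a linearized system whose forcing is damped relative to $a$, which gives the bounds $o(1)$ in $H^2$ and $o(a)$ in $H^{7/2}$. A diagonal Arzel\`a--Ascoli argument in $N$ then produces the trajectory. Items (i)(a)--(e) follow as in Theorem~\ref{thm:prescribed-outer-geometry}: the curvature is $\approx a^{-1}$ on a surface of area $\approx 2\pi L_* a$, and the spectral $H^{3/2}$ norm is computed from the $\theta$-independent profile.

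\textbf{Step 2: energy distribution and profiles.} The energy density is $\tfrac12 q^2/r^2$ in $r$ on $\Omega_t\cap N_{r_0}$. Its integral over $\{a<r<r_\lambda\}$ equals $\pi L_* q^2\log(r_\lambda/a)$, and $\log(r_\lambda/a)=(1-\lambda)Y+O(1)$, giving \eqref{eq:main-logarithmic-energy-law}. Over $\{a<r<Ca\}$ the integral is $O(q^2)=O(1/Y)\to 0$. For (ii)(a), strong $L^p$ convergence for $p<2$ follows by H\"older, since $\int|v|^p\lesssim q^p\int r^{1-p}dr\to0$. Weak convergence to $0$ holds because the energy concentrates on a null set. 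For (ii)(b), the identity $v\cdot n_-\approx q/a$ on $\Gamma_-$ gives \eqref{eq:hly-normal-velocity-divergence}. The gradient $|\nabla v|\approx q/a^2$ is attained at $r=a$, and $q/a^2=-\dot a/a$. From $a^2\sim 2(T-t)\cdot(\text{slowly varying})$ we get $-\dot a/a\sim 1/(2(T-t))$, hence \eqref{eq:main-gradient-blowup-rate}. For (ii)(c), away from $\gamma_*$ the potential is $\Phi\approx 2\pi q\,U_*/(2\pi)$ up to normalization. Interior elliptic estimates then give $v\to0$ in $C^k(K)$. Bernoulli gives $p=-\partial_t\Phi-\tfrac12|v|^2$. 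Computing $\dot q$ from $q\propto Y^{-1/2}$ and $\dot Y=-\dot a/a\sim 1/(2(T-t))$ yields $-\dot q\sim c\,Y^{-3/2}/(T-t)$. Since $Y\sim\tfrac12|\log(T-t)|$, the constants combine to \eqref{eq:main-pressure-time-profile}, and the maximum principle gives $U_*>0$.

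\textbf{Step 3: continuation.} For (iii)(a), an $H^4$-continuous nondegenerate continuation would keep $\Gamma_-$ a smooth surface, contradicting $a\to0$ and the blow-up of curvature in (i)(d). For (iii)(b), I define the state for $t\ge T$ as the classical capillary solution from $(\Omega_*,0)$, which exists on $[T,T+\delta)$ by local well-posedness. This solution is static when $\sigma=0$; for $\sigma>0$ it follows from the smooth torus datum. Gluing at $T$ satisfies the distributional formulation because $\chi_t\to\chi_{\Omega_*}$ in $L^1$ and $\widetilde v\to0$ in $L^1$. For $t<T$ the classical energy identity holds pointwise. Testing the local balance with $\psi\in C_c^\infty$ across $T$ produces a jump term $\lim_{t\uparrow T}\int\psi\,d\mathcal E_t-\int\psi(T)\,d\mathcal E_T$. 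By \eqref{eq:main-energy-concentration} and the smooth convergence of $\Gamma_+$ this jump equals $\tfrac{E_c}{L_*}\int_{\gamma_*}\psi\,d\mathcal H^1$. The inner surface energy $\sigma|\Gamma_-|\sim 2\pi\sigma L_*a$ vanishes. The flux terms are integrable in time near $T$, because $\int|p\,v|$ and $\int|v|^3$ are controlled by $q^3/a\lesssim$ integrable quantities; this needs checking.

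\textbf{Main obstacle.} The main obstacle is the uniform-in-$N$ control of the nonradial modes in Step 1. I would first try to close it by a weighted energy in $Y$ that exploits the logarithmic slow-down. Precise pressure constants in Step 2 depend on tracking the $O(1)$ corrections to the capacity.
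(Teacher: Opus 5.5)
Your outline of (i)--(iii) follows the same synthesis as the paper's proof. The step everything rests on is missing, though, and one computation is wrong.

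\textbf{The gap in Step~1.} This step is the actual content of the theorem. All of (i)--(iii) needs backward solutions from every terminal state on a \emph{common} interval $[Y_0,Y_N]$, with $Y_0$ independent of $N$ and of the Sobolev index. It also needs the tails $\|\eta\|_{H^k}+q^{-1}\|f\|_{H^k}\le\mathcal P_k(Y)a^2$, $|\alpha|\le q\mathcal P(Y)a$ and $|R-R_*|\le\mathcal P(Y)a^2$ for every $k$ (Theorem~\ref{thm:one-state-closed}). ``Energy estimates uniform in $N$ on compact $Y$ intervals'' presupposes this existence interval, and ``forcing damped relative to $a$'' names no mechanism. Nor can this be imported from Theorem~\ref{thm:prescribed-outer-geometry}: its statement omits these tails, and the paper proves it last by rerunning this very argument.

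The mechanism is not a logarithmic weight. In half-density good-unknown variables, the principal DN and Jacobi symbols are $x$ and $x^2-1$ with the same $x=|\xi|/j$, so $\{x^s,x\}=\{x^s,x^2\}=0$. Hence the natural energy grows backward at rate $1$, independently of $s$ and uniformly as $B_q=\sigma a/q^2\to0$ (Theorem~\ref{thm:natural-variable-principal-energy}). The high-mode forcing is $O(\mathcal P(Y)a^3)$. Since $1<3$, integrating backward from zero terminal perturbation gives $E_s\lesssim\mathcal P_sa^3$, i.e.\ $\eta=O(a^2)$, uniformly in $Y_N$; this uses Lemma~\ref{lem:zero-coupled-terminal-budget}, with the inner energy coupled to an outer energy weighted by a power of $a$. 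A rate depending on $s$ would eventually exceed $3$ and destroy the uniformity you need for compactness in every $H^k$.

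Two ingredients are absent from your sketch.
\begin{enumerate}
\item The $a^3$ forcing needs the centre $R$ and the first potential harmonic $\alpha$ as separate unknowns, with $\alpha_Y-\alpha=\mathfrak f_1$ solved backward from $\alpha(Y_N)=0$. The second-harmonic source $\mathfrak f_2$ in \eqref{eq:exact-kinetic-low-source} is quadratic in $\beta=\alpha/q$; it equals $\Theta\beta^2$ at the circular state. With only $|\beta|\le\delta_1$ it is $O(1)$, and then $\eta$ is not even small.
\item Order-zero height-to-momentum couplings cost $B_q^{-1/4}=\sigma^{-1/4}a^{-1/4}q^{1/2}$ in the mixed capillary norm. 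So the mean Taylor coefficient must equal $1+p_q$ up to $O(\mathcal P a)$, which holds because both carry the same capacity correction $\mathfrak c$. An $O(1/Y)$ mismatch, which a weight in $Y$ could otherwise absorb, becomes fatal after that loss.
\end{enumerate}

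\textbf{Further points.}
\begin{enumerate}
\item In (i)(d), the $\theta$-independent part $1/a$ of $\mathcal H$ has $\|1/a\|_{H^{3/2}_{\rm sp}}=a^{-1}|\Gamma_-|^{1/2}=O(a^{-1/2})$. The leading term $\sqrt{2\pi^2/R_*}\,a^{-1}$ instead comes from the first harmonic $\cos\theta/(R+a\cos\theta)$ produced by the major curvature of the torus. This is why $\|\eta\|_{H^{7/2}}=o(a)$, not merely $o(1)$, is required (Proposition~\ref{prop:correct-tube-stability}).
\item In (iii)(b), $L^1$ convergence of $\chi_t$ and $\widetilde v$ only matches the traces. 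The momentum equation also needs $\chi p\in L^1$ near $T$, which is not automatic since $p\to+\infty$ in the bulk. It holds because $\|p(t)\|_{L^2}\le CS(t)$ and $\int^TS\,dt=\tfrac12\int^\infty\Theta q\Lambda^{-1}\,dY\lesssim\int^\infty Y^{-3/2}\,dY<\infty$.
\item Step~2 needs a quantitative $L^2$ comparison of $\widetilde v$ with the model field $-qe_r/r$ (Lemma~\ref{lem:global-radial-energy-approximation}), not just the model density.
\item Your pressure route, $p\approx-\dot q\,U_*$ with $-\dot q\sim S$, is consistent and gives the right constant. It requires time-derivative control of $\Lambda u_c$, $\Phi_0$ and the Bernoulli gauge away from the core. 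The paper's harmonic decomposition $p+\tfrac12|v|^2=Du_c+\mathsf R$ avoids this.
\end{enumerate}
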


Figure~\ref{fig:logarithmic-concentration} illustrates the distribution
of the concentrating energy across logarithmic radial scales.

\begin{figure}[!htbp]
\centering
\begin{tikzpicture}[cbpfigure]
\path[use as bounding box] (0,0) rectangle (9.50,3.30);
\node at (4.75,3.05) {Equal logarithmic radial intervals};
\foreach \i in {0,1,2,3}{
\pgfmathsetmacro{\xx}{.40+2.15*\i}
\filldraw[fill=cbpFluid,draw=white,line width=1pt]
(\xx,1.72) rectangle ++(2.15,.65);
\node at (\xx+1.075,2.045) {$E_c/4$};}
\draw[->,thin] (.34,1.72)--(9.37,1.72);
\foreach \xx/\lab in {.40/{a(t)},2.55/{r_{3/4}(t)},4.70/{r_{1/2}(t)},6.85/{r_{1/4}(t)},9.00/{r_0}}{
\draw (\xx,1.67)--(\xx,1.80);
\node[below=4pt] at (\xx,1.67) {$\lab$};}
\node[cbpInk!80] at (4.70,2.64) {limiting kinetic energy in each band};
\node at (4.70,.70) {$r_\lambda(t)=r_0^{1-\lambda}a(t)^\lambda$};
\node[cbpInk!80] at (4.70,.17) {inside $Ca(t)$: energy $\longrightarrow0$ for fixed $C>1$};
\end{tikzpicture}
\caption{The radial distribution in
\eqref{eq:main-logarithmic-energy-law}.  Equal widths represent equal
increments of $\log(r/a(t))$.  Each of the four bands has limiting
energy $E_c/4$.  The fixed radius $r_0$ captures all limiting kinetic
energy.  The resulting uniform line measure is shown in
Figure~\ref{fig:prescribed-geometry}.}
\label{fig:logarithmic-concentration}
\end{figure}
\begin{remark}
For $\sigma>0$, the canonical trajectories of
Theorem~\ref{thm:toroidal-curvature-singularity} evolve in non-simply
connected fluid domains and simultaneously realize alternatives (2)--(4) of
\cite[Theorem~1.1 and Remark~1.2]{HLY}.
The divergent physical $L^2$ norms in
\eqref{eq:L2-curvature-final} and
\eqref{eq:hly-normal-velocity-divergence} force divergence of the
inhomogeneous $H^{3/2}$ curvature and $H^{5/2}$ normal velocity norms,
respectively.  The rate \eqref{eq:main-gradient-blowup-rate} gives
\begin{equation*}
\int_0^T\|\nabla v(t)\|_{L^\infty(\Omega_t)}\,dt=\infty
\end{equation*}
and also determines the tangential gradient divergence.
The outer boundary has a smooth separated limit, whereas the terminal
inner material map has image $\gamma_*$ and ceases to be a
two-dimensional immersion.
\end{remark}
The same leading singularity occurs at zero surface tension.  

\begin{theorem} 
\label{thm:zero-surface-tension-construction}
For $\sigma=0$, there are smooth initial data in the same
axisymmetric, reflection symmetric, globally potential class
for which \eqref{eq:euler-toroidal-front} has a canonical trajectory on
$[0,T)$ satisfying parts~\ref{item:main-geometry}
and~\ref{item:main-profiles} of
Theorem~\ref{thm:toroidal-curvature-singularity} with $\sigma=0$.  
The filling continuation is explicitly
\begin{equation}\label{eq:zero-static-filling}
\Omega_t=\Omega_*,\qquad v(t)=0,\qquad p(t)=0,
\qquad t\ge T.
\end{equation}
Its exact local energy defect is
\eqref{eq:main-weak-energy-defect}, and its total energy is zero for
$t\ge T$ (the energy before collapse is $E_c>0$).  There is no strong free-boundary continuation whose boundary
remains nondegenerate and continuous in $H^4$ through $T$.
\end{theorem}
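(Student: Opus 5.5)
The plan is to run the same backward-from-collapse construction used for Theorem~\ref{thm:toroidal-curvature-singularity}, now with $\sigma=0$. The key point is to replace the capillary regularization by the Taylor sign condition. First, fix $R_*$ and $\ell$ with $\ell/R_*$ small. For each $N$, take the terminal states $(\Omega_N^*,\Phi_N^*=c_Nu_N)$ of Definition~\ref{def:canonical-terminal-objects}, with $c_N$ determined by $\tfrac12c_N^2\int|\nabla u_N|^2=E_c$ (the $\sigma$-term is now absent). By the remark after Theorem~\ref{thm:prescribed-outer-geometry}, at $\sigma=0$ the pressure solves $-\Delta p=|\nabla^2\Phi|^2$ with zero boundary values. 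Hopf's lemma then gives $-\partial_np>0$ on both components, so the Taylor sign condition holds at the terminal state. The zero-surface-tension local well-posedness theory therefore applies, and we evolve backwards.

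The main quantitative input is a set of a priori bounds uniform in $N$, expressed in the logarithmic variable $Y=\log(\ell/a)$. These should follow by repeating the modulated energy estimates of Sections~\ref{sec:terminal-existence}--\ref{sec:zero-surface-tension} for the deviation $\eta$ from the circular profile and for the outer boundary perturbation. There is one change from the capillary case: the coercive $\sigma$-term in the energy is replaced by the Taylor coefficient weighted norm $\int(-\partial_np)|\cdot|^2$. We need a lower bound for $-\partial_np$ on $\Gamma_-$ that matches the capillary scaling near collapse. To get it, decompose $p$ into the singular part $\simeq q^2\log(r/a)$-type contributions from the near-tube potential $q\log(r/a)$ and a remainder. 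This yields $-\partial_n p\gtrsim \dot a^2/a$ on $\Gamma_-$, which is positive uniformly on compact $Y$-intervals and is the correct size to close the estimates after rescaling.

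With the uniform bounds in hand, a diagonal compactness argument on compact $Y$-intervals as $a_N\downarrow0$ produces a canonical trajectory. Reflection symmetry and zero swirl pass to the limit. Parts~\ref{item:main-geometry} and~\ref{item:main-profiles} are then read off exactly as for $\sigma>0$. The collapse ODE $\tfrac{d}{dt}(\cdots)$ comes from energy conservation with kinetic energy $\simeq \pi L_* q^2\log(\ell/a)\to E_c$, and $q=a\dot a$; integrating gives \eqref{eq:final-collapse-law}. The curvature asymptotics come from the circular leading profile, and the profiles of $v$ and $p$ from convergence of $q(t)^{-1}\Phi$ to a multiple of $U_*$. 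None of these derivations used $\sigma>0$ except through lower-order terms, which now vanish.

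For the continuation, note that the terminal state is $(\Omega_*,0)$ and, with $\sigma=0$, the triple \eqref{eq:zero-static-filling} solves the system trivially for $t\ge T$. The distributional formulation is verified by gluing at $T$. Since $\widetilde v(t)\rightharpoonup0$ weakly in $L^2$ and $\chi_t\to\mathbf 1_{\Omega_*}$ in $L^1$, the mass and momentum identities have no defect. The energy identity picks up exactly the weak-$*$ limit \eqref{eq:main-energy-concentration} as a Dirac mass in time, which gives \eqref{eq:main-weak-energy-defect} with $\sigma=0$. Non-existence of a strong $H^4$ nondegenerate continuation follows because $\|\mathcal H\|_{L^2(\Gamma_-)}\to\infty$, and continuity of the boundary in $H^4$ would keep this bounded. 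I expect the main obstacle to be the uniform lower bound on the Taylor coefficient along the backward flow. It must be shown robust under the perturbations $\eta$ and under interaction with the outer boundary, where $-\partial_np$ is only of size $O(q^2)$, and both cases must be handled with constants independent of $N$.
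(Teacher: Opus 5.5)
You have the paper's architecture right. That includes condenser terminal data with $\tfrac12c_N^2\operatorname{Cap}_N=E_c$, Hopf's lemma for the terminal Taylor sign, $N$-uniform backward bounds in $Y$, and diagonal compactness. It also includes the limiting analysis of Section~\ref{sec:terminal-existence} (which is stated for every $\sigma\ge0$), the static filling, and the $H^4$ curvature contradiction. The gap is in the step you flag but do not resolve, and your description of it points the wrong way.

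The capillary term is not simply replaced by a Taylor-weighted norm. At $\sigma>0$ the inner energy already uses $\mathfrak t_-=a^3(-\partial_{n_-}p)/q^2=1-\frac1{2(Y+\mathfrak c)}+O(\mathcal Pa)$ for $L^2$ coercivity. Your $\dot a^2/a$ is this quantity, though the energy argument also needs the integrable bound on $(\mathfrak t_-)_Y$. There, the capillary term only supplies the vanishing weight $\sqrt{B_q}$ on one extra derivative. What must be rebuilt at $\sigma=0$ is the \emph{outer} energy. At $\sigma>0$ it is coercive through $\sigma x_+^2$, and $t_Y\mathfrak a_+$ is merely an integrable perturbation. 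Moreover, $-\partial_{n_+}p$ is not $O(q^2)$. The harmonic reconstruction $p+\frac12|v|^2=Du_c+\mathsf R$ with $D=q^2/(2a^2)$ gives $-\partial_{n_+}p=SK_+$. Here $S=q^2/(2a^2\Lambda)\to\infty$ and $K_+\to-\partial_nU_*$, which is positive by Hopf's lemma for the Green potential $U_*$ of the filled torus (Lemma~\ref{lem:zero-outer-taylor}). Your proposal has no source for this outer sign.

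Even with the sign, the Taylor-weighted outer energy is
$(\mathcal O^0_{s+1})^2=S\int\mu_+K_+|J^{s+1}\mathfrak h_+|^2\,d\theta+\langle\mathcal A_+J^{s+1}\mathfrak u_+,J^{s+1}\mathfrak u_+\rangle$.
Its reference forcing is $\sqrt S\,t_Yq=\Theta aq/\sqrt{2\Lambda}$. So by itself it controls the outer potential trace only at size $\mathcal Pa$, not the $\mathcal Pa^2$ required by the lower bootstrap, and repeating the capillary argument does not close. The paper needs three further ingredients:
\begin{itemize}
\item uniform-in-$a$ coercivity of the zero-flux outer DN form, with an integrable bound on its $Y$-derivative \eqref{eq:zero-outer-dn-coercivity}--\eqref{eq:zero-outer-dn-time-form};
\item coupling of the inner energy at index $s$ to the outer energy at index $s+1$, via separated-boundary kernels that see the inner state only in a fixed low norm (Lemma~\ref{lem:zero-separated-asymmetric}), closed by the weighted budget of Lemma~\ref{lem:zero-coupled-terminal-budget} with backward rate $9/4<3$;
\item recovery of $q^{-1}\|g\|\le\mathcal Pa^2$ by integrating the outer Bernoulli equation \eqref{eq:zero-outer-raw-recovery} backward from $g(Y_N)=0$.
\end{itemize}

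Separately, your gluing at $T$ checks only the traces. For the momentum equation and the energy balance to hold through $T$ (indeed for $\mathcal F$ to be locally finite), you need $\int^T\|\chi_tp\|_{L^1}\,dt<\infty$ and $\int^T|\mathcal F_t|(\R^3)\,dt<\infty$, even though $p(t,x)\to+\infty$ pointwise. This holds only because $\|p\|_{L^2}\le CS$ and $\int^TS\,dt=\frac12\int\Theta q\Lambda^{-1}\,dY\lesssim\int Y^{-3/2}\,dY<\infty$, i.e.\ because of the exact factor $|\log(T-t)|^{-3/2}$. The pressure profile likewise comes from the harmonic reconstruction of $p+\frac12|v|^2$, not from convergence of $\Phi/q$, which does not control $\partial_t\Phi$.
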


Figure~\ref{fig:filling-energy} summarizes the two filling continuations
and their energy loss.  

\begin{figure}[!htbp]
\centering
\begin{tikzpicture}[cbpfigure]
\path[use as bounding box] (0,0) rectangle (15.7,3.65);
\begin{scope}[shift={(.45,.53)}]
\draw[->,cbpInk] (0,0)--(7.25,0) node[right] {$t$};
\draw[->,cbpInk] (0,0)--(0,2.75) node[above] {$E(t)$};
\draw[cbpInk!40,dashed] (3.60,0)--(3.60,2.59);
\node[below] at (3.60,0) {$T$};
\draw[cbpFlow,very thick] (.10,2.15)--(3.60,2.15);
\filldraw[fill=white,draw=cbpFlow] (3.60,2.15) circle (2pt);
\draw[cbpFlow,very thick] (3.60,.60)--(6.93,.60);
\fill[cbpFlow] (3.60,.60) circle (2pt);
\node[above=4pt] at (1.83,2.15) {$E_c+\sigma|\Gamma_+^*|$};
\node[above=4pt] at (5.51,.60) {$\sigma|\Gamma_+^*|$};
\draw[->,cbpCore,thick] (4.21,2.04)--(4.21,.80);
\node[anchor=west] at (4.30,1.43) {$E_c$};
\end{scope}
\draw[cbpInk!20] (8.16,.26)--(8.16,3.42);
\node at (11.91,3.25) {Filled state at $T$: $(\Omega_*,0)$};
\node[anchor=west,align=left] at (8.68,2.35)
{$\sigma=0$: stationary fluid\\[4pt]
$\Omega_t=\Omega_*,\quad v=p=0$};
\draw[cbpInk!20] (8.57,1.62)--(15.38,1.62);
\node[anchor=west,align=left] at (8.68,.98)
{$\sigma>0$: classical capillary flow\\[4pt]
issued from the filled domain at rest};
\end{tikzpicture}
\caption{The specified filling continuations.  Here $E(t)$ denotes
the total kinetic plus surface energy.  The exact loss at $T$ is $E_c$,
supported on $\gamma_*$ in the local balance
\eqref{eq:main-weak-energy-defect}.  The lower energy level is zero
when $\sigma=0$.  For $\sigma>0$, the outgoing total energy stays
$\sigma|\Gamma_+^*|$ as the capillary flow evolves.}
\label{fig:filling-energy}
\end{figure}

\begin{remark}
Cavity collapse for free-boundary Euler flows with nonzero vorticity
is the subject of a companion work by the authors
\cite{LuoYangVortical}, currently in preparation.
\end{remark}

\subsubsection{Local limits and material deformation}

The singularity has both a local Eulerian profile and a precise
Lagrangian description. Magnifying distances by the cavity radius
and time by the remaining lifetime gives a straight shrinking cylinder
with an explicit exterior flow. Proposition~\ref{prop:universal-cylindrical-tangent}
identifies this limit; Theorem~\ref{thm:terminal-material-deformation}
describes the corresponding degeneration of the material map.
The full coordinate formulation of the cylindrical limit is given in
Subsection~\ref{subsec:inner-pressure-tangent}.

\begin{proposition}
\label{prop:universal-cylindrical-tangent}
Consider a canonical trajectory from
Theorem~\ref{thm:toroidal-curvature-singularity} or
Theorem~\ref{thm:zero-surface-tension-construction}.
Fix any point $x_*\in\gamma_*$ and any sequence $t_j\uparrow T$.
Centre space at $x_*$, rotate the tangent to $\gamma_*$ onto the
third coordinate axis, and divide lengths by $a(t_j)$.
Write $z=(z_1,z_2,z_3)$ for the resulting coordinates and set
\begin{equation*}
\tau=\frac{t-t_j}{T-t_j},\qquad
z_\perp=(z_1,z_2,0),\qquad r=|z_\perp|.
\end{equation*}
Multiply the velocity in this rotated frame by $(T-t_j)/a(t_j)$
and the pressure by $((T-t_j)/a(t_j))^2$.

The rescaled inner boundary converges locally to the shrinking cylinder
$r=\sqrt{1-\tau}$ in every fixed spatial graph norm, uniformly
on compact time intervals in $(-\infty,1)$.
In its exterior, the rescaled velocity and pressure converge to
\begin{equation}\label{eq:universal-cylindrical-tangent}
u_\infty(\tau,z)=-\frac{z_\perp}{2r^2},\qquad
\pi_\infty(\tau,z)
=\frac18\left(\frac1{1-\tau}-\frac1{r^2}\right).
\end{equation}
Every compact subset of 
\begin{equation*}
\{(\tau,z):\tau<1,\ r>\sqrt{1-\tau}\}
\end{equation*}
lies in the rescaled fluid region for all sufficiently large $j$.
On each such compact set the convergence is uniform together with
all spatial derivatives of any fixed order.
The limiting flow is independent of the point and sequence chosen.
It solves the incompressible Euler equations, transports the cylinder
boundary, and has zero boundary pressure.
\end{proposition}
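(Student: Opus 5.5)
We verify the limit directly, then show the rescaled data converge to it, using the asymptotics already established in Theorem~\ref{thm:toroidal-curvature-singularity}. First a direct check. The field $u_\infty=-z_\perp/(2r^2)$ is the gradient of $-\tfrac12\log r$, so it is harmonic, divergence free and irrotational. Since $u_\infty$ is steady, Bernoulli gives $\nabla\pi_\infty=-(u_\infty\cdot\nabla)u_\infty=-\nabla\tfrac12|u_\infty|^2=-\nabla\tfrac1{8r^2}$, matching \eqref{eq:universal-cylindrical-tangent} up to a function of $\tau$. That function is fixed by requiring zero pressure at $r=\sqrt{1-\tau}$. Finally, the cylinder radius $\sqrt{1-\tau}$ has normal speed $-1/(2\sqrt{1-\tau})=u_\infty\cdot e_r$ there, so the boundary is transported.

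The convergence argument runs in four steps.

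\emph{Scales.} From the collapse law \eqref{eq:final-collapse-law} and monotone convergence, $a(t)^2\sqrt{\log(\ell/a(t))}\asymp T-t$. For $\tau$ in a compact subset of $(-\infty,1)$ we have $T-t=(1-\tau)(T-t_j)$. The logarithm is slowly varying, since $\log(\ell/a(t))/\log(\ell/a(t_j))\to1$, so $a(t)/a(t_j)\to\sqrt{1-\tau}$ uniformly. Combined with \eqref{eq:asymptotically-tubular-final} and $R(t)\to R_*$, this gives graph convergence of the rescaled inner boundary. The curvature of $\gamma(t)$ becomes $a(t_j)/R\to0$, so the tube straightens. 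The centre drift $|c(t)-c(t_j)|/a(t_j)\to0$ needs a separate estimate: I would take it from the centred-parametrization estimates of Section~\ref{sec:prescribed-outer-geometry}, or else derive it from the smallness of the first harmonics.

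\emph{Velocity.} Near the cavity the potential is the capacity profile $\Phi\approx -q(t)\log(r/\ell)+\text{(smooth)}$, with the harmonic corrections controlled by $\eta$ and by the $O(r/R)$ toroidal terms. Here $q$ is the normalized flux, which by \eqref{eq:hly-normal-velocity-divergence} and the kinematic condition satisfies $q\approx -a\dot a$. Differentiating the collapse law gives $a\dot a\sim -a^2/(2(T-t))$. In rescaled variables the velocity is $\frac{T-t_j}{a(t_j)}\nabla\Phi$, which behaves like
\begin{equation*}
\frac{T-t_j}{a(t_j)}\cdot\frac{a(t)^2}{2(T-t)}\cdot\frac{-z_\perp}{a(t_j)\,r^2}.
\end{equation*}
This tends to $-z_\perp/(2r^2)$, because $a(t)^2/a(t_j)^2\to1-\tau$ cancels the factor $(T-t_j)/(T-t)=1/(1-\tau)$. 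Interior elliptic estimates for the harmonic rescaled potential upgrade local uniform convergence to convergence of all derivatives on compact sets away from the boundary. The error terms, namely the nonaxisymmetric part $\eta$ and the smooth outer part of size $O(1)$ in unscaled velocity, vanish after multiplying by $(T-t_j)/a(t_j)\to0$.

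\emph{Pressure.} I would use Bernoulli, $p=-\partial_t\Phi-\tfrac12|\nabla\Phi|^2$. The main obstacle is the term $\partial_t\Phi$. In the rescaled frame its leading part is $\dot q\log(r/\ell)$ plus time derivatives of slowly varying constants, and these grow like $\log(1/a)$. Such terms are spatially constant at leading order, so they must combine with the boundary condition $p=\sigma\mathcal H$ to cancel. After rescaling, $\sigma\mathcal H$ is of order $\sigma(T-t_j)^2/a^3\to0$, since $(T-t_j)^2\sim a^4\log(1/a)$.

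The plan is to avoid estimating $\partial_t\Phi$ directly. I would show that the rescaled pressure $P_j$ satisfies $-\Delta P_j=|\nabla^2\Phi_j|^2$ with vanishing boundary data on the rescaled cavity. Then $\nabla P_j=-\partial_\tau u_j-(u_j\cdot\nabla)u_j$. Here $\partial_\tau u_j$ converges because $\dot q/q$ is controlled by the collapse law; the quantity $\partial_\tau(q_j)$ tends to $0$, since $q_j\to1/2$ with log-slow variation. Hence $\nabla P_j\to-\nabla\tfrac1{8r^2}$ locally, and integrating from the boundary, where $P_j\to0$, yields $\pi_\infty$.

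Finally, independence of $x_*$ follows from axisymmetry, and independence of the sequence follows because the limit is explicit and unique.
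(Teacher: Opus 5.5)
Your check of the limiting flow, the scale ratios and the velocity step essentially follow the paper. Some supporting steps should be tightened:
\begin{itemize}
\item For the velocity, cite the established profile \eqref{eq:fixed-annulus-material-profile} and collar bound \eqref{eq:scaled-inner-collar-velocity}; these also handle the dipole $\alpha$ and the zero-flux remainder that your heuristic omits.
\item The centre drift and the all-order smallness of $\eta$ come from \eqref{eq:concentration-input-tails}.
\item Obtain $(T-t)q/a^2\to\tfrac12$ by integrating $t_Y=\Theta a^2/q$, not by differentiating the asymptotic collapse law. For the same reason, the pointwise bound on $\dot q/q$ needs \eqref{eq:raw-q-first-jet}, not the collapse law.
\end{itemize}

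The genuine gap is the pressure. You integrate $\nabla P_j=-\partial_\tau u_j-(u_j\cdot\nabla)u_j$ from the rescaled cavity boundary, so you need $\partial_\tau u_j\to0$ uniformly on regions reaching that boundary. You only show that the rescaled amplitude $\tau_jq(t)/a_j^2$ has vanishing $\tau$-derivative, and convergence of $u_j$ at each time gives no control of $\partial_\tau u_j$.

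At the cavity scale, $\partial_\tau u_j=\tau_j^2a_j^{-1}\partial_tv$ contains terms of order one: the dilation of the normalized field $(a/q)v$ in the fixed physical frame, and the change of $q/a$. These cancel only because $-z/|z|^2$ is homogeneous of degree $-1$. Apart from the $p_q$ term you computed, what remains involves the $Y$-derivatives of every correction (core motion, $\alpha$, $\eta$, $\nabla\Phi_0$, $\Theta-1$). These must be shown to vanish up to the boundary. Since $\partial_\tau u_j=\nabla\partial_\tau\Phi_j$, this is exactly the estimate of $\partial_t\Phi$ you said you would avoid.

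Your Poisson equation does not close the gap by itself. The function $\pi_\infty+c(\tau)\log\bigl(r/\sqrt{1-\tau}\bigr)$ has the same Laplacian and the same zero trace on the cylinder. The content of the pressure claim is precisely that this logarithmic mode disappears, and that requires information from the outer boundary.

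The paper proves this without any time derivative. Your equation $-\Delta p=|\nabla v|^2$ together with $\Delta\tfrac12|v|^2=|\nabla v|^2$ shows that $\mathsf H=p+\tfrac12|v|^2$ is harmonic, with traces $\sigma\mathcal H+\tfrac12|v|^2$ on both boundary components; see \eqref{eq:harmonic-pressure-reconstruction}. Write $\mathsf H=Du_c+\mathsf R$ with $D=q^2/(2a^2)$, as in \eqref{eq:pressure-profile-exact-decomposition}. Then:
\begin{itemize}
\item The maximum principle and the trace bounds give $\|\mathsf R\|_\infty=o(D)$ by \eqref{eq:pressure-profile-remainder-small}.
\item The boundary bound $|\nabla u_c|\le C/(a\Lambda)$, extended into the fluid by the maximum principle for directional derivatives and integrated radially, gives $u_c=1+O(1/\Lambda)$ on every fixed scaled annulus.
\end{itemize}
The second point is where the logarithmic mode is removed: its coefficient is $D/\Lambda=S=o(D)$. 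Hence $p/D=u_c+\mathsf R/D-|av/q|^2\to1-s^{-2}$ at each fixed time. The scale ratios give $(\tau_j/a_j)^2D(t)\to1/(8(1-\tau))$, which yields $\pi_\infty$.

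To keep your route instead, you would have to prove $\tau_j^2a_j^{-1}\partial_tv\to0$ up to the cavity. One way is Hadamard's formula for $\partial_t(\Lambda u_c)$ and $\partial_t\Phi_0$ plus the maximum principle, which is the same harmonic decomposition in another form.
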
 

Figure~\ref{fig:cylindrical-zoom} illustrates this magnification and
the limiting flow.

\begin{figure}[!htbp]
\centering
\begin{tikzpicture}[cbpfigure]
\path[use as bounding box] (0,0) rectangle (15.7,9.05);
\node at (7.85,8.80) {Choose a point on the collapse circle and magnify its neighbourhood};

\draw[cbpCore,line width=.9pt] (2.90,6.80) circle (1.32);
\node[anchor=east,text=cbpCore] at (1.58,7.64) {$\gamma_*$};
\draw[cbpleader] (1.60,7.58)--(1.92,7.69);
\fill[cbpInk] (4.22,6.80) circle (.047);
\node[anchor=west,fill=white,inner sep=2pt] at (4.87,6.18) {$x_*$};
\draw[cbpleader] (4.85,6.34)--(4.28,6.73);
\draw[cbpInk,dashed] (3.86,6.21) rectangle (4.62,7.39);
\draw[cbpflow] (4.22,7.00)--(4.22,8.25);
\node[anchor=west] at (4.40,8.08) {tangent at $x_*$};
\node at (2.85,5.13) {View in the plane of $\gamma_*$};

\draw[cbpflow,line width=1pt] (5.40,6.80)--(7.18,6.80);
\node at (6.28,7.21) {magnify};
\node[align=center,text width=6.8cm] at (11.05,7.15)
{Centre at $x_*$,\\[5pt]
rotate the tangent onto the $z_3$ axis,\\[3pt]
and divide lengths by $a(t_j)\longrightarrow0$.};
\node at (11.05,5.76) {$\displaystyle\tau=\frac{t-t_j}{T-t_j}$};
\node[align=center] at (11.05,5.17)
{$\tau=0$: time $t_j$,\qquad $\tau\uparrow1$: time $t\uparrow T$};

\node at (7.85,4.54) {The circle becomes the straight $z_3$-axis in the local limit};

\begin{scope}[yshift=.10cm]

\foreach \xx/\rr/\lab in {.10/.69/0,8.05/.345/{3/4}}{
\begin{scope}[shift={(\xx,0)}]
\node at (3.75,3.65) {$\tau=\lab$};
\fill[cbpFluid] (.15,.63) rectangle (7.30,3.37);
\fill[white] (.15,{2.0-\rr}) rectangle (7.30,{2.0+\rr});
\draw[cbpinner] (.15,{2.0-\rr})--(7.30,{2.0-\rr});
\draw[cbpinner] (.15,{2.0+\rr})--(7.30,{2.0+\rr});
\draw[cbpinner,dashed,line width=.55pt] (1.02,2.0) ellipse ({.36*\rr} and \rr);
\draw[cbpinner,dashed,line width=.55pt] (6.39,2.0) ellipse ({.36*\rr} and \rr);
\draw[->,dashed,cbpInk!60] (.35,2.0)--(6.95,2.0)
node[right=1pt,fill=white,inner sep=1pt] {$z_3$};
\draw[cbpradius,>={Stealth[length=1.35mm,width=1mm]}]
(2.08,2.0)--(2.08,{2.0+\rr});
\node[anchor=south,fill=white,inner sep=1.2pt]
at (2.08,{2.08+\rr}) {$\sqrt{1-\tau}$};
\foreach \xxv in {4.95,6.10}{
\draw[cbpflow] (\xxv,3.24)--(\xxv,2.80);
\draw[cbpflow] (\xxv,.76)--(\xxv,1.20);}
\node[anchor=west] at (5.45,3.12) {$u_\infty$};
\end{scope}}
\fill[cbpInk] (3.40,2.0) circle (1.4pt);
\node[anchor=north,inner sep=2pt,font=\footnotesize] at (3.40,1.94) {$(0,0,z_3)$};
\draw[cbpflow,draw=cbpInk] (3.40,2.0)--(4.10,3.22)
node[pos=.25,sloped,above=2pt,font=\footnotesize] {$z_\perp$};
\fill[cbpInk] (4.10,3.22) circle (1.4pt);
\node[anchor=south west,inner sep=2pt] at (4.10,3.22) {$z$};
\node at (7.85,.27)
{$|z_\perp|=\sqrt{1-\tau},\qquad
u_\infty(\tau,z)=-\dfrac{z_\perp}{2|z_\perp|^2}$};
\end{scope}
\end{tikzpicture}
\caption{The magnification and cylindrical spacetime limit in
Proposition~\ref{prop:universal-cylindrical-tangent}.  The upper drawing
selects $x_*$ on the collapse circle and its tangent direction.
The change of variables sends $x_*$ to the origin and the tangent to
the $z_3$-axis.  The lower drawings show two time slices of the infinite
straight cylinder: its radius $\sqrt{1-\tau}$ decreases from $1$ at
$\tau=0$ to $1/2$ at $\tau=3/4$.  Shading marks the fluid outside the
cylinder, and dashed ellipses mark transverse sections.
The vector $z_\perp=(z_1,z_2,0)$ joins $(0,0,z_3)$ to $z$.
The flow is radially inward with zero axial velocity.}
\label{fig:cylindrical-zoom}
\end{figure}
%\FloatBarrier

\begin{theorem} 
\label{thm:terminal-material-deformation}
For the canonical trajectories constructed in
Theorems~\ref{thm:toroidal-curvature-singularity}
and~\ref{thm:zero-surface-tension-construction}, let $\mathcal X_t$ be
the material flow defined in Definition~\ref{def:canonical-terminal-objects}.
The flow has a continuous terminal limit
$\mathcal X_T:\overline{\Omega_0}\to\overline{\Omega_*}$
with the following properties.
\begin{enumerate}
\item The convergence rate is
\begin{equation}\label{eq:main-lagrangian-terminal-rate}
\|\mathcal X_T-\mathcal X_t\|_{C^0(\overline{\Omega_0})}
=a(t)(1+o(1)).
\end{equation}
Writing
\begin{equation*}
\omega_T(\delta):=
\sup_{\xi,\zeta\in\overline{\Omega_0},\, |\xi-\zeta|\le\delta}
|\mathcal X_T(\xi)-\mathcal X_T(\zeta)|,
\end{equation*}
there are $c,C,\delta_0>0$ such that
\begin{equation}\label{eq:main-optimal-lagrangian-modulus}
c\sqrt\delta\le\omega_T(\delta)\le C\sqrt\delta,
\qquad 0<\delta<\delta_0.
\end{equation}
Thus the H\"older exponent $1/2$ is optimal.

\item The terminal map satisfies
\begin{equation}\label{eq:main-terminal-exact-fibers}
\begin{gathered}
\mathcal X_T^{-1}(\gamma_*)=\Gamma_-(0),\qquad
\det D_\xi\mathcal X_T=1\quad\hbox{in }\Omega_0,\\
\mathcal X_T:\Omega_0\longrightarrow\Omega_*\setminus\gamma_*
\quad\hbox{is a }C^\infty\hbox{ diffeomorphism}.
\end{gathered}
\end{equation}
Its restriction $\Gamma_+(0)\to\Gamma_+^*$ is also a
$C^\infty$ diffeomorphism.  Its nonsingleton fibers are exactly
the initial meridians of the inner boundary at fixed cylindrical azimuth.
The map $\mathcal X_T$ induces a homeomorphism
\begin{equation}\label{eq:main-terminal-quotient}
\overline{\Omega_0}/\!\sim\ \simeq\ \overline{\Omega_*}.
\end{equation}
\item The critical material regularity is
\begin{equation}\label{eq:main-critical-material-regularity} 
\mathcal X_t\longrightarrow\mathcal X_T
\quad\hbox{strongly in }W^{1,p}(\Omega_0),\quad 1\le p<2,\qquad
D\mathcal X_T\in L^{2,\infty}(\Omega_0)\setminus L^2(\Omega_0). 
\end{equation}
The inverse on $\Omega_*\setminus\gamma_*$, extended boundedly across
$\gamma_*$, belongs to $W^{1,p}(\Omega_*)$ for every $1\le p<2$, and
its derivative belongs to $L^{2,\infty}(\Omega_*)\setminus L^2(\Omega_*)$.
Put $d(\xi)=\operatorname{dist}(\xi,\Gamma_-(0))$, and
write $s_1(A)\ge s_2(A)\ge s_3(A)$ for the Euclidean singular values
of a $3\times3$ matrix $A$.  There is a fixed $d_0>0$ such that, for
$0<d=d(\xi)<d_0$,
\begin{equation}\label{eq:main-anisotropic-material-deformation} 
s_1(D\mathcal X_t)\asymp(a(t)^2+d)^{-1/2},\qquad
s_2(D\mathcal X_t)\asymp1,\qquad
s_3(D\mathcal X_t)\asymp(a(t)^2+d)^{1/2}, 
\end{equation}
uniformly at late times, including $t=T$ with $a(T)=0$. 
\end{enumerate}
\end{theorem}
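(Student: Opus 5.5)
The plan is to work in the meridional half-plane, where the flow is purely meridional and each particle keeps its azimuth. I will reduce everything to the near-cavity flow, which by the construction behind Theorem~\ref{thm:toroidal-curvature-singularity} is asymptotically the logarithmic sink $v\approx -q(t)\,\nabla\log r$ about $\gamma(t)$. The far flow is smooth and tends to zero in $C^k$ on compacta away from $\gamma_*$, by part~\ref{item:main-profiles}(c).

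\textbf{Step 1: radial ODE for particles near the cavity.}
Mass conservation in the tube gives $\pi r^2 - \pi a(t)^2 \approx \text{const}$ along a particle, up to errors controlled by $\|\eta\|_{H^2}=o(1)$ and $R(t)\to R_*$. Precisely, the area enclosed between $\Gamma_-(t)$ and the material meridional curve through the particle is conserved, weighted by $2\pi\rho$, since $\det D\mathcal X_t=1$. A particle at initial distance $d$ from $\Gamma_-(0)$ then satisfies
\[
r(t)^2 \approx a(t)^2 + c\,d .
\]
In addition, the meridional angle $\theta$ converges, because its tangential velocity is $o(a^{-1})$ times lower-order terms.

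\textbf{Step 2: the terminal map and its basic properties.}
From Step 1, $|\mathcal X_t(\xi)-\mathcal X_T(\xi)|\le a(t)(1+o(1))$, with equality attained on $\Gamma_-(0)$. Since $\Gamma_-(t)$ shrinks onto $\gamma(t)$ and $\gamma(t)\to\gamma_*$, this gives \eqref{eq:main-lagrangian-terminal-rate}. Here I need the rate $|\gamma(t)-\gamma_*|=o(a)$, which I will get by integrating $\dot R$, bounded via the first harmonic cancellation. Away from $\Gamma_-(0)$ the velocity is bounded, so $\mathcal X_T$ is a smooth volume-preserving diffeomorphism onto $\Omega_*\setminus\gamma_*$; local $C^k$ convergence gives this. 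Injectivity follows from uniqueness of ODE flows, and surjectivity follows from filling: the volumes agree because $|\Omega_0|=|\Omega_*|$ by conservation and $|N_a|\to0$. Since $\mathcal X_T(\Gamma_-(0))=\gamma_*$ and azimuth is preserved, the nonsingleton fibers are exactly the $M_\phi$. The quotient homeomorphism \eqref{eq:main-terminal-quotient} follows from continuity, surjectivity and compactness.

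\textbf{Step 3: singular values and the Hölder modulus.}
Differentiate $r_T=\sqrt{a^2+cd}$ in $d$; this is the main obstacle. One must show that the true Jacobian, not just the particle distances, has the anisotropic structure
\[
s_1\asymp (a^2+d)^{-1/2},\qquad s_3\asymp (a^2+d)^{1/2},\qquad s_2\asymp 1 .
\]
\begin{itemize}
\item The stretching direction $s_1$ is tangential along the meridian, since a circle of radius $\asymp 1$ maps to one of radius $r$.
\item The compression $s_3$ is radial.
\item The middle value $s_2$ is azimuthal.
\end{itemize}
The first thing I would try is to linearize the ODE in polar coordinates $(r,\theta)$ around the sink. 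I would use the spectral smallness $\|\eta\|_{H^{7/2}}=o(a)$ so that the $\theta$-dependence of the velocity contributes only integrable errors of size $\int\|\nabla v\|\,o(1)$. Then I would apply a Grönwall argument in the variable $\log r$. The rate \eqref{eq:main-gradient-blowup-rate} makes this integral borderline, so the errors must be carried as relative $o(1)$ corrections per logarithmic scale.

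\textbf{Step 4: consequences of the singular values.}
The modulus \eqref{eq:main-optimal-lagrangian-modulus} follows from $|D\mathcal X_T|\lesssim d^{-1/2}$ integrated along paths. The lower bound comes from two points on the same meridian near $\Gamma_-(0)$, at distance $\delta$, which map to points at distance $\asymp\sqrt\delta$ apart. For the integrability, $|\{d<s\}|\asymp s$ and $|D\mathcal X_T|\asymp d^{-1/2}$ give weak $L^2$ but not $L^2$. Uniform domination by $(a^2+d)^{-1/2}\in L^p$ for $p<2$, together with pointwise convergence, then gives the strong $W^{1,p}$ convergence via dominated convergence. The inverse satisfies $|D\mathcal X_T^{-1}|=1/s_3\asymp r^{-1}$ near $\gamma_*$. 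This lies in $L^{2,\infty}\setminus L^2$ in 3D near a curve, and since it is bounded and has no singular part across $\gamma_*$ (a set of zero capacity for $p<2$), it belongs to $W^{1,p}(\Omega_*)$.
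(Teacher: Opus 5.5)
Your heuristic picture (near-core sink, $r^2\approx a^2+\kappa d$, square-root normal form, anisotropic singular values) is what the paper establishes, and your Step~4 is essentially its final step once the singular values are known. The gap is that Steps~1 and~3 carry all the weight but are not arguments, and the mechanisms you name would fail as stated.

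\textbf{Step~1.} Conservation of the weighted area between $\Gamma_-(t)$ and a material loop gives only an angular average of $r^2-b^2$, where $b=a(1+\eta)$. It does not control the shape of the loop, so it yields no pointwise law. In fact the coefficient is not constant: it is $\kappa_*(\theta)=2\rho_0j_0/(R_*\vartheta_*')$. More seriously, the errors you allow (from $\|\eta\|_{H^2}=o(1)$, from $R\to R_*$, from replacing $b$ by $a$) are absolute errors that do not shrink with $d$. So $r^2-a^2\approx cd$ says nothing for particles with $d$ below that error level, which is exactly the regime that determines $\mathcal X_T$ near $\Gamma_-(0)$.

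What you need is a quantity whose evolution has errors proportional to itself, with rates integrable in $Y$. The paper obtains this by subtracting the kinematic boundary equation from the particle ODE. With $\mathscr I=r^2-b(Y,\vartheta)^2$, $b$ evaluated at the particle's current angle, one has exactly $\mathscr I_Y=\mathscr I\,\mathcal B$ and $\vartheta_Y=\mathcal A$. Here $\|\mathcal A\|+\|\mathcal B\|\le\mathcal P(Y)a$, by divided differences of the $O(\mathcal P(Y)a^2)$ and $O(\mathcal P(Y)a)$ errors in $r_Y$ and $\vartheta_Y$ in the scaled collar. The same summability is what makes $\vartheta$ converge; your bound $v_\vartheta=o(q/a)$ only gives $\vartheta_Y=o(1)$.

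This structure holds only while $\mathscr I\le c_ea^2$. Since $u=\mathscr I/a^2$ grows like $e^{2Y}$, each particle exits the collar at a time with $a_e^2\asymp d$. After exit two further ingredients are needed:
\begin{enumerate}
\item the all-scale bound $|D^jv|\le C_jq\,r^{-j-1}$, which with $r\gtrsim a_e$ gives $\int|\nabla v|\,dt\lesssim a_e^{-2}\int a^2\,dY=O(1)$ along the rest of the trajectory;
\item the rescaled profile $-z/|z|^2$, which identifies $r_T^2/a_e^2\to c_e$ and hence the square-root normal form.
\end{enumerate}

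\textbf{Step~2.} You silently assume that interior particles end neither on $\gamma_*$ nor on $\Gamma_+^*$. The phrase ``away from $\Gamma_-(0)$ the velocity is bounded'' conflates Lagrangian distance to $\Gamma_-(0)$ with Eulerian distance to $\gamma_*$. The paper proves this directly. For the core, it shows $d'\ge q\,d/(2a^2)>0$ for the distance to $\Gamma_-(t)$ in a collar of width $ca$, contradicting $d=o(a)$. For the outer boundary, it uses $|d_+'|\le Cq\,d_+$ with $q\in L^1$. Also, equal volumes give only a dense image; surjectivity needs continuity of $\mathcal X_T$ on the compact closure.

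\textbf{Step~3.} ``Relative $o(1)$ corrections per logarithmic scale'' do not suffice. Since $\int\|\nabla v\|\,dt\sim Y$ diverges, such errors produce Gr\"onwall factors $e^{o(Y)}$, which are unbounded and destroy $\asymp$; the errors must be summable in $Y$. Moreover, $\|\eta\|_{H^{7/2}}=o(a)$ controls the geometry, not the deviation of $v$ from the sink. What enters is the scaled collar estimate $(a/q)v=-s^{-1}e_r+O_{C^1}(\mathcal P(Y)a)$.

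You also have the directions reversed. In Lagrangian terms the normal direction is stretched: $\partial_dr_T\asymp d^{-1/2}$, giving $s_1$. The meridional tangential direction is compressed: a loop of length $\asymp1$ goes to one of radius $\asymp\sqrt d$, giving $s_3$.

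A linearization with Gr\"onwall also yields only upper bounds. The paper proves just two upper bounds:
\begin{enumerate}
\item $|D\mathcal X_t|\lesssim(a^2+d)^{-1/2}$, from the crude bound $C/a$ before exit and a bounded flow derivative after exit;
\item $|\partial_\theta\mathcal X_t|\lesssim(a^2+d)^{1/2}$, because $\partial_\theta\log\mathscr I$ and $\partial_\theta\vartheta$ solve a linear system with integrable coefficients.
\end{enumerate}
It then gets the lower bounds from $\det D\mathcal X_t=1$ and the azimuthal stretch $\asymp1$.

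\textbf{Rate and H\"older bound.} These need none of the above. From $\int_t^T\|v\|_\infty\,dt'=a(t)(1+o(1))$ and $\|D\mathcal X_t\|\le C/a(t)$ you get $\omega_T(\delta)\le Ca+Ca^{-1}\delta$, optimized at $a=\sqrt\delta$. Your integration of $d^{-1/2}$ along paths also works, but only for paths that detour to distance $\sim\delta$ from $\Gamma_-(0)$.
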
 

Figure~\ref{fig:terminal-material-map} depicts the terminal fibers
and the square-root normal deformation.

\begin{figure}[!htbp]
\centering
\begin{tikzpicture}[cbpfigure]
\path[use as bounding box] (0,0) rectangle (15.7,4.10);
\node at (1.68,3.78) {Initial section};
\filldraw[fill=cbpFluid,cbpwall] (1.68,2.09) circle (1.10);
\filldraw[fill=white,cbpinner] (1.68,2.09) circle (.52);
\foreach \ang in {10,82,154,226,298}{
\fill[cbpCore] (1.68,2.09)++(\ang:.52) circle (1.8pt);}
\node at (1.68,2.09) {$M_{\phi_0}$};
\draw[|-|,line width=.4pt] (2.20,2.09)--(2.53,2.09);
\node[above=3pt] at (2.36,2.09) {$d$};
\fill[cbpInk] (2.53,2.09) circle (1.3pt);
\node[below=3pt] at (2.53,2.09) {$\xi$};
\draw[->,cbpInk,thick] (3.12,2.09)--(4.97,2.09);
\node[above=5pt] at (4.05,2.09) {$\mathcal X_T$};
\node[below=5pt,align=center] at (4.05,2.09)
{$M_{\phi_0}$\\$\longmapsto x_{\phi_0}$};
\node at (6.35,3.78) {Terminal section};
\filldraw[fill=cbpFluid,cbpwall] (6.35,2.09) circle (1.00);
\fill[cbpCore] (6.35,2.09) circle (2.3pt);
\node[above right=3pt] at (6.35,2.09) {$x_{\phi_0}$};
\draw[cbpleader] (1.13,1.14)--(1.13,.96)--(1.68,.96);
\node[anchor=north,inner sep=2pt] at (1.68,.93) {$\Gamma_+(0)$};
\draw[cbpleader] (5.84,1.23)--(5.40,.96)--(5.60,.96);
\node[anchor=north,inner sep=2pt] at (5.60,.93) {$\Gamma_+^*$};
\node at (7.02,.72) {$x_{\phi_0}\in\gamma_*$};
\fill[cbpInk] (6.97,2.09) circle (1.3pt);
\draw[cbpleader] (6.92,2.04)--(6.64,1.71);
\node at (6.35,1.55) {$\mathcal X_T(\xi)$};
\draw[cbpInk!20] (8.06,.53)--(8.06,3.62);
\begin{scope}[shift={(9.02,.81)}]
\draw[->,cbpInk] (0,0)--(6.05,0) node[right] {$d$};
\draw[->,cbpInk] (0,0)--(0,2.46) node[above] {$r_T$};
\draw[cbpCore,thick,domain=0:2.34,samples=90,variable=\s]
plot ({\s*\s},{.92*\s});
\node at (3.86,2.52) {$r_T\asymp\sqrt d$};
\end{scope}
\node at (7.85,.15)
{$s_1\asymp d^{-1/2},\qquad s_2\asymp1,\qquad
s_3\asymp d^{1/2}\qquad(t=T)$};
\end{tikzpicture}
\caption{The terminal material map at a fixed azimuth $\phi_0$.
The initial inner meridian $M_{\phi_0}$ is the fiber over
$x_{\phi_0}=(R_*\cos\phi_0,R_*\sin\phi_0,0)$.
The fluid interior maps diffeomorphically onto
$\Omega_*\setminus\gamma_*$, and $\Gamma_+(0)$ maps to $\Gamma_+^*$.
The initial section and particle positions are schematic.
For a nearby initial point $\xi$,
$d=\operatorname{dist}(\xi,\Gamma_-(0))$ and
$r_T=\operatorname{dist}(\mathcal X_T(\xi),\gamma_*)$.
The curve illustrates the square-root law.  The singular values
$s_i=s_i(D_\xi\mathcal X_T(\xi))$, ordered as $s_1\ge s_2\ge s_3$,
describe the accompanying normal stretch and meridional compression.}
\label{fig:terminal-material-map}
\end{figure} 

\subsection{Relation to previous work}

The construction connects the potential-flow theory of cavity collapse
with the analysis of singularities in the free-boundary Euler equations.
We first discuss the closest collapse models, then place the result
among other mechanisms of breakdown and the local theories used here.

\subsubsection{Cavity collapse and potential flows}

Classical radial models describe spherical cavity collapse.
Rayleigh calculated the collapse of a spherical cavity in an infinite
incompressible liquid under constant pressure at infinity
\cite{Rayleigh1917}.  Plesset extended the radial equation to account for
time-dependent ambient pressure and a bubble boundary condition
incorporating vapor pressure and surface tension \cite{Plesset1949}.
For symmetric axisymmetric bubble pinch-off, the asymptotic
potential-flow analysis of Gordillo--Sevilla--Rodr\'iguez-Rodr\'iguez--Mart\'inez-Baz\'an
\cite{Gordillo2005} yields the logarithmically corrected relation
$T-t\asymp a^2\sqrt{\log(1/a)}$, where $a$ is the dimensionless
minimum neck radius and $T$ is the pinch-off time.
Eggers--Fontelos--Leppinen--Snoeijer's slender-body theory
\cite{Eggers2007} evolves both the radial and axial scales and predicts
a local radial exponent approaching $1/2$ at order
$|\log(T-t)|^{-1/2}$.
Best's boundary-integral computations describe toroidal bubbles formed
after jet penetration \cite{Best1993}.

Planar potential flows provide exact solutions and numerical models
for singular boundary motion.
Liu--Pego \cite{LiuPego2025} analyze explicit ideal potential flows and
investigate a corner formation scenario through scaling and time reversal
of numerical flows expanding from smooth approximations to a wedge.
Zhuravleva--Zubarev--Zubareva--Karabut \cite{ZhuravlevaEtAl2020}
construct exact planar cavity flows with zero gravity and zero surface tension
through a Hopf equation for the inverse complex velocity.
The ballistic cavity examples analyzed in \cite[Section~6]{LiuPego2025}
occupy unbounded fluid domains and satisfy $p<0$ in the fluid and
$\partial_n p=0$ on the free boundary.
Chen--Walsh--Wheeler \cite{ChenWalshWheeler2026} construct
self-similarly imploding hollow vortices by desingularizing
nondegenerate configurations of collapsing point vortices. Their flows
are planar exterior flows with nonzero circulation around the cavities
and a gas-pressure law on their boundaries.

\subsubsection{Free-boundary singularities}

Self-contact is a mechanism for free-boundary singularity formation.  Castro--C\'ordoba--Fefferman--Gancedo--G\'omez-Serrano constructed
two-dimensional irrotational flows forming splash and splat singularities
at zero surface tension \cite{CastroAnn} and at positive surface tension
\cite{CastroST}.  Coutand--Shkoller constructed three-dimensional splash
and splat singularities at zero surface tension, allowing vorticity
\cite{CSsplash}. The authors \cite{LuoYangMHD} construct finite-time self-intersection
singularities for the three-dimensional free-boundary ideal MHD
equations with positive surface tension, with curvature controlled up to
contact.  The construction also yields capillary Euler
self-intersection when the magnetic field vanishes.

For two-dimensional irrotational water waves at zero surface tension,
C\'ordoba--Enciso--Grubic \cite{CordobaEncisoGrubic2026} prove
finite-time breakdown from initially cornered interfaces, through
loss of weighted Sobolev regularity or self-intersection away from
the corner.  Agrawal \cite{AgrawalGravity} proves that a high-order energy becomes
unbounded in finite time for zero-gravity water waves in a
bounded planar domain with initially angle-crested interfaces.
Coutand \cite{Coutand2019} proves finite-time breakdown for a planar
two-phase flow in a fixed container, with gravity, surface tension and
a heavier inner fluid, under symmetry and smallness assumptions on the
initial geometry, velocities and outer fluid vorticity.

In the solutions constructed here, an entire boundary component
degenerates, with a quantitative curvature blow-up law, while the other
component has a smooth limit. The initial boundaries are smooth, and
both remain embedded and disjoint for every $t<T$.

\subsubsection{Singularities on fixed domains}

For Euler flows on fixed domains, Elgindi constructed finite-time
singularities on $\R^3$ for axisymmetric flows without swirl and with
$C^{1,\alpha}$ velocity, for a sufficiently small positive $\alpha$
\cite{Elgindi2021}. Elgindi--Pasqualotto construct finite-energy
axisymmetric flows with swirl whose initial velocity is $C^{1,\alpha}$
for some $\alpha>0$ and smooth away from a circle
\cite[Theorem~1.3]{ElgindiPasqualotto2023}. Their construction uses
amplification driven by swirl near that circle. Chen--Hou
\cite{ChenHouPNAS2025} give an overview of their computer-assisted
construction of nearly self-similar blow-up from smooth finite-energy
data for axisymmetric Euler flows with swirl in an axially periodic
cylinder with an impermeable wall. Chen constructs asymptotically
self-similar blow-up on $\R^3$ for axisymmetric flows without swirl,
with compactly supported $C^\alpha$ initial vorticity and
$C^{1,\alpha}\cap L^2$ initial velocity for every $0<\alpha<1/3$
\cite[Theorem~1.2]{ChenThreshold2026}.
These constructions develop singular vorticity in a fixed fluid
domain.

The recent manuscript of OpenAI \cite[Theorem~1.1]{OpenAIEuler2026}
states finite-time breakdown on $\R^3$ from smooth, compactly supported
initial velocity; Zheng \cite{ZhengEuler2026} gives an expanded
exposition of that construction. For smooth whole-space Euler flows,
the Beale--Kato--Majda criterion \cite{BKM} requires the time integral
of the vorticity maximum norm to diverge at breakdown. In the present
free-boundary problem, vorticity vanishes identically and the singularity
is carried by the evolving geometry and the concentrating potential flow.

\subsubsection{Local theory and continuation}

For free-boundary Euler flows, local existence and a priori estimates
have been developed in several settings.  Iguchi and Ogawa--Tani established early local
results at finite regularity for planar capillary--gravity flows
\cite{Iguchi,OgawaTani}.  Schweizer proved three-dimensional short-time
existence for small periodic boundary graphs with initial vorticity
vanishing on the free boundary \cite{Schweizer}.
Coutand--Shkoller established Sobolev existence theory at positive
surface tension, with uniqueness for sufficiently regular data, and
local well-posedness under the Taylor sign at zero surface tension
\cite{CS}.  Shatah--Zeng obtained geometric a priori estimates for
the one-phase problem \cite{SZ1} and for capillary interfaces between
two fluids \cite{SZ2}, and developed local well-posedness theory
allowing vorticity and multiply connected fluid domains \cite{SZ3}.
Disconzi--Kukavica \cite{DK} derive Lagrangian a priori
estimates at the $H^3$ bulk level with additional boundary trace
regularity.  Disconzi--Kukavica--Tuffaha \cite{DKT} obtain
$H^{5/2+\varepsilon}$ bulk estimates for $0<\varepsilon<1/2$,
including general bounded domains, with additional regularity of the
initial boundary and normal velocity.
Disconzi--Ebin \cite{DE} prove local well-posedness at positive
surface tension. 

For irrotational water waves, the Dirichlet--Neumann formulation and
paradifferential symmetrization reduce the free-surface equations to
a quasilinear dispersive system \cite{ABZ,dPN}. Lannes
\cite{Lannes} gives a systematic treatment of the Dirichlet--Neumann
operator and the water-wave equations.
Ming--Zhang establish local well-posedness for gravity--capillary
waves over a smooth bottom and prove the zero tension limit over a
flat bottom \cite{MingZhang}.  Their linearized estimates are uniform
in surface tension under a positive L\'evy coefficient.

Continuation criteria connect finite-time breakdown to the loss of
geometric or analytic control.  At zero surface tension, Ginsberg's criterion combines
boundary geometry and the Taylor sign with vorticity, boundary
velocity derivatives and a boundary pressure derivative norm
\cite{Ginsberg}.  Luo--Zhou obtain a criterion based on vorticity, coupled
to boundary control norms and graph nondegeneracy for a horizontally
periodic capillary surface above a fixed bottom \cite{LuoZhou}.
For general bounded capillary domains, Hao and the authors give four
breakdown alternatives involving boundary self-intersection, curvature
and boundary regularity, normal velocity, and the time integral of the
interior velocity gradient supremum
\cite[Theorem~1.1 and Remark~1.2]{HLY}.

\subsection{Mechanism and proof strategy}

The leading motion is governed by a logarithmic potential in the two
directions normal to the collapse circle. Its gradient is proportional
to the inverse distance from the circle, so its kinetic energy is
distributed over a logarithmic range of radii. Energy conservation
therefore fixes the amplitude of this potential and, through the
kinematic condition, the collapse rate. The leading singular term is
local; the outer geometry enters through the regular part of the Green
function. Making this separation quantitative is what permits the
prescription of an arbitrary outer meridional domain.

Two difficulties remain beyond this leading description. The inner
boundary shrinks while the outer boundary stays at a fixed scale, so
the boundary equations must be estimated in different coordinates.
Moreover, at positive surface tension the capillary coefficient in
the rescaled equations tends to zero. The proof requires coercive
estimates in the natural weighted norms, with a leading backward
growth rate and fixed derivative losses independent of the Sobolev
index. The constants in these estimates may depend on that index.

We start from smooth terminal states with circular cavities of radii
$a_N\downarrow0$ and solve backwards in $Y=\log(\ell/a)$ on
$[Y_0,Y_N]$, where $Y_N=\log(\ell/a_N)$.
The central task is to choose $Y_0$ independently of $N$ and of the
fixed regularity level at which compactness is taken.

\medskip
\noindent\emph{Step 1. Separate the flux and the geometric perturbations.}
The potential splits orthogonally in Dirichlet energy into a
flux-carrying condenser field and a zero-flux remainder.
For the normalized inner flux
$q=(2\pi L)^{-1}\int_{\Gamma_-}v\cdot n\,dS$, where $L=2\pi R$,
the energy identity determines $q$, and the capacity expansion gives
$q^2Y\to E_c/(\pi L_*)$. Differentiation yields the flux equation;
energy conservation also recovers the constant Bernoulli equation
between the boundaries.

The mean and first harmonic of the inner graph determine the radius
and centre. The first potential harmonic remains a separate dynamical
unknown. With this choice, the reference forcing of the scaled higher
modes starts at order $a^3$, up to polynomial factors in $Y$.
The zero-flux constraint removes the logarithmic monopole; its
finite-rank variations must still be retained in the nonlinear estimates.

\medskip
\noindent\emph{Step 2. Control the energy as the capillary coefficient vanishes.}
For fixed $\sigma>0$, the effective coefficient $B_q=\sigma a/q^2$
tends to zero. The principal inner energy is uniformly equivalent to
\begin{equation*}
\|\mathfrak h\|_{H^s}
+\sqrt{B_q}\,\|\mathfrak h\|_{H^{s+1}}
+\|\mathfrak\pi\|_{H^{s+1/2}},
\end{equation*}
where $(\mathfrak h,\mathfrak\pi)$ are the normalized inner variables
constructed in Section~\ref{sec:nonlinear-energy}. Thus the extra
derivative of $\mathfrak h$ carries a vanishing capillary weight.
Two cancellations are essential. The inner Taylor
coefficient, reconstructed from the Euler pressure, and the logarithmic
flux derivative contain the same capacity correction, which cancels
at leading logarithmic order in their coupling. In normal variables
and surface half-densities, the principal Dirichlet--Neumann and
curvature symbols are powers of the same arclength frequency.
Commuting with powers of that frequency cancels the leading capillary
commutator and gives a leading backward growth rate of $1$, independent
of the Sobolev index. Zero-flux localization and a finite symbol
expansion control the remainders. The fixed derivative losses and the
coefficient regularity needed for these estimates are specified in
Section~\ref{sec:nonlinear-energy}.
The zero-flux localization in
Proposition~\ref{prop:charge-free-localization} applies to planar
divergence-form equations with smooth uniformly positive scalar
weights, homogeneous outer Dirichlet data, and a shrinking nearly
circular hole uniformly separated from the outer boundary, under
the stated uniform coefficient and geometric bounds.

\medskip
\noindent\emph{Step 3. Close the Taylor energies at zero surface tension.}
At $\sigma=0$, both Taylor coefficients supply coercivity; the outer
one is controlled by the limiting Green potential and Hopf's lemma.
Minimizing the outer Dirichlet--Neumann form over the constant inner
trace enforces zero flux. We couple the inner energy at index $s$
to the outer energy at index $s+1$. The separated-boundary kernels
allow high output derivatives on the outer boundary while using only
a fixed lower norm of the inner input. The original outer Bernoulli
equation gives the additional decay of the outer potential trace
needed to close this coupling.

\medskip
\noindent\emph{Step 4. Obtain a common interval and pass to the collapsing limit.}
Coupling the inner and outer energies with suitable powers of the
radius controls their mutual forcing. The leading backward growth
rate is smaller than the decay exponent of the $a^3$ inner forcing.
This gives bounds independent of $Y_N$ with the same $Y_0$ at every
fixed regularity. Compactness on nondegenerate intervals yields a
smooth Euler trajectory, and the capacity law gives a finite collapse
time. For prescribed outer geometry, retaining both centre coordinates
and both first potential harmonics preserves these forcing orders
and the distribution of derivatives between the boundaries.

\medskip
\noindent\emph{Step 5. Identify the singular profiles and the terminal material map.}
For the canonical trajectories, comparison with a truncated logarithmic
potential determines the limiting line measure and its radial
distribution. In particular, a tube whose radius is a fixed multiple
of the cavity radius contains vanishing limiting energy: the
concentration occurs over a logarithmic range of scales. The Green
profile and the scaled collar estimates give the bulk pressure
asymptotics and the cylindrical tangent, respectively.

Subtracting boundary motion from the particle equation gives an
equation with integrable coefficients for the difference of squared
particle and boundary radii. The collar exit scale and volume
preservation then yield the square-root normal form and the sharp
derivative bounds for the terminal material map.

Finally, integrable momentum and energy fluxes permit concatenation
with a classical flow on the filled domain. Under the hypotheses of
Proposition~\ref{prop:weak-capillary-continuation}(i), the incoming
total-energy measure minus the outgoing one determines the local
total-energy defect at the joining time. For the canonical continuations, the capillary-energy traces agree
and the outgoing kinetic-energy trace vanishes. The defect is
therefore the tensor product of the unit Dirac mass at the collapse
time with the concentrated kinetic-energy measure on the collapse
circle.

\medskip
\noindent\textbf{Organization of the paper.}
Sections~\ref{sec:scaled-DN}--\ref{sec:nonlinear-energy} develop the
two-scale elliptic estimates, derive the exact boundary equations, and
close the nonlinear energy estimates at positive surface tension.
Section~\ref{sec:terminal-existence} constructs the canonical collapsing
trajectory and proves its singular profiles, material map properties and
filling continuation.  Section~\ref{sec:zero-surface-tension} supplies the
Taylor energies needed at zero surface tension and verifies the common
inputs to these limiting arguments.  Finally,
Section~\ref{sec:prescribed-outer-geometry} proves
Theorem~\ref{thm:prescribed-outer-geometry} for a prescribed outer section
and core point by retaining both centre coordinates and both first
potential harmonics.

\section{Two-scale Dirichlet--Neumann analysis}
\label{sec:scaled-DN}

For a single-valued axisymmetric velocity potential, the Laplace equation
reduces to a planar divergence-form equation with a smooth weight.
Using scaled coordinates near the inner boundary and physical coordinates
near the outer boundary, we obtain uniform estimates for the layer
potentials, the Dirichlet--Neumann operators and their first two shape
variations.  We then impose the zero-flux constraint by a Schur complement
and derive the capacity and flux expansions used in the boundary dynamics.

We write $\mathcal P_s(Y)$ for a quantity bounded in absolute
value by $C_s(1+Y)^{M_s}$, where $C_s,M_s\ge0$ are constants.  This
quantity may change from one occurrence to the next and is chosen
nonnegative in upper bounds.  We suppress the argument $Y$ when it is
clear and omit the subscript at fixed lower regularity indices.

\subsection{Weighted flux and layer-potential estimates}

We use $D_\theta=-i\partial_\theta$.

Let $\mathscr D\subset\{(\rho,z):\rho>0\}$ be the meridional section of an axisymmetric three-dimensional fluid domain.
An axisymmetric harmonic potential $\Phi=\Phi(\rho,z)$ satisfies
\begin{equation}\label{eq:axisym-elliptic}
\mathcal L\Phi:=\frac1\rho\partial_\rho(\rho\partial_\rho\Phi)+\partial_z^2\Phi=0
\qquad\text{in }\mathscr D.
\end{equation}
Suppose that $\partial\mathscr D=\mathscr C_-\cup\mathscr C_+$ has two connected components, parametrized by
\begin{equation*}
X_\pm(\theta)=(\rho_\pm(\theta),z_\pm(\theta)),\qquad \theta\in\mathbb T.
\end{equation*}
Set $j_\pm=|\partial_\theta X_\pm|$ and let $n_\pm$ denote the outer unit normal of the meridional fluid domain.  Given Dirichlet data $\boldsymbol\psi=(\psi_-,\psi_+)$, let $\Phi[\boldsymbol\psi]$ solve \eqref{eq:axisym-elliptic} with $\Phi|_{\mathscr C_\pm}=\psi_\pm$.  For a fixed reference major radius $R>0$, define the flux Dirichlet--Neumann (DN) matrix by
\begin{equation*}
\mathfrak G_\pm\boldsymbol\psi
:=\frac{\rho_\pm j_\pm}{R}\,\partial_{n_\pm}\Phi[\boldsymbol\psi].
\end{equation*}
The factor $\rho j$ is the surface Jacobian after integration in the azimuthal variable, apart from the common factor $2\pi$.
All boundary traces are pulled back by $X_\pm$.  We write
$\mathfrak G\boldsymbol\psi=(\mathfrak G_-\boldsymbol\psi,\mathfrak G_+\boldsymbol\psi)$
and use the unnormalized real pairing
\begin{equation*}
\langle\boldsymbol f,\boldsymbol g\rangle
=\sum_{\iota\in\{-,+\}}\int_\T f_\iota g_\iota\,d\theta.
\end{equation*}
For complex traces we use its Hermitian extension, conjugate-linear in
the first argument.

\begin{lemma}
For smooth data $\boldsymbol\psi,\boldsymbol\varphi$,
\begin{equation}\label{eq:flux-DN-green}
\sum_{\iota\in\{-,+\}}\int_{\mathbb T}
\varphi_\iota\,\mathfrak G_\iota\boldsymbol\psi\,d\theta
=\frac1R\int_{\mathscr D}\rho\,\nabla\Phi[\boldsymbol\varphi]\cdot\nabla\Phi[\boldsymbol\psi] \,d\rho dz.
\end{equation}
Consequently, $\mathfrak G$ is symmetric on smooth traces and its
Dirichlet form is nonnegative.

Let $\mathscr C_\pm(\tau)$ be a smooth normal deformation of $\mathscr C_\pm$ with deformation speeds $h_\pm$, so that
\begin{equation*}
\left.\partial_\tau X_\pm\right|_{\tau=0}=h_\pm n_\pm,
\end{equation*}
and transport the Dirichlet data $\psi_\pm$ with the parametrization,
keeping the normalization radius $R$ fixed during shape differentiation.  Set
$\boldsymbol h=(h_-,h_+)$ and
\begin{equation*}
B_\pm:=\partial_{n_\pm}\Phi[\boldsymbol\psi].
\end{equation*}
Then
\begin{equation}\label{eq:Hadamard-flux}
D\mathfrak G_\iota[\boldsymbol h]\boldsymbol\psi
=-\mathfrak G_\iota(\boldsymbol h\boldsymbol B)
-\frac1R\partial_\theta\left(\frac{\rho_\iota}{j_\iota}
h_\iota\partial_\theta\psi_\iota\right),
\qquad \iota\in\{-,+\}.
\end{equation}
Here $\boldsymbol h\boldsymbol B=(h_-B_-,h_+B_+)$, and the first term
includes the contributions from both boundary components.
\end{lemma}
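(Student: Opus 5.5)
The Green identity \eqref{eq:flux-DN-green} comes straight from the weighted Green formula for $\mathcal L$. Multiply $\mathcal L\Phi[\boldsymbol\psi]=0$ by $\rho\,\Phi[\boldsymbol\varphi]$ and write the equation in divergence form, $\operatorname{div}(\rho\nabla\Phi)=0$. Integrating by parts over $\mathscr D$ gives
\[
\int_{\mathscr D}\rho\,\nabla\Phi[\boldsymbol\varphi]\cdot\nabla\Phi[\boldsymbol\psi]\,d\rho\,dz
=\int_{\partial\mathscr D}\rho\,\Phi[\boldsymbol\varphi]\,\partial_n\Phi[\boldsymbol\psi]\,ds .
\]
No boundary term arises near the axis, because $\overline{\mathscr D}$ lies in $\{\rho>0\}$. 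Next pull the boundary integral back by $X_\pm$, using $ds=j_\pm\,d\theta$, and divide by $R$. This yields exactly $\sum_\iota\int_\T\varphi_\iota\,\mathfrak G_\iota\boldsymbol\psi\,d\theta$. The right-hand side is symmetric in $(\boldsymbol\varphi,\boldsymbol\psi)$, and it is nonnegative when $\boldsymbol\varphi=\boldsymbol\psi$. This proves symmetry and nonnegativity of $\mathfrak G$.

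For the Hadamard formula \eqref{eq:Hadamard-flux} I would use a weak formulation, which avoids differentiating the normal derivative directly. Take a fixed smooth test trace $\boldsymbol\varphi$ and extend both data as functions transported with the parametrization. Write $Q(\tau)=R^{-1}\int_{\mathscr D(\tau)}\rho\,\nabla\Phi_\tau[\boldsymbol\varphi]\cdot\nabla\Phi_\tau[\boldsymbol\psi]$, which equals $\langle\boldsymbol\varphi,\mathfrak G(\tau)\boldsymbol\psi\rangle$. Then $\partial_\tau Q$ at $\tau=0$ gives $\langle\boldsymbol\varphi,D\mathfrak G[\boldsymbol h]\boldsymbol\psi\rangle$. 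To compute $\partial_\tau Q$, introduce the shape derivative $\dot\Phi$ (Eulerian derivative) of $\Phi_\tau[\boldsymbol\psi]$. It solves $\mathcal L\dot\Phi=0$ in $\mathscr D$. On $\mathscr C_\iota$ it has boundary value $-h_\iota B_\iota$, since the transported Dirichlet data is constant in $\tau$ along material points: differentiating $\Phi_\tau(X_\iota(\tau,\theta))=\psi_\iota(\theta)$ gives $\dot\Phi+h_\iota\partial_n\Phi=0$. Hence $\dot\Phi=\Phi[-\boldsymbol h\boldsymbol B]$, and similarly for $\boldsymbol\varphi$.

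The derivative of the domain integral is the Reynolds transport formula. It gives the interior variations, which are $2\langle\cdot\rangle$-type terms, plus a boundary term $R^{-1}\int_{\partial\mathscr D}\rho\,h\,\nabla\Phi[\boldsymbol\varphi]\cdot\nabla\Phi[\boldsymbol\psi]\,ds$. By \eqref{eq:flux-DN-green} the interior variations equal $\langle -\boldsymbol h\boldsymbol B^{\varphi},\mathfrak G\boldsymbol\psi\rangle+\langle\boldsymbol\varphi,\mathfrak G(-\boldsymbol h\boldsymbol B)\rangle$, where $B^\varphi=\partial_n\Phi[\boldsymbol\varphi]$. On each boundary component, split $\nabla\Phi\cdot\nabla\Phi$ into its normal part $B^\varphi B$ and its tangential part $j^{-2}\partial_\theta\varphi\,\partial_\theta\psi$. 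Since $\mathfrak G_\iota\boldsymbol\psi=\rho_\iota j_\iota B_\iota/R$, the normal part of the transport boundary term exactly cancels the term $\langle -\boldsymbol h\boldsymbol B^\varphi,\mathfrak G\boldsymbol\psi\rangle$. What remains of the boundary term is $R^{-1}\sum_\iota\int_\T(\rho_\iota/j_\iota)h_\iota\,\partial_\theta\varphi_\iota\,\partial_\theta\psi_\iota\,d\theta$, using $ds=j\,d\theta$. Integrating by parts in $\theta$ turns it into $-R^{-1}\int\varphi_\iota\,\partial_\theta\bigl((\rho_\iota/j_\iota)h_\iota\partial_\theta\psi_\iota\bigr)$. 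Since $\boldsymbol\varphi$ is arbitrary, we read off \eqref{eq:Hadamard-flux}. The first term involves both components because $\mathfrak G$ couples them through the harmonic extension.

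The main obstacle is bookkeeping rather than analysis. Two points need care. First, with transported data the correct shape derivative of the solution is $\Phi[-\boldsymbol h\boldsymbol B]$, with no tangential correction, because tangential reparametrization of a normal deformation is absent at first order. Second, the factor $\rho$ in the Jacobian is not differentiated separately: its variation is already contained in the Reynolds formula through the weight $\rho$ inside the domain integral. To guard against sign errors, I would check the result on concentric circles with $\rho$ replaced by a constant, where both sides are explicit.
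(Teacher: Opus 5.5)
Your proof is correct. The Green identity part is the same weighted integration by parts as in the paper, but your derivation of \eqref{eq:Hadamard-flux} takes a different route.

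\textbf{The paper's route.} It computes pointwise. With $u=\Phi[\boldsymbol\psi]$ and the same Eulerian derivative $u'=\Phi[-\boldsymbol h\boldsymbol B]$ that you use, it differentiates the product $\mathcal J_\iota B_\iota$, where $\mathcal J_\iota=\rho_\iota j_\iota$. It writes $\dot B_\iota=\partial_{n_\iota}u'-\nabla_{\mathscr C_\iota}h_\iota\cdot\nabla_{\mathscr C_\iota}\psi_\iota+h_\iota\partial_{n_\iota n_\iota}u$, which includes the rotation of the normal. It then uses the three-dimensional identities $\partial_{nn}u=-\mathcal H B-\Delta_\Gamma\psi$ and $\dot{\mathcal J}_\iota=\mathcal H_\iota h_\iota\mathcal J_\iota$. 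The mean-curvature terms cancel, and what remains is $\mathcal J_\iota\operatorname{div}_{\Gamma_\iota}(h_\iota\nabla_{\Gamma_\iota}\psi_\iota)=\partial_\theta(\rho_\iota j_\iota^{-1}h_\iota\partial_\theta\psi_\iota)$.

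\textbf{Your route.} You differentiate the energy pairing by Reynolds' formula. You then convert the bulk variations into boundary pairings using \eqref{eq:flux-DN-green}, and identify the operator by testing against arbitrary $\boldsymbol\varphi$.

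\textbf{What each buys.} Your route uses only first derivatives of the potentials. It avoids the curvature of the surface of revolution, the variation of the normal, and the Laplace--Beltrami decomposition, because keeping $\rho$ as a fixed spatial weight absorbs the axisymmetric Jacobian. It also gives the symmetric Hadamard form directly. If you apply the symmetry of $\mathfrak G$ to the surviving term $\langle\boldsymbol\varphi,\mathfrak G(-\boldsymbol h\boldsymbol B)\rangle$ before integrating by parts in $\theta$, you obtain $R^{-1}\sum_\iota\int_\T\rho_\iota j_\iota h_\iota(V^\varphi_\iota V_\iota-B^\varphi_\iota B_\iota)\,d\theta$, with $V_\iota=j_\iota^{-1}\partial_\theta\psi_\iota$ and $V^\varphi_\iota=j_\iota^{-1}\partial_\theta\varphi_\iota$. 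This is exactly the identity the paper later extracts from \eqref{eq:Hadamard-flux} in the proof of \eqref{eq:schur-Hadamard}. The paper's strong computation, by contrast, gives the formula pointwise without a duality step and shows the cancellation of the $\mathcal H hB$ terms explicitly.

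Both arguments rely on $\Phi_\tau$ depending smoothly on $\tau$ up to the boundary. State this when you apply Reynolds' formula to the $\tau$-dependent integrand.
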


\begin{proof}
Weighted integration by parts, with $ds=j_\iota d\theta$, gives
\eqref{eq:flux-DN-green}, symmetry, and nonnegativity.

To compute the shape derivative, differentiate the product of the surface
Jacobian and the normal derivative.
Let $u=\Phi[\boldsymbol\psi]$, and denote its Eulerian derivative by $u'$.
Transport of the Dirichlet data gives
\begin{equation*}
\mathcal Lu'=0,\qquad u'|_{\mathscr C_\jmath}=-h_\jmath B_\jmath.
\end{equation*}
For a normal deformation, the material derivative of the normal derivative is
\begin{equation*}
\dot B_\iota
=\partial_{n_\iota}u'-\nabla_{\mathscr C_\iota}h_\iota\cdot\nabla_{\mathscr C_\iota}\psi_\iota
+h_\iota\partial_{n_\iota n_\iota}u.
\end{equation*}
Decomposing the Euclidean Laplacian into normal and tangential parts gives
\begin{equation*}
\partial_{nn}u=-\mathcal H B-\Delta_{\Gamma}\psi,
\end{equation*}
where $\Gamma$ is the corresponding surface of revolution.  The surface Jacobian $\mathcal J_\iota=\rho_\iota j_\iota$ satisfies $\dot{\mathcal J}_\iota=\mathcal H_\iota h_\iota\mathcal J_\iota$.  Thus the $\mathcal H hB$ terms cancel when differentiating $\mathcal J B$.  For an axisymmetric scalar,
\begin{equation*}
\mathcal J_\iota\,\operatorname{div}_{\Gamma_\iota}
(h_\iota\nabla_{\Gamma_\iota}\psi_\iota)
=\partial_\theta\left(\frac{\rho_\iota}{j_\iota}
h_\iota\partial_\theta\psi_\iota\right),
\end{equation*}
while $u'$ has boundary data $-\boldsymbol h\boldsymbol B$.  Combining
these identities and dividing by $R$ gives \eqref{eq:Hadamard-flux}.
\end{proof}

For a meridional circle with major radius $R$ and minor radius $a$, use
\begin{equation*}
\rho=R+aX,\qquad z=aZ,\qquad \varepsilon=a/R.
\end{equation*}
Writing $u(X,Z)=\Phi(R+aX,aZ)$, equation
\eqref{eq:axisym-elliptic} becomes
\begin{equation*}
\Delta_{X,Z}u+\frac{\varepsilon}{1+\varepsilon X}\partial_Xu=0.
\end{equation*}
In polar coordinates $(X,Z)=r(\cos\theta,\sin\theta)$, this is
\begin{equation*}
\Delta u+\frac{\varepsilon}{1+\varepsilon r\cos\theta}
\left(\cos\theta\,\partial_ru-
\frac{\sin\theta}{r}\partial_\theta u\right)=0.
\end{equation*}
On the exact inner circle $r=1$, the normalized inner flux is
\begin{equation*}
(1+\varepsilon\cos\theta)(-\partial_ru)|_{r=1}.
\end{equation*}
The Dirichlet data and outer boundary condition are inherited from the full fluid domain.

For $j\ge0$, let $P_j$ denote the orthogonal projection onto the Fourier modes
$|m|=j$, and set $P_{\ge2}=I-P_0-P_1$ and $P_{\ne0}=I-P_0$.
We write $H^s_{\ge2}=P_{\ge2}H^s(\mathbb T)$ and
$\langle m\rangle=(1+m^2)^{1/2}$. The Fourier multiplier
$|D_\theta|$ acts by multiplication by $|m|$, and
$\langle D_\theta\rangle$ has symbol $\langle m\rangle$.

To estimate the outer boundary contribution for the admissible
macroscopic geometries, let $d_*>0$ be a uniform lower bound for the distance
from the collapsing centre point to the macroscopic boundary, and set
\begin{equation*}
L_a:=1+\left|\log\frac{a}{d_*}\right|.
\end{equation*}
The constants in the layer estimates below are uniform in $a$. They may depend on
$d_*$ and the fixed macroscopic geometry class.

The exact axisymmetric Green kernel separates the local logarithmic
singularity from a smooth contribution of the outer boundary.
Set $x_0=(R,0)$, $e_r(\theta)=(\cos\theta,\sin\theta)$, and let
$\mathscr D_0$ be the filled meridional domain.
The admissible geometries satisfy $\rho\ge\rho_*>0$ on $\mathscr D_0$,
with uniform bounds on the outer boundary and
$\operatorname{dist}(x_0,\partial\mathscr D_0)\ge d_*$.
Write $G_0$ for the Dirichlet Green kernel of
$-\operatorname{div}(\rho\nabla)$ in $\mathscr D_0$ and put $w=\rho$.

\begin{lemma} \label{lem:axisymmetric-green}
For $x=(\rho,z)$, $y=(\rho',z')$, and $d=|x-y|$, set
\begin{equation*}
s=(d^2+4\rho\rho')^{1/2},\qquad
k=\frac{2\sqrt{\rho\rho'}}s,\qquad
K(k)=\int_0^{\pi/2}(1-k^2\sin^2u)^{-1/2}\,du.
\end{equation*}
The symmetric kernel
\begin{equation*}
G_{\rm ax}(x,y)=\frac1{4\pi}\int_0^{2\pi}
\frac{d\varphi}{\sqrt{d^2+4\rho\rho'\sin^2(\varphi/2)}}
=\frac{K(k)}{\pi s}
\end{equation*}
satisfies $-\operatorname{div}_x(\rho\nabla_xG_{\rm ax})=\delta_y$.
On a fixed small product neighbourhood of $(x_0,x_0)$,
\begin{equation*}
\sqrt{\rho\rho'}G_0(x,y)
=\Gamma(x-y)+d^2A(x,y)\log d+B(x,y),
\qquad \Gamma(z)=-\frac1{2\pi}\log|z|,
\end{equation*}
where $A,B$ are smooth, with uniform bounds at each fixed order
for the admissible class of smooth geometries.
Let $B^{[1]}$ be the Taylor polynomial of $B$ at $(x_0,x_0)$
through total degree one.  Restricting the kernel
$d^2A\log d+B-B^{[1]}$ to
$x=x_0+ae_r(\theta)$, $y=x_0+ae_r(\varphi)$ gives the operator
$\mathcal R_a$, acting by integration against $f(\varphi)\,d\varphi$.
For every fixed integer $r\ge0$, it satisfies
\begin{align}
\|\mathcal R_af\|_{H^{r+2}}&\le C_ra^2L_a\|f\|_{H^r},
\label{eq:axisymmetric-circle-map}\\
\|a\partial_{n_x}\mathcal R_af\|_{H^r}
&\le C_ra^2L_a\|f\|_{H^r}.
\label{eq:axisymmetric-circle-normal-map}
\end{align}
Here the normal derivative is taken before restricting the kernel to the circle.
The same estimates hold after one $a\partial_a$ derivative and after
up to two smooth variations of the centre, outer geometry, or a small
relative inner graph, including mixed derivatives.
The constants depend only on finitely many smooth
norms of the geometry and variation directions.
\end{lemma}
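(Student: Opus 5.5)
The proof has three parts. First, the closed-form axisymmetric kernel. Second, the splitting of the Dirichlet Green kernel $G_0$ near $(x_0,x_0)$. Third, the mapping bounds for the remainder restricted to the small circle.

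\emph{The explicit kernel.} Lift an axisymmetric source to $\R^3$ by spreading unit mass uniformly over the circle of radius $\rho'$ at height $z'$. The Newtonian potential of this ring, written in cylindrical coordinates, gives exactly the integral defining $G_{\rm ax}$, because
\begin{equation*}
|x-y|_{\R^3}^2=(\rho-\rho')^2+(z-z')^2+4\rho\rho'\sin^2(\varphi/2).
\end{equation*}
Since $-\Delta_{\R^3}$ of the Newtonian potential is this ring measure, and $-\operatorname{div}(\rho\nabla)$ is $\rho$ times the meridional form of $-\Delta_{\R^3}$, we get $-\operatorname{div}_x(\rho\nabla_xG_{\rm ax})=\delta_y$ in the meridional half-plane. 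The normalization $2\pi$ from the azimuthal measure is absorbed by the factor $1/(4\pi)$. The substitution $u=\varphi/2$ then yields $K(k)/(\pi s)$.

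\emph{The splitting.} I will use the classical expansion
\begin{equation*}
K(k)=\log\frac4{k'}+O(k'^2\log k'),\qquad k'^2=1-k^2=\frac{d^2}{s^2}.
\end{equation*}
More precisely,
\begin{equation*}
K=\alpha(k'^2)\log\frac1{k'}+\beta(k'^2)
\end{equation*}
with $\alpha,\beta$ analytic and $\alpha(0)=1$. Substituting $k'=d/s$ and $s=2\sqrt{\rho\rho'}\,(1+O(d^2))$ gives
\begin{equation*}
\sqrt{\rho\rho'}\,G_{\rm ax}=-\frac1{2\pi}\log d+d^2A_1\log d+B_1,
\end{equation*}
with $A_1,B_1$ smooth near the diagonal. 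Next write $G_0=G_{\rm ax}-H$, where $H(\cdot,y)$ solves $\operatorname{div}(\rho\nabla H)=0$ in $\mathscr D_0$ with boundary data $G_{\rm ax}(\cdot,y)$. Because $\operatorname{dist}(x_0,\partial\mathscr D_0)\ge d_*$ and $\rho\ge\rho_*$, these boundary data are smooth in $y$ for $y$ near $x_0$. Elliptic regularity for the smooth uniformly elliptic operator $\operatorname{div}(\rho\nabla\cdot)$ up to the boundary then shows that $H$ is jointly smooth on a product neighbourhood. The bounds are uniform over the admissible class, since they depend only on finitely many norms of $\partial\mathscr D_0$. This gives the decomposition with $A=A_1$ and $B=B_1-\sqrt{\rho\rho'}\,H$.

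\emph{Mapping bounds.} On the circle we have $d=2a|\sin((\theta-\varphi)/2)|$. Set
\begin{equation*}
\kappa_a(\theta,\varphi)=d^2A\log d+B-B^{[1]}.
\end{equation*}
\begin{itemize}
\item Because $B-B^{[1]}$ vanishes to second order at $(x_0,x_0)$, its restriction is $a^2$ times a function that is smooth in $(\theta,\varphi)$, with all $\theta$-derivatives $O(a^2)$.
\item The term $d^2A\log d$ equals $a^2\cdot 4\sin^2\!\big(\tfrac{\theta-\varphi}{2}\big)\,A\,\big(\log a+\log|2\sin\tfrac{\theta-\varphi}2|\big)$. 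The $\log a$ part is $a^2L_a$ times a smooth kernel. The other part is $a^2$ times a kernel of the form $\sin^2(\psi/2)\log|\sin(\psi/2)|$ multiplied by a smooth function.
\end{itemize}
The convolution kernel $\sin^2(\psi/2)\log|2\sin(\psi/2)|$ has Fourier coefficients $O(|m|^{-3})$: the coefficients of $\log|2\sin(\psi/2)|$ are $-1/(2|m|)$, and multiplying by $\sin^2(\psi/2)=(2-e^{i\psi}-e^{-i\psi})/4$ takes a second difference of them. It is therefore smoothing of order $3$, which is at least $2$. Multiplication by the smooth factor $A$ is handled by Taylor expanding $A$ in $\varphi$ around $\theta$, or by noting that a pseudodifferential operator of order $-3$ with smooth symbol coefficients keeps the gain. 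This proves \eqref{eq:axisymmetric-circle-map}.

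For the normal derivative, note that $a\partial_{n_x}$ acts on restricted kernels as $r\partial_r$ at $r=1$ in scaled variables. Applying it to $d^2\log d$ gives terms of the type $d^2\log d$, $d^2$, and $(x-y)\cdot n_x$ times smooth factors, all again with an $a^2$ prefactor. Applying it to $B-B^{[1]}$ remains $O(a^2)$. The kernel $(x-y)\cdot n_x\,\log d$ on the circle is $a^2\sin^2(\psi/2)\log$, which is still smoothing. This gives \eqref{eq:axisymmetric-circle-normal-map} with no derivative loss; the statement only claims order $0$.

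\emph{Derivatives and variations.} Applying $a\partial_a$ produces kernels of the same structure, with $\log a$ replaced by $1$ plus the original kernel. Variations of the centre or of the outer geometry act through the smooth dependence of $A$ and $B$ on the geometry. Differentiating $B^{[1]}$ with respect to the centre stays consistent, because the Taylor base point moves with it. A small relative inner graph replaces $e_r(\theta)$ by $(1+\eta)e_r(\theta)$. Then
\begin{equation*}
d^2=a^2\,|(1+\eta(\theta))e_r(\theta)-(1+\eta(\varphi))e_r(\varphi)|^2=a^2\,4\sin^2(\psi/2)\,q(\theta,\varphi),
\end{equation*}
where $q>0$ is smooth. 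So the same kernel class persists, with constants depending on finitely many norms of $\eta$.

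\emph{Main obstacle.} The step I expect to be hardest is making the uniformity rigorous: controlling the smooth factors multiplying $\log|\sin(\psi/2)|$ under up to two mixed variations, uniformly across the admissible class. I would handle it by working in a fixed symbol class of kernels
\begin{equation*}
\sin^2(\psi/2)\,\log|\sin(\psi/2)|\;c(\theta,\varphi)\;+\;\text{smooth},
\end{equation*}
and checking that this class is closed under each operation used above.
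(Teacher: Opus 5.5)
Your proposal is correct and follows essentially the same route as the paper: the ring-potential derivation of $G_{\rm ax}=K(k)/(\pi s)$, the expansion $K=F(k'^2)\log(1/k')+H(k'^2)$ with $F(0)=1$ (the paper derives this from the hypergeometric ODE by a Frobenius argument rather than citing it), subtraction of a weighted-harmonic correction to produce $B$, the $O(\langle m\rangle^{-3})$ Fourier bound for $(2-2\cos\psi)\log|e^{i\psi}-1|$ with smooth amplitudes, and the divided-difference factorization of $d^2$ for graphs. For the graph variations of the normal-derivative estimate, the paper also records that $n_\eta(\theta)\cdot(X_\eta(\theta)-X_\eta(\varphi))$ vanishes to second order on the diagonal; this keeps the $\sin^2$ factor in that kernel, which you use for circles but leave implicit for graphs.
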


\begin{proof}
We divide the proof into three steps.

\emph{Step 1. Expansion of the Green kernel.}
For a compactly supported meridional source $f$, its axisymmetric
three-dimensional extension with density $f(\rho,z)/\rho$ has Newton
potential $\int G_{\rm ax}(x,y)f(y)\,dy$: the cylindrical volume
Jacobian cancels the factor $1/\rho'$.  The identity
$\Delta_3u=\rho^{-1}\operatorname{div}(\rho\nabla u)$ proves the
normalization of the fundamental solution.  Changing variables in
its angular integral gives the expression in terms of $K$.
Put $k'=(1-k^2)^{1/2}=d/s$, $t=(k')^2$, and
$y(t)=K(\sqrt{1-t})$.
Differentiation of the defining integral and integration by parts give
\begin{equation*}
t(1-t)y''+(1-2t)y'-\tfrac14y=0.
\end{equation*}
Its analytic solution normalized to one at zero is
$F(t)=\sum_{n\ge0}c_nt^n$, where $c_0=1$ and
$c_{n+1}=((n+\tfrac12)/(n+1))^2c_n$.
The second Frobenius solution is $-\tfrac12F(t)\log t+H(t)$,
with $H$ analytic for $|t|<1$.  The series determined by this recurrence converges
on that disk.  In the defining integral, the neighborhood of
$u=\pi/2$ gives $y(t)=-\tfrac12\log t+O(1)$.
Consequently
\begin{equation*}
K(k)=F(t)\log(1/k')+H(t),\qquad F(0)=1.
\end{equation*}
Since
$\log(1/k')=\log s-\log d$ and
$F(d^2/s^2)/(\pi s)=(2\pi\sqrt{\rho\rho'})^{-1}+d^2C(x,y)$,
this gives the asserted smooth logarithmic decomposition for
$G_{\rm ax}$.  Write $G_0=G_{\rm ax}-H_0$, where $H_0(\cdot,y)$
is weighted harmonic with outer boundary value $G_{\rm ax}(\cdot,y)$.
The pole stays separated from that boundary.  The coercive form
$\int_{\mathscr D_0}\rho|\nabla u|^2$ and interior elliptic estimates
therefore give joint smooth bounds for $H_0$ near $(x_0,x_0)$.
Pulling the outer domain back by maps equal to the identity near the
pole and differentiating its coercive Dirichlet problem proves the
same bounds for two geometric variations.

\emph{Step 2. Estimates on the circle.}
On the circle, $d^2=a^2(2-2\cos(\theta-\varphi))$.
The Fourier coefficients of
$(2-2\cos\vartheta)\log|e^{i\vartheta}-1|$ are
$O(\langle m\rangle^{-3})$.  Multiplication by a smooth
$(\theta,\varphi)$ amplitude preserves this gain of three derivatives,
as its double Fourier expansion shows.
The term containing $\log a$ and the Taylor remainder of $B$
have uniformly smooth kernels of size $O(a^2L_a)$.
This proves \eqref{eq:axisymmetric-circle-map}.
A derivative in $x$ changes $d^2\log d$ to a combination of
$(x-y)\log d$ and $(x-y)$.
On the circle, $n_x\cdot(x-y)=\pm a(1-\cos(\theta-\varphi))$.
After multiplication by $a$, the differentiated singular kernel
therefore has the same Fourier bound with a smooth amplitude.  The differentiated Taylor remainder is $O(a^2)$.
This proves \eqref{eq:axisymmetric-circle-normal-map}.

\emph{Step 3. Geometric and scale variations.}
For a graph $X_\eta(\theta)=x_0+a r(\theta)e_r(\theta)$,
$r=1+\eta$, the identity
\begin{equation*}
|X_\eta(\theta)-X_\eta(\varphi)|^2
=4a^2\sin^2\frac{\theta-\varphi}{2}\,
\left(r(\theta)r(\varphi)
+\frac{(r(\theta)-r(\varphi))^2}
{4\sin^2((\theta-\varphi)/2)}\right)
\end{equation*}
expresses the quotient in parentheses as a smooth positive function
for graphs in a sufficiently small smooth neighborhood of zero.  The quotient extends across the
periodic diagonal by the formula for divided differences.  Its first two
graph variations have the same property.  Thus the singular angular
factor above is unchanged, and the remaining factors are smooth.
The unit normal is a smooth function of $(\eta,\eta_\theta)$, and
$n_\eta(\theta)\cdot(X_\eta(\theta)-X_\eta(\varphi))$ vanishes
to second order on the diagonal.  Division by
$\sin^2((\theta-\varphi)/2)$ is therefore smooth.
The same Fourier argument gives the estimates for variations of the normal trace.
Finally, $a\partial_a(a^2\log a)=a^2(2\log a+1)$,
while differentiating the smooth factors preserves their bounds.
\end{proof}

\begin{lemma}
\label{lem:weighted-layer-expansion}
Fix an integer $s\ge0$.  For $0<a<a_0$ define the normalized single-layer and
inner conormal operators on the small circle by
\begin{align}
(\mathcal S_a\mu)(\theta)
&:=\int_{\T}G_0(x_0+ae_r(\theta),x_0+ae_r(\varphi))\mu(\varphi)\,d\varphi,
\notag \\
(\mathcal T_a\mu)(\theta)
&:=a\,w(x_0+ae_r(\theta))\partial_{n_x}
\int_{\T}G_0(x,x_0+ae_r(\varphi))\mu(\varphi)\,d\varphi
\bigg|_{x=x_0+ae_r(\theta)} .
\label{eq:Ta-def}
\end{align}
The conormal trace in \eqref{eq:Ta-def} is taken from the exterior of
the small circle, with the fluid outward normal.
There are operators $S_*,T_*$, uniformly bounded
families $S_{1,a},T_{1,a}$, remainders $R^S_a,R^T_a$, and a fixed
exponent $M_s\ge0$ such that
\begin{align}
\mathcal S_a
&=c_{\rm lay}L_aP_0+S_*+aL_aS_{1,a}+a^2L_a^{M_s}R^S_a,
\label{eq:Sa-expansion}\\
\mathcal T_a
&=T_*+aL_aT_{1,a}+a^2L_a^{M_s}R^T_a,
\label{eq:Ta-expansion}
\end{align}
where $c_{\rm lay}>0$, $S_*$ is elliptic of order $-1$ on $P_{\ne0}$, and $T_*$ is of
order $0$.  The families $S_{1,a}$ and $T_{1,a}$ are uniformly bounded
from $H^r$ to $H^{r+1}$ and from $H^r$ to $H^r$, respectively, for
$0\le r\le s+1$.  The remainders satisfy
\begin{align}
\|R^S_a f\|_{H^{r+1}}&\le C_{r,s}\|f\|_{H^r},
\qquad 0\le r\le s+1,\label{eq:Sa-rem-map}\\
\|R^T_a f\|_{H^{r}}&\le C_{r,s}\|f\|_{H^r},
\qquad 0\le r\le s+1.\label{eq:Ta-rem-map}
\end{align}
The first-order correction satisfies the Fourier selection rule
\begin{equation}\label{eq:layer-first-selection}
P_{\ge2}S_{1,a}P_0=P_{\ge2}T_{1,a}P_0=0,
\qquad
(I-P_0-P_1-P_2)S_{1,a}P_1
=(I-P_0-P_1-P_2)T_{1,a}P_1=0.
\end{equation}
Moreover $\mathcal S_a$ is invertible for $a$ small and
\begin{align}
\|P_0\mathcal S_a^{-1}P_0\|_{\mathcal L(P_0L^2(\T))}&\le C L_a^{-1},
\label{eq:Sa-inverse-zero}\\
\|P_{\ne0}\mathcal S_a^{-1}P_{\ne0}f\|_{H^r}
&\le C_r\|f\|_{H^{r+1}},\qquad 0\le r\le s+1,
\label{eq:Sa-inverse-nonzero}\\
\|P_{\ge2}\mathcal T_a\mathcal S_a^{-1}P_0c\|_{H^s}
&\le C_sa^2L_a^{M_s}|c|,
\label{eq:layer-P0-leakage}\\
\|P_{\ge2}\mathcal T_a\mathcal S_a^{-1}P_1f_1\|_{H^s}
&\le C_saL_a^{M_s}\|f_1\|_{H^{s+1}}.
\label{eq:layer-P1-leakage}
\end{align}
All constants are uniform for the admissible macroscopic geometries.
For a small relative graph, replace the two circular arguments by
$X_\eta$ and the conormal prefactor $aw$ by
$w(X_\eta)|\partial_\theta X_\eta|$.  The layer system and its
inverse have two smooth derivatives with respect to the centre, radius,
outer boundary and inner graph at each fixed regularity level.
The selection rule \eqref{eq:layer-first-selection} holds at $\eta=0$.
\end{lemma}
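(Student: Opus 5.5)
The plan is to build everything on Lemma~\ref{lem:axisymmetric-green}. Restricted to the small circle, the Green kernel splits as
\[
G_0=\rho^{-1/2}\rho'^{-1/2}\bigl(\Gamma(x-y)+d^2A\log d+B\bigr).
\]
We expand each piece in powers of $a$ and read off the layer operators term by term.

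\emph{Step 1: the single layer $\mathcal S_a$.}
With $x=x_0+ae_r(\theta)$ and $y=x_0+ae_r(\varphi)$ we have
\[
\Gamma(x-y)=-\tfrac1{2\pi}\log a-\tfrac1{2\pi}\log|e^{i\theta}-e^{i\varphi}|.
\]
The first term is rank one onto $P_0$. The second is the classical circle operator, with symbol $\tfrac1{4\pi|m|}\cdot 2\pi$ on $m\ne0$ and zero on $m=0$, so it is elliptic of order $-1$ on $P_{\ne0}$. The constant $B(x_0,x_0)$ contributes a constant rank-one piece, and the prefactor $(\rho\rho')^{-1/2}=R^{-1}\bigl(1+O(a)\bigr)$ then yields $c_{\rm lay}L_aP_0+S_*$ with $c_{\rm lay}=(2\pi R)^{-1}\cdot2\pi$ after normalizing $d_*$. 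We absorb $\log d_*$ and $B(x_0,x_0)$ into $S_*$, which acts on $P_0$ as a bounded constant.

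The $O(a)$ terms come from two sources: the linear Taylor part of $B$, and the first-order expansion of $(\rho\rho')^{-1/2}$, which produces $\cos\theta+\cos\varphi$ times the log kernel. Both are multiplications by first harmonics in $\theta$ or $\varphi$, composed with a Fourier multiplier. A mode $m$ input is therefore sent to modes $m$ and $m\pm1$. This is exactly the selection rule \eqref{eq:layer-first-selection}: $P_0$ goes into modes $|m|\le1$, and $P_1$ goes into modes $|m|\le2$. It also gives the $H^r\to H^{r+1}$ bound, with the extra $L_a$ coming from $\cos\theta\cdot\log a$.

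Everything of order $a^2$ goes into $a^2L_a^{M_s}R^S_a$. This includes the $a$-expansion of the weight to second order (a finite Taylor expansion with smooth remainder), the Taylor remainder $B-B^{[1]}$, and the $d^2A\log d$ piece. The last two are controlled by \eqref{eq:axisymmetric-circle-map}.

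\emph{Step 2: the conormal operator $\mathcal T_a$.}
We proceed the same way, differentiating the kernel before restriction. Since $a\partial_{n_x}$ of $\Gamma(x-y)$ on the circle, with the fluid normal pointing toward the centre, equals the constant $\tfrac1{4\pi}$ kernel (the classical $\tfrac12$-jump plus the constant double layer), we get an order-$0$ operator $T_*$. The prefactor $aw$ and the derivatives of $(\rho\rho')^{-1/2}$ and $B$ again give first-harmonic multiplications at order $aL_a$, which is where the selection rule for $T_{1,a}$ comes from. The $O(a^2)$ pieces follow from \eqref{eq:axisymmetric-circle-normal-map}. For the perturbed graphs, and for derivatives in the centre, the radius $a$ and the outer geometry, we rely on Step~3 of the proof of Lemma~\ref{lem:axisymmetric-green}. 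There the factorization of $|X_\eta(\theta)-X_\eta(\varphi)|^2$ keeps the singular factor fixed, so every variation only changes smooth amplitudes, and the expansions are stable under up to two variations. The selection rule is asserted only at $\eta=0$.

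\emph{Step 3: inverting $\mathcal S_a$ and the leakage bounds.}
We write $\mathcal S_a$ in block form with respect to $P_0\oplus P_{\ne0}$. The $P_0P_0$ block equals $c_{\rm lay}L_a+O(1)$ and the $P_{\ne0}P_{\ne0}$ block equals $S_*+O(aL_a)$, which is invertible $H^{r+1}\to H^r$ for small $a$. The off-diagonal blocks are $O(1)$ from $S_*$ (possibly zero by rotation invariance of the leading circle kernel) plus $O(aL_a)$. A Schur complement then gives \eqref{eq:Sa-inverse-zero}, with inverse block $O(L_a^{-1})$, and \eqref{eq:Sa-inverse-nonzero}.

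For \eqref{eq:layer-P0-leakage} we expand $\mathcal S_a^{-1}P_0c$ in a Neumann series. The leading density is a constant, and $T_*$, being rotation-invariant, preserves modes. The $O(aL_a)$ corrections leave modes $|m|\le1$ by the selection rule, so $P_{\ge2}$ only detects terms of order $a^2L_a^{M_s}$. The factor $L_a^{-1}$ from the inverse offsets the extra $L_a$'s, and any remaining logarithmic powers are absorbed into $M_s$. In the same way, for \eqref{eq:layer-P1-leakage} a mode-one density reaches $P_{\ge2}$ only through a single first-order correction, of size $aL_a$, and the loss of one derivative accounts for the $\|f_1\|_{H^{s+1}}$ on the right.

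\emph{Main obstacle.}
The delicate point is bookkeeping the second-order cross terms in the Neumann series for the leakage bounds. Products of two $O(aL_a)$ corrections combined with the large $L_a$ block of the $P_0$ part could produce $a^2L_a^2\cdot L_a$. We have to check that, after composing with the $O(L_a^{-1})$ inverse block, the total power stays polynomial in $L_a$. We would handle this by iterating the Schur complement exactly to second order and tracking which Fourier modes each term can reach.
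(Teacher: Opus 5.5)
Your proposal is correct and follows essentially the paper's own route: conjugating $G_0$ by $w^{-1/2}$, expanding in $a$ so that every first-order coefficient lies in $\operatorname{span}\{1,\cos\theta,\sin\theta\}$ (which gives the selection rule \eqref{eq:layer-first-selection}), computing $T_*$ from the jump relation, inverting by a Schur complement on $P_0\oplus P_{\ne0}$, and counting Fourier modes for \eqref{eq:layer-P0-leakage}--\eqref{eq:layer-P1-leakage}; as you expect, any extra powers of $L_a$ there are simply absorbed into $M_s$. The one small difference is that the paper obtains the higher-index bound \eqref{eq:Sa-inverse-nonzero} from an order-one elliptic estimate for the nonzero-mode block rather than from a Neumann series at each $r$, and this keeps the smallness threshold on $a$ independent of the Sobolev index.
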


\begin{proof}
We divide the proof into four steps.

\emph{Step 1. Expansions of the layer operators.}
Lemma~\ref{lem:axisymmetric-green} gives the exact factorization
\begin{equation}\label{eq:green-conjugation}
G_0(x,y)=w(x)^{-1/2}\mathcal K_0(x,y)w(y)^{-1/2}
\end{equation}
with $\mathcal K_0=\Gamma+d^2A\log d+B$.
After subtracting the first-order Taylor polynomial of $B$, the remainder
restricted to the circle and its normalized normal derivative satisfy
\eqref{eq:axisymmetric-circle-map}--
\eqref{eq:axisymmetric-circle-normal-map}.

Next expand the smooth conjugating factors:
\begin{equation}\label{eq:w-half-expansion}
w(x_0+ae_r(\theta))^{-1/2}
=w(x_0)^{-1/2}\bigl(1+a\alpha_1(\theta)+a^2\alpha_{2,a}(\theta)\bigr),
\end{equation}
where $\alpha_1\in P_1C^\infty(\T)$ and the family
$\alpha_{2,a}$ is uniformly bounded in every fixed $C^k$ norm.  The same Taylor expansion applies to the smooth boundary correction.  On the unit circle,
$\Gamma(a(e_r(\theta)-e_r(\varphi)))$ splits into the scalar
$-(2\pi)^{-1}\log a$ and the translation-invariant logarithmic kernel
$-(2\pi)^{-1}\log|e_r(\theta)-e_r(\varphi)|$.  Its multiplier on $e^{im\theta}$ is $(2|m|)^{-1}$ for
$m\ne0$, so $P_{\ne0}S_*P_{\ne0}=(2R)^{-1}|D_\theta|^{-1}P_{\ne0}$.
Combining this fact with
Lemma~\ref{lem:axisymmetric-green} and
\eqref{eq:w-half-expansion} gives
\eqref{eq:Sa-expansion}.  In the weighted remainder, the quadratic Taylor
terms of the conjugating factors multiply the logarithmic convolution,
which has order $-1$.  Smooth multiplication on either side preserves
its $H^r\to H^{r+1}$ bound, proving \eqref{eq:Sa-rem-map}.  On a fixed finite-dimensional smooth input space
the remainder is bounded into every fixed $H^k$, with the same explicit
$a^2L_a^{M_k}$ prefactor.  The family $S_{1,a}$ depends on $a$ because the logarithmic contribution
has size $a(\log a)$ and the smooth first Taylor term has size $O(a)$.
After division by $aL_a$, both contributions are uniformly bounded.
Every coefficient at first order in $a$ is a linear combination of $1$, $\cos\theta$ and $\sin\theta$ in either boundary variable.  Thus the first perturbation shifts the Fourier index by at most one.

For the reference logarithmic kernel, continuation of the nonzero
mode into the fluid as $r^{-|m|}e^{im\theta}$ gives
$T_*=P_0+\tfrac12P_{\ne0}$.  The constant mode follows by differentiating
$-R^{-1}\log r$.  More generally, integrating
$-\operatorname{div}(w\nabla u)=\mu\,d\theta$ across a thin strip around
the layer gives
\begin{equation*}
w|\partial_\theta X_\eta|
\bigl(\partial_nu\big|_{\rm fluid}-\partial_nu\big|_{\rm hole}\bigr)
=\mu,
\end{equation*}
where both derivatives use the fluid outward normal.  The logarithmic
potential is continuous across the layer.  Applying the same conjugation
and Taylor expansion to the normal derivative of
\eqref{eq:green-conjugation} proves
\eqref{eq:Ta-expansion}, \eqref{eq:Ta-rem-map}, and the same Fourier
selection rule for $T_{1,a}$.

\emph{Step 2. Invertibility and Sobolev bounds.}
Relative to $P_0\oplus P_1\oplus P_{\ge2}$, the $P_0P_0$ block of the single-layer operator has
size $c_{\rm lay}L_a+O(1)$.  The first-order off-diagonal blocks are $O(aL_a)$ and
shift the Fourier index by one. Couplings excluded by this selection rule are
$O(a^2L_a^{M_s})$.  We construct the inverse first at a fixed lower Sobolev index.
Put $Q=I-P_0$ and write, relative to $P_0L^2\oplus QL^2$,
\begin{equation*}
\mathcal S_a=
\begin{pmatrix}A_{00}&A_{0Q}\\ A_{Q0}&D_a\end{pmatrix},
\qquad D_a=Q\mathcal S_aQ.
\end{equation*}
As an operator from $QL^2$ to $QH^1$, the reference block on nonzero modes
has an inverse of order one.  Composing this inverse with the perturbation gives
an operator of norm $O(aL_a+a^2L_a^{M_0})$.  Choose $a_0$ so that the Neumann series
converges at this index.  The scalar Schur complement is
\begin{equation*}
s_a=A_{00}-A_{0Q}D_a^{-1}A_{Q0}
=c_{\rm lay}L_a+O(1)+O(a^2L_a^{M_0}),
\qquad s_a^{-1}=O(L_a^{-1}).
\end{equation*}
This proves invertibility at the lower Sobolev index.

Bounds at higher Sobolev indices follow from elliptic regularity for $D_a$
with the same upper bound on the radius.  The logarithmic
kernel and its exact positive conjugating factors give its principal
symbol
\begin{equation*}
\frac{1}{2w(x_0+ae_r(\theta))|\xi|},
\qquad |\xi|\ge1,
\end{equation*}
which is uniformly elliptic of order $-1$.  The removed zero mode
and the logarithmic rank-one terms are smoothing on $Q$. The latter
have size $O(a^2L_a)$ there, because $Q(w^{-1/2})=O(a)$.
The kernel expansions bound each required finite symbol seminorm
uniformly for $0<a<a_0$.  These bounds may depend on the higher geometric
norms and on $r$, while the threshold $a_0$ remains fixed.  An order-one elliptic
parametrix consequently gives, for each fixed $r\ge0$,
\begin{equation*}
\|\mu\|_{H^r}
\le C_r\bigl(\|D_a\mu\|_{H^{r+1}}+\|\mu\|_{L^2}\bigr),
\qquad Q\mu=\mu.
\end{equation*}
Combining this estimate with the inverse bound at the lower index proves
$\|D_a^{-1}f\|_{H^r}\le C_r\|f\|_{H^{r+1}}$.
The full block inverse has
\begin{equation*}
P_0\mathcal S_a^{-1}P_0=s_a^{-1},\qquad
Q\mathcal S_a^{-1}Q
=D_a^{-1}+D_a^{-1}A_{Q0}s_a^{-1}A_{0Q}D_a^{-1}.
\end{equation*}
The off-diagonal blocks have smooth range or are smooth scalar
functionals, with size $O(aL_a)$ in each fixed norm.
The block identities prove
\eqref{eq:Sa-inverse-zero}--\eqref{eq:Sa-inverse-nonzero}
for $0\le r\le s+1$, with $a_0$ independent of $r$.

\emph{Step 3. Estimates for the projected conormal map.}
To estimate the conormal map, combine these block identities with the
Fourier selection rule.  A first-order coupling maps $P_0$ only into
$P_1$.  A contribution in $P_{\ge2}$ requires two first-order couplings
or a second-order remainder.  A first-order coupling can map $P_1$ into $P_2$.
The inverse bounds at higher Sobolev indices preserve the explicit powers
of $a$ in these estimates.  The remaining powers of $L_a$ are
absorbed into $L_a^{M_s}$, proving
\eqref{eq:layer-P0-leakage}--\eqref{eq:layer-P1-leakage}.

\emph{Step 4. Graph variations.}
For a graph, the kernel factorization in
Lemma~\ref{lem:axisymmetric-green} preserves ellipticity of the
nonconstant block.  Its inverse is first obtained by perturbation
at the fixed lower index, then at higher indices by the same
elliptic estimate.  The coefficient of $-\log a$ is still the
rank-one kernel $w(X_\eta(\theta))^{-1/2}w(X_\eta(\varphi))^{-1/2}/(2\pi)$,
with $w(X_\eta)^{-1/2}=R^{-1/2}+O(a)$.  The other kernels are
uniformly bounded at the fixed lower index.  Thus
$A_{00}=L_a/R+O(1)$, $A_{0Q},A_{Q0}=O(1+aL_a)$, and
$s_a=L_a/R+O(1+a^2L_a^2)>0$ for small $a$.
The first state variation of the block inverse is given by
$D(S^{-1})[h]=-S^{-1}(DS[h])S^{-1}$.  Differentiating this identity
once more gives the second variation.  The differentiated kernels
are controlled by Lemma~\ref{lem:axisymmetric-green}, and the same
elliptic inverse estimate at the fixed index controls their compositions.
The conormal prefactor is the product of the arclength Jacobian and $w$,
so the density in the jump relation is still taken with respect to $d\theta$.
\end{proof}

The inner diagonal block of the global flux map, denoted by
$\mathfrak G_{--}^{\rm glob}$, satisfies
$\mathfrak G_{--}^{\rm glob}=R^{-1}\mathcal T_a\mathcal S_a^{-1}$.
Hence \eqref{eq:layer-P0-leakage}--\eqref{eq:layer-P1-leakage}
also hold for the global flux block, since $R$ is uniformly positive.

\subsection{Uniform elliptic estimates and state variations}

Throughout this subsection the macroscopic geometry satisfies
\begin{equation}\label{eq:macro-geometry-assumptions}
0<R_{\min}\le R\le R_{\max},\qquad \rho\ge\rho_*>0
\quad\hbox{in the fluid},
\end{equation}
where $R_{\min},R_{\max}$ are fixed positive constants, the outer
meridional boundary $\mathscr C_+$ belongs to a fixed bounded
smooth class, and
\begin{equation}\label{eq:outer-distance-assumption}
\operatorname{dist}(x_0,\mathscr C_+)\ge d_*>0.
\end{equation}
We use a fixed scaled collar at the inner boundary and a fixed physical
collar at the outer boundary.  For $\mathscr D_a=a^{-1}(\mathscr D-x_0)$,
$\mathscr C_+^{(a)}=a^{-1}(\mathscr C_+-x_0)$, and an outer trace $g_+$, put
\begin{equation*}
\|g_+\|_{H^m_{\rm ph}(\mathscr C_+^{(a)})}
:=\|g_+^{\rm ph}\|_{H^m(\mathscr C_+)},\qquad
g_+^{\rm ph}(x)=g_+((x-x_0)/a).
\end{equation*}

For prescribed global traces $(f,g_+)$, the zero-flux reduction
minimizes the weighted energy with outer trace $g_+$ and inner trace
$f+c_-$, where $c_-$ is free.  Its total inner flux is zero.
The Dirichlet solution with prescribed traces corresponds to $c_-=0$.

For the local problem, extend the normalized weight $\rho/R$ to equal one
outside a fixed scaled collar.  More precisely, choose radial cutoffs
$\chi=1$ near $r=1$, $\chi=0$ for $r\ge3/2$, and
$\chi_1=1$ for $r\le3$, $\chi_1=0$ for $r\ge4$.  On
$\mathcal A_*=\{r>1\}$ set
\begin{equation*}
\Theta_\eta(r,\theta)=(r+\chi(r)\eta(\theta))e_r(\theta),\qquad
e_r(\theta)=(\cos\theta,\sin\theta),
\end{equation*}
and, with $\varepsilon=a/R$,
\begin{equation}\label{eq:A-eps-eta}
A_{\varepsilon,\eta}
=\bigl(1+\varepsilon\chi_1(r)(\Theta_\eta)_1\bigr)
\det D\Theta_\eta\,(D\Theta_\eta)^{-1}(D\Theta_\eta)^{-T}.
\end{equation}
Thus $A_{\varepsilon,\eta}=I$ for $r\ge4$.  Let $u$ be the local solution with finite Dirichlet energy satisfying
$\operatorname{div}(A_{\varepsilon,\eta}\nabla u)=0$ in $\mathcal A_*$,
with $u|_{r=1}=f$.  Its constant limit at infinity is determined by the solution.
Equivalently, minimize the Dirichlet energy modulo constants with the
inner trace prescribed modulo constants, then choose the representative
with the prescribed inner trace.  The local flux map is
\begin{equation*}
\widehat{\mathfrak G}_{\varepsilon,\eta}f
=-e_r\cdot A_{\varepsilon,\eta}\nabla u\big|_{r=1}.
\end{equation*}
Global radius variations are taken in the relative direction
$a\partial_a$, and local parameter variations in the direction
$\partial_\varepsilon$.  At $\varepsilon=0$, $\eta=0$, the nonzero Fourier
mode $e^{im\theta}$ extends as $r^{-|m|}e^{im\theta}$, so the local
flux Dirichlet--Neumann map is $|D_\theta|$.

\begin{lemma}\label{lem:flattening-geometry}
Fix an integer $s_0\ge4$.  For $|\varepsilon|+\|\eta\|_{H^{s_0+2}}$
sufficiently small, $\Theta_\eta$ is a diffeomorphism onto the exterior
of the graph and
\begin{equation*}
\tfrac12|\xi|^2\le A_{\varepsilon,\eta}\xi\cdot\xi\le2|\xi|^2,
\qquad \xi\in\mathbb R^2.
\end{equation*}
Write $A=A_{\varepsilon,\eta}$.  Coefficient Sobolev norms below are
on $\mathcal A_*$ with planar Lebesgue measure. Graph and direction
norms are on $\T$.  For $s\ge s_0$ the coefficients satisfy
\begin{align*}
\|A-I\|_{H^{s+1}}
&\le C_s(|\varepsilon|+\|\eta\|_{H^{s+2}}),\\
\|A_h\|_{H^{s+1}}
&\le C_s\bigl(\|h\|_{H^{s+2}}
+(1+\|\eta\|_{H^{s+2}})\|h\|_{H^{s_0+2}}\bigr),
\\
\|A_{12}\|_{H^{s+1}}
&\le C_s\bigl(\|h_1\|_{H^{s+2}}\|h_2\|_{H^{s_0+2}}
+\|h_2\|_{H^{s+2}}\|h_1\|_{H^{s_0+2}}\\
&\hspace{19mm}+(1+\|\eta\|_{H^{s+2}})
\|h_1\|_{H^{s_0+2}}\|h_2\|_{H^{s_0+2}}\bigr),
\end{align*}
where $A_h=D_\eta A[h]$ and $A_{12}=D_\eta^2A[h_1,h_2]$.
The same bounds, with at most one factor at the higher Sobolev index,
hold for the first two variations in the physical outer charts.  Parameter derivatives of total order at
most two obey the corresponding estimates.
\end{lemma}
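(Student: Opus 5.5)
The plan is to treat $\Theta_\eta$ and $A_{\varepsilon,\eta}$ as explicit nonlinear functions of $(\eta,\eta_\theta,\varepsilon)$ times fixed smooth radial cutoffs. Everything then reduces to Moser-type composition and product estimates, the Sobolev algebra property of $H^{s+1}$ on the bounded region $1<r<4$ (in dimension two, $H^{s+1}$ is an algebra once $s+1>1$), and Sobolev embedding on $\T$.

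\emph{Diffeomorphism and ellipticity.} In polar coordinates, $\Theta_\eta$ maps $(r,\theta)\mapsto(r+\chi(r)\eta(\theta),\theta)$. It is the identity for $r\ge3/2$, and its radial derivative is $1+\chi'(r)\eta(\theta)\ge1-\|\chi'\|_\infty\|\eta\|_{L^\infty}>1/2$ once $\|\eta\|_{H^{s_0+2}}$ is small. So $r\mapsto r+\chi\eta$ is strictly increasing on each ray. It sends $r=1$ to $1+\eta(\theta)$ and agrees with the identity at large $r$, hence it is a bijection of $[1,\infty)$ onto $[1+\eta(\theta),\infty)$. This makes $\Theta_\eta$ a diffeomorphism onto the exterior of the graph.

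By Sobolev embedding ($s_0\ge4$), $\|D\Theta_\eta-I\|_{C^0}\lesssim\|\eta\|_{C^1}\lesssim\|\eta\|_{H^{s_0+2}}$. Also $|\varepsilon\chi_1(\Theta_\eta)_1|\le5|\varepsilon|$. Therefore $A=I+O(|\varepsilon|+\|\eta\|_{H^{s_0+2}})$ in $L^\infty$, and the two-sided bound with constants $1/2,2$ holds once the smallness threshold is chosen.

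\emph{Tame coefficient bounds.} Write $D\Theta_\eta=I+M(\eta)$. The entries of $M$ in Cartesian coordinates are finite sums of terms $c(r,\theta)\eta$, $c\,\eta_\theta/r$, and products with $\chi'$. The coefficients $c$ are smooth on $1\le r\le4$, and the terms vanish for $r\ge3/2$. Then $\|M\|_{H^{s+1}(\mathcal A_*)}\lesssim\|\eta\|_{H^{s+2}(\T)}$, because a function of $\theta$ extended constantly in $r$ over a bounded annulus gains no derivatives. The loss of one derivative comes from $\eta_\theta$.

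Next, $\det D\Theta_\eta$ is a polynomial in $M$, and $(D\Theta_\eta)^{-1}$ is an analytic function $F(M)$ with $F(0)=I$. Moser's composition estimate gives
\[
\|F(M)-I\|_{H^{s+1}}\le C_s\bigl(1+\|M\|_{L^\infty}\bigr)^{k_s}\|M\|_{H^{s+1}}.
\]
The factor $1+\varepsilon\chi_1(\Theta_\eta)_1$ is smooth, with $H^{s+1}$ distance from $1$ at most $C(|\varepsilon|+|\varepsilon|\|\eta\|_{H^{s+2}})$ on $r\le4$. The tame product inequality $\|fg\|_{H^{s+1}}\lesssim\|f\|_{H^{s+1}}\|g\|_{L^\infty}+\|f\|_{L^\infty}\|g\|_{H^{s+1}}$ then yields the bound for $A-I$.

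For the variations, $A_h=D_\eta A[h]$ is linear in $(h,h_\theta)$ with coefficients that are smooth functions of $(\eta,\eta_\theta,\varepsilon)$ and the cutoffs. The tame product estimate, with $L^\infty$ norms controlled by $H^{s_0+2}$ via embedding, gives
\[
\|A_h\|_{H^{s+1}}\lesssim\|h\|_{H^{s+2}}+\|\eta\|_{H^{s+2}}\|h\|_{H^{s_0+2}}.
\]
Likewise $A_{12}$ is bilinear in $(h_1,h_2)$ and their $\theta$-derivatives. Placing the high norm on each factor in turn, with all others in $L^\infty$, gives the three terms stated. Parameter derivatives in $\varepsilon$ act only on the explicit affine factor $1+\varepsilon\chi_1(\Theta_\eta)_1$, so mixed derivatives of total order two obey the same estimates. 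In the physical outer charts the same argument applies verbatim with fixed smooth boundary parametrizations.

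\emph{Main obstacle.} I expect the delicate point to be organizing the tame estimates so that at most one factor carries the index $s$ while the constants and the smallness threshold depend only on $s_0$. This needs care with the Moser estimate for the inverse matrix. The smallness must be imposed only in $L^\infty$ (via $H^{s_0+2}$), and the polynomial dependence on $\|\eta\|_{H^{s+2}}$ must be collected into the linear factor $(1+\|\eta\|_{H^{s+2}})$, using $\|\eta\|_{H^{s_0+2}}\le1$.
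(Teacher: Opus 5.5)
Your proposal is correct and follows essentially the same route as the paper. You invert via radial monotonicity of $r\mapsto r+\chi(r)\eta(\theta)$, obtain ellipticity from smallness in $L^\infty$ via $H^{s_0+2}$, and then treat $A_{\varepsilon,\eta}-I$ (supported in $1<r<4$) as a smooth function of $(\varepsilon,\eta,\eta_\theta)$ with uniformly positive denominators and affine $\varepsilon$-dependence, bounding it and its first two variations by tame product and composition estimates with at most one high factor. The only cosmetic difference is that the paper reaches noninteger $s$ by interpolation, whereas your Moser and product estimates apply directly at real indices.
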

\begin{proof}
In orthonormal polar frames,
\begin{equation*}
D\Theta_\eta=\begin{pmatrix}
1+\chi'\eta&r^{-1}\chi\eta_\theta\\
0&1+r^{-1}\chi\eta
\end{pmatrix}.
\end{equation*}
Both diagonal entries stay positive, and the radial map is strictly
increasing for each angle and equals the identity for $r\ge3/2$.
This proves global invertibility and ellipticity.  The matrix entries
in \eqref{eq:A-eps-eta} are smooth rational functions of
$(\varepsilon,\eta,\eta_\theta)$ with uniformly positive denominators.
Differentiating once or twice and using the Sobolev product estimate,
with at most one factor at index $s+1$, proves the displayed bounds.
The physical outer charts have the same local form on a fixed collar.
The dependence on $\varepsilon$ is affine, and differentiated chart
coefficients have the same product structure.  Interpolation gives
noninteger indices with the same smallness threshold in the lower norm.
\end{proof}

\begin{lemma}
\label{lem:uniform-harmonic-extension}

Consider the global Dirichlet problem, its zero-flux reduction, or the
local exterior problem, after flattening.  Denote the flattened meridional domain by $\mathscr U$
and write $A$ for the uniformly elliptic matrix obtained from the
normalized weight $\rho/R$.  Bulk $L^2$ norms use planar Lebesgue
measure on $\mathscr U$.  For a vector field $F:\mathscr U\to\mathbb R^2$, the weak formulation of
$\operatorname{div}(A\nabla u)=\operatorname{div}F$ is
\begin{equation}\label{eq:large-domain-v}
\int_{\mathscr U} A\nabla u\cdot\nabla v\,dX\,dZ
=\int_{\mathscr U} F\cdot\nabla v\,dX\,dZ.
\end{equation}
The test functions have zero boundary traces in the Dirichlet case.
In the zero-flux case their inner trace is an arbitrary constant
and their outer trace is zero.  In the exterior case they belong to
the homogeneous energy space modulo constants and also have constant
inner trace.  In that case assume $F$ is supported in $r\le4$.  Then
\begin{equation}\label{eq:global-gradient-v}
\|\nabla u\|_2\le C\bigl(
\|F\|_2+\|f\|_{H^{1/2}}+
\|g_+\|_{H^1_{\rm ph}}\bigr),
\end{equation}
where the last term is omitted for the local problem.
For each integer $k\ge s_0$, if $A$ has uniformly controlled
$H^{k+1}$ coefficients on the inner collar and fixed lower ellipticity
and coefficient bounds, then
\begin{align}
&\|u-c_-\|_{H^{k+3/2}(1<r<3/2)}
+\|n\cdot A\nabla u|_{r=1}\|_{H^k}
\label{eq:large-domain-local-estimate}\\
&\quad\le C_k\bigl(
\|F\|_{H^{k+1/2}(1<r<3)}+\|F\|_2
+\|f\|_{H^{k+1}}+\|g_+\|_{H^1_{\rm ph}}\bigr).
\notag
\end{align}
Here $n=-e_r$ is the outward normal of the flattened fluid domain at
$r=1$, $c_-$ is the free inner constant, and $c_-=0$ when the
representative has prescribed inner trace $f$.  The corresponding
estimates hold in a fixed physical outer collar: the outer $H^k$
conormal norm is controlled by the physical $H^{k+1}$ norm of $g_+$,
the energy level $H^{1/2}$ norm of $f$, the global $L^2$ norm of $F$,
and its corresponding physical collar Sobolev norm, with constants
depending on the physical collar coefficient bounds.
The constants are independent of $a$.
\end{lemma}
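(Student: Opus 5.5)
The plan is to derive the energy bound \eqref{eq:global-gradient-v} from the weak formulation \eqref{eq:large-domain-v} by a lifting argument, and then obtain \eqref{eq:large-domain-local-estimate} from localized elliptic regularity in the fixed scaled collar. The key point throughout is that every constant is computed either in the fixed scaled annulus $1<r<3$ or in a fixed physical outer collar. The only long-range information passes through the scale-invariant Dirichlet energy, so no constant can depend on $a$.

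\emph{Energy bound.} Let $E_f$ be a fixed extension of $f$ supported in $1\le r\le 3/2$, with $\|\nabla E_f\|_2\lesssim\|f\|_{H^{1/2}}$. Let $E_{g}$ be an extension of $g_+$ supported in a fixed physical collar of $\mathscr C_+$. The Dirichlet integral is conformally invariant in two dimensions, so computing it in physical variables gives $\|\nabla E_g\|_2\lesssim\|g_+\|_{H^1_{\rm ph}}$ independently of $a$; the stronger $H^1$ norm is harmless here.
\begin{itemize}
\item In the Dirichlet case, set $w=u-E_f-E_g$ and test with $w$. Uniform ellipticity (Lemma~\ref{lem:flattening-geometry} in the collar, and the bounds $R_{\min}$, $\rho_*$ elsewhere) then gives \eqref{eq:global-gradient-v}.
\item In the zero-flux case the test space contains functions with a constant inner trace, and $u-c_--E_f-E_g$ is admissible. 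The same computation applies.
\item In the exterior case, work in the homogeneous energy space modulo constants. Here $F$ is compactly supported in $r\le 4$, so $\int F\cdot\nabla v$ is bounded by $\|F\|_2\|\nabla v\|_2$. Lax--Milgram gives existence and the same bound.
\end{itemize}
Since we only use gradients, no Poincaré constant depending on the size of $\mathscr U$ ever enters.

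\emph{Collar regularity.} Localize with a cutoff equal to one on $1<r<3/2$ and supported in $r<3$, applied to $u-c_-$. The localized function solves a Dirichlet problem on the fixed annulus. The data are $f$, the source $F$, and commutator terms involving $\nabla u$ and $u-\bar u$ on $3/2<r<3$. Those commutator terms are controlled by the energy bound together with a Poincaré inequality on the fixed annulus. Standard boundary $H^{m}$ regularity for $\operatorname{div}(A\nabla\cdot)$ on a smooth fixed domain, with coefficients in $H^{k+1}$ where $k+1>2$, then applies; we induct on $m$ using difference quotients in $\theta$ and the equation for $\partial_r^2$. This yields $u-c_-\in H^{k+3/2}$ in the collar, since $f\in H^{k+1}$ has a harmonic lift in $H^{k+3/2}$. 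The conormal trace $n\cdot A\nabla u$ then lies in $H^{k}$ by the trace theorem. The $H^{k+1/2}$ norm of $F$ enters exactly at this level.

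\emph{Outer collar and main obstacle.} The outer collar estimate is the same argument, carried out in physical coordinates with the physical chart coefficients. Because the collar sits at distance $\ge d_*$, the inner trace enters only through the energy, and hence only through $\|f\|_{H^{1/2}}$.

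The main obstacle is the inductive step. The commutators produced by the coefficient regularity must be tame, i.e.\ linear in the top coefficient norm, and the constant must not depend on the coefficient beyond the stated $H^{k+1}$ collar bound and the fixed lower bounds. I would handle this with the Moser-type product estimates already used in Lemma~\ref{lem:flattening-geometry}: place at most one factor at the top index, and absorb the lower-order terms by interpolation against the energy bound.
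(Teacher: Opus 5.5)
Your proposal is correct and follows the paper's proof essentially step for step. The energy bound comes from disjointly supported collar lifts of $f$ and $g_+$, bounded via the dilation invariance of the two-dimensional Dirichlet energy, and from testing the weak formulation with the remainder. The collar estimate then uses, in each fixed collar, a Poincar\'e inequality for the function with zero inner trace, nested cutoffs, tangential differentiation with recovery of the normal derivatives from the equation, and interpolation up to $H^{k+3/2}$.

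One small correction. In the zero-flux case the admissible test function is $u-E_f-E_g$ itself, whose inner trace is the constant $c_-$ and whose outer trace is zero. By contrast, $u-c_--E_f-E_g$ has outer trace $-c_-$ and lies outside the test space. Correspondingly, in the collar the Poincar\'e inequality is most directly applied to $u-c_--E_f$, which vanishes at $r=1$, rather than to $u-\bar u$.
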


\begin{proof}
Extend $f$ in a fixed scaled collar and $g_+$ in a fixed physical
outer collar, choosing disjoint supports.  Their gradient norms are
bounded by $C\|f\|_{H^{1/2}}$ and
$C\|g_+\|_{H^1_{\rm ph}}$: two-dimensional Dirichlet energy is
invariant under dilation.  Subtract these lifts in
\eqref{eq:large-domain-v} and test with the remainder.  In the
zero-flux problem, the inner constant of this admissible remainder may
be nonzero.  Uniform ellipticity proves
\eqref{eq:global-gradient-v} using only the lift bounds on the two fixed
collars.  The same argument constructs the solution in the
energy space.  For the exterior problem, the nonconstant trace is
continuous in the gradient norm by the collar trace and Poincar\'e
inequalities.  The minimizing gradient is unique.  Outside $r=4$
its harmonic continuation has finite energy, so its Fourier
expansion has a constant term and decaying nonzero modes, with no
logarithmic term.  Fixing the inner trace chooses the representative.
Subtracting its constant limit gives the equivalent representative that
vanishes at infinity.

Subtract $c_-$ and a collar lift of $f$ near the inner boundary.
The resulting function has zero inner trace, so the Poincar\'e
inequality on $1<r<3$ bounds its local $H^1$ norm by the global
gradient bound and the lift norm.  Nested cutoffs supported in this
fixed collar reduce the higher estimate to a uniformly elliptic
boundary problem.  Tangential differentiation gives estimates for the tangential
derivatives, and the divergence equation then recovers the normal derivatives.
The coefficient products require at most $H^{k+1/2}$ regularity
of $A$, supplied by its $H^{k+1}$ bound.  The fixed lower index
controls the Lipschitz norm of the coefficients.  Induction and interpolation
give the $H^{k+3/2}$ estimate in
\eqref{eq:large-domain-local-estimate}.  The flux trace follows from
$\nabla u\in H^{k+1/2}$, whose index is strictly above the trace
threshold.  On the outer collar the same argument is performed in
physical coordinates, subtracting a lift of $g_+$.  Its local
Poincar\'e constant is fixed.  The constants in these two collar estimates
are independent of the diameter of $\mathscr D_a$.
\end{proof}

\begin{proposition}\label{prop:full-C2-flux-DN}

Let $\boldsymbol\zeta=(\eta,z)$, where $\eta$ is the relative inner graph
and $z$ is the physical normal graph over a fixed smooth outer curve,
and let $\boldsymbol f=(f,g_+)$ be the trace pair.  For pairs, use the sum
of the scaled inner and physical outer Sobolev norms.  A graph
direction is $H=(h_-,h_+)$.  Omit the outer components for the local
map.  Let $\mathfrak G^\nu$ denote the global Dirichlet map, its
zero-flux reduction, or the local map $\widehat{\mathfrak G}$, with
$\nu$ indexing these three choices.  For $s\ge s_0$, put
$N_s=1+\|\boldsymbol\zeta\|_{H^{s+2}}$ and
$|H|_s=\|H\|_{H^{s+2}}$.
These maps are $C^2$ for graphs in $H^{s+2}$ and traces in $H^{s+1}$,
take values in $H^s$, and satisfy the uniform estimates
\begin{align}
\|\mathfrak G^\nu\boldsymbol f\|_{H^s}
&\le C_s\bigl(\|\boldsymbol f\|_{H^{s+1}}
+N_s\|\boldsymbol f\|_{H^{s_0+1}}\bigr),
\label{eq:G0-tame}\\
\|D\mathfrak G^\nu[H]\boldsymbol f\|_{H^s}
&\le C_s\bigl(|H|_{s_0}\|\boldsymbol f\|_{H^{s+1}}
+( |H|_s+N_s|H|_{s_0})\|\boldsymbol f\|_{H^{s_0+1}}\bigr),
\\
\|D^2\mathfrak G^\nu[H_1,H_2]\boldsymbol f\|_{H^s}
&\le C_s\bigl(|H_1|_{s_0}|H_2|_{s_0}\|\boldsymbol f\|_{H^{s+1}}
\notag\\
&\quad+(|H_1|_s|H_2|_{s_0}+|H_2|_s|H_1|_{s_0}
+N_s|H_1|_{s_0}|H_2|_{s_0})
\|\boldsymbol f\|_{H^{s_0+1}}\bigr).
\label{eq:D2G-final-tame}
\end{align}
The flux bounds are uniform in $a$.  The bounds for the free scalar trace may contain finite powers of $L_a$.  Mixed local $\varepsilon$ and graph
variations of total order at most two obey the same estimates, with
one direction norm omitted for each $\varepsilon$ derivative.
For simultaneous variations of the centre and relative radius, extend
the graph direction to $H=(h_-,h_+,\delta c,\delta\log a)$, where
$\delta c\in\mathbb R^2$ is the physical centre displacement, and use
\begin{equation*}
|H|_s=\|h_-\|_{H^{s+2}}+\|h_+\|_{H^{s+2}}
+|\delta c|+|\delta\log a|.
\end{equation*}
The global estimates hold for these extended directions.  Trace
variations are included by linearity.
\end{proposition}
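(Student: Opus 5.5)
The plan is to reduce all three choices of $\mathfrak G^\nu$ to the flattened divergence-form problem of Lemma~\ref{lem:uniform-harmonic-extension}, with coefficients $A=A_{\varepsilon,\boldsymbol\zeta}$ built as in \eqref{eq:A-eps-eta} on the scaled inner collar and by the analogous chart on the fixed physical outer collar. Write $u=u[\boldsymbol\zeta,\boldsymbol f]$ for the flattened solution, so that $\mathfrak G^\nu\boldsymbol f=-n\cdot A\nabla u$ on the two boundary components. The zero-flux reduction only adds the free constant $c_-$ to the test space; since the weak formulation \eqref{eq:large-domain-v} is stated uniformly for the three cases, every estimate below is carried out once. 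For \eqref{eq:G0-tame}, apply \eqref{eq:large-domain-local-estimate} with $F=0$ and $k=s$. The constant $C_k$ there depends on the $H^{s+1}$ coefficient norm. I will therefore retrace the induction in that lemma, tangential differentiation followed by recovery of the normal derivative from the equation, keeping track of how this norm enters. Using the tame product estimate with at most one factor at the top index, the top-order coefficient contributes $\|A-I\|_{H^{s+1}}\|u\|_{H^{s_0+3/2}}$. By Lemma~\ref{lem:flattening-geometry} this is at most $N_s\|\boldsymbol f\|_{H^{s_0+1}}$, which gives the tame form. Uniformity in $a$ follows because the gradient bound \eqref{eq:global-gradient-v} is dilation invariant and the two collars are disjoint and fixed.

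For the variations, differentiate the weak formulation in the direction $H$. The derivative $u_H=D_\zeta u[H]$ then solves $\operatorname{div}(A\nabla u_H)=-\operatorname{div}(A_H\nabla u)$ with zero trace data. In the zero-flux case it has constant inner trace; in the exterior case $F=A_H\nabla u$ is supported in $r\le4$. Differentiating $-n\cdot A\nabla u$ gives
\[
D\mathfrak G^\nu[H]\boldsymbol f=-n\cdot A_H\nabla u-n\cdot A\nabla u_H .
\]
The first term is handled by a tame product estimate. For the second, apply \eqref{eq:large-domain-local-estimate} to $u_H$ with $F=A_H\nabla u$, whose $H^{s+1/2}$ collar norm is split tamely into
\[
\|A_H\|_{H^{s+1}}\|\nabla u\|_{H^{s_0}}+\|A_H\|_{H^{s_0+1}}\|\nabla u\|_{H^{s+1/2}} .
\]
Inserting the bound on $A_h$ from Lemma~\ref{lem:flattening-geometry} and the zeroth-order bound on $u$ yields exactly the structure $|H|_{s_0}\|\boldsymbol f\|_{H^{s+1}}+(|H|_s+N_s|H|_{s_0})\|\boldsymbol f\|_{H^{s_0+1}}$. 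For the second variation, differentiate once more. The source becomes $A_{12}\nabla u+A_{H_1}\nabla u_{H_2}+A_{H_2}\nabla u_{H_1}$, and the same bookkeeping, with the $A_{12}$ bound from that lemma, gives \eqref{eq:D2G-final-tame}.

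$C^2$ regularity, as opposed to mere Gateaux bounds, follows from two further facts. The coefficient map $\boldsymbol\zeta\mapsto A$ is smooth, being rational in $(\eta,\eta_\theta)$. The solution operator of the uniformly coercive problem depends smoothly on $A$ by a Neumann series in the energy space. The difference quotients then converge using the estimates above at a slightly lower index. Parameter variations are simpler: the $\varepsilon$-dependence of $A$ is affine, so each $\partial_\varepsilon$ produces a source of the same type without a direction norm.

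The main obstacle is the extended directions $\delta c$ and $\delta\log a$. These move the inner collar relative to the fixed outer chart and rescale the domain $\mathscr D_a$. I would absorb them with a diffeomorphism of the form $x\mapsto x+\chi_{\rm in}(x)\,\delta c+\chi_{\rm in}(x)(x-x_0)\,\delta\log a$. Its cutoff $\chi_{\rm in}$ equals one on the scaled collar and vanishes on the physical outer collar. It is supported in the intermediate region, where the weight $\rho/R$ is smooth and bounded below by \eqref{eq:macro-geometry-assumptions}. Its coefficient variation $A_H$ is supported in that intermediate annulus, and $\|A_H\|_{L^\infty}\lesssim|\delta c|+|\delta\log a|$ uniformly in $a$. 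Because two-dimensional Dirichlet energy is scale invariant, \eqref{eq:global-gradient-v} controls $\nabla u_H$ uniformly, and the collar estimates then go through unchanged. The delicate point is that the free constant $c_-$ and the exterior representative can pick up factors of $L_a$ under $\delta\log a$. As allowed in the statement, I would tolerate finite powers of $L_a$ there, while keeping the flux bounds uniform because they depend only on gradients.
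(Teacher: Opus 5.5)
Your proposal is correct and is essentially the paper's argument. It differentiates the coercive weak formulation to get source problems for $u_H$ and $u_{12}$, makes the collar estimates of Lemma~\ref{lem:uniform-harmonic-extension} tame by commuting tangential derivatives through $\operatorname{div}(A\nabla\cdot)$ so that at most one factor sits at the top index, and realizes $\delta c$ and $\delta\log a$ by cut-off deformations with uniformly bounded coefficient variations. Two small differences are harmless. The paper realizes $\delta\log a$ as an $O(a)$ displacement in an $O(a)$ collar rather than your cut-off dilation. Also, because the weight $\rho$ varies, your $A_H$ is uniformly bounded but not supported only in the intermediate annulus.

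The main step you assert without proof is that the free constant carries only finite powers of $L_a$. The paper obtains this from $c_-=-\langle e,\mathfrak G\boldsymbol f\rangle/\langle e,\mathfrak G e\rangle$ with $e=(1,0)$ and $\langle e,\mathfrak G e\rangle\asymp L_a^{-1}$ (from the Schur complement in Lemma~\ref{lem:weighted-layer-expansion}); differentiating this quotient twice gives the polynomial $L_a$ factors.
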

\begin{proof}
We divide the proof into three steps.

\emph{Step 1. Differentiated equations and flux estimates.}
With fixed trace data in the reference coordinates, write $u_i=D u[H_i]$ and
$u_{12}=D^2u[H_1,H_2]$, and set $A_i=DA[H_i]$ and
$A_{12}=D^2A[H_1,H_2]$.  Coercivity in the fixed test space of
\eqref{eq:large-domain-v} allows differentiation and gives
\begin{align}
\operatorname{div}(A\nabla u_i)
&=-\operatorname{div}(A_i\nabla u),\label{eq:first-u-variation}\\
\operatorname{div}(A\nabla u_{12})
&=-\operatorname{div}(A_{12}\nabla u+A_1\nabla u_2+A_2\nabla u_1).
\label{eq:second-u-variation}
\end{align}
In the Dirichlet problem the differentiated traces vanish.  In the
zero-flux problem their inner traces are free constants, their outer
traces vanish, and the same equations are tested against functions
with arbitrary constant inner trace.  Thus the total conormal flux,
including the displayed coefficient sources, is zero at each order.
When the trace data vary, subtract their collar lifts.  Denote the inner
component by $\mathfrak G_-^\nu$, with
$\mathfrak G_-^\nu=\widehat{\mathfrak G}$ for the local map.
On the fixed inner boundary its differentiated normalized fluxes are
\begin{align*}
D\mathfrak G_-^\nu[H]\boldsymbol f
&=-e_r\cdot(A_H\nabla u+A\nabla u_H)|_{r=1},
\\
D^2\mathfrak G_-^\nu[H_1,H_2]\boldsymbol f
&=-e_r\cdot(A_{12}\nabla u+A_1\nabla u_2
+A_2\nabla u_1+A\nabla u_{12})|_{r=1}.
\end{align*}
Here $A_H=DA[H]$ and $u_H=Du[H]$.  Outer fluxes are evaluated in
physical charts.  Apply Lemma~\ref{lem:uniform-harmonic-extension}
first to $u$, then to
$u_i$ and $u_{12}$.  To retain tame dependence, commute the tangential
derivatives with the full operator $\operatorname{div}(A\nabla)$.
In the highest order commutator, the highest derivative of a coefficient
multiplies a lower derivative of $u$.  All other terms have
fewer derivatives of the solution.  The product bounds of
Lemma~\ref{lem:flattening-geometry}, induction, and recovery of normal
derivatives therefore require at most one factor at the higher Sobolev index.
The displayed flux identities give
\eqref{eq:G0-tame}--\eqref{eq:D2G-final-tame}.

\emph{Step 2. Variations of the global geometry.}
For the global problem use the inner scaled collar and a disjoint
physical outer collar.  A centre displacement is a translation near
the hole, cut off in a fixed neighborhood separated from the outer boundary.
A relative radius displacement has size $O(a)$ in an $O(a)$ collar.  Their coefficient
variations have uniform bounds in the respective charts.  Applying the same
energy tests and collar estimates, with constant trace test functions
for the zero-flux reduction, therefore gives the global formulas.
The smallness threshold in the lower norm remains fixed as $s$ increases.

\emph{Step 3. The free scalar trace.}
The free inner trace is recovered by
\begin{equation*}
c_-=-\frac{\langle e,\mathfrak G\boldsymbol f\rangle}
{\langle e,\mathfrak G e\rangle},\qquad e=(1,0).
\end{equation*}
The Schur complement estimate for the layer operator in
Lemma~\ref{lem:weighted-layer-expansion} gives $\langle e,\mathfrak G e\rangle\asymp L_a^{-1}$.
Differentiating this quotient twice introduces only finite powers
of $L_a$.  Finally, applying the same estimates to coefficient
differences proves continuity of both state differentials.
\end{proof}

\begin{lemma}
\label{lem:cross-block-small-hole}
Fix a Sobolev index $s\ge s_0$.  Assume that the outer meridional boundary remains in a uniformly bounded
$C^{s+4}$ class and stays a fixed positive distance from the centre point $x_0$.
For the inner circle $X_-(\theta)=x_0+ae_r(\theta)$, let $a\downarrow0$
and let $\mathfrak G_{-+}^{(a)}$ denote the block of the flux DN matrix
that maps the outer trace to the inner flux.  Then, after projecting away the
inner Fourier modes $0,\pm1$,
\begin{equation}\label{eq:cross-block-a2}
\|P_{\ge2}\mathfrak G_{-+}^{(a)}g\|_{H^s(\T)}
\le C_s a^2L_a^{M_s}\|g\|_{H^{s+5/2}(\mathscr C_+)}.
\end{equation}
For a centred perturbed circle, write $\mathfrak G_{-+}^{a,\eta}$ for the
corresponding block, so that $\mathfrak G_{-+}^{a,0}=\mathfrak G_{-+}^{(a)}$.
Then
\begin{align}
P_{\ge2}\mathfrak G_{-+}^{a,\eta}g
&=P_{\ge2}\mathfrak G_{-+}^{a,0}g+\mathcal E_{a,\eta}g,
\label{eq:cross-graph-corrected}\\
\mathcal E_{a,\eta}g
&:=\int_0^1P_{\ge2}D_\eta\mathfrak G_{-+}^{a,t\eta}[\eta]g\,dt.
\notag
\end{align}
The circular term satisfies \eqref{eq:cross-block-a2}, while the graph
term $\mathcal E_{a,\eta}$ is controlled by the tame estimates for shape derivatives.
\end{lemma}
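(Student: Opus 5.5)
Our plan is to represent the cross block through the Green kernel and exploit that the outer boundary is separated from the small hole. Let $u$ be the weighted harmonic function in the annular meridional domain with inner trace $0$ and outer trace $g$. We write $u=u_0-V$: here $u_0$ is the solution in the filled domain $\mathscr D_0$ with outer data $g$, and $V$ corrects its inner trace, so $V$ is weighted harmonic with $V=u_0$ on the small circle and $V=0$ on $\mathscr C_+$. Interior elliptic regularity for $\operatorname{div}(\rho\nabla\cdot)$, applied on a fixed disk of radius $d_*/2$ about $x_0$, gives $\|u_0\|_{C^{k}}\le C_k\|g\|_{H^{1/2}(\mathscr C_+)}$ for every $k$. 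In particular $u_0$ is smooth near $x_0$ with uniform bounds, and we Taylor expand $u_0(x_0+ae_r(\theta))=u_0(x_0)+a\nabla u_0(x_0)\cdot e_r(\theta)+a^2q_a(\theta)$, where $q_a$ is uniformly smooth. The first two terms lie in $P_0\oplus P_1$, and the remainder is $O(a^2)$ in every $H^k$. The flux of $u_0$ itself across the circle, multiplied by the normalizing factor $a\rho/R$, is $a\,\partial_nu_0$; its $P_{\ge2}$ part is $O(a^2)$ by the same Taylor expansion, since $\nabla u_0(x_0)\cdot n$ is a pure first harmonic and the next term is $O(a)$.

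For the correction $V$ we use the layer representation of Lemma~\ref{lem:weighted-layer-expansion}, $V=\int G_0(\cdot,y)\mu\,d\varphi$ with $\mathcal S_a\mu=u_0|_{\text{circle}}$, so that the normalized inner flux of $V$ is $R^{-1}\mathcal T_a\mathcal S_a^{-1}(u_0|_{\text{circle}})$. We split the data into $P_0$, $P_1$ and the $a^2q_a$ remainder. The $P_0$ part contributes $O(a^2L_a^{M_s})|u_0(x_0)|$ to $P_{\ge2}$ by \eqref{eq:layer-P0-leakage}. For the $P_1$ part, \eqref{eq:layer-P1-leakage} gives only $O(aL_a^{M_s})$, but its input already carries a factor $a$, namely $a\nabla u_0(x_0)$, which yields $O(a^2L_a^{M_s})$. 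The remainder $a^2q_a$ contributes $O(a^2L_a^{M_s})\|q_a\|_{H^{s+1}}$ by \eqref{eq:Sa-inverse-nonzero} and the boundedness of $\mathcal T_a$ given by \eqref{eq:Ta-expansion}. Finally we sum, take $\|g\|$ in a norm high enough to control the $C^{s+3}$ norms of $u_0$ near $x_0$, and bound that norm by $H^{s+5/2}(\mathscr C_+)$, which is amply sufficient. This proves \eqref{eq:cross-block-a2}; the $C^{s+4}$ assumption on the boundary enters only in the uniformity of these elliptic constants.

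The step needing the most care is sign and normalization bookkeeping. We must check that the flux of $u$ equals that of $u_0$ minus that of $V$ in exactly the normalized form $R^{-1}\mathcal T_a\mathcal S_a^{-1}$ identified after Lemma~\ref{lem:weighted-layer-expansion}. We must also check that $a\,\partial_n u_0$ is the correct prefactor, consistent with $\mathcal T_a$ carrying $a\,w$. A subtle point is that the Taylor coefficients of $u_0$ must be uniformly bounded independently of $a$. This holds because $u_0$ is defined on the fixed filled domain.

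For the graph statement, \eqref{eq:cross-graph-corrected} is the fundamental theorem of calculus along $t\mapsto t\eta$. This requires only that $\eta\mapsto\mathfrak G_{-+}^{a,\eta}g$ be $C^1$ into $H^s$, which Proposition~\ref{prop:full-C2-flux-DN} supplies for the global Dirichlet map, as that map is $C^2$ in the graph. The tame bound for $\mathcal E_{a,\eta}$ then follows by applying the first-variation estimate of that proposition inside the integral, with $H=(\eta,0)$.
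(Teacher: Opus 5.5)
Your proposal is correct and follows essentially the same route as the paper: it uses the filled-domain solution, a first-order Taylor expansion on the small circle, and the single-layer correction with \eqref{eq:layer-P0-leakage} for the constant mode and $a$ times \eqref{eq:layer-P1-leakage} for the first harmonic. It then adds the direct Taylor bound for the filled-domain flux and integrates the shape derivative from Proposition~\ref{prop:full-C2-flux-DN} along $t\eta$ for the graph term. The one bookkeeping point concerns the remainder $a^2q_a$, where citing \eqref{eq:Sa-inverse-nonzero} alone does not suffice: as the paper does, split off its mean, which is handled by \eqref{eq:layer-P0-leakage}, and use that the full operator $\mathcal T_a\mathcal S_a^{-1}$ is bounded from $H^{s+1}$ to $H^s$ on $P_{\ne0}$ inputs, including the off-diagonal block $P_0\mathcal S_a^{-1}P_{\ne0}$, with at most polynomial factors in $L_a$.
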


\begin{proof}
Let $W$ solve the filled macroscopic Dirichlet problem with outer datum $g$.
Interior elliptic regularity and first-order Taylor expansion at $x_0$ give
\begin{align*}
W(x_0+ae_r(\theta))
&=W(x_0)+a\ell_1(\theta)+r_a(\theta),
\\
a\,w(x_0+ae_r(\theta))\partial_{e_r}W(x_0+ae_r(\theta))
&=a\widetilde\ell_1(\theta)+\widetilde r_a(\theta),
\end{align*}
where $\ell_1,\widetilde\ell_1\in P_1L^2(\T)$ and
\begin{equation*}
\|r_a\|_{H^{s+2}}+\|\widetilde r_a\|_{H^{s+1}}
\le C_sa^2\|g\|_{H^{s+5/2}(\mathscr C_+)}.
\end{equation*}
The perforated solution is $W+u$.  The single-layer correction is given by
\begin{equation*}
u(x)=\int_{\T}G_0(x,x_0+ae_r(\varphi))\mu(\varphi)\,d\varphi,
\qquad
\mathcal S_a\mu=-W\circ X_-.
\end{equation*}
The correction has zero outer trace.  Its normalized inner conormal trace is
$\mathcal T_a\mathcal S_a^{-1}(-W\circ X_-)$.
Apply \eqref{eq:layer-P0-leakage} to the constant term and
$a$ times \eqref{eq:layer-P1-leakage} to the first harmonic.
For $r_a$, separate its mean and apply the same constant mode bound
together with the uniform bound of order one on $P_{\ne0}$.  This gives
\begin{equation*}
\|P_{\ge2}\bigl((a w\partial_{n_-}u)\circ X_-\bigr)\|_{H^s}
\le C_sa^2L_a^{M_s}\|g\|_{H^{s+5/2}(\mathscr C_+)}.
\end{equation*}
The direct trace has the same bound by its Taylor expansion.
Adding both traces and dividing by $R$ proves \eqref{eq:cross-block-a2}.
Since $a=\ell e^{-Y}$,
$L_a^{M_s}\le C_s(1+Y)^{M_s}$.

Integrating the derivative from Proposition~\ref{prop:full-C2-flux-DN}
along $t\eta$ gives \eqref{eq:cross-graph-corrected} and its tame bound.
\end{proof}

Let
\begin{equation*}
r(\theta)=1+\eta(\theta),
\qquad
j(\theta)=\sqrt{r^2+r_\theta^2}.
\end{equation*}
For the normal pointing away from the centre circle, the dimensionless mean curvature of the surface of revolution is
\begin{equation}\label{eq:explicit-dimensionless-curvature}
\mathcal J_\varepsilon(\eta)
=\frac{r^2+2r_\theta^2-r r_{\theta\theta}}{j^3}
+\frac{\varepsilon\bigl(r\cos\theta+r_\theta\sin\theta\bigr)}
{j\bigl(1+\varepsilon r\cos\theta\bigr)}.
\end{equation}
Using the inner fluid normal reverses the sign and leaves the bounds in
Proposition~\ref{prop:full-C2-curvature} unchanged.

\begin{proposition}\label{prop:full-C2-curvature}
Under the hypotheses of Lemma~\ref{lem:flattening-geometry},
$\mathcal J_\varepsilon:H^{s+2}\to H^s$ is $C^\infty$ and
\begin{align}
\|D\mathcal J_\varepsilon(\eta)[h]\|_{H^s}
\le C_s\Bigl[
&\|h\|_{H^{s+2}}
+\|\eta\|_{H^{s+2}}\|h\|_{H^{s_0+2}}
\Bigr],\label{eq:DJ-final}\\
\|D^2\mathcal J_\varepsilon(\eta)[h_1,h_2]\|_{H^s}
\le C_s\Bigl[&
\|h_1\|_{H^{s+2}}\|h_2\|_{H^{s_0+2}}
+\|h_2\|_{H^{s+2}}\|h_1\|_{H^{s_0+2}}\notag \\
&+\|\eta\|_{H^{s+2}}
\|h_1\|_{H^{s_0+2}}\|h_2\|_{H^{s_0+2}}
\Bigr].\label{eq:D2J-final}
\end{align}
All mixed $\varepsilon$ derivatives through total order two obey the same tame bounds.  At the cylindrical state,
\begin{equation*}
D\mathcal J_0(0)[h]=-(\partial_\theta^2+1)h,
\end{equation*}
and
\begin{equation}\label{eq:curvature-eps-selection}
\left.\partial_\varepsilon\mathcal J_\varepsilon(0)\right|_{\varepsilon=0}
=\cos\theta.
\end{equation}
Thus the first correction in $\varepsilon$ at the circular state lies entirely in the first Fourier mode.
\end{proposition}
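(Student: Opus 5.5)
The formula \eqref{eq:explicit-dimensionless-curvature} expresses $\mathcal J_\varepsilon(\eta)$ as a polynomial in $(r_\theta^2, r r_{\theta\theta})$ multiplied by smooth functions of $(r,r_\theta,\varepsilon,\cos\theta,\sin\theta)$. These functions are well defined once $r$ stays near $1$ and $\varepsilon r\cos\theta$ stays small. The plan is to treat $\mathcal J_\varepsilon$ as a composition operator on the algebra $H^{s}$, $s\ge s_0\ge4$, using Moser-type composition and product estimates.

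\emph{Smallness and smoothness.} Under the hypotheses of Lemma~\ref{lem:flattening-geometry}, $\|\eta\|_{H^{s_0+2}}$ is small, so Sobolev embedding gives $|\eta|+|\eta_\theta|+|\eta_{\theta\theta}|\le1/4$ pointwise. The denominators $j^3$ and $j(1+\varepsilon r\cos\theta)$ are then uniformly bounded below. The map $\eta\mapsto(r,r_\theta,r_{\theta\theta})$ is bounded linear from $H^{s+2}$ into $H^{s+2}\times H^{s+1}\times H^s$. Composition with a smooth function $F$ then maps $H^{s}$ into $H^{s}$ smoothly, by the standard Moser estimate
\[
\|F(U)-F(0)\|_{H^s}\le C_s(\|U\|_{L^\infty})\|U\|_{H^s}.
\]
The second-derivative term appears only linearly: it is $-r r_{\theta\theta}/j^3$. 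Hence $\mathcal J_\varepsilon$ is $C^\infty$ from $H^{s+2}$ to $H^s$, and it is affine in $r_{\theta\theta}$ with coefficients depending on $(r,r_\theta,\varepsilon)$ only.

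\emph{Tame bounds.} Differentiate once. This gives
\[
D\mathcal J_\varepsilon(\eta)[h]=a_2(\cdot)\,h_{\theta\theta}+a_1(\cdot)\,h_\theta+a_0(\cdot)\,h,
\]
where each coefficient $a_i$ is a smooth function of $(\varepsilon,\theta,r,r_\theta)$, together with $r_{\theta\theta}$ entering linearly in $a_0,a_1$. Apply the tame product estimate
\[
\|fg\|_{H^s}\lesssim\|f\|_{H^s}\|g\|_{L^\infty}+\|f\|_{L^\infty}\|g\|_{H^s}.
\]
Bound $L^\infty$ norms of $h$ and its first two derivatives by $\|h\|_{H^{s_0+2}}$. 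Bound $\|a_i\|_{H^s}$ by $C_s(1+\|\eta\|_{H^{s+2}})$ using Moser, and $\|a_i\|_{L^\infty}$ by a constant. This yields \eqref{eq:DJ-final}. The second derivative is a sum of terms that are bilinear in derivatives of $(h_1,h_2)$ up to order two, with coefficients of the same type, each bilinear term containing at most one second derivative. Distributing the $H^s$ norm onto at most one factor gives \eqref{eq:D2J-final}. Mixed $\varepsilon$-derivatives only differentiate the smooth coefficients in $\varepsilon$, so the same argument applies verbatim.

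\emph{Linearization at the cylinder.} For $\varepsilon=0$ and $r=1+h$, to first order
\[
j^3=1+3h+O(h^2),\qquad r^2+2r_\theta^2-rr_{\theta\theta}=1+2h-h_{\theta\theta}+O(h^2).
\]
The quotient is therefore $1-h-h_{\theta\theta}$, and $D\mathcal J_0(0)[h]=-(\partial_\theta^2+1)h$. At $\eta=0$ we have $r=1$, $r_\theta=0$ and $j=1$, so the $\varepsilon$-term equals $\varepsilon\cos\theta/(1+\varepsilon\cos\theta)$. Its $\varepsilon$-derivative at $\varepsilon=0$ is $\cos\theta$, which gives \eqref{eq:curvature-eps-selection}. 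The sign remark for the inner fluid normal is immediate.

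\emph{Main obstacle.} The only delicate point is the bookkeeping showing that at most one factor carries the high index $s+2$ while all others sit at $s_0+2$. This uses the tame Moser estimate for the coefficients, whose $H^s$ norm is linear in $\|\eta\|_{H^{s+2}}$ rather than polynomial.
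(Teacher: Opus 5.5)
Your proposal is correct and follows the same route as the paper. The paper also notes that the explicit formula is smooth in $(\varepsilon,\eta,\eta_\theta,\eta_{\theta\theta})$, with denominators bounded below once the lower norm is small. It then invokes tame composition and product estimates for \eqref{eq:DJ-final}--\eqref{eq:D2J-final}, and computes the cylindrical linearization and $\mathcal J_\varepsilon(0)=1+\varepsilon\cos\theta/(1+\varepsilon\cos\theta)$ directly. Your argument simply spells out the bookkeeping the paper leaves implicit: the affine dependence on $r_{\theta\theta}$ and Moser bounds that are linear in the high norm.
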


\begin{proof}
The expression \eqref{eq:explicit-dimensionless-curvature} is smooth
in $(\varepsilon,\eta,\eta_\theta,\eta_{\theta\theta})$ when the lower norm is sufficiently small, with denominators uniformly
bounded away from zero.
Tame composition and product estimates give
\eqref{eq:DJ-final} and \eqref{eq:D2J-final}.
Differentiating the planar polar curvature at $r=1$ gives
$D\mathcal J_0(0)[h]=-(h+h_{\theta\theta})$, while
\begin{equation*}
\mathcal J_\varepsilon(0)
=1+\frac{\varepsilon\cos\theta}{1+\varepsilon\cos\theta}
\end{equation*}
gives \eqref{eq:curvature-eps-selection}.
\end{proof}

For the outer energy estimates, we also need a separated boundary
estimate in which only the outer graph is measured at the output
regularity.

\begin{lemma} 
\label{lem:zero-separated-asymmetric}
Let $c$ be the inner centre in the meridional plane.  Suppose that
the inner boundary is $c+a(1+\eta(\theta))e_r(\theta)$ and the outer boundary is a
normal graph $z$ over a fixed smooth curve.  Assume that $c$ and both boundaries remain in the fixed separated
geometry class, that the cylindrical weight is bounded above and below by
positive constants, and that the fixed lower graph norms are small.
There are fixed integers $b,d$, independent of the output Sobolev index
$r\ge b$ and of $a$, such that the following estimates hold.

For an inner Dirichlet trace $f$, let $u_f$ solve the weighted harmonic
problem with inner trace $f$ and zero outer trace.  Define
\begin{equation*}
\mathsf G_{+-}(\eta,z)f
=\bigl(\partial_{n_+}u_f\bigr)\circ X_+ .
\end{equation*}
The same conclusions hold when the inner constant is chosen
by the zero-flux condition, and for the constant trace lifts
obtained by the scalar Schur complement.  For those lifts, the estimates may include a polynomial
factor $\mathcal{P}_r(Y)$, $Y=\log(\ell/a)$.  The normalization
of a prescribed flux is kept as an explicit scalar factor.

Put $A_r(z)=1+\|z\|_{H^{r+d}}$.  Then
\begin{equation*}
\|\mathsf G_{+-}(\eta,z)f\|_{H^r}
\le C_r\mathcal{P}_r(Y) A_r(z)\|f\|_{H^{1/2}}.
\end{equation*}
The inner graph enters only through its fixed lower norms.  To state
the variations, write $V_j=(\eta_j,z_j,f_j)$,
$\nu_j=\|\eta_j\|_{H^b}+\|z_j\|_{H^b}$, and
$F_j=\|f_j\|_{H^{1/2}}$, including $F_0=\|f\|_{H^{1/2}}$.
The first and second differentials satisfy
\begin{align}
\|D(\mathsf G_{+-}f)[V_1]\|_{H^r}
&\le C_r\mathcal{P}_r\bigl[
A_r(z)(F_1+F_0\nu_1)
+\|z_1\|_{H^{r+d}}F_0\bigr],
\label{eq:zero-separated-asymmetric-first}\\
\|D^2(\mathsf G_{+-}f)[V_1,V_2]\|_{H^r}
&\le C_r\mathcal{P}_r\bigl[
A_r(z)(F_0\nu_1\nu_2+F_1\nu_2+F_2\nu_1)\notag\\
&\hspace{35mm}
+\|z_1\|_{H^{r+d}}(F_0\nu_2+F_2)
+\|z_2\|_{H^{r+d}}(F_0\nu_1+F_1)\bigr].
\label{eq:zero-separated-asymmetric-second}
\end{align}
The constants depend on the fixed lower graph norms and the reference
geometry.  For centre and relative scale variations, extend
$V_j$ by $(\delta_jc,\delta_j\log a)$ and add
$|\delta_jc|+|\delta_j\log a|$ to $\nu_j$.
The same polynomial dependence on $Y$ holds.

The same assertions hold for the correction to the filled domain outer
Dirichlet--Neumann map caused by the hole.  Here the input may be measured
in a fixed $H^1$ outer trace norm.  Separation of the boundary components
gives estimates for the full kernel of this correction.

Here is a quantitative version for negative input indices. Fix an
integer $N_1\ge0$. Let $T_{+-}$ be the zero-flux inner-to-outer map,
and let $K_{++}$ be the zero-flux outer DN block minus the DN map of
the filled domain, both in the fixed boundary coordinates. For either
operator write $k_T(\boldsymbol\zeta;\theta,\varphi)$ for its kernel,
where $\boldsymbol\zeta=(\eta,z)$. There are fixed $b,d$, now also
depending on $N_1$ but not on $r$, such that, with
$\|\cdot\|_{r,N_1}=\|\cdot\|_{H^r_\theta H^{N_1}_\varphi}$,
\begin{align}
\|k_T(\boldsymbol\zeta)\|_{r,N_1}
&\le C_r\mathcal P_r(Y)(1+\|\boldsymbol\zeta\|_{H^{r+d}}),
\notag\\
\|Dk_T(\boldsymbol\zeta)[V]\|_{r,N_1}
&\le C_r\mathcal P_r(Y)\bigl[
\|V\|_{H^{r+d}}+
(1+\|\boldsymbol\zeta\|_{H^{r+d}})\|V\|_{H^b}\bigr],
\notag\\
\|D^2k_T(\boldsymbol\zeta)[V,W]\|_{r,N_1}
&\le C_r\mathcal P_r(Y)\bigl[
\|V\|_{H^{r+d}}\|W\|_{H^b}
+\|W\|_{H^{r+d}}\|V\|_{H^b}\notag\\
&\hspace{19mm}
+(1+\|\boldsymbol\zeta\|_{H^{r+d}})
\|V\|_{H^b}\|W\|_{H^b}\bigr].
\label{eq:outer-kernels-C2-negative-input}
\end{align}
These kernel estimates use the full graph norm on the right.
Increasing $d$ to include $b$, integration along $t\boldsymbol\zeta$
on the lower norm ball gives
\begin{equation}\label{eq:outer-kernel-negative-input-difference}
\|[T(\boldsymbol\zeta)-T(0)]v\|_{H^r}
\le C_r\mathcal P_r(Y)\|\boldsymbol\zeta\|_{H^{r+d}}
\|v\|_{H^{-N_1}}.
\end{equation}
The same bounds hold after the fixed normal and half-density factors
and finite projections are included.
\end{lemma}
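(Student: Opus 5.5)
The plan is to represent the solution by layer potentials on the small inner curve and exploit the separation between the two boundary components. This gives smooth kernels for every cross-boundary map.

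\emph{Representation.} For $\mathsf G_{+-}$, write $u_f$ as a single-layer potential of the Dirichlet Green kernel $G_0$ of the \emph{filled} outer domain $\mathscr D_0(z)$. The density $\mu$ lives on the inner curve and solves $\mathcal S_a\mu=f$, where $\mathcal S_a$ is the operator of Lemma~\ref{lem:weighted-layer-expansion}, adapted to the graph $\eta$. Since $G_0$ vanishes on $\mathscr C_+$, we have $u_f=0$ there automatically. The outer normal derivative is then
\begin{equation*}
\partial_{n_+}u_f(x)=\int_\T\partial_{n_x}G_0(x,c+a(1+\eta)e_r(\varphi))\mu(\varphi)\,d\varphi,\qquad x\in\mathscr C_+ .
\end{equation*}
Here $x$ ranges over the outer boundary and $y$ over a fixed compact neighbourhood of $c$ at distance $\ge d_*/2$. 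So the kernel $\partial_{n_x}G_0(x,y)$ is smooth in $(x,y)$, and its $H^r$ norms in $x$, together with finitely many $y$-derivatives, are controlled by boundary elliptic regularity for the weighted operator $\operatorname{div}(\rho\nabla)$ on $\mathscr D_0(z)$.

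\emph{Tame kernel bounds.} I would obtain these by flattening $\mathscr D_0(z)$ to a fixed reference domain through the outer collar charts of Lemma~\ref{lem:flattening-geometry}, which equal the identity near $c$. The regular part $H_0(\cdot,y)$ then solves a Dirichlet problem whose coefficients are controlled as in Lemma~\ref{lem:flattening-geometry}. Applying the tame estimates of Lemma~\ref{lem:uniform-harmonic-extension} and Proposition~\ref{prop:full-C2-flux-DN}, with only one factor at the top index, gives $\|\partial_{n}G_0(\cdot,y)\|_{H^r}\le C_rA_r(z)$ with a fixed loss $d$. Smoothness in $y$ and $\varphi$ costs only the fixed lower norms. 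Integrating against $\mu$ leaves $\mu$ paired with a kernel smooth in $\varphi$. The first and second $z$-variations come from differentiating the flattened problem, exactly as in \eqref{eq:first-u-variation}--\eqref{eq:second-u-variation}. The highest output norm falls either on $A_r(z)$ or on $\|z_j\|_{H^{r+d}}$, which produces the asymmetric structure of \eqref{eq:zero-separated-asymmetric-first}--\eqref{eq:zero-separated-asymmetric-second}.

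\emph{Inner dependence.} The inner graph, the centre and the scale enter only through $\mu=\mathcal S_a^{-1}f$ and the smooth restriction $y=X_\eta(\varphi)$. By \eqref{eq:Sa-inverse-zero}--\eqref{eq:Sa-inverse-nonzero}, $\|\mu\|_{H^{-1/2}}\lesssim\|f\|_{H^{1/2}}$. The variations of $\mathcal S_a^{-1}$ come from $D(S^{-1})=-S^{-1}(DS)S^{-1}$ together with Lemma~\ref{lem:weighted-layer-expansion}, and they cost only the fixed lower norms $\nu_j$. The $a$-dependence appears through $L_a$, which yields $\mathcal P_r(Y)$.

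\emph{Remaining cases.} The zero-flux reduction and the constant lifts come from the scalar Schur complement of Proposition~\ref{prop:full-C2-flux-DN}, Step~3. Because $\langle e,\mathfrak Ge\rangle^{-1}\asymp L_a$, this adds only polynomial factors in $Y$. For the hole correction $K_{++}$, I would take an outer input $g$, subtract the filled solution $W$, and correct by the layer $\mathcal S_a^{-1}(-W\circ X_-)$. The same smooth kernel then maps this density to the outer flux. Since $W$ near $c$ is bounded by $\|g\|_{H^{1}}$ through interior regularity, the full kernel is a composition of two kernels that are smooth in both variables. Measuring the $\varphi$-variable in $H^{N_1}$ is therefore harmless, and it gives \eqref{eq:outer-kernels-C2-negative-input}. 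The difference bound \eqref{eq:outer-kernel-negative-input-difference} follows by integrating the first variation along $t\boldsymbol\zeta$ and pairing the $H^{N_1}_\varphi$ kernel with $v\in H^{-N_1}$.

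\emph{Main obstacle.} The hard step is the tame top-order $z$-dependence of $G_0$, uniformly in $y$ and including two variations. I would handle it by treating $G_0=G_{\rm ax}-H_0$, using the explicit kernel $G_{\rm ax}$ of Lemma~\ref{lem:axisymmetric-green} (which is smooth away from the diagonal) as the datum, and then applying the tame boundary elliptic estimates to $H_0$.
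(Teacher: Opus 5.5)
Your argument is correct, but it takes a different route from the paper. You factor every cross-boundary map explicitly through the filled-domain Green kernel. The factors are the outer normal derivative $\partial_{n_x}G_0(x,y)$, which is smooth for $x\in\mathscr C_+$ and $y$ near $c$, and $\mathcal S_{a,\eta}^{-1}$; for $K_{++}$ there is a further factor, the restriction of the filled harmonic extension to the inner curve. The tame $z$-dependence therefore reduces to tame boundary estimates for $G_0=G_{\rm ax}-H_0$, uniform in the pole. The inner data enter only through the layer inverse and its variations from Lemma~\ref{lem:weighted-layer-expansion}.

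The paper uses no layer potentials in this proof. It pulls back with an inner collar of width comparable to $a$ and a fixed outer collar. The pulled-back matrices and their first two variations are then bounded in $L^\infty$ by fixed $H^b$ norms, because the physical size $a\eta_j$ of an inner variation cancels the $a^{-1}$ from the cutoff derivative. It bounds $\nabla u_f$, $\nabla u_j$ and $\nabla u_{12}$ in $L^2$ by applying \eqref{eq:global-gradient-v} to \eqref{eq:first-u-variation}--\eqref{eq:second-u-variation}. It then obtains the output derivatives from tame boundary regularity on the outer collar, where inner variations produce no source. Negative input indices come from adjoint energy problems, Green's identity, summation over $e^{in\varphi}$, and Hilbert-space-valued tame regularity.

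The energy route controls the input directly at the $H^{1/2}$ energy level and treats the Dirichlet, zero-flux and lift cases by changing only the test space. It uses the layer lemma only through the scalar Schur factor for the lifts. Your route gives an explicit kernel in which smoothness in the input variable, and hence the $H^{-N_1}$ bounds, are nearly immediate. In the zero-flux case, $\int\mu=0$ even yields an extra factor $a$, which the paper exploits later in Lemma~\ref{lem:zero-charge-outer-derivative}.

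Two details need adjusting.

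First, \eqref{eq:Sa-inverse-zero}--\eqref{eq:Sa-inverse-nonzero} are stated only for $r\ge0$, so they do not give $\|\mu\|_{H^{-1/2}}\lesssim\|f\|_{H^{1/2}}$ as written. You can use the symmetry of $\mathcal S_{a,\eta}$ with duality and interpolation. More simply, move $\mathcal S_{a,\eta}^{-1}$ onto the smooth function $\varphi\mapsto\partial_{n_x}G_0(x,X_\eta(\varphi))$, which gives the bound with any negative input norm at a fixed lower graph index.

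Second, the $K_{++}$ kernel is not a composition of two smooth kernels. The middle factor $\mathcal S_{a,\eta}^{-1}$ has order one. What makes the composite smooth in both variables is that this factor sits between two smooth kernels and is handled by the same symmetry argument.
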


\begin{proof}
We divide the proof into four steps.

\emph{Step 1. Energy estimates for the variations.}
Choose disjoint deformation collars: an inner collar of thickness
comparable to $a$, and an outer collar of fixed thickness.  The inner pullback is the identity outside its collar.  An inner graph
variation has physical size $a\eta_j$. The derivative of its cutoff
has size $a^{-1}$, and these factors cancel.  Consequently the
pulled-back elliptic matrices and their first two variations obey
uniform $L^\infty$ bounds in terms of the fixed $H^b$ norms in the
statement.  The outer matrix variations have the same bounds on a
fixed collar.  To vary the centre, use a translation near the hole, cut off in a
fixed neighborhood separated from the outer boundary.  Its coefficient variations are also
uniformly bounded.  A variation of $\log a$ has inner physical
size comparable to $a$ and is handled as an inner graph variation.

Apply the uniform gradient estimate \eqref{eq:global-gradient-v} to
$u_f$ and the material variations
$u_j=Du_f[V_j]$, $u_{12}=D^2u_f[V_1,V_2]$, which satisfy
\eqref{eq:first-u-variation}--\eqref{eq:second-u-variation}.
In the Dirichlet case, the first variation has inner trace $f_j$.
The mixed variation has zero trace, since the trace variable is linear.  The coefficient
bounds just established give
\begin{equation*} 
\|\nabla u_f\|_2\le CF_0,\qquad
\|\nabla u_j\|_2\le C(F_j+F_0\nu_j),\qquad
\|\nabla u_{12}\|_2\le
C(F_0\nu_1\nu_2+F_1\nu_2+F_2\nu_1). 
\end{equation*}
These estimates use only $H^{1/2}$ inner traces and fixed lower graph
norms.  The zero outer trace also gives a uniform collar Poincar\'e bound.

\emph{Step 2. Estimates for the outer conormal trace.}
For an inner variation the coefficient sources vanish on the
outer collar, whose coefficients depend only on $z$.  Boundary
regularity on this collar controls the output derivatives by the
energy bounds for $u_f,u_j,u_{12}$.  Tame estimates for derivatives of the collar coefficients and conormal
trace require only $z$ or $z_j$ to be measured in the higher norm $H^{r+d}$.
The inner factors remain in the fixed energy norms.  The fixed loss $d$ accounts for flattening, the
conormal trace, and Sobolev products.  For an outer variation the
sources lie in that collar, and the same tame estimate requires the higher norm only for
the outer coefficient or its variation.  These are
exactly the alternatives in
\eqref{eq:zero-separated-asymmetric-first}--
\eqref{eq:zero-separated-asymmetric-second}.

The same gradient estimates apply in the zero-flux variational
space.  For the constant trace lifts, the scalar inverse in the
Schur complement formula of Lemma~\ref{lem:weighted-layer-expansion}
and its first two variations contribute only polynomial factors
$\mathcal{P}_r(Y)$.

\emph{Step 3. The correction caused by the hole.}
For the last assertion, first extend the outer trace harmonically
in the filled domain.  Its energy is controlled by a fixed $H^1$
trace norm.  Its values and every fixed physical derivative in an
inner neighborhood are controlled by interior elliptic regularity,
using only the fixed lower outer geometry norms.  Subtract this
extension.  The correction has zero outer trace and an inner trace
controlled in $H^{1/2}$ by the same fixed input norm.  The estimates
already proved apply to this correction.  Differentiating the
problem in the filled domain and the correction once or twice gives the
same distribution of derivatives, with at most one factor at high regularity.  The derivatives of the output
are controlled by the outer geometry at the corresponding order and the
inner geometry at the fixed lower order.
Estimating the correction through its kernel on the separated boundaries
preserves these regularity requirements.

\emph{Step 4. Negative inputs and mixed kernel norms.}
Set $m=N_1+1$. First estimate the potential in a smaller output collar
in $H^1$ by an $H^{-m}$ norm of its input. Test its gradient against
an $L^2$ vector field supported in that collar and solve the adjoint
energy problem with the resulting divergence source. For $T_{+-}$,
the adjoint solution is homogeneous near the inner input boundary.
Fixed-order boundary regularity bounds its pulled-back conormal
density in $H^m$ by the $L^2$ norm of the test field. The inner density
includes its arclength factor, so the scaled estimate introduces no
factor $a^{-1}$. Green's identity then proves the desired negative-input
estimate. The zero outer trace gives the collar Poincar\'e estimate.

For $K_{++}$ use the difference of the perforated and filled adjoint
solutions, with the same source in their common outer collar. The
sources and boundary traces cancel there. Their difference is
homogeneous with zero outer trace, so the same argument gives its
$H^m$ conormal bound. Each adjoint gradient is bounded by the energy
test. The free inner constant is handled in the zero-flux test space;
its recovery contributes only the polynomial Schur factors.

Differentiate these adjoint problems once and twice. Their equations
are \eqref{eq:first-u-variation}--\eqref{eq:second-u-variation}.
In the common collar, the differentiated differences have sources
formed from coefficient variations and the already smooth differences.
The energy estimate followed by boundary regularity of order $m$
therefore uses only a fixed graph index $b_m$ and the direction norms
at that index. This proves the $H^{-m}\to H^1$ output-collar bounds
also for the first two variations, with the corresponding products
of lower direction norms.

Let $t_T$ denote this potential kernel. Applying these bounds to
$e^{in\varphi}$ gives, at the lower output index,
\begin{equation}\label{eq:negative-input-kernel-summation}
\sum_{n\in\mathbb Z}\langle n\rangle^{2N_1}
\|T_{\rm pot}e^{in\varphi}\|_{H^1({\rm collar})}^2
\le C\mathcal P(Y)^2
\sum_{n\in\mathbb Z}\langle n\rangle^{-2}<\infty.
\end{equation}
The same summation applies to both variations. Thus the input
derivatives of $t_T$ through order $N_1$ are controlled in
$H^1({\rm collar};L^2_\varphi)$.
Apply the output boundary estimates to these Hilbert-space-valued
equations. At zero, one and two variations the coefficient sources are,
respectively, zero, $-\operatorname{div}(A_V\nabla t_T)$, and
\[
-\operatorname{div}\bigl(A_{VW}\nabla t_T
+A_V\nabla Dt_T[W]+A_W\nabla Dt_T[V]\bigr).
\]
The product bounds in Lemma~\ref{lem:flattening-geometry} put at most
one graph or direction at the higher index. Taking the conormal trace,
including its differentiated coefficient, proves
\eqref{eq:outer-kernels-C2-negative-input}. The loss $d$ includes
$b_m$, the conormal trace, and the fixed product losses, and is
independent of $r$. Integration along $t\boldsymbol\zeta$ and pairing
in the input variable prove
\eqref{eq:outer-kernel-negative-input-difference}.
\end{proof}

\subsection{Zero-flux reduction and condenser capacity}

The scalar flux constraint is enforced by a constant
lift depending on the geometry.  Differentiating its Schur complement preserves the symmetry
of the energy form.

\begin{proposition}
\label{prop:schur-differentiation}
Use the flux DN matrix $\mathfrak G$ on the direct sum of the two boundary
trace spaces, with the pairing defined above in fixed charts. Adjoints
are taken in this pairing.  Let $e=(1,0)$,
$c=\langle e,\mathfrak G e\rangle>0$, and define
\begin{equation*}
\Pi=I-e c^{-1}\langle e,\mathfrak G\,\cdot\rangle,
\qquad K=\mathfrak G-\mathfrak G e c^{-1}\langle e,\mathfrak G\,\cdot\rangle.
\end{equation*}
For a $C^2$ family of geometries, take the first and second derivatives
in the fixed charts along directions $H_1,H_2$, and denote them by
$\mathfrak G_i,\mathfrak G_{12}$ and $K_i,K_{12}$.
Then, on smooth trace pairs,
\begin{align}
K&=\Pi^*\mathfrak G\Pi=\mathfrak G\Pi=\Pi^*\mathfrak G,\qquad K\ge0,
\label{eq:schur-positive-factorization}\\
K_i&=\Pi^*\mathfrak G_i\Pi,
\label{eq:schur-first-differential}\\
K_{12}&=\Pi^*\mathfrak G_{12}\Pi
-\Pi^*\mathfrak G_1e c^{-1}\langle e,\mathfrak G_2\Pi\,\cdot\rangle
-\Pi^*\mathfrak G_2e c^{-1}\langle e,\mathfrak G_1\Pi\,\cdot\rangle.
\label{eq:schur-second-differential}
\end{align}
Thus the first derivative is the unreduced Hadamard form evaluated on
the two zero-flux lifts.  The second derivative also contains the two
rank-one corrections in \eqref{eq:schur-second-differential}.

For a normal geometric variation $\boldsymbol H=(H_-,H_+)$, keep the
normalization $R$ fixed when taking this directional derivative.
Let $f,g$ be smooth real trace pairs and let $u_f,u_g$ be the harmonic
extensions of $\Pi f,\Pi g$.  For $\iota\in\{-,+\}$, write
$B_{f,\iota}=(\partial_{n_\iota}u_f)\circ X_\iota$ and
$V_{f,\iota}=j_\iota^{-1}\partial_\theta(\Pi f)_\iota$,
with the analogous notation for $g$.  Then
\begin{equation}\label{eq:schur-Hadamard}
\langle f,DK[\boldsymbol H]g\rangle
=\sum_{\iota\in\{-,+\}}\int_\T
H_\iota\frac{\rho_\iota}{R}j_\iota
(V_{f,\iota}V_{g,\iota}-B_{f,\iota}B_{g,\iota})
\,d\theta.
\end{equation}
If the scalar normalization $R$ also varies, its explicit derivative in
$\mathfrak G$ is included in $\mathfrak G_i$ in \eqref{eq:schur-first-differential}.
\end{proposition}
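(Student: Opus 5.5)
The plan is to treat the statement as pure linear algebra in the pairing $\langle\cdot,\cdot\rangle$, using only that $\mathfrak G$ is symmetric and nonnegative (Lemma with \eqref{eq:flux-DN-green}), and then to convert the first differential into a boundary integral through the Hadamard formula \eqref{eq:Hadamard-flux}.

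\emph{Algebraic identities.} Write $\ell(\cdot)=c^{-1}\langle e,\mathfrak G\,\cdot\rangle$, so $\Pi=I-e\,\ell$. Symmetry of $\mathfrak G$ gives $\Pi^*=I-\mathfrak G e\,c^{-1}\langle e,\cdot\rangle$, and hence $\Pi^*\mathfrak G=\mathfrak G-\mathfrak Ge\,\ell=K$ and $\mathfrak G\Pi=K$ as well. Since $\ell(e)=1$, we have $\ell\circ\Pi=0$, so $\Pi^2=\Pi$ and $\Pi^*\mathfrak G\Pi=\mathfrak G\Pi-\mathfrak Ge\,\ell\Pi=\mathfrak G\Pi$. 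Nonnegativity then follows from $\langle f,Kf\rangle=\langle \Pi f,\mathfrak G\Pi f\rangle\ge0$.

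\emph{First differential.} Differentiate $K=\mathfrak G-\mathfrak Ge\,c^{-1}\langle e,\mathfrak G\cdot\rangle$, keeping $e$ fixed in the charts, and use $c_i=\langle e,\mathfrak G_ie\rangle$. This gives
\[
K_i=\mathfrak G_i-\mathfrak G_ie\,\ell-\mathfrak Ge\,c^{-1}\langle e,\mathfrak G_i\cdot\rangle+\mathfrak Ge\,c^{-1}c_i\,\ell .
\]
Expanding $\Pi^*\mathfrak G_i\Pi$ produces exactly these four terms, which proves \eqref{eq:schur-first-differential}. A cleaner way to organize this is the stationarity principle. Since $\langle f,Kf\rangle=\min_{\lambda}\langle f-\lambda e,\mathfrak G(f-\lambda e)\rangle$ with minimizer $\lambda=\ell(f)$, the envelope theorem says the derivative equals $\mathfrak G_i$ evaluated at $\Pi f$. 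Polarization then yields the bilinear form.

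\emph{Second differential.} Differentiate $K_i=\Pi^*\mathfrak G_i\Pi$ once more in direction $H_2$. This gives $\Pi^*\mathfrak G_{12}\Pi+\Pi_2^*\mathfrak G_1\Pi+\Pi^*\mathfrak G_1\Pi_2$, where $\Pi_2=-e\,\ell_2$. Differentiating $\ell=c^{-1}\langle e,\mathfrak G\,\cdot\rangle$ gives $\ell_2=c^{-1}\langle e,\mathfrak G_2\cdot\rangle-c^{-1}c_2\ell$, and therefore $\ell_2\Pi=c^{-1}\langle e,\mathfrak G_2\Pi\cdot\rangle$. The key step is to write $\Pi=\Pi\cdot\Pi$ and show that $\Pi_2\Pi=\Pi_2$ up to terms that recombine. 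Because $\ell\Pi=0$, we have $\Pi_2=\Pi_2\Pi+\Pi_2 e\,\ell$ with $\Pi_2e=-e\,\ell_2(e)$. I would need to check that these leftover $\ell$-terms cancel against the corresponding terms in $\Pi^*$, using $\Pi^* e$-type identities. They should cancel, because the final formula is symmetric and the $e$-direction terms are annihilated: $\Pi^*\mathfrak G_1 e$ appears only through $\Pi$ applied on the correct side.

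This bookkeeping is the main obstacle. I would verify it most safely by the minimization characterization. Set $F(\lambda,H)=\langle f-\lambda e,\mathfrak G_H(f-\lambda e)\rangle$. Its second derivative at the critical point is $F_{HH}-F_{\lambda H}F_{\lambda\lambda}^{-1}F_{\lambda H}$, where $F_{\lambda\lambda}=2c$ and $F_{\lambda H_i}=-2\langle e,\mathfrak G_i\Pi f\rangle$. This gives precisely the two rank-one corrections, with total coefficient $-2\cdot\tfrac12$ after polarization, which is \eqref{eq:schur-second-differential}.

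\emph{Hadamard form.} By \eqref{eq:schur-first-differential}, $\langle f,DK[\boldsymbol H]g\rangle=\langle\Pi f,D\mathfrak G[\boldsymbol H]\Pi g\rangle$. Insert \eqref{eq:Hadamard-flux} with $\boldsymbol\psi=\Pi g$, $B=B_g$, paired with $\boldsymbol\varphi=\Pi f$, and treat the two resulting terms separately.

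For the first term, the symmetry \eqref{eq:flux-DN-green} gives $-\langle\Pi f,\mathfrak G(\boldsymbol H\boldsymbol B_g)\rangle=-\langle \mathfrak G\Pi f,\boldsymbol H\boldsymbol B_g\rangle$. Moreover $(\mathfrak G\Pi f)_\iota=\rho_\iota j_\iota R^{-1}B_{f,\iota}$, so this term equals $-\sum_\iota\int_\T H_\iota\rho_\iota j_\iota R^{-1}B_{f,\iota}B_{g,\iota}\,d\theta$.

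For the second term, integrating by parts in $\theta$ gives
\[
R^{-1}\int_\T\partial_\theta(\Pi f)_\iota\,\rho_\iota j_\iota^{-1}H_\iota\,\partial_\theta(\Pi g)_\iota\,d\theta=\int_\T H_\iota\rho_\iota j_\iota R^{-1}V_{f,\iota}V_{g,\iota}\,d\theta .
\]
Summing the two terms over $\iota$ gives \eqref{eq:schur-Hadamard}. If $R$ varies, its explicit derivative is simply absorbed into $\mathfrak G_i$, since the algebraic argument above never used the specific form of $\mathfrak G_i$.
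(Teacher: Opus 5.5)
Your proposal is correct. The factorization, the first differential and the Hadamard computation are essentially the paper's argument. The paper gets $K_i=\Pi^*\mathfrak G_i\Pi$ a little more economically: it notes $\Pi_i^*\mathfrak G\Pi=\Pi^*\mathfrak G\Pi_i=0$, because $\Pi_i$ has range in $\operatorname{span}\{e\}$ and $Ke=0$. That is cleaner than expanding four terms, but equivalent.

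The genuine difference is the second differential. The paper applies the product rule to $\Pi^*\mathfrak G_1\Pi$ and substitutes the closed form $\Pi_i=-ec^{-1}\langle e,\mathfrak G_i\Pi\,\cdot\rangle$. The bookkeeping you left unresolved in your direct attempt disappears with one observation you already had. Differentiating $\ell(e)=1$ with $e$ fixed gives $\ell_2(e)=0$; your own formula shows this directly, since $\ell_2(e)=c^{-1}c_2-c^{-1}c_2=0$. Hence $\ell_2=\ell_2\Pi$, so $\Pi_2=\Pi_2\Pi$ exactly, and no cancellation across the two sides is needed.

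Your fallback, the second-order envelope formula $V_{12}=F_{H_1H_2}-F_{\lambda H_1}F_{\lambda\lambda}^{-1}F_{\lambda H_2}$ for the value of $\lambda\mapsto\langle f-\lambda e,\mathfrak G_H(f-\lambda e)\rangle$, is also valid and complete. It gives the form $\langle\Pi f,\mathfrak G_{12}\Pi f\rangle-2c^{-1}\langle e,\mathfrak G_1\Pi f\rangle\langle e,\mathfrak G_2\Pi f\rangle$. To pass from forms to operators you should say explicitly why polarization suffices. $K_{12}$ is symmetric, being the derivative of symmetric operators in a fixed pairing. The right side of \eqref{eq:schur-second-differential} is also symmetric, because its two rank-one terms are adjoints of each other.

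Each route has its advantage. The envelope argument explains the structure: stationarity in the free constant removes all lift derivatives at first order, and the second-order correction is a Schur complement, visibly nonpositive when $H_1=H_2$. The paper's route produces the explicit formula for $D\Pi[H]$ and the ordered operator identity, not just a quadratic form. The paper reuses exactly this formula, differentiating $D\Pi[H]$ once more, when it builds the finite-rank kernels in Lemma~\ref{lem:reconstructed-kernel-bounds}.
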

\begin{proof}
The identities $\Pi e=0$ and $\langle e,\mathfrak G\Pi f\rangle=0$ give
\eqref{eq:schur-positive-factorization}. Positivity also follows from
\begin{equation*}
\langle f,Kf\rangle
=\min_{z\in\mathbb R}\langle f+ze,\mathfrak G(f+ze)\rangle,
\qquad z=-c^{-1}\langle e,\mathfrak G f\rangle.
\end{equation*}
For complex traces, use complex scalars and the Hermitian pairing.
Differentiation gives
\begin{equation*}
\Pi_i=-e c^{-1}\langle e,\mathfrak G_i\Pi\,\cdot\rangle,
\qquad \Pi_i^*\mathfrak G\Pi=\Pi^*\mathfrak G\Pi_i=0.
\end{equation*}
Thus differentiating $K=\Pi^*\mathfrak G\Pi$ proves
\eqref{eq:schur-first-differential}. Differentiating once more and
substituting $\Pi_2$ and its adjoint gives
\eqref{eq:schur-second-differential} with the order of factors unchanged.

Apply the flux Hadamard identity to the fixed traces $\Pi f,\Pi g$.
Symmetry of $\mathfrak G$ gives the term $-\int H(\rho/R)jB_fB_g$.
Tangential integration by parts gives $\int H(\rho/R)jV_fV_g$.
This proves \eqref{eq:schur-Hadamard}.  The derivatives of the lifts cancel in the operator identity.
\end{proof}

We next apply the layer estimates to the condenser capacity and its
normalized flux.

\begin{lemma} 
\label{lem:capacity-single-layer-schur}
Let $\Omega$ be the three-dimensional domain obtained by revolving
$\mathscr D$ about the axis, and let $u_c$ be harmonic in $\Omega$
with traces $1,0$ on its inner and outer boundaries.  Set
$\operatorname{Cap}=\int_\Omega|\nabla u_c|^2\,dx$, with three-dimensional
Lebesgue measure $dx$.  Write
$j_\eta=((1+\eta)^2+\eta_\theta^2)^{1/2}$.
Let the macroscopic geometry satisfy
\eqref{eq:macro-geometry-assumptions}--\eqref{eq:outer-distance-assumption},
and assume that the centred relative graph satisfies $P_0\eta=P_1\eta=0$
and belongs to a fixed sufficiently small ball in $H^{s_0+2}$.
For $Y=\log(\ell/a)$ sufficiently large,
\begin{equation}\label{eq:collar-zero-schur}
\frac{2\pi L}{\operatorname{Cap}}
=Y+c_\infty(R,\Gamma_+)+\mathfrak c_{\rm rem},
\qquad L=2\pi R.
\end{equation}
Here $c_\infty(R,\Gamma_+)$ is the limit of
$2\pi L/\operatorname{Cap}-\log(\ell/a)$ as $a\downarrow0$ with
$\eta=0$ and $(R,\Gamma_+)$ fixed.  This is the smooth Robin functional
of the weighted Green function in the filled domain. Here $\ell$ is fixed.
For an angular function $f$, write
$f_0=(2\pi)^{-1}\int_\T f(\theta)\,d\theta$.
If the geometry is invariant under $(\rho,z)\mapsto(\rho,-z)$, put
$b_{\rm cap}=j_\eta(1+\eta)^{-1}\partial_{n_-}u_c$.  Then
\begin{equation}\label{eq:condenser-flux-expansion-v2}
\frac{b_{\rm cap}}{(b_{\rm cap})_0}
=1+\beta_a\cos\theta+\mathfrak b_{\ge2},
\qquad P_0\mathfrak b_{\ge2}=P_1\mathfrak b_{\ge2}=0.
\end{equation}
Fix a spatial index $s\ge s_0$.  Along a smooth path with $a_Y=-a$, set
\begin{equation*}
\mathcal J=\|\eta_Y\|_{H^{s_0+2}}+|R_Y|
+\|X_{+,Y}\|_{H^{s_0+2}},\qquad
\mathcal M_s=|R_Y|+\|X_{+,Y}\|_{H^{s+2}}.
\end{equation*}
On a fixed bounded geometric class, for a polynomial $\mathcal P_s(Y)$,
\begin{align}
|\mathfrak c_{\rm rem}|
&\le \mathcal P_s(Y)\bigl(a+\|\eta\|_{H^{s+2}}\bigr),\notag\\
|(\mathfrak c_{\rm rem})_Y|
&\le \mathcal P_s(Y)(1+\mathcal J)
\bigl(a+\|\eta\|_{H^{s+2}}+\|\eta_Y\|_{H^{s+2}}
+\mathcal M_s\bigr),
\label{eq:collar-c-rem-jet}\\
\|\mathfrak b_{\ge2}\|_{H^s}
&\le \mathcal P_s(Y)\bigl(a^2+\|\eta\|_{H^{s+2}}\bigr),\notag\\
\|(\mathfrak b_{\ge2})_Y\|_{H^s}
&\le \mathcal P_s(Y)(1+\mathcal J)
\bigl(a^2+\|\eta\|_{H^{s+2}}+\|\eta_Y\|_{H^{s+2}}
+a\mathcal M_s\bigr).
\label{eq:collar-flux-rem-jet}
\end{align}
The coefficient $\beta_a$ and its derivative $(\beta_a)_Y$ satisfy the
bounds in \eqref{eq:collar-flux-rem-jet}, with the $H^s$ norms replaced by
absolute values and $a^2$ replaced by $a$.  The operator families and the
scalar resistance $2\pi L/\operatorname{Cap}$ are $C^2$ in the geometric
variables on the Sobolev scales used above.
In particular,
$\mathfrak c=2\pi L/\operatorname{Cap}-Y$ is uniformly bounded when
$\|\eta\|_{H^{s_0+2}}\le K(1+Y)^{M_0}a^2$ for fixed $K,M_0\ge0$.
\end{lemma}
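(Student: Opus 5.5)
The plan is to express the capacity through the single-layer representation on the inner curve. Write $u_c=1$ on the inner boundary and $0$ on the outer one. Represent $u_c$ in the filled meridional domain as $u_c(x)=\int_\T G_0(x,X_\eta(\varphi))\mu(\varphi)\,d\varphi$, using the density $\mu$ from the jump relation of Lemma~\ref{lem:weighted-layer-expansion}. This representation has zero outer trace automatically. Its inner trace condition reads $\mathcal S_{a,\eta}\mu=1$, so $\mu=\mathcal S_{a,\eta}^{-1}P_0 1$. By the jump relation, the total flux of $u_c$ through the inner boundary is $2\pi\int_\T\mu\,d\varphi$, up to the fixed normalization by $R$. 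Since $u_c$ equals $1$ on the inner boundary and $0$ on the outer one, $\operatorname{Cap}$ equals this flux. Consequently
\[
\frac{2\pi L}{\operatorname{Cap}}=\frac{\text{const}\cdot R}{\langle 1,\mathcal S_{a,\eta}^{-1}1\rangle},
\]
and this is, up to normalization, the scalar Schur complement $s_a$ from Step~2 of that lemma.

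First take $\eta=0$. The expansion \eqref{eq:Sa-expansion} gives $s_a=c_{\rm lay}L_a+(S_*)_{00}+O(aL_a)$. The off-diagonal $A_{0Q}D_a^{-1}A_{Q0}$ is $O(a^2L_a^2)$, because the first-order couplings are $O(aL_a)$. The normalization $c_{\rm lay}=1/(2\pi R)$ (after the factor $w(x_0)^{-1}$) turns the $\log(1/a)$ term into exactly $Y$ plus a constant, since $L_a-\log(\ell/a)$ is constant. The constant term is the regular part $B(x_0,x_0)$ of the Green kernel, that is, the Robin functional. This defines $c_\infty(R,\Gamma_+)$, and it is smooth in $(R,\Gamma_+)$ by Step~1 of Lemma~\ref{lem:axisymmetric-green}. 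The remaining terms are $O(aL_a)$, which gives the $a$ part of the bound on $\mathfrak c_{\rm rem}$. To include $\eta$, integrate $D_\eta s$ along $t\eta$, using the $C^2$ dependence of the layer system stated at the end of Lemma~\ref{lem:weighted-layer-expansion}. The centring assumption $P_0\eta=P_1\eta=0$ is only needed for $\mathfrak b$; for $\mathfrak c_{\rm rem}$ the linear bound $\mathcal P_s\|\eta\|_{H^{s+2}}$ suffices. Alternatively, use Proposition~\ref{prop:schur-differentiation}: the Hadamard formula \eqref{eq:schur-Hadamard} with the zero-flux lift of $e$ shows that $D(1/\operatorname{Cap})[H]$ is an integral of $H_-B^2$ against bounded weights, and this is linear in $\eta$.

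For the flux profile, $b_{\rm cap}$ is $\mathcal T_{a,\eta}\mathcal S_{a,\eta}^{-1}1$ divided by the arclength factor. Apply \eqref{eq:layer-P0-leakage}: at $\eta=0$, the $P_{\ge2}$ part of $\mathcal T_a\mathcal S_a^{-1}P_0$ is $O(a^2L_a^{M_s})$. The $P_1$ part is $O(aL_a)$ by the selection rule \eqref{eq:layer-first-selection}. Reflection symmetry kills the $\sin\theta$ component, which leaves $\beta_a\cos\theta$. Dividing by the mean $(b_{\rm cap})_0\asymp L_a^{-1}$ requires care. The numerator of $\mathfrak b_{\ge2}$ is $O(a^2L_a^{M_s})\cdot s_a^{-1}$, and after normalization only polynomial factors of $Y$ remain. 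The $\eta$ dependence again comes from integrating the tame shape derivative along $t\eta$ (Proposition~\ref{prop:full-C2-flux-DN}), which contributes $\mathcal P_s\|\eta\|_{H^{s+2}}$.

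For the $Y$-derivatives, use the chain rule along the path. Since $a_Y=-a$, we have $\partial_Y=-a\partial_a+D_\eta[\eta_Y]+\partial_R R_Y+D_{X_+}[X_{+,Y}]$. The $a\partial_a$ derivative preserves the bounds (Lemma~\ref{lem:axisymmetric-green}). Differentiating the explicit $Y$ and $c_\infty$ is exact, since $c_\infty$ depends only on $(R,\Gamma_+)$; so only the remainder's variations appear. The difficulty is that the shape derivatives of the remainder must themselves be small. Variations of $R$ and of the outer boundary act on a remainder that is $O(a+\|\eta\|)$, because the leading parts have been removed exactly. For $\mathfrak b_{\ge2}$ this yields the $a\mathcal M_s$ factor: an outer variation enters only via the cross block, which is $O(a^2)$ in $P_{\ge2}$ by Lemma~\ref{lem:cross-block-small-hole}, or via the $O(a)$ Taylor term feeding $P_1\to P_{\ge2}$. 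The second derivatives of the $\eta$ path produce the factor $(1+\mathcal J)$. I expect the main obstacle to be this bookkeeping for the mixed variations: checking that each variation of the outer geometry or the centre retains a factor of $a$ (or $a^2$ for $P_{\ge2}$) after the leading Robin term is subtracted. The final claim then follows directly: $\|\eta\|\lesssim Y^{M_0}a^2$ makes $\mathfrak c_{\rm rem}=O(\mathcal P(Y)a)$ bounded.
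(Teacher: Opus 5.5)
Your proposal is correct and follows essentially the same route as the paper. Solving $\mathcal S_{a,\eta}\mu=1$ and block elimination give the exact identity $2\pi L/\operatorname{Cap}=Rs_a$; the circle expansion, together with integration of the shape derivative along $t\eta$, gives \eqref{eq:collar-zero-schur}; the leakage bound \eqref{eq:layer-P0-leakage}, with the selection rule and reflection symmetry, gives \eqref{eq:condenser-flux-expansion-v2}; and the $Y$-derivatives follow because $a\partial_a$ preserves powers of $a$ and the second state differential along $t\eta$ produces the factor $1+\mathcal J$. One harmless slip: $(b_{\rm cap})_0\asymp(aY)^{-1}$ rather than $L_a^{-1}$, but the common factor $a^{-1}$ cancels in the normalized ratio, which is all you use.
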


\begin{proof}
We divide the proof into three steps.

\emph{Step 1. Capacity and the Schur complement.}
Let $X_-(\theta)=x_0+a(1+\eta(\theta))e_r(\theta)$ and define
\begin{equation*}
(\mathcal S_{a,\eta}\mu)(\theta)
=\int_\T G_0(X_-(\theta),X_-(\varphi))\mu(\varphi)\,d\varphi.
\end{equation*}
The density is taken with respect to $d\theta$ and absorbs the arclength
factor.  In this normalization, the jump of the normal derivative is
weighted by $\rho|\partial_\theta X_-|$.
Use the block notation of Lemma~\ref{lem:weighted-layer-expansion},
with $Q=I-P_0$, $D=Q\mathcal S_{a,\eta}Q$,
and $s_a=A_{00}-A_{0Q}D^{-1}A_{Q0}$.
That lemma gives the graph inverse, its first two state variations,
and $s_a^{-1}=O(Y^{-1})$.
Proposition~\ref{prop:full-C2-flux-DN} supplies the tame flux
and capacity variations at the stated Sobolev indices.

Solve $\mathcal S_{a,\eta}\mu=1$.  The layer potential equals one in
the hole by uniqueness for the interior Dirichlet problem.  Its interior
conormal trace vanishes, so the jump relation gives
\begin{equation*}
\mu=\rho a j_\eta\partial_{n_-}u_c,
\qquad \operatorname{Cap}=2\pi\int_\T\mu\,d\theta=4\pi^2\mu_0.
\end{equation*}
Block elimination gives $\mu_0=s_a^{-1}$ and hence the exact identity
\begin{equation*}
\frac{2\pi L}{\operatorname{Cap}}=Rs_a.
\end{equation*}
For $\eta=0$, the mean block is $R^{-1}Y$ plus a smooth
macroscopic term.  The off-diagonal blocks are $O(aY)$.  Their composition
through $D^{-1}$ is $O(a^2\mathcal P_s(Y))$.
Thus the layer expansion gives \eqref{eq:collar-zero-schur} at a circle.
Integrating the first shape derivative along $t\eta$, $0\le t\le1$,
adds at most $\mathcal P_s(Y)\|\eta\|_{H^{s+2}}$.

\emph{Step 2. Expansion of the normalized flux.}
At a circle, apply \eqref{eq:layer-P0-leakage} to the constant
condenser datum, together with the first-order block expansion of
Lemma~\ref{lem:weighted-layer-expansion}.
Dividing the jump identity by $\rho a j_\eta$ and multiplying by the
kinematic factor $j_\eta/(1+\eta)$ gives
\eqref{eq:condenser-flux-expansion-v2} and the stated bound for the higher Fourier modes.
The Hopf lemma gives $\mu>0$, and the jump identity implies
$(b_{\rm cap})_0\asymp (aY)^{-1}$.  The common factor $a^{-1}$
cancels in the normalized ratio.  The graph contribution follows by
integration of the shape variation.  Reflection symmetry removes
the sine coefficient.

\emph{Step 3. State and scale derivatives.}
The block bounds and inverse variations in
Lemma~\ref{lem:weighted-layer-expansion} apply to $s_a$.
The $a\partial_a$ kernel estimates in Lemma~\ref{lem:axisymmetric-green}
and the same inverse formulas control the relative radius derivative.
For the normalized flux, differentiate $\mathcal S_{a,\eta}\mu=1$:
\begin{equation*}
\dot\mu=-\mathcal S_{a,\eta}^{-1}
\dot{\mathcal S}_{a,\eta}\mu.
\end{equation*}
Differentiation with respect to the relative radius preserves each power
of $a$, because
$a\partial_a[a^\nu P(Y)]=a^\nu(\nu P-P')$.
At $\eta=0$, macroscopic differentiation preserves the first-order
selection rule and the factor $a^2$ in the higher Fourier modes.
For the graph difference, differentiate its integral along $t\eta$.
The resulting terms contain $\eta_Y$, or $\eta$ times a first
geometric variation, and are controlled by the second state
differential.  The tame product estimates require at most one factor at index $s+2$.
This proves \eqref{eq:collar-c-rem-jet}--\eqref{eq:collar-flux-rem-jet}
and the bounds for the first coefficient.  The $C^2$ layer bounds
and nonvanishing mean give the asserted state dependence.  The estimate
for $\mathfrak c$ follows from \eqref{eq:collar-zero-schur}.
\end{proof}

\section{Boundary dynamics and linearized energy estimates}
\label{sec:boundary-dynamics}

We write the Euler equations in boundary variables and recover the flux
from energy conservation.  In centred polar coordinates, we estimate
the first potential harmonic and determine the Taylor coefficients by
reconstructing the pressure.  We then derive the linearized system and
prove the energy and terminal estimates used in the nonlinear construction.

\subsection{Boundary dynamics and energy reconstruction}

Throughout this section, $v=\nabla\Phi$ for a single-valued
axisymmetric potential $\Phi$.

Let $X_\iota(t,\theta)$, $\iota\in\{-,+\}$, parametrize the two meridional boundary curves.  Using the arclength density $j_\iota$ introduced in Section~\ref{sec:scaled-DN}, set
\begin{equation*}
\tau_\iota=j_\iota^{-1}\partial_\theta X_\iota.
\end{equation*}
Let $n_\iota$ be the outer unit normal of the fluid meridional domain.  Write the boundary traces as $\psi_\iota=\Phi|_{\Gamma_\iota}$ and $v_\iota=v|_{\Gamma_\iota}$, and set
\begin{equation*}
B_\iota=\partial_{n_\iota}\Phi,\qquad
V_\iota=j_\iota^{-1}\partial_\theta\psi_\iota.
\end{equation*}
We choose material parametrizations on the free boundaries,
\begin{equation*}
\partial_tX_\iota=B_\iota n_\iota+V_\iota\tau_\iota.
\end{equation*}
Introduce the derivative along the normal motion
\begin{equation*}
\mathcal D_\iota:=\partial_t-\frac{V_\iota}{j_\iota}\partial_\theta.
\end{equation*}
Then $\mathcal D_\iota X_\iota=B_\iota n_\iota$.  With the pressure convention $p=\sigma\mathcal H$ on each free boundary, Bernoulli's equation gives
\begin{equation}\label{eq:exact-boundary-system}  
\mathcal D_\iota X_\iota=B_\iota n_\iota,\qquad
\mathcal D_\iota\psi_\iota
=\frac12(B_\iota^2-V_\iota^2)-\sigma\mathcal H_\iota+b(t),\qquad
B_\iota=\frac{R}{\rho_\iota j_\iota}\,\mathfrak G_\iota\boldsymbol\psi. 
\end{equation}
The scalar $b(t)$ reflects the freedom to add a function of time to the
potential.  We fix this freedom by requiring the outer trace to have
zero angular mean, $\int_\T\psi_+\,d\theta=0$.

\begin{lemma}\label{lem:exact-boundary-reduction}
For smooth axisymmetric globally potential flows on a connected fluid
domain, \eqref{eq:exact-boundary-system}, together with the harmonic
extension and material boundary parametrization defined above, is equivalent to the
bulk Euler system and its kinematic and capillary boundary conditions.
The potential is determined up to one common function of time.
On a connected fluid domain, a single-valued potential exists precisely when
the curl and all circulation periods vanish.
\end{lemma}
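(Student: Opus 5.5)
The plan is to prove the two directions of the equivalence separately and then add the topological remark on single-valued potentials.

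\emph{From Euler to the boundary system.} Start from a smooth solution $v=\nabla\Phi$. Incompressibility gives $\Delta\Phi=0$, which in axisymmetric form is \eqref{eq:axisym-elliptic}. The momentum equation becomes $\nabla(\partial_t\Phi+\frac12|\nabla\Phi|^2+p)=0$. Since the fluid domain is connected, this yields Bernoulli's law
\begin{equation*}
\partial_t\Phi+\tfrac12|\nabla\Phi|^2+p=b(t).
\end{equation*}
Next, evaluate this along the material parametrization $\partial_tX_\iota=B_\iota n_\iota+V_\iota\tau_\iota$.
\begin{itemize}[leftmargin=*]
\item The kinematic condition forces the normal component of $\partial_tX_\iota$ to be $v\cdot n=B_\iota$.
\item Any tangential reparametrization is allowed, and the chosen one is exactly the material one.
\item By the chain rule, $\partial_t\psi_\iota=\partial_t\Phi+\nabla\Phi\cdot\partial_tX_\iota=\partial_t\Phi+B_\iota^2+V_\iota^2$.
\item Also $\frac{V_\iota}{j_\iota}\partial_\theta\psi_\iota=V_\iota^2$.
\item Hence $\mathcal D_\iota\psi_\iota=\partial_t\Phi+B_\iota^2=\frac12(B_\iota^2-V_\iota^2)-p+b$.
\item Using $p=\sigma\mathcal H_\iota$ gives the second equation in \eqref{eq:exact-boundary-system}.
\end{itemize}
Finally, $B_\iota=\frac{R}{\rho_\iota j_\iota}\mathfrak G_\iota\boldsymbol\psi$ is just the definition of the flux DN matrix, since $\Phi$ is the harmonic extension of $\boldsymbol\psi$.

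\emph{From the boundary system to Euler.} Given a solution of \eqref{eq:exact-boundary-system}, let $\Phi$ be the harmonic extension of $\boldsymbol\psi$ on the domain bounded by $X_\pm$. Set $v=\nabla\Phi$, which is divergence free and curl free. Define
\begin{equation*}
p:=b(t)-\partial_t\Phi-\tfrac12|\nabla\Phi|^2
\end{equation*}
in the bulk. Then $\partial_tv+(v\cdot\nabla)v+\nabla p=\nabla(\partial_t\Phi+\frac12|v|^2+p)=0$, using $(v\cdot\nabla)v=\nabla\frac12|v|^2$ for curl-free fields.

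Here $\partial_t\Phi$ is taken on the moving domain; this makes sense because the boundaries move smoothly. The kinematic condition follows from the first equation of \eqref{eq:exact-boundary-system}. Reversing the chain-rule computation above shows that $p=\sigma\mathcal H$ on each boundary component. This uses that $\mathcal D_\iota$ differentiates along $\partial_tX-V\tau=Bn$.

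\emph{Uniqueness of the potential.} $\Phi$ is determined up to adding a function $c(t)$. Such a shift changes $b(t)$ by $c'(t)$, and it is removed by the normalization $\int_\T\psi_+\,d\theta=0$.

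\emph{The topological statement.} If $v=\nabla\Phi$ with $\Phi$ single valued, then $\operatorname{curl}v=0$ and every line integral over a closed loop vanishes. Conversely, if the curl and all periods vanish, define $\Phi(x)=\int_{x_0}^xv\cdot d\ell$. This is path independent by Stokes' theorem on homotopic paths together with the vanishing periods on the generators of $H_1$ (de Rham). In the present solid-torus setting the relevant generator is the azimuthal loop. For an axisymmetric field with no swirl its period $\oint v_\phi\rho\,d\phi$ is zero, so the condition holds automatically.

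The main point needing care is the reverse direction: checking that the bulk pressure defined by Bernoulli has the correct boundary trace. This reduces to the identity $\mathcal D_\iota\psi_\iota=(\partial_t\Phi+B_\iota^2)|_{\Gamma_\iota}$, which is a direct chain-rule check.
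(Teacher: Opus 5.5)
Your derivation of the equivalence follows the paper's proof. Both use Bernoulli's law with a single $b(t)$ from connectedness. Both use the trace chain rule $\partial_t\psi_\iota=\partial_t\Phi+B_\iota^2+V_\iota^2$ along $\partial_tX_\iota=B_\iota n_\iota+V_\iota\tau_\iota$, followed by subtraction of $(V_\iota/j_\iota)\partial_\theta\psi_\iota=V_\iota^2$. For the converse, both define $p$ by Bernoulli and read $p=\sigma\mathcal H$ off the same identity.

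The paper also checks one step that you use without proof: the potential of an axisymmetric velocity is itself axisymmetric. Indeed, $\Phi\circ R_\phi-\Phi$ is a constant depending continuously and additively on $\phi\in\T$, hence zero. You need this when you pass to \eqref{eq:axisym-elliptic} and when you write $|v|^2=B_\iota^2+V_\iota^2$ on the boundary. It is also what shows there is no swirl, instead of assuming it: a curl-free axisymmetric field may carry $v_\phi=\Gamma/\rho$.

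One claim in your last paragraph is false and should be removed. The fluid domain is not a solid torus. It is a solid torus with a tubular cavity removed, diffeomorphic to $\T^2\times[0,1]$, so $H_1$ has two generators. Besides the azimuthal loop there is the meridional loop encircling the cavity in a plane $\phi=\mathrm{const}$. Its circulation $\oint(v_\rho\,d\rho+v_z\,dz)$ is not forced to vanish by axisymmetry or by absence of swirl; it is the circulation of a hollow vortex ring.

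So single-valuedness does not hold automatically. In the paper it is imposed at the terminal state, where the condenser field is a gradient, and propagated by Kelvin's theorem in Lemma~\ref{lem:terminal-data-admissible}. Your proof of the stated equivalence is unaffected, since there you assume that all periods vanish.
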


\begin{proof}
Irrotationality, vanishing circulation periods, and incompressibility give a
single-valued harmonic potential $\Phi$.  The additive constants induced by rotations
form a continuous homomorphism from the rotation circle to $\R$ and hence vanish.
The first equation is the kinematic condition.  Bernoulli's equation
and the chain rule for the trace give
\begin{equation*}
\partial_t\psi_\iota
=\partial_t\Phi+B_\iota^2+V_\iota^2
=\tfrac12(B_\iota^2+V_\iota^2)-\sigma\mathcal H_\iota+b(t).
\end{equation*}
Subtracting $(V_\iota/j_\iota)\partial_\theta\psi_\iota=V_\iota^2$
gives the second equation. The last equation is the definition of the flux DN map.
Conversely, extend $\boldsymbol\psi$ harmonically and set
$v=\nabla\Phi$, $p=b(t)-\Phi_t-|\nabla\Phi|^2/2$.
Spatial differentiation gives the Euler equation, and harmonicity gives
incompressibility. The same trace identities give the kinematic condition
and $p=\sigma\mathcal H$ on both components, with one common $b(t)$.
\end{proof}

Write the inner interface in the two equivalent forms
\begin{equation*}
X_-(Y,\theta)=(R(Y),0)+(a(Y)+h(Y,\theta))e_r
=(R(Y),0)+a(Y)(1+\eta(Y,\theta))e_r,
\qquad \eta=\frac h a .
\end{equation*}
Here $e_r$ points radially out of the cavity and is opposite to the fluid
normal at the circle.  The physical radial displacement is $h$. The
uniform inner elliptic estimates are expressed in terms of the relative graph $\eta$.
We use the dimensionless arclength factor
\begin{equation*}
j_\eta:=\bigl((1+\eta)^2+\eta_\theta^2\bigr)^{1/2},
\end{equation*}
so that the inner arclength density is $a j_\eta$.

On a collapsing trajectory put
\begin{equation*}
a=\ell e^{-Y},\qquad
-aa_t=\frac{a^2}{t_Y}>0.
\end{equation*}

For the canonical trajectories, the domains and potentials are invariant
under $z\mapsto-z$.  The scalar graph and potential traces are therefore
even in $\theta$, and $P_1$ acts on them as projection onto $\cos\theta$.
The elliptic estimates of Section~\ref{sec:scaled-DN} retain both first
harmonics unless reflection symmetry is explicitly assumed.

Set $L(Y)=2\pi R(Y)$ and define the inward flux
\begin{equation*}
q(Y):=\frac{1}{2\pi L(Y)}\int_{\Gamma_-(Y)} B_-\,dS.
\end{equation*}
On the collapsing branch, $q>0$. For a round tube, the fluid normal points
into the cavity.  In the meridional plane, write
\begin{equation*}
X_-=(R,0)+a(1+\eta)e_r,\quad
e_r=(\cos\theta,\sin\theta),\quad
e_\theta=(-\sin\theta,\cos\theta).
\end{equation*}
Set
\begin{equation*}
C_\eta=\cos\theta+\frac{\eta_\theta}{1+\eta}\sin\theta,
\qquad b_\eta=\frac{j_\eta}{1+\eta}B_-.
\end{equation*}
For an even scalar function $f$ on $\T$, write
$f_0=(2\pi)^{-1}\int_\T f\,d\theta$ and
$f_1=\pi^{-1}\int_\T f\cos\theta\,d\theta$.
The centering conditions are $\eta_0=\eta_1=0$.

\begin{proposition}
\label{prop:exact-centred-modulation}
On any smooth centred graph trajectory for which
$d_\eta=(b_\eta)_0(C_\eta)_1-(b_\eta)_1(C_\eta)_0\ne0$,
\begin{equation}\label{eq:exact-centred-modulation}
t_Y=\frac{a(C_\eta)_1}{d_\eta},\qquad
R_Y=-\frac{a(b_\eta)_1}{d_\eta},
\end{equation}
and the kinematic equation for the higher modes is
\begin{equation}\label{eq:exact-eta-Y}
\eta_Y=1+\eta+
\frac{(b_\eta)_1}{d_\eta}C_\eta
-\frac{(C_\eta)_1}{d_\eta}b_\eta.
\end{equation}
Define the time rescaling factor by
\begin{equation*}
\Theta:=\frac{q\,t_Y}{a^2},\qquad -aa_t=\frac q\Theta.
\end{equation*}
For a round graph $\eta=0$ with condenser velocity, $B_->0$, and
\begin{equation}\label{eq:round-modulation-exact}
t_Y=\frac a{(B_-)_0}>0,\qquad
R_Y=-a\frac{(B_-)_1}{(B_-)_0}.
\end{equation}
\end{proposition}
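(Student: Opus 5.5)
The plan is to write the kinematic condition for the inner boundary in the variable $Y$, project it onto the normal of the graph, and then impose the centering conditions $\eta_0=\eta_1=0$ as two scalar constraints. These two constraints determine $t_Y$ and $R_Y$, and substituting them back gives the equation for the higher modes.

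\emph{Step 1: the kinematic equation in graph form.} I parametrize the inner boundary by $Y$ and $\theta$ as $X_-=(R,0)+a(1+\eta)e_r$ with $a_Y=-a$. Then
\begin{equation*}
\partial_YX_-=R_Y(1,0)+a\bigl(\eta_Y-(1+\eta)\bigr)e_r+a\eta_\theta e_\theta .
\end{equation*}
The unit normal pointing out of the cavity is $N=j_\eta^{-1}\bigl((1+\eta)e_r-\eta_\theta e_\theta\bigr)$, and the fluid normal is $n_-=-N$. The kinematic condition only constrains the normal velocity, so tangential reparametrization may be ignored. It reads $\partial_YX_-\cdot N=-t_YB_-$, because the boundary moves with normal speed $B_-$ along $n_-$ and $\partial_Y=t_Y\partial_t$.

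I then use $(1,0)\cdot e_r=\cos\theta$ and $(1,0)\cdot e_\theta=-\sin\theta$. The tangential term $a\eta_\theta e_\theta$ cancels against the $-\eta_\theta e_\theta$ component of $N$, which leaves
\begin{equation*}
\frac{1+\eta}{j_\eta}\Bigl(R_YC_\eta+a\bigl(\eta_Y-1-\eta\bigr)\Bigr)=-t_YB_- .
\end{equation*}
Multiplying by $j_\eta/(1+\eta)$ and recalling $b_\eta=j_\eta B_-/(1+\eta)$ gives the pointwise identity
\begin{equation*}
\eta_Y=1+\eta-\frac{R_Y}{a}C_\eta-\frac{t_Y}{a}b_\eta .
\end{equation*}

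\emph{Step 2: centering determines the modulation parameters.} Along a centred trajectory we have $\eta_0=\eta_1=0$ for all $Y$, and hence $(\eta_Y)_0=(\eta_Y)_1=0$. Moreover $(1+\eta)_0=1$ and $(1+\eta)_1=0$. Taking the mean and the $\cos\theta$ coefficient of the identity in Step 1 therefore gives the linear system
\begin{equation*}
x(C_\eta)_0+y(b_\eta)_0=1,\qquad x(C_\eta)_1+y(b_\eta)_1=0,\qquad x=R_Y/a,\ y=t_Y/a .
\end{equation*}
This uses the even-in-$\theta$ setting, where $P_1$ is the projection onto $\cos\theta$. The determinant of the system is $-d_\eta\neq0$, and Cramer's rule gives $y=(C_\eta)_1/d_\eta$ and $x=-(b_\eta)_1/d_\eta$. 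This is exactly \eqref{eq:exact-centred-modulation}. Substituting these values back into the identity of Step 1 yields \eqref{eq:exact-eta-Y}. The factor $\Theta$ and the relation $-aa_t=a^2/t_Y=q/\Theta$ are then immediate from $a_t=a_Y/t_Y=-a/t_Y$.

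\emph{Step 3: the round case.} At $\eta=0$ we have $C_\eta=\cos\theta$, so $(C_\eta)_0=0$ and $(C_\eta)_1=1$. Also $b_\eta=B_-$, so $d_\eta=(B_-)_0$, and the formulas of Step 2 reduce to \eqref{eq:round-modulation-exact}.

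The positivity $B_->0$ for the condenser velocity follows from Hopf's lemma, as already used in Step 2 of the proof of Lemma~\ref{lem:capacity-single-layer-schur}. The potential equals $c\,u_c$ with $c>0$ on the collapsing branch. The function $u_c$ attains its maximum $1$ on the inner boundary, so its derivative along the fluid normal pointing into the cavity is positive. This gives $(B_-)_0>0$, hence $d_\eta\neq0$ and $t_Y>0$.

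No step is a real obstacle. The only delicate points are sign bookkeeping, namely fluid normal versus cavity normal and $a_Y=-a$, and checking that the tangential terms cancel exactly in the normal projection, so that no $\eta_\theta$ appears outside $C_\eta$ and $b_\eta$.
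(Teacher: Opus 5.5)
Your argument is the paper's own proof. You project $X_{-,Y}$ onto the normal, take the mean and cosine coefficient using the differentiated centering conditions, solve the resulting $2\times2$ system, and use Hopf's lemma in the round case.

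There is one slip to fix in Step~1. Since $e_r(\theta)$ does not depend on $Y$, the derivative is $\partial_YX_-=R_Y(1,0)+a(\eta_Y-1-\eta)e_r$, with no $a\eta_\theta e_\theta$ term. Had that term been present, its pairing with $N$ would be $-a\eta_\theta^2/j_\eta\neq0$, so it would not cancel as you claim. Simply delete the term and the cancellation sentence; the identity you display afterwards is then exactly correct.
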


\begin{proof}
The fluid normal is
$n_-=(-(1+\eta)e_r+\eta_\theta e_\theta)/j_\eta$.
Taking the normal component of $X_{-,Y}$ gives
\begin{equation*}
a(\eta_Y-1-\eta)+R_Y C_\eta=-t_Yb_\eta.
\end{equation*}
Taking its mean and first cosine coefficient and using the differentiated
centering conditions gives
\begin{equation*}
t_Y(b_\eta)_0+R_Y(C_\eta)_0=a,\qquad
t_Y(b_\eta)_1+R_Y(C_\eta)_1=0.
\end{equation*}
Solving this system gives \eqref{eq:exact-centred-modulation}. Substitution
into the normal equation gives \eqref{eq:exact-eta-Y}.
The identity for $\Theta$ follows from $a_Y=-a$.
At a round graph, $(C_\eta)_0=0$ and $(C_\eta)_1=1$.
The Hopf lemma gives $B_->0$ for a positive condenser flow,
which proves \eqref{eq:round-modulation-exact}.
\end{proof}

We recover the flux in the modulation equations from the conserved energy.

Let $u_c$ be the condenser potential with traces $1,0$ and set
$\operatorname{Cap}=\int_\Omega|\nabla u_c|^2\,dx$.
Since $\int_{\Gamma_-}\partial_n\Phi\,dS=2\pi Lq$, write
\begin{equation}\label{eq:exact-energy-components}
\begin{gathered}
\Phi_0=\Phi-\frac{2\pi Lq}{\operatorname{Cap}}u_c=q\varphi_z,
\qquad A=|\Gamma_-|+|\Gamma_+|,\qquad
\mathfrak c=\frac{2\pi L}{\operatorname{Cap}}-Y,\\
D_b=\pi L(Y+\mathfrak c),\qquad
D_0=\frac12\int_\Omega|\nabla\varphi_z|^2\,dx,\qquad
K_0=q^2D_0,\qquad E_0=\sigma A+K_0.
\end{gathered}
\end{equation}
Here
$z=(q^{-1}P_{\ne0}\Phi_{0,-},q^{-1}\Phi_{0,+})$
is the pair of normalized traces. The subscripts on $\Phi_0$ denote its
boundary traces, and the outer trace has zero angular mean.
The zero-flux condition determines the inner constant, so $\varphi_z$
depends linearly on $z$ at fixed geometry.  Write the remaining
geometric coordinates, apart from $a=\ell e^{-Y}$, as
$\mathcal G=(R,\eta,h_+)$. 
Let $X_+^*:\T\to\partial D_*$ parametrize the terminal outer meridian
of $\Gamma_+^*$, with $D_*$ as in
Definition~\ref{def:canonical-terminal-objects}, and let $n_+^*$ be
its outward unit normal.  In the fixed normal graph chart, write
\begin{equation*}
X_+(Y,\theta)=X_+^*(\theta)+h_+(Y,\theta)n_+^*(\theta),
\end{equation*}
so $h_+$ is the signed outer normal displacement.

The exact inner area is
\begin{equation}\label{eq:inner-area-exact}
A_-=2\pi a\int_0^{2\pi}
\big(R+a(1+\eta)\cos\theta\big)j_\eta\,d\theta.
\end{equation}

\begin{proposition}
\label{prop:exact-energy-and-capacity}
For a globally potential solution with $q>0$ and conserved energy $E_*$,
the flux is reconstructed in normalized variables by
\begin{equation}\label{eq:q-normalized-explicit}
E_*=\sigma A+q^2(D_b+D_0),\qquad
q^2=\frac{E_*-\sigma A}{D_b+D_0}.
\end{equation}
In physical trace coordinates,
\begin{align}
q^2&=\frac{E_*-E_0}{\pi L(Y+\mathfrak c)},\qquad E_*-E_0>0,
\label{eq:corrected-q-reconstruction}\\
\frac{q_Y}{q}
&=-\frac{(E_0)_Y}{2(E_*-E_0)}-\frac{L_Y}{2L}
-\frac{1+\mathfrak c_Y}{2(Y+\mathfrak c)}.
\label{eq:corrected-q-log-derivative}
\end{align}
If the normalized equations are written as
\begin{equation*}
\mathcal G_Y=\mathcal V_0(Y,\mathcal G,z,q),\qquad
z_Y=\mathcal B_0(Y,\mathcal G,z,q)-p_qz,\qquad p_q=q_Y/q,
\end{equation*}
define $\mathcal D_0^g=\partial_Y+\mathcal V_0\cdot D_{\mathcal G}$,
where $D_{\mathcal G}$ denotes differentiation in the geometric variables
and $\partial_Y$ includes the explicit dependence $a=\ell e^{-Y}$.
We write $D_z$ for differentiation in the normalized trace variables.
Then
\begin{equation}\label{eq:p-normalized-explicit}
p_q=-\frac{\mathcal D_0^g(D_b+D_0)+D_zD_0[\mathcal B_0]
+\sigma q^{-2}\mathcal D_0^gA}{2D_b}.
\end{equation}
Put $\nu_\iota=X_{\iota,Y}\cdot n_\iota$ and $\boldsymbol\nu=(\nu_-,\nu_+)$.  The capacity satisfies
\begin{equation}\label{eq:capacity-Y-exact}
\operatorname{Cap}_Y=-\sum_{\iota\in\{-,+\}}
\int_{\Gamma_\iota}\nu_\iota
(\partial_{n_\iota}u_c)^2\,dS.
\end{equation}
\end{proposition}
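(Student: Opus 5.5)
The plan is to start from the orthogonal splitting of the kinetic energy, then differentiate the resulting identity along the trajectory. Write $\Phi=\frac{2\pi Lq}{\operatorname{Cap}}u_c+\Phi_0$ with $\Phi_0=q\varphi_z$. Here $\Phi_0$ has zero inner flux, because the zero-flux reduction fixes its inner constant, and it has zero outer trace mean, which matches the normalization $\int_\T\psi_+=0$ only up to the outer trace of $u_c$, and $u_c$ vanishes there. Since $u_c$ is constant on each boundary component, Green's identity gives
\begin{equation*}
\int_\Omega\nabla u_c\cdot\nabla\Phi_0\,dx=\int_{\Gamma_-}\partial_n\Phi_0\,dS=0 .
\end{equation*}
The cross term therefore drops out, and the kinetic energy equals
\begin{equation*}
\tfrac12\frac{(2\pi Lq)^2}{\operatorname{Cap}^2}\operatorname{Cap}+q^2D_0=q^2\pi L\frac{2\pi L}{\operatorname{Cap}}+q^2D_0=q^2(D_b+D_0).
\end{equation*}
Adding the surface energy $\sigma A$ and using conservation of the total energy $E_*$ (standard for smooth capillary potential flow) gives \eqref{eq:q-normalized-explicit}. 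Solving for $q^2$, with $E_0=\sigma A+K_0$, gives \eqref{eq:corrected-q-reconstruction}. Positivity of $E_*-E_0$ follows because $E_*-E_0=q^2D_b>0$ whenever $q>0$ and $D_b>0$, and $D_b=\pi L\cdot 2\pi L/\operatorname{Cap}>0$.

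Next I take the logarithmic $Y$-derivative of $q^2\pi L(Y+\mathfrak c)=E_*-E_0$. This yields \eqref{eq:corrected-q-log-derivative} directly, since $E_*$ is constant, $(\pi L)_Y/(\pi L)=L_Y/L$, and $(Y+\mathfrak c)_Y=1+\mathfrak c_Y$.

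For \eqref{eq:p-normalized-explicit} I differentiate the form $E_*=\sigma A+q^2(D_b+D_0)$ along the normalized flow. Here $D_b$ and $A$ depend only on $(Y,\mathcal G)$, while $D_0$ depends on $(Y,\mathcal G,z)$ and is quadratic in $z$. The chain rule with $\mathcal G_Y=\mathcal V_0$ and $z_Y=\mathcal B_0-p_qz$ gives
\begin{equation*}
0=\sigma\mathcal D_0^gA+2q^2p_q(D_b+D_0)+q^2\bigl(\mathcal D_0^g(D_b+D_0)+D_zD_0[\mathcal B_0]-p_qD_zD_0[z]\bigr).
\end{equation*}
Euler's identity for the quadratic form gives $D_zD_0[z]=2D_0$. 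The $p_qD_0$ terms then cancel, leaving $2q^2p_qD_b$ as the only $p_q$ term. Dividing by $2q^2D_b$ gives the displayed formula. The main point to check here is that $\varphi_z$ is genuinely linear in $z$ at fixed geometry, which holds by the zero-flux reduction.

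Finally, \eqref{eq:capacity-Y-exact} is the Hadamard formula. I apply \eqref{eq:schur-Hadamard}, or equivalently \eqref{eq:Hadamard-flux}, to the fixed trace pair $(1,0)$, whose tangential derivatives vanish. Only the $-B^2$ term survives, with normal speed $\nu_\iota$. Multiplying by $2\pi R$ converts the meridional integral $\int\rho j\,d\theta$ into the surface measure $dS$. One point needs care: $\operatorname{Cap}$ is the unreduced Dirichlet form $2\pi R\langle e,\mathfrak Ge\rangle$, but the normalization $R$ also varies along the path. The clean route is to avoid the $R$ normalization altogether and apply the classical Hadamard variation directly to $\int_\Omega|\nabla u_c|^2\,dx$ with boundary data constant on each moving component. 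Its Eulerian derivative $u_c'=-\nu\partial_nu_c$ gives $2\int\partial_nu_c\,u_c'=-2\int\nu(\partial_nu_c)^2$, and the boundary-motion term adds $+\int\nu|\nabla u_c|^2=\int\nu(\partial_nu_c)^2$, for a net $-\int\nu(\partial_nu_c)^2$. I expect this sign and normalization bookkeeping to be the only delicate step.
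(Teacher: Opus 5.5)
Your argument is correct and follows the paper's proof: Green's identity makes the condenser field orthogonal to $\varphi_z$, the condenser energy is $q^2D_b$, logarithmic differentiation gives \eqref{eq:corrected-q-log-derivative}, Euler's identity $D_zD_0[z]=2D_0$ cancels the $p_qD_0$ terms, and a Hadamard computation gives $\operatorname{Cap}_Y$. One correction: \eqref{eq:schur-Hadamard} is not an equivalent alternative here. It is the variation of the zero-flux reduced form $K$, and $\Pi e=0$, so it gives zero on $e=(1,0)$. Only the unreduced identity \eqref{eq:Hadamard-flux}, or your direct three-dimensional Hadamard computation, yields \eqref{eq:capacity-Y-exact}. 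The paper applies \eqref{eq:Hadamard-flux} with $R$ frozen, and this is legitimate because $2\pi R\langle e,\mathfrak Ge\rangle=2\pi\int\rho|\nabla u_c|^2$ does not depend on the normalization constant, so your concern about a varying $R$ is unnecessary.
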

\begin{proof}
Zero inner flux and Green's identity give
$\int_\Omega\nabla u_c\cdot\nabla\varphi_z=0$.
The condenser contribution to the kinetic energy is
$q^2(2\pi L)^2/(2\operatorname{Cap})=q^2D_b$, proving
\eqref{eq:q-normalized-explicit}.  Substituting $K_0=q^2D_0$
and differentiating gives
\eqref{eq:corrected-q-reconstruction}--\eqref{eq:corrected-q-log-derivative}.
The quadratic form $D_0$ satisfies
\begin{equation*}
D_zD_0[z]=2D_0.
\end{equation*}
Differentiating the normalized energy and dividing by $q^2$ gives
\begin{equation*}
0=2p_q(D_b+D_0)+\mathcal D_0^g(D_b+D_0)
+D_zD_0[\mathcal B_0-p_qz]+\sigma q^{-2}\mathcal D_0^g A.
\end{equation*}
The quadratic identity cancels $2p_qD_0$ and leaves the coefficient $2D_b$,
which gives \eqref{eq:p-normalized-explicit}.

To differentiate the capacity, apply \eqref{eq:Hadamard-flux}
with the normalization $R$ held fixed and trace $e_-=(1,0)$.
Both traces are constant, so the tangential term vanishes.  Symmetry gives
\begin{equation*}
\operatorname{Cap}_Y
=2\pi R\langle e_-,D\mathfrak G[\boldsymbol\nu]e_-\rangle
=-\sum_\iota\int_{\Gamma_\iota}\nu_\iota
(\partial_{n_\iota}u_c)^2\,dS,
\end{equation*}
which is \eqref{eq:capacity-Y-exact}.
\end{proof}

\begin{lemma} 
\label{lem:energy-recovers-bernoulli-mean}
Let $t\mapsto(\Omega_t,\Phi)$ be a smooth family with two boundary components,
with $\Delta\Phi=0$, $v=\nabla\Phi$ and boundary normal velocity $v\cdot n$.
Let $\sigma\ge0$ be constant.  Suppose the nonconstant Bernoulli
projections hold on both connected boundary components, so that
\begin{equation*}
\left.\left(\partial_t\Phi+\tfrac12|v|^2
+\sigma\mathcal H\right)\right|_{\Gamma_\iota}
=b_\iota(t),\qquad \iota\in\{-,+\},
\end{equation*}
for scalar functions $b_\iota$.  Write
\begin{equation*}
2\pi L(t)q(t)=\int_{\Gamma_-}v\cdot n\,dS,
\qquad
E_{\rm tot}(t)=\frac12\int_{\Omega_t}|v|^2\,dx
+\sigma\bigl(|\Gamma_-|+|\Gamma_+|\bigr).
\end{equation*}
Then the identity
\begin{equation}\label{eq:energy-bernoulli-mean-identity}
\frac{d}{dt}E_{\rm tot}=2\pi Lq(b_--b_+)
\end{equation}
holds.  If $q\ne0$, conservation of $E_{\rm tot}$ is equivalent to
$b_-=b_+$.  In that case
\begin{equation*}
p=b_+(t)-\partial_t\Phi-\tfrac12|v|^2
\end{equation*}
recovers the full Euler equations and $p=\sigma\mathcal H$ on both
components.  Thus, for positive flux and a fixed common potential gauge,
the energy reconstruction determines the remaining constant mode.
\end{lemma}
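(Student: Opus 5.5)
The plan is to compute $\frac{d}{dt}E_{\rm tot}$ directly by the Reynolds transport theorem and then convert it, via harmonicity and the Bernoulli projections, into boundary integrals involving only the constants $b_\iota$.

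\emph{Kinetic part.} Since the boundary moves with normal velocity $v\cdot n$,
\begin{equation*}
\frac{d}{dt}\frac12\int_{\Omega_t}|v|^2\,dx
=\int_{\Omega_t}\nabla\Phi\cdot\nabla\partial_t\Phi\,dx
+\frac12\int_{\partial\Omega_t}|v|^2(v\cdot n)\,dS .
\end{equation*}
Because $\Delta\Phi=0$, the first term equals $\int_{\partial\Omega_t}\partial_t\Phi\,\partial_n\Phi\,dS$. Adding the two terms gives
\begin{equation*}
\int_{\partial\Omega_t}\Bigl(\partial_t\Phi+\tfrac12|v|^2\Bigr)(v\cdot n)\,dS .
\end{equation*}

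\emph{Surface part.} For each component, the first variation of area under a normal velocity $v\cdot n$ is
\begin{equation*}
\frac{d}{dt}|\Gamma_\iota|=\int_{\Gamma_\iota}\mathcal H\,(v\cdot n)\,dS,
\end{equation*}
with $\mathcal H=\operatorname{div}_{\partial\Omega}n$ and $n$ the outward fluid normal. This is the sign convention under which $p=\sigma\mathcal H$ is the capillary condition, and I will check it against the cavity case.

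\emph{Combining.} Summing the two parts,
\begin{equation*}
\frac{d}{dt}E_{\rm tot}=\sum_\iota\int_{\Gamma_\iota}\Bigl(\partial_t\Phi+\tfrac12|v|^2+\sigma\mathcal H\Bigr)(v\cdot n)\,dS
=\sum_\iota b_\iota\int_{\Gamma_\iota}v\cdot n\,dS .
\end{equation*}
Incompressibility gives $\int_{\Gamma_+}v\cdot n\,dS=-\int_{\Gamma_-}v\cdot n\,dS=-2\pi Lq$. Substituting yields \eqref{eq:energy-bernoulli-mean-identity}. When $q\ne0$, it follows at once that $dE_{\rm tot}/dt=0$ if and only if $b_-=b_+$.

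\emph{Recovering Euler.} Assume $b_-=b_+$ and set $p=b_+-\partial_t\Phi-\frac12|v|^2$. Taking the gradient gives $\partial_tv+(v\cdot\nabla)v+\nabla p=0$, using $\nabla\tfrac12|v|^2=(v\cdot\nabla)v$ for gradient fields. The equation $\nabla\cdot v=0$ holds by harmonicity, and the kinematic condition is part of the hypotheses. On $\Gamma_\iota$ the Bernoulli projection gives $p=b_+-b_\iota+\sigma\mathcal H=\sigma\mathcal H$. Finally, a change of gauge $\Phi\mapsto\Phi+c(t)$ shifts both $b_\iota$ by $c'(t)$ and so leaves $b_--b_+$ invariant. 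Hence, once the common gauge is fixed, energy conservation determines the one remaining constant mode.

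The only step needing care is the sign convention for $\mathcal H$ in the area variation; otherwise the argument is routine.
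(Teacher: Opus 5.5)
Your proposal is correct and is essentially the paper's argument. The paper first sets $p=b_+-\partial_t\Phi-\tfrac12|v|^2$ and uses Reynolds' formula in the form $\frac{d}{dt}\tfrac12\int_{\Omega_t}|v|^2\,dx=-\sum_\iota\int_{\Gamma_\iota}p\,v\cdot n\,dS$ together with $p-\sigma\mathcal H=b_+-b_\iota$ on $\Gamma_\iota$. You instead compute the same boundary integral directly from the potential via Green's identity; the two differ only by the zero total flux term. Your area variation, flux bookkeeping and gauge invariance of $b_--b_+$ match the paper's.
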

\begin{proof}
With this pressure, the velocity satisfies the Euler equation, and
$p-\sigma\mathcal H=b_+-b_\iota$ on $\Gamma_\iota$.
Reynolds' formula, incompressibility, and the kinematic condition give
\begin{equation*}
\frac{d}{dt}\frac12\int_{\Omega_t}|v|^2\,dx
=-\sum_\iota\int_{\Gamma_\iota}p\,v\cdot n\,dS,
\qquad
\frac{d}{dt}|\Gamma_\iota|
=\int_{\Gamma_\iota}\mathcal H\,v\cdot n\,dS,
\end{equation*}
using the fluid normal on both components.  Adding and using
$\int_{\Gamma_+}v\cdot n=-2\pi Lq$ proves
\eqref{eq:energy-bernoulli-mean-identity} and the equivalence for $q\ne0$.
The difference $b_--b_+$ is unchanged by the common potential gauge.

\end{proof}

\subsection{Terminal data and polar dynamics}
\label{subsec:canonical-terminal-data}

Fix $\sigma\ge0$, $E_c>0$ and a circle $\gamma_*$ of radius $R_*>0$.
Choose $\ell/R_*$ sufficiently small and set
$\Gamma_+^*=\partial N_\ell(\gamma_*)$.
The canonical domains satisfy $\rho\ge R_*-\ell>0$.  Choose the
neighbourhood of outer graphs so that $\rho\ge(R_*-\ell)/2$.
For $Y_N\nearrow\infty$, put
\begin{equation}\label{eq:aN-def}
a_N=\ell e^{-Y_N},\qquad
\Gamma_{-,N}^*=\partial N_{a_N}(\gamma_*),\qquad
\Omega_N^*=N_\ell(\gamma_*)\setminus\overline{N_{a_N}(\gamma_*)}.
\end{equation}
Let $u_N$ be the condenser potential
\begin{equation}\label{eq:cap-terminal}
\Delta u_N=0\quad\hbox{in }\Omega_N^*,\qquad
u_N|_{\Gamma_{-,N}^*}=1,\qquad u_N|_{\Gamma_+^*}=0,
\qquad \operatorname{Cap}_N=\int_{\Omega_N^*}|\nabla u_N|^2dx.
\end{equation}
Since $|\Gamma_{-,N}^*|=2\pi L_*a_N$, where $L_*=2\pi R_*$,
for large $N$ there is a unique $c_N>0$ such that
\begin{equation}
\frac12c_N^2\operatorname{Cap}_N+\sigma|\Gamma_{-,N}^*|=E_c.
\end{equation}
Prescribe the potential $\Phi_N^*=c_Nu_N$ and the velocity
\begin{equation}\label{eq:terminal-velocity}
v_N^*=c_N\nabla u_N.
\end{equation}

Figure~\ref{fig:canonical-terminal-objects} shows the prescribed
approximating terminal geometries as $a_N\downarrow0$.
The terminal material map of the resulting trajectory, obtained as
$t\uparrow T$, is depicted in Figure~\ref{fig:terminal-material-map}.

\begin{figure}[!hbp]
\centering
\begin{tikzpicture}[cbpfigure]
\path[use as bounding box] (0,0) rectangle (15.7,4.48);
\begin{scope}[yshift=-4.17cm]

\node at (3.65,8.35) {Approximating terminal state};
\node at (12.00,8.35) {Filled terminal geometry};
\begin{scope}[shift={(3.65,6.35)}]
\filldraw[fill=cbpFluid,cbpwall] (0,0) circle (1.38);
\filldraw[fill=white,cbpinner] (0,0) circle (.50);
\draw[dashed,cbpInk!45] (-1.60,0)--(1.60,0);
\fill[cbpCore] (0,0) circle (1.8pt);
\node[anchor=north,inner sep=3pt] at (0,-.02) {$c_*$};
\draw[cbpradius] (0,0)--({1.38*cos(128)},{1.38*sin(128)});
\node[anchor=south east,fill=cbpFluid,inner sep=1.5pt]
at (-.42,.57) {$\ell$};
\draw[cbpradius] (0,0)--(.50,0);
\draw[cbpleader] (.25,0)--(.83,-.67)--(1.66,-.67);
\node[anchor=west,inner sep=2pt] at (1.66,-.67) {$a_N>0$};
\draw[cbpleader] (.40,.30)--(1.00,.78)--(1.69,.78);
\node[anchor=west,inner sep=2pt] at (1.69,.78) {$\Phi_N^*=c_N$};
\draw[cbpleader] (-1.29,-.49)--(-1.86,-.88);
\node[anchor=east,inner sep=2pt] at (-1.86,-.88) {$\Phi_N^*=0$};
\node[anchor=east,inner sep=2pt] at (-1.65,0) {$z=0$};
\end{scope}
\draw[->,cbpInk!75,line width=.8pt] (6.46,6.35)--(9.39,6.35);
\node[above=5pt] at (7.92,6.35) {$a_N\downarrow0$};
\node[below=5pt,font=\footnotesize,align=center] at (7.92,6.35)
{limit of prescribed\\terminal geometry};
\begin{scope}[shift={(12.00,6.35)}]
\filldraw[fill=cbpFluid,cbpwall] (0,0) circle (1.38);
\draw[dashed,cbpInk!45] (-1.60,0)--(1.60,0);
\node at (-.70,.47) {$D_*$};
\fill[cbpCore] (0,0) circle (2.2pt);
\draw[cbpleader] (.06,-.03)--(.56,-.59)--(1.54,-.59);
\node[anchor=west,inner sep=2pt] at (1.54,-.59) {$c_*=(R_*,0)$};
\draw[cbpleader] (.83,1.10)--(1.36,1.44)--(1.77,1.44);
\node[anchor=west,inner sep=2pt] at (1.77,1.44) {$\partial D_*$};
\end{scope}
\node at (7.85,4.52)
{Revolve about the $z$-axis: $c_*\longmapsto\gamma_*$,
\quad $D_*\longmapsto\Omega_*$ (solid torus).};
\end{scope}
\end{tikzpicture}
\caption{The canonical terminal geometries of
Definition~\ref{def:canonical-terminal-objects}.
Both panels are meridional sections.  The left panel shows the exact
concentric circles and constant boundary potentials imposed at a finite
approximating terminal state.  The right panel shows the limiting outer
disk and its core point.  The horizontal dashed lines pass through the
disk centre at $z=0$.}
\label{fig:canonical-terminal-objects}
\end{figure}

We use the local existence and uniqueness theory on smooth bounded
fluid domains.  Positive surface tension permits arbitrary smooth
divergence-free data.  At $\sigma=0$, one assumes $-\partial_n p>0$
on the entire boundary \cite[Theorems~1.1 and~1.2]{CS}.
The same local theory gives continuation through smooth, nondegenerate
states when the continuation norms remain bounded and, at zero surface
tension, the Taylor coefficient has a positive lower bound.

\begin{lemma}\label{lem:terminal-data-admissible}
For large $N$, the data \eqref{eq:aN-def}--\eqref{eq:terminal-velocity}
generate a smooth Euler solution evolving backwards from the prescribed
terminal state, with axisymmetry, reflection symmetry and a single-valued
potential.
The terminal graph perturbations and zero-flux remainder vanish.
The inner flux is positive, and at $\sigma=0$ both boundaries satisfy
the strict Taylor sign.
\end{lemma}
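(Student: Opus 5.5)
The plan is to reduce the statement to the local well-posedness theory quoted just before it, applied backwards in time from the explicit terminal state, and then to verify the structural properties by uniqueness and direct computation.

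\textbf{Regularity and time reversal.} The terminal domain $\Omega_N^*=N_\ell(\gamma_*)\setminus\overline{N_{a_N}(\gamma_*)}$ is a smooth bounded domain bounded by two disjoint embedded tori, since $\ell<R_*$ and $a_N<\ell$. The condenser potential $u_N$ is smooth up to the boundary by elliptic regularity. Hence $v_N^*=c_N\nabla u_N$ is a smooth, divergence-free, irrotational field with single-valued potential. The Euler system \eqref{eq:euler-toroidal-front} is time reversible: $(v,p,\Omega)(t)\mapsto(-v,p,\Omega)(-t)$ maps solutions to solutions, because the capillary condition does not involve velocity and the kinematic condition is odd in both $t$ and $v$. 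I will therefore solve forward from the data $(\Omega_N^*,-v_N^*)$ on a short interval and reverse time.

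\textbf{Existence from the local theory.} For $\sigma>0$, \cite[Theorems~1.1 and~1.2]{CS} give a smooth local solution for arbitrary smooth divergence-free data. For $\sigma=0$, I must check the strict Taylor sign $-\partial_np>0$ on both components. The Euler pressure at a given instant solves $-\Delta p=\sum_{i,j}(\partial_i\partial_j\Phi)^2\ge0$ in $\Omega_N^*$ with $p=0$ on the boundary. The right side is not identically zero: otherwise $\nabla\Phi$ would be constant, but it vanishes nowhere and has positive inward flux through the inner torus, which is impossible for a constant field on a closed surface. The maximum principle gives $p>0$ inside, and Hopf's lemma gives $\partial_np<0$ on both components, as in the remark after Theorem~\ref{thm:prescribed-outer-geometry}. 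This pressure equation does not depend on the sign of $v$, so the sign condition holds for the reversed data too.

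\textbf{Preservation of symmetry and potential structure.} By uniqueness, rotations about the $z$-axis and the reflection $z\mapsto-z$ map the solution to another solution with the same data, hence to itself. The flow is therefore axisymmetric and reflection symmetric. Vorticity vanishes initially and is transported, and Kelvin's theorem preserves circulation around every loop in the non-simply connected domain. The circulation periods thus stay zero, and Lemma~\ref{lem:exact-boundary-reduction} provides a single-valued potential, which is axisymmetric and even in $z$ after the gauge normalization. The data have no azimuthal velocity, and axisymmetric potential flow stays swirl-free.

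\textbf{Terminal properties and main obstacle.} At $t=T_N$ both meridians are exact concentric circles, so $\eta=0$, $h_+=0$ and the centre is $c_*$. The terminal potential is a multiple of the condenser potential, so its zero-flux remainder $\Phi_0$ in \eqref{eq:exact-energy-components} vanishes. The inward flux equals $c_N\int_{\Gamma_-}\partial_n u_N\,dS=c_N\operatorname{Cap}_N>0$, where $u_N>0$ and Hopf's lemma give $\partial_nu_N>0$ with the fluid normal into the cavity. The remaining choice is $c_N>0$, which exists because $E_c-2\pi\sigma L_*a_N>0$ for large $N$. I expect the only delicate point to be the zero-tension Taylor sign, together with the observation that it is reversal invariant. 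The rest is bookkeeping with the cited uniqueness theory.
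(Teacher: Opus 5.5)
Your proposal is correct and follows the paper's proof essentially step for step. The ingredients match: the local theory applied to the reversed condenser data $(\Omega_N^*,-v_N^*)$; the strict Taylor sign from $-\Delta p=\sum_{i,j}(\partial_i\partial_j\Phi)^2$ with zero boundary trace, via the positive-flux argument, the maximum principle and Hopf's lemma; the symmetries by uniqueness; and single-valuedness by vorticity transport plus Kelvin's theorem.

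One small slip: the Hopf argument giving $\partial_n u_N>0$ on the inner torus uses $u_N<1$ in the fluid, so that the maximum $1$ is attained on $\Gamma_{-,N}^*$. The fact $u_N>0$ that you cite instead gives the sign on the outer boundary.
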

\begin{proof}
The condenser is smooth and has the stated symmetries.
The velocity is the gradient of a single-valued harmonic potential and
is therefore divergence free.
Green's identity and the Hopf lemma give
\begin{equation*}
\int_{\Gamma_{-,N}^*}v_N^*\cdot n\,dS
=c_N\operatorname{Cap}_N>0,\qquad
v_N^*\cdot n>0\quad\hbox{on }\Gamma_{-,N}^*.
\end{equation*}
The total boundary flux is zero.  At $\sigma=0$, the pressure solves
$-\Delta p_N=\sum_{i,j=1}^3(\partial_i v_{N,j}^*)^2$ with zero boundary trace.
A constant velocity has zero flux across a closed surface, so the
positive inner flux implies $Dv_N^*\not\equiv0$.
The strong maximum principle gives $p_N>0$, and Hopf's lemma gives
$-\partial_n p_N>0$ on both compact boundary components.
The local theory therefore applies in both cases.
Solve forward with velocity $-v_N^*$ and reverse time to obtain the
backward branch.

Uniqueness preserves the symmetries.  For $\omega=\nabla\times v$, the equation
$(\partial_t+v\cdot\nabla)\omega=(\omega\cdot\nabla)v$ preserves $\omega=0$.
For every material closed curve $\mathcal C_t$,
\begin{equation*}
\frac d{dt}\oint_{\mathcal C_t}v\cdot dx
=\oint_{\mathcal C_t}\nabla\bigl(|v|^2/2-p\bigr)\cdot dx=0.
\end{equation*}
Since all terminal periods vanish and the evolving domains are
diffeomorphic, the potential remains single valued.
The terminal geometry is circular and the terminal velocity is the
condenser field.  Hence all terminal perturbations vanish.
\end{proof}

Use the centred radius of each local branch to define
\begin{equation}\label{eq:common-Y-gauge}
Y=\log\frac{\ell}{a^{(N)}(t)},\qquad a(Y)=\ell e^{-Y}.
\end{equation}
At the terminal state, \eqref{eq:round-modulation-exact} gives
$t_Y=a_N/(B_-)_0>0$, so this is initially a valid common time coordinate.

The centering conditions on the graph do not constrain the first
potential harmonic.  In the centred polar chart, define
\begin{equation*}
\alpha(Y):=\frac1\pi\int_0^{2\pi}
\Phi_0(X_-(Y,\theta),Y)\cos\theta\,d\theta.
\end{equation*}

At fixed chart coordinates, write $v_{\tau,\iota}=v\cdot\tau_\iota=V_\iota$.
The trace chain rule and $X_{\iota,Y}\cdot n_\iota=t_YB_\iota$ give
\begin{equation}\label{eq:charted-Bernoulli}
(\Phi|_{X_\iota})_Y
=t_Y\left(\tfrac12B_\iota^2-\tfrac12v_{\tau,\iota}^2
-\sigma\mathcal H_\iota\right)
+v_{\tau,\iota}X_{\iota,Y}\cdot\tau_\iota+\widetilde b(Y).
\end{equation}
Here $\widetilde b(Y)=t_Y b(t(Y))$ is the common Bernoulli gauge in the $Y$ coordinate.
The condenser trace is constant on each boundary.  With the outer trace
normalized to have zero angular mean in this chart, set
$f=P_{\ge2}\Phi_{0,-}$ and $g=\Phi_{0,+}$.
We evolve $\alpha$ from $\alpha(Y_N)=0$ and use
$(\eta,f,\alpha,R,h_+,g)$ as the state variables.  The energy identity reconstructs
the flux and, by Lemma~\ref{lem:energy-recovers-bernoulli-mean}, recovers
the constant Bernoulli equation.

\begin{proposition}
\label{prop:exact-polar-raw-system}
On a smooth centred trajectory put
\begin{equation*}
d=1+\eta,\quad e=\eta_\theta,\quad j=(d^2+e^2)^{1/2},\quad
u=e/d,\quad \varsigma=j/d=(1+u^2)^{1/2},
\end{equation*}
\begin{equation*}
\begin{gathered}
n=\frac{aB_-}{q},\quad \upsilon=\frac{av_{\tau,-}}q,\quad
\kappa=\frac{R_Y}{a},\quad \beta=\frac\alpha q,\quad F=\frac f q,\\
p_q=\frac{q_Y}q,\quad B_q=\frac{\sigma a}{q^2},\quad \mathscr C=a\mathcal H_-.
\end{gathered}
\end{equation*}
The curvature is computed with the fluid normal from
Proposition~\ref{prop:exact-centred-modulation}. Thus $\mathscr C=-1$
for a circular cylindrical cavity.  Then
\begin{align}
\eta_Y&=1+\eta-\kappa(\cos\theta+u\sin\theta)
-\Theta\varsigma n,
\label{eq:polar-eta-exact}\\
\frac{X_{-,Y}\cdot\tau_-}{a}
&=-\kappa\varsigma\sin\theta-\Theta u n,
\label{eq:polar-chart-eliminated}\\
\upsilon&=j^{-1}(-\beta\sin\theta+F_\theta).
\label{eq:polar-tangent-trace}
\end{align}
Define the kinetic function
\begin{equation}\label{eq:polar-kinetic-function}
\mathcal K(\eta,e,n,\upsilon,\theta)
=\frac\Theta2(n^2-\upsilon^2)-\Theta\frac e{1+\eta}n\upsilon
-\kappa\left(1+\frac{e^2}{(1+\eta)^2}\right)^{1/2}
\upsilon\sin\theta.
\end{equation}
The nonconstant inner Bernoulli equations are
\begin{align}
\frac{\alpha_Y}{q}&=(\mathcal K-\Theta B_q\mathscr C)_1,
\label{eq:polar-alpha-exact}\\
F_Y+p_qF&=P_{\ge2}(\mathcal K-\Theta B_q\mathscr C).
\label{eq:polar-F-exact}
\end{align}
Moreover, for $h=a\eta$,
\begin{equation}\label{eq:polar-h-exact}
h_Y=aP_{\ge2}\mathscr H(\eta,e,n,\theta),\qquad
\mathscr H=1-\kappa(\cos\theta+u\sin\theta)-\Theta\varsigma n.
\end{equation}

For reflection symmetric data, decompose
\begin{equation*}
n=n_0+n_1\cos\theta+N,\quad N=P_{\ge2}n,\qquad
\upsilon=\upsilon_1\sin\theta+V,
\end{equation*}
where $\upsilon_1=\pi^{-1}\int_0^{2\pi}\upsilon\sin\theta\,d\theta$ and $V$ has zero first sine coefficient.  Then
\begin{equation*}
P_{\ge2}\mathcal K=\mathfrak f_2\cos2\theta+\mathcal R,
\end{equation*}
where
\begin{equation}\label{eq:exact-kinetic-low-source}
\mathfrak f_2=\frac\Theta4(n_1^2+\upsilon_1^2)+\frac\kappa2\upsilon_1.
\end{equation}
Every remaining term in $\mathcal R$ contains $N$, $V$, $u$, or
$\varsigma-1$.  In particular $\mathcal R=0$ when $N=V=u=0$.
The term $-\Theta P_{\ge2}(un\upsilon)$ contains the graph derivative
$u=\eta_\theta/(1+\eta)$.
If $|n_1|+|\upsilon_1|+|\kappa|\le C_0(1+Y)^Ma$ and $|\Theta|\le2$, then
$|\mathfrak f_2|\le C(1+Y)^{2M}a^2$.
After multiplication by $a$, this contribution from the first harmonics
is $O((1+Y)^{2M}a^3)$ in the scaled momentum equation.
\end{proposition}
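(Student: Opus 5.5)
The plan is to derive every identity by direct substitution into results already available: the centred normal equation from the proof of Proposition~\ref{prop:exact-centred-modulation}, the charted Bernoulli identity \eqref{eq:charted-Bernoulli} on $\Gamma_-$, and the decomposition $\Phi=\frac{2\pi Lq}{\operatorname{Cap}}u_c+\Phi_0$. All of this is algebra. The only care needed is in signs, for the fluid normal and the curvature, and in keeping track of which Fourier modes the constant gauge and the condenser trace can reach.

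\emph{Step 1: geometric equations.} The proof of Proposition~\ref{prop:exact-centred-modulation} gives
\begin{equation*}
a(\eta_Y-1-\eta)+R_YC_\eta=-t_Yb_\eta .
\end{equation*}
With $C_\eta=\cos\theta+u\sin\theta$, $b_\eta=\varsigma B_-$, $t_Y=\Theta a^2/q$ and $B_-=qn/a$, we get $t_Yb_\eta/a=\Theta\varsigma n$. Dividing by $a$ then yields \eqref{eq:polar-eta-exact}. For \eqref{eq:polar-chart-eliminated}, differentiate $X_-=(R,0)+a(1+\eta)e_r$ at fixed $\theta$ to get
\begin{equation*}
X_{-,Y}=R_Y(1,0)+a(\eta_Y-1-\eta)e_r .
\end{equation*}
Pair this with $\tau_-=(ee_r+de_\theta)/(aj)\cdot a$, using $e_r\cdot\tau=e/j$ and $(1,0)\cdot\tau=(e\cos\theta-d\sin\theta)/j$. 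Eliminating $\eta_Y-1-\eta$ through the normal equation, the $\kappa e\cos\theta$ terms cancel, which leaves $-\kappa\varsigma\sin\theta-\Theta un$. For \eqref{eq:polar-tangent-trace}, note that on $\Gamma_-$ the trace of $\Phi$ equals a constant plus $\alpha\cos\theta+f$. The condenser part is constant there, and the centred chart gives $P_0,P_1$ of $\Phi_{0,-}$ as a constant and $\alpha\cos\theta$. Hence $v_{\tau,-}=(aj)^{-1}\partial_\theta(\alpha\cos\theta+f)$; multiplying by $a/q$ gives the formula.

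\emph{Step 2: Bernoulli projections.} Divide \eqref{eq:charted-Bernoulli} for $\iota=-$ by $q$ and apply $P_1$ and $P_{\ge2}$. This kills $\widetilde b(Y)$ and the $Y$-derivative of the constant condenser trace. On the left, $(\alpha\cos\theta+qF)_Y/q$ produces $\alpha_Y/q$ and $F_Y+p_qF$. On the right we have three contributions:
\begin{equation*}
\frac{t_Y}{q}\cdot\frac12(B_-^2-v_{\tau}^2)=\frac\Theta2(n^2-\upsilon^2),\qquad
\frac{t_Y\sigma\mathcal H_-}{q}=\Theta B_q\mathscr C,\qquad
\frac{v_\tau X_{-,Y}\cdot\tau}{q}=\upsilon\,(-\kappa\varsigma\sin\theta-\Theta un),
\end{equation*}
where the last one uses Step~1. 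Since $u=e/(1+\eta)$, the sum is exactly $\mathcal K-\Theta B_q\mathscr C$, which proves \eqref{eq:polar-alpha-exact}--\eqref{eq:polar-F-exact}. I will separately check the sign convention $\mathscr C=-1$ for the fluid normal against \eqref{eq:explicit-dimensionless-curvature}. For \eqref{eq:polar-h-exact}, the centering gives $h=aP_{\ge2}\eta$, so
\begin{equation*}
h_Y=aP_{\ge2}(\eta_Y-\eta)=aP_{\ge2}\mathscr H ,
\end{equation*}
and \eqref{eq:polar-eta-exact} supplies $\mathscr H$.

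\emph{Step 3: low source.} Insert the decompositions of $n$ and $\upsilon$ into $\mathcal K$. Every term involving $N$, $V$, $u$ or $\varsigma-1$ goes into $\mathcal R$. What remains is
\begin{equation*}
\frac\Theta2\bigl((n_0+n_1\cos\theta)^2-\upsilon_1^2\sin^2\theta\bigr)-\kappa\upsilon_1\sin^2\theta .
\end{equation*}
Using $\cos^2\theta=\tfrac12(1+\cos2\theta)$ and $\sin^2\theta=\tfrac12(1-\cos2\theta)$, the $P_{\ge2}$ part is $\mathfrak f_2\cos2\theta$ with $\mathfrak f_2$ as in \eqref{eq:exact-kinetic-low-source}. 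The stated bound follows directly from the hypothesis and $|\Theta|\le2$, and multiplying by $a$ gives $O((1+Y)^{2M}a^3)$. The only step that needs care is Step~1's tangential elimination, where the sign of $\tau_-$ relative to the fluid normal must be fixed consistently.
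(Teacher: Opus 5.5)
Your proposal is correct and follows the paper's own proof essentially line by line: dividing the centred normal equation from Proposition~\ref{prop:exact-centred-modulation} by $a$, taking the tangential component and using the cancellation of the $\kappa e\cos\theta$ terms, differentiating the inner trace in $\theta$, dividing \eqref{eq:charted-Bernoulli} by $q$ and projecting, using $h_Y=a(\eta_Y-\eta)$, and expanding with $P_{\ge2}\cos^2\theta=\tfrac12\cos2\theta$ and $P_{\ge2}\sin^2\theta=-\tfrac12\cos2\theta$. One small attribution fix: $P_1\Phi_{0,-}$ is exactly $\alpha\cos\theta$, with no $\sin\theta$ part, because reflection symmetry makes the potential trace even in $\theta$; the centering conditions constrain only $\eta$ and do not give this.
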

\begin{proof}
The normal component of $X_{-,Y}$ gives \eqref{eq:polar-eta-exact}.
Its tangential component is
\begin{equation*}
\frac{X_{-,Y}\cdot\tau_-}{a}
=\kappa\frac{-d\sin\theta+e\cos\theta}{j}
+\frac{(\eta_Y-d)e}{j}.
\end{equation*}
Substituting the first equation and using
$-d\sin\theta+e\cos\theta-e(\cos\theta+(e/d)\sin\theta)
=-j^2\sin\theta/d$ gives \eqref{eq:polar-chart-eliminated}.
Angular differentiation of the trace gives \eqref{eq:polar-tangent-trace},
since the condenser contribution and the eliminated mean are constant
on the inner boundary.
Divide \eqref{eq:charted-Bernoulli} by $q$ and substitute this chart velocity and
$t_Y=\Theta a^2/q$.  The kinetic term is \eqref{eq:polar-kinetic-function}
and the curvature coefficient is $\sigma t_Y/(aq)=\Theta B_q$.
Projecting onto the nonconstant modes and using
$f_Y/q=F_Y+p_qF$ gives the trace equations.
$h_Y=a(\eta_Y-\eta)$ gives the last equation.

Expand \eqref{eq:polar-kinetic-function} and use
$P_{\ge2}\cos^2\theta=\tfrac12\cos2\theta$ and
$P_{\ge2}\sin^2\theta=-\tfrac12\cos2\theta$.
Every term remaining after subtraction of $\mathfrak f_2\cos2\theta$
contains one of the stated factors.  The estimate follows from the bounds
assumed on the three scalar coefficients.
\end{proof}

\begin{proposition}
\label{prop:clock-centre-finite-rank}
Put $C=\cos\theta+u\sin\theta$, $N_* =\varsigma n$, and
$\Delta=(N_*)_0C_1-(N_*)_1C_0$.  When $\Delta\ne0$,
\begin{equation}\label{eq:clock-centre-normalized}
\Theta=C_1/\Delta,\qquad \kappa=-(N_*)_1/\Delta.
\end{equation}
For an arbitrary variation of the normalized state at fixed $a,R$,
\begin{align}
\dot\Delta&=(\dot N_*)_0C_1+(N_*)_0\dot C_1
-(\dot N_*)_1C_0-(N_*)_1\dot C_0,\notag\\
\dot\Theta&=\dot C_1/\Delta-C_1\dot\Delta/\Delta^2,
\qquad
\dot\kappa=-(\dot N_*)_1/\Delta+(N_*)_1\dot\Delta/\Delta^2,
\label{eq:clock-centre-differential}\\
\dot C&=\dot u\sin\theta,\qquad
\dot N_* =\varsigma\dot n+(u/\varsigma)n\dot u,
\quad \dot u=\dot\eta_\theta/d-e\dot\eta/d^2.\notag
\end{align}
In the higher mode equations, the subscript $\mathrm m$ denotes the
terms produced by varying $\Theta$ and $\kappa$ alone:
\begin{align*}
\dot h_Y\big|_{\rm m}
&=-aP_{\ge2}(N_*\dot\Theta+C\dot\kappa),
\\
\dot{(f_Y/q)}\big|_{\rm m}
&=P_{\ge2}\left[
\left(\tfrac12(n^2-\upsilon^2)-un\upsilon-B_q\mathscr C\right)\dot\Theta
-\varsigma \upsilon\sin\theta\dot\kappa\right].
\end{align*}
These terms have finite rank in the linearization at fixed $Y$.
\end{proposition}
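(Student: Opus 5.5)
The plan is to reduce everything to the exact modulation formulas of Proposition~\ref{prop:exact-centred-modulation}, rewritten in the normalized variables of Proposition~\ref{prop:exact-polar-raw-system}. The differential identities will then follow from the quotient rule, and the finite-rank claim from the observation that $\dot\Theta$ and $\dot\kappa$ are scalars.

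\emph{Step 1: the formulas \eqref{eq:clock-centre-normalized}.} First I match notation. Since $C_\eta=\cos\theta+\frac{\eta_\theta}{1+\eta}\sin\theta$, we have $C_\eta=C$. Since $b_\eta=\frac{j_\eta}{1+\eta}B_-=\varsigma B_-$, we get
\begin{equation*}
\frac{a}{q}\,b_\eta=\varsigma n=N_*.
\end{equation*}
Hence $d_\eta=(q/a)\Delta$, and $\Delta\ne0$ is equivalent to $d_\eta\ne0$. Substituting into \eqref{eq:exact-centred-modulation} gives
\begin{equation*}
t_Y=\frac{a^2C_1}{q\Delta},\qquad
R_Y=-\frac{a(N_*)_1}{\Delta}.
\end{equation*}
Using $\Theta=qt_Y/a^2$ and $\kappa=R_Y/a$ then yields \eqref{eq:clock-centre-normalized}.

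\emph{Step 2: the differentials \eqref{eq:clock-centre-differential}.} I vary the normalized state $(\eta,n)$ at fixed $a,R$. The projections $f\mapsto f_0,f_1$ are linear, so the product rule gives $\dot\Delta$, and the quotient rule gives $\dot\Theta$ and $\dot\kappa$. The pointwise formulas are direct. Since $C=\cos\theta+u\sin\theta$, we have $\dot C=\dot u\sin\theta$. Since $\varsigma=(1+u^2)^{1/2}$, we have $\dot\varsigma=u\dot u/\varsigma$, which gives $\dot N_*=\varsigma\dot n+(u/\varsigma)n\dot u$. Finally $u=e/d$ with $e=\eta_\theta$, $d=1+\eta$ gives the stated $\dot u$.

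\emph{Step 3: the terms labelled $\mathrm m$.} By \eqref{eq:polar-h-exact},
\begin{equation*}
h_Y=aP_{\ge2}\bigl(1-\kappa C-\Theta N_*\bigr).
\end{equation*}
Varying only $\Theta$ and $\kappa$ gives $-aP_{\ge2}(N_*\dot\Theta+C\dot\kappa)$. For the trace, \eqref{eq:polar-F-exact} together with $f_Y/q=F_Y+p_qF$ gives
\begin{equation*}
f_Y/q=P_{\ge2}(\mathcal K-\Theta B_q\mathscr C).
\end{equation*}
In \eqref{eq:polar-kinetic-function}, $\mathcal K$ is affine in $(\Theta,\kappa)$. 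The coefficient of $\Theta$ is $\frac12(n^2-\upsilon^2)-un\upsilon$, and the coefficient of $\kappa$ is $-\varsigma\upsilon\sin\theta$, because $(1+e^2/(1+\eta)^2)^{1/2}=\varsigma$. The curvature term contributes $-B_q\mathscr C\,\dot\Theta$. Together these give the displayed formula.

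\emph{Step 4: finite rank, the only point needing care.} At fixed $Y$, each $\mathrm m$ term has the form $\dot\Theta\,g_1+\dot\kappa\,g_2$, where $g_1,g_2$ are fixed functions determined by the base state. The factors $\dot\Theta$ and $\dot\kappa$ are scalar linear functionals of the variation, built from the Fourier coefficients $(\dot N_*)_{0,1}$ and $\dot C_{0,1}$. These functionals remain bounded on the working Sobolev spaces because $\Delta\ne0$. This holds even when $\dot n$ contains the variation of the Dirichlet--Neumann map, because $\dot n$ enters only through its $0$th and first coefficients, which Proposition~\ref{prop:full-C2-flux-DN} controls. Hence each $\mathrm m$ contribution has rank at most two.
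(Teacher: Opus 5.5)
Your proposal is correct and follows the paper's own argument. You identify $C_\eta=C$ and $b_\eta=(q/a)N_*$, normalize the centred modulation identities to obtain \eqref{eq:clock-centre-normalized}, apply the quotient rule, and differentiate \eqref{eq:polar-h-exact} and $f_Y/q=P_{\ge2}(\mathcal K-\Theta B_q\mathscr C)$ in $\Theta,\kappa$ alone, holding $q$ and $B_q$ fixed as the paper does; your Step~4 simply makes explicit the rank-two structure that the paper leaves implicit.
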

\begin{proof}
Since $b_\eta=(q/a)N_*$ and $C_\eta=C$, normalizing
\eqref{eq:exact-centred-modulation} gives \eqref{eq:clock-centre-normalized}.
The quotient rule gives \eqref{eq:clock-centre-differential}.
Differentiating \eqref{eq:polar-h-exact} and
$f_Y/q=P_{\ge2}(\mathcal K-\Theta B_q\mathscr C)$ with respect to
$\Theta,\kappa$ alone gives the last two equations.  Variations of $q,B_q$
are treated separately.
\end{proof}

\begin{lemma}
\label{lem:zero-flux-dipole-reconstruction}
Fix an integer $k\ge4$ and an admissible smooth macroscopic geometry.
Use the trace coordinates $(\alpha,f,g)$ just defined.
The mean inner trace is determined by zero total inner flux.  There are constants
$C,M$, uniform in the macroscopic geometry, such that, with
$B_0=\partial_{n_-}\Phi_0$ and $\|\eta\|_{H^{k+2}}$ small,
\begin{align}
\|aB_0-\alpha\cos\theta\|_{H^k}
&\le C(1+Y)^M\bigl((a+\|\eta\|_{H^{k+2}})|\alpha|
+\|f\|_{H^{k+1}}+\|g\|_{H^{k+4}}\bigr),
\label{eq:dipole-normal-reconstruction}\\
\|a v_{\tau,-}+\alpha\sin\theta\|_{H^k}
&\le C\bigl(\|\eta\|_{H^{k+2}}|\alpha|+\|f\|_{H^{k+1}}\bigr).
\label{eq:dipole-tangent-reconstruction}
\end{align}
In particular, the right side of \eqref{eq:dipole-normal-reconstruction}
also bounds $\|P_{\ge2}aB_0\|_{H^k}$.
\end{lemma}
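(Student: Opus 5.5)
The plan is to decompose $\Phi_0$ linearly according to its three trace inputs, and to estimate the normal and tangential inner traces of each piece separately.

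\emph{Decomposition.} Since $\Phi_0$ has zero inner flux, it is the zero-flux lift of the trace pair. Write its inner trace as
\begin{equation*}
c_-+\alpha\cos\theta+f,
\end{equation*}
where $c_-$ is fixed by the Schur complement of Proposition~\ref{prop:schur-differentiation}, and its outer trace as $g$. By linearity,
\begin{equation*}
\Phi_0=\alpha\,\Phi[(\cos\theta,0)]+\Phi[(f,0)]+\Phi[(0,g)]+c_-\Phi[(1,0)].
\end{equation*}
The constant $c_-$ is
\begin{equation*}
c_-=-\frac{\langle e,\mathfrak G(\alpha\cos\theta+f,g)\rangle}{\langle e,\mathfrak G e\rangle}.
\end{equation*}
The denominator is $\asymp L_a^{-1}\asymp (1+Y)^{-1}$ by the Schur estimate in Lemma~\ref{lem:weighted-layer-expansion}. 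The condenser piece $c_-\Phi[(1,0)]$ contributes $c_-$ times the condenser flux. By \eqref{eq:condenser-flux-expansion-v2}, that flux has size $(aY)^{-1}$ with a $\cos\theta$ correction of order $a$ and higher modes of order $a^2+\|\eta\|$.

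\emph{Dipole piece.} For the circular graph, the scaled local flux map is $|D_\theta|$ (Lemma~\ref{lem:uniform-harmonic-extension}), so the inner conormal of $\cos\theta$ is exactly $\cos\theta$. With the fluid normal $n_-=-e_r$ and the $a$ scaling, this gives $aB\approx\alpha\cos\theta$. The corrections come from three sources, each tamely bounded through Proposition~\ref{prop:full-C2-flux-DN}:
\begin{itemize}[leftmargin=*]
\item the weight $\varepsilon=a/R$, contributing $O(aL_a)$ through the layer expansion \eqref{eq:Ta-expansion};
\item the outer boundary, contributing at order $a^2L_a^{M}$ by \eqref{eq:layer-P1-leakage};
\item the graph, contributing $\|\eta\|_{H^{k+2}}$ by integrating the shape derivative along $t\eta$.
\end{itemize}
The same three corrections enter the mean flux $\langle e,\mathfrak G(\cos\theta,0)\rangle$. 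That mean flux vanishes at $\eta=0$, $\varepsilon=0$, so it is $O(a+\|\eta\|)$. After multiplication by the condenser flux and the factor $L_a$ from the inverse Schur complement, it yields the polynomial $(1+Y)^M$.

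\emph{Remaining pieces.} The $f$ piece is bounded by \eqref{eq:G0-tame} as $\|f\|_{H^{k+1}}$, together with its constant-lift correction. The $g$ piece uses Lemma~\ref{lem:cross-block-small-hole}. Its $P_0$ and $P_1$ parts are not small, but they are of order $a$, coming from the Taylor expansion of the filled extension $W$ at $x_0$. With the scaling by $a$, this gives a bound $C(1+Y)^M\|g\|_{H^{k+4}}$; the derivative loss $k+4>k+5/2$ leaves room for the graph terms. This proves \eqref{eq:dipole-normal-reconstruction}.

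\emph{Tangential estimate.} Estimate \eqref{eq:dipole-tangent-reconstruction} is purely algebraic. By \eqref{eq:polar-tangent-trace}, the tangential velocity is $j^{-1}\partial_\theta$ of the inner trace, and the constants $c_-$ and the condenser trace drop out. Hence
\begin{equation*}
a v_{\tau,-}=j_\eta^{-1}(-\alpha\sin\theta+f_\theta),
\end{equation*}
with the sign fixed by the orientation of $\tau_-$. Since $j_\eta^{-1}-1=O(\|\eta\|_{H^{k+2}})$ in $H^k$ by product estimates, the bound follows with no loss in $Y$ and no $g$ term.

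\emph{Main obstacle.} The hardest step is bookkeeping the constant lift so that the factors $L_a$ from the Schur inverse never multiply something of size $O(1)$. This requires showing that the mean flux of the $\cos\theta$ and $f$ data is itself small:
\begin{itemize}[leftmargin=*]
\item for the $\cos\theta$ datum, the mean flux is $O(a+\|\eta\|)$ by the selection rule \eqref{eq:layer-first-selection};
\item for the $f$ datum, the mean flux is controlled by $\|f\|$.
\end{itemize}
Both bounds are then absorbed into the polynomial weight. The final assertion on $\|P_{\ge2}aB_0\|_{H^k}$ follows immediately, since $P_{\ge2}\cos\theta=0$.
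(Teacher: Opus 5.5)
Your proposal is correct and follows essentially the paper's route. The paper writes $aB_0=\mathcal D^{\rm red}(\alpha\cos\theta+f,g)$, eliminating the inner constant by the same scalar Schur quotient. It compares with $|D_\theta|\cos\theta=\cos\theta$ at the circle via the layer expansion, integrates the shape derivative along $t\eta$, uses the tame flux and cross-block bounds for $f$ and $g$, and gets the tangential estimate from $a v_{\tau,-}=j_\eta^{-1}(-\alpha\sin\theta+f_\theta)$.

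Two harmless slips. First, \eqref{eq:layer-P1-leakage} gives $O(aL_a^{M})$, not $O(a^2L_a^{M})$. Second, the smallness of the mean flux of the $\cos\theta$ datum comes from the $O(aL_a)$ size of the first-order layer corrections to the Fourier-multiplier reference operators, not from the selection rule \eqref{eq:layer-first-selection}, which actually permits $P_1\to P_0$ coupling. Only an $O(a\,\mathrm{poly}(Y))$ error is needed in either place, so the conclusion stands.
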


\begin{proof}
Write $\langle u\rangle=(2\pi)^{-1}\int_\T u$, let
$\mathcal D\boldsymbol\varphi=a\partial_{n_-}\Phi[\boldsymbol\varphi]$,
and put $w_\eta=(\rho/R)j_\eta$.
The zero-flux constraint eliminates the inner mean by
\begin{equation*}
\mathcal D^{\rm red}\boldsymbol\varphi
=\mathcal D\boldsymbol\varphi-\mathcal De_-\,
\frac{\langle w_\eta\mathcal D\boldsymbol\varphi\rangle}
{\langle w_\eta\mathcal De_-\rangle},
\qquad e_-=(1,0).
\end{equation*}
Thus $aB_0=\mathcal D^{\rm red}(\alpha\cos\theta+f,g)$.

At $\eta=0$, the planar nonzero block is $|D_\theta|$.
Lemma~\ref{lem:weighted-layer-expansion}, including conversion
from weighted flux to normal velocity, gives
\begin{equation*}
\|\mathcal D^{\rm red}_{a,0}(\cos\theta,0)-\cos\theta\|_{H^k}
\le C(1+Y)^Ma.
\end{equation*}
The scalar denominator is comparable to $(1+Y)^{-1}$, so its
inverse introduces only a polynomial factor.
Propositions~\ref{prop:full-C2-flux-DN}
and~\ref{prop:schur-differentiation}, with the smooth normal trace
factors, give
\begin{align*}
\|\mathcal D^{\rm red}_{a,\eta}(f,g)\|_{H^k}
&\le C(1+Y)^M
\bigl(\|f\|_{H^{k+1}}+\|g\|_{H^{k+4}}\bigr),\\
\|D_\eta\mathcal D^{\rm red}_{a,\eta}[\zeta]
(\cos\theta,0)\|_{H^k}
&\le C(1+Y)^M\|\zeta\|_{H^{k+2}}.
\end{align*}
The outer trace estimate also follows from
Lemma~\ref{lem:cross-block-small-hole}.
Integrating the second bound along $t\eta$, $0\le t\le1$,
and using linearity proves
\eqref{eq:dipole-normal-reconstruction}.

Finally,
$a v_{\tau,-}=j_\eta^{-1}(-\alpha\sin\theta+f_\theta)$.
The Sobolev product estimate and
$\|j_\eta^{-1}-1\|_{H^k}\le C\|\eta\|_{H^{k+2}}$
prove \eqref{eq:dipole-tangent-reconstruction}.
\end{proof}

\begin{proposition}
\label{prop:first-mode-backward-amplitude}
Fix an integer $k\ge4$.  Consider a smooth centred Euler trajectory on $[Y_0,Y_N]$ with
$a=\ell e^{-Y}$, $q\asymp(1+Y)^{-1/2}$ and uniformly admissible
macroscopic geometry.  Suppose, in the trace coordinates of
Lemma~\ref{lem:zero-flux-dipole-reconstruction}, that
\begin{equation}\label{eq:first-mode-physical-bootstrap}
\|\eta\|_{H^{k+2}}+
q^{-1}\bigl(\|f\|_{H^{k+1}}+\|g\|_{H^{k+4}}\bigr)
\le C_0(1+Y)^{M_0}a^2,\qquad
|\alpha|\le\delta_1q,
\end{equation}
where $0<\delta_1\le1/8$ is fixed, and $\alpha(Y_N)=0$.
For sufficiently large $Y_0$, depending on the constants in the
hypotheses but independent of $N$, the Euler equation implies
\begin{align}
\alpha_Y-\alpha&=\mathfrak f_1,\qquad
|\mathfrak f_1(Y)|\le Cq(Y)(1+Y)^Ma(Y),
\label{eq:first-mode-actual-source}\\
|\alpha(Y)|+|\alpha_Y(Y)|
&\le Cq(Y)(1+Y)^Ma(Y).
\label{eq:first-mode-amplitude-closed}
\end{align}
This improves the bootstrap bound $|\alpha|\le\delta_1q$.
Moreover,
\begin{equation}\label{eq:first-mode-high-feedback}
q^{-1}\|P_{\ge2}aB_0\|_{H^k}
+|R_Y|+\|\eta_Y\|_{H^k}
\le C(1+Y)^{M'}a^2.
\end{equation}
\end{proposition}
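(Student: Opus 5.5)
Our approach is to derive the first-mode equation from the projection \eqref{eq:polar-alpha-exact} of the inner Bernoulli equation onto $\cos\theta$, and to isolate the linear part $\alpha$ exactly. Using \eqref{eq:polar-tangent-trace} and Lemma~\ref{lem:zero-flux-dipole-reconstruction}, we write $n=n_0+\beta\cos\theta+\widetilde N$ and $\upsilon=-\beta\sin\theta+\widetilde V$. Here $\beta=\alpha/q$, the mean $n_0$ comes from the condenser flux, and the errors satisfy $\|\widetilde N\|_{H^k}+\|\widetilde V\|_{H^k}\lesssim(1+Y)^M\bigl((a+\|\eta\|_{H^{k+2}})|\beta|+a^2\bigr)$ by the bootstrap \eqref{eq:first-mode-physical-bootstrap}. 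The condenser part of $n$ is handled by Lemma~\ref{lem:capacity-single-layer-schur}. It gives $n/n_0=1+\beta_a\cos\theta+O(a^2)$ with $|\beta_a|\lesssim(1+Y)^Ma$, and $n_0\Theta=1+O(\cdot)$ by \eqref{eq:clock-centre-normalized} at a near-round graph.

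We then compute $(\mathcal K)_1$ from \eqref{eq:polar-kinetic-function}. The term $\frac\Theta2 n^2$ contributes $\Theta n_0\cdot\beta=\beta(1+O((1+Y)^Ma))$ to the first cosine coefficient. Quadratic terms in $\beta$ only hit modes $0$ and $2$, since $\cos^2$ and $\sin^2$ have no first harmonic. The $\kappa\upsilon\sin\theta$ term contributes to modes $0$ and $2$ at leading order, and $|\kappa|\lesssim|(N_*)_1|\lesssim|\beta|+(1+Y)^Ma$. The curvature term $\Theta B_q\mathscr C$ has first harmonic $\Theta B_q(\mathscr C)_1$. Here $(\mathscr C)_1=O(a)$ by \eqref{eq:curvature-eps-selection}, with $\varepsilon=a/R$, plus $O(\|\eta\|_{H^{k+2}})$ from Proposition~\ref{prop:full-C2-curvature}. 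Since $B_q=\sigma a/q^2\lesssim(1+Y)a$, this term is $O((1+Y)^Ma^2)$. Multiplying by $q$ and collecting gives
\[
\alpha_Y=\alpha+\mathfrak f_1,\qquad |\mathfrak f_1|\le C(1+Y)^M\bigl(a|\alpha|+\beta^2 aq+qa\bigr).
\]
The sign of the linear coefficient comes from $\Theta n_0\to1$, the round modulation \eqref{eq:round-modulation-exact}. Using $|\alpha|\le\delta_1q$, this gives \eqref{eq:first-mode-actual-source}. The residual $qa$ source is from $\beta_a$, i.e.\ from the curvature of $\gamma$ (the $\varepsilon$-correction) and the $\kappa$ coupling.

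To close the bound on $\alpha$, we integrate backward from $\alpha(Y_N)=0$:
\[
\alpha(Y)=-\int_Y^{Y_N}e^{Y-Y'}\mathfrak f_1(Y')\,dY'.
\]
Since $|\mathfrak f_1(Y')|\lesssim q(Y')(1+Y')^Ma(Y')=q(1+Y')^M\ell e^{-Y'}$, the integrand is $\lesssim e^{Y}e^{-2Y'}(1+Y')^M$. Integrating gives $|\alpha(Y)|\lesssim(1+Y)^Mq(Y)a(Y)$, where we use $q\asymp(1+Y)^{-1/2}$ so that the $q$ factors differ only polynomially. The equation then bounds $\alpha_Y$ as well, and for $Y_0$ large this lies below $\delta_1q/2$, improving the bootstrap. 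The key point is that the unstable backward direction, growth rate $1$, is beaten by the $e^{-2Y'}$ decay of the forcing times the $e^{Y}$ factor. The quadratic $\beta^2$ term is absorbed because $|\beta|\le\delta_1$ and it carries an extra factor $a$.

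Finally, \eqref{eq:first-mode-high-feedback} follows by inserting the improved bound $|\alpha|\lesssim q(1+Y)^Ma$ into \eqref{eq:dipole-normal-reconstruction}. This gives $q^{-1}\|P_{\ge2}aB_0\|_{H^k}\lesssim(1+Y)^M\bigl((a+\|\eta\|)a+a^2\bigr)$. Next, $|R_Y|=a|\kappa|\lesssim a\bigl(|\beta|+(1+Y)^Ma\bigr)\lesssim(1+Y)^Ma^2$. The bound on $\eta_Y$ comes from \eqref{eq:polar-eta-exact}: the projections onto modes $0,1$ vanish by centering, and the $P_{\ge2}$ part equals $\eta-P_{\ge2}(\kappa u\sin\theta+\Theta\varsigma n)$, each term being $O((1+Y)^Ma^2)$. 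We expect the main obstacle to be the exact leading coefficient in $(\mathcal K)_1$. One must verify that $\Theta n_0=1+O(a(1+Y)^M)$, with the capacity correction entering only at relative order $a$ and not $1/Y$. Otherwise a term like $\alpha/Y$ would spoil the clean rate; with such a term the backward integration still closes, but only with modified polynomial weights.
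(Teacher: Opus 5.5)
Your proposal is correct and follows the paper's proof essentially step by step: you project the inner Bernoulli equation onto $\cos\theta$, note that the leading first harmonic of $\mathcal K$ is $\Theta\beta$ because $\cos^2\theta$ and $\sin^2\theta$ have no first harmonic, integrate backward from $\alpha(Y_N)=0$, and then feed the improved bound into \eqref{eq:dipole-normal-reconstruction} and the modulation equations. The obstacle you flag does not arise, since the exact flux normalization $\langle(\rho/R)j_\eta n\rangle=1$ together with $\Theta=C_1/\Delta$ already gives $\Theta n_0=1+O((1+Y)^Ma)$ (the paper simply records $aB_-/q=1+\beta\cos\theta+O(\epsilon)$ and $\Theta=1+O(\epsilon)$); also, your stated bound for $\widetilde N$ should include the $O((1+Y)^Ma)$ condenser first harmonic $n_0\beta_a\cos\theta$, which you do account for later.
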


\begin{proof}
We divide the proof into three steps.

\emph{Step 1. Equation for the first harmonic.}
Write $\beta=\alpha/q$ and put $\epsilon(Y)=C(1+Y)^Ma$, increasing
$C,M$ in intermediate bounds if necessary.  The condenser profile in
Lemma~\ref{lem:capacity-single-layer-schur}, its exact flux normalization, and
Lemma~\ref{lem:zero-flux-dipole-reconstruction} give
\begin{equation*}
\frac{aB_-}{q}=1+\beta\cos\theta+O_{H^k}(\epsilon),\qquad
\frac{a v_{\tau,-}}q=-\beta\sin\theta+O_{H^k}(\epsilon).
\end{equation*}
The graph contribution to the condenser profile is bounded by a
polynomial in $1+Y$ times $\|\eta\|_{H^{k+2}}$, and its first circular
harmonic is $O((1+Y)^Ma)$.  The zero-flux reduction includes the scalar correction determined by
the weighted mean of $B_0$.
Substitution in the exact centred modulation identities gives
\begin{equation*}
\Theta=\frac{qt_Y}{a^2}=1+O(\epsilon),\qquad
\frac{R_Y}{a}=-\Theta\beta+O(\epsilon),\qquad
\|\eta_Y\|_{H^k}\le C\epsilon.
\end{equation*}
The normalized modulation determinant is $1+O(\epsilon)$ and is
therefore positive for large $Y_0$.
Insert these bounds, $u=O_{H^k}((1+Y)^Ma^2)$ and
$\varsigma=1+O_{H^k}((1+Y)^Ma^4)$ into
\eqref{eq:polar-kinetic-function} and \eqref{eq:polar-alpha-exact}.
The leading kinetic contribution is
\begin{equation*}
q\Theta\left[
\tfrac12(1+\beta\cos\theta)^2
-\tfrac12\beta^2\sin^2\theta-\beta^2\sin^2\theta
\right]_1=q\Theta\beta=\Theta\alpha.
\end{equation*}
Both $\cos^2\theta$ and $\sin^2\theta$ have zero first cosine coefficient.
Since $|\beta|\le\delta_1$, each substitution error is bounded by
$Cq\epsilon$.  The curvature formula gives
$|(\mathcal H_-)_1|\le C(1+a^{-1}\|\eta\|_{H^{k+2}})$, so
\begin{equation*}
\sigma t_Y|(\mathcal H_-)_1|
\le C\frac{a^2}{q}(1+(1+Y)^{M_0}a)
\le Cq(1+Y)^{M'}a.
\end{equation*}
This proves \eqref{eq:first-mode-actual-source}.

\emph{Step 2. The terminal amplitude estimate.}
Backward variation of constants gives
\begin{equation*}
\alpha(Y)=-\int_Y^{Y_N}e^{Y-s}\mathfrak f_1(s)\,ds.
\end{equation*}
Since $q(s)/q(Y)\le C\sqrt{(1+Y)/(1+s)}$, $a(s)=a(Y)e^{Y-s}$, and
$(1+s)^M\le(1+Y)^M(1+s-Y)^M$, the integral is bounded by
$Cq(Y)(1+Y)^Ma(Y)$ independently of $Y_N$.
Equation~\eqref{eq:first-mode-actual-source} then gives the bound on $\alpha_Y$.

\emph{Step 3. Estimates for the higher modes.}
Using \eqref{eq:dipole-normal-reconstruction} again gives
$q^{-1}\|P_{\ge2}aB_0\|_{H^k}\le C(1+Y)^{M'}a^2$.
The condenser has a first harmonic $O((1+Y)^{M'}a)$ and high harmonics
$O_{H^k}((1+Y)^{M'}a^2)$.  Multiplication by $j_\eta/(1+\eta)$
preserves these bounds, and $P_{\ge2}C_\eta=O_{H^k}(\eta)$.
The exact equations for $R_Y$ and $\eta_Y$ therefore prove
\eqref{eq:first-mode-high-feedback}.
\end{proof}

\begin{proposition} 
\label{prop:raw-state-first-jet}
Consider a smooth Euler trajectory on $[Y_0,Y_N]$ with
$\alpha(Y_N)=0$, and fix $k\ge s_0$.
Assume the physical trace bootstrap bound
\eqref{eq:first-mode-physical-bootstrap} at index $k+4$ in place of $k$,
with the macroscopic charts in the corresponding bounded smooth class.
Assume also the uniform positive energy gap in \eqref{eq:corrected-q-reconstruction}.
Then Proposition~\ref{prop:first-mode-backward-amplitude} implies,
for $j=0,1$,
\begin{align}
q^{-1}|\partial_Y^j\alpha|&\le C(1+Y)^Ma,\notag\\
\|\partial_Y^j\eta\|_{H^{k+2}}
+q^{-1}\bigl(\|\partial_Y^jf\|_{H^{k+1}}
+\|\partial_Y^jg\|_{H^{k+4}}\bigr)
&\le C(1+Y)^Ma^2,\notag\\
|R_Y|+\|X_{+,Y}\|_{H^{k+4}}
&\le C(1+Y)^Ma^2,\notag \\
\frac{q_Y}{q}
&=-\frac1{2(Y+\mathfrak c)}+O((1+Y)^Ma).
\label{eq:raw-q-first-jet}
\end{align}
Moreover,
\begin{equation*}
|\mathfrak c_Y|+|(E_0)_Y|
\le C(1+Y)^Ma.
\end{equation*}
\end{proposition}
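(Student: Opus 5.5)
The plan is to apply Proposition~\ref{prop:first-mode-backward-amplitude} at the index $k+4$ (that is, with $k$ replaced by $k+4$). This is allowed because the bootstrap \eqref{eq:first-mode-physical-bootstrap} is assumed at that index. It gives the $j=0,1$ bounds on $\alpha$ directly from \eqref{eq:first-mode-amplitude-closed}, since $q^{-1}|\alpha|+q^{-1}|\alpha_Y|\le C(1+Y)^Ma$. It also gives $|R_Y|+\|\eta_Y\|_{H^{k+4}}\le C(1+Y)^{M'}a^2$ from \eqref{eq:first-mode-high-feedback}. The $j=0$ bounds on $\eta,f,g$ are the hypothesis itself. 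What remains is to bound $\partial_Y$ of the traces $f,g$ and of the outer chart $X_+$, and then to derive the flux jet \eqref{eq:raw-q-first-jet}.

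\emph{Outer chart and traces.} The outer boundary moves by $X_{+,Y}\cdot n_+=t_YB_+$, with $t_Y=\Theta a^2/q$ and $\Theta=1+O(\epsilon)$. The outer normal velocity has two parts. The condenser part is $q\cdot 2\pi L/\operatorname{Cap}$ times $\partial_{n_+}u_c$, which is $O(q)$ in every $H^m$ on the physical outer curve; separation from the hole gives the smooth Green profile. The zero-flux part is $O(\|g\|_{H^{k+5}}+\ldots)$, controlled by Lemma~\ref{lem:zero-separated-asymmetric}. Hence $\|X_{+,Y}\|_{H^{k+4}}\lesssim (a^2/q)\cdot q=a^2$, up to polynomial factors and an index loss that I absorb by using the higher bootstrap index. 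I will handle the tangential reparametrization in the fixed normal graph chart through the usual chart-elimination formula, analogous to \eqref{eq:polar-chart-eliminated}.

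For $f_Y$ I use \eqref{eq:polar-F-exact}: $F_Y=-p_qF+P_{\ge2}(\mathcal K-\Theta B_q\mathscr C)$. The quadratic low source $\mathfrak f_2$ is $O((1+Y)^{2M}a^2)$. The remainder $\mathcal R$ contains $N,V,u,\varsigma-1$, each of which is $O(a^2)$ in $H^{k+1}$ by Lemma~\ref{lem:zero-flux-dipole-reconstruction} and the bootstrap. The curvature term $B_q\,P_{\ge2}\mathscr C$ is $(\sigma a/q^2)\,O(\|\eta\|_{H^{k+3}})$. With $p_q=O(1/Y)$ this gives $q^{-1}\|f_Y\|_{H^{k+1}}\lesssim(1+Y)^Ma^2$, and $f_Y=qF_Y+q_YF$ converts back. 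For $g_Y$ I use the outer version of \eqref{eq:charted-Bernoulli}, and this is where the main obstacle lies. The outer source $t_Y(\tfrac12B_+^2-\ldots)$ is of size $(a^2/q)q^2=qa^2$, the surface tension term $\sigma t_Y\mathcal H_+$ has size $a^2/q$, and the gauge $\widetilde b$ must be fixed by the zero-mean normalization of $\psi_+$. Since $q\asymp Y^{-1/2}$, we have $a^2/q\le (1+Y)^{M}q a^2$, so this is acceptable. One must, however, check that $\widetilde b$, together with the constant condenser trace's $Y$-derivative coming from $(2\pi Lq/\operatorname{Cap})_Y$, leaves a zero-mean quantity that is still of order $qa^2$. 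I would verify this using Lemma~\ref{lem:energy-recovers-bernoulli-mean}: $b_-=b_+$ ties the outer constant to the inner mean Bernoulli equation. That mean is $O(q)$ at leading order, and only its oscillatory part enters $g$.

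\emph{Flux jet.} I substitute into \eqref{eq:corrected-q-log-derivative}. First, $(E_0)_Y$ involves $\sigma A_Y$ and $(q^2D_0)_Y$. For the area, $A_Y=(|\Gamma_-|)_Y+(|\Gamma_+|)_Y$; by \eqref{eq:inner-area-exact}, $|\Gamma_-|_Y=O(a)$, while $|\Gamma_+|_Y=O(a^2)$ from the outer chart bound. For the kinetic part, $D_0$ is quadratic in $z=O(a^2)$, so $(q^2D_0)_Y=O(a^2)$ after using the jet bounds and the $C^2$ dependence on the geometry from Proposition~\ref{prop:full-C2-flux-DN}. Together these give $|(E_0)_Y|\le C(1+Y)^Ma$. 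Second, $L_Y/L=R_Y/R=O(a^2)$. Third, $\mathfrak c_Y$ is bounded by \eqref{eq:collar-c-rem-jet}, whose right side is now $(1+Y)^M a$ by the bounds just obtained. The uniform energy gap $E_*-E_0\ge c>0$ then gives \eqref{eq:raw-q-first-jet}, since $(1+\mathfrak c_Y)/(2(Y+\mathfrak c))=1/(2(Y+\mathfrak c))+O(a)$.
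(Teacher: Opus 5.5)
Your proposal is correct and follows essentially the paper's route. The first-mode, centre and $\eta_Y$ bounds come from Proposition~\ref{prop:first-mode-backward-amplitude}, $X_{+,Y}$ from the outer kinematic condition with $t_Y=\Theta a^2/q$ and $B_+=O(q)$, $f_Y,g_Y$ from the nonconstant projections of \eqref{eq:charted-Bernoulli}, and \eqref{eq:raw-q-first-jet} from \eqref{eq:corrected-q-log-derivative} via the area, reduced Dirichlet form and capacity derivatives.

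Three small corrections. First, the gauge worry for $g_Y$ is unnecessary. Since $u_c=0$ on $\Gamma_+$ and $g$ has zero mean, $g_Y$ is exactly the $P_{\ne0}$ projection of the right side of \eqref{eq:charted-Bernoulli}, so $\widetilde b$ drops out without Lemma~\ref{lem:energy-recovers-bernoulli-mean}. Second, avoid using $p_q$ before it is estimated. The quantity $f_Y/q=F_Y+p_qF$ is exactly the projected source in \eqref{eq:polar-F-exact}, and $K_0=q^2D_0$ should be treated, as the paper does, as a quadratic form in the physical traces $(\alpha,f,g)$. Third, $z=O(a)$ rather than $O(a^2)$, because $z$ contains $\beta\cos\theta$; this still gives $(K_0)_Y=O((1+Y)^Ma^2)$.
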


\begin{proof}
We divide the proof into three steps.

\emph{Step 1. Inner derivatives.}
Use the outer gauge with zero mean and eliminate the inner mean by
Lemma~\ref{lem:zero-flux-dipole-reconstruction}.
The nonconstant projections of \eqref{eq:charted-Bernoulli}
remove both $\widetilde b(Y)$ and the inner condenser trace.
These equations determine $(\alpha_Y,f_Y,g_Y)$ from the instantaneous
state and $q$, while centred modulation determines
$(R_Y,\eta_Y,X_{+,Y})$.

Write $n=aB_-/q$ and $\upsilon=av_{\tau,-}/q$, and set $r=k+2$ for the following estimates.
The condenser and zero-flux estimates, together with the bound on the
first harmonic, give
\begin{equation*}
n=n_0+n_1\cos\theta+n_{\ge2},\qquad
n_0=1+O((1+Y)^Ma^2),\quad n_1=O((1+Y)^Ma),\quad
n_{\ge2}=O_{H^r}((1+Y)^Ma^2).
\end{equation*}
The estimate for the mean follows from
$1=\langle(\rho/R)j_\eta n\rangle$ and the centering conditions.
Also $\upsilon=-(\alpha/q)\sin\theta+O_{H^r}((1+Y)^Ma^2)$.
The dimensionless curvature has constant component
$-1+O((1+Y)^Ma^2)$, first harmonic $O((1+Y)^Ma)$, and higher harmonics
$O_{H^r}((1+Y)^Ma^2)$.
These estimates require two more graph derivatives and one more trace
derivative.  Since $\kappa=O((1+Y)^Ma)$ and $u=O_{H^r}((1+Y)^Ma^2)$,
\eqref{eq:exact-kinetic-low-source} and the explicit kinetic remainder
give $P_{\ge2}\mathcal K=O_{H^r}((1+Y)^Ma^2)$.
Projecting the Bernoulli equation onto the higher modes and using
$\sigma a/q^2=O((1+Y)^Ma)$
gives $f_Y/q=O_{H^r}((1+Y)^Ma^2)$.
Projection onto the first harmonic and the centred kinematic equations give the stated bounds for
$\alpha_Y$, $\eta_Y$, and $R_Y$.

\emph{Step 2. Outer derivatives.}
At the outer boundary the geometry is nondegenerate and $B_+=O(q)$
up to the regular zero-flux extension.  The factors $t_Y$ and $t_Y/q$
are $O((1+Y)^Ma^2)$.  The trace and product estimates at fixed scale therefore
give the same bound for $X_{+,Y}$ and $g_Y/q$.
The four additional derivatives control the normal trace, curvature,
and products of chart coefficients at all the displayed indices.

\emph{Step 3. The flux derivative.}
To estimate the scalar derivative, regard $K_0$ as the reduced quadratic
Dirichlet energy in the physical traces $(\alpha,f,g)$.
Its coefficients depend on the geometry, while its definition at fixed
physical traces is independent of $q$.
The first shape estimates and \eqref{eq:schur-first-differential},
together with the trace derivative bounds above, give
$|K_0|+|(K_0)_Y|\le C(1+Y)^Ma^2$.
The inner area formula \eqref{eq:inner-area-exact} and the regular outer area variation then
give $|(E_0)_Y|\le C(1+Y)^Ma$.
Lemma~\ref{lem:capacity-single-layer-schur}, now applied with the known
geometric derivatives, gives $|\mathfrak c_Y|\le C(1+Y)^Ma$.
Inserting these bounds and $L_Y=2\pi R_Y$ into
\eqref{eq:corrected-q-log-derivative} proves
\eqref{eq:raw-q-first-jet}.
Finally, differentiating the harmonic extension and its zero-flux lift
once with respect to the shape gives the first derivatives of the
normalized condenser and velocity coefficients.  The required first and
second state variations are estimated in Section~\ref{sec:scaled-DN} and
Proposition~\ref{prop:schur-differentiation}.
\end{proof}

\subsection{Pressure reconstruction and the Taylor coefficient}

We reconstruct the pressure from its harmonic part and estimate its
normal derivative on both boundaries.  The inner estimate supplies the
positive Taylor coefficient in the energy estimate below.

\begin{proposition}
\label{prop:actual-taylor-coefficient}
Fix an integer $k\ge0$.  Let a smooth Euler state satisfy
\eqref{eq:first-mode-physical-bootstrap} and
\eqref{eq:first-mode-amplitude-closed} at index $k+8$.
Suppose the outer geometry belongs to the fixed smooth class and remains
separated from the inner boundary.  Assume also
$R\asymp1$, $q\asymp(1+Y)^{-1/2}$ and $|\mathfrak c|\le C$.
Set
\begin{equation*}
\mathfrak a_\iota=-\partial_{n_\iota}p,
\qquad \mathfrak t_-:=\frac{a^3}{q^2}\mathfrak a_-.
\end{equation*}
Then
\begin{equation}\label{eq:actual-inner-taylor}
\left\|\mathfrak t_--\left(1-\frac1{2(Y+\mathfrak c)}\right)\right\|_{H^k}
\le C_k(1+Y)^{M_k}a.
\end{equation}
In particular, $\mathfrak t_-\ge1/2$ for sufficiently large $Y_0$.
On the outer boundary, put
$\kappa_+=-(Y+\mathfrak c)\partial_{n_+}u_c$.
Then $\|\kappa_+\|_{C^k}\le C_k$ and
\begin{equation}\label{eq:actual-outer-taylor-Y}
t_Y\mathfrak a_+
=\frac{q\Theta}{2(Y+\mathfrak c)}\kappa_+
+O_{C^k}\bigl(q(1+Y)^{M_k}a\bigr).
\end{equation}
Consequently, the bound
$\|t_Y\mathfrak a_+\|_{C^k}
\le C_k((1+Y)^{-3/2}+(1+Y)^{M_k}a)$
gives integrability in $Y$.
\end{proposition}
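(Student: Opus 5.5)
The plan is to compute $\mathfrak a_\iota=-\partial_{n_\iota}p$ directly from the normal component of the Euler momentum equation on each boundary, rather than from the elliptic problem $-\Delta p=|D^2\Phi|^2$. Since $v=\nabla\Phi$ is harmonic, $-\partial_np=n\cdot(\partial_tv+(v\cdot\nabla)v)$ on $\Gamma_\iota$. I would write this as
\begin{equation*}
-\partial_np=\partial_n\partial_t\Phi+\tfrac12\partial_n|\nabla\Phi|^2 .
\end{equation*}
Then I split $\Phi=\frac{2\pi Lq}{\operatorname{Cap}}u_c+\Phi_0$, using \eqref{eq:exact-energy-components}, where $\frac{2\pi Lq}{\operatorname{Cap}}=q(Y+\mathfrak c)$ by \eqref{eq:collar-zero-schur}. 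All normal and tangential derivatives are expressed through the boundary traces $B_\iota$, $V_\iota$, the curvature, and the flux DN data, via $\partial_{nn}\Phi=-\mathcal HB-\Delta_\Gamma\psi$.

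\emph{Leading inner term.} For the model radial condenser flow near the tube, $v\approx-(q/r)e_r$ in the scaled collar, and a direct computation gives
\begin{equation*}
-\partial_{n_-}p=\frac{q^2}{a^3}+\frac{q_t}{a}.
\end{equation*}
Here the convective term $v_r\partial_rv_r=-q^2/r^3$ and the time derivative $\partial_tv_r=-q_t/r$ are both evaluated at $r=a$, with $n_-=-e_r$. Multiplying by $a^3/q^2$ and using $q_t=q_Y/t_Y=q_Y q/(\Theta a^2)$ gives $1+q_Y/(q\Theta)$. Inserting \eqref{eq:raw-q-first-jet} and $\Theta=1+O((1+Y)^Ma)$ from Proposition~\ref{prop:first-mode-backward-amplitude} yields the constant $1-\frac1{2(Y+\mathfrak c)}$ with error $O((1+Y)^Ma)$.

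\emph{Inner error terms.} The corrections come from the first harmonic $\alpha$, the higher modes $(\eta,f,g)$, and the $\varepsilon=a/R$ weight. I would control them in $H^k$ using the condenser profile \eqref{eq:condenser-flux-expansion-v2} and the dipole reconstruction of Lemma~\ref{lem:zero-flux-dipole-reconstruction}. Since $|\alpha|\lesssim q(1+Y)^Ma$, these contribute $O(q^2a^{-3}(1+Y)^Ma)$. For the time derivative $\partial_n\partial_t\Phi$, I would differentiate the decomposition in $Y$ and divide by $t_Y=\Theta a^2/q$. The flux derivative is handled by \eqref{eq:raw-q-first-jet}, the terms $\mathfrak c_Y$ and the geometric derivatives by Proposition~\ref{prop:raw-state-first-jet}, and the shape derivative of the DN map by \eqref{eq:Hadamard-flux} and Proposition~\ref{prop:full-C2-flux-DN}. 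The assumed regularity at index $k+8$ absorbs the losses: roughly two derivatives for the graph, one for the trace, four for the outer charts, and one for the $Y$-derivative.

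\emph{Main obstacle.} The delicate point is the time-derivative term, because it requires the exact logarithmic capacity correction. It is the $q_t$ contribution that produces $-\frac1{2(Y+\mathfrak c)}$, so the error must be $O(a)$ relative to $q^2/a^3$ and not merely $O(1/Y)$. This forces me to use the precise first-jet identity \eqref{eq:raw-q-first-jet}, including the bound $|\mathfrak c_Y|\lesssim(1+Y)^Ma$, rather than crude bounds on $q_Y$. I also have to check that the shape variation of $u_c$ in the scaled collar, through \eqref{eq:capacity-Y-exact}, contributes only at relative order $a$. Positivity $\mathfrak t_-\ge1/2$ then follows for large $Y_0$.

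\emph{Outer boundary.} Here the geometry is nondegenerate and $\nabla\Phi=O(q/(Y+\mathfrak c))$, so the convective part of $\mathfrak a_+$ is $O(q^2/Y^2)$. The time part is $-\partial_{n_+}\partial_t(q(Y+\mathfrak c)u_c)$ plus zero-flux terms. Multiplying by $t_Y$ converts $\partial_t$ to $\partial_Y$, and $\partial_Y[q(Y+\mathfrak c)]=q\bigl(1+\mathfrak c_Y+(Y+\mathfrak c)q_Y/q\bigr)=\tfrac q2+O(q(1+Y)^Ma)$. Combined with $\partial_{n_+}u_c=-\kappa_+/(Y+\mathfrak c)$, this gives \eqref{eq:actual-outer-taylor-Y}. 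The convective term times $t_Y$ is $O(qa^2)$ and is absorbed into the error. The bound $\|\kappa_+\|_{C^k}\le C_k$ follows from interior and boundary regularity of the filled Green function, via Lemma~\ref{lem:axisymmetric-green}. Integrability in $Y$ then follows from $q\asymp(1+Y)^{-1/2}$.
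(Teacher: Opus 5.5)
Your radial-model computation at $\Gamma_-$ gives the right constant. However, the plan for making it rigorous mis-sizes the Eulerian time derivative of the condenser potential, and the outer computation is wrong as written. Write $\Lambda=Y+\mathfrak c$. On the moving domain, $\partial_tu_c$ is harmonic with Dirichlet traces $-B_\iota\partial_{n_\iota}u_c$. On $\Gamma_-$ these equal $-\frac{q}{a^2\Lambda}\bigl(1+O((1+Y)^Ma)\bigr)$, so $q\Lambda\,\partial_tu_c=-\frac{q^2}{a^2}u_c$ up to relatively small terms. This is the same size as $\partial_t(q\Lambda)\,u_c=\frac{q}{\Theta a^2}\partial_Y(q\Lambda)\,u_c\approx\frac{q^2}{2a^2}u_c$. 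Only the sum $\partial_t(q\Lambda u_c)\approx-\frac{q^2}{2a^2}u_c\approx q_t\Lambda\,u_c$ is correct, and your model $\partial_tv_r=-q_t/r$ silently uses exactly this cancellation. Hence the claim in your ``main obstacle'' paragraph that the shape variation of $u_c$ enters only at relative order $a$ is false. It enters at relative order $\Lambda^{-1}$, the very order of the correction $-\frac1{2\Lambda}$, and discarding it would give $\mathfrak t_-\approx1+\frac1{2\Lambda}$.

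The outer step shows the problem directly. You take the time part of $\mathfrak a_+$ to be $-\partial_{n_+}\partial_t(q\Lambda u_c)$, which contradicts your own identity $-\partial_np=\partial_n\partial_t\Phi+\frac12\partial_n|\nabla\Phi|^2$, and you differentiate only the scalar $q\Lambda$. With the correct sign and the shape term kept,
\[
t_Y\,\partial_{n_+}\partial_t(q\Lambda u_c)
=\partial_Y(q\Lambda)\,\partial_{n_+}u_c+q\Lambda\,t_Y\,\partial_{n_+}\partial_tu_c
\approx-\frac{q\kappa_+}{2\Lambda}+\frac{q\Theta\kappa_+}{\Lambda},
\]
so the omitted term is the dominant one, twice the target. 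Your sign slip and the omission cancel only because $\Lambda q_Y/q=-\frac12+O(\Lambda a)$ makes $-\partial_Y(q\Lambda)$ and $q_Y\Lambda$ agree at leading order. (Also $B_+=-q\kappa_++\dots$, so $\nabla\Phi=O(q)$ on $\Gamma_+$, not $O(q/\Lambda)$; this is harmless because $t_Yq^2=\Theta a^2q$.)

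The clean repair, which is the paper's proof, avoids time derivatives altogether. By Bernoulli, $\partial_t\Phi=b(t)-\mathsf H$, where $\mathsf H=p+\frac12|v|^2$ is harmonic with Dirichlet data $\sigma\mathcal H_\iota+\frac12|v_\iota|^2$ known at the fixed time. Write $\mathsf H=Du_c+\mathsf R$ with $D=q^2/(2a^2)$; note that $Du_c$ is precisely the leading part of $-\partial_t(q\Lambda u_c)$ found above. One checks that the traces of $\mathsf R$ are small and reads off
\[
\mathfrak t_-=1-\tfrac12a\partial_{n_-}u_c+\dots,\qquad t_Y\mathfrak a_+=-t_YD\,\partial_{n_+}u_c+\dots.
\]
The $-\frac1{2\Lambda}$ then comes from $a\partial_{n_-}u_c=\Lambda^{-1}(1+O(a))$ rather than from \eqref{eq:raw-q-first-jet}, and no first-jet bounds or shape derivatives are needed.
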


\begin{proof}
We divide the proof into three steps.

\emph{Step 1. Harmonic pressure and remainder estimates.}
In Cartesian coordinates, incompressibility and irrotationality give
$-\Delta p=\operatorname{tr}((\nabla v)^2)$ and
$\Delta(|v|^2/2)=\operatorname{tr}((\nabla v)^2)$.  Hence
\begin{equation}\label{eq:harmonic-pressure-reconstruction}
\mathsf H:=p+\tfrac12|v|^2\quad\hbox{is harmonic},\qquad
\mathsf H|_{\Gamma_\iota}
=\sigma\mathcal H_\iota+\tfrac12|v_\iota|^2.
\end{equation}
In meridional coordinates, $\mathsf H$ satisfies
$\mathcal L\mathsf H=0$, with $\mathcal L$ defined in
\eqref{eq:axisym-elliptic}.

In a fixed scaled collar outside the inner graph, write $s=r/a$.
The DN calculus gives the scaled estimates
\begin{equation}\label{eq:scaled-inner-collar-velocity}
\frac a q v=-s^{-1}e_r+O_{C^{k+1}}((1+Y)^Ma),
\qquad
\frac{a^3}{2q^2}\partial_{n_-}|v|^2
=1+O_{H^k}((1+Y)^Ma).
\end{equation}
For the first estimate, differentiate the local layer representation
and the zero-flux reconstruction.  The condenser contributes
$-s^{-1}e_r$, and the correction due to the curvature of the core circle is
$O((1+Y)^Ma)$.  The dipole has size $\alpha/q=O((1+Y)^Ma)$, while the
remaining traces and graph have size $O((1+Y)^Ma^2)$.
Boundary elliptic regularity in the fixed scaled collar gives the extra
derivative.  The azimuthal derivatives in the three-dimensional gradient contain the
additional factor $a/\rho$ and satisfy the same error bound.  The normal
derivative estimate follows by differentiating
$|s^{-1}e_r|^2=s^{-2}$ in the direction of the inward fluid normal and using
$n_-=-e_r+O((1+Y)^Ma^2)$.

Set $D=q^2/(2a^2)$ and decompose the harmonic pressure as
\begin{equation*}
\mathsf H=D u_c+\mathsf R.
\end{equation*}
The traces of the remainder satisfy
\begin{equation*}
\|\mathsf R_-\|_{H^{k+1}}
\le C\frac{q^2}{a^2}(1+Y)^Ma,
\qquad
\|\mathsf R_+\|_{H^{k+4}}\le C(1+q^2).
\end{equation*}
Indeed, $a v_-/q=-e_r+O((1+Y)^Ma)$ and
$\mathcal H_-=-a^{-1}+O((1+Y)^M)$, while the outer velocity is
$O_{C^{k+2}}(q)$ and the outer curvature is uniformly bounded.
The normalized inner DN estimate and the outer trace estimate at fixed scale give
\begin{align*}
\|a\partial_{n_-}\mathsf R\|_{H^k}
&\le C(1+Y)^M(\|\mathsf R_-\|_{H^{k+1}}
+\|\mathsf R_+\|_{H^{k+4}}),\\
\|\partial_{n_+}\mathsf R\|_{C^k}
&\le C(1+Y)^M(\|\mathsf R_-\|_{H^{k+1}}
+\|\mathsf R_+\|_{H^{k+4}}).
\end{align*}
On the separated macroscopic boundary, the second estimate follows
from the filled domain layer representation and scalar Schur complement
used for the cross blocks.  Its constants grow at most polynomially in $1+Y$.

\emph{Step 2. The inner Taylor coefficient.}
The condenser flux in the normalization of $q$ is
$1/(Y+\mathfrak c)$.  Its profile and the weighted mean identity give
\begin{equation*}
a\partial_{n_-}u_c
=\frac1{Y+\mathfrak c}\bigl(1+O_{H^k}((1+Y)^Ma)\bigr).
\end{equation*}
Combining this with
$\mathfrak a_-=-\partial_{n_-}\mathsf H
+\tfrac12\partial_{n_-}|v|^2$
proves \eqref{eq:actual-inner-taylor}. The outer trace remainder contributes
only $O((1+Y)^Ma^2)$ after multiplication by $a^3/q^2$.

\emph{Step 3. The outer Taylor coefficient.}
The fixed outer Green representation gives
$\|(Y+\mathfrak c)\partial_{n_+}u_c\|_{C^k}\le C_k$.
Also $\partial_{n_+}|v|^2=O_{C^k}(q^2)$.
Apply the same pressure identity on the outer boundary, using $t_Y=\Theta a^2/q$.
The leading term is $-t_YD\partial_{n_+}u_c$, and the displayed
remainder estimates give the error in \eqref{eq:actual-outer-taylor-Y}.
The resulting bound is integrable in $Y$.
\end{proof}

\begin{lemma}
\label{lem:normalized-taylor-variation}
Fix a lower elliptic index $b_{\rm ell}$ and a common integer loss
$d_{\rm ell}\ge4$ for the collar extension, normalized normal trace,
condenser reconstruction and their first two material variations in
Lemmas~\ref{lem:uniform-harmonic-extension},
\ref{lem:capacity-single-layer-schur} and
Proposition~\ref{prop:full-C2-flux-DN}. Here a loss $d_{\rm ell}$ means
that an output norm of index $r$ is controlled by graph, trace and
direction norms of index at most $r+d_{\rm ell}$, with lower norms
at the fixed index $b_{\rm ell}$. Include the physical outer collar
estimates and two derivatives for Sobolev embeddings in this choice.
The displayed tame estimates in those results permit these choices
independently of $r$ and $a$. Set
\begin{equation}\label{eq:taylor-fixed-spatial-loss}
L_{\rm T}=12+4d_{\rm ell}.
\end{equation}
For $k\ge b_{\rm ell}$, assume the hypotheses of
Proposition~\ref{prop:actual-taylor-coefficient}, strengthening
\eqref{eq:first-mode-physical-bootstrap} to index $k+L_{\rm T}+4$.
Require the outer graph in $H^{k+L_{\rm T}+10}$ in the fixed admissible
chart, with its perturbation bounded by $\mathcal P(Y)a^2$.
Put $\beta=\alpha/q$, $F=f/q$ and $G=g/q$.
Suppose
\begin{equation*}
\begin{gathered}
|R_Y|+\|\eta_Y\|_{H^{k+L_{\rm T}+2}}+
\|F_Y\|_{H^{k+L_{\rm T}+1}}
+\|G_Y\|_{H^{k+L_{\rm T}+4}}+
\|X_{+,Y}\|_{H^{k+L_{\rm T}+4}}\le\mathcal P(Y)a^2,
\\ |\beta_Y|\le\mathcal P(Y)a,
\end{gathered}
\end{equation*}
\begin{equation*}
|\mathfrak c_Y|\le\mathcal P(Y)a,\qquad
p_q:=\frac{q_Y}{q}
=-\frac1{2(Y+\mathfrak c)}+O(\mathcal P(Y)a).
\end{equation*}
Then
\begin{equation}\label{eq:normalized-taylor-variation}
\|\partial_Y\mathfrak t_-\|_{H^k}
\le C_k\bigl((1+Y)^{-2}+(1+Y)^{M_k}a\bigr).
\end{equation}
\end{lemma}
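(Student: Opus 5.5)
Differentiate in $Y$ the representation of $\mathfrak t_-$ built in the proof of Proposition~\ref{prop:actual-taylor-coefficient}, and check that every term is of size $(1+Y)^{-2}$ or $\mathcal P(Y)a$. On the inner boundary,
\begin{equation*}
\mathfrak t_-=\frac{a^3}{q^2}\Bigl(-\partial_{n_-}\mathsf H+\tfrac12\partial_{n_-}|v|^2\Bigr),\qquad
\mathsf H=Du_c+\mathsf R,\quad D=\frac{q^2}{2a^2}.
\end{equation*}
This splits $\mathfrak t_-$ into three pieces:
\begin{equation*}
\mathfrak t_-=-\tfrac12\,a\partial_{n_-}u_c+\frac{a^3}{2q^2}\partial_{n_-}|v|^2-\frac{a^3}{q^2}\partial_{n_-}\mathsf R .
\end{equation*}
Each piece is a function of the normalized state $(\eta,\beta,F,G,R,X_+)$, of $Y$ through $a=\ell e^{-Y}$, and of the scalar $q$ only through the combinations $\beta,F,G$ and $B_q=\sigma a/q^2$. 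I would therefore apply the chain rule,
\begin{equation*}
\partial_Y\mathfrak t_-=(\partial_Y)_{\rm explicit}\,\mathfrak t_-+D_{\rm state}\mathfrak t_-[\text{state}_Y]+(\text{terms from }p_q).
\end{equation*}

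\emph{Condenser piece.} By Lemma~\ref{lem:capacity-single-layer-schur} and the flux normalization, $a\partial_{n_-}u_c=(Y+\mathfrak c)^{-1}(1+\mathfrak e)$, where $\mathfrak e$ is $O_{H^k}(\mathcal P a)$. Differentiating the factor $(Y+\mathfrak c)^{-1}$ gives $-(1+\mathfrak c_Y)(Y+\mathfrak c)^{-2}$. The hypothesis $|\mathfrak c_Y|\le\mathcal P a$ and the boundedness of $\mathfrak c$ make this $O((1+Y)^{-2})$. For $\partial_Y\mathfrak e$ I use the jet bounds \eqref{eq:collar-c-rem-jet}--\eqref{eq:collar-flux-rem-jet}. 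The quantities $\mathcal J$ and $\mathcal M_s$ there are controlled by the assumed bounds on $R_Y,\eta_Y,X_{+,Y}$, so $\partial_Y\mathfrak e=O_{H^k}(\mathcal P a)$. The explicit relative-radius derivative preserves powers of $a$ via $a\partial_a[a^\nu P(Y)]=a^\nu(\nu P-P')$, as in Lemma~\ref{lem:axisymmetric-green}.

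\emph{Kinetic piece.} Write $w=(a/q)v$ in the scaled collar. Then $\frac{a^3}{2q^2}\partial_{n_-}|v|^2=\tfrac12 a\,\partial_{n_-}|w|^2$, and $w$ depends only on the normalized traces $(1,\beta\cos\theta+F,G)$ and the geometry. The leading part $-s^{-1}e_r$ is independent of $Y$, so its $Y$-derivative vanishes. The remainder is a $C^2$ function of the state by Proposition~\ref{prop:full-C2-flux-DN}, Proposition~\ref{prop:schur-differentiation} and Lemma~\ref{lem:zero-separated-asymmetric}. Its first variation applied to $(\eta_Y,\beta_Y,F_Y,G_Y,R_Y,X_{+,Y})$ is $O_{H^k}(\mathcal P a)$, because $\beta_Y=O(\mathcal P a)$ and all other rates are $O(\mathcal P a^2)$. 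The fixed loss $L_{\rm T}$ absorbs the derivatives spent in the collar elliptic estimate, the normal trace, and the passage from $H^{k+d_{\rm ell}}$ traces to the $C^{k+1}$ collar bound.

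\emph{Remainder piece, and main obstacle.} The third term, $\frac{a^3}{q^2}a^{-1}(a\partial_{n_-}\mathsf R)$, is where $p_q$ appears explicitly. I would normalize the traces of $\mathsf R$ by $q^2/a^2$ on the inner boundary. There they equal $\tfrac12|w|^2+B_q a\mathcal H_-$ minus the condenser datum, and the order-one parts cancel exactly up to $O(\mathcal P a)$. On the outer boundary I normalize by $q^2$, $\sigma$ and $a^2/q^2$. With this normalization the only source of $q_Y$ is $\partial_Y B_q=B_q(-1-2p_q)=O(\mathcal P a)$, and the outer contribution carries an extra factor $a^2/q^2$, whose derivative is again $O(\mathcal P a^2)$. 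The difficulty is to verify that no derivative of order $(1+Y)^{-1}$ survives. Such a derivative could arise from the Schur lift of the free inner constant, since $c^{-1}\sim Y$ and its $Y$-derivative carries $(Y+\mathfrak c)^{-2}$. I would handle this by expressing the $Y$-derivative of the reduced map through \eqref{eq:schur-first-differential}, so that it acts only on zero-flux lifts. Its mean part then reduces exactly to the derivative of $(Y+\mathfrak c)^{-1}$, which is $O((1+Y)^{-2})$. Summing the three pieces gives \eqref{eq:normalized-taylor-variation}.
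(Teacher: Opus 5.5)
Your proposal is correct and follows the paper's proof essentially line by line. It uses the same identity $\mathfrak t_-=-\tfrac12a\partial_{n_-}u_c-a\partial_{n_-}\widetilde{\mathsf R}+\tfrac12a\partial_{n_-}|av/q|^2$ with $\widetilde{\mathsf R}=(a^2/q^2)\mathsf R$; the $(1+Y)^{-2}$ term comes only from $-(1+\mathfrak c_Y)(Y+\mathfrak c)^{-2}$ in the condenser piece, and the kinetic and remainder pieces have $Y$-derivatives $O(\mathcal P(Y)a)$ because their normalized traces and trace derivatives are $O(\mathcal P a)$ on $\Gamma_-$ and $O(\mathcal P a^2)$ on $\Gamma_+$, with $p_q$ entering only through $B_q$.

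One small correction to your last paragraph: $\mathsf R$ has prescribed traces on both components, so that piece has no free inner constant and no Schur lift. The differentiated logarithmic scalar inverse simply contributes polynomial factors multiplying $O(\mathcal P a)$ data, which is how the paper disposes of it.
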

\begin{proof}
We first record the derivative budget. The normalized velocity uses
one harmonic reconstruction and a first spatial derivative. Forming
the pressure traces adds products and two graph derivatives for
curvature. The harmonic pressure reconstruction and its conormal
trace form a second elliptic operation. On differentiating in $Y$,
each of these two reconstructions is replaced by its first material
variation, whose defining source is a coefficient variation times
the preceding gradient. Thus at most four successive applications
of the elliptic estimates with loss $d_{\rm ell}$ suffice. The
additional twelve derivatives cover the explicit curvature and
gradient derivatives, the trace indices $k+1,k+4$, and the fixed
embeddings. The outer coefficient matrices involve at most one
derivative of the normal graph, and the imposed outer index
$k+L_{\rm T}+10$ covers the same operations there. The stated
spatial and first $Y$-derivative bounds control these operations.

Normalize the harmonic remainder by
$\widetilde{\mathsf R}=(a^2/q^2)\mathsf R$.
Its inner trace and its first $Y$ derivative are
$O_{H^{k+1}}((1+Y)^Ma)$. The outer trace and its first derivative are
$O_{H^{k+4}}((1+Y)^Ma^2)$.
The normalized collar velocity and its derivative equal
$-s^{-1}e_r+O((1+Y)^Ma)$ and $O((1+Y)^Ma)$, respectively.
These bounds follow from the layer formula for the normalized traces
and its derivative.  The curvature contribution is
$\sigma a^2\mathcal H_-/q^2=O((1+Y)^Ma)$, with the same derivative bound.
Pull the normalized inner DN operator back to the scaled collar and
differentiate its defining equation.  At the circle, the principal part
is independent of $a$.  Differentiating the axisymmetric weight and the
graph perturbations gives coefficient derivatives of size
$O((1+Y)^Ma)$, while the differentiated logarithmic scalar inverse has
at most polynomial growth.  Write the pulled-back harmonic equation as
$\operatorname{div}(A_{\rm ell}\nabla u)=0$, where $A_{\rm ell}$ is its coefficient matrix.
The derivative of $u$ solves this elliptic equation with source
$-\operatorname{div}((A_{\rm ell})_Y\nabla u)$ and the differentiated trace.
The inverse estimate in the collar and the matching estimate on the filled domain therefore give
\begin{equation*}
\|\partial_Y(a\partial_{n_-}\widetilde{\mathsf R})\|_{H^k}
\le C(1+Y)^Ma.
\end{equation*}
The differentiated condenser expansion is
\begin{equation*}
\partial_Y(a\partial_{n_-}u_c)
=-\frac{1+\mathfrak c_Y}{(Y+\mathfrak c)^2}
+O_{H^k}((1+Y)^Ma).
\end{equation*}
Finally use the exact normalized pressure identity
\begin{equation*}
\mathfrak t_-=-\tfrac12a\partial_{n_-}u_c
-a\partial_{n_-}\widetilde{\mathsf R}
+\tfrac12a\partial_{n_-}|av/q|^2.
\end{equation*}
The last term is $1+O((1+Y)^Ma)$ with derivative
$O((1+Y)^Ma)$ in the scaled collar.  Differentiating this identity and
using $\mathfrak c_Y=O((1+Y)^Ma)$ proves
\eqref{eq:normalized-taylor-variation}.
\end{proof}

\subsection{Linearized energy and terminal estimates}

The estimates below control the backward evolution as the effective
capillary coefficient tends to zero.  The principal growth rate in
Theorem~\ref{thm:natural-variable-principal-energy} is independent of
the Sobolev index, which permits a common terminal interval at all fixed regularities.

Choose a smooth even function $\varrho(\xi)$ equal to $|\xi|$ for
$|\xi|\ge 3/2$ and bounded below by $1$.  On the projected space
$P_{\ge2}$, the multiplier $\varrho(D_\theta)$ equals $|D_\theta|$.  We write $T_a$ for a Bony paraproduct and
$\operatorname{Op}^{\rm w}(a)$ for full periodic Weyl quantization.
We next specify the cutoff in coefficient frequencies used in the energy.

Restricting coefficient frequencies relative to the input frequency
keeps the required coefficient regularity independent of the energy index.
The composition formulas below follow directly from the Fourier matrix.

\begin{lemma}
\label{lem:finite-bony-weyl}
Choose a real smooth even function $\chi_0$, equal to one on
$[-\epsilon_1,\epsilon_1]$ and zero outside
$(-\epsilon_2,\epsilon_2)$, with fixed sufficiently small
$0<\epsilon_1<\epsilon_2<1/10$.  Put
$\langle\xi\rangle=(1+\xi^2)^{1/2}$ and
$\chi(\ell,\xi)=\chi_0(\ell/\langle\xi\rangle)$.
Define
\begin{equation*}
[\mathcal W(a)]_{jk}
=\chi(j-k,(j+k)/2)\widehat a(j-k,(j+k)/2).
\end{equation*}
Here $j,k\in\mathbb Z$ are Fourier indices, and $\widehat a(\ell,\xi)$
is the $\ell$th Fourier coefficient of $a(\theta,\xi)$ in $\theta$.
Our conventions for the two terms in the composition formula are
\begin{equation*}
\{a,b\}=a_\xi b_\theta-a_\theta b_\xi,\qquad
\mathfrak q_2(a,b)=-\frac18\left(
a_{\xi\xi}b_{\theta\theta}-2a_{\theta\xi}b_{\theta\xi}
+a_{\theta\theta}b_{\xi\xi}\right).
\end{equation*}
Let $a=a_0(\xi,B)+a_1$ and $b=b_0(\xi,B)+b_1$ be scalar symbols
of orders $m,m'$.
Assume their symbol bounds through eight angular and twenty frequency
derivatives are uniform for $0<B\le1$, with size one for the multipliers
and $\epsilon$ for the perturbations.  Then, for every fixed $r$,
\begin{align}
\mathcal W(a)\mathcal W(b)
&=\mathcal W\left(ab+\frac1{2i}\{a,b\}+\mathfrak q_2(a,b)\right)
+\mathcal R_3,\label{eq:bw-moyal}\\
\|\mathcal R_3u\|_{H^r}
&\le C_{r,m,m'}(\epsilon+\epsilon^2)
\|u\|_{H^{r+m+m'-3}}.\label{eq:bw-remainder}
\end{align}
The number of angular derivatives is independent of $r$.
For real $a$, $\mathcal W(a)$ is symmetric on smooth periodic functions.
If $\{a,b\}=0$, the commutator also gains three orders.
The estimates remain valid after one $Y$ derivative when the differentiated
symbols satisfy the same bounds.  The constants are independent of $B$.
The product formula also holds for finite matrix-valued symbols, with
the matrix factors in the displayed order.  In this case, symmetry
requires a Hermitian symbol, and the commutator follows from the full
ordered composition formula.
\end{lemma}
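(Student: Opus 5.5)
The plan is to prove the Lemma~\ref{lem:finite-bony-weyl} composition formula by working directly with the Fourier matrices and Taylor expanding in the frequency variable. This avoids any appeal to a general pseudodifferential calculus. Write
\[
[\mathcal W(a)\mathcal W(b)]_{jk}=\sum_{m}\chi(j-m,\tfrac{j+m}2)\,\chi(m-k,\tfrac{m+k}2)\,\widehat a(j-m,\tfrac{j+m}2)\,\widehat b(m-k,\tfrac{m+k}2).
\]
Set $\ell_1=j-m$, $\ell_2=m-k$ and $\xi=(j+k)/2$. Then $(j+m)/2=\xi+\ell_2/2$ and $(m+k)/2=\xi-\ell_1/2$.

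The cutoffs force $|\ell_i|\le\epsilon_2\langle\cdot\rangle$. Since $\epsilon_2<1/10$, all frequencies involved are comparable to $\langle\xi\rangle$, and $|\ell_1+\ell_2|\le\frac15\langle\xi\rangle(1+o(1))$.

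First I will Taylor expand $\widehat a(\ell_1,\xi+\ell_2/2)$ and $\widehat b(\ell_2,\xi-\ell_1/2)$ in the frequency argument to second order, keeping integral remainders of third order. The zeroth, first and second order terms, summed over $\ell_1+\ell_2=j-k$, are the Fourier coefficients of $ab$, $\frac1{2i}\{a,b\}$ and $\mathfrak q_2(a,b)$ evaluated at $(j-k,\xi)$. This follows because multiplication of $\widehat a(\ell_1)$ by $\ell_1$ corresponds to $-i\partial_\theta$; the signs match the stated conventions for $\{\cdot,\cdot\}$ and $\mathfrak q_2$, which I will check by comparing with the constant-coefficient and pure-multiplier cases. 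The third-order Taylor remainder carries a factor $|\ell_1|^\alpha|\ell_2|^\beta$ with $\alpha+\beta=3$ and frequency derivatives of order $3$. It therefore has order $m+m'-3$ in $\langle\xi\rangle$, times angular weights $|\ell_i|^3$ that are absorbed by the decay of the Fourier coefficients, using eight angular derivatives.

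Second, I will handle the cutoff mismatch. Replacing $\chi(\ell_1,\cdot)\chi(\ell_2,\cdot)$ by $\chi(j-k,\xi)$ produces an error supported where some $|\ell_i|\ge\epsilon_1\langle\cdot\rangle$. On that region the angular decay $|\widehat a(\ell_1)|\lesssim\langle\ell_1\rangle^{-8}$ gives a gain of $\langle\xi\rangle^{-8+2}$ or better, which is more than three orders. Third, I will bound all remainder matrices through the Schur test in weighted $\ell^2$. The kernel is $\langle j\rangle^{r}K_{jk}\langle k\rangle^{-(r+m+m'-3)}$; since $\langle j\rangle\asymp\langle k\rangle$ on the support, the power $r$ costs only a constant. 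This is why the number of angular derivatives is independent of $r$.

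The bounds $\epsilon+\epsilon^2$ come from bilinearity. The products of two pure multipliers $a_0b_0$ compose exactly, with no remainder, because their Fourier matrices are diagonal and the Taylor expansion in $\xi$ terminates trivially for $\ell=0$. Hence every remainder term contains at least one perturbation factor. The remaining claims are short checks:
\begin{itemize}
\item Symmetry for real $a$ follows from $\overline{\widehat a(\ell,\xi)}=\widehat a(-\ell,\xi)$ together with the evenness of $\chi_0$, which give $[\mathcal W(a)]_{kj}=\overline{[\mathcal W(a)]_{jk}}$.
\item The commutator statement follows by subtracting the formula with $a$ and $b$ interchanged. The terms $ab$ and $\mathfrak q_2$ are symmetric and cancel, and the bracket vanishes by hypothesis.
\item For matrix-valued symbols the same expansion applies with the factors kept in order.
\item A $Y$ derivative commutes with $\mathcal W$.
\end{itemize}
The main obstacle I expect is the bookkeeping of the third-order remainder's derivative budget. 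I must verify that twenty frequency derivatives and eight angular ones suffice uniformly in $B$ for every $r$. I would do this first by estimating a single remainder term explicitly.
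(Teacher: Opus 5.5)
Your proposal is correct and follows essentially the same route as the paper. Both arguments compute directly with the Fourier matrices and Taylor expand the two half-shifted frequency arguments through total degree two; your signs for $\frac1{2i}\{a,b\}$ and $\mathfrak q_2$ do come out right. Both then control a cutoff-mismatch term by angular decay where a coefficient frequency is comparable to $\langle\xi\rangle$, close with a weighted Schur test using $\langle j\rangle\asymp\langle k\rangle$, obtain the factor $\epsilon+\epsilon^2$ from bilinearity, and get symmetry and the commutator claim from conjugation and evenness of $\chi_0$. The one bookkeeping point to add is that the cross products of your two second-order expansions, of total degree three and four, must also go into $\mathcal R_3$ alongside the pure third-order remainders; the paper records this as ``every omitted term has total differential degree at least three.''
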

\begin{proof}
Interchanging $j,k$ and taking complex conjugates proves symmetry,
since $a$ is real and $\chi$ is even in its first argument.
In every interaction retained by the cutoff, the input, output, and intermediate
frequencies are comparable.  An $H^r$ weight can therefore be transferred
between them with a constant depending on $r$, using the same fixed
number of angular derivatives of the symbols.
Taylor expansion of the two frequency arguments in the matrix product
through total degree two gives the Moyal polynomial.  Indeed, the shifts
are one half of the opposite coefficient frequency, so the linear terms
give $(2i)^{-1}\{a,b\}$ and the three quadratic terms give
$\mathfrak q_2(a,b)$.  Every omitted term has total differential degree
at least three.  Three frequency derivatives lower the symbol order by three.  The eight angular derivatives control three powers of the coefficient Fourier indices
and leave summable convolution kernels.
The composed cutoff differs from the cutoff of the product only when
a coefficient frequency is comparable to the input frequency.  In this
region, the same angular derivatives give at least three orders of gain.  The weighted Schur estimate proves \eqref{eq:bw-remainder}.
Fixed low-frequency completions satisfy the same estimates.
The remainder vanishes when both symbols are Fourier multipliers.
Bilinearity leaves two terms of size $\epsilon$ and one of size $\epsilon^2$.
The even terms cancel in the scalar commutator.  Applying the product
rule to this bilinear expansion gives the differentiated estimate with
constants uniform in $B$.
\end{proof}

We next derive the normal variational equations for the boundary
displacement and the good unknown.

\begin{proposition}
\label{prop:exact-normal-variational-system}
Linearize the globally potential Euler equations with fixed $\sigma\ge0$
at a smooth trajectory and at fixed physical time.  On either boundary
write the normal displacement as $H_\iota$ and set
\begin{equation*}
U_\iota=\delta\psi_\iota-B_\iota H_\iota
=\delta\Phi|_{\Gamma_\iota},\qquad \mathbf U=(U_-,U_+).
\end{equation*}
Parametrize the variation so that
$\delta X_\iota=H_\iota n_\iota$ at the base state.
If $X_{\iota,t}=v_\iota+w_\iota$, with $w_\iota$ tangential, put
$D_\iota=\partial_t-w_\iota\cdot\nabla_{\Gamma_\iota}$,
$c_\iota=n_\iota\cdot(\nabla v)n_\iota$, and
$J_\iota=-\Delta_{\Gamma_\iota}-|A_{\Gamma_\iota}|_{\mathrm{HS}}^2$,
where $A_{\Gamma_\iota}$ is the second fundamental form of $\Gamma_\iota$,
and $|A_{\Gamma_\iota}|_{\mathrm{HS}}^2$ denotes the sum of the squares
of its two principal curvatures.
Then the system on the two boundary components is
\begin{equation}\label{eq:exact-normal-variational-system}
D_\iota H_\iota=(\mathsf G\mathbf U)_\iota+c_\iota H_\iota,\qquad
D_\iota U_\iota=-(\mathfrak a_\iota+\sigma J_\iota)H_\iota+\delta b(t).
\end{equation}
Here $\mathsf G$ is the ordinary normal DN matrix, related to the
weighted flux matrix by $(\mathsf G\boldsymbol\psi)_\iota
=R(\rho_\iota j_\iota)^{-1}\mathfrak G_\iota\boldsymbol\psi$.
The scalar $\delta b(t)$ is the variation of the common Bernoulli constant.

\end{proposition}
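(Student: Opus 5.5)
We linearize the boundary formulation in Eulerian form rather than in the chart formulation \eqref{eq:exact-boundary-system}, because the good unknown $U_\iota=\delta\Phi|_{\Gamma_\iota}$ is the trace of the Eulerian variation of the potential. Consider a one-parameter family of globally potential solutions $(\Omega^\epsilon_t,\Phi^\epsilon)$ with boundary parametrizations $X^\epsilon_\iota$. Reparametrize tangentially so that $\delta X_\iota=H_\iota n_\iota$ at the base state; this changes $\delta\psi_\iota$ only by tangential terms, which are absorbed once everything is written through $U_\iota$. Let $\dot\Phi=\partial_\epsilon\Phi^\epsilon|_{\epsilon=0}$ be the Eulerian derivative, extended by shape calculus. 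It is harmonic, and its boundary trace is $\delta\psi_\iota-H_\iota\partial_n\Phi=U_\iota$. Consequently $\dot\Phi=\Phi[\mathbf U]$, the harmonic extension of $\mathbf U$. This is the same transport identity used for $u'$ in the proof of the Hadamard formula \eqref{eq:Hadamard-flux}.

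\emph{Kinematic equation.} Since $v\cdot n=\mathcal V$, differentiate the relation $\partial_tX_\iota\cdot n_\iota=v(X_\iota)\cdot n_\iota$ in $\epsilon$. Use $\partial_tX_\iota=v_\iota+w_\iota$ and $\delta n_\iota=-\nabla_{\Gamma}H_\iota$. The left side gives $D_\iota H_\iota$ plus tangential terms $\partial_t X\cdot\delta n$. The right side gives $n\cdot\nabla\dot\Phi+H\,n\cdot(\nabla v)n+v\cdot\delta n$. The $\delta n$ terms cancel because $v-\partial_tX=-w$ is tangential and $\partial_t n$ is tangential; what remains is the transport derivative of $H$ along $w$. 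This yields $D_\iota H_\iota=(\mathsf G\mathbf U)_\iota+c_\iota H_\iota$.

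\emph{Dynamic equation.} Start from Bernoulli in the form $\partial_t\Phi+\frac12|\nabla\Phi|^2+\sigma\mathcal H=b(t)$ on $\Gamma_\iota$. Linearize it at the displaced boundary point $X+Hn$:
\[
\partial_t\dot\Phi+v\cdot\nabla\dot\Phi+H\,\partial_n\bigl(\partial_t\Phi+\tfrac12|v|^2\bigr)+\sigma\,\delta\mathcal H=\delta b .
\]
Then compute each piece.
\begin{itemize}
\item[(a)] Along the boundary, $\partial_t\dot\Phi+v\cdot\nabla\dot\Phi$ equals $D_\iota U_\iota$ plus $(v\cdot n)\partial_n\dot\Phi$ corrections, which are compensated by the time derivative of the trace along $X_t=v+w$.
\item[(b)] Taking the normal gradient of Euler gives $\partial_n(\partial_t\Phi+\frac12|v|^2)=-\partial_n p=\mathfrak a_\iota$, using $p=b-\partial_t\Phi-\frac12|v|^2$.
\item[(c)] The standard normal variation of the mean curvature gives $\delta\mathcal H=J_\iota H=(-\Delta_\Gamma-|A|^2)H$, with the sign fixed by the outward fluid normal.
\end{itemize}
Careful bookkeeping of (a) is needed: $U$ is the Eulerian trace, so $\frac{d}{dt}U(X)=\partial_t\dot\Phi+(v+w)\cdot\nabla\dot\Phi$. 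The term $w\cdot\nabla_\Gamma U$ is removed by the definition of $D_\iota$, and no extra normal term appears since $v\cdot n=X_t\cdot n$. Substituting gives $D_\iota U_\iota=-(\mathfrak a_\iota+\sigma J_\iota)H_\iota+\delta b$. The sign of $\mathfrak a$ matches the base equation $\partial_t\Phi=b-p-\frac12|v|^2$. Relating $\mathsf G$ to $\mathfrak G$ is just the definition of the flux matrix.

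The main obstacle is (a), i.e.\ confirming that all tangential and second-order terms ($v\cdot\delta n$, $\partial_t n$, the Hessian $n\cdot\nabla^2\Phi\,n$ arising from $\partial_n(\frac12|v|^2)$) cancel exactly. I would verify this by working with the Lagrangian-type quantity $\Phi^\epsilon(t,X^\epsilon)$ and differentiating $\epsilon$ and $t$ in both orders.
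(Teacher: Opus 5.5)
Your proposal is correct and follows essentially the same route as the paper: the kinematic row comes from differentiating $(X_{\iota,t}-v)\cdot n_\iota=0$ with $\delta X_\iota=H_\iota n_\iota$ and $\delta n_\iota=-\nabla_{\Gamma_\iota}H_\iota$, and the dynamic row comes from linearized Bernoulli, $\delta\mathcal H=J_\iota H_\iota$, and the trace chain rule along $X_{\iota,t}=v_\iota+w_\iota$. The only cosmetic difference is that you substitute $p=\sigma\mathcal H$ into Bernoulli before linearizing, whereas the paper linearizes it separately as $\delta p+H_\iota\partial_{n_\iota}p=\sigma J_\iota H_\iota$, which is equivalent; also, the step you flag as the main obstacle is already settled by your item (b), because the Hessian term from $\partial_n(\tfrac12|v|^2)$ is absorbed exactly into $\mathfrak a_\iota$ and nothing further needs to cancel.
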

\begin{proof}
Differentiate $(X_{\iota,t}-v)\cdot n_\iota=0$ using
$\delta X_\iota=H_\iota n_\iota$ and
$\delta n_\iota=-\nabla_{\Gamma_\iota}H_\iota$ to obtain
\begin{equation*}
\partial_tH_\iota=(\mathsf G\mathbf U)_\iota+c_\iota H_\iota
+w_\iota\cdot\nabla_{\Gamma_\iota}H_\iota.
\end{equation*}
Here $\delta\Phi$ is harmonic and has trace $U_\iota$ on each boundary.
The linearized Bernoulli equation is
$\partial_t\delta\Phi+v\cdot\nabla\delta\Phi+\delta p=\delta b(t)$,
and the boundary condition gives
\begin{equation*}
\delta p+H_\iota\partial_{n_\iota}p
=\sigma J_\iota H_\iota.
\end{equation*}
The trace chain rule along $X_{\iota,t}=v_\iota+w_\iota$ now gives
the second equation.

\end{proof}

\noindent\textbf{The physical subprincipal symbols.}
The principal symbols of the weighted axisymmetric DN map and the normal
Jacobi operator are powers of the same arclength frequency.  In the
scaled evolution, the second-order curvature term carries the factor
$B_q$.

\begin{proposition}
\label{prop:physical-local-symbols}
Put $j=j_\eta$, $w=(\rho/R)j$, $A=j^{-2}$ and
$\mathcal J_\eta=a^2J_-=a^2(-\Delta_{\Gamma_-}-|A_{\Gamma_-}|_{\mathrm{HS}}^2)$.
Its half-density conjugate is
\begin{equation}\label{eq:Jacobi-half-density-symbol}
w^{1/2}\mathcal J_\eta w^{-1/2}
=\operatorname{Op}^{\rm w}(A\xi^2-1+c_0),\qquad
c_0=w^{-1/2}\partial_\theta(A\partial_\theta w^{1/2})
+\tfrac14A_{\theta\theta}+1-a^2|A_{\Gamma_-}|_{\mathrm{HS}}^2.
\end{equation}
At each fixed smooth geometry, the local Weyl symbol of the inner
diagonal DN block after conjugation by $w^{1/2}$ is
\begin{equation}\label{eq:DN-half-density-symbol}
w^{1/2}a\mathsf G_{--}w^{-1/2}
=\operatorname{Op}^{\rm w}(x+\ell_0+r_{-1}),\qquad
x=|\xi|/j,\qquad
\ell_0=-\frac a2\,\partial_{n_-}\log\rho ,
\end{equation}
modulo a smoothing operator, with $r_{-1}$ of order $-1$.
\end{proposition}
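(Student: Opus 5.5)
Both formulas are computations of principal and subprincipal Weyl symbols on the circle $\T$. I will use the standard rule that conjugating a symmetric operator by a half-density removes its first-order subprincipal term. The plan is to treat the Jacobi operator exactly and the DN block through the layer parametrix of Lemma~\ref{lem:weighted-layer-expansion}.

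\emph{Jacobi operator.} Write the Laplace--Beltrami operator on the axisymmetric surface $\Gamma_-$, acting on azimuth-independent functions. The area element is $\rho\,a j\,d\theta\,d\phi$. Hence
\begin{equation*}
a^2\Delta_{\Gamma_-}f=\frac{1}{\rho j}\partial_\theta\Bigl(\frac{\rho}{j}\partial_\theta f\Bigr)
=w^{-1}\partial_\theta(wA\,\partial_\theta f),\qquad w=(\rho/R)j,\ A=j^{-2}.
\end{equation*}
So $a^2\mathcal J_\eta=-w^{-1}\partial_\theta wA\partial_\theta-a^2|A_{\Gamma_-}|^2_{\rm HS}$. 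Conjugating by $w^{1/2}$ gives
\begin{equation*}
w^{1/2}(-w^{-1}\partial_\theta wA\partial_\theta)w^{-1/2}=-w^{-1/2}\partial_\theta wA\partial_\theta w^{-1/2}.
\end{equation*}
Expanding $\partial_\theta w^{-1/2}=w^{-1/2}\partial_\theta-\tfrac12 w^{-3/2}w_\theta$ shows that this equals $-\partial_\theta A\partial_\theta$ plus a zeroth-order potential. The potential is $w^{-1/2}\partial_\theta(A\partial_\theta w^{1/2})$, which is the standard identity for $\operatorname{div}(wA\nabla)$ conjugated to Schr\"odinger form.

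Next use the exact Weyl formula $\operatorname{Op}^{\rm w}(A\xi^2)=-\partial_\theta A\partial_\theta-\tfrac14A_{\theta\theta}$. It converts $-\partial_\theta A\partial_\theta$ into $\operatorname{Op}^{\rm w}(A\xi^2)+\tfrac14A_{\theta\theta}$. Writing $-a^2|A|^2_{\rm HS}=-1+(1-a^2|A|^2_{\rm HS})$ then produces exactly $c_0$ as in \eqref{eq:Jacobi-half-density-symbol}. I only need to check the sign conventions for $\mathcal J_\eta$ and for $D_\theta$; this step is routine algebra.

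\emph{DN block.} Here I would argue by a boundary-layer (Dirichlet-to-Neumann factorization) computation in the scaled collar of Lemma~\ref{lem:flattening-geometry}. Use coordinates $(s,\theta)$, with $s$ the scaled normal distance into the fluid, so that the metric is $j^2\,d\theta^2+ds^2$ to leading order. The weighted equation $\operatorname{div}(\rho\nabla\Phi)=0$ then becomes
\begin{equation*}
\partial_s^2\Phi+(\partial_s\log\rho+\text{curvature term})\partial_s\Phi+\text{tangential part}=0.
\end{equation*}
I would factor this operator as $(\partial_s-\Lambda^-)(\partial_s-\Lambda^+)$ modulo smoothing, with the decaying branch chosen into the fluid. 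The principal symbol of the resulting normal derivative is $|\xi|/j=x$.

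The order-zero term comes from the Riccati equation for $\Lambda$. For the flat Laplacian with unweighted density, the curvature contribution is cancelled when one passes to the flux normalization $\rho j$ and conjugates by $w^{1/2}$. What survives is the drift from the weight: half of $\partial_s\log\rho$, with a minus sign because the fluid normal points into the cavity. This gives $\ell_0=-\tfrac a2\partial_{n_-}\log\rho$.

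I would double-check this value on the reference circle at $\eta=0$. There Lemma~\ref{lem:weighted-layer-expansion} gives $T_*=P_0+\tfrac12P_{\ne0}$ and $S_*=(2R)^{-1}|D_\theta|^{-1}$, so $\mathcal T_a\mathcal S_a^{-1}$ has leading part $R|D_\theta|$. The $O(a)$ first-mode couplings $S_{1,a},T_{1,a}$ should then reproduce the order-zero term $-\tfrac a2\partial_{n}\log\rho=\tfrac a{2R}\cos\theta+O(a^2)$ after the $w^{\pm1/2}$ conjugation. This check exploits the Fourier selection rule \eqref{eq:layer-first-selection}.

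\emph{Main obstacle.} The hard step is the subprincipal DN term. There are competing zeroth-order contributions: geodesic curvature of the meridian, the $\rho$-weight, the arclength factor $j$, and the Weyl versus standard quantization shift. I must show that all of them except the weight drift cancel after half-density conjugation. I would organize this through the symmetry of $w^{1/2}a\mathsf G_{--}w^{-1/2}$. That operator is symmetric in $L^2(d\theta)$, by \eqref{eq:flux-DN-green} with the flux normalization, so its Weyl symbol is real. This kills any imaginary subprincipal part, such as $\partial_\xi\partial_\theta$ terms of $|\xi|/j$. The real part $\ell_0$ is then read off from the Riccati recursion, in which only the first-order coefficient $\partial_s\log(\rho\sqrt{g})$ enters. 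Its $\sqrt g$ part is absorbed into the arclength normalization.
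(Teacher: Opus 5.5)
Your Jacobi computation is the paper's argument: the divergence form $-w^{-1}\partial_\theta wA\partial_\theta$ of $a^2\Delta_{\Gamma_-}$, the Liouville conjugation, and $\operatorname{Op}^{\rm w}(A\xi^2)=-\partial_\theta A\partial_\theta-\tfrac14A_{\theta\theta}$. One small correction there: the left side of your display should read $\mathcal J_\eta$, not $a^2\mathcal J_\eta$.

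\textbf{Gap: the order-zero DN term.} This term is the real content of \eqref{eq:DN-half-density-symbol}, and your mechanism for it fails. You say the curvature contribution is cancelled by the flux normalization and the $w^{1/2}$ conjugation. You also say that in the Riccati recursion only the normal first-order coefficient $\partial_s\log(\rho\sqrt g)$ enters, with its $\sqrt g$ part ``absorbed into the arclength normalization.'' Neither can work.
\begin{itemize}
\item For a real first-order Weyl symbol $p$ and a positive function $f$, the Weyl symbol of $f\operatorname{Op}^{\rm w}(p)f^{-1}$ is $p+\tfrac1i\{\log f,p\}$ modulo order $-1$. This is a purely imaginary change.
\item A tangential change of variables on half-densities leaves the subprincipal symbol invariant.
\end{itemize}
So the real term $\tfrac12\partial_s\log\sqrt g=\tfrac12\kappa$ that your recursion produces survives every normalization. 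In the scaled collar $\kappa\approx1$, so this is an $O(1)$ error. On the unit circle you would obtain $|D_\theta|+\tfrac12$ instead of the exact local map $|D_\theta|$. Your working metric $j^2d\theta^2+ds^2$ ``to leading order'' discards exactly the needed information, because the subprincipal symbol depends on the first normal derivative of the tangential metric.

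\textbf{The missing step.} The curvature cancels inside the order-one Riccati equation itself.
\begin{itemize}
\item Keep the exact scaled collar metric $dr^2+(jh)^2d\theta^2$ with $h=1+\kappa r$, so that $b_1=|\xi|/(jh)$.
\item Write $\partial_ru=-Bu$ for the solution decaying into the fluid. The order-one equation is $2b_1b_0=\partial_rb_1+b_1\partial_r\log(jh\rho)$ plus imaginary terms.
\item At $r=0$ one has $\partial_rb_1=-\kappa b_1$, which cancels the $\kappa b_1$ from $\partial_r\log h$.
\item Hence $\operatorname{Re}b_0=\tfrac12\partial_r\log\rho$. This equals $\ell_0=-\tfrac a2\partial_{n_-}\log\rho$, since the scaled normal derivative is $a$ times the physical one and $n_-$ points into the cavity.
\end{itemize}
With this correction, your symmetry argument legitimately disposes of all imaginary contributions: the tangential drift, the left-to-Weyl shift, and the $w^{1/2}$ conjugation. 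That gives a valid alternative route.

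\textbf{The paper's route.} It sets $\widetilde u=\sqrt\rho\,u$, which turns the weighted equation into $(\Delta_{\rho,z}+\tfrac1{4\rho^2})\widetilde u=0$; the potential enters only at DN order $-1$. Then $\ell_0$ comes entirely from the boundary identity $\sqrt\rho\,\partial_{n_-}u=\partial_{n_-}\widetilde u-\tfrac12(\partial_{n_-}\log\rho)\widetilde u$. The remaining flat DN map has zero subprincipal symbol on arclength half-densities, by exactly the cancellation $\partial_r\lambda_1+(\partial_r\log h)\lambda_1=0$.

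Your reference-circle check would expose the spurious $\tfrac12\kappa$. However, it tests only $\eta=0$ to first order in $a$, so it cannot replace the general computation.
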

\begin{proof}
On axisymmetric scalars,
$\mathcal J_\eta=-w^{-1}\partial_\theta(wA\partial_\theta)-a^2|A_{\Gamma_-}|_{\mathrm{HS}}^2$.
Conjugation by $w^{1/2}$ gives
$-\partial_\theta A\partial_\theta
+w^{-1/2}\partial_\theta(A\partial_\theta w^{1/2})
-a^2|A_{\Gamma_-}|_{\mathrm{HS}}^2$.
Since
$\operatorname{Op}^{\rm w}(A\xi^2)
=-\partial_\theta A\partial_\theta-A_{\theta\theta}/4$,
this proves \eqref{eq:Jacobi-half-density-symbol}.

For the DN map, set $\widetilde u=\sqrt\rho\,u$.  Direct differentiation gives
\begin{equation*}
(\partial_\rho^2+\rho^{-1}\partial_\rho+\partial_z^2)u
=\rho^{-1/2}\left(\Delta_{\rho,z}+\frac1{4\rho^2}\right)\widetilde u,
\qquad
\sqrt\rho\,\partial_{n_-}u=\partial_{n_-}\widetilde u-\tfrac12(\partial_{n_-}\log\rho)\widetilde u.
\end{equation*}
For the planar operator, use boundary arclength $s$ and signed distance
$r$, positive in the outward normal direction.  The metric is
$dr^2+(1+\kappa_{\rm pl}(s)r)^2ds^2$, with signed curvature $\kappa_{\rm pl}$.
Put $h_r=1+\kappa_{\rm pl}r$.  The leading normal factor is
$\lambda_1=|\xi|/h_r$, and
$\partial_r\lambda_1+(\partial_r\log h_r)\lambda_1=0$.
At $r=0$, the tangential derivatives of $h_r$ vanish, so the order-one
equation in the normal factorization gives
$2\lambda_1\lambda_0=0$.  Thus the planar DN subprincipal symbol
on arclength half-densities is zero.  The interior potential enters the
next equation, at DN order $-1$.
Pulling arclength half-densities back to $\theta$ proves
\eqref{eq:DN-half-density-symbol}.  At a fixed separated geometry, the
other boundary changes the diagonal block only by a smoothing operator.
\end{proof}

\begin{theorem}
\label{thm:natural-variable-principal-energy}
Let $P=P_{\ge2}$, let $\mathcal W$ be the quantization defined with a fixed cutoff in
Lemma~\ref{lem:finite-bony-weyl}, and put
$x(Y,\theta,\xi)=\varrho(\xi)/j(Y,\theta)$.
Consider the symmetric operators on $P L^2$
\begin{equation*}
\mathsf L=\Theta P\mathcal W(x+\ell_0)P,\qquad
J=P\mathcal W(x^2-1)P,\qquad
\mathsf M=\Theta(\tau_0P+BJ),
\end{equation*}
where $\Theta=\Theta(Y)$, $3/4\le\tau_0(Y)\le5/4$, and $\ell_0$ is
real of order zero.  Suppose $j-1$, $\Theta-1$ and $\ell_0$ have
sufficiently small fixed symbol seminorms, bounded together with their
first $Y$ derivatives and $\tau_{0,Y}$ by $\epsilon(Y)$.
The required coefficient regularity is independent of the energy index.
Assume
\begin{equation*}
0<B\le1,\qquad B_Y/B=-1-2p_q,\qquad
|p_q|\le1/4,\quad p_q\le\epsilon(Y),\qquad
\epsilon\ge0,\quad\epsilon\in L^1_Y.
\end{equation*}
Let $T=iP\mathcal W(V\xi)P$, where $V$ is real with fixed symbol seminorms bounded by $\epsilon$.  For a smooth solution of
\begin{equation*}
(\partial_Y+T)h=h+\mathsf L\pi+f_h,\qquad
(\partial_Y+T)\pi=-\mathsf M h-(1+p_q)\pi+f_\pi
\end{equation*}
and every fixed $s\ge0$, put $P_s=P\mathcal W(x^s)P$ with the fixed low-frequency completion, and set $h_s=P_sh$, $\pi_s=P_s\pi$.
Define
\begin{equation*}
E_s^2=\langle\mathsf M h_s,h_s\rangle+\langle\mathsf L\pi_s,\pi_s\rangle
+\langle\mathsf M h,h\rangle+\langle\mathsf L\pi,\pi\rangle.
\end{equation*}
Then
\begin{equation*}
-\partial_Y E_s\le(1+C_s\epsilon)E_s+C_s\mathcal N_s(f_h,f_\pi),
\qquad
\mathcal N_s(f_h,f_\pi)=\|f_h\|_{H^s}+\sqrt B\|f_h\|_{H^{s+1}}
+\|f_\pi\|_{H^{s+1/2}}.
\end{equation*}
The energy is uniformly equivalent to
$\|h\|_{H^s}+\sqrt B\|h\|_{H^{s+1}}+\|\pi\|_{H^{s+1/2}}$.
The leading backward rate $1$ is independent of $s$ and $B$.
\end{theorem}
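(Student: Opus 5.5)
The argument is a weighted energy estimate for the commuted system: apply $P_s$, differentiate $E_s^2$ in $Y$, and use the symbolic calculus of Lemma~\ref{lem:finite-bony-weyl} to show that every commutator is either skew or of size $\epsilon$. The key point is that $\mathsf L$, $J$ and $P_s$ are all quantizations of functions of the single symbol $x=\varrho(\xi)/j$ (up to the order-zero $\ell_0$). Their Poisson brackets therefore vanish at leading order, so $[P_s,\mathsf L]$ and $[P_s,J]$ gain orders with coefficients of size $\epsilon$. This is what makes the rate independent of $s$.

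\emph{Step 1: coercivity.} Since $\Theta\approx1$, $\tau_0\ge3/4$ and $x\ge1$ on $P_{\ge2}$ (where $\varrho=|\xi|\ge2$ and $j\approx1$), we have $x^2-1\gtrsim x^2$. Lemma~\ref{lem:finite-bony-weyl} gives $\mathcal W(x^2-1)\ge c\,\mathcal W(x)^2-C\epsilon$, and a sharp G\aa rding-type bound holds with smallness because the symbols are perturbations of multipliers. Hence
\[
\langle\mathsf M h,h\rangle\asymp\|h\|_{L^2}^2+B\|h\|_{H^1}^2,\qquad
\langle\mathsf L\pi,\pi\rangle\asymp\|\pi\|_{H^{1/2}}^2.
\]
Applying these to $h_s,\pi_s$, and using $P_s\approx|D|^s$ up to $\epsilon$ errors, gives the stated equivalence of $E_s$ with $\|h\|_{H^s}+\sqrt B\|h\|_{H^{s+1}}+\|\pi\|_{H^{s+1/2}}$. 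The low-order copies $\langle\mathsf Mh,h\rangle$ and $\langle\mathsf L\pi,\pi\rangle$ absorb the low-frequency completion.

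\emph{Step 2: commuted equations.} Apply $P_s$ to get
\[
(\partial_Y+T)h_s=h_s+\mathsf L\pi_s+[P_s,\mathsf L]\pi+[\partial_Y+T,P_s]h+P_sf_h,
\]
and similarly for $\pi_s$. By \eqref{eq:bw-moyal}, $\{x^s,x+\ell_0\}=\{x^s,\ell_0\}$ is of order $s-1$ with size $\epsilon$. Likewise $\{x^s,x^2\}=0$, so $B[P_s,J]$ gains three orders by the last clause of Lemma~\ref{lem:finite-bony-weyl}. The term $(\partial_YP_s)$ involves $\partial_Yj=O(\epsilon)$, giving order $s$ with size $\epsilon$. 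Finally $[T,P_s]$ has symbol $\{V\xi,x^s\}$ of order $s$ and size $\epsilon$, while $T$ itself is skew, so it contributes only $O(\epsilon)E_s^2$.

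\emph{Step 3: energy identity.} Compute
\[
\tfrac12\partial_Y\bigl(\langle\mathsf Mh_s,h_s\rangle+\langle\mathsf L\pi_s,\pi_s\rangle\bigr).
\]
The cross terms $\langle\mathsf Mh_s,\mathsf L\pi_s\rangle-\langle\mathsf L\pi_s,\mathsf Mh_s\rangle$ cancel by symmetry. What remains is
\[
\langle\mathsf Mh_s,h_s\rangle-(1+p_q)\langle\mathsf L\pi_s,\pi_s\rangle+\tfrac12\langle\mathsf M_Yh_s,h_s\rangle+\tfrac12\langle\mathsf L_Y\pi_s,\pi_s\rangle.
\]
The delicate term is $\mathsf M_Y\ni\Theta B_YJ=-(1+2p_q)\Theta BJ$. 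It has a favourable sign when $p_q\ge-1/2$ in the forward direction; running backward, the rate is bounded by $1+C\epsilon$ using $p_q\le\epsilon$. More carefully, $-\partial_Y$ of the $h$-part is at most
\[
\bigl(-1+\tfrac12(1+2p_q)\bigr)\Theta B\langle Jh_s,h_s\rangle\le(1/2+\epsilon)\cdot(\text{that part}).
\]
The $\pi$-part contributes $(1+p_q)\le 5/4$; to obtain the sharp rate $1$ one should instead use the combination of both halves. I expect this rate bookkeeping, getting exactly $1$ rather than a worse constant, to be the main obstacle. My first attempt will track $\tau_0$ and $B$ separately and check that the total growth of $E_s^2$ is $2(1+C\epsilon)$, with $|p_q|\le1/4$ used only in the lower-order parts. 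All commutator errors are bounded by $C_s\epsilon E_s^2$. Cauchy--Schwarz in the energy norms pairs $P_sf_h$ against $\mathsf Mh_s$, giving $\|f_h\|_{H^s}+\sqrt B\|f_h\|_{H^{s+1}}$, and pairs $P_sf_\pi$ against $\mathsf L\pi_s$, giving $\|f_\pi\|_{H^{s+1/2}}$. Dividing by $E_s$ yields the claim.
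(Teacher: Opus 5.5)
Your architecture matches the paper's. You commute with $P_s$ and use $\{x^s,x\}=\{x^s,x^2\}=0$, so that $[P_s,\mathsf L]$ and $B[P_s,J]$ lose no derivatives and carry a factor $\epsilon$. You then cancel the Hamiltonian cross terms and pair the sources in the natural norms. The gap is Step~3, which is exactly where the asserted rate $1$ has to come from, and you leave it open with a fix that cannot work. You bound the $\pi$-contribution by $1+p_q\le 5/4$ using $|p_q|\le1/4$, and then propose to recover rate $1$ by combining the two halves. No such combination exists. The forms $\langle\mathsf M h_s,h_s\rangle$ and $\langle\mathsf L\pi_s,\pi_s\rangle$ are separate and nonnegative, and if $E_s$ is carried by $\pi_s$ alone, nothing in the $h$-form can offset the term $2(1+p_q)\langle\mathsf L\pi_s,\pi_s\rangle$ in $-\partial_Y(E_s^2)$. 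What is missing is the hypothesis $p_q\le\epsilon(Y)$, which you invoked only for the $h$-component. Applied to the $\pi$-component, it gives directly $-\mathsf L_Y+2(1+p_q)\mathsf L\le(2+C\epsilon)\mathsf L$ as forms, since $\Theta_Y$, $j_Y$ and $\ell_{0,Y}$ are $O(\epsilon)$.

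For the $h$-component your coefficient has a sign error: $-1+\tfrac12(1+2p_q)=-\tfrac12+p_q\le-\tfrac12+\epsilon$, not $\tfrac12+\epsilon$. Combine this with the $\tau_0$-part and the bounds $-B_Y/B=1+2p_q\le1+2\epsilon$, $J\ge0$, and $|\Theta_Y|+|\tau_{0,Y}|+|j_Y|\lesssim\epsilon$. This gives $-\mathsf M_Y-2\mathsf M\le(-1+C\epsilon)\mathsf M$, so the $h$-part is dissipative backward and the entire rate comes from $\pi$. Summing with your commutator and source bounds yields $-\partial_Y(E_s^2)\le(2+C_s\epsilon)E_s^2+C_sE_s\mathcal N_s(f_h,f_\pi)$. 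Dividing by $2E_s$, regularized at zero, gives the claim.

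Two smaller points. First, skewness of $T$ only removes $\langle u,Tu\rangle$. The terms $2\,\mathrm{Re}\langle\mathsf M h_s,Th_s\rangle$ and $2\,\mathrm{Re}\langle\mathsf L\pi_s,T\pi_s\rangle$ are the forms of $[\mathsf M,T]$ (order two, carrying the factor $B$) and $[\mathsf L,T]$ (order one), and these must be bounded by $C\epsilon$ times the natural forms. Second, in Step~1, treating $P_s-|D_\theta|^sP$ as an absorbable $\epsilon$-error requires $C_s\epsilon<1$, which is a smallness threshold depending on $s$. The paper instead uses an elliptic parametrix for $P_s$ and controls the order $-1$ remainder with the added lower forms. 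This keeps the threshold fixed independently of $s$, as the statement requires.
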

\begin{proof}
We divide the proof into three steps.

\emph{Step 1. Coercivity and coefficient derivatives.}
Small relative perturbations of the circular forms give
\begin{equation*}
\langle \mathsf L u,u\rangle\asymp\|u\|_{H^{1/2}}^2,\qquad
\langle Ju,u\rangle\asymp\|u\|_{H^1}^2,\qquad
\langle \mathsf M u,u\rangle\asymp\|u\|_2^2+B\|u\|_{H^1}^2.
\end{equation*}
The reference Jacobi form is $|D_\theta|^2-1$ on $P L^2$.
Boundedness with a fixed cutoff controls the perturbations as forms of their
respective orders.  The coefficient assumptions imply
\begin{equation*}
|\langle \mathsf L_Yu,u\rangle|\le C\epsilon\langle \mathsf L u,u\rangle,\qquad
-\langle \mathsf M_Yu,u\rangle\le(1+C\epsilon)\langle \mathsf M u,u\rangle .
\end{equation*}
For the second bound, use
$-B_Y/B=1+2p_q\le1+2\epsilon$ and the positivity of $J$.

\emph{Step 2. Commutator estimates.}
Elliptic parametrices for $P_s$, together with the added lower energy,
give the asserted equivalence uniformly in $B$.
The capillary commutators preserve the derivative count because
\begin{equation*}
\{x^s,x\}=\{x^s,x^2\}=0.
\end{equation*}
The finite Weyl calculus therefore gives orders $s-2$ and $s-1$
for $[P_s,\mathcal W(x)]$ and
$[P_s,\mathcal W(x^2)]$, respectively.  The $\ell_0$ commutator has order $s-1$.  The scalar term
$\Theta\tau_0P$ commutes with $P_s$.  Consequently
\begin{equation*}
\|[P_s,\mathsf L]\pi\|_2+\sqrt B\|[P_s,\mathsf L]\pi\|_{H^1}
+\|[P_s,\mathsf M]h\|_{H^{1/2}}\le C_s\epsilon E_s.
\end{equation*}
The finite projection errors satisfy the same bound because the fixed
cutoff controls all interactions between coefficient and input
frequencies.

The symbol identity
$(x^s)_Y=-s(j_Y/j)x^s$ controls the terms arising from $Y$ differentiation
of $P_s$ in the same norms.
Since $T$ is skew-symmetric, its contribution to a natural quadratic form
is a commutator with $\mathsf L$ or $\mathsf M$.  These commutators have
orders one and two, respectively, with the factor $B$ in the order-two
part.  The corresponding forms are bounded by $C\epsilon$ times the
natural energy.
Also, $[P_s,T]$ has order $s$ and coefficient size $C_s\epsilon$,
so its contribution is bounded by the same mixed energy.

\emph{Step 3. The energy inequality.}
The cross terms containing $\mathsf L$ and $\mathsf M$ cancel exactly.
The remaining dilation and coefficient terms in the negative
derivative of each quadratic form are
\begin{equation*}
\langle(-\mathsf M_Y-2\mathsf M)h_s,h_s\rangle+
\langle(-\mathsf L_Y+2(1+p_q)\mathsf L)\pi_s,\pi_s\rangle
\le (2+C_s\epsilon)E_s^2 .
\end{equation*}
The first term is bounded above by
$(-1+C\epsilon)\langle \mathsf M h_s,h_s\rangle$.
The commutator estimates and the pairings with the source terms give
$-\partial_Y(E_s^2)\le(2+C_s\epsilon)E_s^2+
C_sE_s\mathcal N_s(f_h,f_\pi)$.  Regularization at zero completes the proof.
\end{proof}

\begin{lemma}
\label{lem:zeroth-mixed-cost}
For a fixed Sobolev index $s$ and $0<B\le1$, set
\begin{equation*}
\mathfrak E_s(h,\pi)^2
=\|h\|_{H^s}^2+B\|h\|_{H^{s+1}}^2
+\|\pi\|_{H^{s+1/2}}^2.
\end{equation*}
Then
\begin{equation}\label{eq:weighted-half-derivative}
\|h\|_{H^{s+1/2}}
\le B^{-1/4}\bigl(\|h\|_{H^s}^2+B\|h\|_{H^{s+1}}^2\bigr)^{1/2}.
\end{equation}
For an operator $C$ of order zero with a fixed cutoff and boundedness
seminorm $c_C$ at the fixed coefficient regularity,
\begin{equation}\label{eq:zeroth-mixed-cost}
\|Ch\|_{H^{s+1/2}}\le C_s c_C B^{-1/4}\mathfrak E_s(h,\pi).
\end{equation}
Thus, in this mixed norm, the operator norm distance from
$(h,\pi)\mapsto(h,\pi+\mathcal W(c)h)$ to the identity is at most
$C_sB^{-1/4}\|c\|_{C^r}$, for a fixed sufficiently large coefficient index $r$.
For fixed $\sigma>0$, $B=B_q=\sigma a/q^2$ and
$q\asymp(1+Y)^{-1/2}$, an order-zero operator with seminorm
$c_C\le\mathcal P(Y)a^\nu$, $\nu>1/4$, contributes
an integrable homogeneous energy error in \eqref{eq:zeroth-mixed-cost},
with $\mathcal P$ chosen as a nonnegative polynomial majorant.
\end{lemma}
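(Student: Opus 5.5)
The estimate \eqref{eq:weighted-half-derivative} is a Fourier-side interpolation, and I would prove it mode by mode. For each Fourier index $m$, with weight $\langle m\rangle$, the arithmetic--geometric mean inequality gives
\begin{equation*}
B^{1/2}\langle m\rangle^{2s+1}
=\bigl(\langle m\rangle^{2s}\bigr)^{1/2}\bigl(B\langle m\rangle^{2s+2}\bigr)^{1/2}
\le \tfrac12\bigl(\langle m\rangle^{2s}+B\langle m\rangle^{2s+2}\bigr).
\end{equation*}
Multiplying by $|\widehat h(m)|^2$ and summing yields $B^{1/2}\|h\|_{H^{s+1/2}}^2\le\|h\|_{H^s}^2+B\|h\|_{H^{s+1}}^2$, which even has an extra factor $\tfrac12$. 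Taking square roots gives the claim with constant one. No symbol calculus enters here.

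For \eqref{eq:zeroth-mixed-cost}, I would use the boundedness part of Lemma~\ref{lem:finite-bony-weyl}. An order-zero operator quantized with the fixed cutoff $\chi$ only couples comparable input and output frequencies. The weighted Schur estimate in that proof then gives $\|Cu\|_{H^{s+1/2}}\le C_sc_C\|u\|_{H^{s+1/2}}$, where $c_C$ is a fixed finite number of angular symbol seminorms, independent of $s$. Composing this with \eqref{eq:weighted-half-derivative} and using $\|h\|_{H^s}^2+B\|h\|_{H^{s+1}}^2\le\mathfrak E_s^2$ proves the estimate. For the map $(h,\pi)\mapsto(h,\pi+\mathcal W(c)h)$, the difference from the identity changes only the $\pi$ component, and it does so by $\mathcal W(c)h$. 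The $\pi$ component is measured in $H^{s+1/2}$ in $\mathfrak E_s$. So the operator-norm distance is at most $C_sB^{-1/4}c_{\mathcal W(c)}$. Finally, $c_{\mathcal W(c)}\le C\|c\|_{C^r}$ for $r$ large enough to dominate the eight angular derivatives used in Lemma~\ref{lem:finite-bony-weyl}.

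The last assertion is just arithmetic with the scalings. We have $B_q=\sigma a/q^2\asymp\sigma a(1+Y)$, so
\begin{equation*}
B_q^{-1/4}\lesssim \sigma^{-1/4}a^{-1/4}(1+Y)^{-1/4}.
\end{equation*}
With $c_C\le\mathcal P(Y)a^\nu$, the error coefficient is therefore bounded by $\mathcal P(Y)a^{\nu-1/4}=\mathcal P(Y)\ell^{\nu-1/4}e^{-(\nu-1/4)Y}$. This lies in $L^1_Y$ on $[Y_0,\infty)$ because $\nu>1/4$. Hence, as in Theorem~\ref{thm:natural-variable-principal-energy}, it enters the energy inequality as an integrable homogeneous error. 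The only point needing care is uniformity of the Schur bound in $s$ with a fixed coefficient regularity. That uniformity is exactly what the frequency cutoff provides, so I expect no genuine obstacle.
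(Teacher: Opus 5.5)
Your proof is correct and follows the paper's argument. Your mode-wise AM--GM bound is the paper's Fourier inequality $\sqrt B\langle\xi\rangle\le1+B\langle\xi\rangle^2$, and you then use, as the paper does, order-zero boundedness of the fixed-cutoff operator on $H^{s+1/2}$ and the scaling $B_q^{-1/4}=\sigma^{-1/4}a^{-1/4}q^{1/2}$, which gives $c_CB_q^{-1/4}\le\mathcal P(Y)a^{\nu-1/4}$, integrable for $\nu>1/4$. One wording point: the fixed cutoff makes the required coefficient regularity independent of $s$, but the Schur constant itself is $C_s$ and is not uniform in $s$.
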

\begin{proof}
The Fourier inequality
$\sqrt B\langle\xi\rangle\le1+B\langle\xi\rangle^2$
proves \eqref{eq:weighted-half-derivative}. Order-zero boundedness
then gives \eqref{eq:zeroth-mixed-cost} and the estimate for the triangular change of variables.
Finally, $B_q^{-1/4}=\sigma^{-1/4}a^{-1/4}q^{1/2}$ gives
$c_CB_q^{-1/4}\le C\mathcal P(Y)a^{\nu-1/4}q^{1/2}$, which is
integrable for $\nu>1/4$.
\end{proof}

The following lemma combines the inner and outer energy inequalities
and gives bounds independent of the terminal coordinate $Y_N$.

\begin{lemma} 

\label{lem:zero-coupled-terminal-budget}
Let $a(Y)=\ell e^{-Y}$, $Y_0\ge0$, and let $Z,E,O\ge0$ be absolutely
continuous on $[Y_0,Y_N]$, with zero terminal values.  Let $k,\epsilon\ge0$
belong to $L^1([Y_0,\infty))$ and $0\le P(Y)\le C(1+Y)^M$, $M\ge0$.
If $-Z_Y\le(\lambda+k)Z+Pa^3$ with $\lambda<3$, then
\begin{equation}\label{eq:backward-polynomial}
Z(Y)\le e^{\|k\|_{L^1}}
\int_Y^{Y_N}e^{\lambda(S-Y)}P(S)a(S)^3\,dS
\le C'(1+Y)^Ma(Y)^3.
\end{equation}
For the coupled estimates, enlarge $P$ to $C(1+Y)^M\ge1$.
The system
\begin{equation}\label{eq:coupled-terminal-input} 
-E_Y\le(1+\epsilon)E+P(a^3+a^2O),\qquad
|O_Y|\le\epsilon O+P(a^2+aE) 
\end{equation}
implies
\begin{equation*}
E(Y)\le C'(1+Y)^Ma(Y)^3,\qquad
O(Y)\le C'(1+Y)^Ma(Y)^2.
\end{equation*}
The system arising from the Taylor energy estimate
\begin{equation}\label{eq:zero-coupled-terminal-input} 
-E_Y\le(1+\epsilon)E+P(a^3+a^2O),\qquad
-O_Y\le\epsilon O+P(a+E) 
\end{equation}
implies
\begin{equation}\label{eq:zero-coupled-terminal-output}
E(Y)\le C'(1+Y)^{2M}a(Y)^3,\qquad
O(Y)\le C'(1+Y)^Ma(Y).
\end{equation}
All constants are independent of $Y_N$.  The starting point $Y_0$ can
be kept fixed when the polynomial and the finite coefficient integrals
depend on a fixed higher Sobolev index.
\end{lemma}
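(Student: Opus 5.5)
The plan is to integrate the differential inequalities backward from $Y_N$, where all three quantities vanish, and to use $a(S)=a(Y)e^{-(S-Y)}$ to turn polynomially weighted powers of $a$ into convergent integrals.

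\emph{Scalar inequality.} Set $K(Y)=\int_Y^{Y_N}k$. Then
\begin{equation*}
-\partial_Y\bigl(e^{\lambda Y+K}Z\bigr)\le e^{\lambda Y+K}Pa^3 .
\end{equation*}
Integrating from $Y$ to $Y_N$ and using $Z(Y_N)=0$ gives the first inequality in \eqref{eq:backward-polynomial}. Substituting $a(S)^3=a(Y)^3e^{-3(S-Y)}$ and $(1+S)^M\le(1+Y)^M(1+S-Y)^M$, the integral is bounded by
\begin{equation*}
(1+Y)^Ma(Y)^3\int_0^\infty e^{(\lambda-3)u}(1+u)^M\,du ,
\end{equation*}
which is finite because $\lambda<3$. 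This constant does not depend on $Y_N$.

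\emph{System \eqref{eq:coupled-terminal-input}.} I would use a bootstrap or continuity argument. On an interval $[Y_1,Y_N]$, assume
\begin{equation*}
E\le A(1+Y)^Ma^3,\qquad O\le A(1+Y)^Ma^2 .
\end{equation*}
The $O$ inequality, integrated backward with the Gronwall factor $e^{\|\epsilon\|_{L^1}}$, gives
\begin{equation*}
O(Y)\lesssim\int_Y^{Y_N}P\bigl(a^2+aE\bigr)\,dS .
\end{equation*}
Since $aE\lesssim A(1+S)^{2M}a^4$, this yields $O\lesssim(1+Y)^Ma^2+A(1+Y)^{2M}a^4$. Inserting the result into the $E$ inequality, the forcing becomes $P(a^3+a^2O)$, where $a^2O\lesssim(1+Y)^Ma^4+Aa^6\cdots$. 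The scalar bound with $\lambda=1$ then gives $E\lesssim(1+Y)^Ma^3+A(1+Y)^{3M}a^5$. For $Y_0$ large the $A$-dependent terms are at most $A/2$ times the target, which closes the bootstrap. Continuity from $Y_N$, where $E=O=0$, extends the bounds down to $Y_0$. The polynomial powers can be absorbed because $a^2(1+Y)^{2M}\to0$, at the cost of enlarging $M$ or $C'$.

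\emph{System \eqref{eq:zero-coupled-terminal-input}.} This is where I expect the main obstacle: $O$ is now forced by $Pa$ and by $PE$ with no extra factor of $a$. I would proceed in three steps.
\begin{enumerate}
\item Integrate the $O$ inequality backward: $O(Y)\lesssim\int_Y^{Y_N}P(a+E)\,dS$. The $a$ term contributes $(1+Y)^Ma(Y)$.
\item Under the bootstrap $E\le A(1+Y)^{2M}a^3$, the $E$ term contributes at most $A(1+Y)^{3M}a^3$. Hence $O\lesssim(1+Y)^Ma$, with the $A$-dependence carrying a factor $a^2$.
\item Feed this into the $E$ inequality: $a^2O\lesssim(1+Y)^Ma^3$, so the forcing is $\lesssim(1+Y)^{2M}a^3$. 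The scalar bound with $\lambda=1<3$ gives $E\le C'(1+Y)^{2M}a^3$, which improves on $A$ once $Y_0$ is large.
\end{enumerate}
The coupling is therefore harmless only because the $E$ equation carries the factor $a^2$ in front of $O$, and the bookkeeping must track that $O$ only reaches order $a$. This is what produces the exponent $2M$ in \eqref{eq:zero-coupled-terminal-output}.

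\emph{Independence of constants.} All constants depend only on $\|k\|_{L^1}$, $\|\epsilon\|_{L^1}$, $C$, $M$ and $\lambda$. So $Y_0$ can be fixed once these are fixed at a given Sobolev index, independently of $Y_N$.
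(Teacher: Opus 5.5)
Your scalar step is the paper's argument, and the bootstrap you run for the two systems is internally sound. There are two harmless slips: the correction term in the first system is $(1+Y)^{2M}a^4+A(1+Y)^{3M}a^6$, and you never need to enlarge $M$ (which is fixed by the conclusion), only $C'$, since $(1+Y)^Ma$ is bounded on $[0,\infty)$.

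The real gap is in what your argument proves. Every closure step needs $Y$ beyond a threshold $Y_*$ determined by $C$, $M$, $\|\epsilon\|_{L^1}$ and $\ell$. In the first system you need $A(1+Y)^{2M}a(Y)^3$ small once $A$ is fixed; in the Taylor system you need $C_2(1+Y)^{2M}a(Y)^2\le\frac12$. The lemma, however, is asserted on $[Y_0,Y_N]$ for an arbitrary $Y_0\ge0$. Its last sentence says precisely that $Y_0$ need not move when $C$, $M$ and the $L^1$ norms are the larger constants of a higher Sobolev index. This is what yields a common interval at every $s$ in Theorem~\ref{thm:one-state-closed}. Your closing paragraph concludes the opposite: that $Y_0$ is fixed once these constants are fixed, i.e.\ that it depends on the index.

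The repair is short. On $[Y_0,Y_*]$ the coefficients satisfy $a\le\ell$ and $P\le C(1+Y_*)^M$. Backward Gronwall for $E+O$, started from the values at $Y_*$, therefore bounds both quantities by a constant depending only on $Y_*$. Since $a(Y)\ge a(Y_*)$ there, this constant is absorbed into $C'$ without changing $Y_0$.

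The paper avoids thresholds altogether by closing each system with one weighted combination and the scalar estimate.
\begin{itemize}
\item For \eqref{eq:coupled-terminal-input} it takes $Z=E+aO$. Because $a_Y=-a$, this gives $-Z_Y\le(1+k)Z+2Pa^3$ with $k=\epsilon+P(a+a^2)\in L^1$, so $\lambda=1$.
\item For \eqref{eq:zero-coupled-terminal-input} it takes $Z=E+4a^2PO$. Using $-\partial_Y\log(a^2P)\le2$ and $Pa^2O=\frac14(4a^2PO)$, it gets $-Z_Y\le(\frac94+k)Z+(P+4P^2)a^3$ with $k=\epsilon+4a^2P^2$. It then applies the scalar bound with $\lambda=\frac94<3$ and degree $2M$, and reads off $O\le Z/(4a^2P)$.
\end{itemize}
This route is linear, holds for every $Y_0\ge0$, and shows where the exponents come from. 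The $2M$ is the $P^2$ in the forcing of $Z$. The weight $a^2$ on $O$ raises the growth rate to $\frac94$, which is still below the decay rate $3$ of the forcing.

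Your route makes the mechanism more visible: the factor $a^2$ in front of $O$ compensates for $O$ being only of order $a$. The price is a continuity argument plus the compact-interval patch above.
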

\begin{proof}
The integrating factor gives the first bound in
\eqref{eq:backward-polynomial}.  Substitute $S=Y+r$ and use
\begin{equation*}
a(Y+r)=a(Y)e^{-r},\qquad
(1+Y+r)^M\le(1+Y)^M(1+r)^M,
\qquad \int_0^\infty e^{-(3-\lambda)r}(1+r)^M\,dr<\infty
\end{equation*}
to obtain its second bound.
For \eqref{eq:coupled-terminal-input}, set $Z=E+aO$.  Since $a_Y=-a$,
\begin{equation*}
-Z_Y\le(1+k)Z+2Pa^3,\qquad
k=\epsilon+P(a+a^2)\in L^1.
\end{equation*}
Apply the scalar estimate with $\lambda=1$ and use $E\le Z$, $aO\le Z$.

For \eqref{eq:zero-coupled-terminal-input}, set $Z=E+4a^2PO$.
Since $-\partial_Y\log(a^2P)=2-M/(1+Y)\le2$,
\begin{equation*}
-Z_Y\le(9/4+k)Z+(P+4P^2)a^3,\qquad
k=\epsilon+4a^2P^2\in L^1.
\end{equation*}
Here $Pa^2O=\tfrac14(4a^2PO)$, while $4a^2P^2E\le kZ$.
Apply the scalar estimate with $\lambda=9/4$ and polynomial degree $2M$.
The bounds $E\le Z$ and $O\le Z/(4a^2P)$ give
\eqref{eq:zero-coupled-terminal-output}.
\end{proof}

\section{Nonlinear energy estimates}
\label{sec:nonlinear-energy}

We prove Theorem~\ref{thm:one-state-closed}, which continues the
canonical terminal solutions to a common initial value of $Y$ for each
fixed $\sigma>0$.  We retain the notation and reflection symmetry of
Section~\ref{sec:boundary-dynamics}.  Localization of the zero-flux
Dirichlet--Neumann operator and paralinearization in normal variables
reduce the problem to coupled inner and outer energy estimates.

\subsection{Normal variables and the Dirichlet--Neumann operator}

For the local variational identities, we hold
$a,R,q,\Theta,\kappa,p_q,\beta$ fixed at their instantaneous values,
where $\beta=\alpha/q$.  In the nonlinear paralinearization we hold only
$a,R,\beta$ fixed and reconstruct $q,\Theta,\kappa,p_q$ from the state.
The resulting scalar differentials are included in the smooth kernel
part of the differential. Differentiation in $Y$ acts on all these
parameters, including $R$ and $\beta$.
We use the polynomial bounds defined in Section~\ref{sec:scaled-DN}.
The required coefficient regularity is independent of the energy index.

\begin{proposition} 
\label{prop:polar-full-cancellation}
Let $\sigma\ge0$ and use the variables of
Proposition~\ref{prop:exact-polar-raw-system}.  Put
\begin{equation*}
b=\frac{dn-e\upsilon}{j},\qquad z=\frac{en+d\upsilon}{j},\qquad
\mathcal V=\frac{\Theta z+\kappa\sin\theta}{d},\qquad
\mathcal D=\partial_Y+\mathcal V\partial_\theta .
\end{equation*}
For spatial variations at fixed $a,R,q,p_q,\Theta,\kappa,\beta$, set
\begin{equation*}
H=a\,\delta\eta,\qquad Z=a\,\delta F,\qquad
h_0=(d/j)H,\qquad \pi_0=-(Z+bH),\qquad
d_-=t_Yn_-\cdot(\nabla v)n_-.
\end{equation*}
The local inner system is
\begin{equation}\label{eq:polar-complete-normal-block}
\mathcal D h_0=\Theta a\mathsf G_{--}\pi_0+d_-h_0,\qquad
\mathcal D\pi_0=-\Theta(\mathfrak t_-+B_q\mathcal J_\eta)h_0
-(1+p_q)\pi_0,
\end{equation}
with $\mathcal J_\eta$ as in Proposition~\ref{prop:physical-local-symbols}.
The source terms contain the contributions from the other boundary,
the constant lift, scalar modulation, the common gauge and finite
projections.  Coefficients introduced by the change of unknown are
differentiated along the base trajectory.  Under the physical trace bounds
used in Proposition~\ref{prop:raw-state-first-jet},
$d_-=1+O_{C^k}(\mathcal P a)$ and
$\mathcal V=O_{C^k}(\mathcal P a)$, with a fixed number of additional
derivatives in the trace bounds.
\end{proposition}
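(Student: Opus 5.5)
The plan is to derive \eqref{eq:polar-complete-normal-block} directly from the general normal variational system of Proposition~\ref{prop:exact-normal-variational-system}, specialized to the inner boundary. Then I will change independent variable from $t$ to $Y$ and rescale by $a$ and $q$. Throughout, the parameters $a,R,q,p_q,\Theta,\kappa,\beta$ are held fixed, so the spatial variation acts only on $(\eta,F)$.

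\emph{Identification of the unknowns.} The physical normal displacement is $a\,\delta\eta$ times the radial component of $n_-$ along $e_r$. With $n_-=(-(1+\eta)e_r+\eta_\theta e_\theta)/j$, the normal displacement is $H_-=-(d/j)\,a\delta\eta$ up to the sign convention. So $h_0=(d/j)H$ is, up to sign, the normal displacement, normalized by $a$ in the scaled collar.

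For the potential variation, $\delta\psi_-=q\,\delta F$ along the inner trace; the condenser constant and $\alpha$ are fixed. Hence the good unknown $U_-=\delta\psi_--B_-H_-$, divided by $q$ and multiplied by $a$, becomes $-(Z+bH)$ once I check that $aB_-/q$ projected appropriately equals $b=(dn-e\upsilon)/j$. Here I must use that the graph variation is radial rather than normal: I decompose $\delta X=a\delta\eta\, e_r$ into its normal part $h_0$ and a tangential part. The tangential part only reparametrizes, and it contributes $\upsilon$-terms. This is exactly why $b$ mixes $n$ and $\upsilon$, and why the transport field $\mathcal V=(\Theta z+\kappa\sin\theta)/d$ appears. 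I will verify this by differentiating \eqref{eq:polar-eta-exact}--\eqref{eq:polar-chart-eliminated}: the tangential velocity of the chart is $z$ (the tangential component of $(n,\upsilon)$ in the $e_r$ frame) plus the centre drift $\kappa\sin\theta$, divided by $d$ to convert to $\partial_\theta$.

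\emph{Time rescaling.} Multiplying the $t$-equations by $t_Y=\Theta a^2/q$ gives the following.
\begin{itemize}
\item The DN term $t_Y\mathsf G_{--}U_-$ becomes $\Theta a\mathsf G_{--}\pi_0$ after the normalizations by $a/q$.
\item The stretching term $t_Y c_- H$ becomes $d_-h_0$.
\item The pressure term becomes $-t_Y(\mathfrak a_-+\sigma J_-)H$. Using $\mathfrak t_-=a^3\mathfrak a_-/q^2$ and $a^2J_-=\mathcal J_\eta$, this equals $-\Theta(\mathfrak t_-+B_q\mathcal J_\eta)h_0$ in normalized units.
\end{itemize}
The factor $-(1+p_q)\pi_0$ arises from differentiating the normalization $a/q$ in $Y$: we have $\partial_Y\log(a/q)=-1-p_q$.

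\emph{Remaining terms and coefficient bounds.} All other terms go into the sources. These are the outer block of $\mathsf G$, the zero-flux constant lift (finite rank via Proposition~\ref{prop:schur-differentiation}), the variations of $\Theta,\kappa$ (finite rank by Proposition~\ref{prop:clock-centre-finite-rank}), $\delta b$, and the projections $P_{\ge2}$.

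The bounds on $d_-$ and $\mathcal V$ follow from the scaled collar velocity \eqref{eq:scaled-inner-collar-velocity}. For a pure line sink, $(a/q)v=-s^{-1}e_r$, so $t_Y n\cdot\nabla v\, n=\Theta\cdot1$, giving $d_-=1+O(\mathcal P a)$. In addition, $z=O(\mathcal Pa^2)$ and $\kappa=O(\mathcal Pa)$ by Proposition~\ref{prop:raw-state-first-jet}.

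\emph{Main obstacle.} The main difficulty is the bookkeeping of the tangential reparametrization. Concretely, I need to confirm that the transport operator $\mathcal D$, rather than $\partial_Y$, acts on both $h_0$ and $\pi_0$ with no leftover first-order term. I would verify this first at the linear level around the circle, and then argue via the material-derivative identity $D_\iota$ in Proposition~\ref{prop:exact-normal-variational-system}.
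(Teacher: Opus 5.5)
This is the paper's proof. Decompose $He_r=-h_0n_-+(e/j)H\tau_-$, so that $H_-=-h_0$. The exact chart identity $\mathcal DX_-=t_Yv$, obtained from \eqref{eq:polar-eta-exact}--\eqref{eq:polar-chart-eliminated}, gives $t_YD_-=\mathcal D$; since $a/q$ depends only on $Y$, no leftover transport term arises. Then rescale Proposition~\ref{prop:exact-normal-variational-system} using $t_Y=\Theta a^2/q$ and $(a/q)_Y=-(1+p_q)a/q$.

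The paper additionally records the identity $\mathcal Db+(1+p_q)b=\Theta\mathfrak t_-d/j+\Theta B_q(e/j^2)\partial_\theta\mathscr C-\mathcal Vz$. This is the sense in which the change-of-unknown coefficients are differentiated along the trajectory, and it is reused later in the conjugacy.

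Two slips need fixing. First, the Eulerian good unknown is $U_-=\delta\psi_--v\cdot(He_r)=(q/a)(Z+bH)=-(q/a)\pi_0$, because $v\cdot e_r=-(q/a)b$. Hence $(a/q)U_-=+(Z+bH)$, not $-(Z+bH)$; your sign would flip both off-diagonal entries of \eqref{eq:polar-complete-normal-block}. Second, $z=(en+d\upsilon)/j$ is $O(\mathcal Pa)$, not $O(\mathcal Pa^2)$, since $\upsilon\approx-\beta\sin\theta$ with $\beta=\alpha/q=O(\mathcal Pa)$. This still gives $\mathcal V=O(\mathcal Pa)$.
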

\begin{proof}
The geometric and velocity identities are
\begin{equation*}
N:=n_-=-\frac d j e_r+\frac e j e_\theta,\qquad
\tau_-=\frac e j e_r+\frac d j e_\theta,\qquad
\frac a qv=-b e_r+z e_\theta,\qquad \mathcal D X_-=t_Yv.
\end{equation*}
The last equality follows from
\eqref{eq:polar-eta-exact}--\eqref{eq:polar-chart-eliminated}.
Decompose the radial variation as
$He_r=-h_0N+(e/j)H\tau_-$.  Removing its tangential part changes the
potential trace variation by $-v_{\tau,-}(e/j)H$ and leaves its Eulerian
variation unchanged.  Consequently the normal variables of
Proposition~\ref{prop:exact-normal-variational-system} satisfy
\begin{equation*}
H_-=-h_0,\qquad
U_-=\delta\psi_--v\cdot(He_r)=\frac qa(Z+bH)=-\frac qa\pi_0,
\end{equation*}
up to the constant lift included in the source terms.
In this chart $t_YD_-=\mathcal D$.  Substitution into
\eqref{eq:exact-normal-variational-system}, using
$t_Y=\Theta a^2/q$ and $(a/q)_Y=-(1+p_q)a/q$, gives
\begin{equation*}
\mathcal D h_0=\frac{t_Yq}{a}\mathsf G_{--}\pi_0+d_-h_0,
\qquad
\mathcal D\pi_0=-(1+p_q)\pi_0
-\frac{at_Y}{q}(\mathfrak a_-+\sigma J_-)h_0.
\end{equation*}
The identities $a^3\mathfrak a_-/q^2=\mathfrak t_-$ and
$\sigma a^3J_-/q^2=B_q\mathcal J_\eta$ prove
\eqref{eq:polar-complete-normal-block}.

To compute the coefficient derivative in this change of unknown, differentiate
$b=-(a/q)v\cdot e_r$ along $\mathcal D X_-=t_Yv$.
Euler's equation and $\mathcal D e_r=\mathcal V e_\theta$ give
$\mathcal D b+(1+p_q)b=(a/q)t_Y\nabla p\cdot e_r-\mathcal V z$.
Since $\nabla p=-\mathfrak a_-N+
\sigma\tau_-(aj)^{-1}\partial_\theta\mathcal H_-$ on the boundary,
\begin{equation}\label{eq:zero-inner-pressure-cancellation}
\mathcal D b+(1+p_q)b
=\Theta\mathfrak t_-d/j
+\Theta B_q\frac e{j^2}\partial_\theta\mathscr C-\mathcal Vz.
\end{equation}
Thus the time derivative of the shear in the change of unknown contains
the Taylor coefficient of the Euler solution.  The scaled collar velocity estimate in
Proposition~\ref{prop:actual-taylor-coefficient} gives
$a^2N\cdot(\nabla v)N/q=1+O_{C^k}(\mathcal P a)$.
Together with $\Theta=1+O(\mathcal P a^2)$, this bounds $d_-$.
The formula for $\mathcal V$, the bounds $|\alpha|/q+|R_Y|/a\le
\mathcal P a$, and the high trace bounds give its stated estimate.
\end{proof}

\noindent\textbf{A localization estimate for zero flux.}
We obtain localization estimates uniform in the hole radius by comparing
the energies of the global and local problems.

\begin{proposition}
\label{prop:charge-free-localization}
Let $D$ be a smooth bounded planar domain, let $c$ remain a fixed positive
distance from $\partial D$, and let $w$ be smooth and uniformly strictly
positive on $D$.  Assume that the required finite smooth norms are uniformly bounded
and that the geometry remains nondegenerate.  Let
\begin{equation*}
K_a=c+aK_\eta,\qquad
\partial K_\eta=\{(1+\eta(\theta))e_r(\theta):\theta\in\T\},
\end{equation*}
where $\eta$ is in a fixed small smooth neighborhood of zero.
For an inner trace $f$ modulo constants, let $u_g$ minimize
$\int_{D\setminus K_a}w|\nabla u|^2$ among functions with outer trace
zero and inner trace $f$ plus an arbitrary constant.  Let $\mathfrak K_g$
be the map sending $f$ to its inner conormal density, pulled back to $d\theta$.

Let $w_l$ agree with $w$ for $|x-c|<3a$ and with $w(c)$ for
$|x-c|>4a$, using a fixed scaled cutoff.  Let $u_l$ be the corresponding
exterior energy minimizer with inner trace $f$ modulo constants, normalized
to tend to zero at infinity.  Denote its inner conormal density by
$\mathfrak K_l f$, pulled back to $d\theta$.  Then, uniformly for small $a$,
\begin{equation}\label{eq:weak-small-localization}
\|\mathfrak K_g-\mathfrak K_l\|_{H^{1/2}/\R\to H^{-1/2}}
\le C a^{1/2}.
\end{equation}
For every fixed integer $N\ge0$, the difference has a smooth kernel
$k_a(\theta,\varphi)$ and
\begin{equation}\label{eq:kernel-small-localization}
\|k_a\|_{C^N(\T^2)}\le C_N a^{1/4}.
\end{equation}
The constant in \eqref{eq:kernel-small-localization} depends on finitely
many geometric derivatives.  Their number depends on $N$ and is
independent of the energy index.  Polynomial bounds on parameter dependent geometries
give polynomial factors in these estimates.
The quotient removes constants and retains the inner first harmonic.
\end{proposition}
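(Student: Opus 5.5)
The plan is to compare the two minimizers through their Dirichlet energies and then upgrade the energy bound to a kernel bound using interior regularity on the separated annular region. Throughout, fix an inner trace $f$ with zero mean, and normalize by the scaling $x=c+aX$. Under this scaling the two–dimensional Dirichlet energy is invariant, the local problem becomes a fixed exterior problem in $X$, and the global problem becomes a problem on the large domain $a^{-1}(D-c)$.

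\textbf{Step 1: energy comparison.} Both maps are obtained from quadratic forms. The global form is
\[
Q_g(f)=\min\int w|\nabla u|^2,
\]
with the minimum over functions having outer trace zero and inner trace $f$ plus a free constant. The local form $Q_l(f)$ is defined in the same way on the exterior domain with weight $w_l$. In each case the conormal map is the Hessian of the form: $\langle f,\mathfrak K f\rangle=Q(f)$, and by polarization this identifies the bilinear forms.

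The first task is to compare $Q_g$ and $Q_l$.

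\emph{Upper bound for $Q_g$.} Take the local minimizer $u_l$, normalized to vanish at infinity. Its nonconstant modes decay like $|X|^{-1}$, and there is no logarithmic term because $f$ is taken modulo constants. Multiply $u_l$ by a cutoff that equals one for $|x-c|<d_*/2$ and vanishes near $\partial D$. On the macroscopic annulus $|X|\sim a^{-1}$ the gradient of $u_l$ is $O(a^2\|f\|)$, so the cutoff error in energy is $O(a^2)\|f\|_{H^{1/2}}^2$. The weight mismatch matters only where $|x-c|>4a$, and there $|w-w(c)|\lesssim|x-c|$. This gives an error
\[
\int_{|X|>4} a|X|\,|\nabla u_l|^2\,dX\lesssim a\|f\|^2 .
\]

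\emph{Upper bound for $Q_l$.} Conversely, restrict $u_g$ to the local problem. Use the energy bound from Lemma~\ref{lem:uniform-harmonic-extension}, and write $u_g$ minus its inner constant as a local exterior solution plus a correction. The same weight estimate then gives the reverse inequality with an $O(a)$ error.

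Together these give $|Q_g-Q_l|\lesssim a\|f\|^2_{H^{1/2}/\R}$. Since both forms are nonnegative and comparable, polarization only yields $|\langle g,(\mathfrak K_g-\mathfrak K_l)f\rangle|\lesssim a^{1/2}\|f\|\|g\|$. To see why, use a Cauchy–Schwarz argument on the difference of minimizers: the energy of $u_g-\tilde u_l$ is $O(a)$, so its gradient is $O(a^{1/2})$, and pairing with a lift of $g$ produces the same $a^{1/2}$. This proves \eqref{eq:weak-small-localization}.

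\textbf{Step 2: smooth kernel.} Express the difference through Green's identity: the pairing $\langle g,(\mathfrak K_g-\mathfrak K_l)f\rangle$ equals a bulk integral of $(w-w_l)\nabla u\cdot\nabla v$ plus cutoff-layer terms. These are supported where $|X|\ge3$, that is, away from the inner boundary. So the difference is a map from $H^{1/2}$ data into functions that are harmonic (for a smooth elliptic operator) near $|X|=1$ with energy $O(a^{1/2})$.

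Interior and boundary elliptic regularity in the fixed collar $1<|X|<3$ then converts this energy bound into $C^N$ bounds in the output variable. Applying the same argument to the adjoint problem gives $C^N$ bounds in the input variable, exactly as in Step~4 of Lemma~\ref{lem:zero-separated-asymmetric}, summing over Fourier modes $e^{in\varphi}$ with weights $\langle n\rangle^{N}$. Interpolating the $a^{1/2}$ energy bound against uniform higher bounds costs half the exponent, which leaves $a^{1/4}$ in \eqref{eq:kernel-small-localization}.

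\textbf{Main obstacle.} The hard step is the treatment of the free constants and the first harmonic. The zero-flux constraint must be compatible in both problems so that no $\log a$ term appears. Checking that the decay of $u_l$ has no monopole uses the quotient by constants. The first harmonic decays only like $a$ on the macroscopic scale, so it has to be carried through the estimates rather than discarded.
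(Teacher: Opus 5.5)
Your route is correct in outline but differs from the paper's in the energy comparison. The paper first proves dipole decay for \emph{both} minimizers. It fills the hole, shows the jump density has zero mean and is bounded in $H^{-1/2}$, and represents $u_g$ through the subtracted kernel $G_D(x,c+a\zeta)-G_D(x,c)$; this gives $|u_g|\lesssim a/r$ and $|\nabla u_g|\lesssim a/r^2$. It then cuts each solution off between $\sqrt a$ and $2\sqrt a$ and bounds the coefficient mismatch inside that disk crudely by $O(\sqrt a)$, which is where its exponent $1/2$ comes from.

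You need decay only of the local solution. That decay is explicit from its Fourier expansion in $|X|>4$: constant weight, finite energy, no logarithm. You then weigh $|w-w(c)|\lesssim|x-c|$ against $|\nabla_X u_l|\lesssim|X|^{-2}$, which gives $|Q_g-Q_l|\lesssim a$.

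Since $\mathfrak K_g-\mathfrak K_l$ is symmetric, polarization gives operator norm $\lesssim a$. Your remark that polarization ``only yields'' $a^{1/2}$ is wrong, though harmless because the stated bound is weaker. Moreover, your difference of minimizers is already small in energy on the inner collar. So boundary regularity and duality give small bounds at both ends of the interpolation, and the loss from $a^{1/2}$ to $a^{1/4}$ is not needed. What the paper's route buys is the zero-mean jump density bound and the subtracted Green representation, which it reuses in Lemma~\ref{lem:zero-charge-outer-derivative}. Your kernel step otherwise matches its Step~4.

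The step you state too loosely is the reverse inequality $Q_l\le Q_g+Ca$. Using $u_g$ as a local competitor requires decay of $u_g$, which you have not shown. ``The same weight estimate'' does not apply to $u_g$ a priori. Your phrase ``local exterior solution plus a correction'' is the right idea, and it closes without any Green function, as follows.
\begin{itemize}
\item Put $e=u_g-\chi u_l$, with $\chi$ your macroscopic cutoff. Then $e$ has zero outer trace and constant inner trace.
\item Test the equation of $u_l$ against zero extensions of admissible functions. This gives $\int w\nabla e\cdot\nabla\phi=\int F\cdot\nabla\phi$ for all admissible $\phi$, with $F=-(w-w_l)\nabla(\chi u_l)+w_l\nabla((1-\chi)u_l)$.
\item The decay and energy bounds for $u_l$ give $\|F\|_{L^2}\lesssim a\sqrt{\log(1/a)}\,\|P_{\ne0}f\|_{H^{1/2}}$. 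Testing with $\phi=e$ gives the same bound for $\|\nabla e\|_{L^2}$.
\item The minimizer identity $\int w|\nabla(\chi u_l)|^2=Q_g+\int w|\nabla e|^2$ then yields $|Q_g-Q_l|\lesssim a$.
\end{itemize}
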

\begin{proof}
We divide the proof into four steps.

\emph{Step 1. Uniform energy and flux bounds.}
The collar extension and energy argument of
Lemma~\ref{lem:uniform-harmonic-extension} apply with the normalized
coefficient $w/w(c)$.  It gives both minimizing gradients, fixes the
exterior representative by its limit at infinity, and yields
\begin{equation}\label{eq:zero-charge-gradient-bound}
\int w|\nabla u_g|^2+\int w_l|\nabla u_l|^2
\le C\|P_{\ne0} f\|_{H^{1/2}}^2.
\end{equation}
Variation of the free inner constant shows that each minimizer has
zero total inner conormal flux.  Fill the hole with the solution of
$\operatorname{div}(w\nabla u)=0$ having the same boundary trace.
For the local problem, use the corresponding local coefficient.
To bound the filling, write its trace as $f_{\perp}+c_0$, where
$f_{\perp}=P_{\ne0}f$.  Choose a uniformly bi-Lipschitz map
$\Psi_\eta:B_1\to K_\eta$ whose boundary value is
$\Psi_\eta(e_r(\theta))=(1+\eta(\theta))e_r(\theta)$.
Such a map is obtained by extending the radial perturbation in
$1/2<r<1$ and keeping the map equal to the identity near the origin.
If $E f_{\perp}$ is the harmonic extension to $B_1$, then
\begin{equation*}
\int_{B_1}|\nabla E f_{\perp}|^2
=2\pi\sum_{n\ne0}|n|\,|\widehat f_{\perp}(n)|^2
\le C\|f_{\perp}\|_{H^{1/2}}^2.
\end{equation*}
Use $c_0+(E f_{\perp})\circ\Psi_\eta^{-1}$ as a competitor after the
change of variables $x=c+ay$.  The factors $a^2$ from area and $a^{-2}$
from the squared gradient cancel.  Uniform ellipticity, the uniform
bounds for $D\Psi_\eta$ and its inverse, and minimization therefore give
\begin{equation*}
\int_{K_a}w|\nabla u_{\rm fill}|^2
\le C\|P_{\ne0}f\|_{H^{1/2}}^2,
\end{equation*}
with $C$ independent of $a$ and of the free trace constant.
The same argument applies to $w_l$.
The jump density $\mu$, including the scaled arclength
factor, satisfies
\begin{equation}\label{eq:jump-bound-localization}
\int\mu=0,\qquad
\|\mu\|_{H^{-1/2}}\le C\|P_{\ne0} f\|_{H^{1/2}}.
\end{equation}
For the second bound, test the distributional equation of the
filled solution against a two-sided collar extension of an arbitrary
$H^{1/2}$ boundary function and apply Cauchy--Schwarz together with
\eqref{eq:zero-charge-gradient-bound}.

\emph{Step 2. Decay away from the hole.}
Let $G_D$ be the Dirichlet Green kernel of $-\operatorname{div}(w\nabla)$.
The filled global solution equals
\begin{equation*}
u_g(x)=\int_\T
[G_D(x,c+a\zeta(\varphi))-G_D(x,c)]\mu(\varphi)\,d\varphi,
\qquad \zeta=(1+\eta)e_r.
\end{equation*}
The zero-mean identity in \eqref{eq:jump-bound-localization}
justifies the subtraction.
Local Green estimates, obtained from the logarithmic parametrix and
interior elliptic estimates, give for $Ca\le r=|x-c|\le r_0$
\begin{equation}\label{eq:dipole-annulus-estimate}
|u_g(x)|\le C(a/r)\|P_{\ne0} f\|_{H^{1/2}},\qquad
|\nabla u_g(x)|\le C(a/r^2)\|P_{\ne0} f\|_{H^{1/2}}.
\end{equation}
The same bounds hold for the $H^{1/2}_\varphi$ norms of the
subtracted kernel and its $x$ gradient, since every derivative of
$a\zeta$ contributes a factor $a$.  For the local solution, the same
estimates follow by expanding its zero-flux harmonic extension outside
$4a$ in decaying nonzero Fourier modes and using the collar energy bound
for the trace.

\emph{Step 3. Comparison of the two energies.}
Set $r_a=\sqrt a$ after a fixed nondimensionalization of the macroscopic
lengths.  Cut either solution off between $r_a$ and $2r_a$ and use it as
a competitor for the other problem.  Within $2r_a$ the relative
coefficient difference is $O(r_a)$.  The extra cutoff energy and the
energy in the cutoff annulus are $O(a^2/r_a^2)\|P_{\ne0} f\|_{H^{1/2}}^2$
by \eqref{eq:dipole-annulus-estimate}.  Applying minimality to both competitors gives
\begin{equation*}
|\langle(\mathfrak K_g-\mathfrak K_l)f,f\rangle|
\le C(r_a+a^2/r_a^2)\|P_{\ne0} f\|_{H^{1/2}}^2
\le C a^{1/2}\|P_{\ne0} f\|_{H^{1/2}}^2.
\end{equation*}
Both operators are self-adjoint.  Polarization proves
\eqref{eq:weak-small-localization}.

\emph{Step 4. Estimates for the smooth kernel.}
For \eqref{eq:kernel-small-localization}, the two solutions satisfy
the same equation near the inner boundary, where their traces differ
by a constant.  Subtract this constant from their difference.
The resulting function has zero inner Dirichlet trace, and its scaled
collar $H^1$ norm is controlled by
\eqref{eq:zero-charge-gradient-bound}.  Boundary regularity in a
smaller scaled collar gives, for every fixed $m$,
\begin{equation*}
\mathfrak K_g-\mathfrak K_l:H^{1/2}\longrightarrow H^m
\quad\hbox{with norm at most }C_m.
\end{equation*}
Adding a constant leaves the conormal trace unchanged.  By
self-adjointness, the dual estimate is
$H^{-m}\to H^{-1/2}$.  Interpolating these two estimates, with $m$
sufficiently large, gives a bounded map $H^{-K}\to H^K$ for any fixed
$K$.  Interpolating this bound with
\eqref{eq:weak-small-localization} at parameter $1/2$ gives
\begin{equation*}
\|\mathfrak K_g-\mathfrak K_l\|_{H^{-n}\to H^n}
\le C_n a^{1/4}
\end{equation*}
for any prescribed fixed $n$, by taking $K>2n+1$.
If $k_{jk}$ are its Fourier matrix coefficients, then
$|k_{jk}|\le C_n a^{1/4}\langle j\rangle^{-n}\langle k\rangle^{-n}$.
Taking $n>N+2$ and summing the differentiated Fourier series proves
the kernel estimate with the stated finite regularity dependence.
\end{proof}

\begin{lemma}
\label{lem:finite-tame-dn-symbol}
Fix nonnegative integers $N,N_1$.  The lower regularity indices and
numbers of additional derivatives required below may depend on these fixed integers and are
independent of the output index $r$.  In the centred polar chart, let
$\mathcal L_{\rm in}$ be the normalized zero-flux inner diagonal DN block
conjugated by $w^{1/2}$, where $w=(\rho/R)j$.
On $P_{\ge2}$, it admits the decomposition
\begin{equation*}
\mathcal L_{\rm in}
=\operatorname{Op}^{\rm w}(x+\ell_0+r_l)+K_g,\qquad
x=|\xi|/j,\qquad \ell_0=-\tfrac a2\partial_{n_-}\log\rho,
\end{equation*}
where $r_l$ is a symbol of order $-1$, with a smooth completion at
bounded frequencies.  There is a fixed integer $d_N$.  Write $\boldsymbol\zeta=(\eta,h_+)$ for the two graph
perturbations and define the symbol seminorm by
\begin{equation*}
|\mathfrak a|_{r,m,N}
:=\max_{0\le\beta\le N}\sup_{\xi\in\R}
\langle\xi\rangle^{-m+\beta}
\|\partial_\xi^\beta\mathfrak a(\cdot,\xi)\|_{H^r(\T)}.
\end{equation*}
Then
\begin{align}
|r_l(\boldsymbol\zeta)|_{r,-1,N}
&\le C_r\mathcal P_r(Y)
\bigl(a+\|\eta\|_{H^{r+d_N}}\bigr),
\label{eq:dn-high-tame-symbol}\\
|r_l(\boldsymbol\zeta)-r_l(\widetilde{\boldsymbol\zeta})|_{r_0,-1,N}
&\le C\mathcal P(Y)\|\boldsymbol\zeta-\widetilde{\boldsymbol\zeta}\|_{H^{r_0+d_N}}
\label{eq:dn-low-lipschitz-symbol}
\end{align}
when the fixed lower norms of the graphs are in a fixed small ball.
Here $r\ge r_0$, with $r_0$ fixed.  Increasing $r_0$ and $d_N$ if necessary,
on the fixed ball $\|\boldsymbol\zeta\|_{H^{r_0+d_N}}\le1$ we also have
\begin{equation}\label{eq:dn-high-tame-reference-difference}
|r_l(\boldsymbol\zeta)-r_l(0)|_{r,-1,N}
\le C_r\mathcal P_r(Y)\|\boldsymbol\zeta\|_{H^{r+d_N}}.
\end{equation}
The other retained local symbols satisfy the same estimates at their
respective orders, with a number of additional derivatives independent of $r$.
For the fixed input index $N_1$, the global part satisfies
\begin{align}
\|K_g(0)\|_{H^{-N_1}\to H^r}
&\le C_r\mathcal P_r(Y)a^{1/4},\notag\\
\|K_g(\boldsymbol\zeta)-K_g(0)\|_{H^{-N_1}\to H^r}
&\le C_r\mathcal P_r(Y)\|\boldsymbol\zeta\|_{H^{r+d_{N_1}}}.
\label{eq:global-kernel-tame-operator}
\end{align}
The constants are uniform in $B_q$ and polynomial in $1+Y$.
The finite constant trace and centering corrections satisfy the same
kernel estimates, and their reference coefficients between inner high
modes vanish as $a\to0$.
\end{lemma}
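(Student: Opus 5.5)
The plan is to split the zero-flux inner diagonal block into a local exterior part and a global correction. Proposition~\ref{prop:charge-free-localization} does the splitting, with the weight $w=\rho/R$ frozen outside the scaled collar. Write $\mathfrak K_g=\mathfrak K_l+(\mathfrak K_g-\mathfrak K_l)$, where $\mathfrak K_l$ is the local exterior map for the flattened coefficient $A_{\varepsilon,\eta}$ of Lemma~\ref{lem:flattening-geometry}. After conjugation by $w^{1/2}$ and restriction to $P_{\ge2}$, the difference term, together with the finite constant-lift and centering corrections, is put into $K_g$. Everything with a symbol expansion is kept in the local piece.

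For the local part, I would construct the symbol by the normal factorization used in Proposition~\ref{prop:physical-local-symbols}. In the scaled collar I pass to arclength half-densities and to $\widetilde u=\sqrt\rho\,u$. I then solve the Riccati hierarchy $\lambda_1,\lambda_0,\lambda_{-1},\dots$ for the conormal factor up to order $-N-1$. The principal term gives $x=|\xi|/j$ and the subprincipal term gives $\ell_0=-\tfrac a2\partial_{n_-}\log\rho$. All lower terms are collected in $r_l$, and the finite remainder is smoothing.

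Each coefficient $\lambda_{-k}$ is a polynomial in finitely many $\theta$-derivatives of the chart coefficients, with denominators bounded away from zero. So the $H^r$ norm of the symbol needs only $r+d_N$ derivatives of $\eta$, with $d_N$ depending on $N$ and not on $r$. The smallness $a+\|\eta\|$ in \eqref{eq:dn-high-tame-symbol} holds because at $\eta=0$, $\varepsilon=0$ the exact exterior map is $|D_\theta|$, so all order $\le-1$ terms vanish. The $\varepsilon$-dependence in $A_{\varepsilon,\eta}$ is affine, which gives the factor $a$.

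The estimates \eqref{eq:dn-low-lipschitz-symbol} and \eqref{eq:dn-high-tame-reference-difference} follow by integrating the first variation along $t\boldsymbol\zeta$. The tame product rule puts at most one factor at high index, and the fixed ball $\|\boldsymbol\zeta\|_{H^{r_0+d_N}}\le1$ absorbs the lower factor. The operator generated by the symbol differs from the true local exterior map by a smoothing operator. I would estimate that error with the collar regularity of Lemma~\ref{lem:uniform-harmonic-extension} (parametrix plus elliptic estimate) and also assign it to $K_g$.

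For $K_g$, the bound $\|K_g(0)\|_{H^{-N_1}\to H^r}\le C\mathcal P a^{1/4}$ at the circular inner graph comes from the kernel estimate \eqref{eq:kernel-small-localization} with $N$ large in terms of $r$ and $N_1$. There is a tension here, because the statement wants constants uniform in the output index while the derivative count is independent of $r$. Since $\eta=0$ in this term, only the outer graph enters. Its $r$-dependence I would route through Lemma~\ref{lem:zero-separated-asymmetric}, via \eqref{eq:outer-kernels-C2-negative-input}, and absorb it into $C_r\mathcal P_r$.

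The difference \eqref{eq:global-kernel-tame-operator} I would obtain by differentiating the global-minus-local comparison in $\boldsymbol\zeta$ and integrating along $t\boldsymbol\zeta$. I would use the tame $C^2$ flux bounds of Proposition~\ref{prop:full-C2-flux-DN} for the global and local maps separately, and the symbol estimate above for the subtracted parametrix. Both remainders are smoothing from $H^{-N_1}$ into the collar. The constant trace and centering corrections are finite-rank, with coefficients given by the Schur complement of Proposition~\ref{prop:schur-differentiation}. Their reference entries between inner high modes are $O(a^2L_a^M)$ by \eqref{eq:layer-P0-leakage}--\eqref{eq:layer-P1-leakage}, so they vanish as $a\to0$.

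I expect the main obstacle to be the high-output tame bound for the smoothing remainder of the symbolic construction uniformly in $a$. I would first try an adjoint-duality argument as in Step 4 of Lemma~\ref{lem:zero-separated-asymmetric}: inputs are measured in $H^{-N_1}$, outputs are pushed to $H^r$ by tangential commutation in the scaled collar, and only one coefficient factor is carried at index $r+d$. Uniformity in $B_q$ is automatic, since the DN map does not involve $\sigma$.
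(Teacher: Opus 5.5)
\textbf{Gap: the truncation remainder of your symbol cannot go into $K_g$.} You take $r_l$ to be the finite sum of Riccati terms down to order $-N-1$. You then send the difference between the exact local exterior map and $\operatorname{Op}^{\rm w}$ of that finite symbol to $K_g$, calling it smoothing. It is not smoothing. Already at $\eta=0$, $a>0$, the frozen-weight local problem has infinitely many nonvanishing lower-order symbol terms; the $O(a^2)$ potential in the $\sqrt\rho$-conjugated operator feeds every order. So the truncation error is an operator of finite order, about $-N-2$, and maps $H^{-N_1}$ only into about $H^{N+2-N_1}$. It therefore cannot satisfy $\|K_g(0)\|_{H^{-N_1}\to H^r}\le C_r\mathcal P_r a^{1/4}$, or the tame difference bound \eqref{eq:global-kernel-tame-operator}, for the arbitrary output index $r$ the lemma needs. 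Here $N$ is fixed while the energy index is later taken arbitrarily large. Extending the expansion to order $-(r+N_1)$ would cost roughly $r$ extra derivatives of $\eta$, which destroys the $r$-independent loss $d_{N_1}$. The collar regularity of Lemma~\ref{lem:uniform-harmonic-extension} does not help, since it gives finite-order bounds, not infinite smoothing.

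\textbf{What the paper does instead.} It defines $r_l$ as the \emph{exact} symbol of $R_l=\mathcal L_{\rm loc}-\operatorname{Op}^{\rm w}(x+\ell_0)$, so no truncation error is left over. The finite Poisson parametrix, with $J=N+3$ terms and an exact zero-trace residual correction solved by energy and $H^2$ estimates, is used only to \emph{estimate} this symbol. The seminorm $|\cdot|_{r,-1,N}$ sees only $N$ frequency derivatives. So it suffices that the residual maps $H^{-N-1}\to L^2$, with tame bounds on all angular commutators $\operatorname{ad}_D^j$ that keep one factor at high index. Combined with $\operatorname{ad}_M^\beta$, $M=e^{i\theta}$, these give discrete-difference bounds on the left symbol $r^{\rm L}(\theta,n)$. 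A Ruzhansky--Turunen interpolation kernel extends them to real $\xi$, and the shift $\xi\mapsto\xi-\ell/2$ gives the Weyl symbol. This operator-to-symbol step is the idea missing from your plan.

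\textbf{Consequence for $K_g$ and the difference estimate.} With $r_l$ defined this way, $K_g$ is only the global-minus-local difference plus the finite lifts. That difference is genuinely smooth, because both potentials solve the same homogeneous equation in a common scaled collar and differ there by a constant inner trace. This also bears on your argument for \eqref{eq:global-kernel-tame-operator}. Bounding the variations of the global and local maps separately via Proposition~\ref{prop:full-C2-flux-DN} gives only order-one $H^{s+1}\to H^s$ estimates and discards the cancellation that makes the difference smoothing. You must instead differentiate the difference of potentials $T_{\boldsymbol\zeta}f$ itself in the common collar, and prove the negative-input bound $H^{-N_1-1}\to H^1$ for it and its first variation by the adjoint energy argument. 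Then sum over Fourier inputs to get an $H^{N_1}_\varphi$-valued kernel, and only then apply tame output regularity. Your final paragraph points to this argument, but it must be applied to this $K_g$, not to a symbolic remainder.
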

\begin{proof}
We divide the proof into four steps.

\emph{Step 1. Construction of a finite Poisson parametrix.}
All elliptic estimates in this step are on the fixed scaled exterior
chart of Lemma~\ref{lem:flattening-geometry}. Its coefficients equal the
identity outside a fixed compact set. In the boundary collar put
$r_{\rm polar}=ae^t$, $V=\sqrt\rho\,\Phi$,
$u=\log(1+\eta)$, and $\zeta=t-u(\theta)$. Then
\begin{equation*}
LV=(1+u_\theta^2)V_{\zeta\zeta}
-2u_\theta V_{\zeta\theta}+V_{\theta\theta}
-u_{\theta\theta}V_\zeta+W V=0,
\qquad W=\frac{a^2e^{2(\zeta+u)}}{4\rho^2}.
\end{equation*}
The potential is $O(a^2)$ in every required fixed scaled norm.
Extend it smoothly beyond the collar, with a fixed cutoff. Set
$b=u_\theta$, $A=1+b^2$, and, for $|\xi|\ge1$, write
\begin{align*}
e^{-i\theta\xi}Le^{i\theta\xi}
&=L_{0,\xi}+Q_\xi,\\
L_{0,\xi}&=A\partial_\zeta^2-2ib\xi\partial_\zeta-\xi^2,\\
Q_\xi&=-2b\partial_\zeta\partial_\theta
+2i\xi\partial_\theta+\partial_\theta^2
-b_\theta\partial_\zeta+W.
\end{align*}
The roots of $L_{0,\xi}$ are
$\lambda_\pm=(ib\xi\pm|\xi|)/A$.  Their real parts have opposite
signs and absolute values comparable to $|\xi|$, uniformly in
$a$.  Let $J=N+3$ and define
\begin{equation*}
p_0=e^{\lambda_-\zeta},\qquad
p_{k+1}=-L_{0,\xi}^{-1}(Q_\xi p_k),\quad 0\le k<J,
\end{equation*}
where the inverse is taken with zero trace at $\zeta=0$ and decay at infinity.
The explicit Green kernel for this one-dimensional Dirichlet problem
has scale $|\xi|^{-1}$ and exponential decay on the scale
$|\xi|^{-1}$. Consequently $p_k$ has Poisson order $-k$: each normal
derivative raises the order by one, each frequency derivative lowers
it by one, and angular derivatives change only the coefficient norm.
For example, for fixed nonnegative $h,\beta$ and every nonnegative integer $j$,
\begin{equation*}
\|\partial_\theta^j\partial_\xi^\beta
\partial_\zeta^h p_k(\zeta,\cdot,\xi)\|_{L^2_\theta}
\le C_{j,k,h,\beta}
(1+\|\eta\|_{H^{j+d_J}})
\langle\xi\rangle^{h-k-\beta}
(1+\zeta\langle\xi\rangle)^{M}
e^{-c\zeta\langle\xi\rangle}.
\end{equation*}
Here $d_J$ can be chosen to depend only on $J,h,\beta$ and fixed lower
coefficient norms, independently of $j$.  The polynomial exponent $M$ may
depend on $j$. Its contribution to the normal integral is absorbed in
$C_j$.  At each recursive step, $Q_\xi$ uses at most two angular
derivatives. Differentiating
its Green kernel produces derivatives of smooth functions of $b$ with
uniformly nonzero denominators. The product estimate therefore places at most one factor at the
highest angular regularity.  The same argument applies to the first
and second coefficient variations.

Multiply the finite sum $\sum_{k=0}^Jp_k$ by a cutoff equal to one
near $\zeta=0$ and supported entirely inside this boundary chart.
The resulting sum has boundary trace one. Before the cutoff is applied,
its residual is exactly $Q_\xi p_J$. The $L^2_\zeta$ norm of the residual
has tangential symbol order $1/2-J$, since integration of the
exponential profile contributes $\langle\xi\rangle^{-1/2}$.
The cutoff errors, supported at a positive distance from the boundary,
are of arbitrary negative order. Complete the finitely many low
frequencies with a fixed smooth collar lift. Thus the residual source
operator $F_J$ satisfies, after increasing the fixed derivative loss $d_J$,
\begin{equation*}
\|\operatorname{ad}_D^j F_J\|
_{\mathcal L(H^{-N-1},L^2)}
\le C_j(1+\|\eta\|_{H^{j+d_J}}),
\qquad D=-i\partial_\theta.
\end{equation*}
Here $\operatorname{ad}_A B=[A,B]=AB-BA$.
Indeed $1/2-J=-N-5/2<-N-1$. Two additional angular derivatives make
the Fourier convolution summable. All source supports lie in a fixed
scaled compact set. Multiplication by the smooth factors that return
from $V$ to $\Phi$ preserves these estimates.

Return to the weighted divergence equation and let $w$ be the residual
correction in the local exterior problem, with zero Dirichlet trace.
The energy estimate and the local zero-trace $H^2$ estimate give
\begin{equation*}
\|w\|_{H^2(\text{inner collar})}+\|\nabla w\|_{L^2}
\le C\|F_J f\|_{L^2}.
\end{equation*}
Here the compact support of $F_J f$ and the zero boundary trace give
the Poincar\'e estimate needed for the energy test. In particular, the conormal trace lies in $H^{1/2}$, hence in $L^2$.
For the commutators, use the same $H^2$ bound on a fixed annulus
containing the support of all coefficient variations.  Interior
regularity gives the estimate outside the boundary collar.
Write $\mathscr L$ for this flattened weighted operator and $w_j$
for the $j$th commutator of its residual solution operator. Then
\begin{equation*}
\mathscr Lw_j=\operatorname{ad}_D^jF_J
-\sum_{i=1}^j\binom ji(\operatorname{ad}_D^i\mathscr L)w_{j-i},
\qquad w_j|_{\zeta=0}=0.
\end{equation*}
Each $\operatorname{ad}_D^i\mathscr L$ differentiates only coefficients and
is bounded from local $H^2$ to $L^2$ by a coefficient norm with
$i+d_J$ derivatives. Expand this recursion and interpolate the coefficient factors between
their fixed lower norm and $H^{j+d_J}$.  Since the derivative indices
in each product sum to $j$, at most one coefficient is estimated at
high regularity.  Applying the same argument to the conormal coefficients
gives the bound for the boundary correction. The number $d_J$ remains independent of $j$.

\emph{Step 2. Angular and frequency norms of the exact symbol.}
By Proposition~\ref{prop:physical-local-symbols}, the half-density DN
terms of orders one and zero are $x$ and $\ell_0$.
Let $R_l$ be the exact local DN map minus
$\operatorname{Op}^{\rm w}(x+\ell_0)$. The finite boundary symbol
constructed in Step~1 has orders $-1,-2,\ldots$ after these two
subtractions. Its frequency derivatives have the corresponding
lowered orders. Its residual correction maps $H^{-N-1}$ to $L^2$,
and so do its multiplication commutators of orders at most $N$.
The preceding angular estimates and the identity
$[D,e^{i\theta}]=e^{i\theta}$ therefore imply, for $0\le\beta\le N$,
\begin{equation*}
\|\operatorname{ad}_D^j\operatorname{ad}_M^\beta R_l\|
_{\mathcal L(H^{-1-\beta},L^2)}
\le C_j\mathcal P_j(Y)
\bigl(a+\|\eta\|_{H^{j+d_J}}\bigr),
\qquad Mu=e^{i\theta}u.
\end{equation*}
To obtain the small factor on the right, perform the construction and its
zero-trace correction along $(ta,t\eta)$, $0\le t\le1$, and integrate
the coefficient derivative. At $(a,\eta)=(0,0)$ the exact local map
is $|D_\theta|$, so $R_l=0$ on the retained modes. The recursion for the first variation has the same orders and requires
the same fixed number of additional derivatives as above.

Let $r^{\rm L}(\theta,n)=e^{-in\theta}R_l(e^{in\theta})$ and
$\Delta_n r(n)=r(n+1)-r(n)$. Direct calculation gives
\begin{equation*}
\operatorname{ad}_M^\beta R_l(e^{in\theta})
=(-1)^\beta e^{i(n+\beta)\theta}\Delta_n^\beta r^{\rm L}(\theta,n).
\end{equation*}
Together with the angular commutators, this yields
\begin{equation*}
\sup_{n\in\mathbb Z}\langle n\rangle^{1+\beta}
\|\Delta_n^\beta r^{\rm L}(\cdot,n)\|_{H^j}
\le C_j\mathcal P_j(Y)
\bigl(a+\|\eta\|_{H^{j+d_J}}\bigr).
\end{equation*}
Use the fixed Schwartz interpolation kernel in the proof of
\cite[Theorem~5.2]{RuzhanskyTurunen} to extend $r^{\rm L}$ to real
frequencies.  For $0\le\beta\le N$, discrete summation by parts expresses
$\partial_\xi^\beta r^{\rm L}(\cdot,\xi)$ as
$\sum_n K_\beta(\xi-n)\Delta_n^\beta r^{\rm L}(\cdot,n)$,
with a fixed Schwartz function $K_\beta$.
The triangle inequality in $H^j$ and
$\langle\xi\rangle^{1+\beta}
\le C_\beta\langle\xi-n\rangle^{1+\beta}
\langle n\rangle^{1+\beta}$ preserve the preceding bounds. For the Weyl symbol use
$\widehat r^{\rm w}(\ell,\xi)
=\widehat r^{\rm L}(\ell,\xi-\ell/2)$.
The inequality
$\langle\xi\rangle^{1+\beta}
\le C_N\langle\xi-\ell/2\rangle^{1+\beta}
\langle\ell\rangle^{1+\beta}$
shows that $N+3$ additional angular derivatives suffice: the
remaining squared Fourier weights are bounded by
$C\langle\ell\rangle^{-4}$ and are summable. Thus a fixed number of additional derivatives larger than $d_J+N+4$ suffices,
including rounding up to the next integer when $r$ is noninteger. This proves
\eqref{eq:dn-high-tame-symbol} with a derivative loss independent of $r$.

\emph{Step 3. Dependence on the boundary graphs.}
Differentiate the finite factorization and its residual collar
problem along $t\boldsymbol\zeta$.  The residual obeys
\eqref{eq:first-u-variation}, with its differentiated source and boundary
data.  In each product the high derivative falls on either the
variation $\boldsymbol\zeta$ or the base coefficient at $t\boldsymbol\zeta$.
All other factors have bounded lower norms.  The uniform scaled
elliptic estimate therefore gives
\begin{equation*}
|D r_l(t\boldsymbol\zeta)[\boldsymbol\zeta]|_{r,-1,N}
\le C_r\mathcal P_r(Y)\|\boldsymbol\zeta\|_{H^{r+d_N}}
(1+\|\boldsymbol\zeta\|_{H^{r_0+d_N}}).
\end{equation*}
Integration over $0\le t\le1$ proves
\eqref{eq:dn-high-tame-reference-difference}, with a fixed derivative loss and constants uniform in $B_q$.
The same argument applies to the retained local symbols.
Differentiating the elliptic reconstruction gives the same estimates
for smooth functions and products of the normalized velocity
coefficients, with a fixed number of additional spatial derivatives.

\emph{Step 4. The global kernel and finite dimensional corrections.}
Work first with the flux maps before multiplication by half-densities.
For an inner trace $f$, let $T_{\boldsymbol\zeta}f$ be the difference
of the global and local potentials in a common scaled inner collar,
with its constant inner trace subtracted. The two equations and their
coefficients agree there. Thus $T_{\boldsymbol\zeta}f$ solves a
homogeneous elliptic equation with zero inner Dirichlet trace.
Its conormal trace is the difference of the two flux maps.

We first estimate the operator on inputs of negative Sobolev regularity,
using only fixed lower geometric norms.  Fix $m=N_1+1$ and test
$\nabla T_{\boldsymbol\zeta}f$ against a vector field $Z\in L^2$
supported in the common collar. Denote by $v_g$ and $v_l$ the solutions
of the global and local adjoint energy problems with divergence source
$Z$, zero outer trace where applicable, and free constant inner trace.
Their gradients have norm at most $C\|Z\|_2$.  In the common collar,
their difference solves the homogeneous equation and has constant inner
trace.  After subtracting that constant, boundary regularity of fixed
order bounds its conormal trace in $H^m$ by $C_m\|Z\|_2$. Green's identity expresses the original pairing as
the pairing of $f$ with this conormal difference. The direct boundary
contributions of $Z$ cancel. Hence, on a slightly smaller fixed collar,
\begin{equation*}
\|T_{\boldsymbol\zeta}f\|_{H^1}
\le C_m\mathcal P(Y)\|f\|_{H^{-m}}.
\end{equation*}
The potential estimate follows from its zero trace and the collar
Poincar\'e inequality. The constant depends on a fixed graph norm,
say $H^{b_m}$, with $b_m$ independent of the later output index.

The same bound holds for the first graph variation, with an additional
factor $\|H\|_{H^{b_m}}$. Indeed, differentiating the two adjoint
problems gives divergence sources consisting of a first coefficient
variation times the original adjoint gradient. Their energy bounds
use only $\|H\|_{H^{b_m}}$. In the common collar the difference of
these differentiated equations has source
$-\operatorname{div}(A_H\nabla(v_g-v_l))$, where $v_g-v_l$ is the
smooth homogeneous adjoint difference just estimated. Boundary regularity of fixed order again gives the required $H^m$ conormal bound.
The estimate for $DT[H]$ on inputs of negative Sobolev regularity
uses only these fixed lower geometric and variation norms.

Let $t_{\boldsymbol\zeta}(\theta,\zeta',\varphi)$ denote the potential
kernel in this collar. Here $\zeta'$ is its normal coordinate. Applying
the preceding bound to the Fourier inputs $e^{in\varphi}$ and summing
shows
\begin{equation*}
\|\partial_\varphi^{\le N_1}t_{\boldsymbol\zeta}\|
_{H^1_{\theta,\zeta'}L^2_\varphi}
\le C\mathcal P(Y),\qquad
\|\partial_\varphi^{\le N_1}Dt_{\boldsymbol\zeta}[H]\|
_{H^1_{\theta,\zeta'}L^2_\varphi}
\le C\mathcal P(Y)\|H\|_{H^{b_m}}.
\end{equation*}
Here $\partial_\varphi^{\le N_1}$ denotes the collection of derivatives
of order at most $N_1$.  The choice $m=N_1+1$ gives
$\sum_n\langle n\rangle^{2N_1-2m}<\infty$.

Now apply tame boundary regularity in the output variables, treating
$L^2_\varphi$ as the coefficient Hilbert space. Write $\mathscr L=\operatorname{div}(A\nabla)$.  On nested fixed
collars the kernel and its variation satisfy
\begin{equation*}
\mathscr L t_{\boldsymbol\zeta}=0,\qquad
\mathscr L(Dt_{\boldsymbol\zeta}[H])
=-\operatorname{div}(A_H\nabla t_{\boldsymbol\zeta}),
\qquad t_{\boldsymbol\zeta}=Dt_{\boldsymbol\zeta}[H]=0
\quad\hbox{on the inner boundary}.
\end{equation*}
Write $k(\boldsymbol\zeta)(\theta,\varphi)$ for the corresponding
conormal boundary kernel.  The lower kernel bounds established above
provide the initial estimate for this regularity argument.
Commuting angular derivatives with $\mathscr L$ and recovering normal
derivatives gives a tame estimate in which the highest derivative
falls either on the coefficient or on the kernel. For the variation,
the source estimate has the two contributions
$\|A_H\|_{\rm high}\|\nabla t\|_{\rm low}$ and
$\|A_H\|_{\rm low}\|\nabla t\|_{\rm high}$.
Consequently, for a fixed derivative loss $d_m$ independent of $r$,
\begin{align*}
\|\partial_\varphi^{\le N_1}k(\boldsymbol\zeta)\|
_{H^r_\theta L^2_\varphi}
&\le C_r\mathcal P_r(Y)
(1+\|\boldsymbol\zeta\|_{H^{r+d_m}}),\\
\|\partial_\varphi^{\le N_1}Dk(\boldsymbol\zeta)[H]\|
_{H^r_\theta L^2_\varphi}
&\le C_r\mathcal P_r(Y)\left(
\|H\|_{H^{r+d_m}}
+(1+\|\boldsymbol\zeta\|_{H^{r+d_m}})
\|H\|_{H^{b_m}}\right).
\end{align*}
Here $d_m$ includes the fixed input regularity $b_m$, coefficient
derivatives in the conormal trace, and the finite loss in the tame
elliptic estimates. The number of input derivatives is independent of $r$.
Half-density factors and finite projections obey the same product
bounds. Constant lifts satisfy the Schur differentiation formulas.
The inverse capacitance contributes only polynomial factors in $Y$.

At the reference graphs,
Proposition~\ref{prop:charge-free-localization} bounds the kernel
in every fixed required norm by $C_r\mathcal P_r(Y)a^{1/4}$.
Integrate the variation estimate along $t\boldsymbol\zeta$,
$0\le t\le1$, with $H=\boldsymbol\zeta$.
After increasing the fixed lower index to include $b_m$, the mixed term
$\|\boldsymbol\zeta\|_{H^{r+d_m}}
\|\boldsymbol\zeta\|_{H^{b_m}}$
is absorbed by $\|\boldsymbol\zeta\|_{H^{r+d_m}}$.
The reference difference therefore has norm at most
$C_r\mathcal P_r(Y)\|\boldsymbol\zeta\|_{H^{r+d_m}}$
in $H^r_\theta H^{N_1}_\varphi$.
Pairing this estimate and the reference kernel bound with
$H^{-N_1}$ inputs proves
\eqref{eq:global-kernel-tame-operator}. The corresponding first
variation also gives the fixed lower Lipschitz estimates. At the
circular cylindrical reference geometry the high block is $|D_\theta|$, and the finite
lifts preserve orthogonality to the constant mode. This proves the
assertion about their reference coefficients.
\end{proof}

\subsection{Paralinearization and changes of variables}
We use a dyadic paralinearization with fixed derivative loss to
transfer the variational cancellations to the nonlinear state.
The uniform estimates for the shrinking weighted domain with two
boundary components follow from
Proposition~\ref{prop:charge-free-localization} and
Lemma~\ref{lem:finite-tame-dn-symbol}.

\begin{lemma}
\label{lem:finite-loss-dyadic}
Let $\mathcal F$ be a smooth map acting on a finite vector of functions on $\T$.
Assume that its first two differentials satisfy tame estimates with a fixed
loss $d$.  In particular, for $r\ge r_0$ with $r_0$ fixed,
\begin{align}
\|D^2\mathcal F(U)[V,W]\|_{H^{r-d}}
\le C_r\mathcal P_r(Y)\bigl(&
\|V\|_{H^r}\|W\|_{H^{r_0}}
+\|W\|_{H^r}\|V\|_{H^{r_0}}\notag\\[-2pt]
&+(1+\|U\|_{H^r})
\|V\|_{H^{r_0}}\|W\|_{H^{r_0}}\bigr).
\label{eq:coarse-second-tame}
\end{align}
Suppose the differential is the sum of a local pseudodifferential
operator with components of orders $2,1,0,-1$ and a smooth kernel operator.
Assume the local symbols have tame bounds with at most one factor at high
regularity and Lipschitz bounds in fixed lower norms as in
\eqref{eq:dn-high-tame-symbol}--
\eqref{eq:dn-low-lipschitz-symbol}, with fixed finite losses.
For each retained local symbol component $b_m(U)$ of order
$m\in\{2,1,0\}$ and the local remainder $b_{-1}(U)$, also assume
the following estimate for the difference from the reference symbol:
\begin{equation}\label{eq:dyadic-high-symbol-difference}
|b_m(U)-b_m(0)|_{r,m,N}
\le C_r\mathcal P_r(Y)\|U\|_{H^{r+d_{\mathrm{sym}}}},
\qquad r\ge r_0,\quad m\in\{2,1,0,-1\},
\end{equation}
on the admissible domain in the lower Sobolev norm, with a fixed finite
$d_{\mathrm{sym}}$, as supplied for the DN symbols by
\eqref{eq:dn-high-tame-reference-difference}.
Assume also that, for some fixed input index $N_1$, the
smooth kernel part $\mathsf K(U)$ satisfies
\begin{equation*}
\|[\mathsf K(U)-\mathsf K(0)]\varphi\|_{H^r}
\le C_r\mathcal P_r(Y)\|U\|_{H^{r+d_{N_1}}}
\|\varphi\|_{H^{-N_1}},
\end{equation*}
with $d_{N_1}$ fixed, and assume the reference kernel is bounded in
each required fixed smooth norm.  For the DN map, these bounds are
\eqref{eq:global-kernel-tame-operator}.
Then there is a fixed finite $s_{\mathrm{dy}}$, independent of $s$, such that, for
$\|U\|_{H^{s_{\mathrm{dy}}}}\le1$ and $s\ge s_{\mathrm{dy}}$, spatial paralinearization gives
\begin{equation}\label{eq:finite-loss-nonlinear-reduction}
\mathcal F(U)=\mathcal F(0)+D\mathcal F(0)U
+\mathcal W\bigl(\mathfrak a(U)-\mathfrak a(0)\bigr)U
+\mathcal R_{-1}(U)U+\mathcal R_{\rm hh}(U).
\end{equation}
Here $\mathfrak a$ is the finite symbol of orders at least zero.
The fixed cutoff operator $\mathcal R_{-1}(U)$ has order $-1$,
vanishes at $U=0$, and satisfies the corresponding lower seminorm bounds.
The remaining term satisfies
\begin{equation}\label{eq:finite-loss-hh-bound}
\|\mathcal R_{\rm hh}(U)\|_{H^{s+2}}
\le C_s\mathcal P_s(Y)\|U\|_{H^{s_{\mathrm{dy}}}}\|U\|_{H^s}.
\end{equation}
We retain the exact reference linear operator $D\mathcal F(0)$
and estimate its linear remainder separately.
The admissible domain may be a convex neighbourhood of zero depending
on $Y$. In that case require that it contain every frozen state and
Taylor segment used in the proof, and that the stated tame bounds
hold there with the same polynomial weights. No positive lower bound
on the radius of that neighbourhood is required.
\end{lemma}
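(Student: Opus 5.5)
The plan is a second-order Taylor expansion with integral remainder, followed by a split of the frozen differential into its local paradifferential part and its smooth kernel part.

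\emph{Taylor expansion.} Since the admissible domain is convex and contains the segment $\{tU:0\le t\le1\}$, write
\begin{equation*}
\mathcal F(U)=\mathcal F(0)+D\mathcal F(0)U
+\int_0^1\bigl(D\mathcal F(tU)-D\mathcal F(0)\bigr)U\,dt .
\end{equation*}
For each $t$, decompose the frozen differential as
\begin{equation*}
D\mathcal F(tU)=\operatorname{Op}^{\rm w}\Bigl(\sum_{m}b_m(tU)\Bigr)+\mathsf K(tU).
\end{equation*}
The kernel difference $[\mathsf K(tU)-\mathsf K(0)]U$ is then estimated directly by the hypothesis with input index $-N_1$. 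This gives the bound $C_s\mathcal P_s\|U\|_{H^{s+2+d_{N_1}}}\|U\|_{H^{-N_1}}$. That bound is not yet of the form in \eqref{eq:finite-loss-hh-bound}, because the derivative loss $d_{N_1}$ falls on the high norm. To fix this, split $U$ dyadically into low and high frequencies relative to the output frequency, and handle the two pieces separately.
\begin{itemize}
\item[(a)] When the high frequency of $U$ enters only through the coefficient (the kernel argument), use the kernel bound with $\|U\|_{H^{-N_1}}$ on the input.
\item[(b)] When it enters through the input, use the reference smoothness of $\mathsf K$ together with the fixed lower Lipschitz bound, which costs only $\|U\|_{H^{s_{\rm dy}}}$ on the coefficient.
\end{itemize}
Both contributions then go into $\mathcal R_{\rm hh}$.

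\emph{Local symbolic part.} Apply Bony's decomposition in the $\theta$ variable to the map
\begin{equation*}
(V,W)\mapsto\operatorname{Op}^{\rm w}\bigl(b_m(V)-b_m(0)\bigr)W,\qquad V=tU,\ W=U .
\end{equation*}
The three pieces are treated as follows.
\begin{itemize}
\item[(i)] The low–high piece, with coefficient frequencies below $\epsilon_1\langle\xi\rangle$, is exactly the cutoff quantization $\mathcal W$ of Lemma~\ref{lem:finite-bony-weyl}. After integrating in $t$, it gives $\mathcal W(\mathfrak a(U)-\mathfrak a(0))U$ for the components of order $2,1,0$, where $\mathfrak a(U)=\int_0^1\sum_{m\ge0}b_m(tU)\,dt$ up to the usual linear-in-$U$ averaging. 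The order $-1$ component gives $\mathcal R_{-1}(U)$. This operator vanishes at $U=0$ and inherits the lower seminorm bounds from \eqref{eq:dn-low-lipschitz-symbol}.
\item[(ii)] The high–low piece has coefficient frequency exceeding that of the input. Estimate it with \eqref{eq:dyadic-high-symbol-difference}, placing $\|U\|_{H^{s+2+d_{\rm sym}}}$ on the coefficient and a fixed low norm on the input. Since the input is at lower frequency, the loss can be converted into input derivatives: for frequency comparable to $2^k$, $\|U\|_{H^{s+d}}$ on the coefficient block is traded against the low-frequency input. Summing the dyadic blocks gives \eqref{eq:finite-loss-hh-bound} once $s_{\rm dy}$ absorbs $d_{\rm sym}+2+$ the number of symbol derivatives $N$.
\item[(iii)] The high–high piece produces output only at frequencies below the comparable pair. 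Bounding the coefficient in $H^{s+2-m}$ and the input in $H^{s_{\rm dy}}$ (or vice versa), and summing in $\ell^2$, gives the $H^{s+2}$ estimate.
\end{itemize}

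\emph{Main obstacle.} I expect the main difficulty to be showing that the fixed losses $d$, $d_{\rm sym}$, $d_{N_1}$ can all be placed on the lower-regularity factor. The task is to confirm that $s_{\rm dy}$ does not depend on $s$. The difference between the cutoff $\chi$ in $\mathcal W$ and the Bony cutoff is supported where the coefficient and input frequencies are comparable, so it belongs to the high–high regime. Terms from the second differential, arising through the $t$-integration, are controlled by \eqref{eq:coarse-second-tame}; this bound has at most one factor at high regularity. Each time a loss appears, the dyadic localization is used to move it to the factor at lower frequency. Polynomial weights in $Y$ pass through unchanged, and the convexity hypothesis ensures that every frozen state $tU$ lies in the admissible domain.
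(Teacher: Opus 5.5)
The gap is in your step (ii), and your kernel case (a) fails for the same reason.

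\emph{Why the high--low piece is not a remainder.} After Taylor expansion about $0$, the high--low part of $\operatorname{Op}^{\rm w}(b_m(tU)-b_m(0))U$ carries the high frequencies of $U$ in the coefficient and only low frequencies in the input. An $H^{s+2}$ bound for it needs the coefficient in $H^{s+2}$, that is, $\|U\|_{H^{s+2+d_{\rm sym}}}$ by \eqref{eq:dyadic-high-symbol-difference}. The low-frequency input supplies no decay in the dyadic index to pay for this; a loss sitting on the higher-frequency factor cannot be traded against the lower-frequency one. In (a) the kernel hypothesis likewise puts $\|U\|_{H^{s+2+d_{N_1}}}$ on the high-frequency coefficient, and $\|U_{\rm low}\|_{H^{-N_1}}$ gives nothing back.

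In fact this piece is not a remainder at all. Take scalar $\mathcal F(U)=U^2$ on $\T$, so $D\mathcal F(V)W=2VW$, $b_0(V)=2V$, $d=d_{\rm sym}=0$, and there is no kernel part. Your low--high piece is $\mathcal W(U)U$. Your high--low piece is its mirror image, which equals $\mathcal W(U)U$ modulo smoothing terms. The correct reduction is $U^2=\mathcal W(2U)U+\mathcal R_{\rm hh}(U)$ with $\mathfrak a(U)=2U$, the symbol of $D\mathcal F(U)$. Testing with $U=\epsilon(\cos\theta+N^{-s_{\rm dy}}\cos N\theta)$ shows that $\mathcal W(U)U$ violates \eqref{eq:finite-loss-hh-bound} by a factor of order $N^2$.

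So your coefficient $\int_0^1 b(tU)\,dt$ is wrong by exactly the high--low term. The phrase ``up to the usual linear-in-$U$ averaging'' hides half of the linear paradifferential term. The same objection applies to invoking \eqref{eq:coarse-second-tame} for $\int_0^1(1-t)D^2\mathcal F(tU)[U,U]\,dt$: it yields $\|U\|_{H^{s+2+d}}\|U\|_{H^{r_0}}$, a loss on the full high norm. In (iii), too, only the option with the high norm on the input survives.

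\emph{What is missing, and how the paper does it.} To repair your route you would have to do three things, none of which is in the proposal:
\begin{itemize}
\item paralinearize the symbol map $U\mapsto b(U)$ itself;
\item use the symmetry of $D^2\mathcal F$ to move the high-frequency increment from the coefficient slot to the input slot;
\item integrate by parts in $t$ to turn $\int_0^1 b(tU)\,dt$ into $b(U)$.
\end{itemize}
The paper avoids all of this by telescoping. With dyadic cutoffs $S_j$ and $U_j=S_{j+1}U-S_jU$, it writes $\mathcal F(U)-\mathcal F(0)=\sum_j[\mathcal F(S_{j+1}U)-\mathcal F(S_jU)]$ and expands each term to second order about $S_jU$.

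In every linear term $D\mathcal F(S_{j-K}U)U_j$, the high-frequency block is the input and the coefficient is localized below it. By Bernstein, the fixed losses $d_{\rm sym}$ and $d_{N_1}$ then cost only factors $2^{Cj}\|U\|_{H^s}$. These are paid for by $\|U_j\|_{H^{r_0}}$ or by $\|U_j\|_{H^{-N_1}}\le 2^{-(s_{\rm dy}+N_1)j}\|U\|_{H^{s_{\rm dy}}}$, once $s_{\rm dy}$ exceeds fixed thresholds.

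High-frequency coefficients occur only in the quadratic terms $D^2\mathcal F(S_jU+tU_j)[U_j,U_j]$, where both slots carry the same block. There \eqref{eq:coarse-second-tame} at $r=s+2+d$ puts $2^{(2+d)j}$ on one copy and the decay $2^{-(s_{\rm dy}-r_0)j}$ on the other. Replacing the frozen coefficients at $S_{j-K}U$ by their values at $U$ then produces $\mathfrak a(U)$ as the symbol of $D\mathcal F(U)$ itself.

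Your kernel case (b) is essentially this mechanism. The interactions in (a) and (ii), by contrast, have to be eliminated by the expansion, not estimated as remainders.
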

\begin{proof}
We divide the proof into four steps.

\emph{Step 1. Dyadic expansion and the quadratic remainder.}
Set $S_0=0$ and, for $j\ge1$, choose smooth inhomogeneous Fourier
cutoffs $S_j$ with fixed finite spectral overlap and $S_j\to I$.
Put $U_j=S_{j+1}U-S_jU$, and interpret $S_j=0$ when $j<0$.
The first block $U_0$ contains all bounded frequencies.  Taylor's formula gives
\begin{equation*}
\mathcal F(U)=\mathcal F(0)+
\sum_{j\ge0}D\mathcal F(S_jU)U_j+
\sum_{j\ge0}\int_0^1(1-t)
D^2\mathcal F(S_jU+tU_j)[U_j,U_j]\,dt,
\end{equation*}
with the bounded frequency term included in $j=0$.  Replace $S_j$ in
the linear term by $S_{j-K}$, where $K$ is a fixed integer large enough
for the cutoff separating coefficient and input frequencies.
The resulting error has the form of the quadratic Taylor remainder,
with one increment replaced by a fixed sum of neighboring blocks.

Apply \eqref{eq:coarse-second-tame} at $r=s+2+d$.
For each block of the remainder, use
\begin{equation*}
\|U_j\|_{H^{r_0}}\le C2^{-(s_{\mathrm{dy}}-r_0)j}\|U\|_{H^{s_{\mathrm{dy}}}},\quad
\|U_j\|_{H^{s+2+d}}\le C2^{(2+d)j}\|U_j\|_{H^s},
\end{equation*}
\begin{equation*}
\|S_{j+1}U\|_{H^{s+2+d}}
\le C2^{(2+d)j}\|U\|_{H^s}.
\end{equation*}
To sum these terms, set
\begin{equation*}
L=\|U\|_{H^{s_{\mathrm{dy}}}},\qquad H=\|U\|_{H^s},\qquad
b_j=\|U_j\|_{H^s},\qquad \delta=s_{\mathrm{dy}}-r_0.
\end{equation*}
The two mixed increment terms, the constant part of the last term in
\eqref{eq:coarse-second-tame}, and the part containing the frozen state
are bounded, respectively, by
\begin{equation*}
C_s\mathcal P_s(Y)\left(
2^{(d+2-\delta)j}Lb_j
+2^{-2\delta j}L^2
+2^{(d+2-2\delta)j}L^2H\right).
\end{equation*}
Since $\sum_j b_j^2\le C_sH^2$, $L\le H$, and $L\le1$,
the condition $\delta>d+4$ makes their sum at most
$C_s\mathcal P_s(Y)LH$.  The same estimates apply to the fixed
number of neighboring increments introduced by replacing $S_j$ with
$S_{j-K}$.  Thus the series converges absolutely in $H^{s+2}$ and
has the bound \eqref{eq:finite-loss-hh-bound}.  Approximation extends
the conclusion to functions with the stated Sobolev regularity.

\emph{Step 2. Truncation of the coefficient frequencies.}
Insert the full symbol of the differential into the remaining linear
sum with frozen coefficients.  Truncate the coefficient frequencies at
$S_{j-K}U$ before replacing the frozen coefficient by its value at $U$.
In this order, the low symbol seminorms control the output frequencies.

The terms discarded by the cutoff in coefficient frequencies satisfy the
same quadratic bound.  Their matrix entries have coefficient frequency
at least a fixed multiple of $2^j$.  Cauchy--Schwarz in Fourier variables
bounds their $H^{s+2}$ output by a constant times the product of a
coefficient $H^{s+d_2}$ seminorm and a fixed lower norm of $U_j$,
where $d_2$ is fixed.
By \eqref{eq:dyadic-high-symbol-difference} and Bernstein's inequality,
\begin{equation*}
\|\mathfrak a(S_{j-K}U)-\mathfrak a(0)\|_{H^{s+d_2}\text{ symbol}}
\le C_s\mathcal P_s(Y)2^{d_3j}\|U\|_{H^s}
\end{equation*}
for fixed $d_3$.  Use
$\|U_j\|_{H^{r_0}}\le C2^{-(s_{\mathrm{dy}}-r_0)j}\|U\|_{H^{s_{\mathrm{dy}}}}$
and choose $s_{\mathrm{dy}}>r_0+d_3+2$.  This proves absolute summability.
The same argument applies to the local order $-1$ symbol,
using its fixed finite frequency seminorms.

\emph{Step 3. Replacing the frozen coefficients.}
After this truncation, the input and output frequencies of each block
are comparable.  The difference between a retained coefficient at
$S_{j-K}U$ and at $U$, in each fixed boundedness seminorm, is at most
\begin{equation*}
C\mathcal P(Y)2^{-(s_{\mathrm{dy}}-r_0-d_1)j}\|U\|_{H^{s_{\mathrm{dy}}}}
\end{equation*}
for a fixed coefficient loss $d_1$.  Since the frequencies are
comparable, the operator has order at most two, and a gain of two
derivatives is required, its $H^{s+2}$ output is bounded by this expression times
$C_s2^{4j}\|U_j\|_{H^s}$.  Choose
$s_{\mathrm{dy}}>r_0+d_1+6$ and sum.  This summation justifies replacing the coefficients within the
fixed cutoff on coefficient frequencies.

\emph{Step 4. Smooth kernels and the choice of the lower index.}
For a smooth kernel term, apply the operator estimate directly to
its difference from the reference kernel on a frozen block:
\begin{equation*}
\|[\mathsf K(S_{j-K}U)-\mathsf K(0)]U_j\|_{H^{s+2}}
\le C_s\mathcal P_s(Y)2^{(2+d_{N_1})j}\|U\|_{H^s}
2^{-(s_{\mathrm{dy}}+N_1)j}\|U\|_{H^{s_{\mathrm{dy}}}}.
\end{equation*}
Choose $s_{\mathrm{dy}}+N_1>2+d_{N_1}$.  The resulting series is summable and satisfies the stated
quadratic bound.  The reference sum is exactly $\mathsf K(0)U$ and remains in
$D\mathcal F(0)U$.  This argument uses only the kernel bound for $\mathsf K$.
The untruncated negative-order reference terms also remain in
$D\mathcal F(0)U$.  These estimates prove
\eqref{eq:finite-loss-nonlinear-reduction}.

Choose $s_{\mathrm{dy}}$ larger than the regularity index required to bound the coefficients and
\begin{equation*}
\max\{r_0+d+4,\ r_0+d_3+2,\ r_0+d_1+6,\ 2+d_{N_1}-N_1\}.
\end{equation*}
All thresholds are fixed before $s$ is chosen.  Bernstein's inequality
controls the higher norms of the frozen states.
Every use of the differential estimates is on the listed frozen
states or the line segments joining them. Thus the same proof applies
to the admissible domains specified in the statement.
\end{proof}

\begin{lemma} 
\label{lem:common-conjugacy} 
Use the fixed cutoff calculus of Lemma~\ref{lem:finite-bony-weyl}.
Let $\mathsf S(Y,\theta)$ be a uniformly invertible triangular matrix
of order zero, with the fixed lower coefficient bounds and one
$Y$ derivative along the trajectory.  Suppose the complete original and transformed differential
generators satisfy, before finite projections,
\begin{equation}\label{eq:common-exact-conjugacy}
M_{\mathsf S}\mathscr A+M_{\mathsf S_Y}=\mathscr N M_{\mathsf S},
\end{equation}
with the contributions from scalar modulation, the other boundary
and smooth kernels treated separately.  Here $M_{\mathsf S}$ denotes multiplication and
$\mathsf S_Y$ is evaluated along the actual trajectory.  Assume the
tame hypotheses of Lemma~\ref{lem:finite-loss-dyadic}.  If $A,N$ are the local matrix
symbols through order zero, of maximal order two, then
\begin{equation}\label{eq:common-BW-conjugacy}
\mathcal W(\mathsf S)\mathcal W(A)+\mathcal W(\mathsf S_Y)
=\mathcal W(N)\mathcal W(\mathsf S)+\mathcal R_{-1}.
\end{equation}
Here $\mathcal R_{-1}$ has order at most $-1$, with the bounds for the fixed cutoff.
Finite projections add smoothing operators of finite rank.
An explicit scalar factor in an order-two entry is retained in each
corresponding composition remainder, including $B_q$ in the inner
capillary block.  For the nonlinear state, the same conjugacy holds after adding the
reference and high--high terms in
\eqref{eq:finite-loss-nonlinear-reduction}, with the bound
\eqref{eq:finite-loss-hh-bound}.

Let $P$ be a fixed componentwise Fourier projection of finite
codimension, including $\operatorname{diag}(I,P_+)$ for the outer
variables with $P_+=I-P_0$, and put
$\mathcal S=P\mathcal W(\mathsf S)P$ on the retained modes.
If $\|V\|_2\le C\|\mathcal S V\|_2$, then, for every real $s\ge0$,
\begin{equation}\label{eq:common-elliptic-reconstruction}
\|V\|_{H^s}\le C_s\|\mathcal S V\|_{H^s}.
\end{equation}
The lower smallness threshold is independent of $s$.
The reconstruction uses the same Sobolev index for all components.  Additional
weighted half-derivative bounds are treated in the corresponding energy.
These statements apply for $\sigma\ge0$, with the source norms chosen
for the capillary or Taylor energy.
\end{lemma}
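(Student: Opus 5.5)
The plan has two parts: the conjugacy \eqref{eq:common-BW-conjugacy} comes from quantizing the exact identity \eqref{eq:common-exact-conjugacy} symbol by symbol, and the reconstruction \eqref{eq:common-elliptic-reconstruction} comes from an elliptic parametrix for $\mathcal W(\mathsf S)$.

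\emph{Conjugacy.} Read \eqref{eq:common-exact-conjugacy} at the level of full symbols. Multiplication by $\mathsf S$ is the operator with a $\theta$-dependent symbol of order zero, and $\mathscr A,\mathscr N$ have local symbols $A,N$ of order at most two, plus remainders of order $\le-1$ and smooth kernels, which are treated separately. Composing symbols exactly gives
\begin{equation*}
\mathsf S A+\mathsf S_Y=N\mathsf S,\qquad
\mathsf S\#A=\mathsf S\,A,
\end{equation*}
because a multiplication operator placed on the left composes with no frequency derivatives falling on it. On the right, the symbol of $\mathscr N M_{\mathsf S}$ is $N\#\mathsf S$. Its Moyal expansion has nonzero corrections, and these are exactly the lower-order terms contained in $\mathscr N$.

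I would apply Lemma~\ref{lem:finite-bony-weyl} to both products $\mathcal W(\mathsf S)\mathcal W(A)$ and $\mathcal W(N)\mathcal W(\mathsf S)$, using the matrix version with the factors kept in the displayed order. This gives
\begin{equation*}
\mathcal W(\mathsf S)\mathcal W(A)-\mathcal W(N)\mathcal W(\mathsf S)
=\mathcal W\bigl(\mathsf SA-N\mathsf S
+\tfrac1{2i}(\{\mathsf S,A\}-\{N,\mathsf S\})
+\mathfrak q_2(\mathsf S,A)-\mathfrak q_2(N,\mathsf S)\bigr)+\mathcal R_3 .
\end{equation*}
The Weyl symbol of a left multiplication differs from the plain product by precisely these Moyal terms. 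Comparing with the exact conjugacy at orders $2,1,0$ therefore shows that the Moyal terms combine into $-\mathsf S_Y$, up to symbols of order $\le-1$.

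Because $A$ and $N$ have order two, $\mathcal R_3$ has order $\le-1$. Any explicit scalar $B_q$ in an order-two entry passes linearly through every composition formula, so the corresponding remainders keep the factor $B_q$. The fixed-cutoff difference between $\mathcal W$ of a product and the product of $\mathcal W$'s also gains three orders, by the same lemma.

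For the nonlinear state I would insert the paralinearization \eqref{eq:finite-loss-nonlinear-reduction}. The reference term $D\mathcal F(0)$ and the term $\mathcal R_{\rm hh}$ are carried along unchanged. Multiplying by the order-zero operator $\mathcal W(\mathsf S)$ preserves the bound \eqref{eq:finite-loss-hh-bound}. Finite projections change the operators only by finite-rank smoothing maps.

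\emph{Reconstruction.} Assume $\|V\|_2\le C\|\mathcal SV\|_2$. Since $\mathsf S$ is uniformly invertible, $\mathcal W(\mathsf S^{-1})$ is a parametrix:
\begin{equation*}
\mathcal W(\mathsf S^{-1})\mathcal W(\mathsf S)=I+\mathcal W(r_{-1})+\mathcal R_3,
\end{equation*}
where $r_{-1}=\tfrac1{2i}\{\mathsf S^{-1},\mathsf S\}+\mathfrak q_2$ has order $\le-1$. Inserting $P$ costs only a finite-rank smoothing error. Then
\begin{equation*}
\|V\|_{H^s}\le C_s\|\mathcal SV\|_{H^s}+C_s\|V\|_{H^{s-1}},
\end{equation*}
and I would induct on $s$ in integer steps, interpolating for noninteger $s$, until reaching $\|V\|_2\le C\|\mathcal SV\|_2$. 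Every component uses the same Sobolev index because the symbol has order zero.

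The smallness threshold does not depend on $s$: the calculus needs only eight angular derivatives of the coefficients, and Lemma~\ref{lem:finite-bony-weyl} allows the constants $C_s$ to grow with $s$. The weighted half-derivative parts are not needed here, since the statement treats them in the energy.

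\emph{Main obstacle.} The hardest step is justifying that the Moyal corrections from the right composition match $\mathsf S_Y$ and the lower-order parts of $N$ exactly. The exact conjugacy holds for full operators, including their order-$0$ and $-1$ symbols, so I would equate full symbols modulo order $-1$ rather than only principal parts. I would first check this in the scalar triangular case, where $\mathsf S$ is lower triangular with an order-zero entry; there the only order-one Moyal term is $\{N,\mathsf S\}$ and can be verified by hand.
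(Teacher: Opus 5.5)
Your route is the paper's. You expand both sides of the exact identity \eqref{eq:common-exact-conjugacy} with the fixed-cutoff Moyal formula of Lemma~\ref{lem:finite-bony-weyl}, match every term of nonnegative order, and put the rest into $\mathcal R_{-1}$. You then use $P\mathcal W(\mathsf S^{-1})P$ as a left parametrix and induct in unit steps from the assumed $L^2$ bound.

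One step is misstated. In the Weyl-type quantization $\mathcal W$, left multiplication is not exact. For $\xi$-independent $\mathsf S$ one has $\mathsf S\#A=\mathsf SA-\tfrac1{2i}\mathsf S_\theta A_\xi-\tfrac18\mathsf S_{\theta\theta}A_{\xi\xi}$ modulo order $-1$; already $f\#\xi=f\xi+\tfrac i2f'$. So the symbol form of the exact identity is $\mathsf S\#A+\mathsf S_Y=N\#\mathsf S$ modulo order $-1$, not $\mathsf SA+\mathsf S_Y=N\mathsf S$. You assert both that $\mathsf SA+\mathsf S_Y=N\mathsf S$ and that the Moyal terms combine into $-\mathsf S_Y$, and together these would count $\mathsf S_Y$ twice. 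What is true, and suffices, is that the whole bracket in your display, products and Moyal corrections together, equals $-\mathsf S_Y$ modulo order $-1$. That is what your last paragraph proposes, so the fix is to replace the earlier claim with it.

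Two smaller points:
\begin{itemize}
\item For noninteger $s$ no interpolation is needed. For $0<s\le1$ the error term $\|V\|_{H^{s-1}}$ is already bounded by $\|V\|_2\le C\|\mathcal SV\|_2$, so the unit-step induction can start at any such $s$, as in the paper.
\item For the nonlinear statement, write down the exact identity $(\mathcal SV)_Y=\mathcal SV_Y+P\mathcal W(\mathsf S_Y)PV$, which holds because the cutoff does not depend on $Y$. This is where $\mathcal W(\mathsf S_Y)$ enters before \eqref{eq:common-BW-conjugacy} is substituted.
\end{itemize}
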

\begin{proof}
We divide the proof into two steps.

\emph{Step 1. Conjugacy of the linear and nonlinear systems.}
For a zero-order factor composed with an operator of order at most two,
retain the product, Poisson and second Moyal terms in
\eqref{eq:bw-moyal}.  The remainder has order at most $-1$.
Apply this expansion to both sides of
\eqref{eq:common-exact-conjugacy}.  Equality of the complete generators
identifies every nonnegative-order term, including $\mathsf S_Y$,
and proves \eqref{eq:common-BW-conjugacy}.  The full DN cancellation
is performed before this truncation.  Bilinearity of composition preserves any explicit scalar factor in
an order-two entry.  We retain the exact finite-rank operators arising
from finite projections and constant lifts.  Their kernels have the bounds established
by the spatial reconstruction.

Write $V_Y$ in the form supplied by
Lemma~\ref{lem:finite-loss-dyadic} and differentiate
$P\mathcal W(\mathsf S)PV$ exactly:
\begin{equation*}
(\mathcal S V)_Y=\mathcal S V_Y+
P\mathcal W(\mathsf S_Y)PV.
\end{equation*}
Substitution of \eqref{eq:common-BW-conjugacy} proves the nonlinear
statement.  The bounded zero-order factors preserve the dyadic
remainder estimate, and the exact reference operator remains separate.
The term $P\mathcal W(\mathsf S_Y)PV$ therefore retains all scalar
derivatives along the trajectory.

\emph{Step 2. Sobolev estimates for reconstruction.}
For the Sobolev estimate, the inverse symbol and the composition rule give
\begin{equation*}
\mathcal Q\mathcal S=I+\mathcal R_{-1},\qquad
\mathcal Q=P\mathcal W(\mathsf S^{-1})P,
\qquad
\|\mathcal R_{-1}V\|_{H^s}\le C_s\|V\|_{H^{s-1}}.
\end{equation*}
Errors from the finite projections satisfy the same bound.  Consequently
$\|V\|_{H^s}\le C_s(\|\mathcal S V\|_{H^s}+\|V\|_{H^{s-1}})$.
The assumed $L^2$ inverse estimate starts the induction in steps of one.
For $0<s\le1$, the last norm is bounded by $\|V\|_2$.
This proves \eqref{eq:common-elliptic-reconstruction} with the same
smallness threshold in the lower norm at every higher index.
\end{proof}

For a fixed lower index $b$, the physical bootstrap bounds used in the
remaining estimates of this section are
\begin{equation}\label{eq:physical-bootstrap-neighbourhood}
\begin{gathered}
\|\eta\|_{H^b}+q^{-1}\|f\|_{H^b}
+\|h_+\|_{H^{b+1}}+\|g\|_{H^{b+1/2}}+|R-R_*|
\le K(1+Y)^{M_b}a^2,\\
|\alpha|\le\delta_1q.
\end{gathered}
\end{equation}
Here $h_+$ is the outer normal graph in the fixed chart about
$\Gamma_+^*$, and $K,M_b$ are fixed.  The index $b$ is large enough to
supply the additional derivatives required in
\eqref{eq:first-mode-physical-bootstrap} and
Proposition~\ref{prop:raw-state-first-jet}.

\begin{lemma}
\label{lem:zero-charge-outer-derivative}
In the setting of Proposition~\ref{prop:charge-free-localization},
let the geometry depend on $Y$, with $a_Y=-a$.  Use material pullbacks
with the initial boundary coordinates held fixed, and let $K$ bound their
scaled first coefficient derivatives and the required fixed
macroscopic derivatives.  Let $u$ be the zero-flux solution with
inner trace $f$ modulo constants and zero outer trace.  Write $\mathcal N_+u$
for its outer conormal trace in the material boundary coordinate.  Then
\begin{align}
\|\mathcal N_+u\|_{H^k}
&\le C_k a\|P_{\ne0} f\|_{H^{1/2}},\label{eq:outer-dipole-size}\\
\|\partial_Y(\mathcal N_+u)\|_{H^k}
&\le C_k a\bigl(\|P_{\ne0} f_Y\|_{H^{1/2}}
+(1+K)\|P_{\ne0} f\|_{H^{1/2}}\bigr).
\label{eq:outer-dipole-derivative}
\end{align}
The constants require only the fixed finite geometric regularity
associated with $k$.  Under \eqref{eq:physical-bootstrap-neighbourhood}
and the hypotheses of Proposition~\ref{prop:raw-state-first-jet}, after subtracting the
flux-carrying Green function and the filled domain extension of the outer
trace, these estimates imply
\begin{equation}\label{eq:outer-velocity-first-jet}
\|B_+\|_{C^k}\le C_k q+\mathcal P(Y)a^2,
\qquad
\|(B_+)_Y\|_{C^k}
\le C_k(1+Y)^{-3/2}+\mathcal P(Y)a.
\end{equation}
\end{lemma}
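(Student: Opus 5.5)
The plan is to obtain \eqref{eq:outer-dipole-size}--\eqref{eq:outer-dipole-derivative} from the Green representation of Step~2 of Proposition~\ref{prop:charge-free-localization}, and then to assemble \eqref{eq:outer-velocity-first-jet} from the decomposition of the potential into its condenser part, its zero-flux remainder, and its outer-trace part.

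\emph{Outer conormal trace.} Fill the hole and write the zero-flux solution as
\[
u(x)=\int_\T\bigl[G_D(x,c+a\zeta(\varphi))-G_D(x,c)\bigr]\mu(\varphi)\,d\varphi ,
\]
with $\int\mu=0$ and $\|\mu\|_{H^{-1/2}}\le C\|P_{\ne0}f\|_{H^{1/2}}$, as in \eqref{eq:jump-bound-localization}. For $x$ in a fixed outer collar the kernel is smooth in $y$ near $c$, because the pole stays a fixed distance from $\mathscr C_+$. The bracket is therefore $a\int_0^1\nabla_yG_D(x,c+sa\zeta)\cdot\zeta\,ds$. Its $x$-derivatives up to order $k$ and its conormal trace are bounded by $C_ka$ in $H^{1/2}_\varphi$, uniformly in $x$. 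Pairing with $\mu$ gives \eqref{eq:outer-dipole-size}; the constants depend only on finitely many geometric derivatives.

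For the $Y$-derivative, differentiate this representation in the material coordinates:
\begin{itemize}
\item $\partial_Y$ falling on $\mu$ produces the jump density of the solution with trace $f_Y$, plus the density generated by the coefficient variation. The latter is controlled by the energy argument behind \eqref{eq:zero-charge-gradient-bound}, with a factor $K\|P_{\ne0}f\|_{H^{1/2}}$. The source is $-\operatorname{div}(A_Y\nabla u)$ with $|A_Y|\le K$ on the scaled and outer collars, and the scaling cancellation of Step~1 keeps this uniform in $a$.
\item $\partial_Y$ falling on the kernel differentiates $c$, $a$ (using $a_Y=-a$), $\zeta$ and the outer geometry entering $G_D$. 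Each term keeps the factor $a$ from the dipole subtraction.
\end{itemize}
Together these give \eqref{eq:outer-dipole-derivative}.

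\emph{Outer normal velocity.} Decompose $\Phi$ as
\[
\Phi=\frac{2\pi Lq}{\operatorname{Cap}}u_c+\Phi_0 ,
\]
and split $\Phi_0$ further into the filled-domain extension $W$ of the outer trace $g$ and a zero-flux hole correction, whose inner trace is $\alpha\cos\theta+f$ minus $W$ on the inner graph.

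\emph{Condenser term.} The coefficient $2\pi Lq/\operatorname{Cap}=q(Y+\mathfrak c)$, and $(Y+\mathfrak c)\partial_{n_+}u_c$ is bounded in $C^k$, as in Proposition~\ref{prop:actual-taylor-coefficient}. This term is therefore $O_{C^k}(q)$. Its $Y$-derivative is
\[
\bigl(q(Y+\mathfrak c)\bigr)_Y\,\partial_{n_+}u_c+q(Y+\mathfrak c)\,\partial_Y\partial_{n_+}u_c .
\]
The first piece uses \eqref{eq:raw-q-first-jet} and $|\mathfrak c_Y|\le\mathcal P a$: the product $q(Y+\mathfrak c)$ then has derivative of size $q\bigl(\tfrac12+O(\mathcal P a)\bigr)$. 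Multiplied by $\partial_{n_+}u_c=O(Y^{-1})$, it contributes $O(qY^{-1})=O((1+Y)^{-3/2})$.

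The second piece uses the flux-carrying Green profile: $\partial_{n_+}u_c$ equals $(Y+\mathfrak c)^{-1}$ times a smooth function of the geometry, plus the correction for the hole. Its $Y$-derivative is $O(Y^{-2})$ from the scalar factor, plus $O(\mathcal P a^2)$ from the geometric motion by Proposition~\ref{prop:raw-state-first-jet}. Multiplied by $q(Y+\mathfrak c)$, this gives $(1+Y)^{-3/2}+\mathcal P a$.

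\emph{Outer-trace term.} The filled-domain part contributes $\|g\|_{H^{k+2}}\le q\,\mathcal P a^2$, and its derivative is bounded by $\mathcal P a^2$.

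\emph{Hole correction.} Apply \eqref{eq:outer-dipole-size} with input trace of size $|\alpha|+\|f\|+|W\circ X_--W(c)|$, which is at most $q\mathcal P a+\mathcal P a^2$ by Proposition~\ref{prop:first-mode-backward-amplitude} and the Taylor expansion of $W$. This contributes $a(q\mathcal P a)\le\mathcal P a^2$. Its derivative is controlled by \eqref{eq:outer-dipole-derivative} together with $|\alpha_Y|+\|f_Y\|\le q\mathcal P a$ from Proposition~\ref{prop:raw-state-first-jet}, and is again $\mathcal P a^2\le\mathcal P a$.

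The main obstacle is the condenser derivative. One must verify that the scalar factor differentiates only to $(1+Y)^{-3/2}$ rather than $O(q)$. This rests on the logarithmic cancellation $q^2(Y+\mathfrak c)\approx E_c/(\pi L)$ contained in \eqref{eq:corrected-q-log-derivative}, together with the fact that $(Y+\mathfrak c)\partial_{n_+}u_c$ varies only through geometry moving at rate $\mathcal P a^2$.
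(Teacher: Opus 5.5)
You prove \eqref{eq:outer-dipole-size}--\eqref{eq:outer-dipole-derivative} the same way the paper does. You use the subtracted Green representation, bound $\mu_Y$ in $H^{-1/2}$ through the differentiated energy problem with a coefficient source, and pair with a kernel that keeps the factor $a$. Your treatment of the filled extension $W$ of $g$ and of the zero-flux remainder $\Phi_0-W$ is also sound.

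\textbf{The gap is in the flux-carrying term.} Because you route the flux through the condenser, you need the $Y$-derivative of $\kappa_+=-\Lambda\partial_{n_+}u_c$, where $\Lambda=Y+\mathfrak c$. This profile depends on the cavity, whose radius moves at unit relative rate $a_Y=-a$. Proposition~\ref{prop:raw-state-first-jet} only controls $R_Y$, $\eta_Y$, $X_{+,Y}$ and the traces, so it does not give the $O(\mathcal P a^2)$ you assert. You name a ``correction for the hole'' but never differentiate it, so the $\mathcal P a$ in your final bound is not derived anywhere.

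The fix is the decomposition the paper uses, which the statement's phrase ``subtracting the flux-carrying Green function'' points to. Compare with the point source $QG_D(\cdot,c)$. Its outer normal derivative does not involve $a$ at all, and it varies only through $Q_Y=O(q/(1+Y))+q\mathcal P a$, through $R_Y$, and through the outer velocity. In your notation, write $\Lambda u_c=Q_cG_D(\cdot,c)+v_c$, with $Q_c$ the multiple of $R$ that matches the flux. Then $v_c$ has zero flux and zero outer trace. Its nonconstant inner trace has size $\mathcal P a$, and so does its $Y$-derivative; this is the $a\log(1/a)$ first harmonic produced by the weight factor $w^{-1/2}$ in the logarithmic kernel. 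Your own bounds \eqref{eq:outer-dipole-size}--\eqref{eq:outer-dipole-derivative} therefore make $\partial_{n_+}v_c$ and its $Y$-derivative $O(\mathcal P a^2)$. Once you do this, $qv_c+(\Phi_0-W)$ is exactly the paper's remainder $u$, so the detour through $u_c$ gains nothing.

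Your closing paragraph also misplaces the difficulty. Your two pieces add up to $q_Y(\Lambda\partial_{n_+}u_c)+q\,\partial_Y(\Lambda\partial_{n_+}u_c)$, because the $\Lambda_Y$ contributions cancel identically by the product rule. The first term is $O(q/(1+Y))$ using only $p_q=O(1/(1+Y))$ from \eqref{eq:raw-q-first-jet}; no logarithmic cancellation is needed. Indeed, your own computation gives $(q\Lambda)_Y\approx q/2$, which is $O(q)$, not $O((1+Y)^{-3/2})$. The term that actually requires an argument is the second one, which is precisely the hole dependence left unaddressed above.
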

\begin{proof}
Differentiate the equation in the fixed pullback coordinates by
\eqref{eq:first-u-variation}, with test functions having zero outer
trace and free constant inner trace.  The differentiated solution has
nonconstant inner trace $P_{\ne0}f_Y$, and its coefficient source is
bounded in $L^2$ by $CK\|\nabla u\|_2$.
The uniform collar estimate and \eqref{eq:zero-charge-gradient-bound}
give the same bound for $\nabla u_Y$ and the differentiated interior
filling.  Applying the argument for the jump density in
\eqref{eq:jump-bound-localization} to their difference yields
\begin{equation*}
\|\nabla u_Y\|_2+\|\mu_Y\|_{H^{-1/2}}
\le C\bigl(\|P_{\ne0} f_Y\|_{H^{1/2}}
+K\|P_{\ne0} f\|_{H^{1/2}}\bigr),
\qquad \int\mu_Y=0.
\end{equation*}
The density includes its pulled-back arclength factor, so the last
identity is the derivative of the zero total charge.

At the separated outer boundary, write the exact representation using
$\mathcal N_{+,x}G_D(x,c+a\zeta)-\mathcal N_{+,x}G_D(x,c)$.
Its $H^k_xH^{1/2}_\varphi$ norm is $O(a)$, and its first $Y$ derivative
is $O(a(1+K))$ in the same norm.  Here
$(a\zeta)_Y=a(-\zeta+\zeta_Y)$.  Pairing with $\mu$ and $\mu_Y$
proves both estimates.

For the application, decompose the potential as
\begin{equation*}
\Phi=H_g+QG_D(\cdot,c)+u,
\end{equation*}
where $Q$ is its total inner weighted flux and $H_g$ is the harmonic
extension in the filled domain of its outer trace.  Thus $u$ has zero
inner flux and zero outer trace.  The nonconstant inner trace of $u$ and its
first derivative have size $q\mathcal P(Y)a$.  The nonconstant
part of the weighted logarithmic Green trace has this size, and
Proposition~\ref{prop:raw-state-first-jet} controls the first mode,
high traces and their first derivatives.
The filled domain terms satisfy fixed elliptic estimates.
The weighted flux $Q$ is a fixed constant times $Rq$, so
\eqref{eq:raw-q-first-jet} gives
$Q_Y=O(q/(1+Y))+q\mathcal P(Y)a$.
The derivative of the macroscopic Green profile involves only $R_Y$
and the outer geometric velocity, both of size
$\mathcal P(Y)a^2$.  Apply
\eqref{eq:outer-dipole-size}--\eqref{eq:outer-dipole-derivative}
and $q\asymp(1+Y)^{-1/2}$ to obtain
\eqref{eq:outer-velocity-first-jet}.
\end{proof}

\noindent\textbf{Scalar reconstruction.}
In the normalized coordinates of Proposition~\ref{prop:exact-energy-and-capacity},
write $z=(\beta,F,G)=(\alpha,f,g)/q$ and
$\boldsymbol\zeta=(\eta,h_+)$, keeping $R$ separate.

\begin{lemma}
\label{lem:scalar-noncircular}
On the neighbourhood \eqref{eq:physical-bootstrap-neighbourhood}, with
$q\asymp(1+Y)^{-1/2}$, one has
$E_*-\sigma A=E_c+O(\mathcal P a)>0$ and $D_b\asymp1+Y$.
The original equations have the normalized form in
Proposition~\ref{prop:exact-energy-and-capacity}.  Thus
\eqref{eq:q-normalized-explicit}--\eqref{eq:p-normalized-explicit}
express $q,p_q,\Theta,\kappa$ and the coefficients in the equations for the natural variables
as functions of the complete instantaneous state.
Their first two state differentials, measured in any prescribed fixed
lower scaled seminorm, have finite spatial losses and polynomial bounds
in $Y$.  The same holds when the physical outer trace $g$ is treated as an
independent variable, with the explicit polynomial factor $q^{-1}$.
\end{lemma}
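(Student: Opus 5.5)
The plan has three parts: estimate the energy components directly, then obtain the scalars as explicit rational functions of smooth quantities, then differentiate those formulas.

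\emph{Energy gap and capacity.} The energy $E_*$ is conserved along every canonical branch. At the terminal state it equals $E_c+\sigma|\Gamma_+^*|$, by the definition of $c_N$ and since $|\Gamma_{-,N}^*|=2\pi L_*a_N$. Hence $E_*-\sigma A=E_c+\sigma(|\Gamma_+^*|-|\Gamma_+|)-\sigma|\Gamma_-|$. The inner area formula \eqref{eq:inner-area-exact} gives $|\Gamma_-|=O(a)$. The outer graph bound $\|h_+\|_{H^{b+1}}\le K(1+Y)^{M_b}a^2$ makes the outer area defect $O(\mathcal P a^2)$. Together these give $E_*-\sigma A=E_c+O(\mathcal P a)$, which is positive for large $Y_0$. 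For $D_b=\pi L(Y+\mathfrak c)$, I would invoke Lemma~\ref{lem:capacity-single-layer-schur}, which applies on the neighbourhood \eqref{eq:physical-bootstrap-neighbourhood} because $\|\eta\|_{H^{s_0+2}}\le K(1+Y)^{M}a^2$ and $R$ stays near $R_*$. It gives $|\mathfrak c|\le C$, so $D_b\asymp 1+Y$. Also $D_0$ is the zero-flux Dirichlet energy of $\varphi_z$. Since $z=(\beta,F,G)$ and $|\beta|\le\delta_1$ with $F,G=O(\mathcal P a^2)$, the bounds \eqref{eq:global-gradient-v} and \eqref{eq:outer-dipole-size} give $D_0=O(1)$. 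In particular $D_b+D_0\asymp1+Y$, consistent with $q\asymp(1+Y)^{-1/2}$.

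\emph{Expressing the scalars.} Formula \eqref{eq:q-normalized-explicit} then defines $q$ as a smooth function of $(\mathcal G,z)$ and $Y$: it is a quotient of smooth functionals with positive numerator and denominator. Next I would insert $q$ into the instantaneous boundary system. This determines $n=aB_-/q$ and $\upsilon$ through the zero-flux DN map (Lemma~\ref{lem:zero-flux-dipole-reconstruction} and Proposition~\ref{prop:full-C2-flux-DN}), and then $\Theta,\kappa$ through \eqref{eq:clock-centre-normalized}. The determinant satisfies $\Delta=1+O(\mathcal P a)$, by the computations in Proposition~\ref{prop:first-mode-backward-amplitude}. With these, $\mathcal V_0$ and $\mathcal B_0$ are explicit. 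Finally $p_q$ comes from \eqref{eq:p-normalized-explicit}, whose denominator $2D_b$ is bounded below by $c(1+Y)$. Every coefficient in the natural-variable equations, such as $B_q=\sigma a/q^2$, $d_-$ and $\mathfrak t_-$, is therefore a composition of these scalars with the DN and curvature maps.

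\emph{State differentials.} Each building block is $C^2$ with tame bounds: the DN maps by Proposition~\ref{prop:full-C2-flux-DN}, the Schur reduction by Proposition~\ref{prop:schur-differentiation}, the capacity by Lemma~\ref{lem:capacity-single-layer-schur}, the curvature by Proposition~\ref{prop:full-C2-curvature}, and the outer cross terms by Lemma~\ref{lem:zero-separated-asymmetric}. I would differentiate the quotients by the chain rule. The differentials of $D_0$ are quadratic forms with differentiated DN coefficients, handled by \eqref{eq:schur-first-differential}--\eqref{eq:schur-second-differential}. The inverse powers of $D_b$, $\Delta$ and $E_*-\sigma A$ cost at most polynomial factors in $Y$, because each is bounded below by a positive constant or by $c(1+Y)$. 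Measuring outputs in a fixed lower scaled seminorm, each step loses a fixed number of derivatives, so the total loss is finite and independent of the index. If $g$ is used in place of $G=g/q$, the chain rule introduces the factor $q^{-1}$ and $q^{-1}$ times the differential of $q$, which is polynomial in $Y$.

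\emph{Main obstacle.} I expect the hardest point to be $p_q$. Its formula \eqref{eq:p-normalized-explicit} contains $\mathcal D_0^g$ and $D_zD_0[\mathcal B_0]$, which are spatial derivatives of the energy along the vector field. Taking state differentials of these means second and third variations of DN quantities evaluated on $\mathcal V_0$ and $\mathcal B_0$, which themselves already lose derivatives. To avoid genuine third variations, I would rewrite $\mathcal D_0^g(D_b+D_0)$ with the Hadamard formula \eqref{eq:schur-Hadamard} and the capacity formula \eqref{eq:capacity-Y-exact}. This turns it into boundary integrals of the normal velocities $\nu_\iota$ against products of the traces $B$ and $V$. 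Its state differentials then only need first and second variations of $\mathfrak G$, which Proposition~\ref{prop:full-C2-flux-DN} supplies.
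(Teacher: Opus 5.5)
Everything except the final claim follows the paper's route. This covers the energy gap from $E_*=E_c+\sigma|\Gamma_+^*|$ and the area bounds, and $D_b\asymp1+Y$ from the capacity lemma. It also covers the explicit reconstruction of $q,\Theta,\kappa,p_q$ in the normalized coordinates $z=(\beta,F,G)$, and the rewriting of $\mathcal D_0^g(D_b+D_0)$ via \eqref{eq:schur-Hadamard} and \eqref{eq:capacity-Y-exact} so that only first and second shape variations are needed.

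\textbf{The gap is the last assertion, where the physical outer trace $g$ is the independent variable.} There \eqref{eq:q-normalized-explicit} no longer defines $q$ explicitly, because $D_0$ depends on $G=g/q$, so $q$ sits on both sides. You say the chain rule produces ``$q^{-1}$ times the differential of $q$''. That presupposes $q$ is already a well-defined differentiable function of $(\mathcal G,\beta,F,g)$, which is exactly what must be proved. If you substitute $dG=q^{-1}dg-G\,dq/q$ into the normalized differential, $dq$ appears on both sides. Solving for it requires a nondegeneracy condition you never check. Existence and uniqueness of the positive $q$ for given physical data are not addressed either.

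\textbf{How to close it.} Split $\varphi_z$ into its inner-trace and outer-trace parts, with their zero-flux lifts. The energy identity then becomes the scalar quadratic
\[
(D_b+D_{ii})q^2+D_{ig}q+D_{gg}=E_*-\sigma A .
\]
Here $D_{ii}\ge0$ is the energy of the normalized inner trace $\beta\cos\theta+F$, $D_{gg}\ge0$ is the energy of the physical trace $g$, and $D_{ig}$ is the cross form. None of them depends on $q$. On the neighbourhood \eqref{eq:physical-bootstrap-neighbourhood} you have $D_{gg}\le C\|g\|_{H^1}^2=O(\mathcal P a^4)$. Cauchy--Schwarz then gives $|D_{ig}|\le2\sqrt{D_{ii}D_{gg}}=O(\mathcal P a^2)=o(qD_b)$.

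Since $E_*-\sigma A-D_{gg}\ge E_c/2>0$, the two roots have opposite signs, so there is a unique positive root. The implicit derivative has denominator
\[
2q(D_b+D_{ii})+D_{ig}\ge c\,q(1+Y)\asymp\sqrt{1+Y}.
\]
With this lower bound, two implicit differentiations, followed by \eqref{eq:p-normalized-explicit}, give the first two differentials of $q$ and $p_q$ in the physical-$g$ coordinates with polynomial bounds. The same then holds for $\Theta$ and $\kappa$, which depend on $g$ through $n$ and $G=g/q$. The stated factors $q^{-1}$ enter only through $G=g/q$.
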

\begin{proof}
Since $E_*=E_c+\sigma|\Gamma_+^*|$ and
$A=|\Gamma_+^*|+O(\mathcal P a)$, the numerator is positive for large
$Y_0$.  The capacity estimate gives $D_b\asymp1+Y$.
The exact normalized equations in
Proposition~\ref{prop:exact-energy-and-capacity} therefore give the
instantaneous reconstruction, including the common gauge and inner
constant lift.

Lemma~\ref{lem:finite-tame-dn-symbol} and the Schur formulas of
Proposition~\ref{prop:schur-differentiation} give the required finite-loss
bounds for the normalized traces, capacity and their variations.
For the differentiated Dirichlet forms in
\eqref{eq:p-normalized-explicit}, first use
\eqref{eq:schur-Hadamard} and \eqref{eq:capacity-Y-exact} to express
their geometric derivatives as boundary quadratic forms.  Differentiating these explicit forms twice in the state uses at most
the first two variations of the normalized velocity and condenser flux
established above.  Area, curvature and velocity require one graph
derivative, two graph derivatives and one elliptic trace operator,
respectively.  The resulting bounds have fixed spatial losses and
at most one factor at high regularity.
The scalar denominators are the positive energy numerator,
$D_b+D_0$, $D_b$ and the nonzero kinematic determinant.
Their inverses are bounded or polynomial in $Y$.
The factors $a\delta\eta$ and $a\partial_n$ remove the inner scale,
while centre variations use a translation cut off at a fixed macroscopic
scale.  The resulting constants depend polynomially on $Y$.

To use the physical trace $g$, split $\varphi_z$ into its inner and
outer trace parts.  This gives the scalar equation
\begin{equation*}
(D_b+D_{ii})q^2+D_{ig}q+D_{gg}
=E_*-\sigma A,
\end{equation*}
where $D_{ii},D_{gg}\ge0$, and $D_{ig}$ is the cross form.
On the stated neighbourhood, $|D_{ig}|=o(qD_b)$ and
$D_{gg}=o(1)$ as $Y\to\infty$.  Implicit differentiation at the positive root gives the denominator
$2q(D_b+D_{ii})+D_{ig}\ge c q(1+Y)$.
Since $q^{-1}\asymp\sqrt{1+Y}$, two implicit differentiations and
\eqref{eq:p-normalized-explicit} give the stated bounds with $g$
as an independent variable.

\end{proof}

\begin{lemma}[Admissible states for the dyadic expansion]
\label{lem:frozen-admissibility}
Fix $\sigma\ge0$, $E_c>0$, and
$E_*=E_c+\sigma|\Gamma_+^*|$. Fix $a=\ell e^{-Y}$, let $R$ range
in a compact admissible interval, and assume
$|\beta|\le\mathcal P_{\rm ref}(Y)a$ for a prescribed polynomial
$\mathcal P_{\rm ref}$. Write $U=(\eta,F,h_+,g)$, with the original
centering and mean conditions and with $g$ the physical outer trace.
There is a fixed index $b_{\rm ad}$, independent of the later energy
index, for which the following conclusions hold. They remain valid
if $b_{\rm ad}$ is increased.

Let $C_{\rm cut}\ge1$ bound the dyadic cutoffs and their convex
interpolants on every Sobolev space. Define
\begin{equation}\label{eq:frozen-admissible-domain}
\mathcal A_Y=\{U:\ \|U\|_{H^{b_{\rm ad}}}
<2C_{\rm cut}a^{3/2}\}.
\end{equation}
For all sufficiently large $Y$, throughout $\mathcal A_Y$ one has
\begin{equation}\label{eq:frozen-admissibility-bounds}
\begin{gathered}
E_*-\sigma A-D_{gg}\ge E_c/2,\qquad
c(1+Y)\le D_b+D_{ii}\le C(1+Y),\\
c(1+Y)^{-1/2}\le q\le C(1+Y)^{-1/2},\\
2q(D_b+D_{ii})+D_{ig}\ge c\sqrt{1+Y},
\qquad |\Delta|\ge\tfrac12 .
\end{gathered}
\end{equation}
Here $q$ is the positive energy root, and $\Delta$ is defined in
Proposition~\ref{prop:clock-centre-finite-rank}. The reconstructed
scalars $q,\Theta,\kappa,p_q$ have first and second state differentials
with fixed spatial losses and polynomial bounds in $Y$. The constants
depend on the reference data and $\mathcal P_{\rm ref}$, independently
of the radius of $\mathcal A_Y$ and of any bootstrap constant.
If $\|U\|_{H^{b_{\rm ad}}}\le a^{3/2}$, all frozen states and Taylor
segments used in Lemma~\ref{lem:finite-loss-dyadic} belong to
$\mathcal A_Y$.
\end{lemma}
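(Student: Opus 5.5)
The argument is a perturbation from the circular reference state $U=0$ (concentric circles, condenser velocity). On $\mathcal A_Y$ the graph perturbations, the normalized zero-flux traces and $|\beta|$ are small: $O(a^{3/2})$ or $O(\mathcal P_{\rm ref}a)$ in the lower norm $H^{b_{\rm ad}}$. Choose $b_{\rm ad}$ to exceed all fixed lower indices and losses in Lemmas~\ref{lem:flattening-geometry}, \ref{lem:capacity-single-layer-schur}, \ref{lem:zero-flux-dipole-reconstruction}, \ref{lem:finite-tame-dn-symbol} and \ref{lem:scalar-noncircular}, plus the embedding index needed for $C^1$ control. Then every smallness hypothesis in those results holds for large $Y$, because $2C_{\rm cut}a^{3/2}\to0$ faster than any polynomial in $Y$ grows. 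The thresholds depend only on the reference data and $\mathcal P_{\rm ref}$, not on any bootstrap constant.

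\emph{Energy scalars.} The area satisfies $A=|\Gamma_+^*|+O(a)+O(\|h_+\|_{H^{b_{\rm ad}}})$, using \eqref{eq:inner-area-exact} for the inner component. The form $D_{gg}$ is bounded by the filled-domain Dirichlet form of $g$ plus the hole correction of Lemma~\ref{lem:zero-separated-asymmetric}, so $D_{gg}=O(a^3)$. Since $E_*=E_c+\sigma|\Gamma_+^*|$, this gives $E_*-\sigma A-D_{gg}\ge E_c/2$. Lemma~\ref{lem:capacity-single-layer-schur} gives $D_b=\pi L(Y+\mathfrak c)$ with $\mathfrak c$ bounded. Its hypothesis $\|\eta\|\le K(1+Y)^{M_0}a^2$ is replaced by $\|\eta\|\le Ca^{3/2}$; the remainder bound $\mathcal P(a+\|\eta\|)$ still tends to zero. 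The form $D_{ii}$ is quadratic in $(\beta,F)$ with uniformly bounded coefficients. The $F$ part is bounded by the trace energy; the $\beta$ part by the dipole energy, which is $O(1)\beta^2$. Both are $o(1+Y)$, giving $D_b+D_{ii}\asymp1+Y$. The cross form satisfies $|D_{ig}|\lesssim\mathcal P\,a^{3/2}$ by Lemma~\ref{lem:zero-separated-asymmetric}. The quadratic $(D_b+D_{ii})q^2+D_{ig}q+D_{gg}=E_*-\sigma A$ then has a unique positive root $q\asymp(1+Y)^{-1/2}$. At that root, $2q(D_b+D_{ii})+D_{ig}\ge c\sqrt{1+Y}$.

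\emph{Determinant and differentials.} Write $n=aB_-/q$. By Lemma~\ref{lem:zero-flux-dipole-reconstruction}, $n=1+\beta\cos\theta+O(\mathcal P a)$, and $u=O(a^{3/2})$ in $C^1$. Hence $C_1=1+o(1)$, $C_0=o(1)$, $(N_*)_0=1+o(1)$ and $(N_*)_1=O(\mathcal P a)$, so $|\Delta|\ge1/2$. With these lower bounds on the denominators, the differential bounds for $q,\Theta,\kappa,p_q$ follow by repeating the implicit-differentiation argument of Lemma~\ref{lem:scalar-noncircular} on $\mathcal A_Y$. The two state differentials use Proposition~\ref{prop:schur-differentiation}, Lemma~\ref{lem:finite-tame-dn-symbol} and \eqref{eq:clock-centre-differential}. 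That argument used only the neighbourhood bounds established above, not the polynomial bootstrap form, so the constants depend only on the reference data.

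\emph{Frozen states.} Suppose $\|U\|_{H^{b_{\rm ad}}}\le a^{3/2}$. The states arising in Lemma~\ref{lem:finite-loss-dyadic} are $S_jU$, $S_{j-K}U$, $S_jU+tU_j$, and convex combinations of these with $0$ along Taylor segments. Each has norm at most $C_{\rm cut}a^{3/2}<2C_{\rm cut}a^{3/2}$, so it lies in $\mathcal A_Y$. The centering and mean conditions are linear Fourier constraints, and the cutoffs are Fourier multipliers, so they preserve these conditions.

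The main obstacle is the positivity and uniformity of the scalar reconstruction with $g$ as an independent variable. The cross term $D_{ig}q$ must remain $o(q(1+Y))$ uniformly over the whole ball, not only along trajectories. I would secure this first, using the separated-boundary kernel bounds of Lemma~\ref{lem:zero-separated-asymmetric}.
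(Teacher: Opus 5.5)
Your proposal is correct and follows the paper's proof in the same order. The energy-root bounds come from the area, capacity and Dirichlet-form estimates before any modulation scalar is defined; then $\Delta$ is a small perturbation of $\Delta(0)=1+O(\mathcal P(Y)a)$, the scalar differentials follow by differentiating the root and modulation formulas, and the frozen states lie in $\mathcal A_Y$ by the definition of $C_{\rm cut}$. The only difference is the cross term you single out as the main obstacle, which needs no separated-boundary kernel estimate: nonnegativity of the full Dirichlet form gives $|D_{ig}|\le 2\sqrt{D_{ii}D_{gg}}$, and the exact root identity $2(D_b+D_{ii})q+D_{ig}=\sqrt{D_{ig}^2+4(D_b+D_{ii})(E_*-\sigma A-D_{gg})}$ yields the lower bound $c\sqrt{1+Y}$ without any smallness of $D_{ig}$.
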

\begin{proof}
Choose $b_{\rm ad}$ to control the energy forms, the normalized flux
in $H^1$, and the two modulation moments and their first two state
variations. The finite losses in Section~\ref{sec:scaled-DN} make
this choice independent of the later energy index.

The geometry and boundary traces first determine the zero-flux harmonic
fields. Uniform energy
bounds and Cauchy--Schwarz give
\begin{equation*}
\begin{gathered}
0\le D_{ii}\le C(|\beta|+\|F\|_{H^1})^2,\qquad
0\le D_{gg}\le C\|g\|_{H^1}^2,\\
|D_{ig}|\le2\sqrt{D_{ii}D_{gg}}.
\end{gathered}
\end{equation*}
The area formula gives
$E_*-\sigma A=E_c+O(a+\|h_+\|_{H^1}+a\|\eta\|_{H^1})$.
The capacity expansion gives $D_b\asymp1+Y$ uniformly on
\eqref{eq:frozen-admissible-domain}: every polynomial error multiplied
by $a$ or $a^{3/2}$ tends to zero. Put
$A_q=D_b+D_{ii}$ and $N_q=E_*-\sigma A-D_{gg}$. Then
\begin{equation}\label{eq:frozen-positive-root}
q=\frac{-D_{ig}+\sqrt{D_{ig}^2+4A_qN_q}}{2A_q},
\qquad
2A_qq+D_{ig}=\sqrt{D_{ig}^2+4A_qN_q}.
\end{equation}
The first four bounds in \eqref{eq:frozen-admissibility-bounds}
follow, including positivity of the root, before any modulation
scalar has been defined.

The normalized harmonic field now defines $n$, and hence $\Delta$.
At $U=0$, the circular flux expansion gives
$\Delta(0)=1+O(\mathcal P(Y)a)$. The finite-loss trace estimates,
the root formula, and the moment formulas give
\begin{equation*}
|\Delta(U)-\Delta(0)|
\le\mathcal P(Y)\|U\|_{H^{b_{\rm ad}}}
\le2C_{\rm cut}\mathcal P(Y)a^{3/2}.
\end{equation*}
This proves the last bound for large $Y$. Define $\Theta,\kappa$
by \eqref{eq:clock-centre-normalized}, and $p_q$ by
\eqref{eq:p-normalized-explicit}. Differentiating these formulas and
\eqref{eq:frozen-positive-root} twice proves the scalar tame estimates
on $\mathcal A_Y$. The denominators have the explicit lower bounds
above; the elliptic estimates use a fixed ambient geometric ball.
Thus all constants grow at most polynomially in $Y$ and do not depend
on a bootstrap constant or on the shrinking radius of $\mathcal A_Y$.

If a state satisfies $\|U\|_{H^{b_{\rm ad}}}\le a^{3/2}$,
every $S_jU$, $S_{j-K}U$, and
$S_jU+t(S_{j+1}-S_j)U$, $0\le t\le1$, lies strictly in
$\mathcal A_Y$. So do the segments joining frozen states used in
Lemma~\ref{lem:finite-loss-dyadic}. Fourier cutoffs preserve the
centering and mean conditions. This verifies admissibility for the
whole expansion. The threshold for the static construction depends
only on the reference data and $\mathcal P_{\rm ref}$; increasing
$Y_0$ to place an actual bootstrap trajectory in this domain may
depend on its bootstrap constants.
\end{proof}

\begin{lemma}[Kernels of the reconstructed scalar and finite-rank terms]
\label{lem:reconstructed-kernel-bounds}
Fix an integer $N_1\ge0$ and use the static reconstruction of
Lemma~\ref{lem:frozen-admissibility}, with fixed $\sigma\ge0$.
After a fixed enlargement of $b_{\rm ad}$, there are indices
$b_0\le b_{\rm ad}$ and $d_{\rm ker}$, independent of the output
index $r$, with the following property.

Keep $a,R,\beta$ fixed, put
$\lambda=\log q$ and $\Lambda=(\Theta,\kappa,p_q,\lambda)$, and let
$\mathscr F(U,\Lambda)$ denote the boundary vector field before
substituting these reconstructed scalars. Its inner rows evolve
$(a\eta,aF)$ and its outer rows evolve $(h_+,g)$. The scalar chain
rule terms in its differential are
\begin{equation}\label{eq:complete-scalar-chain-rule}
\sum_{\mu}\partial_{\Lambda_\mu}\mathscr F(U,\Lambda(U))
D_U\Lambda_\mu(U)[V].
\end{equation}
After the cancellation in Proposition~\ref{prop:polar-full-cancellation},
separate the full DN shape terms from the scalar, constant-lift and
fixed-projection terms. After division by the inner row factor $a$
and the outer row factor $a^2$, the latter terms have kernels that
are finite sums of
\begin{equation}\label{eq:finite-rank-kernel-representation}
k(U;\theta,\varphi)=\phi(U;\theta)\psi(U;\varphi),\qquad
\ell_U(V)=\sum_\nu\int_\T\psi_\nu(U;\varphi)V_\nu(\varphi)\,d\varphi.
\end{equation}
The kernels may be rectangular between the two boundary trace spaces.
For smooth $U\in\mathcal A_Y$, smooth state variations $V$, and
$r\ge b_0$, their first state variations
and reference differences satisfy
\begin{align}
\|Dk(U)[V]\|_{H^r_\theta H^{N_1}_\varphi}
&\le C_r\mathcal P_r(Y)\bigl(
\|V\|_{H^{r+d_{\rm ker}}}
+(1+\|U\|_{H^{r+d_{\rm ker}}})\|V\|_{H^{b_0}}\bigr),
\label{eq:finite-rank-kernel-tame}\\
\|k(U)-k(0)\|_{H^r_\theta H^{N_1}_\varphi}
&\le C_r\mathcal P_r(Y)\|U\|_{H^{r+d_{\rm ker}}}.
\label{eq:finite-rank-kernel-difference}
\end{align}
The polynomial weights are independent of the radius of
$\mathcal A_Y$. In extracting the outer factor $a^2$, the factor
$t_Y/a^2=\Theta/q$ remains part of the reconstructed vector field.
\end{lemma}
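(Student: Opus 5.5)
I would prove Lemma~\ref{lem:reconstructed-kernel-bounds} by listing every term produced by the scalar chain rule \eqref{eq:complete-scalar-chain-rule}, the constant lift and the fixed projections. For each term I would exhibit the rank-one factorization \eqref{eq:finite-rank-kernel-representation} explicitly. The tame bounds then reduce to tame bounds for the two factors $\phi$ and $\psi$ separately.

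\emph{Step 1: the output factors.} For each scalar $\Lambda_\mu$, the column $\partial_{\Lambda_\mu}\mathscr F(U,\Lambda(U))$ is an explicit boundary function. The clock $\Theta$, centre speed $\kappa$ and flux scalars enter the kinetic function \eqref{eq:polar-kinetic-function} and the kinematic equations \eqref{eq:polar-eta-exact}, \eqref{eq:polar-h-exact} polynomially. On the outer rows they enter through $t_Y=\Theta a^2/q$. This column is the output factor $\phi_\mu(U;\theta)$. Its $H^r_\theta$ norm, and that of its first state variation, I would bound using:
\begin{itemize}
\item the tame DN estimates of Proposition~\ref{prop:full-C2-flux-DN} and Lemma~\ref{lem:finite-tame-dn-symbol};
\item the curvature bounds of Proposition~\ref{prop:full-C2-curvature};
\item Sobolev products, with at most one factor at high regularity.
\end{itemize}
The division by $a$ (inner rows) and by $a^2$ (outer rows) is harmless. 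Inner rows carry the factor $a$ through $h=a\eta$ and $aF$. Outer rows carry $t_Y=a^2\Theta/q$, and $\Theta/q$ stays inside the field, as the statement stipulates. The constant lift column is $\mathcal D e_-$ divided by the scalar Schur denominator. Each fixed projection column is a fixed trigonometric function times smooth coefficients.

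\emph{Step 2: the input functionals.} Each $D_U\Lambda_\mu(U)[V]$ is a linear functional $\ell_U(V)$, and I would identify its Riesz densities $\psi_\nu$. By Lemma~\ref{lem:frozen-admissibility}, $q$ is the positive root \eqref{eq:frozen-positive-root} of a quadratic whose coefficients are the energy forms $D_b,D_{ii},D_{ig},D_{gg}$ and the area. Their first variations are boundary integrals:
\begin{itemize}
\item the Hadamard formula \eqref{eq:schur-Hadamard} and the capacity variation \eqref{eq:capacity-Y-exact} give, for graph directions, densities that are quadratic in normalized velocities and condenser fluxes;
\item for trace directions, the density is the symmetric DN map applied to the state, by \eqref{eq:flux-DN-green}.
\end{itemize}
The scalars $\Theta,\kappa$ come from \eqref{eq:clock-centre-normalized}, which involve Fourier moments of $N_*$ and $C$. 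Their variations \eqref{eq:clock-centre-differential} are moments of $\delta n$ and of $\delta u$. Since $\delta n$ is a DN image, I would move the operator onto the fixed test function $\cos\theta$ by symmetry of $\mathfrak G$. For $p_q$ I would differentiate \eqref{eq:p-normalized-explicit} once more, which keeps the same structure at one further order. After integrating the $\partial_\varphi$ derivatives by parts, each density lies in $H^{N_1}_\varphi$ at the cost of a loss depending only on $N_1$. The denominators in \eqref{eq:frozen-admissibility-bounds} are bounded below uniformly on $\mathcal A_Y$, so quotients cost only polynomial factors in $Y$.

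\emph{Step 3: the two estimates.} The variation of a product kernel gives $D\phi[V]\,\psi+\phi\, D\psi[V]$. Bounding one factor in the high norm and the other in the low norm $b_0$ gives \eqref{eq:finite-rank-kernel-tame}. Integrating along $tU$, $t\in[0,1]$, which stays inside the convex set $\mathcal A_Y$, gives \eqref{eq:finite-rank-kernel-difference}. The mixed term is absorbed because $\|U\|_{H^{b_0}}\le1$.

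\emph{Main obstacle.} The hard point is the variations of $\psi$ in Step 2, not $\psi$ itself. These are second state variations of the energy forms and of the moments. For them, the high derivative may fall on $U$ but must never fall on $V$ beyond $b_0$ when measured through $\varphi$. I would handle this with the second-variation formula \eqref{eq:schur-second-differential} together with the separated-boundary kernel estimates \eqref{eq:outer-kernels-C2-negative-input}, enlarging $b_{\rm ad}$ once to absorb the losses.
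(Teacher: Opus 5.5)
Your proposal is correct and follows essentially the same route as the paper. Both factor each scalar, lift and projection term into an output column $\phi$, measured in $H^r$, and an input density $\psi_\nu$, measured only in $H^{N_1}$. The densities come from the moment differentials, the differentiated energy root and \eqref{eq:p-normalized-explicit}, and the Hadamard/capacity formulas, with tangential integration by parts and Green's identity moving derivatives and DN operators off $V$. The estimates then follow from the product rule $D\phi[V]\psi+\phi\,D\psi[V]$ and integration along $tU$ in the convex set $\mathcal A_Y$.

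One refinement: when you move the DN operator in the moment variations onto a test function, the test trace must be the zero-flux-corrected one rather than plain $\cos\theta$. This is because the inner constant also varies. The paper handles it through the lift differential $D\Pi[H]$ and the Schur second differential.
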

\begin{proof}
To obtain the functions $\psi_\nu$, use
\eqref{eq:clock-centre-differential} for the two moments in the
modulation equations, and differentiate
\eqref{eq:q-normalized-explicit} and
\eqref{eq:p-normalized-explicit} for the energy scalars. Express the
geometric energy derivatives by \eqref{eq:schur-Hadamard} and
\eqref{eq:capacity-Y-exact}. Tangential integration by parts transfers
derivatives of $V$ onto the coefficients; Green's identity transfers a
DN operator acting on a trace variation onto its smooth coefficient.
For the lifts use, in the notation of
Proposition~\ref{prop:schur-differentiation},
\begin{equation*}
D\Pi[H]=-e c^{-1}\langle e,D\mathfrak G[H]\Pi\,\cdot\rangle.
\end{equation*}
Differentiating this identity gives the second differential of $\Pi$.
For the reduced flux operator $K$, the second differential is given by
\eqref{eq:schur-second-differential}, including both rank-one terms.
These formulas express all input functions
$\psi_\nu$ using finitely many products, elliptic traces, and the
scalar inverses listed in Lemma~\ref{lem:scalar-noncircular}.

The fixed $H^{N_1}$ input norms of $\psi_\nu$ and their first state
variations require only a fixed number of state derivatives. The
first two elliptic variations and the tame product estimates therefore
give the first bound in \eqref{eq:finite-rank-kernel-tame}, after a
fixed choice of $b_0$ and $d_{\rm ker}$.
The second estimate follows by integrating the first along $tU$ on
the convex domain $\mathcal A_Y$; increase $d_{\rm ker}$ to include $b_0$.
For example, differentiating \eqref{eq:finite-rank-kernel-representation}
gives $(D\phi[V])\psi+\phi D\psi[V]$. The input factor is measured
only in $H^{N_1}$, so its required state norm is fixed, whereas the
output factor is measured in $H^r$. This gives the displayed
distribution of derivatives. Pairing with an $H^{-N_1}$ input gives exactly the kernel hypothesis of
Lemma~\ref{lem:finite-loss-dyadic}. These estimates use the first two
state differentials of the scalar reconstruction and Schur operators.
\end{proof}

\begin{lemma}[Zero-flux cross-boundary kernels]
\label{lem:cross-kernel-bounds}
Fix an integer $N_1\ge0$ and use the smooth admissible geometries of
Lemma~\ref{lem:frozen-admissibility}. Let
$\boldsymbol\zeta=(\eta,h_+)$, and let $\mathsf G^0_{-+}$ send an
outer trace $\gamma$ to the physical inner normal derivative of its
harmonic extension with constant inner trace and zero total inner
flux. In flux coordinates it is the corresponding block of
\begin{equation}\label{eq:cross-exact-schur}
K=\mathfrak G-\mathfrak G e c^{-1}
\langle e,\mathfrak G\,\cdot\rangle.
\end{equation}
Here $e=(1,0)$ and $c=\langle e,\mathfrak G e\rangle$.
Normal traces are recovered by division by $(\rho/R)aj$.
After a fixed enlargement of $b_{\rm ad}$, there are fixed indices
$b_0\le b_{\rm ad}$ and $d_{\rm ker}$ such that the kernel
$k_{-+}$ of $a\mathsf G^0_{-+}$ satisfies, for $r\ge b_0$ and
smooth graph variations $V,W$,
\begin{align}
\|k_{-+}(\boldsymbol\zeta)\|_{H^r_\theta H^{N_1}_\varphi}
&\le C_r\mathcal P_r(Y)a
(1+\|\boldsymbol\zeta\|_{H^{r+d_{\rm ker}}}),
\label{eq:cross-kernel-a-factor}\\
\|Dk_{-+}(\boldsymbol\zeta)[V]\|_{H^r_\theta H^{N_1}_\varphi}
&\le C_r\mathcal P_r(Y)a\bigl(
\|V\|_{H^{r+d_{\rm ker}}}
+(1+\|\boldsymbol\zeta\|_{H^{r+d_{\rm ker}}})
\|V\|_{H^{b_0}}\bigr).
\label{eq:cross-kernel-a-variation}
\end{align}
The second variation satisfies
\begin{equation}\label{eq:cross-kernel-a-second-variation}
\begin{split}
\|D^2k_{-+}(\boldsymbol\zeta)[V,W]\|_{H^r_\theta H^{N_1}_\varphi}
\le C_r\mathcal P_r(Y)a\bigl[
&\|V\|_{H^{r+d_{\rm ker}}}\|W\|_{H^{b_0}}\\
{}+&\|W\|_{H^{r+d_{\rm ker}}}\|V\|_{H^{b_0}}\\
{}+&(1+\|\boldsymbol\zeta\|_{H^{r+d_{\rm ker}}})
\|V\|_{H^{b_0}}\|W\|_{H^{b_0}}\bigr].
\end{split}
\end{equation}

The indices depend on $N_1$ and the fixed elliptic estimates, and
are independent of $r$ and $a$. For the reconstructed clock and flux
of Lemma~\ref{lem:frozen-admissibility}, at the reference graphs,
\begin{equation}\label{eq:cross-reference-scaled-bound}
\|t_Y\mathsf G^0_{-+}(0)\gamma\|_{H^r}
\le C_r\mathcal P_r(Y)a^2\|\gamma\|_{H^{-N_1}}.
\end{equation}
These estimates hold for each fixed $\sigma\ge0$.
\end{lemma}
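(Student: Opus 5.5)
The plan is to write $\mathsf G^0_{-+}$ through the Schur formula \eqref{eq:cross-exact-schur} and treat its two pieces separately:
\[
K_{-+}=\mathfrak G_{-+}-\mathfrak G_{--}e\,c^{-1}\langle e,\mathfrak G\,\cdot\rangle .
\]
The second piece is the constant-lift correction. It has rank one, with output profile $\mathfrak G_{--}e$ (the inner condenser flux) and input functional $\gamma\mapsto\langle e,\mathfrak G(0,\gamma)\rangle=\int_\T\mathfrak G_{-+}\gamma\,d\theta$.

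The basic tool is the Green representation used in Lemma~\ref{lem:cross-block-small-hole} and in Step~2 of Lemma~\ref{lem:zero-charge-outer-derivative}. Extend $\gamma$ to the filled domain as $W$, and correct by a single layer on the small boundary with density $\mu=-\mathcal S_{a,\eta}^{-1}(W\circ X_-)$. The kernel of $\gamma\mapsto W\circ X_-$ is a Poisson kernel evaluated at the points $c+a\zeta(\theta)$. Those points stay a fixed distance from $\mathscr C_+$, so this kernel is smooth in $\varphi$ to every order, which gives the $H^{N_1}_\varphi$ input norms for free.

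The factor $a$ comes from zero flux. In the reduced map, only the nonconstant part of $W\circ X_-$ can produce a nonzero normal derivative, because the constant part is absorbed by the free inner constant. That nonconstant part is $W(c+a\zeta)-W(c)=O(a)$ in every fixed norm, as in the Taylor expansion of Lemma~\ref{lem:cross-block-small-hole}. After multiplying by $a$ and normalizing the conormal trace, Lemma~\ref{lem:weighted-layer-expansion} shows that the nonzero-mode block $\mathcal T_a\mathcal S_a^{-1}P_{\ne0}$ is bounded of order one; the layer-potential identities are needed only at the circle. Combining these gives $a\mathsf G^0_{-+}\gamma=O(a)\|\gamma\|_{H^{-N_1}}$, which is \eqref{eq:cross-kernel-a-factor} at $\boldsymbol\zeta=0$. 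The first-mode leakage is of exactly this size.

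For perturbed graphs and for the variations \eqref{eq:cross-kernel-a-variation}–\eqref{eq:cross-kernel-a-second-variation}, the first step is to split the potential as $u=W-W(c)+v$, where $v$ is the zero-flux solution with inner trace $-(W-W(c))\circ X_-$ modulo constants and zero outer trace. This is precisely the setting of Lemma~\ref{lem:zero-charge-outer-derivative} and Proposition~\ref{prop:charge-free-localization}, with the roles of the boundaries reversed. The next steps, in order, are:
\begin{enumerate}
\item Apply the tame collar estimate of Lemma~\ref{lem:uniform-harmonic-extension} and Proposition~\ref{prop:full-C2-flux-DN} to $v$ and to its first two material variations, using the equations \eqref{eq:first-u-variation}–\eqref{eq:second-u-variation}.
\item Treat the input $\varphi$-dependence as a Hilbert-space-valued coefficient, exactly as in Step~4 of Lemma~\ref{lem:finite-tame-dn-symbol} and Lemma~\ref{lem:zero-separated-asymmetric}, via the summation \eqref{eq:negative-input-kernel-summation}.
\end{enumerate}
The inner data and all of their variations carry the factor $a$, since $(a\zeta)$ and its variations are $O(a)$. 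The product estimates of Lemma~\ref{lem:flattening-geometry} put at most one factor at the high index $r+d_{\rm ker}$, which yields the stated tame structure with indices independent of $r$. Outer graph variations enter only through the Poisson kernel of the filled domain. Differentiating that kernel preserves the smoothness in $\varphi$ and the $O(a)$ Taylor remainder.

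The Schur scalar $c^{-1}\asymp Y$ and its variations contribute only polynomial factors, by Proposition~\ref{prop:schur-differentiation} and Lemma~\ref{lem:capacity-single-layer-schur}. These are absorbed into $\mathcal P_r(Y)$.

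Finally, \eqref{eq:cross-reference-scaled-bound} follows from the reference kernel bound. We have $t_Y=\Theta a^2/q$, and Lemma~\ref{lem:frozen-admissibility} gives $\Theta\asymp1$ and $q^{-1}\lesssim(1+Y)^{1/2}$. Hence
\[
t_Y\mathsf G^0_{-+}(0)=\frac{\Theta a}{q}\,\bigl(a\mathsf G^0_{-+}(0)\bigr)=O\bigl(\mathcal P(Y)a\cdot a\bigr)=O\bigl(\mathcal P(Y)a^2\bigr)
\]
in $H^{-N_1}\to H^r$.

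The main obstacle is making the factor $a$ survive the free constant lift uniformly in the second variations. The rank-one corrections in \eqref{eq:schur-second-differential} each contain a factor $\langle e,\mathfrak G_i\Pi\,\cdot\rangle$ applied to an outer input. I would check that each such functional also vanishes on the constant part of $W|_{\Gamma_-}$. Specifically, it is the flux of a zero-flux-lifted field, so it sees only $W-W(c)$. This should again give an explicit factor $a$ for every term.
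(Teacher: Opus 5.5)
Your fourth paragraph is essentially the paper's argument. The paper writes $u_\gamma=W+v_\gamma$ (it calls $W$ $H_\gamma$), where $W$ is the filled extension, which has zero inner flux because it is harmonic across the hole, and $v_\gamma$ is the zero-flux field with zero outer trace and inner trace $-W|_{\Gamma_-}$ modulo constants. It takes the factor $a$ from $W(c+a\zeta)-W(c)=aJ$ with $J=\int_0^1\nabla W(c+ta\zeta)\cdot\zeta\,dt$, and then uses the negative-input summation and tame output regularity you cite; your reference bound via $t_Y/a=\Theta a/q$ is also the same.

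The gap is in how you handle the free constant in the second variations, which you yourself single out as the main obstacle. Taken term by term, \eqref{eq:schur-second-differential} does not carry the factor $a$, so the check in your last paragraph fails. Put $z=-c^{-1}\langle e,\mathfrak G(0,\gamma)\rangle$. Then $\Pi(0,\gamma)=(z,\gamma)$, $z=W(c)+O(a)$, and $c^{-1}\langle e,\mathfrak G_i\Pi(0,\gamma)\rangle=-D_iz$.
\begin{itemize}
\item For an outer direction, Hadamard's formula makes this functional an integral over $\Gamma_+$ of $H_{i,+}$ times the condenser flux and the outer normal derivative of the zero-flux field. On $\Gamma_+$ the gradient $\nabla(W-W(c))=\nabla W$ is of order one, so ``it sees only $W-W(c)$'' gives no factor $a$. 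Indeed $D_iz$ equals, up to $O(a)$, the outer shape derivative of $W(c)$, which is generically of order one.
\item For an inner direction, $\Pi^*\mathfrak G_1e=c\,D_1(\mathfrak Ge/c)$ is $c$ times the variation of the normalized condenser profile. A relative graph mode $m\ge2$ changes the equilibrium density at relative order one in its amplitude, so this is of size $Y^{-1}$.
\end{itemize}
Hence the rank-one term with $H_1$ inner and $H_2$ outer has size $Y^{-1}$, not $a$. It is cancelled exactly by a hidden piece of the first term. From $\mathfrak G(0,\gamma)=K(0,\gamma)-z\,\mathfrak Ge$, $\Pi^*\mathfrak Ge=0$ and $\langle e,K\,\cdot\,\rangle\equiv0$, one finds
\[
\Pi^*\mathfrak G_{12}\Pi(0,\gamma)=K_{12}(0,\gamma)-(D_1z)\,\Pi^*\mathfrak G_2e-(D_2z)\,\Pi^*\mathfrak G_1e .
\]
The same thing already happens at order zero for the two pieces in your first paragraph. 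Both $\mathfrak G_{-+}\gamma$ and the rank-one correction have inner mean of size $|z|c\sim Y^{-1}$, and only their sum is $O(a)$. So ``treat the pieces separately'' cannot work at any order.

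The remedy is to avoid the Schur formula entirely, as the paper does. Write the zero-flux correction as $v_\gamma=a\,w_\gamma$, where $w_\gamma$ has zero outer trace, inner trace $-J$ modulo constants, and zero total flux. Then differentiate the variational problem for $w_\gamma$ in the zero-flux test space, with test functions having an arbitrary constant inner trace. This has three effects:
\begin{itemize}
\item The factor $a$ is extracted once, as a fixed parameter, before any differentiation.
\item The free constant and $c^{-1}$ never need to be differentiated.
\item Every term of $Dk_{-+}$ and $D^2k_{-+}$ inherits the factor $a$. This includes the coefficient sources $A_V\nabla w_\gamma$ in the outer collar produced by outer directions. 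Your remark that outer variations enter only through the filled Poisson kernel overlooks these sources: outer variations also move the zero outer trace of $v_\gamma$.
\end{itemize}
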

\begin{proof}
Let $H_\gamma$ be the harmonic extension of
$\gamma$ to the filled domain and write
\begin{equation}\label{eq:cross-filled-representation}
\begin{gathered}
u_\gamma=H_\gamma+v_\gamma,
\qquad
v_\gamma|_{\Gamma_+}=0,\\
v_\gamma|_{\Gamma_-}=-H_\gamma|_{\Gamma_-}\quad\bmod\R,
\end{gathered}
\end{equation}
where $v_\gamma$ is the zero-flux extension. Since $H_\gamma$ extends
through the hole, its total inner flux is zero by the divergence
theorem. Thus \eqref{eq:cross-filled-representation} has precisely
the flux and trace conditions defining $\mathsf G^0_{-+}$.
If $c_{\rm hole}=(R,0)$ denotes the hole centre and
$\zeta(\theta)=(1+\eta(\theta))e_r(\theta)$, then the exact identity
\begin{equation}\label{eq:cross-regular-trace-factor}
H_\gamma(c_{\rm hole}+a\zeta)-H_\gamma(c_{\rm hole})
=a\int_0^1\nabla H_\gamma(c_{\rm hole}+ta\zeta)\cdot\zeta\,dt
\end{equation}
supplies a factor $a$ in the nonconstant inner datum. Interior
regularity in the filled domain bounds every fixed derivative on
this neighbourhood by a fixed outer trace norm. The scaled inner DN
operator $a\partial_{n_-}$ is uniformly bounded on the corresponding
trace spaces, with the fixed tame losses already proved. Hence both
terms in $a\partial_{n_-}u_\gamma$ have a factor $a$.

We prove the mixed kernel bounds, including the two graph variations.
Set $m=N_1+1$ and denote the integral on the right of
\eqref{eq:cross-regular-trace-factor} by $J_\gamma$, so that the
nonconstant datum is $aJ_\gamma$. For $\gamma\in H^{-m}$,
Green's identity with a source supported in the fixed interior
neighbourhood reduces the interior estimate for $H_\gamma$ to the
$H^m$ conormal norm of an adjoint solution at the outer boundary.
The adjoint solution is homogeneous in a separated outer collar.
Its energy bound and fixed-order boundary regularity give this norm
using only a fixed outer graph index. Interior elliptic regularity
then bounds every required physical derivative near the hole.
The same argument differentiated twice uses
\eqref{eq:first-u-variation}--\eqref{eq:second-u-variation};
the graph and direction norms needed for the input estimate remain
at a fixed index depending on $m$.

Write $v_\gamma=a w_\gamma$, choosing the free constant consistently.
The function $w_\gamma$ has zero outer trace, inner trace
$-J_\gamma$ modulo constants, and zero total inner flux.
In the fixed pullback coordinates, write $A$ for the normalized
elliptic coefficient matrix and put
$\mathscr L=\operatorname{div}(A\nabla)$,
$A_V=DA[V]$, and $A_{VW}=D^2A[V,W]$.
For graph directions $V,W$, its variations satisfy
\begin{equation}\label{eq:cross-normalized-variation-problems}
\begin{aligned}
\mathscr L w_V&=-\operatorname{div}(A_V\nabla w_\gamma),\\
\mathscr L w_{VW}
&=-\operatorname{div}\bigl(A_{VW}\nabla w_\gamma
+A_V\nabla w_W+A_W\nabla w_V\bigr).
\end{aligned}
\end{equation}
Their outer traces are zero and their inner traces are
$-DJ_\gamma[V]$ and $-D^2J_\gamma[V,W]$ modulo constants.
The equations are tested in the zero-flux space, so the differentiated
conormal flux includes the displayed coefficient sources.
The factor $a$ has been removed before differentiating: it is a fixed
parameter for these variations. Inner displacements are $a\delta\eta$,
and the derivatives of the scaled pullback coefficients are uniformly
bounded by Lemma~\ref{lem:flattening-geometry}. Thus the coefficients
in both variation equations have uniform bounds at the scaled inner boundary.

The preceding negative-input bounds for $J_\gamma$ and its variations
give the lower collar $H^1$ bounds for $w_\gamma,w_V,w_{VW}$ by the
energy estimate. The direct term from $H_\gamma$ satisfies the same
bounds. Apply these estimates to $\gamma=e^{in\varphi}$ and sum with
weight $\langle n\rangle^{2N_1}$, exactly as in
\eqref{eq:negative-input-kernel-summation}. The choice $m=N_1+1$
makes the sum converge. Thus the potential kernels and their first
two variations have the required $H^1$ output-collar norm with
$N_1$ derivatives in the input variable.

Finally apply tame boundary regularity to
\eqref{eq:cross-normalized-variation-problems}, treating
$L^2_\varphi$ as a coefficient Hilbert space. The conormal variation
contains $A_V\nabla w_\gamma+A\nabla w_V$ at first order, and the
three source terms in the second equation together with
$A\nabla w_{VW}$ at second order. Each product has at most one
factor at high regularity. Restoring $v_\gamma=a w_\gamma$ and the
direct term $a\partial_{n_-}H_\gamma$ restores a factor $a$ in every
term. This proves the three kernel estimates, with the fixed input
regularity and tame elliptic loss included in $b_0,d_{\rm ker}$.
At the reference graphs these bounds hold at every fixed output
index. Pairing the kernel with an $H^{-N_1}$ input and multiplying
by $t_Y/a=\Theta a/q$ gives
\eqref{eq:cross-reference-scaled-bound}, since $\Theta/q$ has
polynomial bounds by Lemma~\ref{lem:frozen-admissibility}.
\end{proof}

\subsection{Energy estimates at positive surface tension}

For the outer graph and physical potential trace, set
\begin{equation*}
\mathcal X_s^+=\|h_+\|_{H^{s+1}}+\|g\|_{H^{s+1/2}}.
\end{equation*}
This norm measures the outer boundary's contribution to the inner
system in physical outer trace variables.

\noindent\textbf{Choice of the fixed regularity indices.}
We make the order of choices precise. Fix $N=24$ frequency derivatives
in Lemma~\ref{lem:finite-tame-dn-symbol} and fix the kernel input index
$N_1=4$, using $m=N_1+1=5$ in the adjoint and Fourier summation
arguments. The extra frequency derivatives cover the second composition
terms while leaving the twenty derivatives used in
Lemma~\ref{lem:finite-bony-weyl}. Enlarge the fixed losses, when
necessary, to cover the first two state differentials, products,
Schur lifts, and the kernels of
Lemmas~\ref{lem:reconstructed-kernel-bounds} and~\ref{lem:cross-kernel-bounds}
at $N_1=4$, together with the outer kernels in
\eqref{eq:outer-kernels-C2-negative-input}. Take $b_0,d_{\rm ker}$ to
dominate their fixed lower indices and kernel losses, and enlarge
$b_{\rm ad}$ to cover these lower indices. This only shrinks
$\mathcal A_Y$, so Lemma~\ref{lem:frozen-admissibility} still applies.
Denote the common lower index for the differential estimates by $r_0$
and their coarse loss by $d$. Let $d_1,d_3$ be the losses
in Steps~3 and~2, respectively, of
Lemma~\ref{lem:finite-loss-dyadic} for this collection of operators.
Choose an integer $k_0$ controlling all these fixed lower seminorms,
including $b_{\rm ad}$, $b_0$, $b_{\rm ell}$, the lower indices for the
$m=5$ adjoint estimates and both kernel variations, the eight angular
derivatives in Lemma~\ref{lem:finite-bony-weyl}, and the coefficient
norms in the first mode, outer velocity and pressure estimates.
The elliptic loss $d_{\rm ell}$ and $L_{\rm T}$ in
\eqref{eq:taylor-fixed-spatial-loss} have already been fixed. Set
$L_{\rm coeff}=L_{\rm T}+12$. Then choose
integers satisfying
\begin{equation}\label{eq:positive-regularity-hierarchy}
\begin{split}
s_{\rm dy}&>\max\{k_0,\ r_0+d+4,\ r_0+d_3+2,
\ r_0+d_1+6,\ 2+d_{\rm ker}-N_1\},\\
s_*&\ge\max\{s_{\rm dy}+1,\ k_0+L_{\rm coeff}\}.
\end{split}
\end{equation}
All losses in this prescription are fixed before the energy index $s$,
the terminal value $Y_N$, and the bootstrap constants are chosen.
For clarity, the sufficient spatial indices for
Lemma~\ref{lem:normalized-taylor-variation} at output index $k$ are
\begin{equation}\label{eq:coefficient-regularity-budget}
\begin{array}{c|cccc}
&\eta&F&g/q&h_+\\ \hline
\hbox{spatial input}
&k+L_{\rm T}+6&k+L_{\rm T}+5&k+L_{\rm T}+8&k+L_{\rm T}+10\\
\hbox{first }Y\hbox{ derivative}
&k+L_{\rm T}+2&k+L_{\rm T}+1&k+L_{\rm T}+4&k+L_{\rm T}+4 .
\end{array}
\end{equation}
The first line lists Sobolev indices. The second line is supplied by
Proposition~\ref{prop:raw-state-first-jet} at index $k+L_{\rm T}$:
its trace inputs are exactly the first three entries of the first line.
It also supplies $R_Y,\beta_Y,\mathfrak c_Y$ and $p_q$.
The outer chart requirements are included explicitly in the last
column. The margin of twelve in $L_{\rm coeff}$ includes this column
and two further derivatives for embeddings. Hence the bootstrap at
$b=s_*$ supplies every coefficient estimate at $k=k_0$.
Factors $q^{\pm1}$ change only the polynomial weights. The constants $d,d_1,d_3,d_{\rm ker}$ include
the outer kernel estimates \eqref{eq:outer-kernels-C2-negative-input}
as well as the inner estimates.

\begin{theorem}
\label{thm:actual-principal-reduction}
Fix $\sigma>0$.  There exist fixed integers $s_{\mathrm{dy}}<s_*$, independent of the
Sobolev index $s$ and terminal value $Y_N$, chosen by
\eqref{eq:positive-regularity-hierarchy}, with the following property.
Suppose a smooth Euler solution on $[Y_0,Y_N]$ satisfies
\eqref{eq:physical-bootstrap-neighbourhood} at $b=s_*$, with
$q\asymp(1+Y)^{-1/2}$, $|\alpha|\le\delta_1q$ and the prescribed
terminal first mode equal to zero.  Choose $Y_0$ sufficiently large using the bounds at this fixed lower
regularity.  Then there are inner variables
$(\mathfrak h,\mathfrak\pi)$ and an energy $E_s$, constructed in Step~1
of the proof using the fixed coefficient cutoff.
The variables vanish at canonical terminal data and reconstruct the physical
high traces through
\begin{equation}\label{eq:new-trace-reconstruction}
\|\eta\|_{H^s}+q^{-1}\|f\|_{H^s}
\le C_s a^{-1}E_s,\qquad s\ge s_*.
\end{equation}
They satisfy the nonlinear equations
\begin{align}
(\partial_Y+T)\mathfrak h
&=\mathfrak h+\mathsf L_0\mathfrak\pi+F_*,\notag\\
(\partial_Y+T)\mathfrak\pi
&=-\mathsf M_0\mathfrak h-(1+p_q)\mathfrak\pi+G_*,
\label{eq:actual-principal-equations}
\end{align}
where, with $P=P_{\ge2}$,
\begin{equation}\label{eq:retained-principal-block}
\mathsf L_0=\Theta P\mathcal W(x+\ell_0)P,\qquad
\mathsf M_0=\Theta P\mathcal W(\tau_0(Y)+B_q(x^2-1))P,
\qquad \tau_0=P_0\mathfrak t_-.
\end{equation}
The common transport $T=iP\mathcal W(\mathcal V\xi)P$ is
skew-symmetric on smooth functions.
With the natural source norm of
Theorem~\ref{thm:natural-variable-principal-energy}, evaluated at $B=B_q$,
\begin{equation}\label{eq:actual-full-source-bound}
\mathcal N_s(F_*,G_*)
\le C_s\varepsilon_*(Y)E_s+
C_s\mathcal P_s(Y)(a^3+a^2\mathcal X_s^+),
\qquad
\varepsilon_*\in L^1([Y_0,\infty)).
\end{equation}
The function $\varepsilon_*$ depends only on the lower norms and may
be chosen as a constant multiple of $(1+Y)^{-3/2}+a^{1/8}$.
The energy satisfies the backward differential inequality
\begin{equation}\label{eq:actual-inner-energy}
-\partial_Y E_s\le(1+C_s\varepsilon_*)E_s+
C_s\mathcal P_s(Y)(a^3+a^2\mathcal X_s^+).
\end{equation}
\end{theorem}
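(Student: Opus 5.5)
The argument has three stages. We first build the natural inner variables from the exact normal variational system, then paralinearize the nonlinear inner equations around the reference state. Finally we feed the resulting principal system into Theorem~\ref{thm:natural-variable-principal-energy}, treating everything else as a source.

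\emph{Step 1 (variables).} Take the unknowns $(h_0,\pi_0)$ from Proposition~\ref{prop:polar-full-cancellation}, with $h_0=(d/j)a\eta$ and $\pi_0=-(aF+ba\eta)$, and project them with $P=P_{\ge2}$. Conjugate by the half-density factor $w^{1/2}$ and divide by the scale $a$. Define $(\mathfrak h,\mathfrak\pi)=P\mathcal W(\mathsf S)P(\eta,F)$, where $\mathsf S$ is the triangular order-zero symbol encoding these operations. The energy $E_s$ is then the quadratic energy of Theorem~\ref{thm:natural-variable-principal-energy}, built with $P_s=P\mathcal W(x^s)P$ and $B=B_q$. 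At canonical terminal data $\eta=F=0$, so the variables vanish. The reconstruction bound \eqref{eq:new-trace-reconstruction} follows from the equivalence $E_s\asymp\|\mathfrak h\|_{H^s}+\sqrt{B_q}\|\mathfrak h\|_{H^{s+1}}+\|\mathfrak\pi\|_{H^{s+1/2}}$, combined with \eqref{eq:common-elliptic-reconstruction} of Lemma~\ref{lem:common-conjugacy}. The $L^2$ invertibility needed there is a small perturbation of the identity. The factor $a^{-1}$ comes from undoing the scale, and the factor $q^{-1}$ from $F=f/q$.

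\emph{Step 2 (paralinearization).} We apply Lemma~\ref{lem:finite-loss-dyadic} to the boundary vector field $\mathscr F(U,\Lambda(U))$, whose scalars are reconstructed through Lemmas~\ref{lem:scalar-noncircular} and~\ref{lem:frozen-admissibility}. On the admissible set $\mathcal A_Y$ the bootstrap at $b=s_*$ gives $\|U\|_{H^{s_{\rm dy}}}\le\mathcal P a^2\le a^{3/2}$. The tame hypotheses are supplied by three results. Lemma~\ref{lem:finite-tame-dn-symbol} provides the local DN symbol $x+\ell_0+r_l$ and the global kernel $K_g$. Proposition~\ref{prop:full-C2-curvature} provides the curvature, whose half-density symbol is \eqref{eq:Jacobi-half-density-symbol}. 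Lemmas~\ref{lem:reconstructed-kernel-bounds} and~\ref{lem:cross-kernel-bounds} provide the finite-rank kernels. Lemma~\ref{lem:common-conjugacy} then transports the exact variational identity \eqref{eq:polar-complete-normal-block} to the nonlinear state. This cancels the shear term through \eqref{eq:zero-inner-pressure-cancellation}, which produces the Taylor coefficient $\mathfrak t_-$ in $\mathsf M$. The leading parts that survive are $\Theta\mathcal W(x+\ell_0)$, $\Theta(\mathfrak t_-+B_q(x^2-1))$, the transport $\mathcal V\partial_\theta$ (which becomes the skew operator $T$), the growth coefficient $d_-=1+O(\mathcal Pa)$, and the damping $-(1+p_q)$. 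Splitting $\mathfrak t_-=\tau_0+(\mathfrak t_--\tau_0)$, we keep $\tau_0$ in $\mathsf M_0$. The remainder is bounded by $\mathcal P a$ through \eqref{eq:actual-inner-taylor}, after discounting the $P_0$ part.

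\emph{Step 3 (sources).} We collect the sources and check that each fits \eqref{eq:actual-full-source-bound}.
\begin{enumerate}
\item[(a)] The reference forcing $\mathcal F(0)+D\mathcal F(0)U$ consists of the $a^3$ kinetic source $\mathfrak f_2\cos2\theta$ of Proposition~\ref{prop:exact-polar-raw-system}, together with the cross block \eqref{eq:cross-reference-scaled-bound}. These contribute $\mathcal P(a^3+a^2\mathcal X^+_s)$.
\item[(b)] The $a^{1/4}$-small reference kernel $K_g(0)$ and the zero-order coefficient errors are all of size $\mathcal P a^\nu$. They are controlled in the mixed norm by Lemma~\ref{lem:zeroth-mixed-cost}, which costs a factor $B_q^{-1/4}$. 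This gives $\mathcal P a^{1/4-1/4+\ldots}$, and we must check that every such coefficient has $\nu>1/4$, producing integrable weights of the form $a^{1/8}$.
\item[(c)] The $Y$-derivative terms of the coefficients give $(1+Y)^{-3/2}$ and $(1+Y)^{-2}$ contributions, via Lemma~\ref{lem:normalized-taylor-variation} for $\tau_{0,Y}$ and via $p_q$.
\item[(d)] The high--high remainder \eqref{eq:finite-loss-hh-bound} is bounded by $\|U\|_{H^{s_{\rm dy}}}\|U\|_{H^s}\lesssim \mathcal Pa^2\cdot a^{-1}E_s$, which is integrable.
\end{enumerate}
Once the sources are controlled, Theorem~\ref{thm:natural-variable-principal-energy} applies. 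It requires $B_Y/B=-1-2p_q$, which holds because $B_q=\sigma a/q^2$, and $p_q\le\epsilon$, which holds because $p_q=-1/(2Y)+O(a)$. The theorem then yields \eqref{eq:actual-inner-energy}.

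\emph{Main obstacle.} We expect the hardest part to be Step 3(b). Each order-zero error must carry a power of $a$ strictly above $1/4$, or else must enter in a way the weighted norm absorbs, because $B_q\to0$ destroys control of $\|\mathfrak h\|_{H^{s+1/2}}$. The delicate case is the finite-rank and global-kernel contributions, which are only $a^{1/4}$-small. For these we would exploit the fact that they are smoothing, so they map into $H^{s+1}$ and their output requires no half-derivative of $\mathfrak h$. They therefore need only $\|\mathfrak h\|_{H^s}$, and the $B_q^{-1/4}$ loss is avoided altogether.
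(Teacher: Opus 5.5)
Your architecture is the paper's: the triangular good unknown $\mathsf S$, the energy of Theorem~\ref{thm:natural-variable-principal-energy}, dyadic paralinearization on $\mathcal A_Y$, the nonlinear conjugacy, and a sorting of sources by order using the $B_q^{-1/4}$ cost of Lemma~\ref{lem:zeroth-mixed-cost} and the smoothing escape for the $a^{1/4}$ kernels. The gap is in how you account for the exact reference operator.

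You list $\mathcal F(0)+D\mathcal F(0)U$ as forcing of size $a^3+a^2\mathcal X_s^+$. That is wrong. Lemma~\ref{lem:finite-loss-dyadic} keeps $D\mathcal F(0)$ exactly, and it contains the principal inner block.
\begin{itemize}
\item Conjugated by the reference matrix at the circular cylindrical state, its height-to-momentum entry is $-(1+p_q)-B_q(|D_\theta|^2-1)$. Here $p_q$ comes from the energy--flux reconstruction.
\item Your retained $\mathsf M_0$ instead carries $\Theta\tau_0$, which comes from the pressure reconstruction.
\item The difference is a scalar order-zero $\mathfrak h\to G_*$ coefficient. Lemma~\ref{lem:zeroth-mixed-cost} therefore multiplies it by $B_q^{-1/4}\asymp a^{-1/4}(1+Y)^{-1/4}$.
\item A priori each of $1+p_q$ and $\tau_0$ is only $1+O(1/Y)$. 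An $O(1/Y)$ mismatch would give a coefficient of size $a^{-1/4}(1+Y)^{-5/4}$, which grows exponentially in $Y$.
\end{itemize}
Your Step~2 split removes only $P_{\ne0}\mathfrak t_-$, and your Step~3(b) test $\nu>1/4$ is never applied to this term. The paper closes it in Step~5 with $1+p_q-\tau_0=O(\mathcal P(Y)a)$. This holds because \eqref{eq:raw-q-first-jet} and \eqref{eq:actual-inner-taylor} carry the same capacity correction $-1/(2(Y+\mathfrak c))$; it is one of the two cancellations the introduction calls essential. You need this identity. The alternative is an explicit proof that the exact conjugacy along the trajectory, through \eqref{eq:zero-inner-pressure-cancellation}, already gives $-\Theta\mathfrak t_-$ in the nonlinear reduction, with every scalar order-zero remainder of size $O(\mathcal P a)$.

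Two further points fail as written.

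\emph{Normalization.} The variables must be $P\mathcal W(\mathsf S)P(a\eta,aF)$, not $P\mathcal W(\mathsf S)P(\eta,F)$. Since $a_Y=-a$, dividing by $a$ adds $1$ to both diagonal coefficients. Your variables then satisfy $\partial_Y\mathfrak h=2\mathfrak h+\cdots$ with damping $-p_q\mathfrak\pi$ and forcing of size $a^2$. Neither \eqref{eq:actual-principal-equations} nor \eqref{eq:actual-inner-energy} holds for them, and the factor $a^{-1}$ in \eqref{eq:new-trace-reconstruction} appears only with the paper's scaling.

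\emph{Finite-rank terms.} Being smoothing removes the $B_q^{-1/4}$ loss, but it does not make these terms small. Proposition~\ref{prop:charge-free-localization} gives $a^{1/4}$ only for the global DN kernel. The scalar chain-rule functionals in \eqref{eq:complete-scalar-chain-rule} have size $a^{-1}$ on the scaled inputs, by \eqref{eq:scalar-functional-scaled-bound}. Their inner-to-inner reference kernels are $O(\mathcal P a)$ only because of the explicit column bounds \eqref{eq:inner-scalar-columns}. For instance, the $\kappa$-column $P(\varsigma\upsilon\sin\theta)=O(|\beta|)$ is small only through the first-mode decay $|\beta|\le\mathcal P a$ of Proposition~\ref{prop:first-mode-backward-amplitude}. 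Lemma~\ref{lem:reconstructed-kernel-bounds}, which you cite, controls only differences from these reference kernels.
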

\begin{proof}
We divide the proof into six steps.

\emph{Step 1. The change of unknown and the energy.}
Proposition~\ref{prop:first-mode-backward-amplitude} gives the decay estimates
for the first potential harmonic and the centre.  Proposition~\ref{prop:raw-state-first-jet}
and the Taylor coefficient estimates give the fixed lower coefficient
bounds and their first $Y$ derivatives used below.

Let $w=(\rho/R)j$, and define the triangular symbol
\begin{equation*}
\mathsf S=\begin{pmatrix}
\sqrt w\,d/j&0\\
-\sqrt w\,b&-\sqrt w
\end{pmatrix}.
\end{equation*}
With the fixed cutoff quantization of Lemma~\ref{lem:finite-bony-weyl}, put
\begin{equation*}
\binom{\mathfrak h}{\mathfrak\pi}
=P\mathcal W(\mathsf S)P\binom{a\eta}{aF}.
\end{equation*}
The projection $P$ uses the original polar angle.  At the circular cylindrical
reference state with zero nonconstant traces,
$\mathsf S_0=\left(\begin{smallmatrix}1&0\\-1&-1\end{smallmatrix}\right)$.
The fixed lower coefficient bounds make
$\|P\mathcal W(\mathsf S)P-\mathsf S_0\|_{PL^2\to PL^2}$ small.
Use the retained operators in \eqref{eq:retained-principal-block} and
put $P_s=P\mathcal W(x^s)P$, with the fixed low-frequency completion
of Theorem~\ref{thm:natural-variable-principal-energy}.  Define
\begin{equation*}
E_s^2=\sum_{A\in\{I,P_s\}}\left(
\langle\mathsf M_0 A\mathfrak h,A\mathfrak h\rangle
+\langle\mathsf L_0 A\mathfrak\pi,A\mathfrak\pi\rangle\right),
\end{equation*}
where $I$ is the identity on $PL^2$ and the brackets denote the
$L^2(d\theta)$ pairing.  The fixed lower coefficient bounds and
the added lower form give positivity and, for each fixed $s$,
\begin{equation*}
E_s\asymp\|\mathfrak h\|_{H^s}
+\sqrt{B_q}\|\mathfrak h\|_{H^{s+1}}
+\|\mathfrak\pi\|_{H^{s+1/2}},
\end{equation*}
uniformly in $Y$ and the finite terminal value.  The quadratic form
and elliptic reconstruction are those of
Theorem~\ref{thm:natural-variable-principal-energy}.
Lemma~\ref{lem:common-conjugacy} then gives
\eqref{eq:new-trace-reconstruction} at every higher index with the
same threshold.
Lemma~\ref{lem:zeroth-mixed-cost} controls the factor
$O(\mathcal P a)B_q^{-1/4}$ arising from the off-diagonal terms in the mixed norms.

\emph{Step 2. The nonlinear map in the original variables.}
Use Proposition~\ref{prop:exact-polar-raw-system} and the outer
normal graph equations with
$U=(\eta,F,h_+,g)$, keeping $a,R,\beta$ as parameters. Throughout this
proof the reference $U=0$ retains these parameters; in particular
$\beta$ need not vanish. Lemma~\ref{lem:scalar-noncircular} reconstructs
$q,\Theta,\kappa,p_q$ as functions of $U$, using the physical outer
trace $g$. We apply Lemma~\ref{lem:finite-loss-dyadic} to this complete
vector field on $\mathcal A_Y$ in \eqref{eq:frozen-admissible-domain}.
The first mode estimate supplies $\mathcal P_{\rm ref}$, and the
bootstrap at $s_*$ places the actual state in its smaller
$a^{3/2}$ ball after increasing $Y_0$.
Lemma~\ref{lem:frozen-admissibility} gives the energy root and
modulation bounds throughout the frozen states and Taylor segments.
The domain version of Lemma~\ref{lem:finite-loss-dyadic} therefore
applies. Its local differential is computed with the reconstructed
scalars frozen, and its scalar chain rule terms are included as kernel
operators. The DN reconstruction includes the exact constant lift.
Lemma~\ref{lem:finite-tame-dn-symbol}, the Schur differentials and
product estimates give its first two differentials with fixed spatial
loss and polynomial constants.  Inner variations are $a\delta\eta$
and $q\delta F$, and the normalized normal derivative is $a\partial_n$,
so the elliptic constants depend polynomially on $Y$.  The curvature
$B_q\mathscr C$, with $\mathscr C=a\mathcal H_-$ and $B_q\le1$, is a
uniformly tame graph map of order two by Proposition~\ref{prop:full-C2-curvature}.  The scalar inverses contributed by the constant lift have polynomial
bounds.  The outer rows use the same elliptic variation equations on
a fixed collar and carry $t_Y$.
Together with the Bernoulli products, these bounds verify
\eqref{eq:coarse-second-tame} for each original equation.  The remainder carries a factor
$a$ in the scaled inner rows and $t_Y$ in the outer rows.

Perform the full DN shape cancellation in
Proposition~\ref{prop:polar-full-cancellation} before separating the
local and kernel terms; this includes the DN map applied to $bH$
when $b$ contains the constant lift. Lemma~\ref{lem:reconstructed-kernel-bounds}
controls the scalar chain rule, lift and fixed-projection kernels,
with the outer factor $a^2$ extracted before differentiating.
Lemma~\ref{lem:cross-kernel-bounds} supplies the zero-flux cross kernel
and its first two graph variations, including the factor $a$ in
$a\mathsf G^0_{-+}$ and the factor $a^2$ in the reference scaled row.

The normal variational equations use
$\gamma=\delta g-B_+H_+$, up to the fixed graph factors.
The required lower norms of $B_+$ are controlled by
Lemma~\ref{lem:zero-charge-outer-derivative}. Multiplying the input
kernel by these factors preserves the estimates. The reference factors
are smooth at every fixed index. Thus
\eqref{eq:cross-reference-scaled-bound} controls both outer trace and
outer shape inputs. The variation bounds give the kernel hypotheses
for the complete nonlinear map, including these inputs.

\emph{Step 3. Paralinearization and the high frequency remainder.}
The DN differential has the finite tame symbol of
Lemma~\ref{lem:finite-tame-dn-symbol}.  The local transport, kinetic
and curvature differentials have tame coefficients depending smoothly
on finitely many geometric derivatives and the reconstructed velocity.  In the full shape formula of
Proposition~\ref{prop:polar-full-cancellation}, separate the local and
smooth kernel parts of the inner DN contribution acting on $bH$.
Use the decomposition of Lemma~\ref{lem:finite-tame-dn-symbol} in its
normalized half-density variables, with the normalization and
half-density factors restored in the physical variables.
This decomposition accounts for the action of high shape frequencies
on a low-frequency potential before the dyadic estimate is applied.
Apply Lemma~\ref{lem:finite-loss-dyadic}, choosing $s_{\mathrm{dy}}$
large enough for its fixed derivative losses and symbol seminorms.
For the inner rows, this gives
\begin{equation*}
a\|\mathcal R_{\rm hh}\|_{H^{s+2}}
\le C_s\mathcal P_s(Y)\|U\|_{H^{s_{\mathrm{dy}}}}
(E_s+a\mathcal X_s^+).
\end{equation*}
Here $U=(\eta,F,h_+,g)$ consists of the unscaled inner high mode
variables and the physical outer variables, with $F=f/q$.
In particular, this application includes the scalar chain rule kernels
of \eqref{eq:complete-scalar-chain-rule}.
The algebraic change of unknown reconstructs the inner variables.
By the lower bootstrap bound
$\|U\|_{H^{s_{\mathrm{dy}}}}\le\mathcal P(Y)a^2$, the remainder is bounded
in the required source norms by an integrable multiple of $E_s$ plus
$C_s\mathcal P_s(Y)a^2\mathcal X_s^+$.  The exact reference linear operator remains
as in \eqref{eq:finite-loss-nonlinear-reduction}.  The errors from the coefficient cutoff
are smooth, vanish at the limiting constant-coefficient state,
and have seminorms at the reference circle bounded by $C_s\mathcal P_s(Y)a^{1/4}$.
They are therefore integrable linear errors.

\emph{Step 4. Conjugacy and the finite dimensional corrections.}
Proposition~\ref{prop:polar-full-cancellation}, followed by the normal
and half-density factors $d/j$ and $\sqrt w$, verifies
\eqref{eq:common-exact-conjugacy} for this matrix.  The normal
good unknown identity cancels the full DN contribution in the
momentum row before truncation, including its nonlocal part.
The conversion from radial to normal variations incorporates the tangential curvature
contribution into \eqref{eq:polar-complete-normal-block}.
Equation~\eqref{eq:Jacobi-half-density-symbol} contains the complete
first-order Jacobi term in $\operatorname{Op}^{\rm w}(x^2)$.
The nonlinear conclusion of
Lemma~\ref{lem:common-conjugacy} therefore gives the common
transport and the retained block \eqref{eq:retained-principal-block},
with $\mathsf S_Y$ evaluated along the trajectory.

The local conjugacy freezes the reconstructed scalars only when
identifying the local symbol. Their derivatives along the trajectory
remain in $(\mathsf S)_Y$ and the scaling terms. The complete nonlinear
reduction in Steps~2--3 has already included their state differentials.
We now estimate the exact reference kernels left by that reduction.

For clarity, the scalar columns in the two scaled inner rows are
$a$ times
\begin{equation}\label{eq:inner-scalar-columns}
\begin{array}{c|c|c}
&a^{-1}\partial_{\Lambda_\mu}(a\eta)_Y
&a^{-1}\partial_{\Lambda_\mu}(aF)_Y\\ \hline
\Theta&-P(\varsigma n)
&P\bigl((n^2-\upsilon^2)/2-un\upsilon-B_q\mathscr C\bigr)\\
\kappa&-PC&-P(\varsigma\upsilon\sin\theta)\\
p_q&0&-F\\
\lambda&-\Theta P(\varsigma n_\lambda)
&\Theta P\bigl((n-u\upsilon)n_\lambda+2B_q\mathscr C\bigr).
\end{array}
\end{equation}
Here the partial derivatives freeze the other scalar parameters and
$\beta,F,g$; linearity of the harmonic reconstruction gives
$n_\lambda=-a\mathsf G^0_{-+}g/q$. In particular $n_\lambda=0$ at
$U=0$. At this reference state the condenser and first mode expansions
give
\begin{gather*}
n=n_0+n_1\cos\theta+O_{H^r}(\mathcal P_r a^2),\qquad
n_0=1+O(\mathcal P a^2),\\
n_1=O(\mathcal P a),\qquad
\upsilon=-\beta\sin\theta,\qquad |\beta|+|\kappa|\le\mathcal P a.
\end{gather*}
and $P\mathscr C=O_{H^r}(a^2)$. The latter follows directly by
expanding the circular curvature $-a^{-1}-\cos\theta/(R+a\cos\theta)$
after multiplication by $a$. Thus the reference column norms in
\eqref{eq:inner-scalar-columns}, at every fixed $H^r$ index, are
bounded by $C_r\mathcal P_r a^2$ for $\Theta$, by
$C_r\mathcal P_r a$ for $\kappa$, vanish for $p_q$, and are bounded
by $C_r\mathcal P_r a^2$ for $\lambda$; here $B_q\le1$ suffices.
In the outer rows every scalar column has size
$C_r\mathcal P_r a^2$, since the physical equations carry
$t_Y=\Theta a^2/q$ and the reference velocity is smooth at the
outer scale.

The moment formulas and Lemma~\ref{lem:scalar-noncircular} give, for
the fixed index $b_0\le k_0$ chosen above,
\begin{equation}\label{eq:scalar-functional-scaled-bound}
|D\Lambda(0)[V]|
\le\mathcal P(Y)\left[
a^{-1}(\|H\|_{H^{b_0}}+\|Z\|_{H^{b_0}})
+\|H_+\|_{H^{b_0}}+\|\delta g\|_{H^{b_0}}\right],
\end{equation}
where $H=a\delta\eta$ and $Z=a\delta F$.
Multiplying this bound by the preceding column bounds proves an
$O(\mathcal P a)$ inner-to-inner reference kernel, an
$O(\mathcal P a^2)$ outer-to-inner reference kernel, and an
$O(\mathcal P a)$ inner-to-outer reference kernel in the scaled
equations. These are bounds for the full smooth reference kernels at
every fixed output index, with a fixed input index.

For the Schur terms the same powers can be checked before estimating
the scalar inverse. Write $g_e=\mathfrak G e$. At the circular
reference graphs the layer expansion gives
\begin{equation}\label{eq:schur-reference-profile}
\begin{gathered}
c^{-1}=O(\mathcal P(Y)),\qquad
(g_e)_-=c/(2\pi)+O_{H^r}(\mathcal P_r a),\\
P(g_e)_-=O_{H^r}(\mathcal P_r a^2).
\end{gathered}
\end{equation}
The formula is in the fixed flux pairing; the constant term is fixed
by $c=\int_\T(g_e)_-\,d\theta$.
Division by the inner metric factor and multiplication by the normal
and half-density factors preserve the $O(a)$ bound after removal of
the constant mode, and the high reference profiles have polynomial
bounds with this factor. The nonconstant part of each input moment
has the same bounds by adjointness. At the cylindrical limit these
moments annihilate the retained high modes. A first-order perturbation
of a profile, a moment, or a chart factor therefore supplies at least
one factor $a$, and $c^{-1}$ supplies only a polynomial in $Y$.
The normalized inner reference Schur and finite-projection kernels
are consequently $O_{H^r_\theta H^{N_1}_\varphi}(\mathcal P_r a)$.
For an outer input the possibly flux-carrying terms must be combined
as in \eqref{eq:cross-exact-schur}; their scaled bound is
\eqref{eq:cross-reference-scaled-bound}.
For an inner input and outer output the physical potential factor is
$q/a$, so that $t_Y(q/a)=\Theta a$; the separated trace estimates of
Lemma~\ref{lem:zero-separated-asymmetric} give the stated
$O(\mathcal P_r a)$ bound. The adjoint collar argument in
Lemma~\ref{lem:cross-kernel-bounds} allows the same fixed negative input index for these reference kernels.

In the original chart equations, the projections $P$ on the inner
trace and $P_+=I-P_0$ on the outer trace annihilate the common
Bernoulli scalar. We apply these fixed projections before changing
the unknown. In particular
$P\mathcal W(\mathsf S)P$ applied to the inner potential component of
a constant gauge is zero. On the outer boundary the right-hand
projection is $\operatorname{diag}(I,P_+)$ and gives the same
cancellation. If a projection is commuted through a chart factor in
the normal variational identity, retain the resulting finite-rank
commutator. Its input and output profiles are that factor applied to
a fixed low Fourier mode, so it satisfies
\eqref{eq:finite-rank-kernel-tame}--\eqref{eq:finite-rank-kernel-difference}.
The inner chart factors are constant at the cylindrical limit; their
reference nonconstant parts are $O(\mathcal P_r a)$. The outer
commutators carry $t_Y$.

To record the nonlinear conclusion, let $\delta_b$ bound the fixed
lower norms of $U$, and let $\mathcal R^{\rm sg}_-$ and
$\mathcal R^{\rm sg}_+$ denote the scalar, lift, and projection
contributions after the local conjugacy, including their shares of
the dyadic remainder. The reference kernel estimates above and
\eqref{eq:finite-loss-hh-bound} give
\begin{align}
\mathcal N_s(\mathcal R^{\rm sg}_-)
&\le C_s\mathcal P_s\bigl[
aE_s+a^2\mathcal X_s^++\delta_b(E_s+a\mathcal X_s^+)\bigr],
\label{eq:scalar-gauge-schur-inner-bound}\\
\|\mathcal R^{\rm sg}_+\|_{H^{s+2}}
&\le C_s\mathcal P_s\bigl[
aE_s+a^2\mathcal X_s^++a^2\delta_b(a^{-1}E_s+\mathcal X_s^+)\bigr].
\label{eq:scalar-gauge-schur-outer-bound}
\end{align}
Here the outer norm denotes the sum of the two component norms;
using $H^{s+2}$ is permissible for these smooth reference kernels and
dyadic remainders. The homogeneous terms have zero value at $U=0$;
the value of the full vector field at $U=0$ is estimated in Step~6.
To see explicitly why the nonlinear kernel differences have no
derivative loss, their frozen dyadic blocks, after division by the
row factor, are bounded by
\begin{equation*}
C_s\mathcal P_s\,
2^{(2+d_{\rm ker}-s_{\rm dy}-N_1)j}
\|U\|_{H^{s_{\rm dy}}}\|U\|_{H^s}.
\end{equation*}
This is Step~4 of Lemma~\ref{lem:finite-loss-dyadic} applied to
\eqref{eq:finite-rank-kernel-difference}; the exponent is strictly
negative by \eqref{eq:positive-regularity-hierarchy}. The quadratic
Taylor terms use its Step~1. Multiplication by $a$ or $a^2$ and
\eqref{eq:new-trace-reconstruction} gives precisely the last terms
in \eqref{eq:scalar-gauge-schur-inner-bound}--
\eqref{eq:scalar-gauge-schur-outer-bound}. The bounded fixed-cutoff
changes of unknown preserve these bounds. In particular, taking
$\delta_b\le\mathcal P a^2$, or the weaker $\delta_b\le a^{3/2}$
used for bootstrap closure, gives an integrable diagonal error and
the required $\mathcal P_s a^2\mathcal X_s^+$ and
$\mathcal P_s aE_s$ cross errors.

\emph{Step 5. Estimates for the coefficients.}
Proposition~\ref{prop:raw-state-first-jet} supplies the physical trace,
geometric and scalar derivatives at the indices in
\eqref{eq:coefficient-regularity-budget}. Its estimate \eqref{eq:raw-q-first-jet} uses
only the physical bootstrap and the first mode bound.  Differentiating
the normalized harmonic reconstruction once therefore gives
$n_Y,\upsilon_Y=O(\mathcal P a)$.  The flux identity and the centred
modulation system retain the stronger mean bounds and give
$\Theta_Y=O(\mathcal P a^2)$.
These first state derivatives satisfy the hypotheses of
Lemma~\ref{lem:normalized-taylor-variation}, which gives
$(\mathfrak t_-)_Y=O((1+Y)^{-2}+\mathcal P a)$.
The lower bootstrap supplies all these spatial and time-direction
norms by \eqref{eq:positive-regularity-hierarchy}.
All norms in the next four displays are the fixed symbol or coefficient
norms required by the preceding lemmas, and $c_0$ is defined in
\eqref{eq:Jacobi-half-density-symbol}.  The resulting estimates are
\begin{equation*}
|\beta|+|\kappa|+\|n-1\|+\|\upsilon\|+\|b-1\|+\|b_Y\|
+\|\mathcal V\|\le\mathcal P(Y)a,
\end{equation*}
\begin{equation*}
\|j-1\|+\|j_Y\|+|\Theta-1|+|\Theta_Y|
\le\mathcal P(Y)a^2,
\end{equation*}
\begin{equation*}
\tau_0\in[3/4,5/4],\quad
|(\tau_0)_Y|\le C(1+Y)^{-2}+\mathcal P(Y)a,\quad
\|\mathfrak t_--\tau_0\|+\|c_0\|\le\mathcal P(Y)a,
\end{equation*}
\begin{equation*}
|\ell_0|+|\partial_Y\ell_0|\le\mathcal P(Y)a,
\qquad |p_q|\le1/4,\quad p_q\le\mathcal P(Y)a,
\qquad (B_q)_Y/B_q=-1-2p_q.
\end{equation*}
The identity relating the two means is
\begin{equation*}
1+p_q-\tau_0=O(\mathcal P(Y)a).
\end{equation*}
It follows from \eqref{eq:raw-q-first-jet} and
\eqref{eq:actual-inner-taylor} with the same capacity correction
$\mathfrak c$.  Using this common correction gives the
$O(\mathcal P(Y)a)$ remainder required for integrability after the
$B_q^{-1/4}$ loss.
At the circular cylindrical reference state, the transformed matrix of the
original system is exactly
$\left(\begin{smallmatrix}1&|D_\theta|\\
-(1+p_q)-B_q(|D_\theta|^2-1)&-(1+p_q)\end{smallmatrix}\right)$.
This identity controls the scalar coefficient that is not small
in the comparison with \eqref{eq:retained-principal-block}.
The derivatives of the original variables are given by
Proposition~\ref{prop:raw-state-first-jet}.

After subtracting the retained block, the remaining linearized
coefficients have the following orders and natural source bounds,
listed by matrix entry:
\begin{equation*}
\begin{array}{c|c|c}
\hbox{entry}&\hbox{order and coefficient size}&\hbox{natural source bound}\\ \hline
\mathfrak h\to F_*\,,\ \mathfrak\pi\to G_* &
0,\ \mathcal P a&C_s\mathcal P a E_s\\
\mathfrak h\to G_*&0,\ \mathcal P a&
C_s\mathcal P a B_q^{-1/4}E_s\\
\mathfrak\pi\to F_*&-1,\ \mathcal P a^{1/4}&
C_s\mathcal P a^{1/4}E_s\\
\hbox{remaining entries}&-1\ \hbox{or lower},\ \mathcal P a^{1/4}&
C_s\mathcal P a^{1/4}E_s.
\end{array}
\end{equation*}
The first row includes the half-density time coefficient, the skew
transport completion, and the perturbation of $d_-$ from one.  The
second includes $(\mathsf S)_Y$, the angular Taylor coefficient
$-\Theta(\mathfrak t_--\tau_0)$ and $-\Theta B_qc_0$.
The order-zero DN term $\ell_0$ remains in $\mathsf L_0$ to preserve the
source bound of order minus one in the third row.  The fixed cutoff estimates control all negative-order remainders
through fixed lower symbol seminorms.
Every capillary composition remainder retains its factor $B_q$.

\emph{Step 6. Forcing and coupling to the outer boundary.}
When the perturbation variables vanish, the forcing in the inner high modes has size
$\mathcal P_s(Y)a^3$.  The contributions are the normalized high modes of the condenser,
of size $O(\mathcal P_s a^2)$; the DN coupling from the first to the
high modes, of size $O(\mathcal P_s a\beta)$ by
Lemma~\ref{lem:weighted-layer-expansion}; the square of the first mode
and its centre term in \eqref{eq:exact-kinetic-low-source}; and the
high modes of the circular curvature.  Each is multiplied by $a$ in the scaled
inner rows. Projection removes the zeroth and first harmonics from this forcing.
In the normal variational system, the outer good unknown appears only
in the kinematic row. Lemma~\ref{lem:cross-kernel-bounds} gives its exact zero-flux
operator \eqref{eq:cross-exact-schur} and the reference bound
\eqref{eq:cross-reference-scaled-bound}. The normal and half-density
factors and finite projections preserve this bound. With $r=s+2$ this bounds
its natural source norm by $C_s\mathcal P_s a^2\mathcal X_s^+$.
For the state-dependent cross kernels,
\eqref{eq:cross-kernel-a-factor}--\eqref{eq:cross-kernel-a-variation}
give the kernel hypothesis used in Step~3. Their dyadic and quadratic
remainders are part of the already estimated complete remainder, with
bound $C_s\mathcal P_s\delta_b(E_s+a\mathcal X_s^+)$ in the inner
source norm. The scalar modifications of the cross operator are
included in \eqref{eq:scalar-gauge-schur-inner-bound}. Thus the full
cross contribution obeys
\begin{equation}\label{eq:complete-cross-source-bound}
\begin{split}
\mathcal N_s(\mathcal R_{-+})
&\le C_s\mathcal P_s\bigl[
a^2\mathcal X_s^++\delta_b(E_s+a\mathcal X_s^+)\bigr]\\
&\le C_s\varepsilon_*E_s+C_s\mathcal P_s a^2\mathcal X_s^+.
\end{split}
\end{equation}
This estimate includes the outer shape input through its good unknown.

Each homogeneous term in the table and each high--high remainder is
bounded by $C_s\varepsilon_*E_s$.  This follows from $B_q=\sigma a/q^2$
and $\mathcal P_s(Y)a^\delta\le C_{s,\delta}a^{1/8}$ for
$\delta\ge1/4$, after increasing a fixed constant.  The terms with polynomial
decay are bounded by $C_s(1+Y)^{-3/2}$.
This proves \eqref{eq:actual-full-source-bound} and the system
\eqref{eq:actual-principal-equations}.
The retained coefficients satisfy the assumptions of
Theorem~\ref{thm:natural-variable-principal-energy}, which gives
\eqref{eq:actual-inner-energy}.
\end{proof}

\begin{proposition}
\label{prop:actual-outer-principal-energy}
Fix $\sigma>0$.  Under the assumptions of
Theorem~\ref{thm:actual-principal-reduction}, there is an outer energy
$\mathcal O_s\asymp\mathcal X_s^+$ such that
\begin{equation}\label{eq:actual-outer-energy}
|\partial_Y\mathcal O_s|
\le C_s\varepsilon_*(Y)\mathcal O_s+
C_s\mathcal P_s(Y)(a^2+aE_s).
\end{equation}
The fixed lower norm is the one in
Theorem~\ref{thm:actual-principal-reduction}.
\end{proposition}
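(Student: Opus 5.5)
The outer energy will be built as a lower-order quadratic form in normal variables on $\Gamma_+$, mirroring Theorem~\ref{thm:natural-variable-principal-energy}. The outer problem sits at fixed physical scale, and the outer rows of the boundary system carry the factor $t_Y=\Theta a^2/q$. Because of this factor, no growth rate of order one appears; only integrable coefficients and forcing survive.

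\emph{Step 1 (setup).} Apply Proposition~\ref{prop:exact-normal-variational-system} on $\Gamma_+$ with the physical unknowns $(h_+,g)$, and pass to the outer good unknown $\gamma=\delta g-B_+H_+$, projected by $P_+=I-P_0$. In the $Y$ variable the system becomes
\begin{align*}
\partial_Y H_+&=t_Y\bigl(\mathsf G_{++}\gamma+c_+H_+\bigr)+t_Y\mathsf G_{+-}U_-,\\
\partial_Y\gamma&=-t_Y\bigl(\mathfrak a_++\sigma J_+\bigr)H_+,
\end{align*}
plus transport and nonlinear remainders. Define
\begin{equation*}
\mathcal O_s^2=\langle \sigma J_+P_{s}^+H_+,P_s^+H_+\rangle+\langle\mathsf G_{++}P_s^+\gamma,P_s^+\gamma\rangle+\text{lower forms},
\end{equation*}
with $P_s^+$ the arclength power on $\Gamma_+$. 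Coercivity comes from fixed $\sigma>0$ together with the separated geometry, and it gives $\mathcal O_s\asymp\|h_+\|_{H^{s+1}}+\|g\|_{H^{s+1/2}}=\mathcal X_s^+$. The conversion from $g$ to $\gamma$ uses $\|B_+\|_{C^k}\lesssim q$ from Lemma~\ref{lem:zero-charge-outer-derivative}.

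\emph{Step 2 (principal terms).} Differentiating, the principal cross terms cancel exactly, as in Theorem~\ref{thm:natural-variable-principal-energy}. The commutators $[P_s^+,\mathsf G_{++}]$ and $[P_s^+,J_+]$ have orders $s$ and $s+1$, since both principal symbols are powers of the arclength frequency (Proposition~\ref{prop:physical-local-symbols} at fixed scale). Their coefficients are multiplied by $t_Y=O(\mathcal P a^2/q)$, so these contributions are bounded by $\mathcal P a^2\mathcal O_s$ and are integrable. The Taylor term is handled by \eqref{eq:actual-outer-taylor-Y}, which gives $\|t_Y\mathfrak a_+\|_{C^k}\lesssim(1+Y)^{-3/2}+\mathcal P a$; this order-zero term is absorbed in $\varepsilon_*\mathcal O_s$ via the $\sigma J_+$ coercivity. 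The coefficient $Y$-derivatives are controlled by \eqref{eq:outer-velocity-first-jet} and Proposition~\ref{prop:raw-state-first-jet}, together with the bootstrap. Since both signs of $\partial_Y\mathcal O_s$ must be controlled and no rate one appears, we obtain a two-sided bound.

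\emph{Step 3 (forcing, coupling, nonlinear terms).} The forcing at zero outer perturbation comes from the outer velocity $B_+=O(q)$ together with the regular condenser corrections; the outer rows carry $t_Y$, so this forcing is $O(\mathcal P a^2)$. The coupling from inner to outer goes through the separated kernels of Lemma~\ref{lem:zero-separated-asymmetric} and \eqref{eq:outer-kernels-C2-negative-input}. The inner potential carries the factor $q/a$ and $t_Y(q/a)=\Theta a$, and the inner unknowns need only a fixed negative norm, which is bounded by $E_s$. This gives the term $\mathcal P_s aE_s$, with the hole-induced correction to $\mathsf G_{++}$ contributing only $a^{1/4}$-small kernel operators. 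The nonlinear remainders follow from Lemma~\ref{lem:finite-loss-dyadic} together with the kernel bounds of Lemmas~\ref{lem:reconstructed-kernel-bounds} and~\ref{lem:cross-kernel-bounds}, in the outer form \eqref{eq:scalar-gauge-schur-outer-bound}. Under the bootstrap that estimate yields $\mathcal P_s(a^2+aE_s)+\varepsilon_*\mathcal O_s$.

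\emph{Main obstacle.} The delicate point is the tame outer paralinearization: the high outer derivatives must fall only on outer quantities while the inner input is measured in fixed low norms, as in Lemma~\ref{lem:zero-separated-asymmetric}. This is what keeps the derivative loss independent of $s$, and I would verify it first.
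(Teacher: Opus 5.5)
Your plan is the paper's proof in outline. The outer energy is built from the capillary form $\sigma x_+^2$ (with a low-mode correction) and the outer DN form, and its weight is independent of $t_Y$, so no rate-one growth appears. The Taylor term is an order-zero source with the integrable coefficient $\|t_Y\mathfrak a_+\|_{C^k}\lesssim(1+Y)^{-3/2}+\mathcal P(Y)a$. The inner coupling goes through $t_Y(q/a)\mathsf G_{+-}=\Theta a\,\mathsf G_{+-}$ with inner inputs in a fixed low norm, giving $\mathcal P_s aE_s$. The reference forcing is $O(\mathcal P a^2)$ because the outer rows carry $t_Y$.

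One step fails as written. You state that $[P_s^+,\mathsf G_{++}]$ and $[P_s^+,J_+]$ have orders $s$ and $s+1$. That is only the generic one-derivative gain, and with it the terms left after the Hamiltonian cancellation are not bounded by the energy:
\begin{itemize}
\item $\sigma J_+P_s^+H_+$ lies only in $H^{-1}$, while $[P_s^+,\mathsf G_{++}]\gamma$ lies only in $H^{1/2}$;
\item $\mathsf G_{++}P_s^+\gamma\in H^{-1/2}$ is paired with $[P_s^+,\sigma J_+]H_+\in L^2$.
\end{itemize}
Each pairing misses by half a derivative, and the small prefactor $t_Y$ cannot compensate a derivative loss. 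Closure needs commutator orders at most $s-\tfrac12$ and $s+\tfrac12$, respectively.

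This is exactly where the fact you cite must actually be used: $\{x_+^s,x_+\}=\{x_+^s,x_+^2\}=0$ for $x_+=|\xi|/j_+$. In surface half-densities at the fixed outer scale (Proposition~\ref{prop:physical-local-symbols}), the DN symbol is $x_++\ell_{+,0}$ modulo order $-1$ and the capillary symbol is $\sigma x_+^2$ modulo order zero. The calculus of Lemma~\ref{lem:finite-bony-weyl} then gives:
\begin{itemize}
\item $[P_s^+,\mathcal W(x_+)]$ of order $s-2$;
\item $[P_s^+,\mathcal W(x_+^2)]$ of order $s-1$;
\item order $s-1$ for the commutators with the order-zero DN and Jacobi terms.
\end{itemize}
These bounds, multiplied by $t_Y$, are what the paper uses in Step~4.

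A smaller point concerns your claim that the hole-induced correction to $\mathsf G_{++}$ is $a^{1/4}$-small; it is not. The $a^{1/4}$ bound of Proposition~\ref{prop:charge-free-localization} concerns the inner global-versus-local block. For the outer diagonal block one has only the polynomial bound \eqref{eq:outer-diagonal-kernel-application}. The correction is harmless because it enters the outer rows with the factor $t_Y$, contributing $\mathcal P_s a^2\mathcal O_s$.
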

\begin{proof}
We divide the proof into four steps.

\emph{Step 1. The coercive outer forms.}
Use a fixed macroscopic normal graph and, for its variations, the good unknown
$U_+=\delta\psi_+-B_+H_+$.  After the surface half-density change, the local DN symbol is
$x_++\ell_{+,0}$ modulo symbols of order $-1$, and the capillary symbol
is $\sigma x_+^2$ modulo symbols of order zero.  Here $j_+$ is the outer arclength density,
$x_+=|\xi|/j_+$, and $\ell_{+,0}$ is the order-zero DN symbol.
The chart bounds give $0<c\le j_+\le C$.  Retain
\begin{equation*}
\mathsf L_+=\mathcal W(x_++\ell_{+,0}),\qquad
\mathsf M_+=\sigma\mathcal W(x_+^2)+C_\mathrm{low}.
\end{equation*}
Here $C_{\mathrm{low}}$ is a fixed self-adjoint correction on finitely
many low modes.  After removing the potential gauge, choose this correction and,
if necessary, the analogous correction to $\mathsf L_+$ so that both
forms are coercive.  Their contributions to the source contain $t_Y$ and are integrable.

\emph{Step 2. Local coefficients and the effect of the hole.}
At the fixed outer scale, apply the local symbol estimates and the energy
arguments of Lemma~\ref{lem:finite-tame-dn-symbol}, followed by
Lemma~\ref{lem:finite-loss-dyadic} on the domains $\mathcal A_Y$.
For the hole correction $K_{++}$ to the zero-flux outer diagonal DN
map, \eqref{eq:outer-kernels-C2-negative-input} supplies the kernel
and its first two graph variations with fixed loss. In particular,
with $r=s+2$,
\begin{align}
\|K_{++}(0)v\|_{H^{s+2}}
&\le C_s\mathcal P_s(Y)\|v\|_{H^{-N_1}},\notag\\
\|[K_{++}(\boldsymbol\zeta)-K_{++}(0)]v\|_{H^{s+2}}
&\le C_s\mathcal P_s(Y)
\|\boldsymbol\zeta\|_{H^{s+2+d_{\rm ker}}}
\|v\|_{H^{-N_1}}.
\label{eq:outer-diagonal-kernel-application}
\end{align}
The same bounds hold with the normal good-unknown and half-density
factors included: their input profiles require only fixed lower
norms, and their output profiles obey the same tame product bounds.
The smooth part of the filled-domain DN map is treated by the
fixed-collar version of the argument in
Lemma~\ref{lem:finite-tame-dn-symbol}. Its first two variations obey
the same estimates.

Keep the reference operators exactly. For the differences, on a
frozen dyadic block the second estimate in
\eqref{eq:outer-diagonal-kernel-application} gives
\begin{equation*}
C_s\mathcal P_s(Y)
2^{(2+d_{\rm ker}-s_{\rm dy}-N_1)j}
\|U\|_{H^{s_{\rm dy}}}\|U\|_{H^s}.
\end{equation*}
This is summable by \eqref{eq:positive-regularity-hierarchy}, and the
second-variation bound controls the Taylor terms. Extract $a^2$
from the outer vector field before applying this argument; the
remaining factor $\Theta/q$ has polynomial state derivatives on
$\mathcal A_Y$. Thus the nonlinear kernel terms contribute at most
$C_s\mathcal P_s a^2\delta_b(a^{-1}E_s+\mathcal O_s)$.
The exact reference kernels contribute
$C_s\mathcal P_s a^2\mathcal O_s$ after the change of unknown.

The first derivatives of $j_+$ and of the local coefficients involve
only the outer geometric velocity and have size $\mathcal P(Y)a^2$ in the
required fixed norms.  Lemma~\ref{lem:zero-charge-outer-derivative} controls the derivative
of the outer good unknown coefficient.
For the remaining zeroth-order Taylor term,
Proposition~\ref{prop:actual-taylor-coefficient} gives
\begin{equation*}
\|t_Y\mathfrak a_+\|_{C^k}
\le C_k(1+Y)^{-3/2}+\mathcal P(Y)a.
\end{equation*}
It maps an outer height in $H^{s+1}$ to a momentum source in
$H^{s+1/2}$ with this integrable coefficient.  The physical dilation
and transport terms, after multiplication by $t_Y$, have the same
integrable bounds.  Thus all remaining diagonal and triangular coefficients have integrable
bounds in the fixed capillary energy norm.

\emph{Step 3. Coupling and forcing.}
For the coupling between the two boundaries,
\eqref{eq:exact-normal-variational-system} contains the inner potential
only in its kinematic row.  The inner good unknown
is $-(q/a)\mathfrak\pi$ up to bounded half-density factors.  Consequently
its outer coefficient is
\begin{equation*}
t_Y(q/a)\mathsf G_{+-}=\Theta a\mathsf G_{+-}.
\end{equation*}
Lemma~\ref{lem:zero-separated-asymmetric} gives polynomial bounds in
$Y$ for this separated map and the constant trace lifts, requiring high
regularity only of the outer geometry.  The first two variation
estimates from that lemma, together with the same dyadic reduction,
bound the reference coupling by $C_s\mathcal P_s(Y)aE_s$ in the outer
source norm. The scalar and Schur corrections are covered by
\eqref{eq:scalar-gauge-schur-outer-bound}; the state-dependent kernel
differences belong to the complete dyadic remainder below.

At zero perturbation, the outer kinematic and Bernoulli fields have
size $\mathcal P_s(Y)a^2$ in $Y$ time: the fixed scale velocity and
curvature fields are multiplied by $t_Y$.  The outer Bernoulli equation uses the pressure trace prescribed by
curvature.  The high--high remainders have the extra factor $t_Y$.
Their coarse bound is
\begin{equation*}
C_s\mathcal P_s(Y)t_Y\|U\|_{H^{s_{\mathrm{dy}}}}
(a^{-1}E_s+\mathcal O_s),
\end{equation*}
which is bounded by the right side of
\eqref{eq:actual-outer-energy}.

\emph{Step 4. The energy inequality.}
Use $P_s^+=\mathcal W(x_+^s)$ with the fixed low-frequency completion.
The principal energy argument applies since
$\{x_+^s,x_+\}=\{x_+^s,x_+^2\}=0$, while the order-zero DN commutator
has order $s-1$.  The coefficient derivatives are controlled by
\eqref{eq:outer-velocity-first-jet}, and the lower commutators by
Lemma~\ref{lem:finite-bony-weyl}.  The common factor $t_Y$ cancels in the Hamiltonian pairing.
The energy weight is independent of $t_Y$.
The resulting fixed capillary energy has the stated norm equivalence
and satisfies \eqref{eq:actual-outer-energy}.
\end{proof}

Combining the inner and outer inequalities closes the bootstrap and
continues the canonical solutions to a common initial value of $Y$.

\begin{theorem}
\label{thm:one-state-closed}
Fix $\sigma>0$.  For the canonical data of Lemma~\ref{lem:terminal-data-admissible}, there is
a common $Y_0$, independent of $Y_N$, to which every corresponding solution
continues backwards.  Each solution satisfies, for every fixed $s$,
\begin{equation}\label{eq:one-state-closed-estimates}
E_s(Y)\le C_s\mathcal P_s(Y)a(Y)^3,
\qquad \mathcal O_s(Y)\le C_s\mathcal P_s(Y)a(Y)^2,
\qquad Y_0\le Y\le Y_N.
\end{equation}
In addition,
\begin{equation}\label{eq:complete-state-closed-estimates}
|\alpha|+|\alpha_Y|\le q\mathcal P(Y)a,
\quad |R-R_*|+|R_Y|\le\mathcal P(Y)a^2,
\quad
\|\eta\|_{H^s}+q^{-1}\|f\|_{H^s}
\le C_s\mathcal P_s(Y)a^2.
\end{equation}
Proposition~\ref{prop:raw-state-first-jet} bounds the first $Y$ derivatives
using a fixed number of additional spatial derivatives. This includes the coefficient derivatives
required in the energy argument.
\end{theorem}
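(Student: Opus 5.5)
The proof is a continuity (bootstrap) argument in the backward variable $Y$, run uniformly in $N$. For each canonical terminal datum, Lemma~\ref{lem:terminal-data-admissible} provides a smooth backward solution near $Y_N$. There all perturbation variables vanish: $\eta=f=g=h_+=0$, $\alpha(Y_N)=0$ and $R=R_*$. Hence $E_s(Y_N)=\mathcal O_s(Y_N)=0$ and the physical bootstrap \eqref{eq:physical-bootstrap-neighbourhood} holds strictly.

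Let $Y_1\in[Y_0,Y_N]$ be the smallest value such that, on $[Y_1,Y_N]$, the solution exists and the following hold. First, \eqref{eq:physical-bootstrap-neighbourhood} holds at $b=s_*$ with a constant $2K$. Second, $|\alpha|\le\delta_1q$. Third, $q\asymp(1+Y)^{-1/2}$ with doubled constants. We show that on this interval every hypothesis is recovered with the constant $K$ and with $|\alpha|\le\delta_1q/2$. This holds provided $Y_0$ is chosen large depending only on $K$, $M_b$ and the fixed lower constants. Local continuation then pushes $Y_1$ down to $Y_0$.

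The flux bound comes first. Lemma~\ref{lem:scalar-noncircular} gives $E_*-\sigma A=E_c+O(\mathcal P a)$ and $D_b\asymp 1+Y$, so \eqref{eq:corrected-q-reconstruction} yields $q^2(1+Y)\to E_c/(\pi L_*)$ up to errors $O(\mathcal Pa)$. Here $|\mathfrak c|$ is bounded by Lemma~\ref{lem:capacity-single-layer-schur}. This recovers $q\asymp(1+Y)^{-1/2}$ with the original constants. Next, Proposition~\ref{prop:first-mode-backward-amplitude} gives $|\alpha|+|\alpha_Y|\le Cq(1+Y)^Ma$, which is far below $\delta_1q/2$ once $Y_0$ is large. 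It also gives $|R_Y|\le\mathcal P a^2$. Integrating $R_Y$ from $Y_N$, where $R=R_*$, then gives $|R-R_*|\le\mathcal P a^2$.

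For the high modes, Theorem~\ref{thm:actual-principal-reduction} and Proposition~\ref{prop:actual-outer-principal-energy} give the coupled system \eqref{eq:coupled-terminal-input}. We take $E=E_s$, $O=\mathcal O_s$, $\epsilon=C_s\varepsilon_*\in L^1$ and $P=C_s\mathcal P_s$, and both energies vanish at $Y_N$. Lemma~\ref{lem:zero-coupled-terminal-budget} then yields \eqref{eq:one-state-closed-estimates} with constants independent of $Y_N$.

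Combining \eqref{eq:one-state-closed-estimates} with the reconstruction \eqref{eq:new-trace-reconstruction} gives
$$\|\eta\|_{H^s}+q^{-1}\|f\|_{H^s}\le C_s\mathcal P_s a^2 .$$
The estimate $\mathcal O_s\asymp\mathcal X^+_s\le C_s\mathcal P_s a^2$ controls $h_+$ and $g$. Taking $s=s_*$, these bounds return \eqref{eq:physical-bootstrap-neighbourhood} with the constant $C\mathcal P(Y)$, which is independent of $K$. Choosing $K$ larger than this constant at the outset closes the bootstrap. The resulting bounds at level $s_*$ also keep the actual state inside the admissible domain $\mathcal A_Y$ of Lemma~\ref{lem:frozen-admissibility}, because $\mathcal P a^2\le a^{3/2}$ for large $Y_0$.

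The main obstacle is that $Y_0$ must not depend on $s$. The bootstrap is closed only at the fixed index $s_*$, and $Y_0$ is fixed there. For an arbitrary $s>s_*$, we rerun the linear-in-top-norm inequalities \eqref{eq:actual-inner-energy} and \eqref{eq:actual-outer-energy} on the already established interval $[Y_0,Y_N]$. These inequalities are valid there because their hypotheses involve only the lower norms, which are now known. Their coefficients $\varepsilon_*$ depend only on those lower norms, and the leading rate $1<3$ does not depend on $s$. Lemma~\ref{lem:zero-coupled-terminal-budget} therefore gives every $E_s$ and $\mathcal O_s$ bound on the same interval, with $C_s$ absorbing the $s$-dependence.

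A related point must also be checked: solutions cannot cease to exist before $Y_0$. Because every $H^s$ norm of the geometry and the velocity stays bounded, the boundaries remain separated, and the local theory of Coutand--Shkoller applies at $\sigma>0$, the continuation criterion is satisfied. The same remark applies to the time change. The determinant $\Delta$ stays near $1$, so $t_Y>0$ and $Y$ remains a valid time coordinate. Finally, Proposition~\ref{prop:raw-state-first-jet} supplies the stated first $Y$-derivative bounds.
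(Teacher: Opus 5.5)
Your outline has the same architecture as the paper's proof: a lower bootstrap at $s_*$, the flux from the energy identity, the first mode from the backward integral, the coupled inequalities fed into Lemma~\ref{lem:zero-coupled-terminal-budget}, reconstruction, higher $s$ on the same interval, and continuation. The gap is in the closing step, where you assert that the recovered bound $C\mathcal P(Y)a^2$ is independent of $K$.

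Nothing you cite gives this independence. Proposition~\ref{prop:first-mode-backward-amplitude} produces constants $C,M$ that depend on the constants $C_0,M_0$ in its hypothesis \eqref{eq:first-mode-physical-bootstrap}, that is, on $K$ and $M_b$. This dependence enters through terms such as $\|\eta\|_{H^{k+2}}$ and $(1+Y)^{M_0}a$, and it feeds directly into your bound for $|R_Y|$ and hence $|R-R_*|$. Likewise, Theorem~\ref{thm:actual-principal-reduction} and Proposition~\ref{prop:actual-outer-principal-energy} are stated under the bootstrap with $Y_0$ depending on it. They make no claim that $C_s$, $\mathcal P_s$ or the multiple in $\varepsilon_*$ are bootstrap-independent. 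So choosing $K$ larger than this constant at the outset is circular as written. The same objection applies to the polynomial degree. Lemma~\ref{lem:zero-coupled-terminal-budget} returns the degree of $P$, and this must stay below $M_b$. You never fix $M_b$ relative to a forcing degree that is known not to depend on $M_b$.

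What is missing is the paper's bookkeeping device, applied in the following order.
\begin{enumerate}
\item Fix the weight degree $M_*$ first. It must exceed the degrees of the reference forcing: inner $a^3$, outer $a^2$, first mode and centre tail, plus one power for $q^{-1}$.
\item For the given $K$, enlarge $Y_0$ so that the lower perturbation norm satisfies $\delta_b\le a^{3/2}$.
\item With this, $q^{-1}|\mathfrak f_1|\le\mathcal P^0(Y)(a+\delta_b)$, where $\mathcal P^0$ is fixed on the unit low-norm ball. The first-mode and centre bounds are therefore $K$-independent.
\item In the energy step, split the reconstructed vector field into two parts. The first is its value and exact differential at zero perturbation; these depend only on the reference geometry, $q$ and the first-mode bound, via Lemma~\ref{lem:frozen-admissibility} with a bootstrap-independent $\mathcal P_{\rm ref}$. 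The second consists of symbol-difference and quadratic terms carrying an explicit factor $\delta_b$, for example $\mathcal Q_-\le\mathcal P_s^0\delta_b(E_s+a\mathcal O_s)$.
\item The $\delta_b$-terms then enter only as integrable homogeneous coefficients, never as forcing. A further enlargement of $Y_0$ makes their $L^1$ norm small; this norm is what appears in $e^{\|k\|_{L^1}}$ in Lemma~\ref{lem:zero-coupled-terminal-budget}.
\item Only then can you take $K$ larger than twice the forcing constant and close the bootstrap.
\end{enumerate}

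There is also a smaller omission in the flux step. The lower bound on $q$ from \eqref{eq:corrected-q-reconstruction} requires control of $K_0$ in $E_0$, not only of $E_*-\sigma A$. This follows from \eqref{eq:zero-flux-dirichlet-bound} together with $|\alpha|\le\delta_1q$. The relative error there is $O(1/Y)$, coming from $\mathfrak c$ and $K_0$, not $O(\mathcal Pa)$. This does not affect the two-sided bound.
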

\begin{proof}
We divide the proof into four steps.

\emph{Step 1. The lower bootstrap and the flux bound.}
Fix $N,N_1,k_0,s_{\rm dy},s_*$ in the order specified in
\eqref{eq:positive-regularity-hierarchy}. In particular
$s_*\ge k_0+L_{\rm coeff}$ supplies the spatial and time coefficient estimates
in Propositions~\ref{prop:raw-state-first-jet}--
\ref{prop:actual-taylor-coefficient} and
Lemma~\ref{lem:normalized-taylor-variation} at the fixed coefficient
index $k_0$.
Impose \eqref{eq:physical-bootstrap-neighbourhood} with $b=s_*$ and
$M_b=M_*$.  The norms in its first line are controlled by
$a^{-1}E_{s_*}+\mathcal O_{s_*}+|R-R_*|$ at the chosen lower
energy level.
First choose a polynomial weight $(1+Y)^{M_*}$ whose degree exceeds
those of the reference inner and outer forcing bounds at this
level, the first mode forcing and the centre tail.  Include one extra
power to absorb the factor $q^{-1}$.  The reference geometry and the finite elliptic estimates determine
these degrees.  The bootstrap terms involving the perturbation variables are
homogeneous errors with integrable coefficients, so the forcing degree
remains unchanged.
The energy identity gives a two-sided bound for $q$ on this
neighborhood using only the physical bootstrap.  To derive it,
write $A=|\Gamma_-|+|\Gamma_+|$ and
$D_b=\pi L(Y+\mathfrak c)$.  The geometric bootstrap and the
spatial capacity estimate give
\begin{equation*}
N:=E_*-\sigma A=E_c+O(a+K(1+Y)^{M_*}a^2),
\qquad c(1+Y)\le D_b\le C(1+Y).
\end{equation*}
Apply the uniform gradient estimate
\eqref{eq:global-gradient-v} to the zero-flux potential with inner
trace $\alpha\cos\theta+f$ and outer trace $g$.  Squaring the bound
and using the uniform cylindrical weight gives
\begin{align}
0\le K_0&\le C\bigl(|\alpha|^2+\|f\|_{H^1}^2+\|g\|_{H^1}^2\bigr)
\label{eq:zero-flux-dirichlet-bound}\\
&\le C\bigl(\delta_1^2+K^2(1+Y)^{2M_*}a^4\bigr)q^2
+CK^2(1+Y)^{2M_*}a^4,\notag
\end{align}
with a constant uniform in the small radius.  Since
$N=D_bq^2+K_0$, nonnegativity first gives $q^2\le C/(1+Y)$.
Increasing $Y_0$ so that the last additive term is smaller than
$E_c/4$ gives $q^2\ge c/(1+Y)$.  Since $q(Y_N)>0$,
continuity fixes the positive branch throughout the bootstrap interval.
Thus $q\asymp(1+Y)^{-1/2}$, using only the physical trace bootstrap.
The determinant in the exact kinematic formulas is positive there,
so $t_Y=\Theta a^2/q>0$.  The terminal data satisfy these bounds strictly.

\emph{Step 2. The first mode and centre motion.}
Choose $Y_0$ so that the fixed lower physical norm is at most $a^{3/2}$.
Let $\delta_b$ denote the sum of the normalized
physical high traces and geometric perturbations in the fixed low norms
used here. The factors $q^{\pm1}$ are included in its polynomial weight.
The preceding calculation gives, for a polynomial $\mathcal P^0$ fixed on the
unit low norm ball,
\begin{equation*}
q^{-1}|\mathfrak f_1|\le \mathcal P^0(Y)(a+\delta_b),\qquad
\delta_b\le a^{3/2}.
\end{equation*}
Indeed the terms quadratic in $\beta=\alpha/q$ at the circular cylindrical reference geometry
have zero first Fourier coefficient. Every other term has a reference
geometric factor $O(\mathcal P^0a)$ or a physical high trace factor
$O(\mathcal P^0\delta_b)$.  The curvature term has size at most
$\mathcal P^0(a^2+a\delta_b)$ after division by $q$.
Thus $\mathcal P^0$ and the resulting first mode and centre bounds are independent
of the bootstrap weight and constant.  The exact backward integral for
$\alpha_Y-\alpha=\mathfrak f_1$, $\alpha(Y_N)=0$, therefore improves the first mode
bound to $|\alpha|+|\alpha_Y|\le Cq\mathcal P(Y)a$, with a constant independent of $K$.
The centre equation then gives $|R_Y|\le C\mathcal P(Y)a^2$.
Integrating from the terminal value gives the same bound for $R-R_*$.

\emph{Step 3. Energy estimates and closure of the bootstrap.}
To track the dependence of the source estimates on the bootstrap
parameters, keep $a,R,\beta$ as parameters and apply
Lemma~\ref{lem:finite-loss-dyadic} to the complete reconstructed vector
field, as in Step~2 of Theorem~\ref{thm:actual-principal-reduction}.
Retain its value and exact differential at zero perturbation, its
paralinearized local symbol difference, and its quadratic remainder.
The value and differential at this reference state have polynomial bounds
determined by the reference geometry, $q\asymp(1+Y)^{-1/2}$ and the
first mode bound just obtained. Apply Lemma~\ref{lem:frozen-admissibility} with this
bootstrap-independent choice of $\mathcal P_{\rm ref}$. The bound
$\delta_b\le a^{3/2}$ places every frozen state and Taylor segment
in its admissible domain. That lemma and
Lemmas~\ref{lem:reconstructed-kernel-bounds}--\ref{lem:cross-kernel-bounds}
give polynomial constants independent of $K$ and $M_*$. Keeping the
low factor $\delta_b$ explicit and denoting the two quadratic source
norms by $\mathcal Q_-$ and $\mathcal Q_+$, the inner and outer reductions give
\begin{equation*} 
\mathcal Q_-\le \mathcal P_s^0\delta_b(E_s+a\mathcal O_s),\qquad
\mathcal Q_+\le \mathcal P_s^0t_Y\delta_b(a^{-1}E_s+\mathcal O_s). 
\end{equation*}
Here $\mathcal P_s^0=\mathcal P_s^0(Y)$ is a polynomial weight fixed before the
bootstrap constants.  Since
$\delta_b\le a^{3/2}$ and $t_Y\le \mathcal P_s^0a^2$, these terms are bounded by
an integrable multiple of the same energy plus, respectively,
$\mathcal P_s^0a^2\mathcal O_s$ and $\mathcal P_s^0aE_s$, after enlarging $\mathcal P_s^0$.
They contribute no independent forcing. The finite-rank scalar and
Schur terms have already been included in this decomposition:
\eqref{eq:finite-rank-kernel-difference} and the summation following
\eqref{eq:scalar-gauge-schur-outer-bound} place their nonlinear kernel
differences in these quadratic bounds. The common gauge is removed
by the original mean projections. Their exact reference kernels
satisfy \eqref{eq:scalar-gauge-schur-inner-bound}--
\eqref{eq:scalar-gauge-schur-outer-bound}, and the cross-boundary terms
satisfy \eqref{eq:complete-cross-source-bound}. Reference forcing therefore remains
$\mathcal P_s^0a^3$ and $\mathcal P_s^0a^2$ in the two rows.
Use $\mathcal X_s^+\le C_s\mathcal O_s$ in
\eqref{eq:actual-inner-energy} and combine it with
\eqref{eq:actual-outer-energy}.  Lemma~\ref{lem:zero-coupled-terminal-budget}, applied to $E_s+a\mathcal O_s$,
gives \eqref{eq:one-state-closed-estimates}.  The terminal estimate is uniform in the radius and applies at every fixed
higher index.

At the fixed level $s_*$, choose $K$ larger than twice the resulting
forcing constant.  Then enlarge $Y_0$ so that, after integration, the
coefficients depending on the bootstrap constant contribute less than the remaining
factor of two.  The trace reconstruction and the centre and first mode bounds improve
every bootstrap inequality.  Continuity closes the fixed lower
bootstrap on the maximal common interval.

At each higher fixed $s$, the closed lower bounds control all
coefficient seminorms.  Integrating the same two energy inequalities
gives \eqref{eq:one-state-closed-estimates} with the same $Y_0$.
Lower indices follow by Sobolev monotonicity.  Reconstruction gives
\eqref{eq:complete-state-closed-estimates}.

\emph{Step 4. Continuation to the common initial parameter.}
If the left endpoint exceeded $Y_0$, the radius would remain at least
$a(Y_N)>0$, all continuation norms would be bounded, and the charts
and boundary separation would remain nondegenerate.  Local capillary
Euler theory would extend this solution with finite terminal value,
contradicting maximality.
The derivative bounds for the original variables follow from
Proposition~\ref{prop:raw-state-first-jet}.
\end{proof}

\section{Terminal approximation and singular limit}
\label{sec:terminal-existence}

Using the uniform estimates of Theorem~\ref{thm:one-state-closed}, we let
the terminal cavities shrink to obtain a canonical collapsing trajectory
with positive surface tension.  We derive its curvature, velocity, energy
and pressure asymptotics, describe the terminal material map and the
cylindrical limit, and construct a weak filling continuation with its
exact energy defect.  The limiting arguments also apply to the
zero-surface-tension trajectories constructed in
Section~\ref{sec:zero-surface-tension}.

\subsection{Curvature asymptotics and compactness}

We establish curvature estimates for circular tubes and show that they
are stable under the relative graph perturbations in
\eqref{eq:asymptotically-tubular-final}.  We use the spectral norm
\eqref{eq:spectral-H32-definition}.

\begin{proposition}
\label{prop:correct-tube-stability}
\begin{enumerate}[label=(\roman*),leftmargin=*]
\item Let $\Sigma_{a,\eta}$ be the surface of revolution generated by
$r=a(1+\eta(\theta))$ about a circle whose radius $R$ lies in a fixed compact
subset of $(0,\infty)$.  For all sufficiently small $a>0$ and
$\|\eta\|_{H^{7/2}}$, every axisymmetric scalar $f$, identified with
its pullback to the meridional angle $\theta$, satisfies
\begin{equation*}
\|f\|_{H^{3/2}_{\rm sp}(\Sigma_{a,\eta})}
\le Ca^{-1}\|f\|_{H^{3/2}(\T)}.
\end{equation*}
If $R_a\to R_*>0$, $\eta_a\to0$ in $H^{7/2}$, and
$g_a\to g_*$ in $H^{3/2}(\T)$, then
\begin{equation}\label{eq:low-mode-fractional-limit}
a^2\|g_a\|_{H^{3/2}_{\rm sp}(\Sigma_{a,\eta_a})}^2
\longrightarrow
2\pi R_*\big\||D_\theta|^{3/2}g_*\big\|_{L^2(\T)}^2.
\end{equation}
\item Let $\Sigma_{a,\eta_a}$ be smooth centred axisymmetric relative graphs
over circles of radii $R_a\to R_*>0$, and put $L_*=2\pi R_*$.
Assume that $\sup_a\|\eta_a\|_{H^{7/2}}$ is sufficiently small.
If $\|\eta_a\|_{H^2}=o(1)$, then, for either orientation of the normal,
\begin{equation*}
\|\mathcal H_{a,\eta_a}\|_{L^2(\Sigma_{a,\eta_a})}
\sim(2\pi L_*)^{1/2}a^{-1/2}.
\end{equation*}
If moreover $\|\eta_a\|_{H^{7/2}}=o(a)$, then
\begin{equation}\label{eq:tube-H32-H}
\|\mathcal H_{a,\eta_a}\|_{H^{3/2}_{\rm sp}(\Sigma_{a,\eta_a})}^2
\sim\frac{2\pi^2}{R_*}a^{-2}.
\end{equation}
In particular, these conclusions hold for the exact circular tubes.
\end{enumerate}
\end{proposition}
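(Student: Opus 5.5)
The plan is to compute everything on the exact circular tube first, by separation of variables in the torus coordinates, and then treat the relative graph as a perturbation. Parametrize $\Sigma_{a,\eta}$ by $(\theta,\phi)\in\T^2$ with position $(R+ar\cos\theta)e_\rho(\phi)+ar\sin\theta\,e_z$, where $r=1+\eta$. The induced metric is $g=a^2j^2\,d\theta^2+\rho^2d\phi^2$ with $\rho=R+ar\cos\theta$, and the area density is $a j\rho$. For axisymmetric $f$,
\begin{equation*}
-\Delta_\Sigma f=-\frac{1}{a^2 j\rho}\partial_\theta\Bigl(\frac{\rho}{j}\partial_\theta f\Bigr).
\end{equation*}
Conjugating by the half-density $(j\rho)^{1/2}$ turns this into $a^{-2}\bigl(\operatorname{Op}^{\rm w}(\xi^2/j^2)+c\bigr)$ on $L^2(d\theta)$, where $c$ is bounded in terms of $\|\eta\|_{C^2}$ and $a/R$. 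This is the same structure as \eqref{eq:Jacobi-half-density-symbol}.

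For part (i), I will write $1-\Delta_\Sigma=a^{-2}(Q+a^2)$, where $Q$ is the conjugated operator. The operator $Q$ is nonnegative, elliptic of order two, and its coefficients are uniformly controlled once $\|\eta\|_{H^{7/2}}\subset C^{2,1/2}$ is small. Then
\begin{equation*}
\|f\|_{H^{3/2}_{\rm sp}}^2=a^{-3}\|(Q+a^2)^{3/4}w^{1/2}f\|_{L^2}^2\cdot a\,(\text{area factor}),
\end{equation*}
with $w=j\rho$. Elliptic functional calculus (interpolating between $Q+a^2$ and its square) gives $\|(Q+a^2)^{3/4}u\|_{L^2}\le C\|u\|_{H^{3/2}}$ uniformly in $a$. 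This yields the bound $Ca^{-1}\|f\|_{H^{3/2}}$.

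For the limit \eqref{eq:low-mode-fractional-limit}, I use that $Q\to-\partial_\theta^2$ in the sense of $H^2\to L^2$ operators as $a\to0$ and $\eta_a\to0$ in $H^{7/2}$. Meanwhile $a^2\to0$ and $w\to R_*$. Strong resolvent convergence then gives $(Q+a^2)^{3/4}\to|D_\theta|^{3/2}$ strongly from $H^{3/2}$ to $L^2$, and $g_a\to g_*$ in $H^{3/2}$ finishes it. The constant $2\pi R_*$ comes from the $\phi$-integral of $\rho\to R_*$. I expect this uniform fractional-power step to be the main technical point. I would handle it with the Balakrishnan integral representation of $(Q+a^2)^{3/4}$ combined with uniform resolvent bounds.

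For part (ii), the mean curvature is $\mathcal H=\pm a^{-1}\mathcal J_\varepsilon(\eta)$ with $\varepsilon=a/R$, using \eqref{eq:explicit-dimensionless-curvature}.

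\emph{The $L^2$ asymptotics.} Write $\mathcal J_\varepsilon(\eta)=1+O(\varepsilon)+E(\eta)$, where $\|E\|_{L^2}\lesssim\|\eta\|_{H^2}=o(1)$ by Proposition~\ref{prop:full-C2-curvature} at low regularity. Then
\begin{equation*}
\|\mathcal H\|_{L^2}^2=a^{-2}\int_{\T^2}\mathcal J^2\,aj\rho\,d\theta\,d\phi=a^{-1}\bigl(4\pi^2R_*+o(1)\bigr)=a^{-1}\bigl(2\pi L_*+o(1)\bigr).
\end{equation*}

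\emph{The $H^{3/2}_{\rm sp}$ asymptotics.} The constant part of $\mathcal J$ costs only $\|(1-\Delta)^{3/4}1\|^2=|\Sigma|\sim 2\pi L_*a$. Since this is $o(a^{-2})$, it is negligible. At the circle, the first-order term is exactly $\varepsilon\cos\theta+O(\varepsilon^2)$ by \eqref{eq:curvature-eps-selection}. Hence
\begin{equation*}
\mathcal H=\pm\bigl(a^{-1}+R_*^{-1}\cos\theta+g_a\bigr),
\end{equation*}
where $g_a$ collects the $O(a)$ terms together with $a^{-1}E(\eta_a)$. By Proposition~\ref{prop:full-C2-curvature}, $a^{-1}\|E(\eta_a)\|_{H^{3/2}}\lesssim a^{-1}\|\eta_a\|_{H^{7/2}}=o(1)$, so $g_a\to0$ in $H^{3/2}$. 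Applying \eqref{eq:low-mode-fractional-limit} with $g_*=R_*^{-1}\cos\theta$ gives
\begin{equation*}
a^2\|\mathcal H\|_{H^{3/2}_{\rm sp}}^2\to2\pi R_*\cdot R_*^{-2}\cdot\pi=\frac{2\pi^2}{R_*}.
\end{equation*}

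The cross term between the constant and the $\cos\theta$ part is handled by Cauchy--Schwarz. It is at most $(2\pi L_*a)^{1/2}\cdot Ca^{-1}=o(a^{-2})$, so it does not affect the limit. Reversing the orientation of the normal changes only signs. This proves \eqref{eq:tube-H32-H}.
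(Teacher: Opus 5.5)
Your proposal is correct and follows essentially the paper's proof. It uses the same half-density unitary conjugation of $a^2(1-\Delta_\Sigma)$ to an operator on $L^2(\T)$, uniform $H^2$ equivalence plus interpolation for the $Ca^{-1}$ bound, convergence of the $3/4$ powers, and the splitting $\mathcal H=\pm(a^{-1}+R_*^{-1}\cos\theta+o_{H^{3/2}}(1))$, in which the constant costs only $a^{-2}|\Sigma|=O(a^{-1})$. The one real difference is the fractional-power limit: you obtain it from resolvent convergence and the Balakrishnan integral, whereas the paper uses the spectral-measure bound $\|\mathsf B_a^{3/4}e_m-|m|^{3/2}e_m\|_2^2\le\|(\mathsf B_a-m^2)e_m\|_2^{3/2}$ on Fourier modes together with the uniform bound and density.

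Three minor corrections:
\begin{itemize}
\item The zeroth-order coefficient $c$ contains $\eta_{\theta\theta\theta}$, so it is controlled in $L^2$ by $\|\eta\|_{H^{7/2}}$, not by $\|\eta\|_{C^2}$. This still suffices, since $H^2\subset L^\infty$.
\item The interpolation should be between $L^2$ and $D(1+Q)=H^2$, not between $Q+a^2$ and its square.
\item The cross term is $O(a^{-3/2})$ rather than $O(a^{-1/2})$; this is still $o(a^{-2})$, so the conclusion stands.
\end{itemize}
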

\begin{proof}
We divide the proof into two steps.

\emph{Step 1. Fractional scaling.}
Put $j=((1+\eta)^2+\eta_\theta^2)^{1/2}$ and
$\rho=R+a(1+\eta)\cos\theta$.  After integration in the azimuthal variable, the surface measure
becomes $a\mathfrak m\,d\theta$, where $\mathfrak m=2\pi\rho j$.
The axisymmetric subspace reduces $\Delta_\Sigma$ because rotations
commute with this operator.  On that subspace, the map
$Uf=(a\mathfrak m)^{1/2}f$ is unitary from
$L^2(\T,a\mathfrak m\,d\theta)$ to $L^2(\T,d\theta)$ and conjugates
$a^2(1-\Delta_\Sigma)$ to
\begin{equation*}
\mathsf B\varphi=a^2\varphi-\mathfrak m^{-1/2}\partial_\theta
\left(\frac{\mathfrak m}{j^2}\partial_\theta(\mathfrak m^{-1/2}\varphi)\right).
\end{equation*}
This is a nonnegative self-adjoint operator on $L^2(\T)$, with form
domain $H^1(\T)$.  Its operator domain is $H^2(\T)$: the leading
coefficient is positive and has bounded first derivative, and the
coefficient of order zero lies in $L^2$.  The weak equation and
$H^1(\T)\subset L^\infty(\T)$ therefore give $\varphi''\in L^2$.
Since $\mathfrak m,j$ and their inverses are uniformly bounded in $H^{5/2}$,
expansion of the displayed expression gives
\begin{equation*}
\|(\mathsf B+\partial_\theta^2)\varphi\|_2
\le C\bigl(a+\|\eta\|_{H^{7/2}}\bigr)\|\varphi\|_{H^2}.
\end{equation*}
The coefficients of $\varphi''$ and $\varphi'$ converge to those of
$-\partial_\theta^2$ in $L^\infty$, and that of $\varphi$ converges in $L^2$.
Absorbing this perturbation in the elliptic estimate for $1-\partial_\theta^2$ proves
\begin{equation*}
\|(1+\mathsf B)\varphi\|_2\asymp\|\varphi\|_{H^2}
\end{equation*}
uniformly.  Interpolation of the positive self-adjoint Hilbert scale
between $L^2$ and $D(1+\mathsf B)=H^2$ consequently gives
\begin{equation}\label{eq:uniform-fractional-upper}
\|\mathsf B^{3/4}\varphi\|_2
\le\|(1+\mathsf B)^{3/4}\varphi\|_2\le C\|\varphi\|_{H^{3/2}}.
\end{equation}
Since multiplication by $\mathfrak m^{1/2}$ is uniformly bounded on $H^{3/2}$,
the identity
\begin{equation*}
\|f\|_{H^{3/2}_{\rm sp}}
=a^{-3/2}\|\mathsf B^{3/4}Uf\|_2
=a^{-1}\|\mathsf B^{3/4}(\mathfrak m^{1/2}f)\|_2
\end{equation*}
proves the upper bound.

For the fractional limit, let $\mathsf B_a$ denote the operator
$\mathsf B$ for the parameters $(a,R_a,\eta_a)$, and set
$e_m=(2\pi)^{-1/2}e^{im\theta}$.
The coefficient estimate gives
$\|(\mathsf B_a-m^2)e_m\|_2\to0$ for each fixed $m\in\mathbb Z$.
If $\mu_{a,m}$ is the spectral probability measure of $\mathsf B_a$
associated with $e_m$, the scalar inequality
$|\lambda^{3/4}-\nu^{3/4}|\le|\lambda-\nu|^{3/4}$ for
$\lambda,\nu\ge0$ and H\"older's inequality give
\begin{equation*}
\|\mathsf B_a^{3/4}e_m-|m|^{3/2}e_m\|_2^2
\le\int|\lambda-m^2|^{3/2}\,d\mu_{a,m}
\le\|(\mathsf B_a-m^2)e_m\|_2^{3/2}\longrightarrow0.
\end{equation*}
Hence the fractional powers converge strongly on every finite Fourier
sum, including its constant mode.  The uniform bound \eqref{eq:uniform-fractional-upper} and
Fourier approximation extend this to every vector in $H^{3/2}$,
and to varying vectors converging in that space.
Since $\mathfrak m_a\to2\pi R_*$ in $H^{5/2}$, apply this conclusion
to $\mathfrak m_a^{1/2}g_a\to(2\pi R_*)^{1/2}g_*$ in $H^{3/2}$.
The preceding norm identity proves \eqref{eq:low-mode-fractional-limit}.

\emph{Step 2. Curvature asymptotics.}
Choose the normal pointing away from the centre circle.  Formula
\eqref{eq:explicit-dimensionless-curvature} gives
$a\mathcal H_{a,\eta}=\mathcal J_{a/R}(\eta)$.
Its denominators are uniformly bounded away from zero for the stated class of graphs.
Thus $\eta_a\to0$ in $H^2$ implies
$a\mathcal H_{a,\eta_a}\to1$ in $L^2(\T)$, while
$\mathfrak m_a\to2\pi R_*$ uniformly.  Consequently
\begin{equation*}
a\|\mathcal H_{a,\eta_a}\|_{L^2(\Sigma_{a,\eta_a})}^2
=\int_\T \mathfrak m_a|a\mathcal H_{a,\eta_a}|^2\,d\theta
\longrightarrow4\pi^2R_*=2\pi L_*.
\end{equation*}

For the fractional estimate, compare with the exact circular curvature
at the same radius $R_a$, pulled back to the perturbed surface:
\begin{equation}\label{eq:exact-tube-H}
\mathcal H_{a,\eta_a}=\frac1a+g_a+e_a,
\qquad g_a=\frac{\cos\theta}{R_a+a\cos\theta},
\qquad e_a=\mathcal H_{a,\eta_a}-\mathcal H_{a,0}.
\end{equation}
The explicit curvature formula and the one-dimensional Sobolev product
estimates show that $\mathcal J_{a/R}:H^{7/2}\to H^{3/2}$ is uniformly locally
Lipschitz.  Therefore
\begin{equation*}
\|ae_a\|_{H^{3/2}}
\le C\|\eta_a\|_{H^{7/2}}=o(a).
\end{equation*}
The bound in \textup{(i)}, applied to $e_a$, yields
\begin{equation*}
\|e_a\|_{H^{3/2}_{\rm sp}(\Sigma_{a,\eta_a})}=o(a^{-1}).
\end{equation*}
Also $g_a\to R_*^{-1}\cos\theta$ in $H^{3/2}(\T)$.
Since $|D_\theta|^{3/2}\cos\theta=\cos\theta$ and
$\int_\T\cos^2\theta\,d\theta=\pi$, \eqref{eq:low-mode-fractional-limit} gives
\begin{equation*}
a^2\|g_a\|_{H^{3/2}_{\rm sp}(\Sigma_{a,\eta_a})}^2
\longrightarrow\frac{2\pi^2}{R_*}.
\end{equation*}
Finally, constants are eigenfunctions of $-\Delta_\Sigma$ with eigenvalue zero, so
\begin{equation*}
\|1/a\|_{H^{3/2}_{\rm sp}(\Sigma_{a,\eta_a})}
=a^{-1}|\Sigma_{a,\eta_a}|^{1/2}=O(a^{-1/2}).
\end{equation*}
Applying the triangle inequality and its reverse to
\eqref{eq:exact-tube-H} proves \eqref{eq:tube-H32-H}.
Reversing the normal changes only the sign of the curvature.
\end{proof}

The following compactness statement holds for every fixed $\sigma\ge0$.
All approximations use the common gauge \eqref{eq:common-Y-gauge},
so their inner radii agree at each $Y$: $a(Y)=\ell e^{-Y}$.

\begin{proposition}
\label{prop:compactness-without-relative-energy}
Fix $\sigma\ge0$, $E_c>0$ and
$E_*=E_c+\sigma|\Gamma_+^*|$.
Let $Y_N\to\infty$.  Suppose the Euler solutions on $[Y_0,Y_N]$
with prescribed terminal data conserve
$E_*$, have positive flux, a common positive lower bound for
$E_*-E_0$, and share the same $Y_0$, centering and separation constants.
Assume the physical trace bounds and the bounds for the first harmonics
\eqref{eq:first-mode-physical-bootstrap}--\eqref{eq:first-mode-amplitude-closed}
and the decay estimates for the centre and outer boundary hold at every
fixed spatial regularity,
uniformly in $N$, with constants allowed to depend on that regularity.
Then a diagonal subsequence converges on each compact $Y$ interval to a
smooth Euler trajectory.  The limit satisfies the same physical trace and geometric bounds,
including $\eta=O_{H^k}((1+Y)^{M_k}a^2)$ for every fixed $k$, and hence
has the collapse law \eqref{eq:final-collapse-law} and the curvature
asymptotics \eqref{eq:L2-curvature-final}--\eqref{eq:H32-curvature-final}.
\end{proposition}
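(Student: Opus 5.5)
The plan is to take the limit in the scaled variables, where all uniform bounds live, and to verify the collapse law and curvature asymptotics directly on the limit. We work in the common gauge $a(Y)=\ell e^{-Y}$, with state $(\eta,f,\alpha,R,h_+,g)$, flux $q$ and clock $t_Y$ all regarded as functions of $Y\in[Y_0,Y_N]$.

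\emph{Compactness.} The hypotheses give $N$-uniform bounds at every fixed spatial index $k$ on compact intervals $[Y_0,Y_1]$:
\begin{itemize}
\item from \eqref{eq:first-mode-physical-bootstrap}, \eqref{eq:first-mode-amplitude-closed} and the centre and outer decay estimates, bounds on the physical traces and graphs;
\item from Proposition~\ref{prop:raw-state-first-jet}, applied at index $k+4$, bounds on their first $Y$ derivatives with a fixed loss.
\end{itemize}
Arzel\`a--Ascoli in $C^0([Y_0,Y_1];H^k)$, combined with interpolation against the higher index and a diagonal argument over $k$ and $Y_1\to\infty$, then gives a subsequence converging in $C^0_{\rm loc}H^k$ for every $k$.

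The scalar quantities converge with the state, because they are reconstructed from it:
\begin{itemize}
\item $q$ through \eqref{eq:corrected-q-reconstruction}, using the uniform lower bound on $E_*-E_0$;
\item $t_Y$ and $R_Y$ through the centred modulation identities \eqref{eq:exact-centred-modulation}, whose determinant stays bounded below;
\item $\Theta$ and $\kappa$ likewise.
\end{itemize}
These are continuous functionals of the state by Section~\ref{sec:scaled-DN}. The limit therefore satisfies the exact boundary system \eqref{eq:exact-boundary-system}. Since $t_Y>0$, we convert back to physical time. The harmonic extension then yields $v=\nabla\Phi$ and $p$ as in Lemma~\ref{lem:exact-boundary-reduction}.

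Energy is conserved in the limit, since $E_*$ is the same for every $N$. Lemma~\ref{lem:energy-recovers-bernoulli-mean} then recovers the common Bernoulli constant, so the limit is a smooth Euler trajectory. The bounds pass to the limit by lower semicontinuity of norms. In particular $\eta=O_{H^k}((1+Y)^{M_k}a^2)$, and embeddedness and disjointness persist because the separation constants are uniform.

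\emph{Collapse law.} Since $q^2=(E_*-E_0)/(\pi L(Y+\mathfrak c))$, we need three inputs:
\begin{itemize}
\item $E_0\to\sigma|\Gamma_+^*|$, because $K_0=O(\mathcal P a^2)$ and $A=|\Gamma_+|+O(a)$;
\item $L\to L_*$;
\item $\mathfrak c$ bounded, by Lemma~\ref{lem:capacity-single-layer-schur}.
\end{itemize}
Together these give $q^2Y\to E_c/(\pi L_*)$.

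Proposition~\ref{prop:first-mode-backward-amplitude} gives $\Theta=1+O(\epsilon)$, hence $t_Y=a^2(1+o(1))/q$. Integrating from $Y$ to $\infty$ gives
\[
T-t(Y)=\int_Y^\infty \ell^2e^{-2S}\sqrt{\pi L_*S/E_c}\,(1+o(1))\,dS
=\tfrac12 a^2\sqrt{\pi L_*Y/E_c}\,(1+o(1)),
\]
using $\int_Y^\infty e^{-2S}\sqrt S\,dS\sim \tfrac12 e^{-2Y}\sqrt Y$. Finiteness of $T$ follows from convergence of this integral. This is \eqref{eq:final-collapse-law}, and inverting it yields the radius asymptotic.

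\emph{Curvature.} Since $\eta=O(\mathcal P a^2)$ in every $H^k$, we have $\|\eta\|_{H^2}=o(1)$ and $\|\eta\|_{H^{7/2}}=o(a)$. Moreover $R\to R_*$. Proposition~\ref{prop:correct-tube-stability}(ii) then gives \eqref{eq:L2-curvature-final}--\eqref{eq:H32-curvature-final}.

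\emph{Main obstacle.} The delicate step is identifying the limit as an Euler solution in physical time on a common interval. This requires that $t_Y$ and the reconstructed flux converge, not just the geometry. It relies on the positive energy gap and the nondegenerate modulation determinant holding uniformly in $N$. I expect to verify this by applying Lemma~\ref{lem:frozen-admissibility} to the limiting states.
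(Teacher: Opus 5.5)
Your proposal is correct and follows essentially the same route as the paper: the uniform spatial bounds plus the first $Y$-derivative bounds from Proposition~\ref{prop:raw-state-first-jet} give Arzel\`a--Ascoli compactness in $C([Y_0,Y_1];H^k)$ with a diagonal extraction, and $q$ and $t_Y$ converge through the energy root and the modulation formulas. The limit then solves the exact boundary system, and the collapse and curvature laws follow from $q^2Y\to E_c/(\pi L_*)$, $\Theta\to1$ and Proposition~\ref{prop:correct-tube-stability}. The appeal to Lemma~\ref{lem:frozen-admissibility} in your last paragraph is unnecessary. The uniform energy gap is a hypothesis, and the positive modulation determinant already comes from Proposition~\ref{prop:first-mode-backward-amplitude} under the assumed first-mode bounds.
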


\begin{proof}
Fix $Y_1<\infty$ and pull the domains back to a common reference
domain using the boundary charts on $[Y_0,Y_1]$.
Since $a\ge\ell e^{-Y_1}>0$, the physical and scaled Sobolev norms are
equivalent, with constants depending on $Y_1$.
The state coordinates are $(\eta,R,X_+,\alpha,f,g)$. The zero-flux condition
determines the inner constant, and the exact energy identity determines
$q$.

The spatial bounds and Proposition~\ref{prop:raw-state-first-jet}
bound the first $Y$ derivatives in each fixed lower Sobolev space.
Interpolation gives equicontinuity: a uniform $H^{k+1}$ bound and a
uniform $H^{k-2}$ bound for the time derivative yield an $H^k$ modulus
$C|Y-Y'|^{1/3}$.
Compact Sobolev embedding and Arzel\`a--Ascoli then give subsequential
strong convergence in $C([Y_0,Y_1],H^k)$.  A diagonal extraction over
integer $Y_1$ and $k$ produces compatible limits on all compact intervals.

The capacity and the zero-flux extension $\Phi_0$ are continuous under
this convergence on the compact interval, where the domains remain
nondegenerate.  To see this, subtract their elliptic equations after
pullback and apply the fixed domain estimate.  The coefficients and
traces converge strongly, while ellipticity remains uniform.  Hence $D_b,K_0,A$ converge.  The fixed energy and positive energy gap
then give convergence of the positive root
$q=((E_*-\sigma A-K_0)/D_b)^{1/2}$, followed by convergence of the full
harmonic potential.
The algebraic modulation formula gives convergence of $t_Y$ and the
geometric right-hand side.  Curvature and the Bernoulli quadratic term
converge by the Sobolev product and trace estimates.

Passing to the limit in the integral boundary equations in the common
potential gauge, using strong convergence of their right-hand sides,
shows that the limit is differentiable in $Y$ and solves the exact
boundary system.  Lemma~\ref{lem:exact-boundary-reduction} gives
the Euler equations in the fluid.  On $[Y_0,Y_1]$ one has $0<c_{Y_1}\le t_Y\le C_{Y_1}$.  The common separation and chart bounds pass to the limit,
so the limiting domains are embedded and nondegenerate there.

Finally, the pointwise estimates pass to the limit by strong convergence
and, at the highest regularity index under consideration, by lower semicontinuity.  Their constants are
independent of $Y_1$, so the same bounds hold for every finite $Y$.
The decay estimates for the centre and outer boundary identify the prescribed terminal geometries.
The exact energy identity, bounded capacity correction and the decay estimates
for the centre and outer boundary give $q^2Y\to E_c/(\pi L_*)$.
The algebraic time parametrization gives $\Theta\to1$.  Integrating
$t_Y=\Theta a^2/q$ then gives a finite terminal physical time and the
collapse law \eqref{eq:final-collapse-law}.  Proposition~\ref{prop:correct-tube-stability}
gives \eqref{eq:L2-curvature-final} and \eqref{eq:H32-curvature-final}.  Smoothness follows from the diagonal
bounds at every fixed spatial index and the exact evolution equations.
\end{proof}

\subsection{Energy concentration and sharp velocity estimates}
\label{subsec:energy-concentration}

\noindent\emph{Common estimates for the limiting analysis.}
For the limiting trajectory of
Proposition~\ref{prop:compactness-without-relative-energy}, write
$Y=\log(\ell/a)$, $L=2\pi R$ and $\widetilde v=\mathbf1_{\Omega_t}v$.
The physical boundary estimates, the outer trace estimate and the energy
identity yield, for every fixed spatial regularity index $k$, a polynomial
$\mathcal P_k$ such that, for all large $Y$,
\begin{equation}\label{eq:concentration-input-tails}
\begin{gathered}
\|\eta\|_{H^k}+|R-R_*|+|R_Y|
+\|X_+-X_+^*\|_{H^k}\le\mathcal P_k(Y)a^2,\\
|\alpha|\le q\mathcal P_k(Y)a,\qquad
\|f\|_{H^k}\le q\mathcal P_k(Y)a^2,\qquad
\|g\|_{H^k}\le\mathcal P_k(Y)a^2,\\
\Lambda:=\frac{2\pi L}{\operatorname{Cap}}=Y+O(1),\qquad
q^2Y\longrightarrow\frac{E_c}{\pi L_*}>0,\qquad
t_Y=\Theta a^2/q,\quad |\Theta-1|\le\mathcal P_k(Y)a^2.
\end{gathered}
\end{equation}
The outer norms use the prescribed fixed charts.  For the canonical
trajectory at positive surface tension, these estimates follow from
Theorem~\ref{thm:one-state-closed},
Lemma~\ref{lem:capacity-single-layer-schur}, and
Proposition~\ref{prop:compactness-without-relative-energy}.
Below, $\mathcal P$ denotes a polynomial depending on the fixed regularity indices.

Throughout this subsection and the next two, we use
\eqref{eq:concentration-input-tails}, a global velocity potential,
and the boundary condition $p=\sigma\mathcal H$ for a fixed
$\sigma\ge0$.  The local elliptic estimates of
Section~\ref{sec:scaled-DN} also give the scaled collar bound
\eqref{eq:scaled-inner-collar-velocity} from the physical boundary
estimates.  The conclusions hold for every trajectory satisfying
these assumptions.

Choose a fixed $r_0<R_*/4$ so small that, for all sufficiently late times,
$\overline{N_{r_0}(\gamma(t))}$ lies strictly inside the region enclosed
by $\Gamma_+(t)$.  Set $b(\theta)=a(1+\eta(\theta))$ and
$\Lambda_0=\log(r_0/a)$.  We parametrize the centre circles by
\begin{equation*}
\gamma(t,\phi)=(R(t)\cos\phi,R(t)\sin\phi,0),\qquad
\gamma_*(\phi)=(R_*\cos\phi,R_*\sin\phi,0).
\end{equation*}
For $0\le\phi,\theta<2\pi$ the tube coordinates are
\begin{equation}\label{eq:concentration-tube-coordinates}
\begin{aligned}
X_t(\phi,r,\theta)
&=\big((R+r\cos\theta)\cos\phi,
(R+r\cos\theta)\sin\phi,r\sin\theta\big),\\
e_r(\phi,\theta)&=(\cos\theta\cos\phi,\cos\theta\sin\phi,\sin\theta),
\qquad ds_t=R\,d\phi,\\
dx&=r\left(1+\frac rR\cos\theta\right)dr\,d\theta\,ds_t.
\end{aligned}
\end{equation}
The remaining unit vectors are $e_\theta=\partial_\theta e_r$ and
$e_\phi=(-\sin\phi,\cos\phi,0)$. In these coordinates the fluid
inside the tube is $b(\theta)<r<r_0$.  Define a vector field on $\R^3$ by
\begin{equation*}
w_t(X_t(\phi,r,\theta))=-\frac q r e_r
\quad\hbox{if }b(\theta)<r<r_0,
\qquad w_t=0\quad\hbox{elsewhere}.
\end{equation*}

\begin{lemma}
\label{lem:global-radial-energy-approximation}
For the constructed trajectory and all sufficiently large $Y$,
\begin{equation}\label{eq:global-radial-energy-error}
\|\widetilde v(t)-w_t\|_{L^2(\R^3)}\le C Y^{-1/2}.
\end{equation}
\end{lemma}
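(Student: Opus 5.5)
I will split $\R^3$ into three regions and estimate $\widetilde v-w_t$ separately on each. The regions are the collar $\{b(\theta)<r<Ca\}$, the intermediate tube $\{Ca\le r<r_0\}$, and the remaining fluid $\Omega_t\setminus N_{r_0}(\gamma(t))$. The hole $r<b(\theta)$ contributes nothing, since there both $\widetilde v$ and $w_t$ vanish. The target is an error of size $O(q)+O(\mathcal P(Y)a)$ in $L^2$. Since $q\asymp Y^{-1/2}$ by \eqref{eq:concentration-input-tails}, this gives \eqref{eq:global-radial-energy-error}.

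\emph{Step 1: outer region.} On $\Omega_t\setminus N_{r_0}(\gamma(t))$, decompose $\Phi=\frac{2\pi Lq}{\operatorname{Cap}}u_c+\Phi_0$. The zero-flux part $\Phi_0$ has Dirichlet energy $K_0$. By \eqref{eq:zero-flux-dirichlet-bound} and the tails $|\alpha|\le q\mathcal P a$, $\|f\|,\|g\|\le \mathcal P a^2$, this energy is $O(q^2\mathcal P^2a^2+\mathcal P^2a^4)$. For the condenser part, the coefficient $2\pi Lq/\operatorname{Cap}=q\Lambda/1\cdot(\ldots)$ is $O(qY)$. However, $u_c$ is $\Lambda^{-1}$ times a weighted Green function plus a correction. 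Away from the collapse circle, the Green function $G_0(\cdot,x_0)$ has bounded gradient by Lemma~\ref{lem:axisymmetric-green}, together with the small-hole correction from Proposition~\ref{prop:charge-free-localization}. So the velocity there is $O(q)$ in $L^2$. Since $w_t=0$ in this region, the contribution is $O(q)$.

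\emph{Step 2: intermediate tube, the main step.} For $Ca\le r<r_0$, I will use the local Green representation. This is the expansion $\sqrt{\rho\rho'}G_0=-\frac1{2\pi}\log|x-y|+d^2A\log d+B$ of Lemma~\ref{lem:axisymmetric-green}, applied to the single-layer density. By Lemma~\ref{lem:weighted-layer-expansion}, that density has mean $\asymp q$ (normalized by $\rho$) and nonconstant part of size $O(q\mathcal P a)$. Differentiating the representation in $x$ gives
\[
v=-\frac{q}{r}\Bigl(1+O\bigl(\tfrac rR\bigr)\Bigr)e_r+O(q)+O\Bigl(q\mathcal P\frac{a}{r^2}\Bigr)+\nabla\Phi_{\rm reg},
\]
where $O(q)$ accounts for the smooth part $B$ and the weight $\rho$, the term $O(q\mathcal P a/r^2)$ accounts for the dipole and higher modes, and $\nabla\Phi_{\rm reg}$ collects the outer-trace contributions. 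The key point is that the logarithmic coefficient is exactly $q$, with no $\log$ loss. This follows from the flux normalization $\int_{\Gamma_-}B_-\,dS=2\pi Lq$ and Gauss's law across each tube $r=\mathrm{const}$. Squaring and integrating with $dx\asymp r\,dr\,d\theta\,ds$ over $Ca<r<r_0$ then gives:
\begin{itemize}
\item $\int (q\cdot r/R\cdot r^{-1})^2\,r\,dr=O(q^2)$ from the curvature correction;
\item $O(q^2)$ from the smooth part;
\item $q^2\mathcal P^2a^2\int_{Ca}r^{-3}\,dr=O(q^2\mathcal P^2)/C^2$ from the dipole, which is $O(q^2\mathcal P^2 a^0)$.
\end{itemize}
The dipole term is the delicate one. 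To make it small I will use the sharper first-mode bound $|\alpha|\le q\mathcal P a$: the dipole potential is $\alpha a/r$, so its gradient is $\alpha a/r^2\lesssim q\mathcal P a^2/r^2$, and its $L^2$ contribution is $q\mathcal P a$. I expect this bookkeeping, which must avoid any factor $Y$ from the $\log a$ term in the Green kernel, to be the main obstacle. The $\log a$ term multiplies only the mean of the density, and it is constant, so it does not enter the gradient.

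\emph{Step 3: collar.} In the scaled collar $b<r<Ca$, I will apply \eqref{eq:scaled-inner-collar-velocity}: $(a/q)v=-s^{-1}e_r+O((1+Y)^Ma)$. Since the collar has volume $O(a^2)$, the error there is $O(q\mathcal P a)$. Adding the three regions gives $\|\widetilde v-w_t\|_2\le Cq+C\mathcal P a\le CY^{-1/2}$.
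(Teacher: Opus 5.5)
Your proposal is correct in outline but follows a different route from the paper.

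\textbf{The paper's route.} The paper never analyses $v$ pointwise. It writes $v=q\nabla W+\nabla\Phi_0$ with $W=\Lambda u_c$, and bounds the zero-flux part by $\|\nabla\Phi_0\|_2^2=2K_0$, giving $\|\nabla\Phi_0\|_2\le q\mathcal P a$. It then compares $W$ with the explicit competitor $W_c=\Lambda\log(r_0/r)/\Lambda_b(\theta)$ in the tube, extended by zero outside; $W_c$ has the same traces as $W$. Since $W$ is harmonic and $W_c-W\in H^1_0$, the Dirichlet principle gives $\|\nabla(W_c-W)\|_2^2=\int|\nabla W_c|^2-2\pi L\Lambda=2\pi L\Lambda(\Lambda-\Lambda_0)/\Lambda_0+O(1)=O(1)$. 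The only analytic input here is the capacity asymptotic $\Lambda-\Lambda_0=O(1)$. Finally, $\nabla W_c$ is explicitly within $O(Y^{-1/2}+Y^{1/2}\|\eta_\theta\|_{L^2})$ of $w_t/q$ in $L^2$. So the whole estimate comes from one energy identity: no region splitting, no derivative bounds on the Green kernel across scales, and no use of the collar estimate \eqref{eq:scaled-inner-collar-velocity}.

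\textbf{Your route.} Your three-region argument instead proves the radial profile pointwise at every scale. That is essentially what the paper obtains afterwards in Lemma~\ref{lem:all-scale-material-velocity}, which in fact uses the present lemma as its $L^2$ input. The price is heavier machinery:
\begin{itemize}
\item the decomposition $\mu=Rq+O(q\mathcal P a)$ of the layer density;
\item derivative bounds for $G_0$ uniformly on the whole tube $Ca\le r<r_0$, which requires $r_0$ inside the neighbourhood of Lemma~\ref{lem:axisymmetric-green};
\item separated Green bounds up to $\Gamma_+$ for the outer region.
\end{itemize}
For the last item, Lemma~\ref{lem:axisymmetric-green} is only local near $(x_0,x_0)$. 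The bound $|\nabla W|\le C$ away from the core should instead come from the filled-domain representation used in the proof of Lemma~\ref{lem:terminal-condenser-profile}.

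\textbf{Two bookkeeping corrections.} First, differentiating the weight factor $(R/\rho)^{1/2}$ against the monopole $-q\log r$ produces an error of size $q\log(R/r)/R$. This is not $O(q)$ pointwise near $r\sim a$, where it is of order $qY$. It is, however, still $O(q)$ in $L^2$ over the tube, so your conclusion stands. Second, the dipole gradient is $O(q\mathcal P a^2/r^2)$, as in your corrected version, not $O(q\mathcal P a/r^2)$. For $r\ge Ca$ no $\log a$ term arises at all, so the ``main obstacle'' you anticipated does not occur.
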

\begin{proof}
The exact decomposition in \eqref{eq:exact-energy-components} is
\begin{equation*}
v=q\nabla W+\nabla\Phi_0,\qquad
W=\Lambda u_c,\qquad
\int_{\Omega_t}|\nabla W|^2\,dx=2\pi L\Lambda.
\end{equation*}
Since $\|\nabla\Phi_0\|_2^2=2K_0$,
\eqref{eq:zero-flux-dirichlet-bound} and
\eqref{eq:concentration-input-tails} give
$\|\nabla\Phi_0\|_2\le q\mathcal P(Y)a=o(q)$.

To compare the condenser potential with a radial potential, define
\begin{equation*}
\Lambda_b(\theta)=\log\frac{r_0}{b(\theta)}
=\Lambda_0-\log(1+\eta(\theta)).
\end{equation*}
Define the $H^1$ comparison potential on the fluid by
\begin{equation*}
W_c(X_t(\phi,r,\theta))
=\Lambda\frac{\log(r_0/r)}{\Lambda_b(\theta)}
\quad(b(\theta)<r<r_0),
\qquad W_c=0\quad(r\ge r_0).
\end{equation*}
This potential has the same traces as $W$: $\Lambda$ on the
inner boundary and zero on the outer boundary.  Continuity across $r=r_0$ gives $W_c\in H^1(\Omega_t)$.
Thus $W_c-W\in H^1_0(\Omega_t)$, and harmonicity of $W$ yields
\begin{equation*}
\|\nabla(W_c-W)\|_2^2
=\int_{\Omega_t}|\nabla W_c|^2\,dx-2\pi L\Lambda.
\end{equation*}

Write $E_r(W_c)$ and $E_\theta(W_c)$ for the radial and angular parts
of the Dirichlet energy of this potential.  The exact volume element gives
\begin{align}
E_r(W_c)
&=L\Lambda^2\int_0^{2\pi}
\left(\frac1\Lambda_b+
\frac{(r_0-b)\cos\theta}{R \Lambda_b^2}\right)d\theta
=\frac{2\pi L\Lambda^2}{\Lambda_0}+O(1),
\label{eq:comparison-radial-energy}\\
E_\theta(W_c)
&\le C\Lambda^2\int_0^{2\pi}
\frac{|\partial_\theta \Lambda_b|^2}{\Lambda_b^4}
\int_b^{r_0}\log^2(r_0/r)\,\frac{dr}{r}\,d\theta
\le C Y\|\eta_\theta\|_{L^2}^2=o(1).
\label{eq:comparison-angular-energy}
\end{align}
Here $\Lambda_b\asymp \Lambda_0\asymp Y$, $\Lambda-\Lambda_0=O(1)$,
and $\Lambda_b-\Lambda_0=-\log(1+\eta)=O(\|\eta\|_{L^\infty})$.
The radial Jacobian correction in
\eqref{eq:comparison-radial-energy} is bounded independently of $Y$,
while the inner integral in \eqref{eq:comparison-angular-energy}
equals $\Lambda_b^3/3$.  Therefore
\begin{equation*}
0\le\|\nabla(W_c-W)\|_2^2
=2\pi L\Lambda
\frac{\Lambda-\Lambda_0}{\Lambda_0}+O(1)\le C.
\end{equation*}

The radial derivative of $W_c$ is $-\Lambda/(r\Lambda_b)$, so the same calculation gives
\begin{equation*}
\|\nabla W_c-w_t/q\|_{L^2(\Omega_t)}^2
\le C\int_0^{2\pi}
\frac{(\Lambda-\Lambda_b)^2}{\Lambda_b}\,d\theta
+CY\|\eta_\theta\|_{L^2}^2
\le \frac C Y+CY\|\eta_\theta\|_{L^2}^2.
\end{equation*}
The exact velocity decomposition and these estimates prove
\eqref{eq:global-radial-energy-error} after zero extension.
\end{proof}

\begin{proposition}
\label{prop:energy-concentration-limits}

The energy and velocity limits
\eqref{eq:main-energy-concentration}, \eqref{eq:main-logarithmic-energy-law}
and \eqref{eq:main-velocity-limits}
hold, including the vanishing energy in $N_{Ca(t)}(\gamma(t))$ for
fixed $C>1$ and the obstruction to strong $L^2$ continuation.
For every fixed $1\le p<2$,
\begin{equation}\label{eq:subcritical-Lp-rate}
\|\widetilde v(t)\|_{L^p(\R^3)}\le C_pY^{-1/2}
\quad\hbox{for all sufficiently large }Y.
\end{equation}
\end{proposition}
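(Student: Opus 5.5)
Everything reduces to the explicit radial field $w_t$, via Lemma~\ref{lem:global-radial-energy-approximation}. Since $\|\widetilde v(t)-w_t\|_{L^2}\le CY^{-1/2}\to0$, every statement about quadratic energy densities, weak limits and $L^p$ norms on bounded sets may be proved for $w_t$ instead of $\widetilde v$. The only errors are cross terms bounded by Cauchy--Schwarz, using $\|w_t\|_{L^2}=O(1)$.

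\textbf{Energy of $w_t$ on radial shells.} In the tube coordinates \eqref{eq:concentration-tube-coordinates}, $\frac12|w_t|^2dx=\frac{q^2}{2r}(1+\frac rR\cos\theta)\,dr\,d\theta\,R\,d\phi$. For a test function $\varphi\in C_c(\R^3)$, integrate over $b(\theta)<r<r_0$. Uniform continuity lets us replace $\varphi$ by $\varphi(\gamma(t,\phi))$ with an error $o(1)$ times $q^2Y$, and the $\cos\theta$ correction contributes $O(q^2)$. This gives
\[
\tfrac12\int\varphi|w_t|^2dx=\pi q^2\Lambda_0\int_0^{2\pi}\varphi(\gamma(t,\phi))R\,d\phi+o(1).
\]
We then insert $q^2\Lambda_0=q^2Y+O(q^2)\to E_c/(\pi L_*)$ and $R\to R_*$. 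This yields the limit $\frac{E_c}{L_*}\int_{\gamma_*}\varphi\,d\mathcal H^1$, i.e.\ \eqref{eq:main-energy-concentration}. The same computation restricted to $b<r<r_\lambda(t)$ gives $\pi Lq^2\log(r_\lambda/a)=\pi Lq^2\lambda\Lambda_0+O(q^2)\to\lambda E_c$; note that the shell nearest the cavity carries the fraction $\lambda$. Taking the complement, $N_{r_\lambda}$ here should be read with the convention of \eqref{eq:main-logarithmic-energy-law}. I would recheck which shell receives $1-\lambda$: with $r_\lambda=r_0^{1-\lambda}a^\lambda$ one has $\log(r_\lambda/a)=(1-\lambda)\log(r_0/a)$, so the inner tube carries $(1-\lambda)E_c$, as claimed. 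For $r<Ca$ the energy is $\pi Lq^2\log C=O(1/Y)\to0$.

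\textbf{Velocity statements.} Outside $N_{r_0}$, $\widetilde v$ differs from $w_t=0$ by $O(Y^{-1/2})$ in $L^2$. For $L^p$ with $p<2$ I compute $\|w_t\|_{L^p}^p\asymp q^p\int_a^{r_0}r^{1-p}dr\le C_pq^p$, so $\|w_t\|_{L^p}\le C_pY^{-1/2}$. On the bounded support (contained in a fixed ball), H\"older gives $\|\widetilde v-w_t\|_{L^p}\le C\|\widetilde v-w_t\|_{L^2}$, which proves \eqref{eq:subcritical-Lp-rate} and the strong $L^p$ convergence to zero. The $L^2$ norm satisfies $\|\widetilde v\|_2^2=2(E_*-\sigma A)\to2E_c$ by energy conservation and smooth convergence of the areas ($|\Gamma_-|\to0$). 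Weak $L^2$ convergence to zero follows from the $L^1$ convergence together with boundedness in $L^2$, by testing against $C_c^\infty$ and using density. A strongly $L^2$-continuous continuation would force $\widetilde v(T)=\lim\widetilde v(t)$ in $L^2$; the weak limit is $0$ while the norms tend to $\sqrt{2E_c}>0$, a contradiction.

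\textbf{Main obstacle.} The delicate point is the bookkeeping of the tube region. The vector field $w_t$ lives on the moving tube about $\gamma(t)$ with the perturbed inner radius $b(\theta)$, so the logarithmic integrals must be computed with $\log(r_0/b)$ rather than $\log(r_0/a)$. The $\eta$ corrections are $O(\mathcal P a^2)$ by \eqref{eq:concentration-input-tails} and are therefore negligible. One must also check that the convergence $\gamma(t)\to\gamma_*$ transfers the test-function integrals to $\gamma_*$; this is uniform continuity plus $|R-R_*|\to0$.
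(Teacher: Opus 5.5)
Your proposal is correct and follows essentially the paper's route. Both arguments replace $\widetilde v$ by the explicit field $w_t$ through the $L^1$ comparison of energy densities given by Lemma~\ref{lem:global-radial-energy-approximation}, compute the shell energies $\pi Lq^2\log(h/a)+O(q^2)$ with the uniform-continuity localisation onto $\gamma(t)$, and obtain the $L^p$ rate, the weak $L^2$ limit and the strong-continuation obstruction in the same way; the only cosmetic difference is that you get $\|\widetilde v\|_2^2\to 2E_c$ from energy conservation and convergence of the areas rather than from the shell computation at $h=r_0$. In a write-up, delete your initial claim that the inner shell carries the fraction $\lambda$, since your own check $\log(r_\lambda/a)=(1-\lambda)\Lambda_0$ shows it carries $(1-\lambda)E_c$.
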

\begin{proof}
We divide the proof into two steps.

\emph{Step 1. Energy concentration.}
The conserved energy and
Lemma~\ref{lem:global-radial-energy-approximation} bound
$\|\widetilde v\|_2+\|w_t\|_2$ uniformly.  Consequently
\begin{equation}\label{eq:energy-density-L1-comparison}
\big\||\widetilde v|^2-|w_t|^2\big\|_{L^1}
\le C\|\widetilde v-w_t\|_{L^2}
\longrightarrow0.
\end{equation}
This $L^1$ estimate also controls energy integrals over measurable sets
that may depend on time.

For $\max_\theta b(\theta)<h\le r_0$, direct integration of
\eqref{eq:concentration-tube-coordinates} gives
\begin{equation*} 
\frac12\int_{N_h(\gamma(t))}|w_t|^2\,dx
=\pi Lq^2\log\frac h a
-\frac{Lq^2}{2}\int_0^{2\pi}\log(1+\eta)\,d\theta -\frac{Lq^2}{2R}\int_0^{2\pi}b(\theta)\cos\theta\,d\theta. 
\end{equation*}
The last two terms tend to zero uniformly over this range of $h$.
For $h=r_\lambda$ the first logarithm equals $(1-\lambda)\Lambda_0$,
and $q^2\Lambda_0\to E_c/(\pi L_*)$.
For $h=Ca$ it equals $\log C$.  Both choices exceed
$\max b$ for all sufficiently late times.  Equation~\eqref{eq:energy-density-L1-comparison} proves
\eqref{eq:main-logarithmic-energy-law} and the $Ca$-tube limit.  Taking $h=r_0$
also shows that the total kinetic energy tends to $E_c$.

To determine the spatial energy distribution, let $\varphi\in C_c(\R^3)$.
For fixed $0<\varepsilon<r_0$, the model energy in
$\varepsilon<r<r_0$ is at most
$Cq^2\log(r_0/\varepsilon)$ and tends to zero.  On $r<\varepsilon$,
replace $\varphi(X_t(\phi,r,\theta))$ by
$\varphi(\gamma(t,\phi))$ using uniform continuity.  The resulting
error is bounded by a constant times the modulus of continuity of
$\varphi$ at $\varepsilon$.  Moreover the Jacobian correction
$r\cos\theta/R$ contributes $O(q^2)$, and replacing the lower
radial endpoint $b(\theta)$ by $a$ contributes at most
$Cq^2\|\log(1+\eta)\|_{L^\infty}$.  Thus, first letting $t\uparrow T$
and then $\varepsilon\downarrow0$, we obtain
\begin{equation*}
\frac12\int\varphi|w_t|^2\,dx
-\pi q^2 \Lambda_0\int_{\gamma(t)}\varphi\,ds_t\longrightarrow0.
\end{equation*}
The convergence of $R$ and $q^2\Lambda_0$ proves
\eqref{eq:main-energy-concentration}.

\emph{Step 2. Velocity convergence.}
By convergence of the outer boundary, the fluid domains and the
supports of $w_t$ lie in a fixed bounded set $K$ for all sufficiently
late times.  For $1\le p<2$,
\begin{equation*}
\|w_t\|_{L^p}^p
\le C_pq^p\int_0^{r_0}r^{1-p}\,dr\le C_pq^p,
\qquad
\|\widetilde v-w_t\|_{L^p}
\le |K|^{1/p-1/2}\|\widetilde v-w_t\|_{L^2}.
\end{equation*}
This proves \eqref{eq:subcritical-Lp-rate}.  Strong $L^1$ convergence
to zero and the uniform $L^2$ bound imply weak $L^2$ convergence to
zero, by testing first against bounded compactly supported vector
fields and then using their density in $L^2$.  The kinetic energy limit gives
$\|\widetilde v(t)\|_2^2\to2E_c>0$.
A strong $L^2$ subsequential limit would equal the weak limit zero,
contradicting this positive limiting norm.
\end{proof}

The radial comparison also yields pointwise bounds for the velocity
and its gradient.

\begin{lemma}
\label{lem:velocity-away-from-core}
For every fixed $\delta>0$ and nonnegative integer $k$, there is a
constant $C_{\delta,k}$ such that, for all sufficiently late times,
\begin{equation}\label{eq:velocity-away-from-core}
\max_{0\le j\le k}\ \sup_{x\in\overline{\Omega_t}
\setminus N_\delta(\gamma_*)}
|D_x^jv(t,x)|\le C_{\delta,k}q(t).
\end{equation}
Spatial derivatives on the outer boundary are understood as traces from
the fluid.  Moreover,
\begin{equation}\label{eq:integrable-away-core-velocity}
\int_t^Tq(t')\,dt'
=\int_{Y(t)}^\infty\Theta(Y')a(Y')^2\,dY'
=\frac{a(t)^2}{2}(1+o(1)).
\end{equation}
\end{lemma}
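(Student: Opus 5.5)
The plan is to split the potential away from the core as $\Phi=qW+\Phi_0$, where $W=\Lambda u_c$, and to estimate each part by elliptic theory on the fixed region $\overline{\Omega_t}\setminus N_\delta(\gamma_*)$.

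\emph{Step 1: the condenser part.} The decomposition in \eqref{eq:exact-energy-components} writes $\Phi=qW+\Phi_0$ with $W=\Lambda u_c$. Away from the core, $W$ is essentially the weighted Dirichlet Green function of the filled domain with pole on $\gamma(t)$. More precisely, Lemma~\ref{lem:cross-kernel-bounds} and Lemma~\ref{lem:zero-charge-outer-derivative} give
\begin{equation*}
W=2\pi\cdot(\text{const})\,G_D(\cdot,c(t))+O(a).
\end{equation*}
This uses that the hole correction has size $O(a)$ together with its derivatives on separated sets. The centre $c(t)$ and the outer geometry converge, and both stay in a fixed smooth class by \eqref{eq:concentration-input-tails}. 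The Green function is therefore bounded in $C^k$ on $\{\operatorname{dist}(x,\gamma(t))\ge\delta/2\}$ up to the outer boundary. The bound is uniform: use interior estimates near the core, and boundary Schauder or Sobolev estimates in the fixed outer collar, where the outer graph is smooth uniformly. This gives $|D^j(qW)|\le C_{\delta,k}q$.

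\emph{Step 2: the zero-flux remainder.} Write $\Phi_0=H_g+u$, where $H_g$ is the filled-domain extension of the outer trace $g$. The trace satisfies $\|g\|_{H^k}\le\mathcal P a^2$, so elliptic estimates give $H_g=O_{C^k}(\mathcal P a^2)$. The part $u$ has zero flux and zero outer trace. Its nonconstant inner trace is $\alpha\cos\theta+f$ minus $H_g$, of size $q\mathcal P a$. By \eqref{eq:outer-dipole-size}, combined with the Green representation in Step 2 of Proposition~\ref{prop:charge-free-localization}, the velocity of $u$ away from the hole is $O(a/\delta^2)$ times that trace norm. Higher derivatives follow in the same way: differentiate the subtracted Green kernel $G_D(x,c+a\zeta)-G_D(x,c)$ in $x$, which gives $O(a)$ in $C^k_x$ for $x$ separated from $c$. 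Near the outer boundary, use boundary regularity in the fixed collar. In total $\Phi_0$ contributes at most $\mathcal P a^2=o(q)$.

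The three-dimensional derivatives are recovered from the meridional derivatives by the smooth axisymmetric change of coordinates, which is valid because $\rho\ge\rho_*>0$. This proves \eqref{eq:velocity-away-from-core}.

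\emph{Step 3: the time integral.} Since $t_Y=\Theta a^2/q$,
\begin{equation*}
\int_t^Tq\,dt'=\int_{Y(t)}^\infty\Theta a^2\,dY'.
\end{equation*}
The endpoint is $Y=\infty$ because $T=t(\infty)$. Using $|\Theta-1|\le\mathcal P a^2$ and $a(Y')^2=a(Y)^2e^{-2(Y'-Y)}$, the integral equals $\tfrac12a^2(1+O(\mathcal P a^2))$.

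The main obstacle is the uniformity up to the moving outer boundary in Step 1. I would handle it by pulling back along the fixed outer chart: since $\|X_+-X_+^*\|_{H^k}\le\mathcal P a^2$, the pulled-back coefficients are uniformly smooth and the estimates do not depend on $t$.
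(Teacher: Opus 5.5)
Your argument is correct, but it is organized differently from the paper's.

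\textbf{The paper's route.} The paper never expands $W$ in the bulk. It starts from the $L^2$ comparison \eqref{eq:global-radial-energy-error} with the explicit radial field $w_t$. Together with $q\asymp Y^{-1/2}$, this gives $\|v\|_{L^2(\Omega_t\setminus N_{\delta/2}(\gamma_*))}\le C_\delta q$. From there it uses only that the Cartesian components of $v$ are harmonic. Interior estimates handle the bulk. Near $\Gamma_+(t)$, local boundary estimates are fed with the full Cartesian Dirichlet trace of $v$: its normal part is $B_+$, controlled by \eqref{eq:outer-velocity-first-jet}, and its tangential part is $\nabla_{\Gamma_+}g$.

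\textbf{Your route.} You represent the potential itself. The main term is a filled-domain Green function centred at $c(t)$ with the exact flux. The remainders are zero-flux corrections that decay like a dipole away from the hole. You then bound each piece directly in $C^k$.

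\textbf{What each buys.} Your route bypasses the energy comparison and gives more: the quantitative profile $v=q\nabla\bigl(\text{const}\cdot G_D(\cdot,c)\bigr)+O\bigl(q\,\mathcal P(Y)a\bigr)$ away from the core. This is essentially a quantitative form of the velocity limit that the paper obtains later in Lemma~\ref{lem:terminal-condenser-profile} and Proposition~\ref{prop:terminal-pressure-profile}. The price is that you need uniform smoothness of the filled Green kernel up to the moving outer boundary. You also need the jump-density and dipole bounds of Proposition~\ref{prop:charge-free-localization} throughout the bulk, not just at the outer trace.

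\textbf{Shared ingredient.} The paper's trace bound \eqref{eq:outer-velocity-first-jet} is itself proved by exactly your decomposition $\Phi=H_g+QG_D(\cdot,c)+u$. So the two proofs coincide at the outer boundary and differ only in how the bulk is handled.

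\textbf{Minor points.} The expansion of $W$ you need comes from that decomposition in the proof of Lemma~\ref{lem:zero-charge-outer-derivative}, or from the filled representation \eqref{eq:filled-green-representation}. It does not come from Lemma~\ref{lem:cross-kernel-bounds}, which concerns the outer-to-inner block. Your time-integral step is the same as the paper's.
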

\begin{proof}
Since $q\asymp Y^{-1/2}$, the energy comparison
\eqref{eq:global-radial-energy-error} gives
$\|\widetilde v-w_t\|_2\le Cq$.  The centre circles converge, so the
explicit field $w_t$ satisfies $|w_t|\le C_\delta q$ outside
$N_{\delta/2}(\gamma_*)$ for all sufficiently large $Y$.  All supports
lie in a fixed bounded set.  Consequently
\begin{equation*}
\|v\|_{L^2(\Omega_t\setminus N_{\delta/2}(\gamma_*))}
\le C_\delta q.
\end{equation*}
The Cartesian components of $v$ are harmonic, so local interior
estimates prove \eqref{eq:velocity-away-from-core} away from the outer
boundary.

Smooth convergence of the outer boundary provides charts of fixed size
with uniformly bounded derivatives at every fixed order.  The
normal velocity estimate \eqref{eq:outer-velocity-first-jet}, together
with the decay estimate for the outer potential trace $g$ at sufficiently
high regularity, bounds the
full Cartesian Dirichlet trace of $v$ in every fixed $C^m$ norm by
$C_mq$: its normal component is $B_+$, and its tangential component is
$\nabla_{\Gamma_+}g$.  Local boundary estimates for the harmonic
Cartesian components, using the trace bounds and
$\|v\|_{L^2(\Omega_t\setminus N_{\delta/2}(\gamma_*))}\le C_\delta q$
on slightly larger patches, give
\eqref{eq:velocity-away-from-core} up to $\Gamma_+(t)$.  These patches stay a fixed distance from the inner component.
A finite covering therefore gives constants independent of the
shrinking cavity.
Finally $q\,dt=\Theta a^2\,dY$, $a=\ell e^{-Y}$ and
$\Theta\to1$ give \eqref{eq:integrable-away-core-velocity}.
\end{proof}

The gradient is the
Cartesian Jacobian, $(\nabla v)_{ij}=\partial_jv_i$, and the tangential
derivative of the full velocity trace on
$\Gamma=\Gamma_-\cup\Gamma_+$ is
$\nabla_\Gamma v=(\nabla v)(I-n\otimes n)$.

\begin{lemma}
\label{lem:sharp-global-velocity-gradient}
There are constants $C,M>0$ such that, with
$\epsilon(Y)=C(1+Y)^Ma(Y)$ and $Y$ sufficiently large,
\begin{equation}\label{eq:sharp-global-velocity-gradient}
\left|\frac aq\|v\|_{L^\infty(\Omega_t)}-1\right|
+\left|\frac{a^2}q\|\nabla v\|_{L^\infty(\Omega_t)}-1\right|
+|\Theta-1|\le\epsilon(Y),
\qquad \int^{\infty}\epsilon(Y)\,dY<\infty.
\end{equation}
The normal velocity satisfies \eqref{eq:hly-normal-velocity-divergence},
and the tangential gradient satisfies
\begin{equation}
\|\nabla_\Gamma v\|_{L^\infty(\Gamma)}
\sim\|\nabla v\|_{L^\infty(\Omega_t)}
\sim\frac q{a^2}\sim\frac1{2(T-t)}.
\label{eq:hly-two-gradient-rates}
\end{equation}
The relative error in the comparison of the tangential gradient with $q/a^2$
is also bounded by $\epsilon(Y)$.  Both gradient integrals from a fixed
late time $t_0$ to $t(Y)$ equal $Y+O(1)$.  In particular,
\eqref{eq:main-gradient-blowup-rate} holds.
\end{lemma}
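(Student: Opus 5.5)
The plan is to split $\Omega_t$ into a scaled inner collar $\{b(\theta)<r<Ca\}$, an intermediate region $\{Ca\le r\le r_0\}$, and the region away from $\gamma_*$. In each region we compare $v$ with the radial model $-(q/r)e_r$ and its gradient. Away from the core, Lemma~\ref{lem:velocity-away-from-core} gives $|v|+|\nabla v|\le C_\delta q$, and this is $o(q/a)$ and $o(q/a^2)$ with relative error $O(a)$. In the inner collar we use the scaled estimate \eqref{eq:scaled-inner-collar-velocity} in the variable $s=r/a$. It gives $(a/q)v=-s^{-1}e_r+O_{C^{k+1}}((1+Y)^Ma)$, and one scaled derivative yields $(a^2/q)\nabla v=\nabla_s(-s^{-1}e_r)+O((1+Y)^Ma)$. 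The model field $-s^{-1}e_r$ is the planar field $-z_\perp/|z_\perp|^2$ plus azimuthal corrections carrying a factor $a/\rho$. Its maximum modulus on $s\ge1+\eta$ is $1/\min(1+\eta)=1+O(\|\eta\|_{L^\infty})$. Its gradient $-s^{-2}(I_\perp-2e_r\otimes e_r)$ has operator norm $s^{-2}$, so the supremum is also $1+O(\mathcal P a^2)$ and is attained on $\Gamma_-$. This gives the first two terms of \eqref{eq:sharp-global-velocity-gradient}. The bound on $|\Theta-1|$ is already in \eqref{eq:concentration-input-tails}.

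The step I expect to be the main obstacle is the intermediate region $Ca\le r\le r_0$, where neither the scaled collar estimate nor the macroscopic estimate applies directly. Here I would write $v=q\nabla W+\nabla\Phi_0$ and expand $W=\Lambda u_c$ with the weighted Green representation of Lemma~\ref{lem:axisymmetric-green}. The leading term is $-\log r$, plus a dipole of size $O(a^2/r^2)$, plus a regular part with bounded derivatives. For the zero-flux part $\Phi_0$ I would use the dipole decay \eqref{eq:dipole-annulus-estimate}, scaled by $q\mathcal P a$ via \eqref{eq:concentration-input-tails}. Together with interior harmonic estimates on dyadic annuli $\{r\sim 2^j a\}$, this gives $|v+qr^{-1}e_r|\le Cq(\mathcal P a/r+1)$ and a gradient error of $Cq(\mathcal P a/r^2+1/r)$. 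After multiplying by $a/q$ and $a^2/q$ respectively, these are $O(\mathcal P a)$ uniformly. The model is maximised at the smallest $r$, so the intermediate region cannot exceed the collar values. I would choose $C$ large only to overlap with the collar charts.

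For the boundary statements, the normal velocity $B_-=(q/a)n$ with $n=1+O(\mathcal P a)$ from Proposition~\ref{prop:raw-state-first-jet}. Integrating against the surface measure $a\mathfrak m\,d\theta$ with $\mathfrak m\to 2\pi R_*$ gives $\|v\cdot n_-\|_{L^2}^2\sim 2\pi L_* q^2/a$, which is \eqref{eq:hly-normal-velocity-divergence}; note that the normalized flux $q(t)$ agrees with the $q$ used throughout. For the tangential gradient on $\Gamma_-$, the model gradient restricted to the tangent plane has the eigenvalue $+s^{-2}$ in the $e_\theta$ direction, so $\|\nabla_\Gamma v\|_{L^\infty}=(q/a^2)(1+O(\epsilon))$. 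On $\Gamma_+$ the tangential gradient is $O(q)$, which is negligible.

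Finally, from $t_Y=\Theta a^2/q$ we get $\partial_Y\bigl(t_Y\|\nabla v\|_{L^\infty}\bigr)$-type identities. More precisely, $\int\|\nabla v\|_{L^\infty}\,dt=\int(q/a^2)(1+O(\epsilon))\,t_Y\,dY=\int\Theta(1+O(\epsilon))\,dY=Y+O(1)$, using integrability of $\epsilon$. For the rate, \eqref{eq:integrable-away-core-velocity}-style integration gives $T-t=\int_Y^\infty\Theta a^2/q\,dY'$. Since $q$ varies only logarithmically, with $q_Y/q=O(1/Y)$ by \eqref{eq:raw-q-first-jet}, this equals $(a^2/(2q))(1+o(1))$. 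Hence $q/a^2\sim 1/(2(T-t))$, which proves \eqref{eq:hly-two-gradient-rates} and \eqref{eq:main-gradient-blowup-rate}.
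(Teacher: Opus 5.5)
Your proposal is correct, but it takes a heavier route than the paper in the interior, and the step you flag as the main obstacle does not arise in the paper's argument.

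\textbf{How the paper handles the interior.} Since $\Phi$ is harmonic in three dimensions, every Cartesian component of $v$ and of $\nabla v$ is harmonic in $\Omega_t$. The paper applies the scalar maximum principle to $e\cdot v$ and $e\cdot(\nabla v)f$ for fixed unit vectors $e,f$, and then takes the supremum over $e,f$. This gives
$\|v\|_{L^\infty(\Omega_t)}\le\sup_{\partial\Omega_t}|v|$ and $\|\nabla v\|_{L^\infty(\Omega_t)}\le\sup_{\partial\Omega_t}\|\nabla v\|$, so only boundary values are needed:
\begin{itemize}
\item On $\Gamma_-$, the collar estimate \eqref{eq:scaled-inner-collar-velocity} and the exact gradient \eqref{eq:radial-cartesian-gradient} give $1+O(\mathcal P(Y)a)$. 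The operator norm of \eqref{eq:radial-cartesian-gradient} there is exactly $q/r^2$.
\item On $\Gamma_+$, Lemma~\ref{lem:velocity-away-from-core} gives $O(q)$.
\item Continuity up to $\Gamma_-$ gives the matching lower bounds.
\end{itemize}

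\textbf{What your route buys and costs.} Your three-region argument also closes. It uses the monopole and dipole expansion of $\Lambda u_c$, the dipole decay \eqref{eq:dipole-annulus-estimate} for $\Phi_0$, and dyadic interior estimates. It gives more than the lemma needs: a pointwise comparison of $v$ and $\nabla v$ with the radial model at every intermediate scale. The paper proves this separately, later, in Lemma~\ref{lem:all-scale-material-velocity} for the material map. The cost is bookkeeping that the maximum principle avoids:
\begin{itemize}
\item The relative first harmonic of the condenser density must be $O(\mathcal P a)$, via $\beta_a$ in \eqref{eq:condenser-flux-expansion-v2}. Otherwise the dipole correction is not small at $r\sim Ca$. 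You use this implicitly.
\item You must split off the outer trace $g$ from $\Phi_0$ before invoking \eqref{eq:dipole-annulus-estimate}, which is stated for zero outer trace.
\item The ring-curvature correction makes the velocity error $O(q\log(1/r))$ rather than $O(q)$. This is harmless after multiplying by $a/q$.
\item The collar and annular charts must be made to overlap.
\end{itemize}

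\textbf{Minor slips.} The $e_\theta$ eigenvalue of the model gradient is $-s^{-2}$, not $+s^{-2}$. This matches your own formula and the paper's $(\nabla v)\tau_-=-(q/a^2)e_\theta+\cdots$, and only its modulus matters. For $T-t\sim a^2/(2q)$, the paper simply combines \eqref{eq:final-collapse-law} with \eqref{eq:concentration-input-tails}; this is equivalent to your direct integration.

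The normal-velocity asymptotic, the tangential-gradient bound and the gradient time integrals agree with the paper.
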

\begin{proof}
The state estimates at every fixed regularity pass to the limiting trajectory by
Proposition~\ref{prop:compactness-without-relative-energy}.  Hence the
scaled $C^1$ collar estimate \eqref{eq:scaled-inner-collar-velocity}
applies to this trajectory, and
$\eta=O_{C^1}(\mathcal P(Y)a^2)$.
In the orthonormal tube frame $(e_r,e_\theta,e_\phi)$, with
$\rho=R+r\cos\theta$, the radial comparison field has the exact
Cartesian gradient
\begin{equation}\label{eq:radial-cartesian-gradient}
\nabla\left(-\frac q r e_r\right)
=\frac q{r^2}(e_r\otimes e_r-e_\theta\otimes e_\theta)
-\frac{q\cos\theta}{r\rho}\,e_\phi\otimes e_\phi.
\end{equation}
Here $q$ is fixed when taking spatial derivatives.  On the inner
boundary $r=a(1+\eta)$, $\rho$ is uniformly positive and $r<\rho$ for
large $Y$.  The operator norm of the matrix in
\eqref{eq:radial-cartesian-gradient} is therefore exactly $q/r^2$.
The collar estimate now gives, uniformly on $\Gamma_-$,
\begin{equation*}
\frac a q|v|=1+O(\mathcal P(Y)a),\qquad
\frac{a^2}q\|\nabla v\|
=1+O(\mathcal P(Y)a).
\end{equation*}

Since $\Gamma_+$ stays a fixed distance from the core,
Lemma~\ref{lem:velocity-away-from-core} gives
$|v|+\|\nabla v\|\le Cq$ there.

Each Cartesian component of $v$ and of $\nabla v$ is harmonic in the
fluid.  For fixed Cartesian unit vectors $e,f$, apply the scalar
maximum principle to $e\cdot v$ and $e\cdot(\nabla v)f$, and then take
the supremum over these vectors.  This proves
\begin{equation*}
\|v\|_{L^\infty(\Omega_t)}
\le\sup_{\partial\Omega_t}|v|,\qquad
\|\nabla v\|_{L^\infty(\Omega_t)}
\le\sup_{\partial\Omega_t}\|\nabla v\|.
\end{equation*}
Continuity up to the boundary gives the matching lower bounds on
$\Gamma_-$.  The boundary estimates prove the velocity and gradient
bounds in \eqref{eq:sharp-global-velocity-gradient}.

The inner normal and meridional unit tangent satisfy
$n_-=-e_r+O(\mathcal P(Y)a^2)$ and
$\tau_-=e_\theta+O(\mathcal P(Y)a^2)$.  The same collar estimate gives
\begin{equation*}
v\cdot n_-=\frac qa(1+O(\mathcal P(Y)a)),\qquad
(\nabla v)\tau_-=-\frac q{a^2}e_\theta
+O\!\left(\frac q{a^2}\mathcal P(Y)a\right).
\end{equation*}
Together with $|\Gamma_-|=2\pi L_*a(1+o(1))$, the first identity
proves \eqref{eq:hly-normal-velocity-divergence}.  The second gives
the lower bound for the tangential gradient.  The upper bound follows from
$\|\nabla_\Gamma v\|\le\|\nabla v\|$.
The bound for $\Theta$ follows from \eqref{eq:concentration-input-tails}.
Increasing $C,M$ gives a common error bound $\epsilon(Y)$, whose integrability follows from
$a=\ell e^{-Y}$.  Multiplying either gradient estimate by
$t_Y=\Theta a^2/q$ gives $1+O(\epsilon(Y))$, so its time integral
is $Y+O(1)$.  Finally, \eqref{eq:final-collapse-law} and
\eqref{eq:concentration-input-tails} give $(T-t)q/a^2\to1/2$, proving
\eqref{eq:hly-two-gradient-rates} and \eqref{eq:main-gradient-blowup-rate}.
\end{proof}

\subsection{The terminal material map and critical regularity}

The velocity bounds give a uniform limit of the particle trajectories.
We identify its fibers and relate the terminal distance from the
collapse circle to the initial distance from the inner boundary.

\begin{proposition}
\label{prop:terminal-lagrangian-map}
Let $\mathcal X$ be the material flow of the constructed solution, and
let $\Omega_*$ be the open region enclosed by $\Gamma_+^*$, including the
limiting core circle.  Then $\mathcal X(t,\cdot)$ converges uniformly
on $\overline{\Omega_0}$ to a continuous map
$\mathcal X_T:\overline{\Omega_0}\to\overline{\Omega_*}$, with
the sharp rate \eqref{eq:main-lagrangian-terminal-rate} and the
upper modulus bound $\omega_T(\delta)\le C\sqrt\delta$ for all
sufficiently small $\delta>0$.
If $\phi(\xi)$ denotes the cylindrical azimuth of the initial position
$\xi$ on the inner boundary, then
\begin{equation}\label{eq:terminal-meridian-collapse}
\mathcal X_T(\xi)
=\bigl(R_*\cos\phi(\xi),R_*\sin\phi(\xi),0\bigr),
\qquad \xi\in\Gamma_-(0).
\end{equation}
\end{proposition}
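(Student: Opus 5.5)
The proof splits into uniform convergence, the sharp rate, the $\sqrt\delta$ modulus, and the boundary identification \eqref{eq:terminal-meridian-collapse}, in that order.

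\emph{Uniform convergence and the upper rate.} I will treat particles by their distance to the core.
\begin{itemize}
\item Particles that stay outside a fixed neighbourhood $N_\delta(\gamma_*)$ are handled by Lemma~\ref{lem:velocity-away-from-core}: since $|v|\le C_\delta q$ there, \eqref{eq:integrable-away-core-velocity} bounds their remaining displacement by $C_\delta a(t)^2$.
\item Near the core, Lemma~\ref{lem:global-radial-energy-approximation} and the collar estimate \eqref{eq:scaled-inner-collar-velocity} give $v=-(q/r)e_r+$ lower order. So the squared distance $r^2$ of a particle to the moving circle $\gamma(t)$ satisfies $\frac{d}{dt}r^2=-2q(1+O(\mathcal P a))$, up to the motion of the centre, which is $|R_t|\le\mathcal P a^2\cdot q/(\Theta a^2)$ and hence integrable.
\item The inner boundary radius obeys the same law, $\frac{d}{dt}a^2\approx-2q$, by the modulation identity $-aa_t=q/\Theta$.
\item Hence $r^2-a^2$ has an integrable time derivative, with error $\int q\,\mathcal P a\,dt\lesssim\int\mathcal P a^3\,dY$.
\item Because $\int_t^Tq\,dt'\sim a(t)^2/2$, every particle within distance $r_0$ satisfies $r(t)^2-r(T)^2=a(t)^2(1+o(1))$.
\end{itemize}
Its terminal displacement is therefore at most $\sqrt{r(t)^2}-\sqrt{r(t)^2-a^2}\le a(t)(1+o(1))$. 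Equality holds exactly for boundary particles, where $r(t)=a(t)(1+\eta)$ and $r(T)=0$. Meridional and tangential drift is $O(\int q\,\mathcal P a)$, and there is no azimuthal drift, since the canonical flow has zero swirl. Together these give uniform Cauchy convergence and \eqref{eq:main-lagrangian-terminal-rate}.

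\emph{Hölder modulus.} The conserved quantity is, to leading order, $\zeta:=r(t)^2-a(t)^2$ along trajectories. At $t=0$ it is comparable to the distance $d(\xi)$ to $\Gamma_-(0)$, because $r^2-a^2\asymp a\,d$ with $a(0)$ fixed. Then $r_T(\xi)=\sqrt{\zeta(\xi)+o}\asymp\sqrt d$. The map $\xi\mapsto(\zeta,\theta,\phi)$ is Lipschitz on $\overline{\Omega_0}$, since the flow is smooth up to any $t<T$, and the terminal map is obtained by composing with $\zeta\mapsto\sqrt\zeta$ in the normal variable. The angles are Lipschitz-controlled by the integrable drift, whose Jacobian I would bound using Lemma~\ref{lem:sharp-global-velocity-gradient} and Grönwall in the rescaled frame. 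This gives $\omega_T(\delta)\le C\sqrt\delta$.

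\emph{Boundary identification and main obstacle.} For \eqref{eq:terminal-meridian-collapse}, a boundary particle stays on $\Gamma_-(t)\subset N_{2a(t)}(\gamma(t))$ and keeps its azimuth. Since $\gamma(t)\to\gamma_*$ with $R(t)\to R_*$, its limit is $x_{\phi(\xi)}$. The main obstacle is making the Lipschitz control of the drift errors uniform up to $T$ near the core, where $\nabla v\sim q/a^2$ is not integrable. I would instead work in the coordinates $(r^2-a^2,\theta,\phi)$, in which the exact leading radial field $-(q/r)e_r$ generates no shear, so only the $O(\mathcal P a)$ relative corrections enter the Grönwall argument. Those corrections are integrable against $dY$.
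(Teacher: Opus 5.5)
You have the boundary identification \eqref{eq:terminal-meridian-collapse} and the lower half of the sharp rate right, and your argument matches the paper's: a boundary particle keeps its azimuth, sits at meridional distance $a(1+\eta)$ from $\gamma(t)$, and ends on $\gamma_*$. The genuine gap is the $\sqrt\delta$ modulus. Your Gr\"onwall argument in the coordinates $(r^2-a^2,\vartheta,\phi)$ does not have integrable coefficients. In the collar the angular drift is $\vartheta_Y=\varepsilon_\vartheta(Y,r/a,\vartheta)$, where $\varepsilon_\vartheta=O(\mathcal P a)$ is smooth in the \emph{scaled} radius. Hence
\begin{equation*}
\partial_{r^2}\vartheta_Y=\frac1{2r}\,\partial_r\varepsilon_\vartheta=O\bigl(\mathcal P(Y)a^{-1}\bigr),
\end{equation*}
which grows like $e^Y$. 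The reason is that the collar has width only $O(a^2)$ in the variable $r^2$, so an $O(\mathcal Pa)$ relative correction becomes a large coefficient once differentiated. Since $\partial_d(r^2)$ is of order one for a particle in the collar, Gr\"onwall only gives $|\partial_d\vartheta|\lesssim\mathcal P(Y_e)a(Y_e)^{-1}$ at the exit time $Y_e$, where $a(Y_e)^2\asymp d$. That is roughly $d^{-1/2}$. So you have not shown the terminal angle is Lipschitz; only $r_T|\partial_d\vartheta_T|$ stays bounded. The factorization ``Lipschitz map followed by $\sqrt{\cdot}$ in the normal variable'' is therefore not established.

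There is a second problem with the invariant itself. $r^2-a^2$ vanishes on the circle $r=a$, not on $\Gamma_-(0)$, where $r=a(1+\eta)$. At $t=0$ it equals $a^2(2\eta+\eta^2)+2a(1+\eta)d+d^2$, which is not comparable to $d$ when $d\lesssim a\|\eta\|_\infty$. The correct collar invariant is $r^2-b(Y,\vartheta)^2$, and its near-conservation rests on the kinematic cancellation \eqref{eq:material-collar-invariant-identity}. The paper develops that machinery only for the later square-root normal form in Proposition~\ref{prop:critical-terminal-deformation}.

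What you are missing is that the non-integrability of $\|\nabla v\|_\infty\sim q/a^2$, which you treat as the obstacle, is exactly what produces the exponent $1/2$. By Lemma~\ref{lem:sharp-global-velocity-gradient} and $dt=\Theta a^2q^{-1}\,dY$, one has $\int_0^{t(Y)}\|\nabla v\|_{L^\infty}\,dt'\le Y+C$. Gr\"onwall then gives $\|D_\xi\mathcal X_t\|_{L^\infty(\Omega_0)}\le Ce^Y\le C/a(t)$. Join $\xi,\xi'$ by a path in $\Omega_0$ of length at most $C|\xi-\xi'|$ and pass through time $t$:
\begin{equation*}
|\mathcal X_T(\xi)-\mathcal X_T(\xi')|\le2\|\mathcal X_T-\mathcal X_t\|_{C^0}+\frac{C}{a(t)}|\xi-\xi'|\le Ca(t)+\frac{C}{a(t)}|\xi-\xi'|.
\end{equation*}
Choosing $a(t)=\sqrt\delta$ gives $\omega_T(\delta)\le C\sqrt\delta$; this is the paper's proof.

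The same lemma also fixes your rate step. Your pointwise law $\frac{d}{dt}r^2=-2q(1+O(\mathcal Pa))$ is justified only in the fixed scaled collar. Lemma~\ref{lem:global-radial-energy-approximation} is an $L^2$ estimate, and at intermediate scales you have only $|v|\le Cq/r$ (Lemma~\ref{lem:all-scale-material-velocity}), which by itself does not give the sharp constant $1$. The paper avoids the case split altogether. The Cartesian components of $v$ are harmonic, so the maximum principle yields $\|v\|_{L^\infty(\Omega_t)}=(q/a)(1+O(\epsilon(Y)))$. Hence $\int_{t(Y)}^T\|v\|_{L^\infty}\,dt'=\int_Y^\infty a(1+O(\epsilon))\,dY'=a(Y)(1+o(1))$ for all particles at once.
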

\begin{proof}
We divide the proof into two steps.

\emph{Step 1. Uniform convergence and the sharp rate.}
For each $t<T$, smoothness and the kinematic boundary condition make
the material flow a diffeomorphism from $\overline{\Omega_0}$ to
$\overline{\Omega_t}$.  Write $\mathcal X_t=\mathcal X(t,\cdot)$.

Lemma~\ref{lem:sharp-global-velocity-gradient} and the exact time parametrization give
\begin{equation*}
\int_{t(Y)}^T\|v(t')\|_\infty\,dt'
=\int_Y^\infty a(Y')(1+O(\epsilon(Y')))\,dY'
=a(Y)(1+o(1)).
\end{equation*}
Integrating the material ODE proves uniform convergence and
\begin{equation}\label{eq:terminal-flow-upper-error}
\|\mathcal X_T-\mathcal X_t\|_{C^0(\overline{\Omega_0})}
\le a(t)(1+o(1)).
\end{equation}
Convergence of the outer boundary places the image in $\overline{\Omega_*}$.
The velocity associated with the single-valued axisymmetric potential
is meridional, so each fluid particle retains its initial cylindrical azimuth.  Particles
on the inner boundary remain at meridional distance $a(1+\eta)$ from the centre
circle.  Thus $R\to R_*$ and $a\to0$ prove
\eqref{eq:terminal-meridian-collapse}.  Moreover,
$|R-R_*|=O(\mathcal P(Y)a^2)=o(a)$ and $\|\eta\|_\infty=o(1)$
give $|\mathcal X_T(\xi)-\mathcal X_t(\xi)|=a(t)(1+o(1))$
uniformly on $\Gamma_-(0)$.  This supplies the matching lower bound
in \eqref{eq:main-lagrangian-terminal-rate}.

\emph{Step 2. The upper H\"older bound.}
The integrable error in \eqref{eq:sharp-global-velocity-gradient}
and the exact time parametrization give
\begin{equation*}
\int_0^{t(Y)}\|\nabla v(t')\|_{L^\infty(\Omega_{t'})}\,dt'\le Y+C.
\end{equation*}
Gr\"onwall applied to the material variational equation yields
\begin{equation}\label{eq:material-flow-derivative-bound}
\|D_\xi\mathcal X_t\|_{L^\infty(\Omega_0)}
\le Ce^Y\le C/a(t).
\end{equation}
Since $\Omega_0$ is a fixed smooth bounded connected domain, any two
points $\xi,\zeta\in\Omega_0$ can be joined inside $\Omega_0$ by a path
of length at most $C|\xi-\zeta|$.  Integrating the derivative bound
along such a path and using the uniform terminal error gives
\begin{equation*}
|\mathcal X_T(\xi)-\mathcal X_T(\zeta)|
\le Ca(t)+C a(t)^{-1}|\xi-\zeta|.
\end{equation*}
The estimate extends to initial positions on the boundary by continuity.  Choosing $a(t)=\sqrt\delta$
proves $\omega_T(\delta)\le C\sqrt\delta$.
\end{proof}

\begin{proposition}
\label{prop:exact-terminal-topology}
The terminal map has the exact fibers and induces the volume preserving
interior diffeomorphism, outer boundary diffeomorphism and quotient
specified in
\eqref{eq:main-terminal-exact-fibers}--\eqref{eq:main-terminal-quotient}.
Moreover, $\mathcal X_t\to\mathcal X_T$ in $C^\infty$ on compact
subsets of $\Omega_0$ and in smooth boundary charts near $\Gamma_+(0)$.
\end{proposition}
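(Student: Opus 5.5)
Proposition~\ref{prop:terminal-lagrangian-map} already gives the uniform limit and the collapse of each inner meridian $M_\phi$ onto $x_\phi$. What remains is to show that $\mathcal X_T$ is injective off $\Gamma_-(0)$, has the stated local smoothness, and that the quotient is a homeomorphism. I would organize the proof around one interior fact: particles off the inner boundary never reach the cavity. Everything else should follow from it.

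\emph{Step 1 (interior convergence).} Fix a compact $K\Subset\Omega_0$ (or a compact piece of a chart near $\Gamma_+(0)$). I would first show that $\mathcal X_t(K)$ stays a fixed distance $\delta_K>0$ from $\gamma_*$ for all late $t$. Then Lemma~\ref{lem:velocity-away-from-core} gives $\|D^jv(t)\|\le C_{\delta,k}q(t)$ on $\mathcal X_t(K)$. By \eqref{eq:integrable-away-core-velocity}, $\int^Tq\,dt<\infty$. Differentiating the particle ODE $j$ times and applying Gr\"onwall then makes $D^j\mathcal X_t$ Cauchy in $C^0(K)$. Hence $\mathcal X_t\to\mathcal X_T$ in $C^\infty(K)$, and $\det D\mathcal X_T=\lim\det D\mathcal X_t=1$ by incompressibility. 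The same argument in boundary charts near $\Gamma_+(0)$ uses the trace form of \eqref{eq:velocity-away-from-core}. Combined with the smooth convergence $\Gamma_+(t)\to\Gamma_+^*$, it shows that $\Gamma_+(0)\to\Gamma_+^*$ is a $C^\infty$ local diffeomorphism with inverse derivative bounded by Gr\"onwall. It is therefore a covering, and since it is degree one as a limit of diffeomorphisms, it is a diffeomorphism.

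\emph{Step 2 (separation from the core, the main obstacle).} The difficulty is that the velocity near $\gamma(t)$ is of size $q/r$, which is not integrable in time uniformly up to the cavity. I would work in the meridional half-plane, where azimuth is conserved. There I would compare the squared particle distance $r^2$ from $\gamma(t)$ with the squared boundary radius $b^2=a^2(1+\eta)^2$, as suggested in the strategy section. In the collar, \eqref{eq:scaled-inner-collar-velocity} gives $v=-(q/r)e_r+O(\mathcal P a\,q/r)$. The leading terms of $\frac{d}{dt}r^2$ and $\frac{d}{dt}a^2$ both equal $-2q$, so $\frac d{dt}(r^2-a^2)$ has an error of size $\mathcal P(Y)a\,q$ plus the centre drift $|R_t|r$, which is integrable in $t$. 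Integrating to $T$ gives $r_T^2=r_t^2-a(t)^2+o(\cdot)$, and more precisely $r_T(\xi)^2\asymp d(\xi)$ after relating $r_0^2-a(0)^2$ to $d$ near $\Gamma_-(0)$. Outside a fixed collar, Lemma~\ref{lem:velocity-away-from-core} keeps particles away. So every $\xi\notin\Gamma_-(0)$ has $\mathcal X_T(\xi)\notin\gamma_*$, which gives $\mathcal X_T^{-1}(\gamma_*)=\Gamma_-(0)$ and the positive distance needed in Step 1. The delicate point is controlling the integrated relative error uniformly in $\xi$ as $d(\xi)\to0$. I would split time at the exit moment when $r\asymp a$ is lost, using the scaled collar for the early regime and integrability of $\mathcal P a q$ for the late one.

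\emph{Step 3 (bijectivity and quotient).} First, $\mathcal X_T$ is injective on $\Omega_0$. Volume preservation together with $\det=1$ makes it a local diffeomorphism into $\Omega_*\setminus\gamma_*$. Degree theory or properness then upgrades this to a bijection: by Step 2 the map $\Omega_0\to\Omega_*\setminus\gamma_*$ is proper, it is a local diffeomorphism between connected manifolds, and it preserves volume with $|\Omega_0|=|\Omega_*|$. Hence it is a finite covering of degree one, i.e.\ a diffeomorphism. Distinct azimuths give distinct points $x_\phi$, so the nonsingleton fibers are exactly the $M_\phi$. Finally, $\overline{\Omega_0}$ is compact and $\overline{\Omega_*}$ is Hausdorff, so the continuous surjection $\mathcal X_T$ is a closed map. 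It therefore descends to a homeomorphism of the quotient by the fiber relation, which gives \eqref{eq:main-terminal-quotient}.
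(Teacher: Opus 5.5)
Your Step~2, where the proposition is actually decided, does not close as written. The estimate you integrate is additive. Along a particle in the scaled collar, $\bigl|\tfrac{d}{dt}(r^2-b^2)\bigr|\lesssim\mathcal P(Y)\,a\,q$, and its integral from $t_1$ to $T$ is of order $\mathcal P(Y(t_1))\,a(t_1)^3$. The only thing you know about the gap $r^2-b^2$ at time $t_1$ is its sign. At $t=0$ the gap is comparable to $a(0)d(\xi)$, so for every $\xi$ with $d(\xi)\lesssim\mathcal P(Y_0)a(0)^2$ the integrated error already exceeds it. Nothing in your argument then prevents $r^2-b^2$ from tending to zero faster than $a^3$, i.e.\ the particle from ending on $\gamma_*$. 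Splitting at the exit time does not help. By \eqref{eq:main-lagrangian-terminal-rate} and $|R-R_*|=o(a)$, a particle ending on $\gamma_*$ stays within $a(t)(1+o(1))$ of $\gamma(t)$ for all late $t$, so it never exits. ``Using the scaled collar for the early regime'' is therefore exactly the missing step. Writing $r^2-a^2$ instead of $r^2-b(\vartheta)^2$ makes this worse: that quantity is not sign-definite, and it ignores the drift of the polar angle.

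What is missing is a \emph{relative} bound, and it comes from the kinematic condition. Particle and boundary velocities agree on $\Gamma_-(t)$, so the error vanishes together with the gap. The paper argues by contradiction. If $\xi\in\Omega_0$ and $\mathcal X_T(\xi)\in\gamma_*$, the sharp rate forces $0<d(t)=o(a(t))$. Then $d'=(v(t,x)-v(t,\pi))\cdot\nu=\int_0^d\nu^{\mathsf T}\nabla v\,\nu\,ds\ge q\,d/(2a^2)>0$, by \eqref{eq:scaled-inner-collar-velocity} and the positive radial eigenvalue $q/r^2$ in \eqref{eq:radial-cartesian-gradient}. So $d$ increases and cannot tend to zero.

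Two repairs in your own variables also work:
\begin{enumerate}
\item Use the divided-difference form $\mathscr I_Y=\mathscr I\,\mathcal B$ with $\mathcal B$ integrable, for $\mathscr I=r^2-b^2$. This is the identity used later in Proposition~\ref{prop:critical-terminal-deformation}.
\item Keep your additive bound but add a lower bound on the gap. Gr\"onwall with $\int_0^t\|\nabla v\|_\infty\le Y+C$ gives $\|D(\mathcal X_t^{-1})\|\lesssim a(t)^{-1}$. Hence $d(t)\ge c\,a(t)d(\xi)$ and $r^2-b^2\ge c\,a(t)^2d(\xi)$, which dominates $\mathcal P a(t)^3$ at late times.
\end{enumerate}
Only this pointwise statement is needed here. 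Uniformity in $\xi$ and $r_T^2\asymp d$ belong to Proposition~\ref{prop:critical-terminal-deformation}.

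Once Step~2 is fixed, the rest of your argument is a valid alternative to the paper's. You obtain bijectivity from properness and the volume count $|\Omega_0|=|\Omega_*|$, which makes the map a one-sheeted covering. The paper instead proves injectivity by comparing with the flow of a smooth extension of $v$ from a late time, and surjectivity by compactness. You should still say why interior particles cannot end on $\Gamma_+^*$. One reason is that a local diffeomorphism is open while its image lies in $\overline{\Omega_*}$; the paper instead uses $|d_+'|\le Cqd_+$ with $q\in L^1$.
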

\begin{proof}
We divide the proof into four steps.

\emph{Step 1. Interior particles avoid the core at the terminal time.}
The rescaled meridional boundary is close to the unit circle in $C^2$
and has an embedded normal collar of uniformly positive width.  In three dimensions,
a point and its nearest boundary point have the same azimuth, since the cylindrical radii
are positive.  Together with $\rho\ge\rho_*>0$ and separation of the
outer boundary, this gives a smooth nearest point
projection in a fluid collar of width $ca(t)$, with $c>0$ fixed.

Suppose a particle initially at $\xi\in\Omega_0$ had terminal position
$x_*\in\gamma_*$.  Put $x(t)=\mathcal X_t(\xi)$ and
$r(t)=\operatorname{dist}(x(t),\gamma(t))$.  The sharp uniform
displacement estimate \eqref{eq:main-lagrangian-terminal-rate} and
$|R-R_*|=o(a)$ give
\begin{equation*}
r(t)\le |x(t)-x_*|+\operatorname{dist}(x_*,\gamma(t))
\le a(t)(1+o(1)).
\end{equation*}
Since the material diffeomorphism keeps $x(t)$ in the fluid for $t<T$,
$r(t)\ge a(t)(1-\|\eta\|_\infty)$ at late times.  Its distance from
$\Gamma_-(t)$ thus satisfies $0<d(t)=o(a(t))$, placing the trajectory
in the collar of width $ca(t)$.

Let $\pi(t)$ be its nearest point on $\Gamma_-(t)$, and let $\nu(t)$
be the unit normal there pointing out of the cavity and into the
fluid, so $x(t)=\pi(t)+d(t)\nu(t)$.  Differentiating the distance and
using the kinematic normal velocity gives the exact identity
\begin{equation}\label{eq:moving-inner-distance-identity} 
d'(t)=\bigl(v(t,x(t))-v(t,\pi(t))\bigr)\cdot\nu(t)=\int_0^{d(t)}\nu(t)^{\mathsf T}
\nabla v(t,\pi(t)+s\nu(t))\nu(t)\,ds. 
\end{equation}
Here the footpoint moves with the boundary normal velocity and a tangential
sliding component, and $\nu\cdot\nu'=0$.

Uniformly on this segment, $r/a=1+o(1)$, while the normal pointing out of the cavity
differs from the radial direction by $o(1)$.  The scaled
$C^1$ estimate \eqref{eq:scaled-inner-collar-velocity} and the exact
matrix \eqref{eq:radial-cartesian-gradient} therefore yield
\begin{equation*}
\nu^{\mathsf T}\nabla v(t,\pi+s\nu)\nu
\ge \frac{q(t)}{2a(t)^2},\qquad 0\le s\le d(t),
\end{equation*}
for all sufficiently late times.  The positive sign comes from the radial eigenvalue of
$\nabla(-q e_r/r)$, which is $q/r^2$.
Equation~\eqref{eq:moving-inner-distance-identity} implies
$d'(t)\ge q(t)d(t)/(2a(t)^2)>0$.  Thus $d(t)$ cannot tend to zero,
contradicting $d(t)=o(a(t))$.

Labels on the outer boundary end on $\Gamma_+^*$, which is disjoint from
$\gamma_*$.  Together with \eqref{eq:terminal-meridian-collapse}, this
proves the exact core preimage and its meridional fibers.

\emph{Step 2. Interior particles remain inside the limiting outer boundary.}
If an interior trajectory ended on $\Gamma_+^*$, it would eventually
lie in the fluid collar of fixed width supplied by smooth convergence
of the outer boundary.  Its positive distance $d_+(t)$ from $\Gamma_+(t)$ obeys,
by \eqref{eq:moving-inner-distance-identity} and
Lemma~\ref{lem:velocity-away-from-core},
\begin{equation*}
|d_+'(t)|\le Cq(t)d_+(t),\qquad
d_+(t)\ge d_+(t_0)
\exp\left(-C\int_{t_0}^tq(t')\,dt'\right)>c_{t_0}>0.
\end{equation*}
Integrability of $q$ therefore excludes this possibility.

\emph{Step 3. Smoothness and injectivity away from the core.}
Fix $x_*\in\overline{\Omega_*}\setminus\gamma_*$ and a neighborhood
$U$ separated from the core, chosen inside $\Omega_*$ when $x_*$
is interior.  Lemma~\ref{lem:velocity-away-from-core} and the uniformly
smooth outer charts give, for every integer $m\ge1$, an ambient extension
$\widehat v$ agreeing with $v$ on a smaller moving fluid neighborhood
and satisfying
\begin{equation*}
\|\widehat v(t)\|_{C^m(\mathbb R^3)}\le C_mq(t).
\end{equation*}
A cutoff suffices in interior neighborhoods.  Near the outer boundary,
we flatten the boundary, extend from the half space to the required
finite order, and use a partition of unity.  Since $q\in L^1(0,T)$, the
extended ODE starting at a sufficiently late $t_0$ has a terminal $C^m$
diffeomorphic flow, whose inverse is given by the backward ODE.

By uniform terminal convergence, every trajectory ending in a smaller
neighborhood of $x_*$ remains where the extension agrees with $v$ for
all sufficiently late times.  Uniqueness for the ODE then identifies
the physical and extended trajectories.  Applying this argument to a
finite covering of the image of $K\Subset\Omega_0$ and to the outer
boundary proves local $C^m$ convergence and smooth local
invertibility of the terminal map.  The extensions for different
$m$ give the same map, so the conclusion holds in $C^\infty$.
In the interior, incompressibility passes to the limit:
\begin{equation*}
\det D_\xi\mathcal X_T
=\lim_{t\uparrow T}\det D_\xi\mathcal X_t=1.
\end{equation*}
If two particles had the same terminal point away from the core, their
late trajectories would lie in one such extension neighborhood.
Injectivity of the terminal extended flow implies that their positions at
$t_0$ coincide. The physical diffeomorphism from time zero to $t_0$ then implies that the
initial positions coincide.  This proves injectivity both in the interior and on
the outer boundary.

\emph{Step 4. Surjectivity and the quotient.}
For any $x\in\overline{\Omega_*}$, choose
$x_j\in\overline{\Omega_{t_j}}$ with $t_j\uparrow T$ and $x_j\to x$.
Such points exist by convergence of the outer boundary and shrinkage
of the cavity.  Compactness gives a convergent subsequence of
$\xi_j=\mathcal X_{t_j}^{-1}(x_j)$. Uniform flow convergence then
gives $\mathcal X_T(\lim\xi_j)=x$.  Thus $\mathcal X_T$ is onto.
The initial inner boundary, interior and outer boundary map into the
disjoint sets $\gamma_*$, $\Omega_*\setminus\gamma_*$ and
$\Gamma_+^*$, respectively.  Each restriction is consequently onto
the corresponding set.  Smooth local invertibility, injectivity and
$\det D\mathcal X_T=1$ prove the diffeomorphism assertions in
\eqref{eq:main-terminal-exact-fibers}.
The nonsingleton fibers are precisely the inner meridians.  The induced
continuous bijection from the compact quotient to the Hausdorff
space $\overline{\Omega_*}$ is a homeomorphism, proving
\eqref{eq:main-terminal-quotient}.
\end{proof}

We next determine the normal form and critical regularity of the terminal
map, beginning with velocity estimates between the inner and macroscopic scales.

\begin{lemma}\label{lem:all-scale-material-velocity}
After decreasing a fixed tubular radius $r_0>0$, for every integer
$j\ge0$ there is $C_j$ such that, at all sufficiently late times,
\begin{equation}\label{eq:all-scale-material-velocity}
|D_x^jv(t,x)|\le C_j\frac{q(t)}{r^{j+1}},
\qquad r=\operatorname{dist}(x,\gamma(t))<r_0,
\quad x\in\overline{\Omega_t}.
\end{equation}
Write the meridional velocity components as $v_\rho,v_z$.
For the rescaled meridional variable $z=(z_1,z_2)\in\R^2$, define
\begin{equation*}
\mathcal V_Y(z)=\frac{a(Y)}{q(Y)}
(v_\rho,v_z)\bigl(t(Y),R(Y)+a(Y)z_1,a(Y)z_2\bigr).
\end{equation*}
Then, for every compact $K\Subset\{z\in\R^2:|z|>1\}$ and
integer $j\ge0$,
\begin{equation}\label{eq:fixed-annulus-material-profile}
\mathcal V_Y\longrightarrow-\frac z{|z|^2}
\quad\hbox{in }C^j(K).
\end{equation}
\end{lemma}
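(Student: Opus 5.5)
We plan to decompose the velocity near the core as in Lemma~\ref{lem:global-radial-energy-approximation}, writing $v=q\nabla W+\nabla\Phi_0$ with $W=\Lambda u_c$, and to estimate each piece separately on dyadic shells $\{2^{-k-1}r_0\le r\le 2^{-k}r_0\}$ intersected with the fluid. The condenser part is handled through the axisymmetric Green function: by Lemma~\ref{lem:axisymmetric-green} and the single-layer representation of Lemma~\ref{lem:capacity-single-layer-schur}, $u_c$ is given by a layer density $\mu$ on the inner graph whose mean is $\asymp(aY)^{-1}\cdot a$ and whose nonconstant part is controlled by \eqref{eq:condenser-flux-expansion-v2}. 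We then expand the kernel $\sqrt{\rho\rho'}G_0=\Gamma(x-y)+d^2A\log d+B$ about the centre point. Because the source circle has radius $a(1+\eta)$ and $r\ge a$, the monopole term produces $-\Lambda^{-1}\log r$ up to smooth corrections. This gives $|D^j(q\nabla W)|\le C_jq\,r^{-j-1}$, where the $P_{\ge1}$ parts of $\mu$ contribute multipole terms of size $q\,a^m r^{-m-1-j}$ with $m\ge1$.

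For the zero-flux remainder $\nabla\Phi_0$, we will use \eqref{eq:dipole-annulus-estimate} from Proposition~\ref{prop:charge-free-localization}. Its inner trace has nonconstant $H^{1/2}$ norm at most $|\alpha|+\|f\|\le q\mathcal P(Y)a$ by \eqref{eq:concentration-input-tails}, and its outer trace $g$ is $O(\mathcal P a^2)$ and smooth, so its filled extension is harmless. Hence $|\nabla\Phi_0|\le C(a/r^2)q\mathcal P a\le Cq/r$ for $Ca\le r\le r_0$. Higher derivatives follow from interior elliptic estimates on balls of radius $r/4$: the components of $v$ are harmonic, so $|D^jv(x)|\le C_jr^{-j}\sup_{B_{r/2}(x)}|v|$. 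This settles the range $r\ge Ca$ provided the sup bound $|v|\le Cq/r$ holds on the enlarged balls.

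The region $a(1+\eta)\le r\le Ca$, up to the inner boundary, is covered by the scaled collar estimate \eqref{eq:scaled-inner-collar-velocity}. Boundary elliptic regularity in the fixed scaled collar at every order $C^{k}$ follows from the physical trace bounds at arbitrarily high index, which are available from Theorem~\ref{thm:one-state-closed} and Proposition~\ref{prop:compactness-without-relative-energy}. This gives $|D_x^jv|\le C_jqa^{-1-j}\asymp q\,r^{-1-j}$ there. We will stretch the fixed scaled collar to $1<s<C$ by the same argument, using the Fourier decay of the local exterior problem. The main obstacle is making these three regimes match with constants independent of $Y$: in particular, controlling derivatives uniformly of the regular parts $B$ and the smooth kernels, and of the $\log a$ scalar. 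We expect to handle this by working in the scaled variable $z=(x-x_0)/a$ throughout $|z|<r_0/a$, where the equation is $\Delta u+\varepsilon(1+\varepsilon z_1)^{-1}\partial_{z_1}u=0$ with $\varepsilon=a/R$. The drift term is of size $\varepsilon\le r/R$ at physical distance $r$, so interior Schauder estimates on scaled balls of radius $|z|/4$ remain uniform once $r_0$ is decreased.

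For \eqref{eq:fixed-annulus-material-profile}, we rescale so that the collar bound gives $\mathcal V_Y=-z/|z|^2+O(\mathcal P(Y)a)$ in $C^{j}$ on fixed compact sets. Concretely, the meridional components of the leading field $-qe_r/r$ are exactly $-(q/a)z/|z|^2$. The dipole correction $\beta=\alpha/q=O(\mathcal P a)$, the graph perturbation $\eta=O(\mathcal P a^2)$ and the curvature drift $O(a)$ all vanish, and convergence in every $C^j(K)$ follows from \eqref{eq:scaled-inner-collar-velocity} at higher order together with the derivative bounds just proved.
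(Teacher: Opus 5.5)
\textbf{Pointwise bound: a different and heavier route.} For \eqref{eq:all-scale-material-velocity} you argue differently from the paper. The paper never expands the condenser or the zero-flux remainder in multipoles. It uses the global comparison \eqref{eq:global-radial-energy-error}, whose error $CY^{-1/2}$ is $\asymp q$, to get $\|(v_\rho,v_z)\|_{L^2(B_{cr})}\le Cq$ on every meridional disk $B_{cr}$ centred at distance $r\ge(1+\delta/2)a$. Here $c=c(\delta)$ is small enough that the disk lies in the fluid. It then rescales the disk to unit size, where the component equations are uniformly elliptic with bounded coefficients, and interior estimates give $|D^jv|\le C_jqr^{-j-1}$ directly. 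The collar estimate \eqref{eq:scaled-inner-collar-velocity} is used only for $r\le(1+\delta)a$.

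This handles the whole range $(1+\delta/2)a\le r<r_0$ at once. So the matching problem you flag never arises: the annulus between the edge of the fixed scaled collar and the range $r\ge Ca$ where \eqref{eq:dipole-annulus-estimate} is stated. Your layer-potential and dipole bookkeeping can be made to give the upper bound from the paper's kernel and density estimates. But that intermediate annulus still has to be filled, and the scale-invariant $L^2$ energy bound is the natural tool for it.

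\textbf{The profile step has a gap as written.} The collar estimate \eqref{eq:scaled-inner-collar-velocity} identifies $\mathcal V_Y$ only on a thin fixed annulus adjacent to $|z|=1$. But $K$ is an arbitrary compact subset of $\{|z|>1\}$, for example $\{10\le|z|\le20\}$. There, the bounds $|D^j\mathcal V_Y|\le C_j|z|^{-j-1}$ give precompactness but do not determine the limit.

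Your first paragraph does not close this either, as stated. Multipole terms of size $qa^mr^{-m-1}$ are of the same order as the leading term $q/r$ when $r\sim a$, so a bound of that form cannot identify the profile. To use that route you would need three things. You would have to carry the coefficient smallness $\beta_a=O(\mathcal P a)$ and $\mathfrak b_{\ge2}=O(\mathcal P a^2)$ from \eqref{eq:condenser-flux-expansion-v2}. You would have to upgrade the condenser computation to the asymptotic $a\nabla W=-z/|z|^2+o(1)$ in $C^j$ on scaled compact sets. And you would still have to cover the intermediate annulus.

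The paper's missing step is short:
\begin{itemize}
\item Every subsequential $C^j(K)$ limit has harmonic components on $\{|z|>1\}$, because the first- and zeroth-order coefficients of the rescaled meridional equations are $O(a)$.
\item The limit coincides with $-z/|z|^2$ on the collar annulus.
\item Unique continuation for harmonic functions on the connected exterior then identifies it everywhere.
\item Uniqueness of the limit gives convergence of the whole family.
\end{itemize}
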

\begin{proof}
Fix a small $\delta>0$ within the collar of
\eqref{eq:scaled-inner-collar-velocity}.  That estimate proves
\eqref{eq:all-scale-material-velocity} for $r\le(1+\delta)a$.
For $(1+\delta/2)a\le r<r_0$, a meridional disk $B_{cr}$ about
the point lies in the fluid if $c>0$ is sufficiently small, depending
only on $\delta$.  Shrinking $r_0$ keeps these disks away from the
outer boundary.  The graph bound $\eta=o_{C^1}(1)$ makes these
choices uniform.

Since $q\asymp Y^{-1/2}$, the global radial comparison
\eqref{eq:global-radial-energy-error} has error at most $Cq$.
Axisymmetry and the uniform positive lower and upper bounds on $\rho$
convert this to the corresponding meridional $L^2$ estimate.  On
$B_{cr}$ the comparison field has size $O(q/r)$ and hence meridional
$L^2$ norm at most $Cq$.  Consequently
\begin{equation*}
\|(v_\rho,v_z)\|_{L^2(B_{cr})}\le Cq.
\end{equation*}
The exact component equations are
\begin{equation*}
(\partial_\rho^2+\rho^{-1}\partial_\rho+\partial_z^2
-\rho^{-2})v_\rho=0,
\qquad
(\partial_\rho^2+\rho^{-1}\partial_\rho+\partial_z^2)v_z=0.
\end{equation*}
Rescaling the disk by $r$ gives uniformly elliptic equations with
uniformly bounded smooth coefficients.  Interior estimates thus give
$|\partial_{\rho,z}^jv|\le C_jqr^{-j-1}$.  Cartesian derivatives
introduce only additional bounded factors of $\rho^{-1}$, proving
\eqref{eq:all-scale-material-velocity}.

These estimates make $\mathcal V_Y$ precompact in every fixed
$C^j$ norm on compact subsets of $|z|>1$.  Since the coefficients of
first and zeroth order in the rescaled equations vanish in the limit,
every subsequential limit has harmonic components in the connected
exterior of the unit disk.  The collar estimate
identifies this limit with $-z/|z|^2$ on an open annulus adjoining that disk.
Unique continuation for harmonic functions identifies it on the whole
exterior.  A diagonal extraction and uniqueness of the limit prove
\eqref{eq:fixed-annulus-material-profile} for the full trajectory.
\end{proof}

Parametrize the initial inner boundary positively around each
meridian by
\begin{equation*}
\zeta(\theta,\phi)
=(\rho_0(\theta)\cos\phi,\rho_0(\theta)\sin\phi,z_0(\theta)),
\qquad
j_0(\theta)=|(\rho_0',z_0')|>0.
\end{equation*}
Let $\nu_0$ be the unit normal pointing from the cavity into the fluid.  In a fixed
normal collar on the fluid side put
\begin{equation}\label{eq:initial-material-normal-coordinates}
\Xi(\theta,\phi,d)=\zeta(\theta,\phi)+d\nu_0(\theta,\phi),
\qquad 0\le d<d_0.
\end{equation}
Thus $d=\operatorname{dist}(\Xi,\Gamma_-(0))$.  We retain the notation
$s_1(A)\ge s_2(A)\ge s_3(A)$ for the Euclidean singular values
of a $3\times3$ matrix $A$.

\begin{proposition}\label{prop:critical-terminal-deformation}
There is an orientation-preserving $C^\infty$ circle diffeomorphism
$\vartheta_*$ such that, with
\begin{equation}\label{eq:material-square-root-coefficient}
\kappa_*(\theta)
=\frac{2\rho_0(\theta)j_0(\theta)}{R_*\vartheta_*'(\theta)}>0,
\end{equation}
the terminal map satisfies
\begin{equation}\label{eq:terminal-square-root-normal-form}
\mathcal X_T(\Xi(\theta,\phi,d))
=\gamma_*(\phi)
+\sqrt{\kappa_*(\theta)d}\,
e_r(\phi,\vartheta_*(\theta))+o(\sqrt d),
\end{equation}
uniformly in $\theta,\phi$ as $d\downarrow0$.

The estimates for the singular values
\eqref{eq:main-anisotropic-material-deformation} and the strong
Sobolev convergence and endpoint regularity
\eqref{eq:main-critical-material-regularity} hold.  In addition,
\begin{equation}\label{eq:terminal-critical-Sobolev}
\sup_{t<T}\|D\mathcal X_t\|_{L^{2,\infty}(\Omega_0)}<\infty,
\qquad
\|D\mathcal X_t\|_{L^2(\Omega_0)}^2\asymp1+Y(t)
\quad(t\uparrow T).
\end{equation}

The inverse $\mathcal X_T^{-1}:\Omega_*\setminus\gamma_*\to\Omega_0$,
extended as a bounded function across $\gamma_*$, belongs to
$W^{1,p}(\Omega_*)$ for every
$1\le p<2$, and
\begin{equation}\label{eq:inverse-critical-Sobolev}
D\mathcal X_T^{-1}\in L^{2,\infty}(\Omega_*)\setminus L^2(\Omega_*).
\end{equation}
The inverse has no continuous extension across the core circle.
The terminal map also satisfies the matching lower modulus bound in
\eqref{eq:main-optimal-lagrangian-modulus}, so its H\"older exponent
$1/2$ is optimal.
\end{proposition}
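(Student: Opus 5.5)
The plan is to follow individual particles near the inner boundary in collar coordinates. We compare the squared distance from the moving centre circle with the squared radius of the cavity. Take a label $\Xi(\theta,\phi,d)$ and set $x(t)=\mathcal X_t(\Xi)$, $r(t)=\operatorname{dist}(x(t),\gamma(t))$, and let $\vartheta(t)$ be its meridional angle about $c(t)$. Let $\pi(t)$ be the boundary particle with the same label $(\theta,\phi)$, which sits at radius $b(t,\vartheta_\partial(t))=a(1+\eta)$. The key quantity is
\begin{equation*}
\Delta(t)=r(t)^2-a(t)^2(1+\eta(t,\vartheta(t)))^2 .
\end{equation*}
Using the radial comparison $v=-(q/r)e_r+\text{error}$, we get $\partial_t(r^2)=-2q+O(q\,\mathcal P a)$ in the collar by \eqref{eq:scaled-inner-collar-velocity}. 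At larger $r$ we instead use Lemma~\ref{lem:all-scale-material-velocity}: the exact profile $-z/|z|^2$ has zero two-dimensional divergence, and the deviation from it is controlled by \eqref{eq:all-scale-material-velocity} together with the $O(\mathcal P a)$ first-mode and curvature corrections. The cavity radius satisfies $\partial_t(a^2)=-2q/\Theta$. Subtracting the two gives an equation $\Delta'=c(t)\Delta+g(t)$ whose coefficients are integrable against $dt=\Theta a^2dY/q$. The main terms cancel exactly, and the remaining ones are $O(q\mathcal P a)$ or $O(q r^{-2}\cdot\text{small})$, which become integrable in $Y$. Integrating from a late time $t_0$ to $T$ then gives $r_T^2=\lim\Delta=\Delta(t_0)(1+o(1))+o(\Delta(t_0))$.

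Next we connect $\Delta(t_0)$ to the initial depth $d$. On $[0,t_0]$ the flow is a fixed smooth diffeomorphism. Volume preservation in the meridional collar with weight $\rho$, namely the flux of area $\rho\,dr\,d\theta$, shows that
\begin{equation*}
\Delta(t_0)\approx\frac{2\rho_0(\theta)j_0(\theta)}{R\,\partial_\theta\vartheta_{t_0}}\,d .
\end{equation*}
Here $\vartheta_{t_0}$ is the boundary angle map at time $t_0$, and $\vartheta_*$ is its limit as $t_0\to T$. This limit exists because the angular velocity in the collar is $O(\mathcal P a\cdot q/a^2)$, which is integrable in $t$. The meridional angle of interior particles converges to $\vartheta_*(\theta)+o(1)$ as $d\to0$ for the same reason. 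Putting these together yields \eqref{eq:terminal-square-root-normal-form} with the coefficient $\kappa_*$ of \eqref{eq:material-square-root-coefficient}.

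The derivative bounds come from differentiating the same normal form at a finite time. The relation $r_t^2\approx a(t)^2+\kappa d$ gives $\partial_d r_t\asymp(a^2+d)^{-1/2}$, and this is $s_1$. The azimuthal direction is an isometry up to bounded factors, which gives $s_2\asymp1$. The determinant equals $1$, which forces $s_3\asymp(a^2+d)^{1/2}$, establishing \eqref{eq:main-anisotropic-material-deformation}. To make the $\theta$-derivatives uniform we differentiate the $\Delta$-equation in the label, using \eqref{eq:all-scale-material-velocity} for $D_x^2v$.

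The remaining statements follow by integrating these bounds. Integrating $s_1^2\asymp(a^2+d)^{-1}$ over the collar gives $\|D\mathcal X_t\|_2^2\asymp1+\log(1/a)=1+Y$, and at $t=T$ gives $|D\mathcal X_T|\asymp d^{-1/2}$, which lies in $L^{2,\infty}\setminus L^2$. The weak-$L^2$ bound is uniform because $(a^2+d)^{-1/2}\le d^{-1/2}$. Strong $W^{1,p}$ convergence for $p<2$ follows from this domination, which gives uniform integrability, combined with the $C^\infty$ convergence away from $\Gamma_-(0)$ in Proposition~\ref{prop:exact-terminal-topology}.

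For the inverse we write $d\asymp r_T^2$. Its derivative has norm $s_3^{-1}\asymp r_T^{-1}$ near $\gamma_*$. This is in $L^p$ for $p<2$ in the three-dimensional tube measure $r\,dr$, lies in $L^{2,\infty}$, and is not in $L^2$. The inverse has no continuous extension because nearby points near $x_\phi$ map to different points of the loop $M_\phi$. Finally, the lower modulus bound $\omega_T(\delta)\ge c\sqrt\delta$ in \eqref{eq:main-optimal-lagrangian-modulus} follows by comparing $\Xi(\theta,\phi,\delta)$ with $\Xi(\theta,\phi,0)$.

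The main obstacle is proving that the $\Delta$-equation has integrable error uniformly across all scales $a\lesssim r\lesssim r_0$, including the label-derivative version needed for the singular values. The cancellation of the $-2q$ terms is exact only for the leading profile. I would first try to bound the defect $\operatorname{div}_{\rm mer}$-type error by $q\mathcal P(Y)a/r$ plus the $\rho$-weight term $O(q)$, and show that these integrate to $o(r^2)$ along trajectories.
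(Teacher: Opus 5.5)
There is a genuine gap in your $\Delta$-equation; your $\Delta$ is the paper's $\mathscr I$. You compare $\partial_t(r^2)=-2q+O(q\mathcal P a)$ with $\partial_t(a^2)=-2q/\Theta$, so the remainder $g$ is an inhomogeneous forcing of size $O(\mathcal P a^3)$ per unit $Y$. Its integral from a fixed late $t_0$ is therefore of order $a(t_0)^3$. But $\Delta(t_0)\asymp d\ll a(t_0)^3$ as $d\downarrow0$, so $r_T^2=\Delta(t_0)(1+o(1))+o(\Delta(t_0))$ does not follow. Choosing $t_0=t_0(d)$ later does not help: relating $\Delta(t_0)$ to $d$ then requires the same estimate over $[t_b,t_0]$ with $t_b$ fixed, and there the forcing again accumulates to $O(a(t_b)^3)$.

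The missing idea is that the error must vanish identically at $\Delta=0$, because the boundary particle has $\Delta\equiv0$ exactly. In the scaled collar, write the particle ODE as $r_Y=-\Theta a^2/r+\varepsilon_r$, $\vartheta_Y=\varepsilon_\vartheta$. Subtract the exact kinematic boundary equation evaluated at the particle's \emph{own} angle. This leaves $2\{r\varepsilon_r(r)-b\varepsilon_r(b)\}-2bb_\vartheta\{\varepsilon_\vartheta(r)-\varepsilon_\vartheta(b)\}$. Divide this by $r^2-b^2$ using divided differences, which is legitimate because the error fields are smooth in $r/a$ on the collar. The result is the homogeneous system $\Delta_Y=\Delta\,\mathcal B$, $\vartheta_Y=\mathcal A$ with $\|\mathcal A\|_{C^m}+\|\mathcal B\|_{C^m}\le\mathcal P(Y)a$. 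This homogeneity is what makes $\Delta/d$ converge uniformly to $\kappa_*$.

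The obstacle you flag at intermediate scales is also real, and the paper avoids it rather than overcoming it. Lemma~\ref{lem:all-scale-material-velocity} gives only the sizes $|D^jv|\le C_jq/r^{j+1}$ and a rate-free profile limit on compact rescaled sets. It gives no quantitative deviation from $-qe_r/r$ for $a\ll r\ll r_0$.

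The paper instead stops the collar analysis at the exit time $Y_e$ where $\Delta=c_ea^2$. Since $u=\Delta/a^2$ satisfies $u_Y=(2+\mathcal B)u$, this time is unique, and $a(Y_e)^2\asymp d$. On a fixed window $0\le Y-Y_e\le\lambda$, it uses only the non-quantitative convergence of the rescaled particle ODE to $d\hat r/d\hat Y=-e^{-2\hat Y}/\hat r$ with $\hat\vartheta$ constant. The tail is controlled by the crude bound $|\mathcal X_T-\mathcal X_t|\le a(t)(1+o(1))=a(Y_e)e^{-\lambda}(1+o(1))$. Letting $d\downarrow0$ and then $\lambda\to\infty$ gives the normal form.

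The same exit time is needed for the singular values. Your $\partial_dr_t\asymp(a^2+d)^{-1/2}$ comes from differentiating an asymptotic formula, which is not justified, and in any case it only bounds $s_1$ from below. You also need the following upper bounds:
\begin{itemize}
\item $|D\mathcal X_t|\le C(a^2+d)^{-1/2}$. Before exit this is the Gr\"onwall bound $C/a(t)$. After exit the flow derivative is bounded, since $|\nabla v|\le Cq/r^2$ with $r\gtrsim a(Y_e)$ integrates to at most $C\,a(Y_e)^{-2}\int_{Y_e}^\infty a^2\,dY\le C$.
\item $|\partial_\theta\mathcal X_t|\le C(a^2+d)^{1/2}$. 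This comes from the label-differentiated collar system together with the smooth dependence of $Y_e$ on the label.
\end{itemize}
The meridional determinant then supplies both lower bounds.

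The remaining steps of your plan agree with the paper: volume preservation for $\kappa_*$, the Sobolev consequences, the inverse, and the lower modulus bound.
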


\begin{proof}
We divide the proof into five steps.

\emph{Step 1. Transport equations in the inner collar.}
Use $Y$ as time and retain the initial coordinates in
\eqref{eq:initial-material-normal-coordinates}.  Constants may depend
on a fixed sufficiently late starting time $Y_b$ and the earlier smooth
flow.

Write $b(Y,\vartheta)=a(1+\eta(Y,\vartheta))$.  Along a particle,
use polar coordinates $(r,\vartheta)$ centred at $(R(Y),0)$ in
its fixed meridional plane.  The exact time parametrization and
\eqref{eq:scaled-inner-collar-velocity}, together with
$R_Y=O(\mathcal P(Y)a^2)$, give
\begin{equation*}
r_Y=-\frac{\Theta a^2}{r}+\varepsilon_r(Y,r,\vartheta),
\qquad
\vartheta_Y=\varepsilon_\vartheta(Y,r,\vartheta),
\end{equation*}
where, on a fixed scaled collar and for every fixed $m$,
\begin{equation*}
\|\varepsilon_r\|_{C^m(r/a,\vartheta)}
\le C_m(1+Y)^{M_m}a^2,
\qquad
\|\varepsilon_\vartheta\|_{C^m(r/a,\vartheta)}
\le C_m(1+Y)^{M_m}a.
\end{equation*}
These remainders include the motion of the centre circle.  On the boundary
the kinematic condition reads
\begin{equation*}
b_Y+b_\vartheta\varepsilon_\vartheta(Y,b,\vartheta)
=-\frac{\Theta a^2}{b}+\varepsilon_r(Y,b,\vartheta).
\end{equation*}
Set $\mathscr I=r^2-b(Y,\vartheta)^2$.  Subtraction gives the
exact cancellation
\begin{equation}\label{eq:material-collar-invariant-identity}
\mathscr I_Y
=2\{r\varepsilon_r(Y,r,\vartheta)-b\varepsilon_r(Y,b,\vartheta)\}
-2bb_\vartheta\{\varepsilon_\vartheta(Y,r,\vartheta)
-\varepsilon_\vartheta(Y,b,\vartheta)\}.
\end{equation}
Choose $c_e>0$ sufficiently small that
$0\le\mathscr I\le c_ea^2$ lies inside the established collar.
Dividing \eqref{eq:material-collar-invariant-identity} by
$(r-b)(r+b)$ and using divided differences yields
\begin{equation}\label{eq:material-collar-invariant-system}
\mathscr I_Y=\mathscr I\mathcal B(Y,u,\vartheta),
\qquad
\vartheta_Y=\mathcal A(Y,u,\vartheta),
\qquad u=\mathscr I/a^2,
\end{equation}
where
\begin{equation}\label{eq:material-collar-integrable-coefficients}
\|\mathcal A(Y)\|_{C^m([0,c_e]\times\T)}
+\|\mathcal B(Y)\|_{C^m([0,c_e]\times\T)}
\le\epsilon_m(Y):=C_m(1+Y)^{M_m}a(Y),
\qquad \int_{Y_b}^\infty\epsilon_m(Y)\,dY<\infty.
\end{equation}
$b/a=1+O_{C^m}(\mathcal P a^2)$ and
$r/a=((b/a)^2+u)^{1/2}$ have uniformly bounded derivatives on
this fixed set.  The divided difference of $r\varepsilon_r$,
followed by division by $r+b$, is $O_{C^m}(\mathcal P a)$.
The term containing $b_\vartheta$ is smaller.  The same formula for divided
differences gives smoothness at $u=0$.

The earlier smooth flow maps a sufficiently small initial
normal collar into this collar at $Y_b$.  Its smoothness and the
positive normal Jacobian give
\begin{equation}\label{eq:initial-collar-invariant-factor}
\mathscr I(Y_b,\theta,d)=d\,h_b(\theta,d),
\qquad h_b>0,
\end{equation}
with $h_b$ smooth and $0<c\le h_b\le C$.
The earlier meridional flow has positive Jacobian $\rho_0/\rho$
and preserves the boundary orientation.  Since both inner meridians
are radial graphs with their positive angular parametrizations,
the induced angular map has degree one and positive derivative.
On $d=0$, the boundary angular map $\vartheta_b(Y,\theta)$ records the
polar angle of the particle with initial meridional coordinate $\theta$.
It satisfies
$\partial_Y\vartheta_b=\mathcal A(Y,0,\vartheta_b)$.
Equation~\eqref{eq:material-collar-integrable-coefficients} and
successive differentiation of this scalar ODE show convergence
to $\vartheta_*$ in every $C^m$ norm. Moreover
\begin{equation*}
\partial_\theta\vartheta_b(Y,\theta)=\partial_\theta\vartheta_b(Y_b,\theta)
\exp\!\left(\int_{Y_b}^Y
\partial_\vartheta\mathcal A(Y',0,\vartheta_b(Y',\theta))\,dY'\right).
\end{equation*}
The derivative satisfies $0<c\le\partial_\theta\vartheta_b\le C$.
The limit has degree one and strictly positive derivative, so
$\vartheta_*$ is a $C^\infty$ circle diffeomorphism.

\emph{Step 2. Exit from the collar and the terminal normal form.}
Increase $Y_b$ so that $|\mathcal B|\le1$.  Then
$u_Y=(2+\mathcal B)u>0$ for $u>0$.  Each sufficiently small
$d>0$ has a unique exit time $Y_e$ at $u=c_e$.  Before exit,
\eqref{eq:initial-collar-invariant-factor} and the integrable
bound for $\mathcal B$ imply
\begin{equation}\label{eq:material-exit-scale}
\mathscr I(Y,\theta,d)\asymp d,
\qquad a_e^2:=a(Y_e)^2\asymp d.
\end{equation}
In particular $Y_e\to\infty$ uniformly as $d\downarrow0$.
On each fixed $Y$ interval, the particle converges to the boundary
particle with the same initial meridional coordinate as $d\downarrow0$.  Applying dominated convergence to
the integral equations in
\eqref{eq:material-collar-invariant-system}, with the integral tails
bounded by \eqref{eq:material-collar-integrable-coefficients}, gives
\begin{equation}\label{eq:material-exit-invariant-limit}
\frac{\mathscr I(Y_e,\theta,d)}d\longrightarrow\kappa_*(\theta),
\qquad \vartheta(Y_e,\theta,d)\longrightarrow\vartheta_*(\theta),
\end{equation}
uniformly.  Here the first limit is also
$\lim_{Y\to\infty}\partial_d\mathscr I(Y,\theta,0)$:
both equal $h_b(\theta,0)$ times the exponential of the integral
of $\mathcal B(Y,0,\vartheta_b(Y,\theta))$.

To evaluate this limit, put $j_b=(b^2+b_\vartheta^2)^{1/2}$ and
$\rho_b(Y,\theta)=R(Y)+b(Y,\vartheta_b(Y,\theta))
\cos\vartheta_b(Y,\theta)$ along the boundary particle.  Its
surface Jacobian is
\begin{equation*}
J_s(Y,\theta)=\frac{\rho_bj_b\partial_\theta\vartheta_b(Y,\theta)}{\rho_0j_0}.
\end{equation*}
Let $\nu_Y$ be the unit normal pointing from the current cavity into
the fluid.  Since $\det D\mathcal X_t=1$, the normal stretch satisfies
$\nu_Y\cdot D\mathcal X_t\nu_0=J_s^{-1}$.  On the boundary,
$\nabla\mathscr I=2j_b\nu_Y$, and consequently
\begin{equation*}
\partial_d\mathscr I(Y,\theta,0)
=\frac{2\rho_0(\theta)j_0(\theta)}
{\rho_b\partial_\theta\vartheta_b(Y,\theta)}.
\end{equation*}
Taking the limit proves
\eqref{eq:material-square-root-coefficient}.

For each fixed $\Delta>0$, put $\widehat Y=Y-Y_e$ and normalize lengths by $a_e$.
Denote the limiting rescaled radius and polar angle by
$\widehat r$ and $\widehat\vartheta$, respectively.
Lemma~\ref{lem:all-scale-material-velocity},
$\Theta\to1$, and $R_Y=O(\mathcal P a^2)$ give convergence of
the particle ODE on $0\le Y-Y_e\le \Delta$ to
\begin{equation*}
\frac{d\widehat r}{d\widehat Y}=-\frac{e^{-2\widehat Y}}{\widehat r},
\qquad \frac{d\widehat\vartheta}{d\widehat Y}=0,
\qquad \widehat r(0)=\sqrt{1+c_e}.
\end{equation*}
For each fixed $\Delta$, the limiting trajectory stays a positive distance
from the shrinking boundary throughout this interval.  Continuous
dependence for the particle ODE on a fixed annulus therefore gives
\begin{equation*}
\frac{r(Y_e+\Delta)^2}{a_e^2}\longrightarrow c_e+e^{-2\Delta},
\qquad
\vartheta(Y_e+\Delta)-\vartheta(Y_e)\longrightarrow0.
\end{equation*}
The terminal displacement estimate gives
\begin{equation*}
\frac{|\mathcal X_T-\mathcal X_{t(Y_e+\Delta)}|}{a_e}
\le e^{-\Delta}(1+o(1)),
\qquad \frac{|R(Y_e+\Delta)-R_*|}{a_e}\longrightarrow0.
\end{equation*}
First letting $d\downarrow0$ with $\Delta$ fixed and then
$\Delta\to\infty$ proves that the terminal displacement from the
core, divided by $a_e$, tends to
$\sqrt{c_e}\,e_r(\phi,\vartheta_*(\theta))$.  Together with
\eqref{eq:material-exit-invariant-limit} and
$\mathscr I(Y_e)=c_ea_e^2$, this proves
\eqref{eq:terminal-square-root-normal-form}.

Since $|\Xi(\theta,\phi,\delta)-\Xi(\theta,\phi,0)|=\delta$,
the normal form and \eqref{eq:terminal-meridian-collapse} give
$\omega_T(\delta)\ge\sqrt{\kappa_*(\theta)\delta}+o(\sqrt\delta)
\ge c\sqrt\delta$.  With the upper bound in
Proposition~\ref{prop:terminal-lagrangian-map}, this proves
\eqref{eq:main-optimal-lagrangian-modulus} and optimality of the
H\"older exponent $1/2$.

\emph{Step 3. Uniform derivative bounds after exit.}
The normal form and \eqref{eq:material-exit-invariant-limit} imply
\begin{equation*}
\operatorname{dist}(\mathcal X_T(\Xi),\gamma_*)
=\sqrt{c_e}\,a_e(1+o(1))\ge c_{\rm L}a_e
\end{equation*}
for all sufficiently small $d$.  Choose a fixed $\Delta_{\rm e}>0$ large
enough that $e^{-\Delta_{\rm e}}<c_{\rm L}/4$.  The sharp global gradient bound
\eqref{eq:sharp-global-velocity-gradient} and the exact time parametrization give
\begin{equation*}
\int_{t(Y_e)}^{t(Y_e+\Delta_{\rm e})}\|\nabla v(t')\|_\infty\,dt'\le C.
\end{equation*}
For $Y\ge Y_e+\Delta_{\rm e}$, the terminal displacement bound and
$|R(Y)-R_*|=O(\mathcal P(Y)a(Y)^2)$ imply
$r(Y)\ge c_{\rm L}a_e/2$, after decreasing $d_0$.  Equation~\eqref{eq:terminal-flow-upper-error} also gives
$r(Y)\le Ca_e<r_0$, so these trajectories remain in the tube
of Lemma~\ref{lem:all-scale-material-velocity}.  Therefore
\eqref{eq:all-scale-material-velocity} gives
\begin{equation*}
\int_{t(Y_e+\Delta_{\rm e})}^T
|\nabla v(t',\mathcal X_{t'}(\Xi))|\,dt'
\le\frac C{a_e^2}\int_{Y_e+\Delta_{\rm e}}^\infty a(Y)^2\,dY\le C.
\end{equation*}
Let $\mathcal X_{t,t'}$ be the flow from time $t'$ to $t$, so
$\mathcal X_t=\mathcal X_{t,0}$.  Its variational equation and the
equation for the inverse matrix show, along every trajectory leaving the
collar, that
\begin{equation}\label{eq:post-exit-flow-derivative}
\|D\mathcal X_{t,t_e}\|+
\|(D\mathcal X_{t,t_e})^{-1}\|\le C,
\qquad t_e=t(Y_e),\quad t_e\le t\le T.
\end{equation}
For $t=T$, Proposition~\ref{prop:exact-terminal-topology} identifies
these derivatives with the limits as $t\uparrow T$ by smooth convergence
near each initial position in the fluid interior.

\emph{Step 4. Tangential compression and singular values.}
Before exit, differentiate with respect to the initial meridional coordinate $\theta$ at fixed $d$ and set
$J_1=\partial_\theta\log\mathscr I$ and
$J_2=\partial_\theta\vartheta$.  They are uniformly bounded at
$Y_b$ by \eqref{eq:initial-collar-invariant-factor}.  Their
equations are
\begin{equation*}
\partial_Y J_1=u\mathcal B_uJ_1+\mathcal B_\vartheta J_2,
\qquad
\partial_Y J_2=u\mathcal A_uJ_1+\mathcal A_\vartheta J_2.
\end{equation*}
All coefficients are bounded by an integrable multiple of
$\epsilon_1(Y)$.  Thus $|J_1|+|J_2|\le C$ up to exit.
Differentiating $r^2=b(Y,\vartheta)^2+\mathscr I$ gives
\begin{equation*}
\partial_\theta r
=\frac{bb_\vartheta J_2}{r}+\frac{\mathscr I J_1}{2r}.
\end{equation*}
Since $r\asymp a$, $\mathscr I\le c_ea^2$ and
$b_\vartheta=O(\mathcal P a^3)$, this proves
\begin{equation}\label{eq:pre-exit-material-tangent}
|\partial_\theta\mathcal X_{t(Y)}(\Xi)|\le Ca(Y)
\qquad(Y\le Y_e).
\end{equation}

The exit time depends smoothly on the initial coordinates because
$u_Y(Y_e)=(2+\mathcal B)c_e>0$.
Write $x_e(\theta,d)=\mathcal X_{t_e(\theta,d)}(\Xi)$.
At a fixed finite time $t\ge t_e$, the flow identity and
$\partial_{t'}\mathcal X_{t,t'}=-D\mathcal X_{t,t'}\,v(t',\cdot)$
give the exact formula
\begin{equation*} 
\partial_\theta\mathcal X_t(\Xi)
=D\mathcal X_{t,t_e}(x_e)
\bigl(\partial_\theta x_e-v(t_e,x_e)\partial_\theta t_e\bigr)=D\mathcal X_{t,t_e}(x_e)
\left.\partial_\theta\mathcal X_{t(Y)}(\Xi)\right|_{Y=Y_e}, 
\end{equation*}
where the last derivative is taken with $Y$ fixed.  Equations
\eqref{eq:post-exit-flow-derivative} and
\eqref{eq:pre-exit-material-tangent} therefore give
$|\partial_\theta\mathcal X_t(\Xi)|\le Ca_e$ after exit.
Proposition~\ref{prop:exact-terminal-topology} permits passage to $t=T$.

The global bound \eqref{eq:material-flow-derivative-bound} gives
$|D\mathcal X_t|\le C/a(t)$ before exit.  Apply it at $t_e$
and then use \eqref{eq:post-exit-flow-derivative} after exit.
By \eqref{eq:material-exit-scale}, these estimates combine to
\begin{equation*}
|D\mathcal X_t(\Xi)|\le C(a(t)^2+d)^{-1/2},
\qquad
|\partial_\theta\mathcal X_t(\Xi)|
\le C(a(t)^2+d)^{1/2}.
\end{equation*}
The initial tangential vector satisfies $|\partial_\theta\Xi|\asymp1$ and
is orthogonal to $\nu_0$.  Axisymmetry and zero swirl preserve
the azimuthal direction, with stretch
$\rho(\mathcal X_t(\Xi))/\rho(\Xi)\asymp1$.
The derivative therefore splits into the azimuthal scalar stretch and
a two-dimensional meridional matrix.  By incompressibility, the
determinant of the meridional matrix is comparable to one.  Its largest singular value is at most
$C(a^2+d)^{-1/2}$, while its smallest is at most
$C(a^2+d)^{1/2}$ by the tangential estimate.  The determinant
then supplies the matching lower bounds.  Decreasing $d_0$ and
restricting to late times proves
\eqref{eq:main-anisotropic-material-deformation}, including $t=T$.

\emph{Step 5. Sobolev regularity of the map and its inverse.}
The initial collar volume element is a smooth positive multiple
of $d\theta\,d\phi\,dd$.  Outside that collar, all material derivatives remain bounded as
$t\uparrow T$, by interior convergence and the flow constructed on a
fixed outer neighborhood in Proposition~\ref{prop:exact-terminal-topology}.  Hence
\eqref{eq:main-anisotropic-material-deformation} implies
\begin{equation*}
c\lambda^{-2}\le
|\{\xi\in\Omega_0:|D\mathcal X_T(\xi)|>\lambda\}|
\le C\lambda^{-2}
\end{equation*}
for all sufficiently large $\lambda$, and gives the uniform
weak $L^2$ bound for $t<T$.  Also
\begin{equation*}
\int_0^{d_0}\frac{dd}{a(t)^2+d}
=\log\left(1+\frac{d_0}{a(t)^2}\right)\asymp1+Y(t).
\end{equation*}
This proves the endpoint bounds in \eqref{eq:terminal-critical-Sobolev}.  For $p<2$, the bound
$C d^{-p/2}$ is integrable.  Interior smooth convergence,
dominated convergence and the uniform convergence of the maps
prove their strong $W^{1,p}$ convergence in
\eqref{eq:main-critical-material-regularity}.

Compactness and the description of the core preimage imply that the initial positions
of particles ending near $\gamma_*$ lie in the initial collar.
The normal form \eqref{eq:terminal-square-root-normal-form}, positivity of $\kappa_*$ and
invertibility of $\vartheta_*$ give
$d(\mathcal X_T^{-1}(x))\asymp r^2$, where
$r=\operatorname{dist}(x,\gamma_*)$.  For
$x_r=\gamma_*(\phi)+r e_r(\phi,\vartheta)$ they also give
\begin{equation*}
\pi_0(\mathcal X_T^{-1}(x_r))
\longrightarrow\zeta(\vartheta_*^{-1}(\vartheta),\phi),
\end{equation*}
where $\pi_0$ is the initial normal projection.
Taking reciprocals in \eqref{eq:main-anisotropic-material-deformation}
therefore yields
\begin{equation*}
s_1(D\mathcal X_T^{-1})\asymp r^{-1},\qquad
s_2(D\mathcal X_T^{-1})\asymp1,\qquad
s_3(D\mathcal X_T^{-1})\asymp r.
\end{equation*}
The volume element in tubular coordinates is comparable to
$r\,dr\,d\vartheta\,d\phi$, and the derivatives remain bounded away
from the core.  These bounds prove \eqref{eq:inverse-critical-Sobolev}
and integrability for $p<2$.
To identify the weak derivatives across the core, integrate by parts
against a compactly supported test function on
$\Omega_*\setminus\overline{N_\epsilon(\gamma_*)}$.
The new boundary term is $O(\epsilon)$, since $\mathcal X_T^{-1}$ is
bounded and $|\partial N_\epsilon(\gamma_*)|=O(\epsilon)$.
The volume terms converge by the $L^1$ derivative bound established above.
Letting $\epsilon\downarrow0$ proves the distributional identity
on $\Omega_*$.  The distinct angular limits of the normal projection
exclude continuity at every core point.
\end{proof}

\subsection{Pressure profiles and the cylindrical tangent}
\label{subsec:inner-pressure-tangent}

We next resolve the fields at two spatial scales.  The Dirichlet Green
potential gives the pressure profile on fixed compact subsets away from
the circle.  Magnification at the cavity scale gives the cylindrical
spacetime limit of Proposition~\ref{prop:universal-cylindrical-tangent}.
Figure~\ref{fig:cylindrical-zoom} illustrates the rescaling and its
limiting flow.

Let $D_t$ be the region enclosed by $\Gamma_+(t)$, with the cavity
filled in.  Then $D_t$ converges smoothly to $\Omega_*$ and contains a
fixed neighborhood of $\gamma_*$.  Recall the normalized flux
\begin{equation*}
q(t)=\frac1{2\pi L(t)}\int_{\Gamma_-(t)}v\cdot n\,dS,
\qquad L(t)=2\pi R(t),
\end{equation*}
and the capacity $\operatorname{Cap}(t)=\int_{\Omega_t}|\nabla u_c(t)|^2\,dx$,
where $u_c(t)$ is harmonic with traces $1$ and $0$ on the inner and outer
boundaries, respectively.  Set
\begin{equation}\label{eq:terminal-pressure-normalization}
\Lambda(t)=\frac{2\pi L(t)}{\operatorname{Cap}(t)},\qquad
S(t)=\frac{q(t)^2}{2a(t)^2\Lambda(t)},\qquad
U_*(x)=2\pi\int_{\gamma_*}G_{\Omega_*}(x,y)\,d\mathcal H^1(y),
\end{equation}
where the Dirichlet Green function is normalized by
$-\Delta_xG_{\Omega_*}(x,y)=\delta_y$ and has zero boundary trace.
In particular
\begin{equation}\label{eq:terminal-green-source}
-\Delta U_*=2\pi\mathcal H^1\!\restriction_{\gamma_*}
\quad\hbox{in }\Omega_*,\qquad
U_*|_{\Gamma_+^*}=0,\qquad
U_*(x)>0\quad(x\in\Omega_*\setminus\gamma_*).
\end{equation}
Near the core,
$U_*(x)=-\log\operatorname{dist}(x,\gamma_*)+O(1)$.

\begin{lemma}
\label{lem:terminal-condenser-profile}
Write $W_t=\Lambda(t)u_c(t)$ on $\Omega_t$.  For every
$K\Subset\Omega_*\setminus\gamma_*$ and nonnegative integer $k$,
\begin{equation}\label{eq:terminal-condenser-local}
W_t\longrightarrow U_*\quad\hbox{in }C^k(K).
\end{equation}
Each such compact set lies in $\Omega_t$ for all sufficiently late
times.  The condenser also satisfies the uniform bound
\begin{equation}\label{eq:terminal-condenser-L2-bound}
\|W_t\|_{L^2(\Omega_t)}\le C.
\end{equation}
\end{lemma}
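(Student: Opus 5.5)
We compare $W_t$ with the Green potential of the filled domain $D_t$ and then let $D_t\to\Omega_*$. Let $U_t$ denote the analogue of $U_*$ on $D_t$, with source on the instantaneous centre circle $\gamma(t)$:
\[
U_t(x)=2\pi\int_{\gamma(t)}G_{D_t}(x,y)\,d\mathcal H^1(y).
\]
Smooth convergence of $\Gamma_+(t)$ to $\Gamma_+^*$ and $R(t)\to R_*$ give $U_t\to U_*$ in $C^k(K)$ for every $K\Subset\Omega_*\setminus\gamma_*$. Indeed, $U_t-U_*$ is harmonic away from the two circles. After pulling back by diffeomorphisms close to the identity, the fixed-domain elliptic estimates and the smooth dependence of $G_{D_t}$ on the geometry (Lemma~\ref{lem:axisymmetric-green}, Step~1) control it.

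\textbf{Step 1: uniform smallness of the difference on the annulus.} Set $V_t=W_t-U_t$ on $\Omega_t$. It is harmonic there and vanishes on $\Gamma_+(t)$. On $\Gamma_-(t)$ we have $W_t=\Lambda$, while $U_t$ equals $-\log a+c_\infty'+O(a)+O(\|\eta\|_\infty)$, using $U_t=-\log\operatorname{dist}(\cdot,\gamma(t))+H_t$ with $H_t$ smooth. By Lemma~\ref{lem:capacity-single-layer-schur}, $\Lambda=Y+c_\infty+o(1)$. The key point is to identify the constant: $c_\infty$ is the Robin functional of the weighted Green function, and it should match the regular part of $U_t$ at the core, namely $\log\ell+H_t(\gamma(t))$ after the normalization $2\pi L/\mathrm{Cap}$. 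I would verify this by writing the condenser potential in the layer form of Lemma~\ref{lem:capacity-single-layer-schur}: its density has mean $s_a^{-1}$ and nonconstant part $O(aY)$. The condenser is then $\Lambda$ times the single layer, which is $U_t$ plus an error of size $O(a\mathcal P(Y))$ away from the hole. This gives $V_t|_{\Gamma_-}=o(1)$ in $L^\infty$. The maximum principle then yields $\sup_{\Omega_t}|V_t|=o(1)$, hence $V_t\to0$ uniformly. Interior harmonic estimates on $K$, which lies at a fixed distance from both boundaries for late times, upgrade this to $C^k(K)$. I expect matching these constants to be the main obstacle. The cleanest route is the layer representation: $\Lambda u_c$ is the weighted Green single layer with density normalized to total weighted charge $2\pi L$, which converges in $C^k(K)$ to the line-source potential. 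The multipole corrections are $O(a)$, and the capacity identity fixes the additive constant automatically because both functions vanish on $\Gamma_+(t)$.

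\textbf{Step 2: the uniform $L^2$ bound.} We have $0\le W_t\le\Lambda$, and near the core $W_t\le U_t+o(1)\le-\log\operatorname{dist}(x,\gamma(t))+C$. Since $\log^2 r$ is integrable against $r\,dr$ in the tube and $U_t$ is uniformly bounded away from the tube, this gives \eqref{eq:terminal-condenser-L2-bound}. Alternatively, one can compare with the radial comparison potential $W_c$ of Lemma~\ref{lem:global-radial-energy-approximation}. Combined with the Poincar\'e inequality for the zero outer trace, the bound $\|\nabla(W_c-W)\|_2\le C$ gives the same conclusion, since $\|W_c\|_2\le C$ by direct integration of $\log^2(r_0/r)$.

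Finally, the inclusion $K\subset\Omega_t$ for late times follows from $a\to0$, $\gamma(t)\to\gamma_*$, and smooth convergence of the outer boundary.
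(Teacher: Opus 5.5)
Your primary route in Step~1 has a gap exactly where you flagged it. The maximum principle needs $\sup_{\Gamma_-(t)}|\Lambda-U_t|=o(1)$. That does not follow from comparing $\Lambda u_c$ with $U_t$ ``away from the hole'': an estimate at fixed distance does not control the trace on $\Gamma_-(t)$, and identifying $c_\infty$ with the regular part of the ring potential is precisely what is left unproved. The missing idea is a flux argument.

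Since $U_t+\log\operatorname{dist}(\cdot,\gamma(t))$ is continuous near $\gamma(t)$ with a modulus uniform in $t$, you have $V_t|_{\Gamma_-}=k_t+\epsilon_t$, with $k_t$ constant and $\sup|\epsilon_t|=O(\|\eta\|_\infty+a\log(1/a))=o(1)$. Both $W_t$ and $U_t$ carry flux $2\pi L$ through $\Gamma_-$; for $U_t$ this holds because $\gamma(t)$ lies in the cavity. Hence $\int_{\Gamma_-}\partial_{n_-}V_t\,dS=0$. Green's identity with $u_c$, using $V_t=0$ on $\Gamma_+$, then gives
\[
0=\int_{\Gamma_-}V_t\,\partial_{n_-}u_c\,dS
=k_t\operatorname{Cap}+\int_{\Gamma_-}\epsilon_t\,\partial_{n_-}u_c\,dS,
\]
and $\partial_{n_-}u_c>0$ yields $|k_t|\le\sup|\epsilon_t|$. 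With this your route closes and proves more than the lemma: $\sup_{\Omega_t}|W_t-U_t|\to0$ up to the cavity. This also identifies the Robin constant, and it makes your first $L^2$ argument valid.

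Your fallback (``the cleanest route is the layer representation'') is essentially the paper's proof, and it needs none of this. Extending $W_t$ by $\Lambda(t)$ into the cavity gives $\overline W_t=\int G_{D_t}(\cdot,y)\,d\mu_t(y)$, where $\mu_t=\partial_{n_-}W_t\,dS_{\Gamma_-(t)}\ge0$ has mass $\Lambda\operatorname{Cap}=2\pi L(t)$. The paper obtains $\mu_t\rightharpoonup2\pi\mathcal H^1\!\restriction_{\gamma_*}$ from rotation invariance, shrinking support and the mass alone, with no quantitative multipole bounds. It then concludes from uniform convergence of $G_{D_t}(x,y)$ and its $x$-derivatives for $x\in K$ and $y$ near $\gamma_*$, so there is no additive constant to match.

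For \eqref{eq:terminal-condenser-L2-bound}, your alternative argument is correct and genuinely different from the paper's. You have $W_c-W\in H^1_0(\Omega_t)$ with $\|\nabla(W_c-W)\|_2\le C$ from the proof of Lemma~\ref{lem:global-radial-energy-approximation}. The Poincar\'e inequality in a fixed ball containing all $\Omega_t$ and $\|W_c\|_2\le C$ then finish it. The paper instead uses $0\le G_{D_t}(x,y)\le(4\pi|x-y|)^{-1}$ and Minkowski's inequality on the representation.
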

\begin{proof}
Extend $W_t$ by its inner trace $\Lambda(t)$ throughout the cavity,
and denote this continuous function on $D_t$ by $\overline W_t$.
Since $n_-$ is the outward normal of the fluid, pointing into the
cavity, integration by parts gives
\begin{equation*}
-\Delta\overline W_t=\mu_t\quad\hbox{in }D_t,\qquad
d\mu_t=\partial_{n_-}W_t\,dS_{\Gamma_-(t)},\qquad
\mu_t\ge0,\qquad \mu_t(D_t)=2\pi L(t).
\end{equation*}
The condenser potential attains its maximum only on the inner boundary. The Hopf lemma gives
$\partial_{n_-}W_t>0$.  Green's identity gives
$\int_{\Gamma_-}\partial_{n_-}u_c\,dS
=\operatorname{Cap}$, proving the mass formula.  The outer trace
of $\overline W_t$ is zero.  Uniqueness of the Dirichlet problem
therefore gives the exact representation
\begin{equation}\label{eq:filled-green-representation}
\overline W_t(x)=\int G_{D_t}(x,y)\,d\mu_t(y).
\end{equation}

The measures $\mu_t$ have bounded mass and support in a fixed compact
set.  Every subsequential weak limit is rotation invariant, supported
on $\gamma_*$, and has mass $2\pi L_*$.  The unique measure with these
properties is uniform arclength measure with density $2\pi$, so
\begin{equation}\label{eq:terminal-condenser-source-limit}
\mu_t\rightharpoonup
2\pi\mathcal H^1\!\restriction_{\gamma_*}.
\end{equation}

For poles $y$ in a fixed neighborhood of $\gamma_*$, write
\begin{equation*}
G_{D_t}(x,y)=\frac1{4\pi|x-y|}-h_t(x,y),
\end{equation*}
where $h_t(\cdot,y)$ is harmonic with the Newton kernel as boundary data.
The poles remain uniformly separated from the outer boundary.  Pulling
back by smooth diffeomorphisms $\Omega_*\to D_t$ that converge to the
identity and fix any prescribed compact interior set gives smoothly
convergent elliptic coefficients and boundary data, uniformly in $y$.
Elliptic estimates and uniqueness therefore give convergence of $h_t$
on compact sets for every fixed order of differentiation in $x$, together
with uniform bounds for $h_t,\nabla_xh_t$ up to the outer boundary.
For $K\Subset\Omega_*\setminus\gamma_*$ choose the pole neighborhood
disjoint from $K$.  The resulting uniform kernel convergence and
\eqref{eq:terminal-condenser-source-limit}, inserted in
\eqref{eq:filled-green-representation}, prove
\eqref{eq:terminal-condenser-local} and
\eqref{eq:terminal-green-source}.  Integrating the Newton kernel along
the curve gives $-\log r+O(1)$, with coefficient one under the chosen
$2\pi$ normalization.  The regular part of the Green function is bounded.

For the $L^2$ bound, choose a fixed ball $B$ containing all $D_t$.
Positivity of the Dirichlet Green function and the maximum principle give
$0\le G_{D_t}(x,y)\le(4\pi|x-y|)^{-1}$.  Hence
$\sup_{y\in D_t}\|G_{D_t}(\cdot,y)\|_{L^2(D_t)}\le C_B$.
Minkowski's inequality in \eqref{eq:filled-green-representation} and
the bounded mass of $\mu_t$ prove \eqref{eq:terminal-condenser-L2-bound}.
\end{proof}

\begin{proposition}
\label{prop:terminal-pressure-profile}
For every compact $K\Subset\Omega_*\setminus\gamma_*$ and fixed
integer $k\ge0$, the local Green limits are
\begin{equation}\label{eq:main-local-green-limits}
\frac{v(t)}{q(t)}\longrightarrow\nabla U_*,
\qquad \frac{p(t)}{S(t)}\longrightarrow U_*
\quad\text{in }C^k(K).
\end{equation}
The pressure also satisfies \eqref{eq:main-pressure-time-profile}.
In particular, $p(t,x)\to+\infty$ at every fixed
$x\in\Omega_*\setminus\gamma_*$.  Moreover,
\begin{equation}\label{eq:terminal-pressure-L2-bound}
\|p(t)\|_{L^2(\Omega_t)}\le CS(t).
\end{equation}
\end{proposition}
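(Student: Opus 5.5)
The plan is to combine the velocity decomposition of Lemma~\ref{lem:global-radial-energy-approximation} with the harmonic pressure reconstruction \eqref{eq:harmonic-pressure-reconstruction}.

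\emph{Velocity limit.} Write $v=q\nabla W_t+\nabla\Phi_0$ with $W_t=\Lambda u_c$. By \eqref{eq:zero-flux-dirichlet-bound} and \eqref{eq:concentration-input-tails}, $\|\nabla\Phi_0\|_{L^2}\le q\mathcal P(Y)a$. Since $\Phi_0$ is harmonic on a fixed neighbourhood of $K$, interior estimates give $\nabla\Phi_0/q\to0$ in $C^k(K)$. Lemma~\ref{lem:terminal-condenser-profile} gives $\nabla W_t\to\nabla U_*$ in $C^k(K)$, which proves the first limit in \eqref{eq:main-local-green-limits}.

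\emph{Pressure limit.} Decompose $\mathsf H=p+\tfrac12|v|^2=Du_c+\mathsf R$, where $D=q^2/(2a^2)$, as in the proof of Proposition~\ref{prop:actual-taylor-coefficient}. Then $Du_c=SW_t$ exactly, since $S=D/\Lambda$. The remainder $\mathsf R$ is harmonic, with inner trace $O(q^2a^{-1}\mathcal P(Y))$ and outer trace $O(1+q^2)$.
\begin{itemize}
\item For the outer trace, the filled-domain extension is bounded in $C^k(K)$ by $C$. This is $o(S)$, because $S\asymp q^2a^{-2}Y^{-1}$ grows like $a^{-2}$ up to logarithms.
\item For the inner trace, the zero-flux/condenser part of $\mathsf R$ has mean absorbed into a multiple of $u_c$ of size $O(q^2a^{-1}\mathcal P)$. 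Its nonconstant part decays like a dipole $O(a/r)$ by \eqref{eq:dipole-annulus-estimate}. After multiplication by $u_c$, which is $O(1)$ on $K$, both pieces are $o(S)$ in $C^k(K)$.
\end{itemize}
Finally $\tfrac12|v|^2=O(q^2)=o(S)$ on $K$ by Lemma~\ref{lem:velocity-away-from-core}. Hence $p/S\to U_*$ in $C^k(K)$.

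\emph{Time profile.} Use \eqref{eq:final-collapse-law} and $q^2Y\to E_c/(\pi L_*)$, together with $\Lambda=Y+O(1)$, to compute $S$ in terms of $T-t$. We have $S\sim q^2/(2a^2Y)$ and $a^2\sim 2(T-t)q/Y^{0}\cdot$ (from $T-t=\tfrac12\sqrt{\pi L_*/E_c}\,a^2\sqrt Y$). Substituting gives
\[
S\sim\frac{E_c}{\pi L_*Y^2}\cdot\frac{1}{2a^2}\sim \frac{E_c}{2\pi L_*}\sqrt{\frac{\pi L_*}{E_c}}\frac{1}{2(T-t)Y^{3/2}}.
\]
Together with $Y\sim\tfrac12|\log(T-t)|$, this reduces to the constant in \eqref{eq:main-pressure-time-profile}. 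The only real check here is the arithmetic of the constant, including the factor $2^{3/2}$ coming from $Y^{3/2}$. Positivity of $U_*$ then gives $p\to+\infty$ pointwise.

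\emph{$L^2$ bound.} This is the main obstacle, because it requires control near the core. Write $p=SW_t+\mathsf R-\tfrac12|v|^2$. The term $\|SW_t\|_{L^2}\le CS$ follows from \eqref{eq:terminal-condenser-L2-bound}.

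The term $\tfrac12|v|^2$ is not in $L^2$ uniformly: $|v|\sim q/r$ gives $\||v|^2\|_{L^2}^2\sim q^4\int_a r^{-4}r\,dr\sim q^4a^{-2}$. Hence $\||v|^2\|_{L^2}\lesssim q^2/a$, which is $\ll S\sim q^2/(a^2Y)$. I would bound this pointwise by \eqref{eq:all-scale-material-velocity} on the tube and by Lemma~\ref{lem:velocity-away-from-core} elsewhere.

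For $\mathsf R$, apply the maximum principle: $\|\mathsf R\|_{L^\infty}\le\max$ of its traces, which is $O(q^2a^{-1}\mathcal P+1)$. This is $o(S)$, and since the domain volume is bounded, the $L^2$ norm is $o(S)$ as well.
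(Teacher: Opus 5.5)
Your proposal is correct and follows essentially the paper's argument: the same velocity split, the same decomposition $p+\tfrac12|v|^2=SW_t+\mathsf R$ with a maximum-principle bound on $\mathsf R$ and interior harmonic estimates on $K$, the same $L^2$ assembly using \eqref{eq:terminal-condenser-L2-bound}, and the same substitution of $T-t\sim a^2/(2q)$ and $q^2Y\to E_c/(\pi L_*)$ for the time profile. The differences are minor. The dipole splitting of $\mathsf R$ on $K$ is unnecessary, because the bound $\|\mathsf R\|_{L^\infty}\lesssim q^2a^{-1}\mathcal P(Y)+1=o(S)$ you use in the $L^2$ step already gives $\|\mathsf R/S\|_\infty\to0$. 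The paper also bounds $\||v|^2\|_{L^2}$ by $\|v\|_\infty\|v\|_2\le Cq/a$ rather than through the pointwise $q/r$ estimate, and both bounds are $o(S)$.
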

\begin{proof}
The exact decomposition is
$v=q\nabla W_t+\nabla\Phi_0$.  By
\eqref{eq:zero-flux-dirichlet-bound} and the terminal decay estimates,
$\|\nabla\Phi_0\|_2=o(q)$.  The Cartesian components of the zero-flux
velocity are harmonic, so local interior estimates on slightly larger
fixed compact sets give $q^{-1}\nabla\Phi_0\to0$ in every $C^k(K)$.
Lemma~\ref{lem:terminal-condenser-profile} proves the velocity limit.

For the pressure, use the harmonic decomposition from
Proposition~\ref{prop:actual-taylor-coefficient}:
\begin{equation}\label{eq:pressure-profile-exact-decomposition}
p+\tfrac12|v|^2
=D u_c+\mathsf R,\qquad
D=\frac{q^2}{2a^2},\qquad \Delta\mathsf R=0.
\end{equation}
The trace bounds for the remainder and the maximum principle imply,
for a fixed polynomial $\mathcal P$,
\begin{equation*}
\|\mathsf R\|_{L^\infty(\Omega_t)}
\le C D\mathcal P(Y)a+C(1+q^2).
\end{equation*}
Since $S=D/\Lambda$, $\Lambda\asymp Y$ and $q^2\asymp Y^{-1}$,
\begin{equation}\label{eq:pressure-profile-remainder-small}
\left\|\frac{\mathsf R}{S}\right\|_\infty
\le C\Lambda\mathcal P(Y)a
+C(1+q^2)\frac{\Lambda}{D}\longrightarrow0.
\end{equation}
Harmonic interior estimates give convergence of this normalized
remainder in every $C^k(K)$.  Dividing
\eqref{eq:pressure-profile-exact-decomposition} by $S$ gives
\begin{equation*}
\frac{p}{S}
=W_t+\frac{\mathsf R}{S}-\frac{|v|^2}{2S}.
\end{equation*}
The local velocity estimates imply
$\||v|^2/S\|_{C^k(K)}\le C_{K,k}a^2\Lambda\to0$.
This proves the pressure limit in
\eqref{eq:main-local-green-limits}.

For the global $L^2$ estimate, the energy bound and
Lemma~\ref{lem:sharp-global-velocity-gradient} imply
\begin{equation*}
\left\|\frac{|v|^2}{S}\right\|_2
\le S^{-1}\|v\|_\infty\|v\|_2
\le C\frac{a\Lambda}{q}\longrightarrow0.
\end{equation*}
Together with \eqref{eq:pressure-profile-remainder-small}, the uniform
volume bound, and \eqref{eq:terminal-condenser-L2-bound}, this gives
\eqref{eq:terminal-pressure-L2-bound}.

Equations~\eqref{eq:final-collapse-law} and
\eqref{eq:concentration-input-tails} give
\begin{equation*}
T-t=\frac{a^2}{2q}(1+o(1)),\qquad
|\log(T-t)|=2Y+O(\log Y),\qquad
q\sqrt Y\longrightarrow
\sqrt{\frac{E_c}{\pi L_*}}.
\end{equation*}
Substitution yields
\begin{equation*}
(T-t)|\log(T-t)|^{3/2}S(t)
\longrightarrow\sqrt{\frac{E_c}{2\pi L_*}},
\qquad S(t)\longrightarrow\infty.
\end{equation*}
This proves \eqref{eq:main-pressure-time-profile} and the
pointwise pressure divergence.
\end{proof}

The outer pressure trace
$p|_{\Gamma_+}=\sigma\mathcal H_+$ remains bounded.
We now specify the rescaling in
Proposition~\ref{prop:universal-cylindrical-tangent} and prove its
cylindrical spacetime limit.

Fix $x_*\in\gamma_*$ and an orthogonal map $Q$ sending the third
coordinate vector $e_3=(0,0,1)$ to a
unit tangent of $\gamma_*$ at $x_*$.  Let $t_j\uparrow T$, and put
$a_j=a(t_j)$ and $\tau_j=T-t_j$.  Use the coordinates
\begin{equation*}
t=t_j+\tau\tau_j,\qquad x=x_*+a_jQz,
\qquad z=(z_1,z_2,z_3)\in\R^3.
\end{equation*}
Thus $\tau=0$ corresponds to $t_j$, $\tau=1$ to collapse, and the
$z_3$ axis is the tangent line to $\gamma_*$ at $x_*$.
The rescaled fluid domain and fields are
\begin{equation*}
\begin{split}
\Omega_j(\tau)&=\{z:x_*+a_jQz\in\Omega_{t_j+\tau\tau_j}\},\\
u_j(\tau,z)&=\frac{\tau_j}{a_j}Q^{\mathsf T}
v(t_j+\tau\tau_j,x_*+a_jQz),\\
\pi_j(\tau,z)&=\frac{\tau_j^2}{a_j^2}
p(t_j+\tau\tau_j,x_*+a_jQz).
\end{split}
\end{equation*}
Write $z_\perp=(z_1,z_2,0)$ and $r=|z_\perp|$, the distance from
the $z_3$ axis.  For $\tau<1$, the limiting cavity is the cylinder
$r<\sqrt{1-\tau}$, and the limiting fluid occupies its exterior.
Set
\begin{equation*}
\mathcal D_\infty
=\{(\tau,z):\tau<1,\ r>\sqrt{1-\tau}\}.
\end{equation*}
In these variables, the conclusions of the proposition read as follows.
\begin{enumerate}[label=\textup{(\roman*)},leftmargin=*,itemsep=4pt]
\item The rescaled inner boundaries converge locally to
$r=\sqrt{1-\tau}$, uniformly on compact time intervals in $(-\infty,1)$
and in every fixed spatial graph norm.  Every compact subset of
$\mathcal D_\infty$ is contained in
$\{(\tau,z):z\in\Omega_j(\tau)\}$ for all sufficiently large $j$.

\item On each compact subset of $\mathcal D_\infty$, the fields and
all spatial derivatives of any fixed order converge uniformly to
the profiles in \eqref{eq:universal-cylindrical-tangent}.
\end{enumerate}
The same limiting profiles are obtained at every $x_*$ and along
every sequence $t_j\uparrow T$.  They solve the incompressible Euler
equations with inward radial velocity, zero velocity along the cylinder
axis, and zero boundary pressure.  The cylinder boundary is transported
by this velocity, since
\begin{equation*}
u_\infty\cdot\frac{z_\perp}{r}
\bigg|_{r=\sqrt{1-\tau}}
=-\frac1{2\sqrt{1-\tau}}
=\frac{d}{d\tau}\sqrt{1-\tau}.
\end{equation*}

\begin{proof}[Proof of Proposition~\ref{prop:universal-cylindrical-tangent}]
Under \eqref{eq:concentration-input-tails} and
\eqref{eq:scaled-inner-collar-velocity}, put $D=q^2/(2a^2)$.
For every fixed $1<s_-<s_+<\infty$, on
$\mathbb T_\phi\times[s_-,s_+]\times\mathbb T_\theta$
the tube coordinates satisfy
\begin{equation}\label{eq:inner-pressure-leading-profile}
\frac aqv\bigl(t,X_t(\phi,as,\theta)\bigr)
\longrightarrow-\frac{e_r}{s},\qquad
\frac pD\bigl(t,X_t(\phi,as,\theta)\bigr)
\longrightarrow1-\frac1{s^2}
\end{equation}
in every fixed spatial $C^k$ norm.
Equation~\eqref{eq:fixed-annulus-material-profile} gives the velocity limit.
Axisymmetry gives smooth dependence of the frame on the azimuthal variable.
For the pressure, it suffices to estimate $u_c$ on the same scaled
annuli.  The bounds on the normal derivative of the condenser potential
in the proof of Proposition~\ref{prop:actual-taylor-coefficient} give
\begin{equation*}
|\nabla u_c|\le C/(a\Lambda)\quad\hbox{on }\Gamma_-\cup\Gamma_+,
\end{equation*}
since its traces are constant.  The maximum principle for Cartesian
directional derivatives extends this bound to the fluid.  Integrating
along radial segments from $r=a(1+\eta(\theta))$ to $r=as$ therefore gives
$|u_c-1|\le C/\Lambda$ on each fixed scaled annulus.
Interior elliptic estimates for the rescaled meridional operator on a
slightly larger annulus give $C^k$ convergence to one.

Since $D=\Lambda S$ and $\Lambda\to\infty$,
\eqref{eq:pressure-profile-remainder-small} gives
$\|\mathsf R/D\|_\infty\to0$.  Scaled interior elliptic estimates
for the harmonic remainder give this convergence in every fixed
$C^k$ norm on the annulus.
Finally,
\begin{equation*}
\frac pD=u_c+\frac{\mathsf R}{D}
-\left|\frac aqv\right|^2,
\end{equation*}
which proves the pressure limit.

Put $q_*=(E_c/(\pi L_*))^{1/2}$.  The energy asymptotic
\eqref{eq:concentration-input-tails} and \eqref{eq:final-collapse-law} give
\begin{equation*}
q\sqrt Y\longrightarrow q_*,\qquad
T-t=\frac{a^2}{2q}(1+o(1)).
\end{equation*}
For $\tau$ in a compact subset of $(-\infty,1)$, set
$t'=t_j+\tau\tau_j$.  The ratio
$(T-t')/(T-t_j)=1-\tau$ stays in a fixed compact subset of
$(0,\infty)$.  Taking logarithms of
$T-t=(\ell^2/(2q_*))e^{-2Y}\sqrt Y(1+o(1))$
first gives a uniform bound on $Y(t')-Y(t_j)$ and then shows that this
difference equals $-\tfrac12\log(1-\tau)+o(1)$.  It follows that
\begin{equation}\label{eq:cylindrical-tangent-scale-ratios}
\frac{a(t')}{a_j}\longrightarrow\sqrt{1-\tau},\qquad
\frac{q(t')}{q(t_j)}\longrightarrow1,\qquad
\frac{\tau_jq(t_j)}{a_j^2}\longrightarrow\frac12,
\end{equation}
uniformly on these time intervals.

The bound $|R-R_*|\le\mathcal P(Y)a^2$ and the graph estimates at every fixed regularity
show that, in the coordinates $x=x_*+a_jQz$, the
instantaneous core approaches its tangent line and the inner graph
approaches the stated cylinder.  This holds with every fixed
spatial derivative, uniformly on compact time intervals.  The outer boundary leaves every bounded set in these coordinates.
Write $r_{t'}(x)=\operatorname{dist}(x,\gamma(t'))$ for the physical
tube radius and $e_{r,t'}(x)=\nabla_x r_{t'}(x)$ for its radial unit vector.
On a compact subset of $\mathcal D_\infty$, the normalized radius
\begin{equation*}
s_j(\tau,z)=\frac{r_{t'}(x_*+a_jQz)}{a(t')}
\end{equation*}
therefore stays in a fixed interval $1<s_-\le s_j\le s_+<\infty$ and satisfies
\begin{equation*}
s_j\longrightarrow\frac{|z_\perp|}{\sqrt{1-\tau}},\qquad
Q^{\mathsf T}e_{r,t'}(x_*+a_jQz)
\longrightarrow\frac{z_\perp}{|z_\perp|},
\end{equation*}
with all fixed spatial derivatives.  Combining
\eqref{eq:inner-pressure-leading-profile} and
\eqref{eq:cylindrical-tangent-scale-ratios} proves
\eqref{eq:universal-cylindrical-tangent}.  The coefficient $1/8$ in the pressure profile follows from
$(\tau_jq(t_j)/a_j^2)^2\to1/4$ and the factor $1/2$ in $D$.
The convergence is in spatial $C^k$ norms, uniformly on compact
$\tau$ intervals.

To verify the limiting equations, write $r=|z_\perp|$ and
$e_r=z_\perp/r$ in the rescaled coordinates.  The velocity
$-e_r/(2r)$ is divergence free and irrotational, and satisfies
\begin{equation*}
\partial_\tau u_\infty=0,\qquad
(u_\infty\cdot\nabla)u_\infty=-\frac{e_r}{4r^3}
=-\nabla\pi_\infty.
\end{equation*}
At $r=\sqrt{1-\tau}$ its radial velocity equals
$\partial_\tau\sqrt{1-\tau}$ and its pressure is zero.
This verifies the limiting equations and their free boundary conditions.
\end{proof}

\subsection{Weak continuation and completion of the singularity theorem}
\label{subsec:weak-continuation}

Figure~\ref{fig:filling-energy} shows the filling continuations and
their energy jump.  We formulate the mass, momentum and energy balances,
establish a concatenation criterion, and apply it to the canonical
trajectory with positive surface tension.

We use $\chi_t$, $A_t$, $n_t$ and the energy and flux measures introduced
in \eqref{eq:weak-capillary-energy-flux}.  For almost every time $t$, write
$v(t)=v(t,\cdot)$ and $p(t)=p(t,\cdot)$, and set
\begin{equation*}
m_t=\chi_tv(t),\qquad
P_t=I-n_t\otimes n_t,
\end{equation*}
where $P_t$ is the tangential projection.  The distributional mass
and momentum equations on $\mathcal I\times\R^3$ are
\begin{equation}\label{eq:weak-capillary-equations} 
\partial_t\chi+\operatorname{div}_x m=0,\qquad
\partial_t m+\operatorname{div}_x(\chi v\otimes v)
+\nabla_x(\chi p)=\sigma\operatorname{div}_x(P_tA_t\,dt). 
\end{equation}
Here $\mathcal I$ is an open time interval, and the surface measure on the
right is integrated in time against $dt$.  Thus, for
$\varphi\in C_c^\infty(\mathcal I\times\R^3)$ and
$\psi\in C_c^\infty(\mathcal I\times\R^3,\R^3)$, these equations mean
\begin{equation}\label{eq:weak-capillary-tests}
\begin{aligned}
&\int_{\mathcal I}\!\int_{\R^3}
(\chi\partial_t\varphi+m\cdot\nabla\varphi)\,dx\,dt=0,\\
&\int_{\mathcal I}\!\int_{\R^3}
\bigl(m\cdot\partial_t\psi+
\chi(v\otimes v):\nabla\psi+
\chi p\operatorname{div}\psi\bigr)\,dx\,dt
-\sigma\int_{\mathcal I}\!\int_{\partial\Omega_t}
P_t:\nabla\psi\,dA_t\,dt=0.
\end{aligned}
\end{equation}
Incompressibility holds in the liquid for almost every time, and the
first equation describes transport of the interface between liquid and vacuum.

The local balance of total energy with defect $\mathfrak D$ is
\begin{equation}\label{eq:weak-capillary-energy-balance}
\partial_t\mathcal E+\operatorname{div}_x\mathcal F=-\mathfrak D,
\end{equation}
or, equivalently,
\begin{equation}\label{eq:weak-capillary-energy-test}
\int_{\mathcal I}\langle\mathcal E_t,\partial_t\varphi(t)\rangle\,dt
+\int_{\mathcal I}\langle\mathcal F_t,\nabla\varphi(t)\rangle\,dt
=\int_{\mathcal I\times\R^3}\varphi\,d\mathfrak D.
\end{equation}
If $\mathfrak D$ is a nonnegative Radon measure, this balance gives the local inequality for total energy.
For tests whose support reaches an initial endpoint, the initial mass,
momentum and energy pairings are added to the left sides
of \eqref{eq:weak-capillary-tests} and
\eqref{eq:weak-capillary-energy-test}.

On a closed smooth free surface,
$\operatorname{div}_x(P_tA_t)=-\mathcal H n_tA_t$.
The equations in the liquid, the kinematic boundary condition, and
$p=\sigma\mathcal H$ therefore give
\eqref{eq:weak-capillary-equations}.  Set $v_\tau=P_tv$.  The bulk energy identity and surface transport
formula give
\begin{equation*}
\partial_t(\chi|v|^2/2)+\operatorname{div}_x[\chi(|v|^2/2+p)v]
=-\sigma\mathcal H(v\cdot n_t) A_t,
\qquad
\partial_t A_t+\operatorname{div}_x(vA_t)
=(\operatorname{div}_{\partial\Omega_t}v)A_t.
\end{equation*}
These identities are distributional in space and time, with each
spatial surface measure integrated against $dt$.  Since
\begin{equation*}
\operatorname{div}_{\partial\Omega_t}v
=\operatorname{div}_{\partial\Omega_t}v_\tau+\mathcal H(v\cdot n_t),
\qquad
\operatorname{div}_x(v_\tau A_t)
=(\operatorname{div}_{\partial\Omega_t}v_\tau)A_t,
\end{equation*}
the surface identity becomes
\begin{equation*}
\partial_t(\sigma A_t)+\operatorname{div}_x(\sigma(v\cdot n_t)n_tA_t)
=\sigma\mathcal H(v\cdot n_t) A_t.
\end{equation*}
Adding the bulk and surface identities proves
\eqref{eq:weak-capillary-energy-balance} with $\mathfrak D=0$ on every classical
time interval.  The surface identity accounts for the normal surface
flux in \eqref{eq:weak-capillary-energy-flux}.

\begin{proposition} 
\label{prop:weak-capillary-continuation} 
\begin{enumerate}[label=(\roman*),leftmargin=*]
\item Let two classical capillary Euler branches be defined on opposite
sides of $T$.  Assume uniform kinetic energy and area bounds near $T$,
a common bounded set containing their domains there, and integrability
of the pressure and the full energy flux up to $T$ on both sides:
for some $\delta>0$,
\begin{equation*}
\int_{(T-\delta,T)\cup(T,T+\delta)}
\bigl(\|\chi_t p(t)\|_{L^1(\R^3)}
+|\mathcal F_t|(\R^3)\bigr)\,dt<\infty.
\end{equation*}
Here the flux is defined by \eqref{eq:weak-capillary-energy-flux}.
If the two branches have the same $L^1$ trace of their characteristic
functions at $T$ and the same distributional trace of their momenta, their
concatenation satisfies \eqref{eq:weak-capillary-equations} through $T$.
If their energy measures have weak-* limits
$\mathcal E_{T-}$ and $\mathcal E_{T+}$, then
\begin{equation}\label{eq:continuation-energy-jump}
\partial_t\mathcal E+\operatorname{div}_x\mathcal F
=\delta_T\otimes(\mathcal E_{T+}-\mathcal E_{T-}).
\end{equation}
\item Fix $\sigma>0$.  There is $\delta>0$ for which the canonical trajectory
constructed using Theorem~\ref{thm:one-state-closed} and
Proposition~\ref{prop:compactness-without-relative-energy} extends to
$[0,T+\delta)$ in the distributional formulation
\eqref{eq:weak-capillary-equations}.  The extension satisfies
\begin{equation}\label{eq:weak-capillary-state-continuity}
\chi\in C([0,T+\delta),L^1(\R^3)),\qquad
m\in C_{\mathrm w}([0,T+\delta),L^2(\R^3)),\qquad
(\chi_T,m_T)=(\mathbf1_{\Omega_*},0).
\end{equation}
Here $C_{\mathrm w}$ denotes continuity in the weak $L^2$ topology.
For $t\ge T$, the extension is the classical capillary Euler solution
starting from the filled domain $\Omega_*$ with zero initial velocity.  Its energy and
flux measures in \eqref{eq:weak-capillary-energy-flux} have locally
finite total variation in spacetime, and its exact local defect in total energy is
\begin{equation}\label{eq:weak-capillary-exact-defect}
\mathfrak D=\delta_T\otimes\mu_*,\qquad
\mu_*:=\frac{E_c}{L_*}
\mathcal H^1\!\restriction_{\gamma_*}.
\end{equation}
Thus the total energy is $E_c+\sigma|\Gamma_+^*|$ for $t<T$
and $\sigma|\Gamma_+^*|$ for $t\ge T$.
\end{enumerate}
\end{proposition}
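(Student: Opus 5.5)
For part (i), I will test the concatenated fields against $\varphi\in C_c^\infty(\mathcal I\times\R^3)$ with $\mathcal I\ni T$. Insert a time cutoff $\eta_\epsilon(t)$ that vanishes on $(T-\epsilon,T+\epsilon)$ and equals one outside $(T-2\epsilon,T+2\epsilon)$. On each branch the classical identities, already derived above with $\mathfrak D=0$, give the equations tested against $\eta_\epsilon\varphi$. The difference from the full pairing is the integral over $T-2\epsilon<|t|<T+2\epsilon$ of the fluxes and the pressure term against $\nabla\varphi$, together with the term in which $\partial_t$ falls on $\eta_\epsilon$.

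The first part tends to zero by the assumed $L^1_t$ integrability of $\chi p$ and $|\mathcal F_t|$. The other convective and capillary flux terms are handled by the uniform kinetic energy bound, the area bound, and the common bounded set. The second part is an average of $\langle\chi_t,\varphi\rangle$, $\langle m_t,\psi\rangle$ or $\langle\mathcal E_t,\varphi\rangle$ over $t\in(T\pm\epsilon,T\pm2\epsilon)$ with weights $\pm1$. This average converges to the trace at $T-$ minus the trace at $T+$, because the one-sided traces exist: for $\chi$ and $m$ by assumption, for $\mathcal E$ by the weak-* limits, with uniform mass bounds. Equal traces cancel in the mass and momentum equations. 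For the energy, the same computation leaves $\langle\mathcal E_{T+}-\mathcal E_{T-},\varphi(T)\rangle$, and this is \eqref{eq:continuation-energy-jump} under the sign convention of \eqref{eq:weak-capillary-energy-test}.

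For part (ii), the incoming branch is the canonical trajectory. By the boundary convergence in Theorem~\ref{thm:toroidal-curvature-singularity} and $a\to0$, $\chi_t\to\mathbf1_{\Omega_*}$ in $L^1$. By Proposition~\ref{prop:energy-concentration-limits}, $m_t\rightharpoonup0$ weakly in $L^2$. The energy measures converge weak-* to $\mu_*+\sigma\mathcal H^2\restriction_{\Gamma_+^*}$, since the kinetic part converges by \eqref{eq:main-energy-concentration}, $|\Gamma_-|\to0$, and $\Gamma_+\to\Gamma_+^*$ smoothly. The outgoing branch is the classical capillary solution from $(\Omega_*,0)$, which the local theory \cite{CS} provides on $[T,T+\delta)$. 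Its energy trace at $T+$ is $\sigma\mathcal H^2\restriction_{\Gamma_+^*}$, and all its fluxes are bounded near $T$. Part (i) then gives the defect $\mathcal E_{T-}-\mathcal E_{T+}=\mu_*$, and the total energy values follow.

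The main obstacle is the integrability of $\|\chi p\|_{L^1}$ and $|\mathcal F_t|(\R^3)$ as $t\uparrow T$, since $p$ blows up at the rate $(T-t)^{-1}|\log|^{-3/2}$. I will bound $\|p\|_{L^1}\le C\|p\|_{L^2}\le CS(t)$ using \eqref{eq:terminal-pressure-L2-bound}. By the computation at the end of Proposition~\ref{prop:terminal-pressure-profile}, $S\sim c(T-t)^{-1}|\log(T-t)|^{-3/2}$, which is integrable near $T$ precisely because the exponent $3/2$ exceeds $1$.

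For the flux, I will split it into three parts.
\begin{itemize}
\item The kinetic part: $\|\,|v|^2v\|_{L^1}\le\|v\|_\infty\|v\|_2^2\lesssim q/a$, and $\int q/a\,dt=\int\Theta a\,dY<\infty$.
\item The pressure part: $\|pv\|_{L^1}\le\|p\|_2\|v\|_2\lesssim S$, which is integrable as above.
\item The surface part: it is bounded by $\sigma\int|v\cdot n|\,dA$. On $\Gamma_-$ this is about $(q/a)\cdot a$, which is integrable, and on $\Gamma_+$ it is $O(q)$.
\end{itemize}
Finally, the claim that the weak-* limit of the energy measures includes the boundary area measure needs the smooth convergence of $\Gamma_+$ together with the bound $\sigma|\Gamma_-(t)|\to0$.
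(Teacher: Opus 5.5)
Your proposal is correct and follows the paper's proof essentially step for step. Part (i) uses the same excision near $T$ with one-sided traces cancelling, and part (ii) uses the same integrability inputs ($\|p\|_{L^2}\le CS$ with $\int S\,dt<\infty$, and $\|v\|_\infty\le Cq/a$ with $\int q/a\,dt<\infty$) and the same incoming and outgoing traces. The differences are cosmetic: a smooth cutoff $\eta_\epsilon$ instead of excising $(T-h,T+h)$, and a sharper $O(q)$ bound for the inner surface flux. One caution on citations: take the geometric convergence from Proposition~\ref{prop:compactness-without-relative-energy} and \eqref{eq:concentration-input-tails}, not from Theorem~\ref{thm:toroidal-curvature-singularity}, whose proof itself invokes this proposition.
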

\begin{proof}
We divide the proof into three steps.

\emph{Step 1. Concatenation across the singular time.}
Apply the classical mass, momentum and energy identities on the
intervals ending at $T-h$ and starting at $T+h$, sum them against
smooth spacetime tests, and let $h\downarrow0$.
The uniform bounds on mass, the $L^1$ norm of momentum, and energy
allow us to replace the endpoint tests by their values at $T$.  The common mass and
momentum traces cancel.  The omitted stress integrals vanish by the
kinetic energy and area bounds and pressure integrability.  Since the
stresses enter through spatial derivatives, integrability in time suffices.
This proves \eqref{eq:weak-capillary-equations}.
For energy, the contribution from the omitted interval vanishes by
integrability of the full flux, leaving
\begin{equation*}
\int\langle\mathcal E_t,\partial_t\varphi(t)\rangle\,dt
+\int\langle\mathcal F_t,\nabla_x\varphi(t)\rangle\,dt
=\langle\mathcal E_{T-}-\mathcal E_{T+},\varphi(T)\rangle,
\end{equation*}
which is \eqref{eq:continuation-energy-jump}.

\emph{Step 2. Integrability up to the singular time.}
Smooth convergence of the outer boundary places the fluid
domains in a fixed bounded set for all sufficiently late times.  The inner graph estimates in
\eqref{eq:concentration-input-tails} give
$|\Gamma_-(t)|=O(a(t))$ and cavity volume $O(a(t)^2)$.  In particular,
the areas $A_t(\R^3)$ and fluid volumes are uniformly bounded, and
\begin{equation}\label{eq:weak-capillary-geometric-traces}
\chi_t\longrightarrow\mathbf1_{\Omega_*}\quad\hbox{in }L^1(\R^3),
\qquad A_t\overset{*}{\rightharpoonup}
A_*:=\mathcal H^2\!\restriction_{\Gamma_+^*}
\quad(t\uparrow T).
\end{equation}
Proposition~\ref{prop:energy-concentration-limits} bounds
$\|v(t)\|_{L^2(\Omega_t)}$ uniformly.  Lemma~\ref{lem:sharp-global-velocity-gradient}
and \eqref{eq:terminal-pressure-L2-bound} give
\begin{equation*}
\|v(t)\|_{L^\infty(\Omega_t)}\le Cq/a,
\qquad \|p(t)\|_{L^2(\Omega_t)}\le CS.
\end{equation*}
By \eqref{eq:concentration-input-tails}, the exact time parametrization
$dt=\Theta a^2q^{-1}\,dY$, and
\eqref{eq:terminal-pressure-normalization}, for all sufficiently late
$t_1<T$,
\begin{equation}\label{eq:weak-capillary-integrable-scales}
\int_{t_1}^TS(t)\,dt
=\frac12\int_{Y(t_1)}^\infty
\frac{\Theta(Y)q(Y)}{\Lambda(Y)}\,dY
\le C\int_{Y(t_1)}^\infty Y^{-3/2}\,dY<\infty.
\end{equation}
The proof of Proposition~\ref{prop:terminal-lagrangian-map} gives
$\int_{t_1}^T\|v(t)\|_\infty\,dt<\infty$.
Consequently, the mass flux,
convective stress, pressure extended by zero and surface stress in
\eqref{eq:weak-capillary-equations} are
integrable in time as finite spatial measures.  For the energy flux, the
uniform kinetic energy and area bounds give
\begin{equation}\label{eq:weak-capillary-flux-integrability}
\begin{aligned}
\int_{t_1}^T\!\int_{\Omega_t}|v|^3\,dx\,dt
&\le C\int_{t_1}^T\|v(t)\|_{L^\infty}\,dt<\infty,\\
\int_{t_1}^T\!\int_{\Omega_t}|pv|\,dx\,dt
&\le\int_{t_1}^T\|p(t)\|_{L^2}\|v(t)\|_{L^2}\,dt
\le C\int_{t_1}^TS(t)\,dt<\infty,\\
\int_{t_1}^T\!\int_{\partial\Omega_t}|v\cdot n_t|\,dA_t\,dt
&\le C\int_{t_1}^T\|v(t)\|_{L^\infty}\,dt<\infty.
\end{aligned}
\end{equation}
Classical regularity gives the corresponding bounds on earlier compact
time intervals.

\emph{Step 3. Filling the cavity and computing the energy defect.}
Applying the local theory for positive surface tension in
Subsection~\ref{subsec:canonical-terminal-data} to the smooth bounded
filled domain $\Omega_*=N_\ell(\gamma_*)$ at rest gives a classical
solution on $[T,T+\delta)$ with sufficient Sobolev regularity for the
mass, momentum and energy identities
\eqref{eq:weak-capillary-equations} and
\eqref{eq:weak-capillary-energy-balance}.
The traces from the right are
\begin{equation*}
\chi_{T+}=\mathbf1_{\Omega_*},\qquad
m_{T+}=0,\qquad A_{T+}=A_*,\qquad
\mathcal E_{T+}=\sigma A_*.
\end{equation*}
The outgoing momentum converges strongly in $L^2$.  On the incoming side,
Proposition~\ref{prop:energy-concentration-limits} and
\eqref{eq:weak-capillary-geometric-traces} give
\begin{equation*}
\chi_{T-}=\mathbf1_{\Omega_*},\qquad
m_{T-}=0,\qquad A_{T-}=A_*,\qquad
\mathcal E_{T-}=\mu_*+\sigma A_*.
\end{equation*}
The incoming momentum convergence is strong in $L^1$ and weak in
$L^2$.  The energy and surface traces are weak-* limits of finite Radon
measures.

The bounds \eqref{eq:weak-capillary-integrable-scales}--\eqref{eq:weak-capillary-flux-integrability}
and outgoing classical regularity verify the hypotheses of part (i).
The common mass and momentum traces give the continuation and
\eqref{eq:weak-capillary-state-continuity}.  The energy traces give
\begin{equation*}
\partial_t\mathcal E+\operatorname{div}_x\mathcal F
=-\delta_T\otimes\mu_*.
\end{equation*}
Thus \eqref{eq:weak-capillary-exact-defect} holds with $\mu_*\ge0$.
Both surface traces equal $A_*$.  Classical energy conservation on each
side then gives the stated total energies and the loss $E_c$.
\end{proof}

\begin{remark}
The outgoing capillary solution in part (ii) has initial acceleration $-\nabla p$, where $p$
is harmonic in $\Omega_*$ with the nonconstant boundary trace
$\sigma\mathcal H_{\Gamma_+^*}$.
\end{remark}

\begin{proof}[Proof of Theorem~\ref{thm:toroidal-curvature-singularity}]
Apply Theorem~\ref{thm:one-state-closed} to the canonical terminal data
with energy gap $E_c$ and $Y_N\to\infty$.
Proposition~\ref{prop:compactness-without-relative-energy} gives a smooth
Euler trajectory, embeddedness and separation on compact time intervals,
and the collapse and curvature asymptotics.  Normalize $t(Y_0)=0$.
Inverting \eqref{eq:final-collapse-law}, with
$|\log(T-t)|\sim2Y$, gives the stated factor $2^{3/4}$.

Proposition~\ref{prop:energy-concentration-limits} gives the energy and velocity
limits and the obstruction to strong $L^2$ continuation.
Lemma~\ref{lem:sharp-global-velocity-gradient} gives the normal velocity
asymptotic and the gradient rate.
Proposition~\ref{prop:terminal-pressure-profile} gives the Green and pressure profiles.
Proposition~\ref{prop:weak-capillary-continuation} gives the weak
continuation and its exact energy defect.

Suppose a strong continuation has finitely many boundary charts
$X_i(t)$ continuous in $H^4$ on $[T-\tau,T+\tau]$, with uniformly
nondegenerate surface metrics.  In each chart, write
$g_{\alpha\beta}=\partial_\alpha X_i\cdot\partial_\beta X_i$
for the induced metric and $g^{\alpha\beta}$ for its inverse coefficients.
Then $\mathcal H=-g^{\alpha\beta}\partial_{\alpha\beta}X_i\cdot n$.
Continuity on the compact time interval bounds the chart norms.
Metric nondegeneracy and $H^4$ control give bounds for $g^{-1}$ and the
surface Jacobian in $L^\infty$, and for $\partial^2X_i$ in $L^2$.
A finite partition of unity therefore gives
\begin{equation}\label{eq:H4-controls-curvature}
\sup_{|t-T|\le\tau}\|\mathcal H(t)\|_{L^2(\Gamma(t))}<\infty
\end{equation}
on each boundary component.  This contradicts
\eqref{eq:L2-curvature-final}, proving maximality in the stated strong class.
\end{proof}

\section{Cavity collapse at zero surface tension}
\label{sec:zero-surface-tension}

We construct the canonical trajectories at $\sigma=0$ in the
axisymmetric, reflection symmetric class with a global velocity potential.
The spatial elliptic estimates, scalar reconstruction and first mode
identities remain valid.  We use Taylor energies in place of the
capillary energies and estimate the full Taylor coefficients and their
first $Y$ derivatives.  The coupled inner and outer estimates allow us to pass to
the singular limit and prove Theorem~\ref{thm:zero-surface-tension-construction}.
We then complete the proof of Theorem~\ref{thm:terminal-material-deformation}
for each fixed $\sigma\ge0$.

Recall that $F=f/q$ and $p_q=q_Y/q$.  Fix a lower Sobolev index $b_0$ larger than
$\max\{s_{\rm dy}+2,k_0+d_f+16\}$, where $k_0$ controls the fixed
coefficient seminorms and $d_f$ bounds the derivative losses
in the first two elliptic variations, the first $Y$ derivatives and the scalar
reconstruction.  Increase $b_0$ to include the additional inner regularity
required by Lemma~\ref{lem:zero-separated-asymmetric}.  On the neighbourhood
used for the estimates, assume
\begin{equation*}
\|\eta\|_{H^{b_0}}+\|F\|_{H^{b_0}}
+\|h_+\|_{H^{b_0+1}}+\|g\|_{H^{b_0+1/2}}+|R-R_*|
\le K(1+Y)^{M_b}a^2,
\end{equation*}
$|\alpha|\le q\mathcal P a$, $q\asymp(1+Y)^{-1/2}$, and
$p_q=-1/(2(Y+\mathfrak c))+O(\mathcal P a)$.
We impose the bootstrap only at this fixed lower index and allow the
energy index to be chosen arbitrarily.

\subsection{Taylor coefficients and common state variables}

Assume the macroscopic geometry conditions \eqref{eq:macro-geometry-assumptions} and \eqref{eq:outer-distance-assumption}.  Write
\begin{equation*}
\Lambda=Y+\mathfrak c=\frac{2\pi L}{\operatorname{Cap}},\qquad
D=\frac{q^2}{2a^2},\qquad S=\frac D\Lambda.
\end{equation*}
At each coefficient index $k$, assume the physical bootstrap
and the hypotheses on first $Y$ derivatives in
Lemma~\ref{lem:normalized-taylor-variation}, with enough additional
spatial derivatives for the first mode and scalar estimates.
These bounds follow from the original equations.  The number of additional
derivatives is fixed before the energy index is chosen.

\begin{lemma} 
\label{lem:zero-outer-taylor}
On the neighbourhood described above, for sufficiently large $Y_0$
independent of the terminal parameter $Y_N$, the coefficient
\begin{equation*}
K_+=\frac{\mathfrak a_+}{S},\qquad
\mathfrak a_+=-\partial_{n_+}p,
\end{equation*}
satisfies, in the fixed outer pullback,
\begin{equation}\label{eq:zero-outer-taylor-jets}
K_+\ge \underline\kappa>0,\qquad
\|K_+\|_{C^k}\le C_k,\qquad
\|(K_+)_Y\|_{C^k}\le\mathcal P_k(Y)a.
\end{equation}
Here $\underline\kappa$ depends on the fixed terminal geometry.  Moreover,
\begin{equation}\label{eq:zero-outer-S-growth}
\frac{S_Y}{S}
=2p_q+2-\frac{1+\mathfrak c_Y}{\Lambda}
=2-\frac2\Lambda+O(\mathcal P(Y)a)\ge1,
\qquad
t_Y\sqrt S=\frac{\Theta a}{\sqrt{2\Lambda}}.
\end{equation}
\end{lemma}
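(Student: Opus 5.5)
The plan is to reuse the harmonic pressure decomposition $p+\tfrac12|v|^2=Du_c+\mathsf R$ from Proposition~\ref{prop:actual-taylor-coefficient}, now at $\sigma=0$. On $\Gamma_+$ the velocity is $O(q)$ in $C^{k+1}$ by Lemma~\ref{lem:velocity-away-from-core}, or more precisely by \eqref{eq:outer-velocity-first-jet} and the decay of $g$. So I would write
\begin{equation*}
\mathfrak a_+=-\partial_{n_+}p=-D\partial_{n_+}u_c-\partial_{n_+}\mathsf R+\tfrac12\partial_{n_+}|v|^2 .
\end{equation*}
Dividing by $S=D/\Lambda$ gives
\begin{equation*}
K_+=\kappa_+-\frac{\Lambda}{D}\,\partial_{n_+}\mathsf R+\frac{\Lambda}{2D}\,\partial_{n_+}|v|^2 ,
\end{equation*}
with $\kappa_+=-\Lambda\partial_{n_+}u_c$ as in Proposition~\ref{prop:actual-taylor-coefficient}.

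The last two terms are small. When $\sigma=0$ the traces of $\mathsf R$ are $\tfrac12|v|^2-D$ on $\Gamma_-$ and $\tfrac12|v|^2$ on $\Gamma_+$. By the remainder bounds already established, $\|\partial_{n_+}\mathsf R\|_{C^k}\le \mathcal P(Y)(Da+q^2)$. Since $\Lambda q^2/D=2\Lambda a^2$ is tiny, both corrections are $O_{C^k}(\mathcal P a)$. This gives the upper bound $\|K_+\|_{C^k}\le C_k$.

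For the positive lower bound, I would use the filled Green representation from the proof of Lemma~\ref{lem:terminal-condenser-profile}. There $\Lambda u_c=\overline W_t\to U_*$ smoothly up to $\Gamma_+$, because the poles stay separated and the outer boundary converges smoothly. Hopf's lemma applied to $U_*>0$ on the terminal domain gives $-\partial_{n}U_*\ge 2\underline\kappa>0$ on $\Gamma_+^*$. Hence $\kappa_+\ge 2\underline\kappa-o(1)$ for large $Y_0$. The constant $\underline\kappa$ depends only on the fixed terminal geometry, and the choice of $Y_0$ does not involve $Y_N$.

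For the derivative bound on $(K_+)_Y$, I would differentiate the same identity in the fixed outer pullback. The term $(\kappa_+)_Y$ is the delicate one, and I expect it to be the main obstacle. A naive bound gives only $O(1/Y)$, but \eqref{eq:zero-outer-taylor-jets} requires $O(\mathcal P a)$. The argument therefore has to use the exact representation $\kappa_+=-\partial_{n_+}\int G_{D_t}(\cdot,y)\,d\mu_t(y)$, where $\mu_t=\Lambda\,\partial_{n_-}u_c\,dS$ has fixed total mass $2\pi L$ and varies only through $R_Y$, $\eta_Y$, $X_{+,Y}$ and $a_Y=-a$.

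The outer geometry contributes $\mathcal P a^2$ by Proposition~\ref{prop:raw-state-first-jet}. For the source, the kernel is smooth at distance $a$ from the core, so moving the source by the rescaling $a\partial_a$ changes the outer conormal trace by $O(a)$. The key point is that the mean of the source is exactly fixed, so the logarithmic $1/\Lambda$ factor cancels. This is the content of the dipole estimate \eqref{eq:outer-dipole-derivative}, applied to the centred measure with zero total charge variation. The remainder terms are differentiated using Lemma~\ref{lem:normalized-taylor-variation}-type material variations. Their $Y$-derivatives carry the same factors $a$, plus a $\Lambda_Y/\Lambda$ contribution multiplying quantities that are already $O(a)$.

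Finally, \eqref{eq:zero-outer-S-growth} is direct algebra. From $S=q^2/(2a^2\Lambda)$ and $a_Y=-a$, we get $S_Y/S=2p_q+2-\Lambda_Y/\Lambda$ with $\Lambda_Y=1+\mathfrak c_Y$. Inserting $p_q=-1/(2\Lambda)+O(\mathcal P a)$ and $\mathfrak c_Y=O(\mathcal P a)$ gives $2-2/\Lambda+O(\mathcal P a)$, which is at least $1$ for large $Y_0$. Lastly, $t_Y=\Theta a^2/q$ gives $t_Y\sqrt S=\Theta a/\sqrt{2\Lambda}$.
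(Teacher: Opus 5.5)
Your proposal is correct and follows essentially the same route as the paper: the same decomposition $K_+=\kappa_{\rm cap}-\Lambda\partial_{n_+}(\mathsf R/D)+\frac{\Lambda}{2D}\partial_{n_+}|v|^2$, positivity from Hopf's lemma for $U_*$, the identical algebra for $S_Y/S$ and $t_Y\sqrt S$, and the derivative bound for $\kappa_{\rm cap}$ through the filled Green representation of $\Lambda u_c$ with a positive source of mass $2\pi L$. In that representation only $L_Y=O(\mathcal P a^2)$, the $O(a)$ support motion, the $O(\mathcal P a)$ density variation and the $O(\mathcal P a^2)$ macroscopic motion enter, which produces exactly the cancellation of the naive $O(1/Y)$ terms that you identified. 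The only cosmetic difference is where the density-variation bound comes from. The paper takes it directly from the layer system of Lemma~\ref{lem:capacity-single-layer-schur}. You invoke \eqref{eq:outer-dipole-derivative}, which is stated for zero-flux solutions, but its kernel-difference pairing argument applies verbatim to the zero-mean part of the condenser density.
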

\begin{proof}
We divide the proof into two steps.

\emph{Step 1. Normalized pressure reconstruction.}
Use the exact harmonic pressure reconstruction of
Proposition~\ref{prop:actual-taylor-coefficient}, with zero pressure
trace on both components.  Write
\begin{equation*}
p+\tfrac12|v|^2=D u_c+\mathsf R,
\qquad r=\mathsf R/D,
\qquad \kappa_{\rm cap}=-\Lambda\partial_{n_+}u_c.
\end{equation*}
Here $r_\pm$ are the traces of $r$ on $\Gamma_\pm$, pulled back to
boundary coordinates.  The normalized collar velocity estimates and their first $Y$
derivatives give
\begin{equation*}
\|r_-\|_{H^{k+1}}+\|(r_-)_Y\|_{H^{k+1}}
\le\mathcal P_k a,
\qquad
\|r_+\|_{H^{k+4}}+\|(r_+)_Y\|_{H^{k+4}}
\le\mathcal P_k a^2.
\end{equation*}
The derivatives are taken after material pullback.  Normalization
by $D$ leaves only $p_q$ and first derivatives supplied by the original equations.
The differentiated estimates on a fixed collar and the Schur estimates of Lemmas~\ref{lem:normalized-taylor-variation}
and~\ref{lem:zero-charge-outer-derivative} give
\begin{equation}\label{eq:zero-outer-taylor-error}
K_+=\kappa_{\rm cap}-\Lambda\partial_{n_+}r
+\frac{\Lambda}{2D}\partial_{n_+}|v|^2,
\qquad
\|K_+-\kappa_{\rm cap}\|_{C^k}
+\|\partial_Y(K_+-\kappa_{\rm cap})\|_{C^k}
\le\mathcal P_k a.
\end{equation}
\emph{Step 2. Positivity and time variation.}
The same normalization controls the condenser time derivative at
the outer boundary.  The potential
$\Lambda u_c$ has the filled domain representation
\eqref{eq:filled-green-representation}.  Its positive source has
total mass $2\pi L$.  In the inner pullback, the source density and its
first normalized variation are given by the layer system in
Lemma~\ref{lem:capacity-single-layer-schur}.  Its mass derivative is
$2\pi L_Y=O(\mathcal P a^2)$. The remaining density variation is
$O(\mathcal P a)$ in a fixed negative trace norm, and the support
velocity is $O(a)$ in that pullback.  Pair the density variation and support velocity with the smooth
outer normal derivative of the filled Green kernel.  Outer boundary
and centre derivatives have size $O(\mathcal P a^2)$.  It follows that
\begin{equation*}
\|(\kappa_{\rm cap})_Y\|_{C^k}\le\mathcal P_k a,
\qquad
\kappa_{\rm cap}\longrightarrow-\partial_{n_*}U_*
\quad\hbox{smoothly on }\Gamma_+^*.
\end{equation*}
The positive Green potential $U_*$ vanishes on the outer boundary.
The Hopf lemma and compactness of this fixed boundary give
$\min_{\Gamma_+^*}(-\partial_{n_*}U_*)>0$.  Equation
\eqref{eq:zero-outer-taylor-error} proves
\eqref{eq:zero-outer-taylor-jets}.  Finally, use
$p_q=-1/(2\Lambda)+O(\mathcal P a)$ and
$\mathfrak c_Y=O(\mathcal P a)$ to differentiate $S$ and obtain
\eqref{eq:zero-outer-S-growth}.
\end{proof}

\noindent\textbf{The outer gauge and a uniformly coercive DN form.}
For a prescribed outer trace $\varphi$, minimize the weighted Dirichlet
energy over functions with this trace and an arbitrary constant inner trace.
In the fixed outer angular coordinate, put $\mu_+=\rho_+j_+$, up to a
fixed normalization.  Let $\mathsf G_+^{\rm zc}$ be the
resulting ordinary normal DN operator and set
$\mathcal A_+^{\rm zc}=\mu_+\mathsf G_+^{\rm zc}$.  We work in the fixed
angular measure $d\theta$.
Variation of the free inner constant imposes zero
inner flux.  The outer flux consequently has zero total integral.
The operator $\mathcal A_+^{\rm zc}$ is self-adjoint and nonnegative,
with kernel consisting of the constants.  Uniformly in $a$,
\begin{equation}\label{eq:zero-outer-dn-coercivity}
c\|\varphi\|_{H^{1/2}/\mathbb R}^2
\le \langle\mathcal A_+^{\rm zc}\varphi,\varphi\rangle_{L^2(d\theta)}
\le C\|\varphi\|_{H^{1/2}/\mathbb R}^2.
\end{equation}
Extending the function into the hole by its constant trace gives an
admissible competitor in the filled domain with exactly the same
Dirichlet energy.  The filled domain DN form therefore gives the lower bound.  For the
upper bound, start with the filled harmonic extension and correct
its nonconstant inner trace in a collar of radius comparable to $a$.
Interior regularity bounds that trace by $O(a)$ times the outer
$H^{1/2}$ norm. Scale invariance of the meridional energy bounds the
correction.  The filled domain spectral gap modulo constants is
uniform on the fixed macroscopic geometry class.

Choose the representative of the good trace with zero ordinary
$d\theta$ mean and write $P_+=I-P_0$.  This fixed projection
commutes with $\langle D_\theta\rangle^r$ for every $r$.
The flux operator satisfies $\mathcal A_+^{\rm zc}1=0$ and takes values
in the space of functions with zero ordinary mean.  Thus the kernel of
the DN form is fixed.  The density remains
uniformly positive and bounded, and $\|(\mu_+)_Y\|_{C^k}\le\mathcal P_k a^2$.
The positive Taylor form defined below controls the mean of the outer height.
The rank-one difference between
an unreduced outer DN block and $\mathsf G_+^{\rm zc}$ is retained
as an explicit smooth kernel term.

The first $Y$ derivative of the DN form satisfies a relative bound
in the $H^{1/2}/\mathbb R$ form norm, with coefficient
\begin{equation}\label{eq:zero-outer-dn-time-form}
\epsilon_{\rm geo}(Y)
=C(1+Y)^{-2}+\mathcal P(Y)a,
\qquad \epsilon_{\rm geo}\in L^1([Y_0,\infty)).
\end{equation}
To prove this bound, fix a smooth outer trace $\varphi$ modulo constants
and denote its zero-flux harmonic extension by $u$.  Let $W$ be the
harmonic extension of the same trace to the filled domain.  Interior
elliptic estimates in a fixed neighbourhood of the centre give
\begin{equation*}
\|\nabla W\|_{C^1}\le C\|\varphi\|_{H^{1/2}/\mathbb R}.
\end{equation*}
The field $u-W$ has zero outer trace, zero total inner flux, and
nonconstant inner trace $-P_{\ne0}(W\circ X_-)$.  The flux of $W$
through the inner curve is zero by its equation in the filled hole.
Since $X_-=c+a(1+\eta)e_r$, the uniform zero-flux energy estimate gives
\begin{equation*}
\|\nabla(u-W)\|_{L^2}^2
\le Ca^2\|\varphi\|_{H^{1/2}/\mathbb R}^2,
\qquad
\int_{\mathcal C_a}\rho|\nabla u|^2
\le Ca^2\|\varphi\|_{H^{1/2}/\mathbb R}^2,
\end{equation*}
where $\mathcal C_a$ is an inner collar of thickness comparable to $a$.
The second bound also uses the area $O(a^2)$ of this meridional collar.
All constants depend only on fixed lower geometric bounds.

At any fixed $Y$, choose a deformation field at fixed material boundary
coordinates and split it into the relative inner motion, supported in
$\mathcal C_a$, and a macroscopic field.  The relations $a_Y=-a$,
$\eta_Y=O(\mathcal P a^2)$, $c_Y=O(\mathcal P a^2)$ and the outer
geometric velocity bounds give a deformation field $V$ with
$|DV|\le C\mathbf1_{\mathcal C_a}+\mathcal P a^2$ and
$|V|\le Ca\mathbf1_{\mathcal C_a}+\mathcal P a^2$.
Let $\Psi$ be the deformation map generated by $V$.  At the chosen
time $\Psi=I$, and differentiation of
$A=(\rho\circ\Psi)\det(D\Psi)D\Psi^{-1}D\Psi^{-T}$ gives
\begin{equation*}
A_Y=(V\cdot\nabla\rho+\rho\operatorname{div}V)I
-\rho(DV+DV^{\mathsf T}).
\end{equation*}
Consequently 
\begin{equation*}
|A_Y|\le C\mathbf1_{\mathcal C_a}+\mathcal P(Y)a^2.
\end{equation*}
A varying scalar normalization of the Dirichlet form contributes only
$O(\mathcal P a^2)A$.  Since the extension minimizes the energy and its
inner constant is free, we have
\begin{equation*}
\partial_Y\langle\mathcal A_+^{\rm zc}\varphi,\varphi\rangle
=\int A_Y\nabla u\cdot\nabla u.
\end{equation*}
Indeed, the derivative of the extension has zero outer trace and a
constant inner trace, so its energy pairing with $u$ vanishes by
harmonicity and zero inner flux.  This is the first Schur identity
\eqref{eq:schur-first-differential}: the derivatives of the minimizing
scalar cancel, including all inverse capacity terms.
Combining the preceding bounds with
\eqref{eq:zero-outer-dn-coercivity} proves
\begin{equation*}
\bigl|\partial_Y\langle\mathcal A_+^{\rm zc}\varphi,\varphi\rangle\bigr|
\le \mathcal P(Y)a^2\|\varphi\|_{H^{1/2}/\mathbb R}^2.
\end{equation*}
By density, the estimate extends from smooth traces to $H^{1/2}/\mathbb R$,
yielding \eqref{eq:zero-outer-dn-time-form}.

\noindent\textbf{Good variables and positive forms.}
Put $\mathfrak t_-=a^3(-\partial_{n_-}p)/q^2$.  The normalized pressure
calculation of Proposition~\ref{prop:actual-taylor-coefficient} at
$\sigma=0$ gives, in fixed coefficient norms,
\begin{equation*}
\mathfrak t_-=1-\frac1{2(Y+\mathfrak c)}+O_{C^k}(\mathcal P a),
\qquad d_-=1+O_{C^k}(\mathcal P a).
\end{equation*}
In particular $\mathfrak t_-\ge1/2$ for large fixed $Y_0$.
Lemma~\ref{lem:normalized-taylor-variation} gives
\begin{equation}\label{eq:zero-inner-taylor-jet}
\|(\mathfrak t_-)_Y\|_{C^k}
\le C_k\bigl((1+Y)^{-2}+\mathcal P a\bigr).
\end{equation}
At $\sigma=0$, the harmonic function $p+|v|^2/2$ has boundary data
$|v|^2/2$.  Its differentiated normalized reconstruction uses the first
state derivatives obtained from the original equations and $q_Y/q$.
The other coefficient estimates are
\begin{equation*}
|\Theta-1|+|\Theta_Y|+\|j-1\|_{C^k}+\|j_Y\|_{C^k}
\le\mathcal P a^2,\qquad
\|\mathcal V\|_{C^k}+\|\ell_0\|_{C^k}
+\|(\ell_0)_Y\|_{C^k}\le\mathcal P a .
\end{equation*}
The lower index $b_0$ includes all coefficient indices used here.

Use $P=P_{\ge2}$, $J^s=\langle D_\theta\rangle^s$, and the polar
coefficients $d,e,j,b$ of Proposition~\ref{prop:polar-full-cancellation}.
Define the inner variables and the operators retained in the equations by
\begin{equation}\label{eq:zero-inner-good-unknown}
\binom{\mathfrak h}{\mathfrak\pi}
=P\mathcal W(\mathsf S)P\binom{a\eta}{aF},\qquad
\mathsf S=\sqrt w\begin{pmatrix}d/j&0\\-b&-1\end{pmatrix},
\qquad w=(\rho/R)j,
\end{equation}
\begin{equation}\label{eq:zero-inner-retained-operators}
\mathsf L_0=\Theta P\mathcal W(x+\ell_0)P,\qquad
\mathsf M_0=\Theta P\mathcal W(\mathfrak t_-)P,\qquad
x=\varrho(\xi)/j .
\end{equation}
The real symbols, the fixed low-frequency completions of
Lemma~\ref{lem:finite-bony-weyl}, and the small fixed lower coefficient
bounds give
$\langle\mathsf M_0 h,h\rangle\asymp\|h\|_2^2$ and
$\langle\mathsf L_0\pi,\pi\rangle\asymp\|\pi\|_{H^{1/2}}^2$
on $P L^2$.  Define
\begin{equation}\label{eq:zero-common-inner-form}
E_s^2=\langle\mathsf M_0J^s\mathfrak h,J^s\mathfrak h\rangle
+\langle\mathsf L_0J^s\mathfrak\pi,J^s\mathfrak\pi\rangle.
\end{equation}
Thus
\begin{equation}\label{eq:zero-inner-energy}
E_s\asymp\|\mathfrak h\|_{H^s}
+\|\mathfrak\pi\|_{H^{s+1/2}}.
\end{equation}
At the circular cylindrical reference state,
$\mathsf S=\left(\begin{smallmatrix}1&0\\-1&-1\end{smallmatrix}\right)$.
Under the fixed lower bootstrap, the compressed operator in
\eqref{eq:zero-inner-good-unknown} is a small $L^2$ perturbation of this
invertible constant matrix.  Lemma~\ref{lem:common-conjugacy} therefore
gives, with the same $Y_0$ at every $s$,
\begin{equation}\label{eq:zero-inner-reconstruction}
\|\eta\|_{H^s}+\|F\|_{H^s}\le C_sa^{-1}E_s .
\end{equation}
To relate variations in the fixed normal chart to physical normal
variations, write $X_+=X_+^*+h_+n_+^*$ and set
\begin{equation*}
c_+^{\rm ch}=n_+\cdot n_+^*,\qquad b_+^{\rm ch}=v_+\cdot n_+^* .
\end{equation*}
For a variation in this chart the physical normal displacement and
Eulerian potential trace are
\begin{equation*}
H_+=c_+^{\rm ch}\,\delta h_+,\qquad
U_+=\delta g-b_+^{\rm ch}\,\delta h_+ .
\end{equation*}
The Eulerian trace is unchanged when the tangential chart variation is
removed from the normal variational identity.  This gives the nonlinear variables
\begin{equation}\label{eq:zero-outer-chart-good-unknown}
\mathfrak h_+=\mathcal W(c_+^{\rm ch})h_+,\qquad
\mathfrak u_+=P_+\bigl(g-\mathcal W(b_+^{\rm ch})h_+\bigr).
\end{equation}
The negative-order composition terms remain in the sources.
Thus the shear coefficient is the velocity component along the
reference normal $n_+^*$.
The positive coefficient $c_+^{\rm ch}$ is close to one, and
$b_+^{\rm ch}=O(q)$ in the fixed lower norms.  The Taylor entry retains all zeroth-order terms arising from the
time derivative of this chart shear.

The operator $\mathcal A_+$ is the sum of the exact canonical
zero-flux DN operator at radius $a$, including its complete
reference kernel, and the self-adjoint perturbation of the local symbol.
The latter is formed with the fixed cutoff and compressed by $P_+$ on both sides.
The perturbation is small in the $H^{1/2}\to H^{-1/2}$ form norm
by the lower geometric bootstrap.  Consequently
$\mathcal A_+1=0$ and \eqref{eq:zero-outer-dn-coercivity}
holds for $\mathcal A_+$ on the fixed space $P_+L^2$.
Lemma~\ref{lem:zero-separated-asymmetric} estimates the state-dependent
separated kernels with fixed lower inner norms.

For $r=s+1$, set $H_r=J^r\mathfrak h_+$, $U_r=J^r\mathfrak u_+$ and define
\begin{equation}\label{eq:zero-common-outer-form}
(\mathcal O_{s+1}^0)^2
=S\int_\T\mu_+K_+|H_r|^2\,d\theta
+\langle\mathcal A_+U_r,U_r\rangle_{L^2(d\theta)}.
\end{equation}
The positivity of $K_+$ and \eqref{eq:zero-outer-dn-coercivity} imply
\begin{equation*}
\mathcal O_{s+1}^0\asymp
\sqrt S\,\|\mathfrak h_+\|_{H^{s+1}}
+\|\mathfrak u_+\|_{H^{s+3/2}}.
\end{equation*}
The fixed lower coefficient bounds give
$c_+^{\rm ch}=1+O(\mathcal P a^2)$ in the symbol seminorms needed for
$L^2$ boundedness.  Thus $\mathcal W(c_+^{\rm ch})$ is a small $L^2$
perturbation of the identity, with a uniformly bounded inverse for
one fixed $Y_0$.  Equation~\eqref{eq:common-elliptic-reconstruction}
then gives its inverse bounds at every higher Sobolev index.
Recover $g$ from the good trace and $\mathcal W(b_+^{\rm ch})h_+$ in the
fixed zero-mean gauge.  Composition errors are handled by
Lemma~\ref{lem:common-conjugacy}.
The operator bounds use fixed lower seminorms of $b_+^{\rm ch}=O(q)$.
The composition errors require high norms only of the outer geometry,
with at most one factor at high regularity.  We obtain
\begin{equation}\label{eq:zero-outer-trace-recovery}
\|h_+\|_{H^{s+1}}\le C_s S^{-1/2}\mathcal O_{s+1}^0,
\qquad
\|g\|_{H^{s+1/2}}\le\mathcal P_s(Y)\mathcal O_{s+1}^0.
\end{equation}
Here $qS^{-1/2}=\sqrt2\,a\sqrt\Lambda$, so the shear contribution
to the recovered physical trace has at most a polynomial weight.
The two forms and the trace bounds use only the fixed lower
bootstrap and the spatial elliptic estimates.
All four transformed variables vanish at the canonical terminal data.

\subsection{The outer and inner energy estimates}

\begin{proposition} 

On the fixed lower bootstrap neighbourhood, for every $s\ge b_0+2$
the variables \eqref{eq:zero-outer-chart-good-unknown} and the form
\eqref{eq:zero-common-outer-form} satisfy
\begin{equation}\label{eq:zero-outer-energy}
-\partial_Y\mathcal O_{s+1}^0
\le C_s\varepsilon_s\mathcal O_{s+1}^0
+\mathcal P_s(Y)(a+E_s),
\end{equation}
where one may take $\varepsilon_s=C_s\varepsilon_*$ with the common
integrable function
$\varepsilon_*=(1+Y)^{-3/2}+a^{1/8}$.  Polynomial factors multiplying the positive powers of $a$ are
absorbed in $C_s$, with the same $Y_0$ for all $s$.
The inner separated source, including the common potential gauge,
obeys
\begin{equation}\label{eq:zero-outer-to-inner}
\mathcal N_s^0(F_{-+},G_{-+})
\le C_s\varepsilon_s E_s
+\mathcal P_s a^2\mathcal O_{s+1}^0,
\qquad
\mathcal N_s^0(F,G)=\|F\|_{H^s}+\|G\|_{H^{s+1/2}}.
\end{equation}
\end{proposition}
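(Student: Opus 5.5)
We derive the outer normal variational system at zero surface tension and run a Taylor-energy argument on the weighted form \eqref{eq:zero-common-outer-form}. The structure mirrors Proposition~\ref{prop:actual-outer-principal-energy}, with the capillary form $\sigma\mathcal W(x_+^2)$ replaced by the Taylor form $S\mu_+K_+$.

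\textbf{Step 1: the outer system.} Apply Proposition~\ref{prop:exact-normal-variational-system} on $\Gamma_+$ with $\sigma=0$, in $Y$ time, i.e.\ multiplied by $t_Y$. Pass to the chart unknowns \eqref{eq:zero-outer-chart-good-unknown} through Lemma~\ref{lem:finite-loss-dyadic} and Lemma~\ref{lem:common-conjugacy}. The result should be
\begin{equation*}
\mathcal D_+\mathfrak h_+=t_Y\mu_+^{-1}\mathcal A_+\mathfrak u_++f_+,\qquad
\mathcal D_+\mathfrak u_+=-t_Y\mathfrak a_+\mathfrak h_++g_+ .
\end{equation*}
Here $\mathcal D_+=\partial_Y$ plus a skew transport with coefficient $O(\mathcal P a^2)$. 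Write $t_Y\mathfrak a_+=t_YSK_+$.

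Pair $J^r$ of the first equation with $S\mu_+K_+J^rH$ and $J^r$ of the second with $\mathcal A_+J^rU$. The cross terms cancel to principal order, because the Hamiltonian pairing produces $t_YS\langle K_+\mathcal A_+U_r,H_r\rangle$ from each row with opposite signs. The commutators of $J^r$ with $\mathcal A_+$ and with the multiplication by $\mu_+K_+$ have order $r$ and $r-1$ respectively. Each commutator carries the factor $t_Y$. Using $t_Y\sqrt S=\Theta a/\sqrt{2\Lambda}$ from \eqref{eq:zero-outer-S-growth}, they are bounded by $\mathcal P a\,\mathcal O^0$ and are therefore integrable.

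\textbf{Step 2: coefficient derivatives.} This step is what fixes the \emph{sign} of the estimate. Differentiating the form produces $S_Y\int\mu_+K_+|H_r|^2$, and by \eqref{eq:zero-outer-S-growth} one has $S_Y/S\ge1>0$. This term therefore enters $-\partial_Y(\mathcal O^0)^2$ with a favorable negative sign and can be discarded in a backward estimate. The remaining coefficient derivatives are controlled as follows:
\begin{itemize}
\item $(K_+)_Y=O(\mathcal P a)$ by \eqref{eq:zero-outer-taylor-jets};
\item $(\mu_+)_Y=O(\mathcal P a^2)$;
\item the DN time derivative is relatively bounded by $\epsilon_{\rm geo}$ in \eqref{eq:zero-outer-dn-time-form};
\item the chart-shear derivative $(b_+^{\rm ch})_Y$ satisfies \eqref{eq:outer-velocity-first-jet}, so it is $(1+Y)^{-3/2}+\mathcal P a$. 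This term enters the momentum row as a zeroth-order height term. It costs a factor $S^{-1/2}$ against the height part of the form, but the bound $\sqrt S\|H\|$ in the form and $t_Y$ times this coefficient times $S^{-1/2}\cdot\sqrt S$ again give an integrable multiple of $\mathcal O^0$.
\end{itemize}
All of these fit inside $C_s\varepsilon_*$.

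\textbf{Step 3: the sources.} This is the main obstacle. There are three pieces.
\begin{itemize}
\item \emph{Reference forcing.} At zero perturbation the forcing is $t_Y$ times outer velocity and pressure fields. Measured in the $\mathcal O^0$ norm, its size is $\sqrt S\,t_Y q+t_Yq\lesssim\mathcal P a$, which produces the $\mathcal P_s a$ term.
\item \emph{Inner-to-outer coupling.} This is $\Theta a\mathsf G_{+-}$ acting on $\mathfrak\pi$ in the kinematic row. By Lemma~\ref{lem:zero-separated-asymmetric} it is smoothing, with only the fixed lower inner norms entering. In the $\sqrt S$-weighted height norm it costs $\sqrt S\,a\,\mathcal P\|\mathfrak\pi\|_{H^{1/2}}$, and since $\sqrt S\,a\lesssim q/\sqrt\Lambda$ this is $\le\mathcal P E_s$. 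This estimate requires that no negative power of $a$ survives, and that point is where care is needed.
\item \emph{Nonlinear remainders.} The dyadic remainders, bounded by \eqref{eq:outer-diagonal-kernel-application} and \eqref{eq:finite-loss-hh-bound}, carry $t_Y\delta_b$ with $\delta_b\le\mathcal P a^2$. They are absorbed into $\varepsilon_*\mathcal O^0+\mathcal P E_s$.
\end{itemize}
Dividing $-\partial_Y(\mathcal O^0)^2$ by $2\mathcal O^0$ then gives \eqref{eq:zero-outer-energy}.

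\textbf{Step 4: the outer-to-inner bound.} The estimate \eqref{eq:zero-outer-to-inner} follows as in \eqref{eq:complete-cross-source-bound}. The outer input enters the inner kinematic row through $t_Y\mathsf G^0_{-+}$, which by \eqref{eq:cross-reference-scaled-bound} has size $\mathcal P a^2$ from $H^{-N_1}$ to every $H^r$. This is paired with $\|\mathfrak u_+\|+\|h_+\|$.

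The height $h_+$ costs $S^{-1/2}$ by \eqref{eq:zero-outer-trace-recovery}, and it enters only through $B_+h_+$, which has size $q$. Since $qS^{-1/2}\lesssim a\sqrt\Lambda$, this gives $\mathcal P a^2\mathcal O^0$. The common Bernoulli gauge is annihilated by $P$ and $P_+$. The state-dependent cross-kernel differences and the scalar/Schur terms are bounded by Lemmas~\ref{lem:cross-kernel-bounds} and \ref{lem:reconstructed-kernel-bounds} by $\varepsilon_*E_s$.
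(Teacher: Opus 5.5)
Your overall route is the paper's. The Taylor weight $S\mu_+K_+$ contributes the favourable term $-S_Y\int\mu_+K_+|H_r|^2$. The Hamiltonian cross terms cancel. The $J^r$ commutators cost $t_Y\sqrt S=\Theta a/\sqrt{2\Lambda}$. The inner-to-outer coupling $\Theta a\mathsf G_{+-}$ carries the weight $\sqrt S\,a=q/\sqrt{2\Lambda}$ and uses only fixed inner norms. The outer-to-inner bound comes from \eqref{eq:cross-reference-scaled-bound}. There is, however, a genuine error in your Step~2 bullet on the chart-shear derivative.

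Differentiating $\mathfrak u_+=P_+(g-\mathcal W(b_+^{\rm ch})h_+)$ produces $-P_+\mathcal W((b_+^{\rm ch})_Y)h_+$, a zeroth-order height term in the momentum row. You propose to estimate it as an integrable error, but at $\sigma=0$ this fails for two reasons.

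First, derivatives do not close. A momentum source is paired against $\mathcal A_+U_r$, so it must lie in $H^{s+3/2}$. The energy $\mathcal O^0_{s+1}$ controls $h_+$ only in $H^{s+1}$, so a zeroth-order height term loses half a derivative. There is no $\sqrt B\,\|h\|_{H^{s+1}}$ term to pay for it, as Lemma~\ref{lem:zeroth-mixed-cost} does at positive tension.

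Second, the term is not small relative to the Taylor weight. Its leading part is $t_Y\mathfrak a_+$, because $t_Y$ times the material derivative of $v\cdot n$ contains $-t_Y\partial_np$. That is why your bound is $(1+Y)^{-3/2}$, which is exactly the size of $t_YSK_+=\Theta qK_+/(2\Lambda)$. So it can neither be treated as a source nor as a perturbation of $K_+$.

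What is actually needed is the identity that makes your Step~1 system true, which you only assert ("the result should be"). The following terms must all be combined into the single lower-left entry $-t_YSK_+$, and this must be done before any fixed-cutoff truncation:
\begin{itemize}
\item the time derivative of the chart shear;
\item the DN shape term $-\mathsf G_{++}(B_+H_+)$;
\item the tangential transport terms;
\item the linearized Bernoulli terms.
\end{itemize}
The combination goes through the exact normal good-unknown relation $U_+=\delta g-b_+^{\rm ch}\delta h_+$ and \eqref{eq:exact-normal-variational-system}. This is the outer analogue of \eqref{eq:zero-inner-pressure-cancellation}. After it, every real zeroth-order height-to-momentum term sits inside the Taylor coefficient, and only remainders of order $\le-1$ are left, each costing $t_Y\sqrt S$.

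The paper does this as its ``retention of the full Taylor coefficient'' step. That step also checks that the retained coefficient needs high regularity only of the outer state. It does so by composing the variation bounds for $r_-=|V_-|^2-1$ with Lemma~\ref{lem:zero-separated-asymmetric}, holding the inner variables fixed in the outer dyadic expansion.

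If your Step~1 system is derived this way, your $(b_+^{\rm ch})_Y$ bullet double counts and should simply be deleted. If it is not, the argument breaks at that bullet.

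A smaller omission: the finite-rank clock and flux couplings from the inner state also enter the outer rows. Their inner inputs have size $\mathcal P E_s/a$ and multiply outer reference fields of size $\mathcal P a^2$, so with the weight $\sqrt S$ they give $\mathcal P E_s$, consistent with \eqref{eq:zero-outer-energy}.
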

\begin{proof}
We divide the proof into six steps.

\emph{Step 1. Dyadic reduction in the outer variables.}
Write the two outer source components in the good variables as
$(F_+,G_+)$ and use the norm
\begin{equation*}
\|(F_+,G_+)\|_{\mathcal Y_s}
:=\sqrt S\,\|F_+\|_{H^{s+1}}
+\|G_+\|_{H^{s+3/2}}.
\end{equation*}
This source norm is obtained by pairing the two equations with
\eqref{eq:zero-common-outer-form}.

During the outer spatial dyadic expansion, keep $a,R,\beta$
and the normalized inner variables $(\eta,F)$ fixed, and reconstruct
$q,\Theta,\kappa,p_q$ at each outer state.  Lemma~\ref{lem:frozen-admissibility}
provides an admissible domain for the frozen states and Taylor segments.
The reconstructed scalars are frozen only in the local partial
differential; their chain rule terms
\eqref{eq:complete-scalar-chain-rule} remain in the kernel contributions.
Decompose the outer harmonic field into the filled domain extension, a
flux-carrying field, and the separated correction of
Lemma~\ref{lem:zero-separated-asymmetric}.  The filled domain
Dirichlet--Neumann map has the usual local outer symbol.  The hole
correction and the inner trace input have the asymmetric tame bounds
of that lemma.  For the scalar chain rule terms, the scalar input
functionals use fixed lower norms, while their outer range functions
are estimated using the same separated representation, as in Steps~2--3
below.  Thus the first two differentials in this outer expansion lose
a fixed number of derivatives only in the outer variables, with the
inner factors measured at the fixed index $b_0$.

Set $\nu=s+1/2$ and $Z=(h_+,g)$. Scalar powers of $q$ may be absorbed
in the polynomial constants.  Equation~\eqref{eq:zero-outer-trace-recovery}
gives $\|Z\|_{H^\nu}\le\mathcal P_s\mathcal O_{s+1}^0$.
Apply the dyadic argument of Lemma~\ref{lem:finite-loss-dyadic} at
index $\nu$, with the inner variables held fixed.  Its remainder is
in $H^{\nu+2}=H^{s+5/2}$, which contains both required source spaces.
To determine the additional regularity, let $d$ cover all losses in the
first two outer differentials, and let $r_0$ be their fixed lower index.  If $Z_j$ is an outer dyadic block, $L=\|Z\|_{H^b}$,
$H=\|Z\|_{H^\nu}$ and $\delta=b-r_0$, the sum of the mixed quadratic terms is bounded by
\begin{equation*}
C_s\mathcal P_s\sum_{j\ge0}
2^{(d+2-\delta)j}L\|Z_j\|_{H^\nu}
\le C_s\mathcal P_sLH
\qquad(\delta>d+3).
\end{equation*}
The term evaluated at the frozen state has the summable factor
$2^{(d+2-2\delta)j}L^2H$.  The symbol and separated kernel terms are
summed using the additional fixed regularity specified in
Lemma~\ref{lem:finite-loss-dyadic}.  Choose $b<b_0$ above all these
fixed thresholds.  Lemma~\ref{lem:zero-separated-asymmetric} places
no $j$-dependent derivatives on the inner factors: their norms are
fixed $H^{b_0}$ norms throughout this summation.

\emph{Step 2. Retention of the full Taylor coefficient.}
At fixed geometry, the separated trace-to-trace kernel is smoothing.
Its shape differential includes local terms from changes in the evaluation
point and the normal.  In particular the full DN
shape derivative contains $-\mathsf G_{++}(B_+H_+)$ and the tangential
transport terms.  Before applying any cutoff, combine these terms with the exact normal
good unknown and its actual time derivative, using the full DN map.  Equation~\eqref{eq:exact-normal-variational-system} identifies the
retained local DN and Taylor blocks.  In the corresponding half-density
variables, these are represented by $\mathsf L_+$ and $SK_+$.
The outer reduction treats only the remaining separated trace
kernels as smoothing terms.
The common transport, fixed angular gauge, and all real
zeroth-order terms in the lower-left entry are retained as in the normal
conjugacy.
For the pressure coefficients this separation follows directly from
$p+|v|^2/2=D u_c+\mathsf R$.  For its normalized inner datum, put $V_-=(a/q)v|_{\Gamma_-}$,
so that $r_-=|V_-|^2-1$.  Apply
Proposition~\ref{prop:full-C2-flux-DN} at a fixed index above $1/2$
to $V_-$ and its first two outer variations.  The additional regularity included in
$b_0$ controls the required traces and tangential derivatives.
For outer variations $Z_i$, put $\nu_i=\|Z_i\|_{H^{b_0}}$.
The product rule, including
$D^2|V_-|^2[Z_1,Z_2]=2DV_-[Z_1]\cdot DV_-[Z_2]
+2V_-\cdot D^2V_-[Z_1,Z_2]$, gives
\begin{equation*}
\|Dr_-[Z_i]\|_{H^{1/2}}\le\mathcal P\nu_i,
\qquad
\|D^2r_-[Z_1,Z_2]\|_{H^{1/2}}
\le\mathcal P\nu_1\nu_2.
\end{equation*}
Compose these fixed index estimates with
Lemma~\ref{lem:zero-separated-asymmetric}.  In the chain rule, high output regularity falls only on the outer graph,
while all factors from $r_-$ remain in the fixed inner trace norm.  The outer datum is a local quadratic expression in
the outer velocity.  Its local shape and normal variation terms are
included in \eqref{eq:exact-normal-variational-system} before the dyadic expansion in the outer variables only.  Thus a coefficient measured at high
outer regularity requires high regularity only of the outer state.

The dyadic quadratic remainder includes the difference between the
separated kernel and its reference value.  This reference value is
evaluated at zero outer state, with the current inner parameters held fixed.
Lemma~\ref{lem:zero-separated-asymmetric} controls its high output
norms through the fixed lower inner norms.

\emph{Step 3. Source estimates.}
The physical inner good trace has size $(q/a)E_s$ at the fixed input
index.  Its separated outer kinematic coefficient is
$t_Y(q/a)\mathsf G_{+-}$.  Multiplication by the height weight gives
\begin{equation*}
\sqrt S\,t_Y\frac qa
=\frac{\Theta q}{\sqrt{2\Lambda}},
\end{equation*}
which is bounded by a polynomial in $Y$.  This gives the $\mathcal P_s E_s$ term at any fixed outer
output index.  The inputs to the clock and the other scalar functionals may have size $\mathcal P E_s/a$.  They couple to an outer kinematic reference field of size $\mathcal P a^2$,
so the weighted contribution is at most $\mathcal P E_s$.  The corresponding terms in the dynamic equations
carry $t_Y$ and satisfy smaller bounds.  Fixing the zero-mean gauge in
the outer angular coordinate determines
the common Bernoulli constant before the change of unknowns.

The reference outer normal velocity is $O(q)$ and its reference
Bernoulli field is $O(q^2)$.  Hence
\begin{equation*}
\sqrt S\,t_Y q=\frac{\Theta aq}{\sqrt{2\Lambda}}
=\mathcal P(Y)a,\qquad
t_Yq^2=\Theta a^2q=\mathcal P(Y)a^2.
\end{equation*}
These yield the reference term in
\eqref{eq:zero-outer-asymmetric-source}.
The quadratic terms in the original outer equations carry $t_Y$.
The largest factor introduced by the source norm is the height weight
$\sqrt S=\mathcal P/a$, so the resulting prefactor is at most $\mathcal P a$.  Under the lower bootstrap
bounds, the normalized
outer trace is $O(\mathcal P a)$, while the inner high trace and the
outer graph are $O(\mathcal P a^2)$.  The quadratic estimate is
therefore bounded by
\begin{equation*}
\mathcal P a\,\mathcal P a
\bigl(a^{-1}E_s+\mathcal O_{s+1}^0\bigr)
\le \mathcal P a E_s+\mathcal P a^2\mathcal O_{s+1}^0.
\end{equation*}
All coefficient $Y$ derivatives are evaluated from the original
equations at the current state.  We choose the lower index to include
the required additional derivatives before fixing $s$.  Under the exact conjugacy, the Taylor term is incorporated into $SK_+$.  The remaining diagonal and negative-order
terms have integrable bounds from the fixed lower coefficient estimates
and \eqref{eq:zero-outer-dn-time-form}.  Thus the source bound is
\begin{equation}\label{eq:zero-outer-asymmetric-source}
C_s\mathcal{P}_s(Y)\bigl(a+E_s\bigr)
+C_s\varepsilon_s(Y)\mathcal{O}_{s+1}^0,
\qquad \varepsilon_s\in L^1([Y_0,\infty)).
\end{equation}

\emph{Step 4. The differentiated outer system.}
The preceding reduction gives common transport, an order-one DN
entry and the full Taylor entry $-SK_+$, including the chart shear
derivative.  The remaining off-diagonal entries have order at most
$-1$, and the sources obey \eqref{eq:zero-outer-asymmetric-source}.

For the energy calculation, set $r=s+1$,
$H_r=\langle D_\theta\rangle^r\mathfrak h_+$, and
$U_r=\langle D_\theta\rangle^r\mathfrak u_+$.
In particular $P_+U_r=U_r$.  Paralinearizing the outer equations and
commuting derivatives gives the retained block
\begin{equation*} 
(\partial_Y+T_+)H_r
=t_Y\mu_+^{-1}\mathcal A_+U_r+F_r,\qquad
(\partial_Y+T_+)U_r
=-t_YS\,P_+(K_+H_r)+G_r. 
\end{equation*}
Here $T_+$ denotes the tangential transport part of the fixed cutoff
paralinearization, with principal symbol $i\xi\mathcal V_+$, where 
\begin{equation*}
\mathcal V_+=\frac{(t_Yv_+-X_{+,Y})\cdot\tau_+}{j_+},
\qquad j_+=|\partial_\theta X_+|.
\end{equation*} 
In the second row this transport is compressed by the fixed $P_+$.
Its projection commutators are fixed smooth kernels.
At the differentiated energy level, the cutoff gives
\begin{equation}\label{eq:zero-fixed-target-multiplication}
\|cz-\mathcal W(c)z\|_{H^{\nu+1}}
\le C\|c\|_{C^4}\|z\|_{H^\nu},\qquad
\nu\in\{-\tfrac12,0\}.
\end{equation}
The Fourier matrix of the difference is
$[1-\chi(j-k,(j+k)/2)]\widehat c(j-k)$.
Its support satisfies
$\langle j\rangle+\langle k\rangle\le C\langle j-k\rangle$.
For these two values of $\nu$, the weighted row and column sums are
bounded by $C\sum_\ell\langle\ell\rangle|\widehat c(\ell)|
\le C\|c\|_{H^2}$, so the Schur test proves the estimate.
The fixed low frequency completion obeys the same bound.
Apply \eqref{eq:zero-fixed-target-multiplication} with
$(c,z,\nu)=(K_+,H_r,0)$ and
$(c,z,\nu)=(\mu_+^{-1},\mathcal A_+U_r,-1/2)$.
The resulting errors belong to $H^1\subset H^{1/2}$ and
$H^{1/2}\subset L^2$, respectively, and involve only the fixed $C^4$
coefficient norms.  The smooth fixed reference coefficients may also be
retained exactly.
The errors in the two rows contribute the coefficient
$C_r t_Y\sqrt S$ to the energy estimate and are included in $(F_r,G_r)$.
The height weight $S\mu_+K_+$ is positive.
The form derivatives satisfy \eqref{eq:zero-outer-taylor-jets}
and \eqref{eq:zero-outer-dn-time-form}, using the same fixed cutoff
as the nonlinear reduction.

\emph{Step 5. The outer energy estimate.}
Differentiate the form \eqref{eq:zero-common-outer-form}.
The Hamiltonian cross terms cancel
exactly, since
\begin{equation*}
\langle\mu_+K_+H_r,\mu_+^{-1}\mathcal A_+U_r\rangle
=\langle K_+H_r,\mathcal A_+U_r\rangle
=\langle P_+(K_+H_r),\mathcal A_+U_r\rangle .
\end{equation*}
The last equality uses $\int\mathcal A_+U_r\,d\theta=0$.
It holds at every differentiated level because $P_+$ is fixed.
The term $S_Y\int\mu_+K_+|H_r|^2$ has a favourable
sign in the backward estimate.  The other weight derivatives have
integrable form bounds.  Commutation with the order-one DN entry
costs at most $r$ derivatives of $\mathfrak u_+$. Commutation with
the order-zero Taylor entry costs at most $r-1$ derivatives of
$\mathfrak h_+$.  The half-derivative in the DN energy therefore controls both
commutators, with coefficient at most
$C_r t_Y\sqrt S$.  We integrate the common transport terms by parts in the weighted form.
The derivatives of the transport coefficient and the resulting derivatives
of the density are $O(\mathcal P a^2)$.
The remaining terms of order $-1$ from height to momentum satisfy the
same energy bound after multiplication by $t_YS$, with coefficient $t_Y\sqrt S$.
Together with the nonlinear and separated source bound
\eqref{eq:zero-outer-asymmetric-source}, these estimates give
\eqref{eq:zero-outer-energy} after taking the square root,
regularized at zero.

\emph{Step 6. Coupling to the inner energy.}
The outer input to the exact inner variational kinematic
row is $\mathsf G_{-+}U_+$.
Lemma~\ref{lem:cross-block-small-hole} bounds its output in high Sobolev
norms on the circular reference boundary.  We estimate its graph correction separately.  An inner factor bounded
by $\mathcal P a^2$ in the lower norms multiplies the outer input at high
regularity.  An inner factor at high regularity, multiplied by the outer
state in a lower norm, contributes to $\varepsilon_sE_s$ by the same
fixed cutoff tame estimate.  The outer shape differential is the separated term
$-\mathsf G_{-+}(B_+H_+)$.
Separation permits a fixed low input norm, and
\eqref{eq:zero-outer-trace-recovery} supplies that norm.
The Taylor norm of the outer height therefore couples to the inner energy
at index $s$.

It remains to estimate the common outer gauge in the Taylor energy norm.
Its variation may contain a smooth scalar functional of
$\mathfrak a_+H_+=SK_+H_+$, measured with the Taylor weight $S$.  Its contribution to the scaled inner
momentum is bounded by
\begin{equation*}
\frac{a t_Y}{q}\,S\|H_+\|_{L^2}
\le C\frac{a}{\Lambda}S^{-1/2}\mathcal O_{s+1}^0
=C\frac{a^2}{q\sqrt\Lambda}\mathcal O_{s+1}^0
\le C a^2\mathcal O_{s+1}^0.
\end{equation*}
Fixed numerical factors are absorbed in $C$.
The remaining variations of the gauge, charge, and time parametrization are smooth finite-rank
functionals of the same traces.  Their coefficients have polynomial
bounds by the scalar Schur formulas in
Propositions~\ref{prop:schur-differentiation} and~\ref{prop:clock-centre-finite-rank}.  On the
outer boundary the gauge is eliminated by the fixed $P_+$.
This proves \eqref{eq:zero-outer-to-inner}.
\end{proof}

The inner Taylor estimate uses the outer coupling just established.

\begin{lemma} 

Let $(F_*,G_*)$ denote the sources in the reduced inner system
\eqref{eq:zero-inner-reduced-system}.  On the fixed lower bootstrap
neighbourhood, for every $s\ge b_0+2$, the variables and energy in
\eqref{eq:zero-inner-good-unknown}--\eqref{eq:zero-inner-energy}
satisfy the source and backward energy estimates
\begin{equation}\label{eq:zero-inner-source}
\|F_*\|_{H^s}+\|G_*\|_{H^{s+1/2}}
\le C_s\varepsilon_*(Y)E_s+
C_s\mathcal P_s(Y)(a^3+a^2\mathcal O_{s+1}^0),
\end{equation}
\begin{equation}\label{eq:zero-inner-budget}
-\partial_YE_s\le(1+C_s\varepsilon_*(Y))E_s+
C_s\mathcal P_s(Y)(a^3+a^2\mathcal O_{s+1}^0).
\end{equation}
Here $\varepsilon_*=(1+Y)^{-3/2}+a^{1/8}$, as above, and the constants
depend on the fixed lower bootstrap.
\end{lemma}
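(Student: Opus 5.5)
The plan is to follow the positive surface tension argument of Theorem~\ref{thm:actual-principal-reduction}, with $B_q=0$ and with the natural source norm replaced by $\mathcal N_s^0$. First, I would write the inner equations in the variables \eqref{eq:zero-inner-good-unknown}. The full DN shape cancellation of Proposition~\ref{prop:polar-full-cancellation} is performed before any cutoff; at $\sigma=0$ the curvature entry is absent. Then I would apply Lemma~\ref{lem:finite-loss-dyadic} to the complete reconstructed vector field on the admissible domain $\mathcal A_Y$ of Lemma~\ref{lem:frozen-admissibility}, keeping $a,R,\beta$ as parameters. The nonlinear conjugacy of Lemma~\ref{lem:common-conjugacy} then produces the reduced system
\begin{equation*}
(\partial_Y+T)\mathfrak h=\mathfrak h+\mathsf L_0\mathfrak\pi+F_*,\qquad
(\partial_Y+T)\mathfrak\pi=-\mathsf M_0\mathfrak h-(1+p_q)\mathfrak\pi+G_*,
\end{equation*}
with $\mathsf L_0,\mathsf M_0$ as in \eqref{eq:zero-inner-retained-operators}. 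Here the full Taylor symbol $\mathfrak t_-$ is retained in $\mathsf M_0$, rather than only its mean.

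Second, I would estimate the sources term by term. The table in Step~5 of Theorem~\ref{thm:actual-principal-reduction} simplifies here, because at $\sigma=0$ there is no mixed norm and hence no $B_q^{-1/4}$ loss. The order-zero entries of size $\mathcal P a$ therefore contribute $C_s\mathcal P aE_s$ directly. The order $-1$ entries, including the localization kernel of Lemma~\ref{lem:finite-tame-dn-symbol}, contribute $\mathcal P a^{1/4}E_s$. The coefficient derivatives come from \eqref{eq:zero-inner-taylor-jet} and the listed bounds on $\Theta,j,\mathcal V,\ell_0$. The reference forcing is $\mathcal P_sa^3$, exactly as in Step~6 there: it consists of the condenser high modes, the $\mathfrak f_2$ term of Proposition~\ref{prop:exact-polar-raw-system}, and the first-to-high DN leakage, each multiplied by $a$. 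The scalar, Schur and projection kernels satisfy \eqref{eq:scalar-gauge-schur-inner-bound} with $\delta_b\le\mathcal P a^2$, and the dyadic remainder is bounded by $\delta_b(E_s+a\mathcal O^0_{s+1})$. The outer coupling, including the common gauge and the outer shape input, is exactly \eqref{eq:zero-outer-to-inner}. Summing these contributions gives \eqref{eq:zero-inner-source}, with $\mathcal P_sa^\delta\le C a^{1/8}$ absorbed into $\varepsilon_*$.

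Third, I would derive the energy inequality for \eqref{eq:zero-common-inner-form}. The Hamiltonian cross terms cancel by symmetry of $\mathsf L_0$ and $\mathsf M_0$. The transport $T$ is skew, and its commutators with $J^s$ have order $s$ with an $O(\mathcal P a)$ coefficient. The key derivative computation is
\begin{equation*}
-\partial_Y\langle\mathsf M_0J^s\mathfrak h,J^s\mathfrak h\rangle
=-2\langle\mathsf M_0 J^s\mathfrak h,J^s\mathfrak h\rangle-\langle(\mathsf M_0)_YJ^s\mathfrak h,J^s\mathfrak h\rangle+\dots .
\end{equation*}
On the $\mathfrak\pi$ side the factor is $2(1+p_q)$, and $p_q\le\mathcal P a$ together with $|p_q|\le1/4$ gives the bound $(2+C\epsilon)$. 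The form derivative $(\mathsf M_0)_Y$ is integrable by \eqref{eq:zero-inner-taylor-jet}. The new feature compared with the capillary case is the commutator $[J^s,\mathsf M_0]$, now of order $s-1$ acting on $\mathfrak h$ with coefficient $\mathcal P a$ from $\mathfrak t_--\tau_0$. I expect this to be the main obstacle: it must be measured in $H^{s+1/2}$ against $\mathfrak\pi$, so it needs $\|\mathfrak h\|_{H^{s-1/2}}\le E_s$, and I believe the available bound is exactly this. The commutator $[J^s,\mathsf L_0]$ has order $s$ and is controlled by $\|\mathfrak\pi\|_{H^{s+1/2}}$ through the Weyl calculus of Lemma~\ref{lem:finite-bony-weyl}.

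Taking the square root with regularization at zero then yields \eqref{eq:zero-inner-budget}, with leading rate $1$ independent of $s$. As in the capillary case, I expect this rate to be needed later in Lemma~\ref{lem:zero-coupled-terminal-budget}.
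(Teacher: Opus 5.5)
You have a genuine gap in the source estimate. You say the capillary table of Theorem~\ref{thm:actual-principal-reduction} simplifies because at $\sigma=0$ ``there is no mixed norm and hence no $B_q^{-1/4}$ loss''. You then conclude that every order-zero entry of size $\mathcal P a$ contributes $C_s\mathcal P aE_s$. This is backwards for the entry $\mathfrak h\mapsto G_*$.

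The energy \eqref{eq:zero-inner-energy} controls $\mathfrak h$ only in $H^s$, whereas $G_*$ is paired with $\mathsf L_0J^s\mathfrak\pi$ and must lie in $H^{s+1/2}$. In the capillary case the factor $B_q^{-1/4}$ came from Lemma~\ref{lem:zeroth-mixed-cost}. That lemma recovers $\|\mathfrak h\|_{H^{s+1/2}}$ by interpolating with $\sqrt{B_q}\,\|\mathfrak h\|_{H^{s+1}}$. At $B_q=0$ this control disappears altogether, so any order-zero residual from height to momentum would cost half a derivative and the inequality would not close.

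In the capillary table that row contains $(\mathsf S)_Y$ as well as $-\Theta(\mathfrak t_--\tau_0)$ and $-\Theta B_qc_0$. Retaining the full $\mathfrak t_-$ in $\mathsf M_0$, as you do, disposes of $-\Theta(\mathfrak t_--\tau_0)$, and $-\Theta B_qc_0$ vanishes at $\sigma=0$. It does not address the order-zero lower-left part of $(\mathsf S)_Y$, which you still count as harmless.

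The missing step is to show that, at $\sigma=0$, the lower-left entry of the transformed generator is exactly $-\Theta\mathfrak t_-$ through order zero. Then the $\mathfrak h\mapsto G_*$ residual has order $-1$ and maps $H^s$ into $H^{s+1}\subset H^{s+1/2}$. The paper does this as follows.
\begin{itemize}
\item The normal block \eqref{eq:polar-complete-normal-block} with $\sigma=0$ has lower-left entry $-\Theta\mathfrak t_-$ and no further order-zero term. The time derivative of the shear $b$ coming from the change of unknown is precisely what produces the full Taylor coefficient, by \eqref{eq:zero-inner-pressure-cancellation}.
\item The half-density factor $W=\sqrt w$ commutes with multiplication by $\mathfrak t_-$ and contributes only the diagonal coefficient $\mathcal D\log W$.
\item The conjugacy of Lemma~\ref{lem:common-conjugacy} leaves a remainder of order $-1$.
\end{itemize}
The only order-zero residuals are then $\mathfrak h\mapsto F_*$, $\mathfrak\pi\mapsto G_*$ and $\mathfrak\pi\mapsto F_*$. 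The last is admissible because the kinematic source is needed only in $H^s$.

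By contrast, the commutator $[J^s,\mathsf M_0]$ that you identify as the main obstacle is harmless. It has order $s-1$, maps $H^s$ into $H^{1/2}$, and its constant-coefficient part vanishes.

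With the lower-left entry handled this way, the rest of your outline agrees with the paper's proof. This includes the $\mathcal P_sa^3$ forcing, the dyadic remainder, the outer coupling \eqref{eq:zero-outer-to-inner}, and the quadratic energy identity with rate $2+C_s\varepsilon_*$.
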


\begin{proof}
We divide the proof into three steps.

\emph{Step 1. The reduced inner system.}
Set $\sigma=0$ in Proposition~\ref{prop:polar-full-cancellation}.
With $W=\sqrt w$ and $\delta=\mathcal D\log W$, the homogeneous
block in half-density variables is
\begin{equation*} 
\mathcal D h=\mathsf L_{\rm full}\pi+(d_-+\delta)h,\qquad
\mathcal D\pi=-\Theta\mathfrak t_-h-(1+p_q-\delta)\pi,\qquad
\mathsf L_{\rm full}=\Theta W(a\mathsf G_{--})W^{-1}. 
\end{equation*}
The operator $\mathsf L_{\rm full}$ is self-adjoint in the fixed
space of half densities.  Replacing $\mathcal V\partial_\theta$ by its
skew completion changes only the diagonal coefficients, adding
$\mathcal V_\theta/2$ to both.  All the displayed diagonal corrections to
$1$ and $-(1+p_q)$ are $O(\mathcal P a)$.

Use $U=(\eta,F,h_+,g)$, with $a,R,\beta$ as parameters, and
reconstruct $q,\Theta,\kappa,p_q$ as in
Lemma~\ref{lem:scalar-noncircular}.  Apply
Lemmas~\ref{lem:finite-tame-dn-symbol},
\ref{lem:finite-loss-dyadic} and~\ref{lem:common-conjugacy}
to the complete reconstructed equations
\eqref{eq:polar-h-exact} and~\eqref{eq:polar-F-exact}
on the domain of Lemma~\ref{lem:frozen-admissibility}, with $\sigma=0$.
The reconstructed scalars are held fixed only when computing the
local partial differential; their chain rule contributions
\eqref{eq:complete-scalar-chain-rule} are retained among the kernel terms.

The local differentials contain the same DN variations and kinetic
products, with the curvature term deleted.  The conjugacy remainder
has order $-1$.  Its coefficient derivative contains the terms in
\eqref{eq:zero-inner-pressure-cancellation}, so the lower-left entry
retains the full multiplication operator $-\Theta\mathfrak t_-$.

Use the operators in \eqref{eq:zero-inner-retained-operators}
and set $T=iP\mathcal W(\mathcal V\xi)P$.  The nonlinear equations
then take the form
\begin{equation}\label{eq:zero-inner-reduced-system} 
(\partial_Y+T)\mathfrak h=\mathfrak h+\mathsf L_0\mathfrak\pi+F_*,\qquad
(\partial_Y+T)\mathfrak\pi=-\mathsf M_0\mathfrak h-(1+p_q)\mathfrak\pi+G_* . 
\end{equation}
The projection in the original equations removes the potential constant
before the change to half densities.

\emph{Step 2. Source estimates.}
The local remainders have the following maximal orders:
\begin{equation*}
\begin{array}{c|c|c}
\text{entry}&\text{order}&\text{input norm}\\ \hline
\mathfrak h\longmapsto F_*&0&H^s\\
\mathfrak\pi\longmapsto G_*&0&H^{s+1/2}\\
\mathfrak\pi\longmapsto F_*&0&H^{s+1/2}\\
\mathfrak h\longmapsto G_*&-1&H^s
\end{array}
\end{equation*}
The coefficient bounds for $\mathfrak t_-,d_-,\Theta,j,\mathcal V,\ell_0$
and the localization estimate \eqref{eq:kernel-small-localization}
give integrable bounds for the lower coefficient seminorms of these remainders.
In particular, the last row maps $H^s$ into $H^{s+1}$, which
contains the required $H^{s+1/2}$ momentum source space.  An
order-zero upper-right remainder is admissible because the
kinematic source requires only $H^s$.

For the smooth kernel, constant trace and finite projection
corrections, use \eqref{eq:global-kernel-tame-operator} and the
kernel decomposition of Lemma~\ref{lem:reconstructed-kernel-bounds}.
The scalar chain rule terms are retained before applying the dyadic
summation.  The two source components are measured in $H^s$ and
$H^{s+1/2}$, respectively, as in \eqref{eq:zero-inner-source}.
The inner-to-inner reference kernels remain explicit linear terms.  Their
$\mathcal P_s a^{1/4}$ operator bounds contribute
$C_s\varepsilon_*E_s$ to the sources.  For the state-dependent parts,
the summation with frozen kernels in Lemma~\ref{lem:finite-loss-dyadic}
places high output derivatives on at most one factor of the quadratic
remainder.  All actual time derivatives, including those of the
parameters fixed in the spatial expansion and of the reconstructed
scalars, are retained when differentiating $\mathsf S$ and the
retained operators.

Set $U=(\eta,F,h_+,g)$.  As in the outer dyadic argument,
we use the physical outer trace $g$ and obtain
\begin{equation*}
a\|\mathcal R_{\rm hh}\|_{H^{s+2}}
\le C_s\mathcal P_s(Y)\|U\|_{H^{s_{\rm dy}}}
(E_s+a\mathcal O_{s+1}^0).
\end{equation*}
Here $s_{\rm dy}<b_0$ is fixed.  The estimate follows from
\eqref{eq:finite-loss-hh-bound}, the algebraic $H^s$ reconstruction
\eqref{eq:zero-inner-reconstruction}, and the bound for the original
outer trace with one additional outer derivative.  The inner height
is measured in $H^s$.  Since the lower bootstrap factor is
$O(\mathcal P a^2)$, this remainder is bounded as in
\eqref{eq:zero-inner-source}.

The forcing evaluated at zero high mode state is $O(\mathcal P_s a^3)$.
Indeed, the high Fourier modes of the normalized condenser profile,
first mode leakage, and
quadratic first mode terms are $O(\mathcal P_s a^2)$, by
Lemma~\ref{lem:weighted-layer-expansion} and
\eqref{eq:exact-kinetic-low-source}, and the scaled equations carry
the additional factor $a$.  The curvature source is absent.
For the separated outer input, the spatial kernel estimate of
Lemma~\ref{lem:cross-block-small-hole}, which is independent of surface tension,
supplies the reference factor $\mathcal P_s a^2$.
The graph, pressure and common gauge terms satisfy
\eqref{eq:zero-outer-to-inner}, giving
$C_s\varepsilon_*E_s+
C_s\mathcal P_s a^2\mathcal O_{s+1}^0$.
These estimates prove \eqref{eq:zero-inner-source}.

\emph{Step 3. The inner energy estimate.}
Differentiate the form \eqref{eq:zero-common-inner-form}, using
$J^s=\langle D_\theta\rangle^s$.
Use the fixed low-frequency completions of
Lemma~\ref{lem:finite-bony-weyl}.  The coefficient $Y$ derivatives and the
transport commutators of $\mathsf L_0,\mathsf M_0$ have the corresponding form
bounds with coefficient $C\varepsilon_*$. The assertion for
$\mathsf M_0$ uses \eqref{eq:zero-inner-taylor-jet}.

The commutators $[J^s,\mathsf L_0]$ and $[J^s,\mathsf M_0]$ have orders
at most $s$ and $s-1$, respectively.  They map $H^{s+1/2}$
to $L^2$ and $H^s$ to $H^{1/2}$, the required source spaces.
Their constant-coefficient reference parts vanish.  The order-$s$
commutator with $T$ is bounded in both component norms by an
integrable coefficient determined by the size of $\mathcal V$.

Set $h_s=J^s\mathfrak h$ and $\pi_s=J^s\mathfrak\pi$.
The Hamiltonian cross terms cancel.  After estimating the coefficient and transport terms, the remaining contribution to the negative derivative of the
quadratic energy is
\begin{equation*}
\langle(-(\mathsf M_0)_Y-2\mathsf M_0)h_s,h_s\rangle
+\langle(-(\mathsf L_0)_Y+2(1+p_q)\mathsf L_0)\pi_s,\pi_s\rangle .
\end{equation*}
It is at most $(2+C_s\varepsilon_*)E_s^2$, since
$p_q\le\mathcal P a$.  The source pairing is bounded by
$C_sE_s(\|F_*\|_{H^s}+\|G_*\|_{H^{s+1/2}})$.
Taking the square root, with regularization at zero, gives
\eqref{eq:zero-inner-budget}.
\end{proof}

\subsection{The terminal construction at zero surface tension}

Use the canonical terminal geometry, common potential gauge and
centred radius $a=\ell e^{-Y}$ of
Subsection~\ref{subsec:canonical-terminal-data}.
The energy estimates above couple $E_s$ to $\mathcal O_{s+1}^0$.
We apply Lemma~\ref{lem:zero-coupled-terminal-budget} to this pair in the
form \eqref{eq:zero-coupled-terminal-input}--\eqref{eq:zero-coupled-terminal-output}.

\begin{theorem} 
\label{thm:zero-one-state-closed}
Choose the canonical domains and condenser potentials in
\eqref{eq:aN-def}--\eqref{eq:cap-terminal}.  Fix $E_c>0$
and prescribe
\begin{equation*}
v_N^*=c_N\nabla u_N,\qquad
\frac12c_N^2\operatorname{Cap}_N=E_c,\qquad c_N>0.
\end{equation*}
There is a common $Y_0$, independent of $N$, such that the Euler
equations have smooth solutions on $[Y_0,Y_N]$ with these data at $Y_N$.
For every fixed sufficiently large $s$,
\begin{equation}\label{eq:zero-energy-tails}
E_s\le\mathcal P_s a^3,\qquad
\mathcal O_{s+1}^0\le\mathcal P_s a.
\end{equation}
For every fixed $k$, their physical coordinates satisfy
\begin{equation}\label{eq:zero-physical-tails}
\begin{aligned}
\|\eta\|_{H^k}+q^{-1}\|f\|_{H^k}
+\|h_+\|_{H^k}+q^{-1}\|g\|_{H^k}
+|R-R_*|&\le\mathcal P_k a^2,\\
q^{-1}(|\alpha|+|\alpha_Y|)&\le\mathcal P_k a,\qquad
|R_Y|\le\mathcal P_k a^2.
\end{aligned}
\end{equation}
All constants and polynomial degrees are independent of $N$.
The first $Y$ derivatives satisfy the corresponding bounds with the
additional spatial derivatives required by
Proposition~\ref{prop:raw-state-first-jet}.
\end{theorem}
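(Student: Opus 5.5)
The proof follows the template of Theorem~\ref{thm:one-state-closed}, with the capillary energies replaced by the Taylor energies $E_s$ and $\mathcal O_{s+1}^0$ of \eqref{eq:zero-common-inner-form} and \eqref{eq:zero-common-outer-form}, and it is organized as a bootstrap at the fixed lower index $b_0$.

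\textbf{Step 1 (terminal data and bootstrap setup).} Lemma~\ref{lem:terminal-data-admissible} at $\sigma=0$ supplies smooth local solutions backwards from $Y_N$. The terminal state has strict Taylor sign on both boundaries, zero graph perturbations and zero zero-flux remainder. On the maximal interval on which the lower bootstrap of this section holds with constants $K,M_b$, argue as in Step~1 of the proof of Theorem~\ref{thm:one-state-closed}.
\begin{itemize}
\item The energy identity, with $\sigma=0$ so that $E_*=E_c$, and \eqref{eq:zero-flux-dirichlet-bound} give $q\asymp(1+Y)^{-1/2}$, using only the physical bootstrap.
\item Proposition~\ref{prop:first-mode-backward-amplitude}, the backward integral from $\alpha(Y_N)=0$, improves the first-mode bound to $|\alpha|+|\alpha_Y|\le q\mathcal P a$ with a constant independent of $K$.
\item The centre equation then gives $|R_Y|+|R-R_*|\le\mathcal P a^2$.
\item Proposition~\ref{prop:raw-state-first-jet} supplies $p_q=-1/(2(Y+\mathfrak c))+O(\mathcal P a)$ and the first $Y$ derivatives needed by Lemmas~\ref{lem:normalized-taylor-variation} and~\ref{lem:zero-outer-taylor}.
\end{itemize}
Increasing $Y_0$ makes the lower norm at most $a^{3/2}$. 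This places all frozen states in $\mathcal A_Y$ of Lemma~\ref{lem:frozen-admissibility}, so the constants in the inner and outer estimates are polynomials fixed before $K$.

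\textbf{Step 2 (coupled energy inequalities).} For each $s\ge b_0+2$, combine \eqref{eq:zero-inner-budget} with \eqref{eq:zero-outer-energy}. This is exactly the system \eqref{eq:zero-coupled-terminal-input}, with $E=E_s$, $O=\mathcal O_{s+1}^0$, $\epsilon=C_s\varepsilon_*\in L^1$, and $P$ a polynomial independent of $N$.
\begin{itemize}
\item Both quantities vanish at $Y_N$, because all four transformed variables vanish at canonical data.
\item Lemma~\ref{lem:zero-coupled-terminal-budget} therefore yields \eqref{eq:zero-energy-tails}, i.e.\ $E_s\le\mathcal P_s a^3$ and $\mathcal O_{s+1}^0\le\mathcal P_s a$. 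Only the one-sided inequality $-O_Y\le\dots$ is needed, and it is what \eqref{eq:zero-outer-energy} provides.
\end{itemize}
The bounds in \eqref{eq:zero-energy-tails} must be translated into physical coordinates. On the inner side, \eqref{eq:zero-inner-reconstruction} gives $\|\eta\|_{H^s}+\|F\|_{H^s}\le C_s\mathcal P_s a^2$. On the outer side, \eqref{eq:zero-outer-trace-recovery} gives
\begin{equation*}
\|h_+\|_{H^{s+1}}\le C_sS^{-1/2}\mathcal P_s a=\mathcal P_s a^2\sqrt{\Lambda}/q,
\end{equation*}
using $S^{-1/2}=a\sqrt{2\Lambda}/q$. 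Since $q^{-1}\asymp\sqrt{1+Y}$, this is $\mathcal P a^2$ after enlarging the polynomial.

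\textbf{Main obstacle: the outer trace $g$.} The estimate $\|g\|_{H^{s+1/2}}\le\mathcal P_s\mathcal O_{s+1}^0$ from \eqref{eq:zero-outer-trace-recovery} yields only $O(\mathcal P a)$, not the required $q\mathcal P a^2$. The plan is to recover the extra power of $a$ from the original outer Bernoulli equation. The trace $g$ is the mean-zero part of a quantity whose $Y$-derivative is $t_Y\times(\text{Bernoulli terms})$; with $\sigma=0$ the curvature term is absent. That derivative is $O(t_Yq^2+t_Y\cdot\text{perturbation})=\mathcal P a^2 q$. Integrating it from $g(Y_N)=0$ against $a^2$ decay gives $q^{-1}\|g\|_{H^k}\le\mathcal P_k a^2$, at a fixed derivative loss. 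This is the "additional decay of the outer potential trace" mentioned in Step~3 of the strategy. I expect this step, together with checking that the loss is absorbed by taking $s$ large relative to $k$, to be the main technical point.

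\textbf{Step 3 (closing and continuation).} Choose $K$ larger than twice the resulting constants at index $b_0$, then enlarge $Y_0$ so the bootstrap-dependent integrable coefficients contribute less than the remaining factor; continuity then closes the bootstrap. The Taylor coefficients stay positive on the whole interval: on the inner side by $\mathfrak t_-\ge 1/2$, and on the outer side by $K_+\ge\underline\kappa$ from Lemma~\ref{lem:zero-outer-taylor}. The local zero-surface-tension theory \cite{CS} then continues the solution while the norms stay bounded, which contradicts a left endpoint above $Y_0$. Higher $s$ reuse the same $Y_0$, since all coefficients are controlled at the fixed lower index. The first $Y$ derivatives follow from Proposition~\ref{prop:raw-state-first-jet}.
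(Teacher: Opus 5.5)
Your proposal is correct and follows essentially the same route as the paper. It has the same ingredients: a bootstrap at the fixed lower index; $q\asymp(1+Y)^{-1/2}$ from the energy identity; first-mode and centre bounds made independent of the bootstrap constants by enforcing $\delta_b\le a^{3/2}$; the coupled Taylor energies closed by Lemma~\ref{lem:zero-coupled-terminal-budget}; and the extra factor of $a$ for $g$ recovered by integrating the projected outer Bernoulli equation \eqref{eq:zero-outer-raw-recovery} backward from $g(Y_N)=0$, with the energy at an index $r\ge b+d+2$ supplying the derivatives.

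The paper is more explicit only about the order of choices at closure. It fixes the forcing polynomial degree first, then takes $M_b$ above the resulting output degrees (including $2\deg\mathcal P$ from the terminal lemma and the $q^{-1}$ factors), and only then chooses $K_b$ and $Y_0$.
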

\begin{proof}
We divide the proof into five steps.

\emph{Step 1. Local solutions.}
At $\sigma=0$, Lemma~\ref{lem:terminal-data-admissible} gives smooth
backward branches satisfying both Taylor sign conditions and preserving
the symmetries and the global velocity potential.

\emph{Step 2. The lower bootstrap and scalar estimates.}
Set $b=b_0$, increasing $b_0$ once to cover the coefficient seminorms,
the admissibility index $b_{\rm ad}$, the lower index in the dyadic
estimates, the first mode estimate,
and the additional derivatives required for the first pressure variation.  Bootstrap the physical
trace bounds at this index by $K_b(1+Y)^{M_b}a^2$, together with
$|\alpha|\le\delta_1q$, admissible separated geometry and the
positive kinematic determinant.  The outer coordinate norms
required at $b$ include the fixed extra derivatives appearing in
the coefficient estimates.  All these additional regularity requirements are fixed before an
energy index is chosen.

At $\sigma=0$, the exact energy is
\begin{equation*}
E_c=D_bq^2+K_0,
\qquad D_b=\pi L(Y+\mathfrak c).
\end{equation*}
The geometric capacity bound and \eqref{eq:zero-flux-dirichlet-bound}
give $D_b\asymp1+Y$ and the required upper bound for $K_0$.
This gives $q\asymp(1+Y)^{-1/2}$ before any energy estimate at higher
regularity is applied.  The positive branch and the exact modulation
give $t_Y=\Theta a^2/q>0$.
Proposition~\ref{prop:first-mode-backward-amplitude} applies with
its curvature term removed.  Its exact kinetic cancellation yields
$|\alpha|+|\alpha_Y|\le q\mathcal P a$ and
$|R-R_*|+|R_Y|\le\mathcal P a^2$.
As in Step~2 of the proof of Theorem~\ref{thm:one-state-closed},
enlarge $Y_0$ so that the lower physical perturbation norm is at most
$a^{3/2}$. This fixes $\mathcal P$ independently of $K_b,M_b$.
The instantaneous formula \eqref{eq:p-normalized-explicit}, with
$\sigma A$ deleted, and the first pressure variation give the
scalar and coefficient bounds required by the zero-surface-tension
inner and outer energy estimates.  This step uses only the bootstrap at the fixed lower index and the
scalar reconstruction.

\emph{Step 3. Energy estimates above the bootstrap index.}
Choose an energy index $r>b$ once and for all, large enough to
reconstruct the outer good unknown at the lower index, apply one outer
DN operator, and use the original Bernoulli equation at all indices
required in the bootstrap.  It suffices to take $r\ge b+d+2$,
where $d$ is the maximum fixed loss in these operations.
The estimates with one factor at the highest regularity use smallness
at index $b$ and therefore apply at $s=r$ on the bootstrap interval.  The energy $\mathcal O_{r+1}^0$ controls the high outer potential norm.

Keep $a,R,\beta$ as parameters and apply the reference, linear and
quadratic decomposition to the complete reconstructed vector field,
as in Step~3 of the proof of
Theorem~\ref{thm:one-state-closed}, using
\eqref{eq:frozen-admissible-domain} with $\sigma=0$ and enlarging
$Y_0$ so that $\delta_b\le a^{3/2}$. Its
constants depend on $b,r$ and the reference geometry, but not on
$K_b,M_b$.  Let $P=\mathcal P_r^0\ge1$ dominate the reference forcing
and separated couplings, including the first mode bounds from Step~2.
Its degree is therefore fixed before the bootstrap weight.
Write $\delta_b$ for the lower physical and geometric norm.
The quadratic source bounds are
\begin{equation*}
\mathcal Q_-\le P\delta_b(E_r+a\mathcal O_{r+1}^0),\qquad
\mathcal Q_+\le P\delta_b(a^{-1}E_r+\mathcal O_{r+1}^0).
\end{equation*}
The second bound includes the outer height weight $\sqrt S=\mathcal P/a$
and the factor $t_Y$.  Choose $Y_0$ finally so that
$K_b(1+Y)^{M_b}a^2\le a^{3/2}$, hence $\delta_b\le a^{3/2}$.
The diagonal coefficients are then integrable. The cross terms are
bounded by $Pa^2\mathcal O_{r+1}^0$ and $PE_r$, respectively.
The finite-rank scalar and gauge terms are split in the same way
before taking norms: their reference values contribute forcing, and
their differences retain a perturbation factor.
Thus the only independent forcing is $Pa^3$ in the inner row and
$Pa$ in the outer row.

Combining \eqref{eq:zero-inner-budget} and
\eqref{eq:zero-outer-energy} gives
\eqref{eq:zero-coupled-terminal-input} for
$(E,O)=(E_r,\mathcal O_{r+1}^0)$ with $P=\mathcal P_r^0$.
The coupling of the two different regularity indices is justified by
Lemma~\ref{lem:zero-separated-asymmetric} and
\eqref{eq:zero-outer-asymmetric-source}.
Both transformed states vanish at the canonical terminal data.
Lemma~\ref{lem:zero-coupled-terminal-budget} therefore gives
\eqref{eq:zero-energy-tails} at $r$.
The inner reconstruction \eqref{eq:zero-inner-reconstruction}
gives $\|\eta\|_{H^r}+\|F\|_{H^r}\le\mathcal P a^2$.
Since
\begin{equation*}
S^{1/2}=\frac{q}{\sqrt2\,a\sqrt\Lambda}
\asymp\frac1{a(1+Y)},
\end{equation*}
the outer energy gives $\|h_+\|_{H^{r+1}}\le\mathcal P a^2$ and
$\|\mathfrak u_+\|_{H^{r+3/2}}\le\mathcal P a$.
The trace recovery \eqref{eq:zero-outer-trace-recovery} gives
$\|g\|_{H^k}\le\mathcal P a$ at the lower indices $k$ needed next.

\emph{Step 4. Recovery of the outer potential and closure of the bootstrap.}
The elliptic reconstruction at the fixed outer scale uses the condenser
velocity of size $O(q)$, the inner decay estimates just proved, and the bound
$\|g\|_{H^k}\le\mathcal P a$ at the required indices.  It gives
$v|_{\Gamma_+}=O(\mathcal P q)$, using the additional regularity $r\ge b+d+2$.  Here polynomial
multiples of $a$ are bounded by polynomial multiples of $q$.
Also $X_{+,Y}=O(\mathcal P a^2)$.
In the common mean-zero outer gauge, the nonconstant projection
of the original Bernoulli equation is
\begin{equation}\label{eq:zero-outer-raw-recovery}
g_Y=P_{\ne0}\left\{
t_Y\left(\tfrac12B_+^2-\tfrac12|v_{\tau,+}|^2\right)
+v_{\tau,+}(X_{+,Y}\cdot\tau_+)\right\}.
\end{equation}
Thus at every low index required in the bootstrap,
$\|g_Y\|\le\mathcal P q a^2$.
Since $g(Y_N)=0$, backward integration and
$q(s)/q(Y)\le C\sqrt{(1+Y)/(1+s)}$ give
\begin{equation*}
q(Y)^{-1}\|g(Y)\|\le\mathcal P(Y)a(Y)^2.
\end{equation*}
The energy estimate at $r$ supplies the extra derivative in
\eqref{eq:zero-outer-raw-recovery}, independently of the lower
bootstrap bound being improved.

Now choose $M_b$ above the degree $2\deg \mathcal P_r^0$ from the
weighted terminal estimate and above the finitely many resulting
degrees in the reconstruction, centre estimate and outer
recovery, including their factors of $q^{-1}$.
These degrees have already been determined independently of
$M_b$.  When defining the resulting forcing constant $C_b$, bound the integrated
homogeneous coefficients in the coupled estimate by a fixed constant, say one.  Choose $K_b>2C_b$, and finally
increase $Y_0$ so that the fixed ball conditions,
$K_b(1+Y)^{M_b}a^2\le a^{3/2}$, and the chosen bound on all
integrated homogeneous coefficients hold.
All bootstrap dependent contributions contain a positive power of $a$
and therefore decay exponentially, so this final choice is possible.
The estimates then improve the entire low bootstrap with the fixed
degree $M_b$ and constant $K_b$.

\emph{Step 5. Higher regularity and continuation.}
After the low bootstrap is closed, apply the same pair of energy
estimates at every higher fixed $s$.  Their coefficients depend
only on the established bounds at the lower index.  The weighted terminal lemma applies with arbitrary fixed polynomial
constants at higher regularity and the same $Y_0$, proving
\eqref{eq:zero-energy-tails} at these indices.  For any prescribed outer trace index, apply
\eqref{eq:zero-outer-raw-recovery} using the energy with the required fixed number of additional
derivatives.  This proves
\eqref{eq:zero-physical-tails} at every fixed index on the same
interval.  Proposition~\ref{prop:raw-state-first-jet} applies with the
curvature term deleted and supplies the first $Y$ derivatives.

Proposition~\ref{prop:actual-taylor-coefficient} and
Lemma~\ref{lem:zero-outer-taylor} give
$\mathfrak a_-\ge c q^2/a^3$ and
$\mathfrak a_+\ge\underline\kappa S>0$ on the closed bootstrap.
These lower bounds are uniform on each compact $Y$ interval.
If the left endpoint of a solution with finite terminal parameter
were greater than $Y_0$, its positive cavity radius, separated smooth
charts, Sobolev bounds and Taylor lower bounds would permit continuation
by the local zero-surface-tension theory.  The solution therefore extends to $Y_0$.
\end{proof}

\begin{proof}
We prove Theorem~\ref{thm:zero-surface-tension-construction}.
Apply Proposition~\ref{prop:compactness-without-relative-energy}
with $\sigma=0$ to the branches of
Theorem~\ref{thm:zero-one-state-closed}.  It gives a smooth limiting trajectory satisfying the geometric and
potential estimates and $q^2Y\to E_c/(\pi L_*)$, with strictly increasing
physical time and a finite collapse time.  Thus all hypotheses in
\eqref{eq:concentration-input-tails} hold.
The global potential and the same local layer representation give
\eqref{eq:scaled-inner-collar-velocity}.  Since $p=0$ on both
boundaries, the harmonic pressure identity
\eqref{eq:harmonic-pressure-reconstruction} holds with $\sigma=0$.
Thus the common arguments in
Subsections~\ref{subsec:energy-concentration}--\ref{subsec:inner-pressure-tangent}
give the energy and velocity limits, exact gradient rate, Green and
pressure profiles, and cylindrical tangent.
Proposition~\ref{prop:correct-tube-stability} gives both curvature
asymptotics.  The chart estimate \eqref{eq:H4-controls-curvature}
excludes the stated strong continuation.

For the filling continuation, the incoming bounds $\|v\|_\infty\le Cq/a$ and
$\|p\|_2\le CS$ imply the pressure and bulk flux integrability in
\eqref{eq:weak-capillary-integrable-scales}--\eqref{eq:weak-capillary-flux-integrability}.
The static branch \eqref{eq:zero-static-filling} has the same mass
and momentum traces and zero outgoing energy.  Applying
Proposition~\ref{prop:weak-capillary-continuation}(i) with $\sigma=0$
gives the exact defect \eqref{eq:main-weak-energy-defect}.
\end{proof}

\begin{proof}
We prove Theorem~\ref{thm:terminal-material-deformation}.
Both constructions satisfy the common hypotheses in
\eqref{eq:concentration-input-tails} and the scaled collar bound
\eqref{eq:scaled-inner-collar-velocity}.  Propositions
\ref{prop:terminal-lagrangian-map}, \ref{prop:exact-terminal-topology},
and~\ref{prop:critical-terminal-deformation} therefore give all
assertions for each fixed $\sigma\ge0$.
\end{proof}

\section{Construction with prescribed outer geometry}
\label{sec:prescribed-outer-geometry}

We prove Theorem~\ref{thm:prescribed-outer-geometry} by adapting the
canonical construction to the prescribed outer geometry.  We first
introduce the centred coordinates and the corresponding geometric
estimates.  The proof then applies the preceding elliptic and energy
estimates while retaining both centre coordinates and both first
potential harmonics.

\subsection{Centred coordinates and geometric estimates}

Fix $D_*$, $c_*$, $\sigma$ and $E_c$ as in
Theorem~\ref{thm:prescribed-outer-geometry}.
We use the following parametrizations and establish the estimates below
for the solution in that theorem.

Let $a(t)$ denote the mean radius of the inner meridional section
in the centred parametrization below.  Fix a sufficiently small local
reference length $\ell>0$, and use the logarithmic scale variable
$Y=\log(\ell/a)$, so that $a(Y)=\ell e^{-Y}$ and $Y\to\infty$ as
$t\uparrow T$.  The inner meridian is parametrized by
\begin{equation*}
X_-(Y,\theta)=c(Y)+a(Y)(1+\eta(Y,\theta))e_r(\theta),
\qquad c=(R,Z),\qquad P_0\eta=P_1\eta=0,
\end{equation*}
where $c(Y)=(R(Y),Z(Y))$ is the centre in the $(\rho,z)$ plane.
The angle $\theta\in\T$ is measured at $c$ from the direction of
increasing $\rho$, and $e_r(\theta)=(\cos\theta,\sin\theta)$ is the
corresponding unit radial vector.  Thus the distance from $c$ to
$X_-(Y,\theta)$ is $a(1+\eta)$: the dimensionless function $\eta$
describes the shape, and $a\eta$ is the signed radial displacement
from the reference circle of radius $a$.  The operators $P_0$ and
$P_1$ are the $L^2(\T)$ orthogonal projections onto constants and
onto $\operatorname{span}\{\cos\theta,\sin\theta\}$, respectively.
The conditions $P_0\eta=P_1\eta=0$ make $a$ the angular mean radius
and remove the first harmonics from the shape variable, separating
the radius and centre from the remaining deformation.

Let $X_+^*:\T\to\partial D_*$ be a fixed smooth parametrization
and $n_+^*$ its outward unit normal.  In the corresponding normal
graph chart, the moving outer meridian is
\begin{equation*}
X_+(Y,s)=X_+^*(s)+h_+(Y,s)n_+^*(s),\qquad s\in\T,
\end{equation*}
where $h_+$ is its signed normal displacement from the prescribed
terminal curve.  Here $s$ parametrizes the outer curve independently of
the inner polar angle $\theta$.  For every fixed $k\ge0$, there is
a polynomial $\mathcal P_k$ with nonnegative coefficients,
depending on the fixed data and on $k$, such that
\begin{equation}\label{eq:prescribed-geometry-tails}
\|\eta\|_{H^k}+|c-c_*|+|c_Y|
+\|X_+-X_+^*\|_{H^k}\le \mathcal P_k(Y)a^2.
\end{equation}
Here $c_Y=dc/dY$, and the Sobolev norms are taken on $\T$.
Thus the relative inner deformation, centre displacement and velocity
in the $Y$ coordinate, and outer displacement all tend to zero as
$Y\to\infty$.

Figure~\ref{fig:prescribed-coordinates} shows the centred inner chart
and the normal graph over the prescribed outer meridian.

\begin{figure}[!htb]
\centering
\begin{tikzpicture}[cbpfigure]
\path[use as bounding box] (0,0) rectangle (15.7,7.30);
\node at (7.85,7.00) {Inner meridian: radial coordinates in the $(\rho,z)$ plane};
\begin{scope}[shift={(6.00,4.00)}]
\fill[cbpFluid] (-4.70,-2.35) rectangle (8.20,2.55);
\filldraw[fill=white,cbpinner,domain=0:360,samples=200,variable=\ang]
plot ({1.80*(1+.40*cos(2*\ang)+.12*cos(3*\ang))*cos(\ang)},
{1.80*(1+.40*cos(2*\ang)+.12*cos(3*\ang))*sin(\ang)})--cycle;
\draw[dashed,cbpInk!65,line width=.7pt] (0,0) circle (1.80);
\draw[dashed,->,cbpInk!60] (0,0)--(4.70,0)
node[right] {positive $\rho$ direction};
\draw[cbpInk!55,line width=.65pt] (0,0)--({1.80*cos(30)},{1.80*sin(30)});
\draw[cbpflow] (0,0)--({1.10*cos(30)},{1.10*sin(30)});
\node[anchor=south east,inner sep=2pt] at (.93,.64) {$e_r(\theta)$};
\draw[->,cbpInk,line width=.7pt,domain=0:30,samples=25,variable=\ang]
plot ({.63*cos(\ang)},{.63*sin(\ang)});
\node at (.82,.20) {$\theta$};
\draw[cbpmeasure] (0,0)--({1.80*cos(125)},{1.80*sin(125)})
node[midway,fill=white,inner sep=2pt,anchor=east] {$a(Y)$};
\fill[cbpInk] ({1.80*cos(30)},{1.80*sin(30)}) circle (1.8pt);
\fill[cbpCore] ({2.16*cos(30)},{2.16*sin(30)}) circle (2.1pt);
\draw[cbpflow,line width=1.05pt]
({1.80*cos(30)},{1.80*sin(30)})--({2.16*cos(30)},{2.16*sin(30)});
\draw[cbpleader] ({2.16*cos(30)},{2.16*sin(30)})--(3.20,2.08)--(7.52,2.08);
\node[anchor=south east,inner sep=3pt] at (7.52,2.08) {$X_-(Y,\theta)$};
\draw[cbpleader] ({1.98*cos(30)},{1.98*sin(30)})--(3.22,1.35)--(7.52,1.35);
\node[anchor=south east,inner sep=3pt,cbpFlow] at (7.52,1.35) {$a(Y)\eta(Y,\theta)$};
\draw[cbpleader] ({1.80*cos(30)},{1.80*sin(30)})--(3.20,-.55)--(7.52,-.55);
\node[anchor=north east,inner sep=3pt] at (7.52,-.55) {$c(Y)+a(Y)e_r(\theta)$};
\fill[cbpCore] (0,0) circle (2pt);
\node[anchor=north west,inner sep=4pt] at (0,-.04) {$c(Y)$};
\fill[cbpInk] (-.90,-.75) circle (1.8pt);
\node[anchor=north east,inner sep=4pt] at (-.90,-.75) {$c_*$};
\draw[cbpflow] (-.86,-.715)--(-.055,-.046);
\draw[cbpleader] (-.46,-.383)--(-2.30,-.42)--(-4.15,-.42);
\node[anchor=south west,inner sep=3pt] at (-4.15,-.42) {$c(Y)-c_*$};
\node at (.68,-.91) {cavity};
\node at (6.72,-1.57) {fluid};
\draw[cbpleader]
({1.80*(1+.40*cos(320)+.12*cos(480))*cos(160)},
{1.80*(1+.40*cos(320)+.12*cos(480))*sin(160)})--(-3.18,1.58)--(-4.15,1.58);
\node[anchor=south west,inner sep=3pt] at (-4.15,1.58) {current inner meridian};
\draw[->,cbpInk!65] (-4.20,-1.92)--(-3.35,-1.92) node[right] {$\rho$};
\draw[->,cbpInk!65] (-4.20,-1.92)--(-4.20,-1.20) node[above] {$z$};
\end{scope}
\draw[dashed,cbpInk!65,line width=.7pt] (3.00,1.03)--(3.75,1.03);
\node[anchor=west] at (3.95,1.03) {reference circle $|(\rho,z)-c(Y)|=a(Y)$};
\node at (7.85,.35) {$X_-(Y,\theta)=c(Y)+a(Y)\bigl(1+\eta(Y,\theta)\bigr)e_r(\theta)$};
\end{tikzpicture}

\vspace{4mm}
\begin{tikzpicture}[cbpfigure]
\path[use as bounding box] (0,0) rectangle (15.7,7.30);
\node at (7.85,7.00) {Outer meridian: normal coordinates in the $(\rho,z)$ plane};
\begin{scope}[shift={(7.85,0)}]
\path[fill=cbpFluid]
plot[domain=-5.7:5.7,samples=160,variable=\s]
({\s-(.80+.04*\s)*(.10+.13*\s)/sqrt(1+(.10+.13*\s)^2)},
{3.00+.10*\s+.065*(\s)^2+(.80+.04*\s)/sqrt(1+(.10+.13*\s)^2)})
--({5.7-(.80+.04*5.7)*(.10+.13*5.7)/sqrt(1+(.10+.13*5.7)^2)},1.70)
--({-5.7-(.80-.04*5.7)*(.10-.13*5.7)/sqrt(1+(.10-.13*5.7)^2)},1.70)--cycle;
\draw[cbpwall,line width=.85pt,domain=-5.7:5.7,samples=160,variable=\s]
plot ({\s-(.80+.04*\s)*(.10+.13*\s)/sqrt(1+(.10+.13*\s)^2)},
{3.00+.10*\s+.065*(\s)^2+(.80+.04*\s)/sqrt(1+(.10+.13*\s)^2)});
\draw[dashed,cbpInk!65,line width=.7pt,domain=-5.7:5.7,samples=160,variable=\s]
plot (\s,{3.00+.10*\s+.065*(\s)^2});
\fill[cbpInk] (0,3.00) circle (2pt);
\fill[cbpCore] ({-.80*.10/sqrt(1+.10^2)},
{3.00+.80/sqrt(1+.10^2)}) circle (2.2pt);
\draw[cbpflow,line width=1.05pt]
(0,3.00)--({-.80*.10/sqrt(1+.10^2)},{3.00+.80/sqrt(1+.10^2)});
\draw[dotted,->,cbpInk,line width=.75pt]
({-.80*.10/sqrt(1+.10^2)},{3.00+.80/sqrt(1+.10^2)})
--({-1.60*.10/sqrt(1+.10^2)},{3.00+1.60/sqrt(1+.10^2)});
\node[anchor=south east,inner sep=4pt] at (-.24,4.60) {$n_+^*(s)$};
\draw[cbpleader]
({-.80*.10/sqrt(1+.10^2)},{3.00+.80/sqrt(1+.10^2)})--(1.45,6.20)--(4.50,6.20);
\node[anchor=south east,inner sep=3pt] at (4.50,6.20) {$X_+(Y,s)$};
\draw[cbpleader] (0,3.00)--(1.64,2.52)--(4.90,2.52);
\node[anchor=north east,inner sep=3pt] at (4.90,2.52) {$X_+^*(s)$};
\draw[cbpleader] ({-.40*.10/sqrt(1+.10^2)},
{3.00+.40/sqrt(1+.10^2)})--(-.82,3.41);
\node[anchor=east,inner sep=3pt,cbpFlow] at (-.82,3.41) {$h_+(Y,s)$};
\node at (2.40,1.99) {fluid};
\draw[->,cbpInk!65] (-4.85,2.02)--(-3.97,2.02) node[right] {$\rho$};
\draw[->,cbpInk!65] (-4.85,2.02)--(-4.85,2.77) node[above] {$z$};
\end{scope}
\draw[dashed,cbpInk!65,line width=.7pt] (1.75,1.03)--(2.50,1.03);
\node[anchor=west] at (2.70,1.03) {terminal meridian $\partial D_*$};
\draw[cbpwall,line width=.85pt] (8.80,1.03)--(9.55,1.03);
\node[anchor=west] at (9.75,1.03) {current outer meridian};
\node at (7.85,.35) {$X_+(Y,s)=X_+^*(s)+h_+(Y,s)n_+^*(s)$};
\end{tikzpicture}
\caption{Local coordinates for
Theorem~\ref{thm:prescribed-outer-geometry}.
In the upper panel, $\theta$ is measured about the moving centre
$c(Y)$ from the positive $\rho$ direction.  The dashed circle has
radius $a(Y)$. The signed radial displacement from it to the inner
meridian is $a(Y)\eta(Y,\theta)$.  The arrow from $c_*$ to $c(Y)$ is the centre
displacement.  In the lower panel, the corresponding outer boundary
points differ by $h_+(Y,s)n_+^*(s)$, where $n_+^*(s)$ is the outward
unit normal to the terminal meridian.  The displayed radial and normal displacements are
positive. A negative displacement points in the opposite direction.}
\label{fig:prescribed-coordinates}
\end{figure}

\subsection{Proof of the theorem for prescribed geometry}

\begin{proof}[Proof of Theorem~\ref{thm:prescribed-outer-geometry}]
We divide the proof into five steps.

\emph{Step 1. Elliptic estimates for the prescribed geometry.}
Put $\rho_*=\tfrac12\min_{\overline D_*}\rho>0$ and
$d_*=\tfrac12\operatorname{dist}(c_*,\partial D_*)>0$.  Choose the local reference length
$\ell$ sufficiently small that $4\ell<\min(R_*,d_*)$, and set
$r_0=\ell$.  Choose fixed, sufficiently small neighbourhoods of $\partial D_*$ in the
normal graph topology and of $c_*$ so that
\eqref{eq:macro-geometry-assumptions}--\eqref{eq:outer-distance-assumption} hold.
The constants depend on the fixed pair $D_*,c_*$.

Lemma~\ref{lem:axisymmetric-green} applies to this filled domain.
Its regular kernel and the geometric variations of that kernel satisfy
the local bounds used in Lemma~\ref{lem:weighted-layer-expansion}.
The coefficient $\rho$ is invariant under translation in $z$, so
the layer estimates also apply when both centre coordinates vary.
The same nonconstant single-layer inverse and scalar Schur
complement give Lemma~\ref{lem:capacity-single-layer-schur}, with
the Robin coefficient depending on $(R,Z,\Gamma_+)$.
Proposition~\ref{prop:charge-free-localization} applies to the present
macroscopic domain.  Thus the capacity bounds, the estimates for separated
boundaries, and the estimates with fixed derivative loss in
Section~\ref{sec:scaled-DN} hold with the stated powers of $a$.

\emph{Step 2. Centre modulation and the first potential harmonics.}
Retain both centre coordinates and both first harmonic coefficients
of the potential.  For a scalar $f$ write
$f_0=(2\pi)^{-1}\int_\T f\,d\theta$ and
$f_1=\pi^{-1}\int_\T f e_r\,d\theta\in\R^2$.
For vector-valued functions, take means componentwise.  Set
\begin{equation*}
d=1+\eta,\quad e=\eta_\theta,\quad j=(d^2+e^2)^{1/2},
\quad C=e_r-(e/d)e_\theta,\quad b_\eta=(j/d)B_-.
\end{equation*}
The exact kinematic equation is
$a(\eta_Y-d)+c_Y\cdot C=-t_Yb_\eta$.  Define
$M_{ij}=\pi^{-1}\int(e_r)_iC_j$.  Taking the mean and first harmonic projections of the kinematic equation gives
\begin{equation}\label{eq:general-centre-modulation}
\begin{pmatrix}(b_\eta)_0&C_0^\top\\ (b_\eta)_1&M\end{pmatrix}
\begin{pmatrix}t_Y\\c_Y\end{pmatrix}
=\begin{pmatrix}a\\0\end{pmatrix},\qquad
t_Y=\frac a{\Delta_c},\quad
c_Y=-\frac{aM^{-1}(b_\eta)_1}{\Delta_c},\quad
\Delta_c=(b_\eta)_0-C_0\cdot M^{-1}(b_\eta)_1.
\end{equation}
Here $M=I+O(\eta)$ and $C_0=O(\eta^2)$, the latter because
$P_1\eta=0$.  In normalized variables, the matrix is a small perturbation
of the matrix at the circular cylindrical reference state, and $\Delta_c>0$ under the physical bootstrap.

Use the zero-flux decomposition
$\Phi_{0,-}=c_-+\alpha\cdot e_r+f$, with
$P_0f=P_1f=0$ and $\alpha\in\R^2$.  Put
\begin{equation*}
n=aB_-/q,\quad \upsilon=av_{\tau,-}/q,\quad
\boldsymbol\kappa=(\kappa_1,\kappa_2)=c_Y/a,\quad
\beta=\alpha/q,\quad\Theta=qt_Y/a^2.
\end{equation*}
The complete dimensionless kinetic term is
\begin{equation*}
\mathcal K=\frac\Theta2(n^2-\upsilon^2)-\Theta\frac{e}{d}n\upsilon
+\frac jd\upsilon(\boldsymbol\kappa\cdot e_\theta).
\end{equation*}
At the circular cylindrical reference state,
$n=1+\beta\cdot e_r$, $\upsilon=\beta\cdot e_\theta$ and
$\boldsymbol\kappa=-\Theta\beta$.  Hence
\begin{equation*}
\mathcal K=\Theta\left\{\tfrac12(1+\beta\cdot e_r)^2
-\tfrac32(\beta\cdot e_\theta)^2\right\},
\qquad \mathcal K_1=\Theta\beta.
\end{equation*}
The corresponding second harmonic is
\begin{equation*}
P_{\ge2}\mathcal K
=\Theta\bigl((\beta_1^2-\beta_2^2)\cos2\theta
+2\beta_1\beta_2\sin2\theta\bigr).
\end{equation*}
Here and below $P_{\ge2}=I-P_0-P_1$ removes both first harmonics.
With $B_0=\partial_{n_-}\Phi_0$ as before, inversion of the layer
operator on $\operatorname{span}\{\cos\theta,\sin\theta\}$ gives
\begin{align*}
\|aB_0-\alpha\cdot e_r\|_{H^k}
&\le\mathcal P_k\bigl((a+\|\eta\|_{H^{k+2}})|\alpha|
+\|f\|_{H^{k+1}}+\|g\|_{H^{k+4}}\bigr),\\
\|av_{\tau,-}-\alpha\cdot e_\theta\|_{H^k}
&\le C_k\bigl(\|\eta\|_{H^{k+2}}|\alpha|
+\|f\|_{H^{k+1}}\bigr).
\end{align*}
Indeed, the circular nonzero DN block acts as the identity on this
two-dimensional space, and the same exact scalar flux elimination
applies.  The curvature formula depends on $R$ and the graph but
not on $Z$. Its first vector coefficient satisfies
$|\mathcal H_{-,1}|\le C_k(1+a^{-1}\|\eta\|_{H^{k+2}})$.
Substituting these estimates into the exact kinetic function, under
the physical trace bootstrap and $|\beta|\le\delta_1$, gives
\begin{equation*}
\alpha_Y-\alpha=\mathfrak f_1,\qquad |\mathfrak f_1|\le q\mathcal P a.
\end{equation*}
The terminal identity
$\alpha(Y)=-\int_Y^{Y_N}e^{Y-s}\mathfrak f_1(s)\,ds$ holds componentwise.
Thus $|\alpha|+|\alpha_Y|\le q\mathcal P a$ when $\alpha(Y_N)=0$.
Put $u_1=\pi^{-1}\int_{\mathbb T}n e_r\,d\theta$ and
$v_1=\pi^{-1}\int_{\mathbb T}\upsilon e_\theta\,d\theta$, and write
$n=n_0+u_1\cdot e_r+N$ and $\upsilon=v_1\cdot e_\theta+V$.
After subtracting the product of the displayed low modes, each remaining
term contains $N$, $V$, $e/d$, or $j/d-1$.  The high mode projection
of the terms involving only low modes is quadratic in
$(u_1,v_1,\boldsymbol\kappa)$ and is therefore $O_{H^k}(\mathcal P a^2)$ by
the first mode estimate.  This also controls the mixed product
$\beta_1\beta_2$ and proves the $O(\mathcal P a^3)$ forcing
bound in the scaled inner equations for general first harmonics.
Substituting these estimates into
\eqref{eq:general-centre-modulation} and integrating from
$c(Y_N)=c_*$ gives $|c_Y|+|c-c_*|\le\mathcal P a^2$.
The same exact energy reconstruction applies.  Its geometric derivatives
now include $\partial_Z$, with the same kernel bounds at the fixed scale.
Hence $q\asymp(1+Y)^{-1/2}$, $q_Y/q=-1/(2\Lambda)+O(\mathcal P a)$,
and $\Theta=1+O(\mathcal P a^2)$.

\emph{Step 3. The normal variational system.}
For the principal system, put
\begin{equation*}
b_g=\frac{dn-e\upsilon}{j},\qquad z=\frac{en+d\upsilon}{j},\qquad
\kappa_\tau=\boldsymbol\kappa\cdot e_\theta,\qquad
\mathcal V=\frac{\Theta z-\kappa_\tau}{d},\qquad
\mathcal D=\partial_Y+\mathcal V\partial_\theta.
\end{equation*}
The kinematic equation gives
$\mathcal D X_-=\Theta a(-b_ge_r+ze_\theta)=t_Yv$.
Thus the substitution from radial to normal variations in
Proposition~\ref{prop:polar-full-cancellation} applies with this
transport and retains the full Taylor multiplication operator.
Normalize the modulation matrix by setting
\begin{equation*}
N_c=(j/d)n,\qquad v_c=M^{-1}(N_c)_1,\qquad
\Delta=(N_c)_0-C_0\cdot v_c.
\end{equation*}
Then $\Theta=\Delta^{-1}$ and $\boldsymbol\kappa=-\Theta v_c$.
For a variation at fixed $a,c$, put $u=e/d$.  The differentials are
\begin{align*}
\dot u&=\dot\eta_\theta/d-e\dot\eta/d^2,
&\dot C&=-\dot u\,e_\theta,
&\dot N_c&=(j/d)\dot n+(u/(j/d))n\dot u,\\
\dot M_{ij}&=\pi^{-1}\int(e_r)_i\dot C_j,
&\dot v_c&=M^{-1}(\dot N_c)_1-M^{-1}\dot Mv_c,\\
\dot\Delta&=(\dot N_c)_0-\dot C_0\cdot v_c-C_0\cdot\dot v_c,
&\dot\Theta&=-\Theta^2\dot\Delta,
&\dot{\boldsymbol\kappa}&=-\dot\Theta v_c-\Theta\dot v_c.
\end{align*}
Both $M^{-1}$ and $\Delta^{-1}$ are uniformly bounded under the
bootstrap at the fixed lower index.  Angular integration by parts and
the DN variation formula express these derivatives as finite sums of
scalar functionals of the variations, with the fixed derivative loss
and polynomial constants of the spatial estimates.
The resulting additional terms in the scaled high mode equations are
\begin{align*}
\dot h_Y\big|_{\rm m}
&=-aP_{\ge2}(N_c\dot\Theta+C\cdot\dot{\boldsymbol\kappa}),\\
a\dot{(f_Y/q)}\big|_{\rm m}
&=aP_{\ge2}\left[
\left(\tfrac12(n^2-\upsilon^2)-un\upsilon
-B_q\mathscr C\right)\dot\Theta
+(j/d)\upsilon(\dot{\boldsymbol\kappa}\cdot e_\theta)\right].
\end{align*}
The reconstruction estimates and the centering constraints imply,
in each fixed coefficient norm,
\begin{equation*}
P_{\ge2}N_c=O(\mathcal P a^2),\qquad
P_{\ge2}C=O(\mathcal P a^2),\qquad
P_{\ge2}\left(\tfrac12(n^2-\upsilon^2)-un\upsilon
-B_q\mathscr C\right)=O(\mathcal P a^2),
\end{equation*}
whereas $P_{\ge2}((j/d)\upsilon e_\theta)=O(\mathcal P a)$.
We express the kernels of these finite-rank terms in the variables
$U=(\eta,F,h_+,G)$, where $F=f/q$ and $G=g/q$, with $a,c,q,\beta$ held fixed.
Let $\boldsymbol\Psi=(\lambda+\beta\cdot e_r+F,G)$ be the trace
of $\Phi/q$.  With $e_-=(1,0)$ and $w=(\rho_-/R)j$, its constant
$\lambda$ is determined by
\begin{equation*}
(\mathfrak G\boldsymbol\Psi)_-=wn,
\qquad \langle e_-,\mathfrak G\boldsymbol\Psi\rangle=2\pi.
\end{equation*}
Pairings use $d\theta$ on each boundary.  For
$\chi\in\{(2\pi)^{-1},\pi^{-1}\cos\theta,\pi^{-1}\sin\theta\}$, put
\begin{equation*}
t_\chi=\frac{R\chi}{\rho_-d},\qquad
d_c=\langle e_-,\mathfrak Ge_-\rangle,\qquad
r_\chi=\frac{\langle e_-,\mathfrak G(t_\chi,0)\rangle}{d_c},
\qquad Z_\chi=\mathfrak G\bigl((t_\chi,0)-r_\chi e_-\bigr).
\end{equation*}
Thus $\langle e_-,Z_\chi\rangle=0$ and $d_c\asymp(1+Y)^{-1}$.
Write $\bar b_+=q^{-1}b_+^{\rm ch}=q^{-1}v|_{\Gamma_+}\cdot n_+^*$, where $n_+^*$
is the fixed normal defining the outer graph.  The moment derivative is
\begin{align}
D\!\left(\int\chi N_c\,d\theta\right)[\dot U]
&=\langle B_\chi^\eta,\dot\eta\rangle
+\langle (Z_\chi)_-,\dot F\rangle
-\langle\bar b_+(Z_\chi)_+,\dot h_+\rangle
+\langle(Z_\chi)_+,\dot G\rangle,\notag\\
B_\chi^\eta
&=b_g(Z_\chi)_--\frac{\rho_-}{R}z\,\partial_\theta t_\chi
-(\mathfrak G\boldsymbol\Psi)_-t_\chi
\left(\frac1d+\frac{a\cos\theta}{\rho_-}\right).
\label{eq:general-moment-kernel}
\end{align}
Indeed, $N_c=R(\mathfrak G\boldsymbol\Psi)_-/(\rho_-d)$.
The radial inner variation has normal component
$H_-=-a(d/j)\dot\eta$ and tangential component
$a(e/j)\dot\eta$.  Hadamard's formula, including the tangential
pullback of the flux density, gives its geometric contribution as
\begin{equation*}
D\mathfrak G[\dot\eta]\boldsymbol\Psi
=\mathfrak G(b_g\dot\eta,0)
+\left(\partial_\theta\bigl((\rho_-/R)z\dot\eta\bigr),0\right).
\end{equation*}
By self-adjointness, we move $\mathfrak G$ onto the test trace and
integrate the second term by parts.  The differentiated flux constraint
eliminates $\dot\lambda$ after replacing the test trace
with $(t_\chi,0)-r_\chi e_-$, with $r_\chi$ held fixed.
The derivative
$\dot t_\chi=-t_\chi(d^{-1}+a\cos\theta/\rho_-)\dot\eta$
supplies the last term in \eqref{eq:general-moment-kernel}.
The test trace vanishes on the outer boundary, so the normal and
tangential contributions combine into
$-q^{-1}(v\cdot n_+^*)\dot h_+(Z_\chi)_+$.
The remaining moments have the local kernels
\begin{equation*}
D\!\left(\int\chi C_i\,d\theta\right)[\dot\eta]
=\int\frac{\partial_\theta(\chi(e_\theta)_i)}d\,
\dot\eta\,d\theta,\qquad i=1,2.
\end{equation*}
This follows by integrating the $\dot\eta_\theta$ term once.
The two terms containing $e\dot\eta/d^2$ cancel.
Substitution into the displayed differentials of $M,v_c,\Delta$
gives row kernels $B_\Theta,B_{\kappa_i}$ such that
$\dot\Theta=\langle B_\Theta,\dot U\rangle$ and
$\dot\kappa_i=\langle B_{\kappa_i},\dot U\rangle$.

Set $P=P_{\ge2}$ and
\begin{equation*}
A_\Theta=\binom{-PN_c}
{P\left(\tfrac12(n^2-\upsilon^2)-un\upsilon-B_q\mathscr C\right)},
\qquad
A_i=\binom{-PC_i}{P\left((j/d)\upsilon(e_\theta)_i\right)}.
\end{equation*}
The two scaled rows above have the exact modulation contribution
$a\mathsf K(U)\dot U$, where
\begin{equation*}
\mathsf K(U)=A_\Theta\otimes B_\Theta+
\sum_{i=1}^2A_i\otimes B_{\kappa_i},
\qquad (A\otimes B)\dot U=A\langle B,\dot U\rangle.
\end{equation*}
Fix $N_1$.  The explicit kernels, the flux estimates and the scalar
denominators determine a fixed index $b_{N_1}$ such that, on a bounded
ball at this lower index,
\begin{equation*}
\sum_\nu\|B_\nu(U)\|_{H^{N_1}}\le\mathcal P(Y),\qquad
\sum_\nu\|B_\nu(U)-B_\nu(0)\|_{H^{N_1}}
\le\mathcal P(Y)\|U\|_{H^{b_{N_1}}}.
\end{equation*}
Here $\nu\in\{\Theta,1,2\}$ indexes the three terms in $\mathsf K$.
We write $B_i=B_{\kappa_i}$ for $i=1,2$.  Each DN operator
acts on one of the displayed test traces, so these estimates use only
spatial derivatives of the state up to a fixed order.  In the inner Hadamard pairing, the normal
displacement, arclength density and two physical derivatives have
scales $a,a,a^{-2}$, respectively.  Their product is scale invariant.
The factor $d_c^{-1}$ contributes only polynomial factors in $Y$.
For $r\ge b_{N_1}$, the estimates at high output regularity have a fixed
loss $d_{\rm out}$ independent of $r$:
\begin{equation*}
\sum_\nu\|A_\nu(U)-A_\nu(0)\|_{H^r}
\le C_r\mathcal P_r(Y)\|U\|_{H^{r+d_{\rm out}}},\qquad
\sum_\nu\|A_\nu(0)\|_{H^r}
\le C_r\mathcal P_r(Y)a.
\end{equation*}
The reference state keeps the current parameters and prescribed
outer geometry.  Its bound follows from $|\beta|\le\mathcal P a$ and the
circular condenser expansion.  The identity
\begin{equation*}
A(U)\otimes B(U)-A(0)\otimes B(0)
=(A(U)-A(0))\otimes B(U)+A(0)\otimes(B(U)-B(0))
\end{equation*}
therefore gives, for a fixed $d_{N_1}$ independent of $r$,
\begin{equation*}
\|\mathsf K(U)-\mathsf K(0)\|_{H^{-N_1}\to H^r}
\le C_r\mathcal P_r(Y)\|U\|_{H^{r+d_{N_1}}},\qquad
\|\mathsf K(0)\|_{H^{-N_1}\to H^r}\le C_r\mathcal P_r(Y)a.
\end{equation*}
Only the range functions $A_\nu$ are measured at the output index $r$.
For input $(a\dot\eta,a\dot F,\dot h_+,\dot g)$ at fixed $q$,
the prefactor $a$ replaces a row kernel
$(B_\eta,B_F,B_h,B_G)$ by
$(B_\eta,B_F,aB_h,(a/q)B_G)$.
The preceding kernel formulas use normalized trace coordinates.
For the spatial dyadic expansion, now write $U=(\eta,F,h_+,g)$,
keep $a,c,\beta$ fixed, and reconstruct $q$ from the energy equation.
The coordinate conversion gives
\begin{equation*}
\dot G=q^{-1}\dot g-G\,\dot q/q,\qquad \dot\beta=0.
\end{equation*}
The energy equation is quadratic in $q$, with positive root
\eqref{eq:frozen-positive-root} and implicit derivative denominator
$2q(D_b+D_{ii})+D_{ig}\asymp q(1+Y)$.
Differentiating this equation and \eqref{eq:p-normalized-explicit},
and moving tangential and DN derivatives onto the test fields,
gives the scalar input kernels in these coordinates.  Their additional
inner range functions include $(0,2\Theta B_qP\mathscr C)$ for
$\dot q/q$ and $(0,-F)$ for $\dot p_q$; the outer trace conversion
retains its factor $G$.  As in
Lemma~\ref{lem:reconstructed-kernel-bounds}, these kernels have the
same fixed-loss difference bounds, with polynomial factors from
$q^{\pm1}$. Enlarge $b_{N_1}$ and $d_{N_1}$ once to cover these
scalar kernels and the trace conversion.
In the dyadic sum below, $\mathsf K$ denotes the sum
of the modulation and scalar chain rule kernels in the unscaled physical
coordinates $U$, before multiplication by the row factor $a$.
For a physical row kernel
$\widehat B=(\widehat B_\eta,\widehat B_F,\widehat B_h,\widehat B_g)$,
this multiplication and the input
$(a\dot\eta,a\dot F,\dot h_+,\dot g)$ give the row
\begin{equation*}
(\widehat B_\eta,\widehat B_F,a\widehat B_h,a\widehat B_g).
\end{equation*}
All factors from $G=g/q$, including $q^{-1}$, are already included
in $\widehat B$.
Fixing $\beta$ in the spatial expansion leaves its actual time
variation unchanged:
\begin{equation*}
\beta_Y=q^{-1}\alpha_Y-p_q\beta.
\end{equation*}
This derivative remains in the differentiated coefficients.

Use \eqref{eq:frozen-admissible-domain} in these physical coordinates,
with $c=(R,Z)$ and $\beta\in\mathbb R^2$ as parameters, increasing
$b_{\rm ad}$ to control both first harmonic moments.
This fixed choice precedes the final choice of the lower bootstrap
and energy indices.
The inner Dirichlet energy is a nonnegative quadratic form in
$(\beta,F)$, so the energy-root argument of
Lemma~\ref{lem:frozen-admissibility} applies with the same energy
and Cauchy--Schwarz bounds.
Moreover, \eqref{eq:general-centre-modulation} gives
$M=I+O(\|\eta\|_{H^{b_{\rm ad}}})$.
The circular flux expansion and the moment estimates give
\begin{equation*}
\Delta=1+O(\mathcal P(Y)a)
+O(\mathcal P(Y)\|U\|_{H^{b_{\rm ad}}}).
\end{equation*}
Thus $M$ is invertible and $\Delta>1/2$ throughout this domain
for sufficiently large $Y$.  Its Fourier cutoffs and Taylor segments
are admissible by the same convexity argument as in
Lemma~\ref{lem:frozen-admissibility}.

On a frozen dyadic block, for $b\ge b_{N_1}$,
\begin{equation*}
\|[\mathsf K(S_{j-K}U)-\mathsf K(0)]U_j\|_{H^{s+2}}
\le C_s\mathcal P_s(Y)
2^{(2+d_{N_1})j}\|U\|_{H^s}
2^{-(b+N_1)j}\|U\|_{H^b}.
\end{equation*}
Choose $b+N_1>2+d_{N_1}$ while satisfying the other fixed lower index
requirements.  Summation gives the quadratic remainder of
Lemma~\ref{lem:finite-loss-dyadic}. The reference kernel remains an
integrable linear source.  Multiplication by the row factor $a$
and the preceding scale conversion give the source bounds of
Theorem~\ref{thm:actual-principal-reduction} for $\sigma>0$.
For $\sigma=0$, the two source components are measured in $H^s$
and $H^{s+1/2}$ and combined with
\eqref{eq:zero-outer-to-inner} and \eqref{eq:zero-inner-source}.

Differentiation in $Z$ acts only on the regular Green kernels at the
fixed scale, since the cylindrical weight and the inner curvature
formula are independent of $Z$.
The corresponding time derivative carries the factor $Z_Y=O(\mathcal P a^2)$
and therefore satisfies the same coefficient bounds as the other
centre terms.

\emph{Step 4. Energy and trace estimates.}
We apply the inner calculus on the full real space
$P_{\ge2}H^s(\mathbb T)$.  The Fourier proofs of the product,
composition, dyadic and commutator estimates sum over all signed
frequencies and thus apply on this space.  The principal
symbol is $x=|\xi|/j$. Ellipticity on the retained modes follows from
$|m|\ge2$ for both sine and cosine modes, where $m\in\mathbb Z$
is the Fourier index.  The change of
unknowns has the same constant limiting matrix and the same
small perturbation in the lower norm.  The preceding explicit
modulation calculation accounts for the additional finite-rank
terms.  Curvature and pressure are handled by their geometric
variation formulas on the full graph space.  The estimates for separated boundaries in
Lemma~\ref{lem:zero-separated-asymmetric} apply to arbitrary $c$ and
arbitrary inner graphs.  In particular, the inner inputs remain at the
fixed lower index when the outer output is estimated at index $s+1$.
Thus the second centre coordinate and the sine harmonics preserve the
source orders and the regularity assigned to each boundary.

For $\sigma>0$, the outer argument at a fixed scale in
Proposition~\ref{prop:actual-outer-principal-energy} uses ellipticity.
The finitely many nonpositive low modes are completed by fixed
finite-rank corrections, which are returned to the source with the
integrable factor $t_Y$.  Consequently the principal reduction and
Lemma~\ref{lem:zero-coupled-terminal-budget} give
$E_s\le\mathcal P_sa^3$ and $\mathcal O_s\le\mathcal P_sa^2$.
For $\sigma=0$, the Hopf bound for the filled Green potential in
Lemma~\ref{lem:zero-outer-taylor} and the filled domain spectral gap in
\eqref{eq:zero-outer-dn-coercivity} apply to this smooth connected
fixed geometry.  With the inner and outer energies paired as in
Theorem~\ref{thm:zero-one-state-closed},
Lemma~\ref{lem:zero-coupled-terminal-budget} gives
$E_s\le\mathcal P_sa^3$ and $\mathcal O_{s+1}^0\le\mathcal P_sa$.
The original outer Bernoulli equation
\eqref{eq:zero-outer-raw-recovery} then recovers
$q^{-1}\|g\|_{H^k}\le\mathcal P_ka^2$ with the required fixed number
of additional spatial derivatives.
Choose the bootstrap parameters in the order used in
Theorem~\ref{thm:zero-one-state-closed}: first the lower index and
the number of additional derivatives, then the reference forcing
polynomial, and finally the bootstrap constants and $Y_0$.
The lower index estimates and the terminal inequalities give bounds
at every higher index on the same interval.

\emph{Step 5. Terminal data and the limiting trajectory.}
Let $Y_N\to\infty$ and put $a_N=\ell e^{-Y_N}$.
At $Y_N$, take
$\Omega_N=\Omega_*\setminus\overline{N_{a_N}(\gamma_*)}$.
Let $u_N$ be its harmonic condenser, with traces $1$ on the inner
boundary and $0$ on the outer boundary, and put
$\operatorname{Cap}_N=\int_{\Omega_N}|\nabla u_N|^2\,dx$.
Prescribe $v_N=c_N\nabla u_N$, where
\begin{equation*}
\tfrac12c_N^2\operatorname{Cap}_N
+\sigma|\partial N_{a_N}(\gamma_*)|=E_c.
\end{equation*}
Because $|\partial N_{a_N}(\gamma_*)|=2\pi L_*a_N$,
this determines a positive $c_N$ for all sufficiently large $N$,
for every prescribed $E_c>0$ and fixed $\sigma\ge0$.
The local existence argument in the proof of
Lemma~\ref{lem:terminal-data-admissible} applies to these smooth condenser
data and preserves axisymmetry and the global velocity potential.  At $\sigma=0$, their pressure solves
$-\Delta p_N=\sum_{i,j=1}^3(\partial_i(v_N)_j)^2$ with zero boundary trace.
The positive inner flux gives $Dv_N\not\equiv0$, so the maximum
principle and Hopf's lemma give the strict Taylor sign on both
boundary components.
The starting value $Y_0$ may depend on these parameters and the
prescribed fixed geometry.  The energy and trace bounds give
continuation for each finite terminal parameter, as in Theorems~\ref{thm:one-state-closed}
and~\ref{thm:zero-one-state-closed}.
The conserved energy is $E_*=E_c+\sigma|\partial\Omega_*|$.
The uniform trace and geometric bounds give
$A\to|\partial\Omega_*|$ and $K_0\to0$ as $Y\to\infty$, so
$E_*-E_0\ge E_c/2$ after increasing $Y_0$, uniformly in the terminal
parameter.  On each compact $Y$ interval, the kinematic and Bernoulli
equations together with \eqref{eq:general-centre-modulation} bound the
first $Y$ derivatives at every fixed lower Sobolev index.
Apply the compactness argument of
Proposition~\ref{prop:compactness-without-relative-energy} to the state
$(\eta,c,X_+,\alpha,f,g)$, with $c=(R,Z)$ and $\alpha\in\R^2$.
The same chart and elliptic convergence arguments apply, and the
modulation variables converge by \eqref{eq:general-centre-modulation}.
Thus a diagonal subsequence gives a smooth limit for each fixed
$\sigma\ge0$.
The resulting decay estimates imply the stated smooth geometric limits.
The exact capacity energy identity yields
$q^2Y\to E_c/(\pi L_*)$ and
$T-t\sim a^2/(2q)$.  Proposition~\ref{prop:correct-tube-stability}
gives the curvature laws.  Finally, apply the condenser comparison in
Lemma~\ref{lem:global-radial-energy-approximation} in tubular coordinates
centred at $c(Y)=(R(Y),Z(Y))$.  At each fixed $Y$, translation in $z$
preserves the cylindrical weight, so the same local comparison applies.
The complementary region has bounded fixed scale coefficients and
vanishing kinetic energy.  Together, these facts give
\eqref{eq:main-energy-concentration} with the asserted mass.
\end{proof}
\subsection*{Acknowledgment}
S. Yang was supported by the National Natural Science Foundation of China under Grant No. 12601417, the Beijing Natural Science Foundation under Grant
No. 1264051, the Postdoctoral Fellowship Program (Grade C) of China
Postdoctoral Science Foundation under Grant No. GZC20261671, the Beijing
Postdoctoral Research Activity Funding Program under Grant No. 2026-ZZ-28,
and the Funding Program for Newly Recruited Young Teachers of Beijing
University of Technology. T. Luo was supported by a General Research Fund of the Research Grants
Council of Hong Kong under Grant No. 11313025.

\medskip

\noindent\textbf{Data Availability} Data sharing is not applicable to this article as no datasets were generated or analyzed during the current study.

\medskip

\noindent\textbf{Declarations}
\medskip

\noindent\textbf{Conflict of interest} The authors declare that there is no conflict of interest.

\end{document}